\newtheorem{thm}{Theorem}[section]
\newtheorem{prp}[thm]{Proposition}
\newtheorem{cor}[thm]{Corollary}
\newtheorem{lma}[thm]{Lemma}
\theoremstyle{definition}
\newtheorem{defi}[thm]{Definition}
 \theoremstyle{remark}
 \newtheorem{rmk}[thm]{Remark}
\numberwithin{equation}{section}
\newcommand{\gor}[1]{\textcolor{red}{#1}}
\newcommand{\myrmk}[1]{\textcolor{blue}{#1}}
\renewcommand{\gor}[1]{}
\renewcommand{\myrmk}[1]{}
\newcommand{\labitem}[2]{%
\def\@itemlabel{{#1}}
\item
\def\@currentlabel{#1}\label{#2}}
\newcommand{\R}{{\mathbb R}}
\newcommand{\Z}{{\mathbb Z}}
\newcommand{\C}{{\mathbb C}}
\newcommand{\Or}{\mathcal{O}}
\newcommand{\Co}{\mathcal{C}}
\newcommand{\Hi}{\mathcal{H}}
\newcommand{\M}{\mathcal{M}}
\newcommand{\T}{\mathcal{T}}
\newcommand{\V}{\mathcal{V}}
\newcommand{\s}{\mathfrak{s}}
\newcommand{\F}{\mathcal{F}}
\newcommand{\I}{\mathcal{I}}
\newcommand{\Pu}{\mathcal{P}}
\newcommand{\Y}{\mathcal{Y}}
\newcommand{\dbar}{\bar{\partial}}
\newcommand{\id}{\operatorname{id}}
\newcommand{\ind}{\operatorname{index}}
\newcommand{\Ker}{\operatorname{Ker}}
\newcommand{\Coker}{\operatorname{Coker}}
\newcommand{\kd}{\operatorname{Ker}\dbar}
\newcommand{\cd}{\operatorname{Coker}\dbar}
\newcommand{\con}{\operatorname{con}}
\newcommand{\ev}{\operatorname{ev}}
\newcommand{\diag}{\operatorname{Diag}}
\newcommand{\sgn}{\operatorname{sign}}
\newcommand{\spn}{\operatorname{Span}}
\newcommand{\vk}{\operatorname{VirKer}}
\newcommand{\capp}{\operatorname{cap}}
\newcommand{\pre}{\operatorname{pre}}
\newcommand{\aux}{\operatorname{Aux}}
\newcommand{\im}{\operatorname{im}}
\newcommand{\ul}{\underline}
\newcommand{\en}{\text{end}}
\newcommand{\op}{\operatorname{op}}
\newcommand{\w}{{\bf w}}
\newcommand{\bm}{\operatorname{bm}}
\newcommand{\inter}{\operatorname{int}}
\newcommand{\Lag}{\operatorname{Lag}}
\newcommand{\negg}{\operatorname{ne}}
\newcommand{\pos}{\operatorname{po}}
\newcommand{\tp}{\operatorname{tp}}
\newcommand{\indu}{\operatorname{ind}}
\newcommand{\triv}{\operatorname{triv}}
\newcommand{\swi}{\operatorname{sw}}
\newcommand{\stab}{\operatorname{stab}}
\title[Orientations of Morse flow trees in Legendrian contact homology]
{Orientations of Morse flow trees in Legendrian contact homology}
\author{Cecilia Karlsson}
\address{University of Borås,
Department of Engineering\\
Högskolan i Borås,
S-501 90 Borås,
Sverige\\
cecilia.karlsson@hb.se}
\begin{document}

\begin{abstract}
Let $\Lambda$ be a closed, connected, spin Legendrian submanifold of the 1-jet space of a smooth $n$-dimensional manifold. We give a coherent orientation scheme for the moduli space of rigid Morse flow trees of $\Lambda$, implying that the Legendrian contact homology of $\Lambda$ with integer coefficients can be computed using Morse flow trees. If $n>1$ then this orientation scheme can be computed with an algorithm which uses intersections of oriented flow manifolds in $M$ together with combinatorial data coming from the trees.
\end{abstract}

\keywords{Legendrian submanifold; Legendrian contact homology; orientation; Morse flow trees.}

\subjclass[2000]{57R17; 53D42}

\maketitle

\newpage

\tableofcontents

\newpage

\section{Introduction}
Let $M$ be a smooth manifold of dimension $n$, and let $J^1(M)=T^*M \times \R$ denote its 1-jet
space, with  local coordinates $(x_1,\dotsc,x_n,y_1,\dotsc,y_n,z)$. This space can be given the structure  of a contact manifold, with contact distribution $\xi = \Ker ( dz- \sum_i y_i dx_i)$.   
 A submanifold $\Lambda \subset J^1(M)$ is called \emph{Legendrian} if it is $n$-dimensional and everywhere tangent to $\xi$, 
and a \emph{Legendrian isotopy} is a smooth $1$-parameter family of Legendrian submanifolds. The problem of classifying all Legendrian submanifolds of a contact manifold, up to Legendrian isotopy, is a central problem in contact geometry. This motivates finding  Legendrian invariants, that is, invariants that are preserved under Legendrian isotopies. One such invariant is \emph{Legendrian contact homology},  which is a homology theory that fits into the package of Symplectic Field Theory (SFT), introduced by Eliashberg, Givental and Hofer in the paper \cite{sft}. 

More recently, it has been shown that Legendrian contact homology can be used not only to study properties of Legendrian submanifolds, but also to compute symplectic invariants of Weinstein manifolds obtained by surgery along Legendrians. See e.g.\ ~\cite{surg2,surg1}. It has also been shown, for example in ~\cite{el,rm}, that this can be generalized and used for computations in homological mirror symmetry. This indicates the importance of being able to explicitly understand the Legendrian contact homology complex with integer coefficients. This paper should be an important step in that direction.   

%

Briefly, Legendrian contact homology is the homology of a differential
graded algebra (DGA) associated to $\Lambda$.  One should note that
Legendrian contact homology has not been worked out in full detail for
all contact manifolds, but in the case when the contact manifold is
given by  the 1-jet space $J^1(M) = T^*M \times \R $ of a manifold
$M$, then the analytical details were established by Ekholm, Etnyre
and Sullivan in \cite{pr}. In the special case $M = \R$, this was also done by Eliashberg in \cite{eliinv} and independently by Chekanov in \cite{chekanov}. 

In the case of a 1-jet space, the DGA of $\Lambda$ can be defined by considering the \emph{Lagrangian projection} 
$\Pi_\C: J^1(M) \to T^*M$. The generators of the algebra are then given by the double points of $\Pi_\C(\Lambda)$, which correspond to \emph{Reeb chords} of $\Lambda$. These are flow segments of the \emph{Reeb vector field}
 $\partial_z$, having start and end point on $\Lambda$. We assume that all self-intersections of  $\Pi_\C(\Lambda)$ are transverse double points, which is a generic condition.
The differential $\partial$ of the algebra counts certain rigid pseudo-holomorphic disks in $T^*M$ with boundary on $\Pi_\C(\Lambda)$. With a clever choice of almost complex structure on $T^*M$, one gets that the homology of this complex is a Legendrian invariant. 

If $M = \R$, then the count of pseudo-holomorphic disks reduces to combinatorics, as described by Chekanov in \cite{chekanov},  but in higher dimensions the Cauchy-Riemann equations give rise to non-linear partial differential equations, which are hard to solve. 
To simplify an analogous problem in Lagrangian Floer homology, Fukaya
and Oh in \cite{fo} introduced \emph{Morse flow trees}, which are
trees with edges along gradient flow lines in $M$, and gave a
one-to-one correspondence between rigid Morse flow trees and rigid
pseudo-holomorphic disks in $T^*M$ with boundary on the
Lagrangians. This is built on an idea of Floer, see \cite{floertrees}.
In \cite{trees}, Ekholm generalized this method to also work in the Legendrian contact homology setting.

 In
the present paper we will combine the results in \cite{trees} with the results in \cite{orientbok}, where it is proved that Legendrian contact homology can be defined using $\Z$-coefficients, provided $\Lambda$ is spin. This corresponds to orienting moduli spaces of holomorphic disks in a coherent way, and builds on the work of Fukaya, Oh, Ohta and Ono   in \cite{fooo}, where they describe a way to orient the determinant line of the $\dbar$-operator over the space of trivialized Lagrangian boundary conditions for the unit disk in $\C$. Since the differential in Legendrian contact homology counts rigid pseudo-holomorphic disks, the orientation of the moduli space at a disk $u$ corresponds to a sign of $u$. That the orientation of the moduli space is coherent means that all these signs cancels in $\partial^2$, so that we get $\partial^2 =0$.

The main result of the present paper is the existence of an orientation scheme for the moduli space of rigid Morse flow trees of $\Lambda$, which uses 
orientations of stable and unstable manifolds associated to the Morse flow
trees together with combinatorial data from the trees. If $n>1$ then this scheme can be described by an algorithm which gives the signs of the trees. If $n=1$ we have partial results, but the author has not been able to compute some of the signs needed at the end of the algorithm. We refer to \cite{korta} for the explicit algorithm, in the present paper we focus on the analytical aspects of the proof that this algorithm indeed gives a coherent orientation scheme for moduli spaces of rigid Morse flow trees.  This is a generalization and an extension of the orientation scheme described in  \cite{kch} in the case when $\Lambda$ is the conormal lift of a knot in $\R^3$.

\subsection{Outline}\label{sec:outline}
Let $\Lambda \subset J^1(M)$ be a spin Legendrian submanifold.
 We will assume that $\Lambda$ is \emph{front generic}, meaning that
 $\Pi:\Lambda \to M$ is an immersion outside a co-dimension 1
 submanifold $\Sigma \subset \Lambda$. The points in $\Sigma$ we call
 \emph{cusp points}. We will also assume that $\Lambda$ has
 \emph{simple front singularities}, meaning that the points in
 $\Sigma$ are projected to standard cusp singularities under the front
 projection $\Pi_F: J^1(M) \to M \times \R$. If $n=2$ we also allow swallow-tail singularities. See \cite{trees}.  

 Locally, away from $\Sigma$, the Legendrian $\Lambda$ can be described as the multi-1-jet graph of locally defined functions $f_1,\dotsc,f_l: U \to \R$, $U \subset M$.
Fix a metric $g$ on $M$. The Morse flow trees of $\Lambda$ are defined using negative gradient flows of such function differences. I.e.\ the edges of the flow trees consist of flow lines of $- \nabla (f_i-f_j)$ in $M$, and these curves are patched together at $2$- and $3$-valent vertices, where the local defining functions for $\Lambda$ change. The tree is a directed graph with one root, and the direction of the edges are given by the flow direction of the defining vector fields. For example, at a $3$-valent vertex we can have an incoming flow line which solves $- \nabla(f_1-f_3)$, and two outgoing flow-lines solving $- \nabla (f_1-f_2)$ and $- \nabla (f_2 - f_3)$, respectively. See Figure \ref{fig:extree}. The $1$-valent vertices are either critical points of the flow lines ending at the vertex, or belong to $\Pi(\Sigma)$.  We give a more detailed description of flow trees in Section \ref{sec:tre}.

\begin{figure}[ht]
\labellist
\small\hair 2pt
\pinlabel $\R$ [Br] at 90 340
\pinlabel ${f_1}$ [Br] at  400 280 
\pinlabel ${f_2}$ [Br] at 400 215
\pinlabel $f_3$ [Br] at 390 145
\pinlabel $\Gamma$ [Br] at 220 22
\pinlabel $M$ [Br] at 67 15
\endlabellist
\centering
\includegraphics[height=5cm, width=6.5cm]{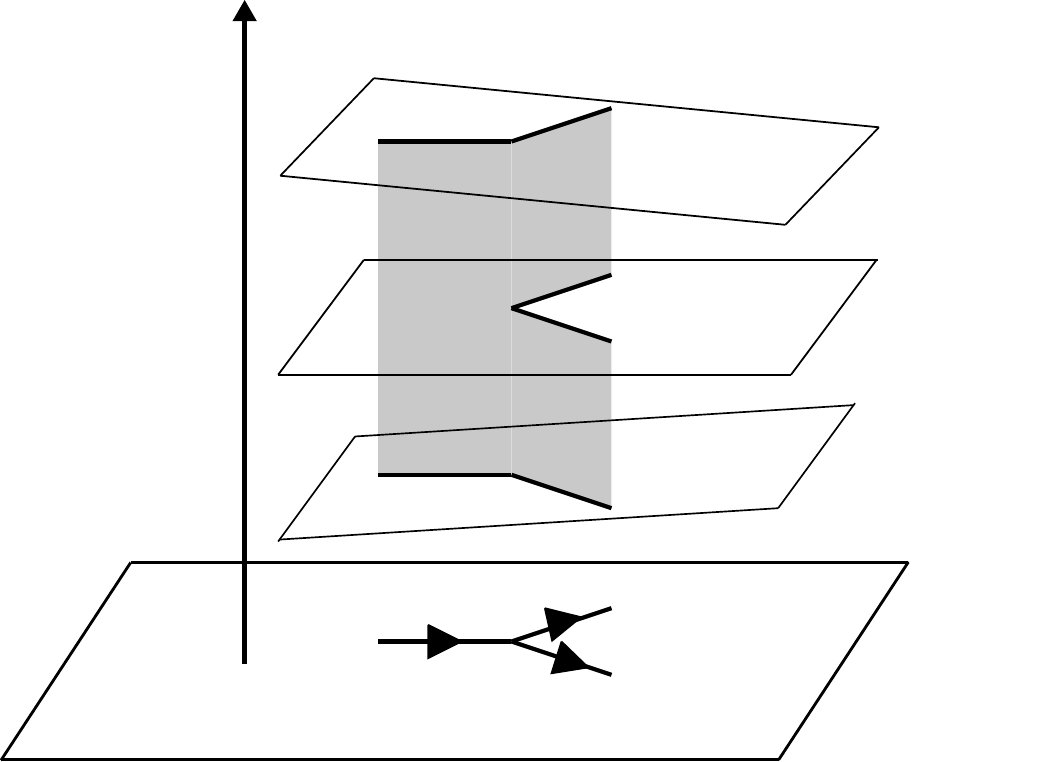}
\caption{The local picture of a Morse flow tree $\Gamma$ in a neighborhood of a $Y_0$-vertex. The graphs of the defining functions $f_1$, $f_2$ and $f_3$ are sketched, together with the lift of the tree to the sheets of $\Lambda$ determined by these functions. The shaded area indicates the corresponding pseudo-holomorphic disk.}
\label{fig:extree}
\end{figure}

%

The interior points of the edges of the tree has two lifts to $\Lambda$, one for each of the sheets corresponding to the defining functions for the flow line. The 1-valent vertices which do not lift to $\Sigma$ will lift to end points of Reeb chords, and the Lagrangian projection of the lift of a tree will give a closed curve in $\Pi_\C(\Lambda)$. 

By the results in \cite{trees} there is a one-to-one correspondence between flow trees and pseudo-holomorphic disks defined by $\Lambda$. Briefly this works as follows. Consider the fiber scaling $s_\lambda:(x,y,z) \mapsto (x, \lambda y, \lambda z)$ which pushes $\Lambda$ towards the zero section. Then there is a choice of metric on $M$ and almost complex structure of $T^*M$, so that after a perturbation of $\Lambda$ there is a one-to-one correspondence between rigid flow trees of $\Lambda$ and sequences of $J$-holomorphic disks $u_\lambda$ in $T^*M$ with boundary on $\Pi_\C(s_\lambda(\Lambda))$. The disks are characterized by the property that their boundaries are arbitrarily close to the cotangent lift of the corresponding tree $\Gamma$. 
Moreover, the linearized boundary conditions of the disks tend to constant $\R^n$-boundary conditions, except at cusp points, where we get a split boundary condition which is constantly $\R$ in the first $n-1$ directions, and given by a uniform $\pm \pi$-rotation in the last direction, where the sign depends on the type of vertex. In addition, when we let $\lambda \to 0$, then  the domain of the disk will split up into a finite union of strips and strips with one slit.  See Section \ref{sec:slit}.       

The orientation of the moduli space of rigid trees will be related to 
 orientations of the moduli spaces of the pseudo-holomorphic disks $u_\lambda$ converging to the trees. The latter space will be given orientation by orienting the determinant lines of the linearized $\dbar$-operator at the disks. This orientation, in turn, is defined by using methods from \cite{fooo} and \cite{orientbok}, and is called the \emph{capping orientation} of $u_\lambda$. 

Under the degeneration process, i.e.\ when $\lambda \to 0$, each rigid
flow tree $\Gamma$ (or more precisely, the disks $u_\lambda$) splits
up into \emph{elementary pieces}; pieces of the tree with at most $1$
vertex each. We use this to derive an algorithm for combinatorially
calculating the capping orientation of  $u_\lambda$ in the limit
$\lambda =0$, and this will be the \emph{capping orientation of}
$\Gamma$. Namely, we will give the elementary pieces of the tree
orientations coming from the stable and unstable manifolds of the
critical points contained in the tree, and from $\Pi(\Sigma)$. Then we glue the pieces back together again, and under this gluing we can understand the orientation of $\Gamma$ as oriented intersections of flow manifolds. Due to the rigidity of $\Gamma$, we will in the end get an oriented $0$-dimensional intersection, and by multiplying this by a sign calculated from combinatorial data from the tree, we obtain the capping orientation of the tree. This will also be called the \emph{sign of $\Gamma$}.

The capping orientation of $u_\lambda$ and of $\Gamma$ will depend on several choices, which we call \emph{initial orientation choices} and which we describe in Section \ref{sec:orientconv}.
The main result is the following.

\begin{thm}\label{thm:mainthm}
Let $\Lambda \subset J^1(M)$ be a closed, connected, spin Legendrian submanifold, and assume that we have fixed all initial orientation choices. Then there is a coherent orientation scheme for the moduli space of rigid pseudo-holomorphic disks of $\Lambda$ so that the following holds.
Assume that $\Gamma$ is a rigid flow tree of $\Lambda$, and let
 $u_\lambda$, $\lambda \to 0$, be a sequence of rigid pseudo-holomorphic disks converging to  $\Gamma$ in the sense of \cite{trees}. Then there is a $\lambda_0>0$ so that the orientation of $u_\lambda$ is independent of $\lambda$ for $0 < \lambda<\lambda_0$, and if $n>1$ then this orientation can be computed in terms of intersections of oriented flow manifolds in $M$ together with combinatorial data coming from $\Gamma$. 
\end{thm}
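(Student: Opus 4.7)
The plan is to combine the tree-to-disk correspondence of \cite{trees} with the capping-orientation formalism of \cite{fooo,orientbok} to identify the sign of $u_\lambda$ with data concentrated on $\Gamma$. By \cite{trees}, as $\lambda \to 0$ the domain of $u_\lambda$ degenerates into a tree of standard strips and strips-with-slits, one for each edge and internal vertex of $\Gamma$, with linearized boundary conditions that tend to piecewise-constant $\R^n$-Lagrangians away from cusps and to a split Lagrangian carrying a uniform $\pm\pi$-rotation in one factor at each cusp. My first step is a linear gluing argument: the determinant line of $\dbar_{u_\lambda}$, from which the capping orientation is read off, can for small $\lambda$ be written as a tensor product of determinant lines on the limiting pieces, glued along finite-dimensional matching spaces at the internal edges, up to an $O(\lambda)$ deformation that does not alter signs. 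This immediately yields the independence from $\lambda$ on some interval $(0,\lambda_0)$ and reduces the problem to the elementary pieces.

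Second, I would orient each elementary piece separately. For an edge-strip following $-\nabla(f_i-f_j)$, the linearized $\dbar$-operator has trivial cokernel and a kernel canonically oriented by a chosen orientation of the relevant stable/unstable submanifold associated to the pair $(f_i,f_j)$. For each vertex type ($Y_0$, $Y_1$, end, switch, cusp) I would reduce the local $\dbar$-problem to a finite-dimensional linear model by a standard deformation/homotopy argument and compute the determinant-line orientation explicitly, encoding it as the intersection orientation of the incident flow manifolds twisted by a local combinatorial sign that depends only on sheet ordering and vertex type.

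Third, I would glue back together. Attaching determinant lines along internal edges of $\Gamma$ corresponds, on the flow-manifold side, to taking oriented fibered products of the incident stable/unstable manifolds over the shared evaluation points in $M$. Because $\Gamma$ is rigid, these fibered products telescope to a $0$-dimensional oriented intersection in $M$. Multiplying the resulting intersection sign by the product of the local vertex signs and a global combinatorial correction, which accounts for (i) reordering tensor factors to match the cyclic boundary order used in the capping convention of \cite{orientbok}, (ii) comparing stable/unstable manifold orientations with the capping-disk orientations at each Reeb chord endpoint, and (iii) the trivializations of the spin structure of $\Lambda$ near each cusp, yields the capping orientation of $u_\lambda$.

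The hardest step will be the cusp analysis. At a cusp vertex the limiting boundary condition is a singular split Lagrangian rotating by $\pm\pi$ in one factor, so the plain constant-Lagrangian determinant computation of \cite{fooo} does not apply directly; one must interpolate carefully between the rotating model and a capping-disk model while tracking the Maslov index, and then match the answer to the chosen spin structure along $\Sigma$. Once the local cusp sign is pinned down, the remaining nontrivial point is \emph{coherence}: I must verify that the global sign rule is compatible with $\partial^2=0$, which, via the standard broken-tree boundary analysis, reduces to a finite case check over ordered pairs of vertex types. Coherence of the underlying capping scheme on the holomorphic side is granted by \cite{orientbok}, so the real work is to confirm that the combinatorial rule faithfully transports this coherence through the degeneration.
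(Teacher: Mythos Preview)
Your high-level strategy---decompose into elementary pieces, orient each piece via flow-manifold data, and glue---is the same as the paper's. However, your outline skips two technical devices that the paper treats as essential, and without them your ``linear gluing argument'' does not go through as stated.

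First, the paper never analyzes $\dbar_{u_\lambda}$ directly on pieces. Instead it introduces the pre-glued disks $w_\lambda$ from \cite{trees}, whose boundary conditions are \emph{explicit} (constant $\R^n$ away from cusp neighborhoods, with controlled $\pm\pi$-rotations there). All the piece-by-piece determinant computations are done for $w_\lambda$, not $u_\lambda$; a separate argument (the paper's Proposition~\ref{prp:iso}) then shows that the capping orientations of $u_\lambda$ and $w_\lambda$ agree for small $\lambda$. Your sentence ``up to an $O(\lambda)$ deformation that does not alter signs'' is doing the work of this entire proposition, and it is not automatic: one must build a path $u_s(\lambda)$ from $u_\lambda$ to $w_\lambda$, show the relevant cokernels form a genuine vector bundle over the path, and check compatibility with both the capping sequence and the conformal-variation identification.

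Second, and more seriously, you do not mention stabilizing by conformal variations. The unstabilized operators $\dbar_{\Gamma_\lambda}$ on the pieces are not surjective in general, so their kernels and cokernels need not have constant dimension in $\lambda$; without that, there is no vector bundle over $(0,\lambda_0]$ and your tensor-product-of-determinant-lines claim is ill-posed. The paper fixes this by adding the finite-dimensional space $V_{\con}(\Gamma_\lambda)$ of conformal variations to the domain (Section~\ref{sec:vectors}), making the operators surjective, and then carefully tracking the sign cost of passing between the stabilized, pre-stabilized, and unstabilized problems (Propositions~\ref{prp:mainstabsign}, \ref{lma:prestab}, \ref{prp:megalemma4paux}). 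This is where most of the combinatorial signs in the final formula actually originate, and it is also what allows the evaluation-at-the-special-puncture map to identify kernels with tangent spaces of flow-outs in the limit $\lambda\to 0$. Your proposal's ``oriented fibered products of the incident stable/unstable manifolds'' is the right geometric picture, but making it rigorous requires exactly this stabilization and the accompanying weighted Sobolev setup so that the $\lambda=0$ limit is a Fredholm problem with the expected kernel.

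Finally, a smaller point: the paper treats end-vertices as additional marked points with their own capping operators $\dbar_e$, and reorients the enlarged conformal-structure space accordingly (Section~\ref{sec:endmp}); this is part of how the cusp contribution is absorbed into the combinatorics rather than handled as a singular boundary-condition computation. Your plan to ``interpolate between the rotating model and a capping-disk model'' at cusps is reasonable in spirit but would have to reproduce this bookkeeping.
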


For a complete statement of the algorithm for computing the
orientation, we refer to \cite{korta}. We point out that for $n=1$ we get a similar algorithm for computing the orientation, but now the algorithm will depend on some auxiliarly variables. However, in this dimension one can as well use Chekanovs algorithm or the one given by Ng in \cite{ngcomp}, for example.  In Section
\ref{sec:glupf} we indicate how all the formulas in \cite{korta} are
derived, and in Section \ref{sec:examples} we compute some examples. In particular, the reader will see what combinatorial data from the trees that are involved. 



\begin{rmk}
 The capping orientation of $u_\lambda$ defined in this paper will
 differ from the capping orientation defined in \cite{orientbok}. The
 reason for this is that the orientation in the present paper is
 easier to work with in the tree-setting, while the orientation in
 \cite{orientbok} is more efficient when, for example, proving
 invariance of the Legendrian contact homology under Legendrian
 isotopies. However, in \cite{teckcob} it is proven that these
 different choices give rise to isomorphic DGA's. Thus
 it follows that we can use flow trees to compute the Legendrian contact homology of $\Lambda$ with $\Z$-coefficients. Also compare \cite{kch} where a similar issue is discussed. 
\end{rmk}


\subsection{Organization of the paper}
In Section \ref{sec:tre} we give an introduction to Morse flow trees, closely following \cite{trees}. In particular, we give a description of neighborhoods of partial flow trees, understood as immersed submanifolds of $M$. It is these neighborhoods that we will use when calculating the orientation of $\Gamma$ from intersecting submanifolds in $M$.


In Section \ref{sec:background} we review some background materials and also define the capping orientation of a $J$-holomorphic disk. We use an approach similar to the one in \cite{orientbok}, but we will use slightly different orientation conventions. 

In Section \ref{sec:cutstab} we give a brief outline of how to associate a family of pseudo-\linebreak
holomorphic disks $u_\lambda$ to a rigid flow tree $\Gamma$, again following \cite{trees}. We also define the capping orientation of a flow tree $\Gamma$, which will be the capping  orientation of a sequence of pre-glued disks associated to the tree. Indeed, in \cite{trees} it is shown that for each $\lambda$  sufficiently small, there is a pre-glued disk $w_\lambda$ arbitrary close to $u_\lambda$ so that the boundary conditions induced by $w_\lambda$ are very easy to describe explicitly. 

In Section \ref{sec:glupf} we prove that the capping orientation of
$\Gamma$ is independent of $\lambda$ for $\lambda$ sufficiently small,
and that the capping orientation of $u_\lambda$ equals the capping
orientation of $w_\lambda$ for such $\lambda$. This gives Theorem
\ref{thm:mainthm}. The main technical results are proven in Section
\ref{sec:stab} and Section \ref{sec:glupf}. In Section
\ref{sec:glupf} we also show how the explicit formulas in \cite{korta} are derived. In Section
\ref{sec:examples} we use the algorithm to compute orientations of some of the
trees that occurs in [\cite{trees}, Section 7]. In Section
\ref{sec:examples}, Section \ref{sec:der} and Section \ref{sec:analder} we derive expressions for the signs that are indicated but not explicitly stated in Section \ref{sec:glupf}. We also include
Appendix \ref{app:mini}, where we summarize some results concerning families of Fredholm operators.

  \section{Morse flow trees}\label{sec:tre}
In this section we give an introduction to Morse flow trees, following Ekholm in \cite{trees}. We will focus on the properties of the trees that will be of most importance for us, and refer to \cite{trees} for a deeper discussion of the subject.

We will first describe how Morse flow trees are built, see Subsection \ref{sec:buildtree}. In Subsection \ref{sec:parttree} we introduce partial flow trees, which are defined similar to Morse flow trees, but should be thought of as trees obtained from a Morse flow tree $\Gamma$ by cutting $\Gamma$ into smaller pieces. In Subsection \ref{sec:geomint} we describe neighborhoods of partial flow trees, given as submanifolds of $M$ via evaluation. It will be these submanifolds that we orient when we are deriving the sign of the  flow trees.   
In Subsection \ref{sec:slit} we describe a special type of domains associated to flow trees. 

We will assume that $\Lambda$ is closed and connected.


\subsection{Flow lines and vertices}\label{sec:buildtree}
As we briefly explained in Section \ref{sec:outline}, Morse flow trees
consist of gradient flow lines in $M$. These flow lines are induced by
pairs of locally defined functions related to $\Lambda$, and they are patched together to give a closed curve when lifted to $T^*M$. More precisely, the trees are constructed as follows. 

Given a Legendrian $\Lambda \subset J^1(M)$, it is a standard fact that $\Lambda\setminus \Sigma$ locally can be given as the \emph{$1$-jet lift} 
\begin{equation*}
J^1(f)=\{(x,df(x),f(x)), x \in U\subset M\}
\end{equation*}
of a locally defined function $f : U \to \R$. 
Fix a Riemannian metric $g$ on $M$ and let $\nabla$ denote the corresponding gradient operator. To define flow trees we will use the local functions $f_i$ which define $\Lambda$, or more precisely, the flow lines of the gradients of their pairwise function differences. Note that these vector fields  $-\nabla (f_i-f_j)$ are only locally defined on $M$. 

\begin{defi}
A \emph{local flow line} $\gamma$ of $\Lambda$ is a curve $\gamma: I \to U \subset M$ satisfying 
\begin{equation*}
\dot\gamma(t) = - \nabla(f_1-f_2)(\gamma(t)),
\end{equation*} 
where $f_1, f_2:U \to \R$ is a pair of locally defining functions for $\Lambda$. Here $I$ is some interval.
\end{defi}

\begin{rmk}
A local flow line is either an embedding or a constant map whose image is a critical point of $f_1-f_2$.
\end{rmk}

Assume that $\gamma$ is non-constant and that $f_1 > f_2$ along $\gamma$. Then we define the \emph{flow orientation} of $\gamma$ to be the orientation given by the vector field $- \nabla(f_1-f_2)(\gamma(t))$.

We associate to each flow line $\gamma$  its two $1$-jet lifts $(\tilde{\gamma}^1,\tilde{\gamma}^2)$, given by
\begin{equation*}
\tilde{\gamma}^i(t) = (\gamma(t), df_i(\gamma(t)), f_i(\gamma(t))) \in J^1(M), \qquad i=1,2,
\end{equation*}
and also its cotangent lifts $(\bar{\gamma}^1, \bar{\gamma}^2)$ given by
\begin{equation*}
\bar{\gamma}^i(t) = \Pi_\C(\tilde{\gamma}^i(t) ) \in T^*M, \qquad i=1,2.
\end{equation*}
The \emph{flow orientation} of $\tilde{\gamma}^1$ and $\tilde{\gamma}^2$ is given by the lift of the flow orientation of $\gamma$ and $- \gamma$, respectively. See Figure \ref{fig:local}.
The flow orientation of $\bar{\gamma}^i$ is the orientation induced by the orientation of $\tilde{\gamma}^i$ under the Lagrangian projection, $i=1,2$. 

\begin{figure}[hbt]
\labellist
\small\hair 2pt
\pinlabel ${f_1}$ [Br] at  80 260 
\pinlabel ${f_2}$ [Br] at 68 128
\pinlabel $\tilde \gamma_1$ [Br] at 150 235
\pinlabel $\tilde \gamma_2$ [Br] at 150 104
\pinlabel $\gamma$ [Br] at 140 14
\pinlabel $M$ [Br] at 72 40
\endlabellist
\centering
\includegraphics[height=5cm, width=6.5cm]{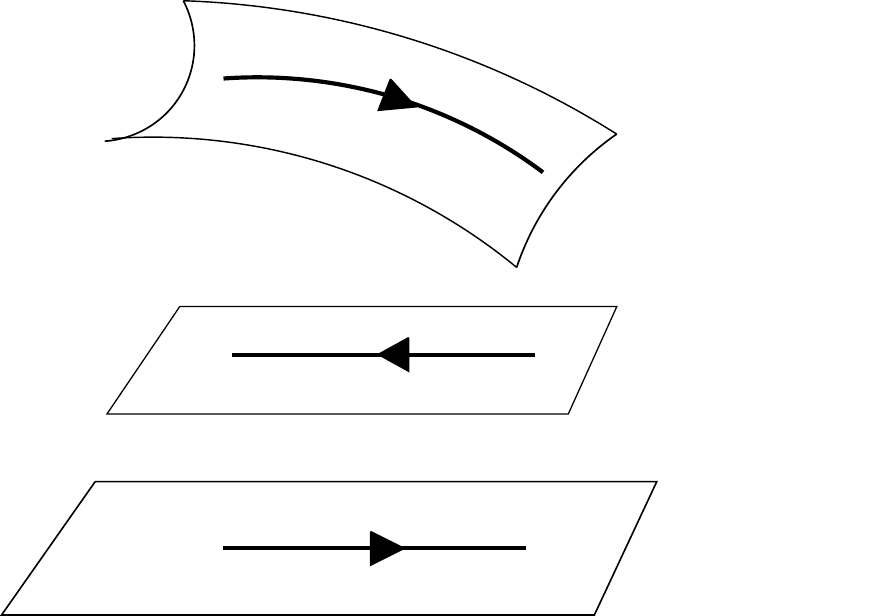}
\caption{The front projection of 1-jet lifts of a gradient flow line.}
\label{fig:local}
\end{figure}

Now assume that we have two pairs $f_1, f_2 : U \to \R$, $\tilde f_1, \tilde f_2 : \tilde U \to \R$ of locally defining functions of $\Lambda$ so that $U \cap \tilde U \neq \emptyset$ and  so that $J_1(f_i) =J_1(\tilde f_i)$ on $U \cap \tilde U$, $i=1,2$. Assume that $\gamma_1: I_1 \to U$ is a local flow line of $\Lambda$ satisfying $\dot \gamma_1(t) = - \nabla (f_1-f_2)(\gamma_1(t))$. Assume $\gamma_1$ can be extended over $\tilde U$ to a curve $\gamma:I\to U \cup \tilde U$, $I_1 \subset I$, which satisfies 
\begin{equation}\label{eq:patch}
\gamma(t) = 
\begin{cases}
&\gamma_1(t), \quad t \in I_1 \\
& \gamma_2(t), \quad t \in I\setminus I_1, \text{ where } \gamma_2(t) \in \tilde U, \quad \dot \gamma_2(t) = - \nabla(\tilde f_1 - \tilde f_2)(\gamma_2(t)).
\end{cases}
\end{equation}
\begin{defi}
We call $\gamma$ a \emph{flow line of $\Lambda$}, where we are allowed to make the extension in \eqref{eq:patch} over several subsets of $M$ (assuming inductively that $\gamma_1$ is a flow line of $\Lambda$ defined on $U$  and not necessarily a local flow line).
\end{defi}
\begin{defi}
If $\gamma:I \to M$ is a flow line of $\Lambda$ which cannot be extended to a larger interval as in \eqref{eq:patch} we call $\gamma$ a \emph{maximal flow line of $\Lambda$.}
\end{defi}

Thus, a flow line $\gamma$ of $\Lambda$ is locally a solution curve to $-\nabla(f_1-f_2)$ where $f_1$, $f_2$ are locally defining functions for $\Lambda$.  The \emph{1-jet lift of a flow line of $\Lambda$} is locally given by the 1-jet lift of the corresponding local flow line. The \emph{cotangent lift of a flow line of $\Lambda$} is defined in an analogous way, and  we define the \emph{flow orientation of $\gamma$} to locally be given by the flow orientation of the corresponding local flow line. The flow orientation of the 1-jet lift and the cotangent lift is defined similarly.

\begin{defi}\label{def:tree}
 A \emph{Morse flow tree} of $\Lambda \subset J^1(M)$ is an immersed tree $\Gamma$ in $M$ satisfying the following.
 \begin{enumerate}
 \item The edges are mapped to flow lines of $\Lambda$.
  \item\label{t1} The vertices of $\Gamma$ have valence of at most 3.
  \item\label{t2} The tree $\Gamma$ is rooted, oriented away from the root. The orientation of the edges are given by the flow orientation. The valence of the root is equal to $1$ or $2$.
  \item\label{t4} We get an oriented closed curve $\bar{\Gamma}\subset T^*M$ when concatenating the oriented cotangent lifts of the edges of $\Gamma$. 
  \item\label{t5}The vertices of $\Gamma$ are of the form listed in Table \ref{tab:trees}.
 \end{enumerate}
\end{defi}

 \begin{table}[ht]
 \begin{center}
   \caption{{\bf Complete list of vertices occurring in rigid flow trees.} 
   }\label{tab:trees}
 \tabulinesep=1.15mm
  \begin{tabu}{>{\centering\arraybackslash}m{0.4in} | >{\arraybackslash}m{2.8in}|>{\centering\arraybackslash}m{1.8in}}
  \hline Type  & Description & Local picture of the tree and its front lift \\
   \hline
   $P_{1,+}$ &
   \emph{Positive 1-valent punctures}, not contained in  $\Pi(\Sigma)$. &
\includegraphics[height=1.6cm]{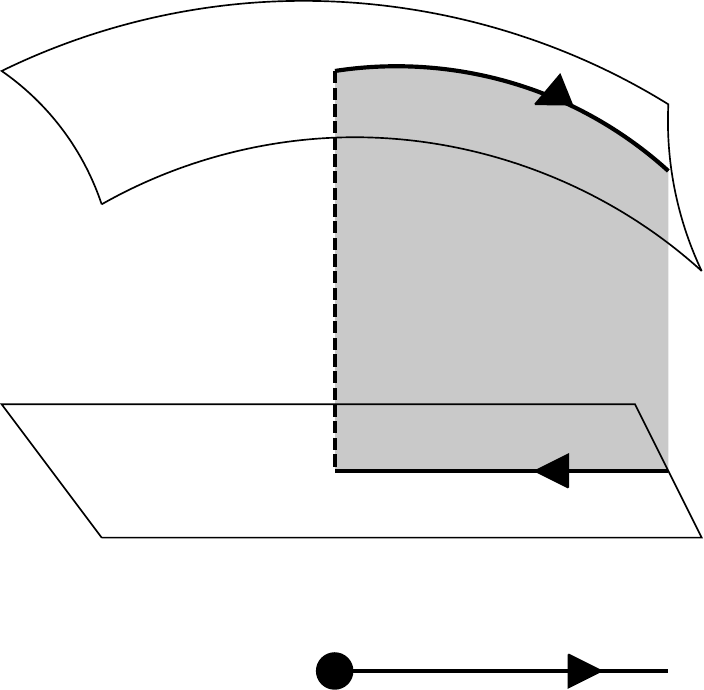}\\
   \hline
   $P_{1,-}$ &  \emph{Negative 1-valent punctures}, not contained in  $\Pi(\Sigma)$. &
\includegraphics[height=1.6cm]{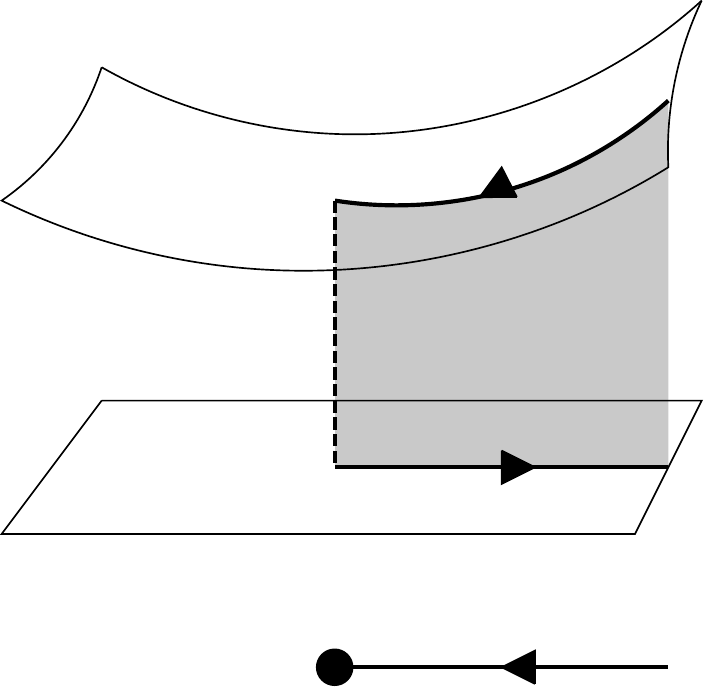}\\
   \hline
   $P_{2,+}$ & \emph{Positive 2-valent punctures $p$}, not contained in  $\Pi(\Sigma)$, with index
$I(p)=0$. &
\includegraphics[height=1.5cm]{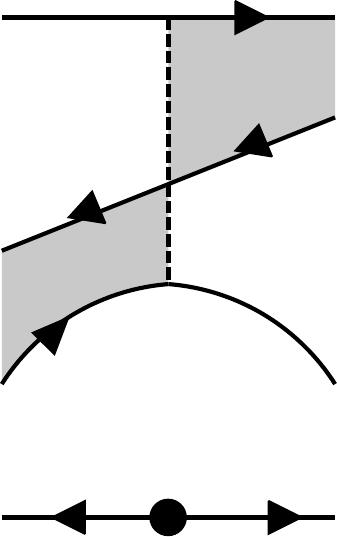}\\
   \hline
   $P_{2,-}$ & \emph{Negative 2-valent punctures $p$}, not contained in  $\Pi(\Sigma)$, with index
$I(p)=n$. & 
\includegraphics[height=1.5cm]{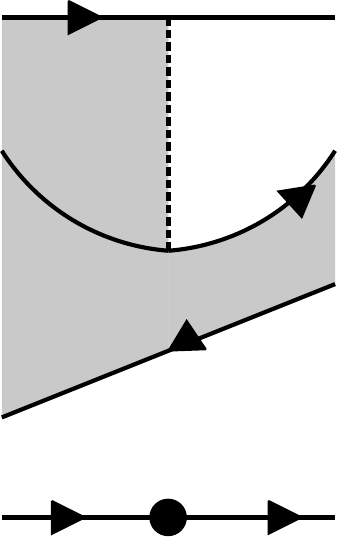}\\
   \hline
   $Y_0$ &  \emph{3-valent $Y_0$-vertices}, not contained in  $\Pi(\Sigma)$ and not containing any punctures. &
\includegraphics[height=2.1cm]{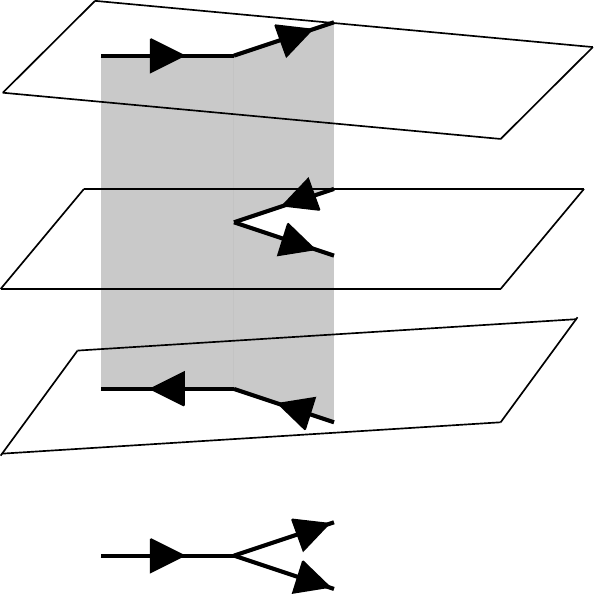}\\
   \hline
   end & \emph{1-valent end-vertices $e$},  contained in $\Pi(\Sigma)$, not containing any punctures, meeting $\Pi(\Sigma)$ transversely, the $1$-jet lift of $\Gamma$ through $e$ is traversing $\Sigma$ downwards.&
\includegraphics[height=2.2cm]{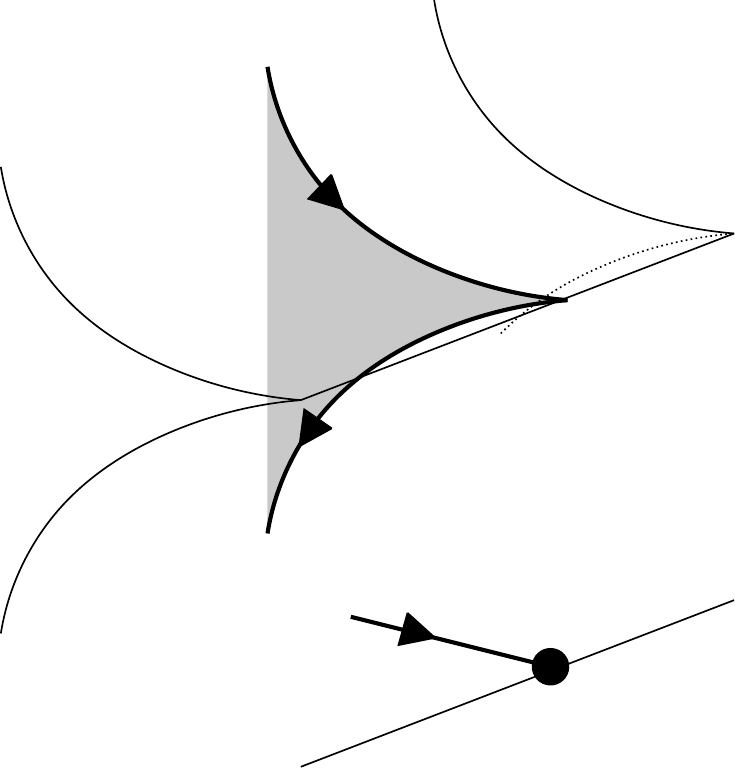}\\
   \hline
   switch & \emph{2-valent switch-vertices $s$}, contained in  $\Pi(\Sigma)$,  not containing any punctures, tangent to $\Pi(\Sigma)$,  one of the $1$-jet lifts of $\Gamma$ through $v$ is traversing $\Sigma$ upwards, the other 1-jet lift of $s$ is not contained in $\Sigma$. &
\includegraphics[height=2.7cm]{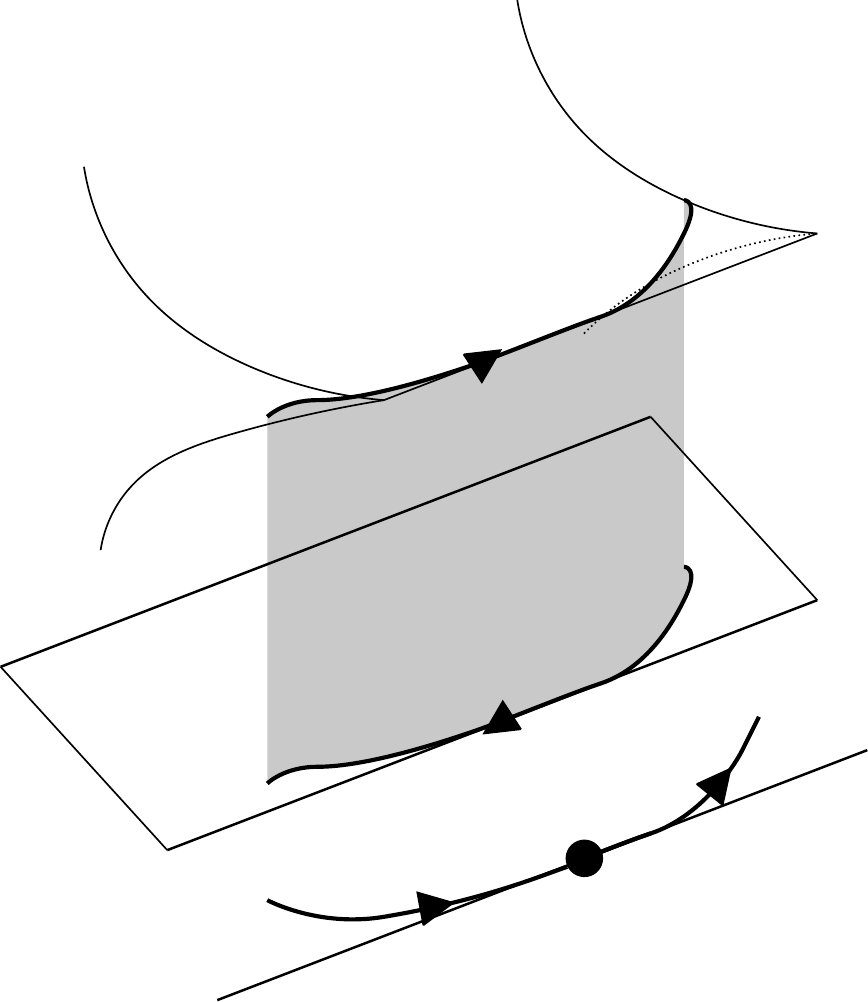}\\
   \hline
    $Y_1$ & \emph{3-valent $Y_1$-vertices  $v$},  contained in   $\Pi(\Sigma)$, not containing any punctures, meeting $\Pi(\Sigma)$ transversely, one of the $1$-jet lifts of $\Gamma$ through $v$ is traversing $\Sigma$ upwards, the other two 1-jet lifts of $v$ are not contained in $\Sigma$.&
\includegraphics[height=3.02cm]{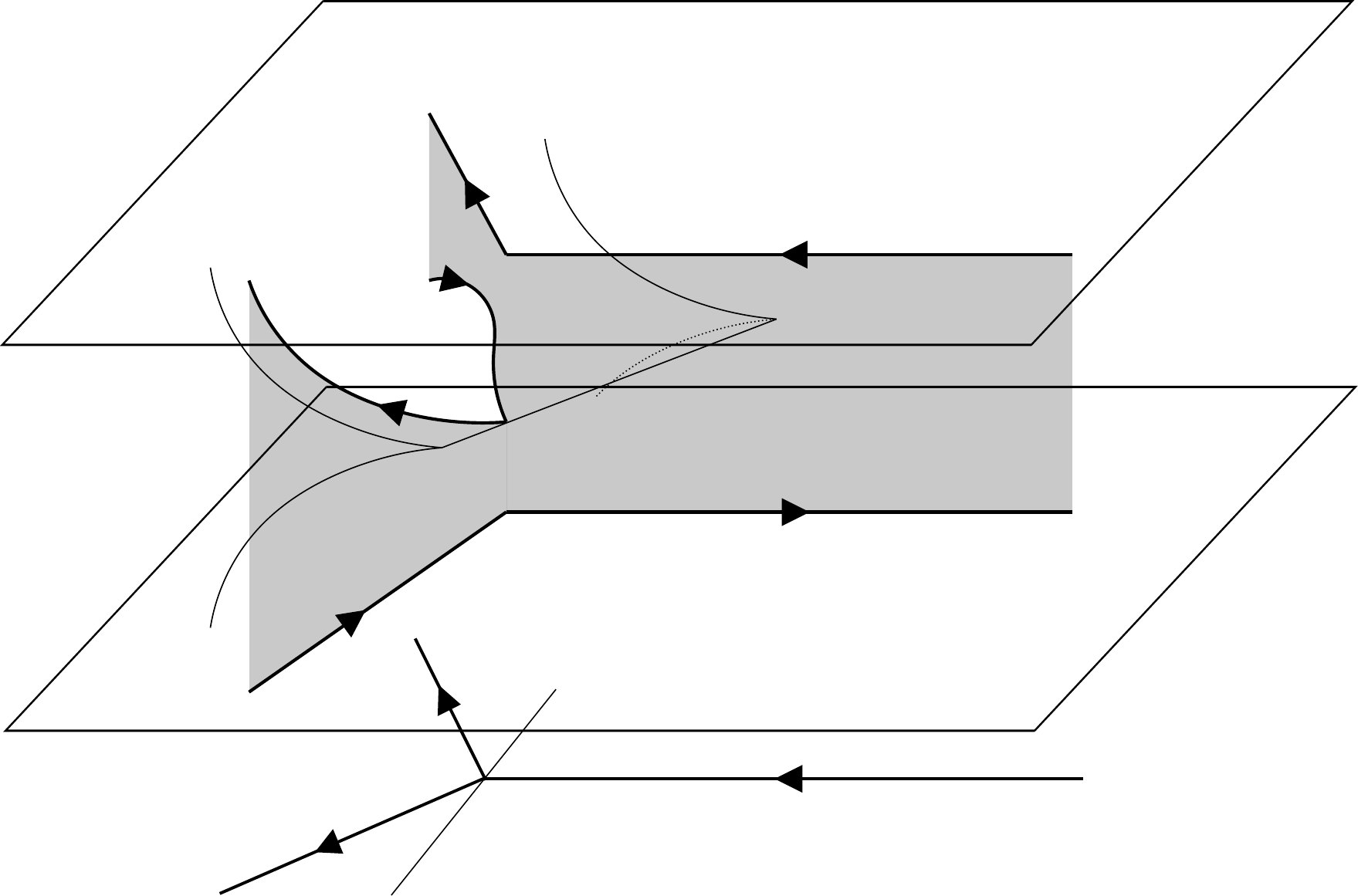}\\
   \hline
   \end{tabu}
   \end{center}
 \end{table}

\begin{defi}
 A vertex $v \in \Gamma$ is called a \emph{puncture} if the 1-jet lift
 of $\Gamma$ makes a jump at $v$. It is called a \emph{negative
   puncture} if we go from the upper sheet to the lower when following
 the lifted orientation of $\Gamma$,  and it is called a
 \emph{positive puncture} if we go from the lower sheet to the
 upper. 
\end{defi}

\begin{rmk}
We will only consider trees with exactly one positive puncture, and for such trees the positive puncture coincides with the root of the tree.
\end{rmk}

\begin{rmk}
When switching local defining functions along a flow line of $\Lambda$ as in \eqref{eq:patch} we do not introduce any vertex along the corresponding edge of $\Gamma$.
\end{rmk}

Next we fix some notations. Note that if $p$ is a puncture of a tree $\Gamma$, then it is a critical point of some local function difference  $f_1-f_2$, where we assume that $f_1>f_2$. Let $U \subset M$ be a maximal connected neighborhood of $p$ such that both $f_1$ and $f_2$ are defined on $U$. Denote by $W^u(p)$ and $W^s(p)$
the unstable and stable manifold, respectively, at $p$,  with respect to the negative gradient flow
of $f_1-f_2$. 
That is, if $\phi:U \times \R \to M$ is a solution of 
\begin{align*}
 &\frac{\partial}{\partial t}\phi(x,t)=-\nabla ((f_1-f_2)(\phi(x,t))), \quad \phi(x,0)=x,
\end{align*}
then 
\begin{align*}
 &W^u(p)=\{x\in U; \lim\limits_{t\to -\infty} \phi(x,t)=p\},  &W^s(p)=\{x\in U; \lim\limits_{t\to \infty} \phi(x,t)=p \}.
\end{align*}
 We define the \emph{index} $I(p)$ of a puncture $p$ to the be Morse index of $f_1-f_2$.
Because of our transversality assumptions we have 
\begin{align*}
 \dim(W^u(p)) = I (p), \quad \dim(W^s(p))= n-I (p).
\end{align*}

The dimension of a Morse flow tree is given as follows.
Let $e(\Gamma)$ be the number of end-vertices, $s(\Gamma)$ be the number of switches, and $Y_1(\Gamma)$ the number of $Y_1$-vertices of $\Gamma$.

\begin{defi}\label{def:dim}
If $\Gamma$ is a flow tree with positive puncture $a$ and negative punctures $b_1,\dotsc,b_m$, then the dimension of $\Gamma$ is given by 
\begin{equation*}
 \dim(\Gamma) = -2 + \dim W^u(a) + \sum_{j=1}^m (\dim W^s(b_j) - n + 1) + e(\Gamma) - s(\Gamma) - Y_1(\Gamma). 
\end{equation*}
\end{defi}

In this paper we will be interested in \emph{rigid flow trees}, i.e.\ trees of dimension 0 which are transversely cut out from the space of trees. 

In Legendrian contact homology, the \emph{grading} of a Reeb chord plays an important role. This is defined as follows. 
For each Reeb chord $c$ of $\Lambda$, let $c_+, c_- \in \Lambda \cap c$ denote the end points of $c$. We fix notation so that the $z$-coordinate of $c_+$ is larger than the $z$-coordinate of $c_-$. Pick a
 \emph{capping path} $\gamma_c$ of $c$, which is a smooth path in $\Lambda$  going from $c_+$ to
$c_-$, and we choose it so that it intersects $\Sigma$ transversely. Let  $D(\gamma_c)$  be the number of cusps that $\gamma_c$ traverses coming from the
upper sheet of the cusp  in direction to the lower sheet of the cusp, and let $U(\gamma_c)$ be the number
of cusps that $\gamma_c$ traverses coming from the lower sheet going to the upper
sheet (relative the $z$-axis).  We define the \emph{Maslov index} $\mu(\gamma_c)$ of $\gamma_c$ to be the quantity
\begin{equation*}
 \mu(\gamma_c) =  D(\gamma_c)-U(\gamma_c).
\end{equation*}
We also let $|\mu(c)| \in \{0,1\}$ denote the parity of the Maslov
index of $\gamma_c$,
and notice that it is independent of the chosen capping path. See \cite{legsub}. 

Now define the \emph{grading of $c$} to be given by 
\begin{equation}\label{eq:grading}
 |c|= \mu(\gamma_c) + I(c) -1.
\end{equation}

 \begin{defi}
  A Reeb chord $c$ is \emph{odd (even)} if $|c|$ is odd (even).
 \end{defi}

\begin{rmk}
Since we have a one-to-one correspondence between the Reeb chords of $\Lambda$ and the double points of $\Pi_\C(\Lambda)$, given by the projection, we will use these two notations interchangeably. Nor will we make any notational difference between these objects and punctures of Morse flow trees or punctures of pseudo-holomorphic disks. 
\end{rmk}

\subsection{Partial flow trees}\label{sec:parttree}
It will be useful to cut rigid flow trees into simpler parts, to obtain \emph{partial
flow trees}. These trees are defined in the same way as flow trees, except that we have
weakened condition \eqref{t4} in Definition \ref{def:tree}. Namely, we allow $1$-valent vertices where the Lagrangian lifts
of the tree do not match up. Such a vertex we call a \emph{special
puncture}. We denote the vertices that are not special punctures \emph{true vertices}.

We define the notion of \emph{positive} and \emph{negative special punctures} just
as in the case of ordinary punctures of a flow tree. That is, a
special puncture $p$ is positive if the oriented $1$-jet lift of
$\Gamma$ jumps from a lower sheet to a higher one at $p$, and
otherwise it is a negative puncture. We also define the Maslov index
of a special puncture just as it is done for ordinary punctures.

If $\Gamma$ is a true Morse flow tree, then the partial flow trees obtained by cutting
$\Gamma$ into smaller pieces are given the flow orientation from $\Gamma$. Thus, if we cut $\Gamma$ at only one point, then one of the partial flow trees will have the same positive puncture as $\Gamma$, and the other tree will have its (special) positive puncture at the cutting point.


We will  mainly be interested in a special kind of partial flow trees, which are  trees obtained by cutting a
rigid flow tree just once. These trees we denote \emph{sub flow trees}.
The dimension of a sub flow tree is computed similar to the case of Morse flow trees. Here $I(p)=n$ if $p$ is a special puncture.


Recall the fiber scaling  $s_\lambda:(x,y,z) \mapsto (x, \lambda y, \lambda z)$ as $\lambda \to 0$. In [\cite{trees}, Section 4.3] it is described how one can perturb $\Lambda$ so that there is a choice of metric of $M$ which is flat in neighborhoods of flow trees. In addition, in Section 4.4 of the same paper it is shown that this metric induces an almost complex structure such that we can identify $T^*M$ with $\C^n$ in neighborhoods of the cotangent lifts of the flow trees. Moreover, by these constructions we can find a partition of $\Gamma$ into \emph{elementary trees} and
\emph{edge point regions}. The elementary trees are characterized by the following. Along the cotangent lifts of the edges we have that $\Pi_\C(s_\lambda(\Lambda))$ tend to $\R^n \times ai \subset \C^n$ for some $a \in \R^n$ as $\lambda \to 0$, except in neighborhoods of cusp points, where $\Pi_\C(s_\lambda(\Lambda))$ is given by $\R^{n-1}$ in the directions tangent to $\Sigma$ times a bend in the remaining direction, as illustrated in Figure \ref{fig:bend}.
See Section \ref{sec:treetriv} for a more detailed explanation of this behavior. 

Moreover, we can assume
that the elementary trees have at most one true vertex and thus either
is of one of the forms drawn in Table \ref{tab:trees}, or are just
flow lines without true vertices. We classify them based on this data,
by denoting an elementary tree $\Gamma$ containing a true vertex of
type $v$ a \emph{v-piece}. 

The edge point regions are constructed to interpolate between elementary regions. They contain no true vertices and shrink to points under the degeneration process $\lambda \to 0$.

The sub flow trees together with the elementary trees will play an important role in the orientation algorithm. 

   \begin{figure}[ht]
      \labellist
\small\hair 2pt
\pinlabel $\times \R^{n-1}$ [Br] at 740 142
\endlabellist
 \vspace{-4cm}
 \hspace{-2.5cm}
\centering
 \includegraphics[width=5cm, height=7cm]{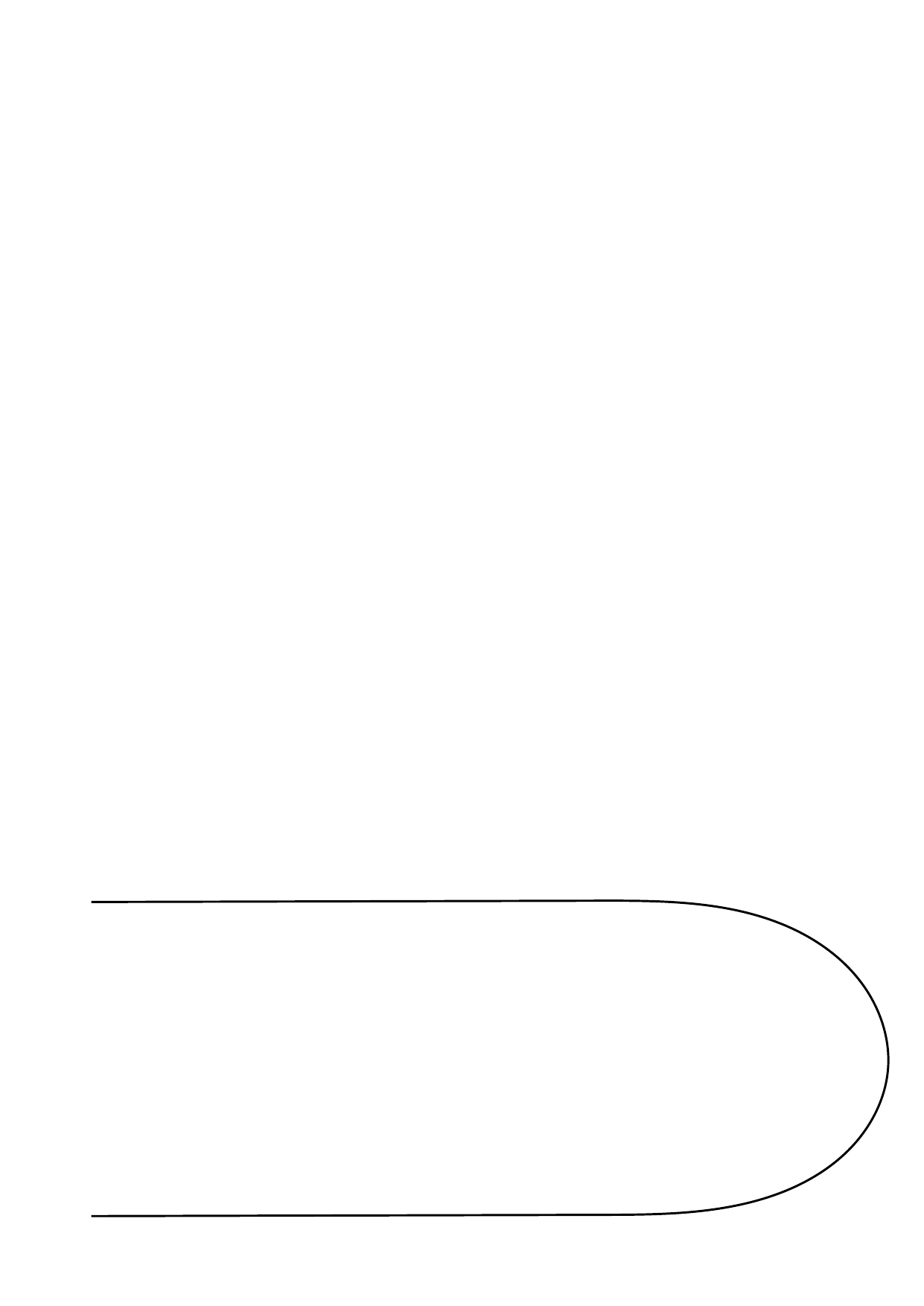} 
 \caption{Local picture in the Lagrangian projection close to an end- switch- or $Y_1$-vertex, consisting of a bended one-dimensional sheet $W$ times $\R^{n-1}$.}
 \label{fig:bend}
 \end{figure}

\subsection{Description of a neighborhood of a sub flow tree}\label{sec:geomint}
Given a sub flow tree $\Gamma$, let $\Omega=\Omega(\Gamma)$ be a
neighborhood of $\Gamma$ in the space of trees, and let $s$ be the point on the source tree mapping to the special puncture $q$ of
$\Gamma$. We define the \emph{natural evaluation map} $\ev_s:\Omega \to M$ to be given 
 by evaluation at $s$. In this section we describe the image of this map.

\begin{rmk}
To simplify notation we will often write  $\ev_q:\Omega \to M$ instead of $\ev_s$ when it is clear from the context that $q$ is the image of $s$ under evaluation.
\end{rmk}

\begin{defi}
Let $K \subset M$ be a subset and $f_1-f_2$ a local function difference associated to $\Lambda$. Then the \emph{flow-out of K in the direction of $\pm\nabla(f_1-f_2)$} is defined as a subset $\F^\pm(K) \subset K \times [0,\infty)$ together with a map $i^\pm: \F^\pm(K) \to M$ where 
\begin{align*}
\F^\pm(K)=\{(p,t) \in K \times [0,\infty) & | \text{ there is a maximal flow line } \gamma_p:I \to M   \text{ of } \Lambda\\ &  
\text{ such that } I\cap[0,t]=[0,t], \gamma_p(0)=p, \\
& \text{ and there is a } B>0 \text{ such that } \dot \gamma_p|_{[0,B]} =\pm \nabla(f_1-f_2)\}
\end{align*}
and where 
\begin{equation}\label{eq:inclusion}
i^\pm: \F^\pm(K) \to M, \qquad (p,t) \mapsto \gamma_p(t).
\end{equation}
\end{defi}

\subsubsection{Flow outs of some particular sub flow trees}\label{sec:flowouts}
Now assume that $\Gamma' \subset \Gamma$ is a sub flow tree of a rigid tree $\Gamma$. Assume that the special puncture of $\Gamma'$ is given by $q$, and let 
 $e$ be the edge of $\Gamma'$ ending at $q$ and let $p$ denote the other vertex of $e$. Let $f_1-f_2$ be the local function differece of $\Lambda$ defining $e$ in a neighborhood of $p$. We  will define something called the \emph{flow-out of $\Gamma'$ at $q$}, denoted by $\F_q(\Gamma')$.  In the case when $p$ is a $2$- or $3$-valent vertex, then this is the flow-out of the corresponding \emph{intersection manifold $\I_p(\Gamma')$} along $e$. All this is defined inductively  below, for each type of vertex $p$.  In all cases we let $D_p^k$ be a $k$-dimensional open disk centered at $p$ and contained in the connected subset of $M$ where $f_1-f_2$ is defined. We also let $S_p \subset M$ be a sphere centered at $p$ with radius so small that it intersects $e$ in precisely one point $s$, located between $p$ and $q$.  
 
 First we assume that $e$ is a local flow line of $-\nabla{(f_1-f_2)}$ and define the flow-outs and intersection manifolds as follows.

 \begin{description}
 \item[ \it $p$  positive $1$-valent puncture] \hfill \\
  Let $\I_p(\Gamma')$ be the connected component of $S_p \cap W^u(p)$ that intersects $e$, and let $\F_q(\Gamma')$ be the flow-out of $\I_p$ in the direction of $-\nabla{(f_1-f_2)}$. 
\item [\it $p$  negative $1$-valent puncture]\hfill \\
 Let $\I_p(\Gamma')$ be the connected component of $S_p \cap W^s(p)$ that intersects $e$, and let $\F_q(\Gamma')$ be the flow-out of $\I_p$ in the direction of $\nabla{(f_1-f_2)}$. 
 \item[\it $p$ an end]\hfill \\
  Choose $\I_p(\Gamma')=D_p^{n-1} \subset \Pi(\Sigma)$ and let $\F_q(\Gamma')$ be the flow-out of $\I_p$ in the direction of $\nabla{(f_1-f_2)}$. 
  \item[\it $p$ a switch]\hfill \\
Let $r$ be a point on the edge of $\Gamma'$ containing $p$ and not containing $q$, let $p_0\neq p$ be a true vertex of $\Gamma'$ on that edge and let $\Gamma'_1$ be the sub flow tree of $\Gamma$ with $r$ as a special positive (negative) puncture if $q$ is positive (negative). Assume, by induction, that $r$ is chosen so that $\F_r(\Gamma_1')$ is defined and let $t_0>0$ so that $(p_0,t_0)\in \F_r(\Gamma'_1)$ satisfies $i(p_0,t_0) = p$, where $i:\F_r(\Gamma'_1) \to M$ is the map in \eqref{eq:inclusion}. Let $D(p_0,t_0)\subset \F_r(\Gamma'_1)$ be a neighborhood of $(p_0,t_0)$ and let  $D_p^{n-1} \subset \Pi(\Sigma)$ so that $\I_p(\Gamma')=i(D(p_0,t_0)) \cap D_p^{n-1}$ is a disk. We define $\F_q(\Gamma')$ to be the flow-out of $\I_p(\Gamma')$ in the direction of $\nabla{(f_1-f_2)}$ ($-\nabla{(f_1-f_2)}$). 
  \item[\it$p$  $2$-valent negative puncture, $q$ is positive] \hfill \\
  Define $\F_q(\Gamma')$ to be the flow-out of $p$ in the direction of $\nabla{(f_1-f_2)}$. The intersection manifold $\I_p(\Gamma')$ equals $p$. 
 \item[\it$p$ is a $Y_0$-vertex, $q$ is positive]\hfill \\
Let $e_1,e_2$ be the other edges of $\Gamma$ containing $p$, let $r_1$ and $r_2$ be points on $e_1$ and $e_2$, respectively, and let $\Gamma'_j$, $j=1,2$, be the sub flow tree of $\Gamma$ with $r_j$ as special positive puncture. Assume, by induction, that  $r_j$ is chosen so that $\F_{r_j}(\Gamma_j')$ is defined for $j=1,2$, and let $p_j \neq r_j$ be the true vertex on $e_j\cap \Gamma_j'$. Let $t_1,t_2>0$ so that $(p_j,t_j)\in \F_{r_j}(\Gamma'_j)$ satisfies $i_j(p_j,t_j) = p$, $j=1,2$, where $i_j:\F_{r_j}(\Gamma'_j) \to M$ is the map from \eqref{eq:inclusion}. Let $D(p_j,t_j)\subset \F_{r_j}(\Gamma'_j)$ be a neighborhood of $(p_j,t_j)$, $j=1,2$, and define $\I_p(\Gamma')=i_1(D(p_1,t_1)) \cap i_2(D(p_2,t_2))$.
Let $D_p^{\dim \I_p(\Gamma')} \subset  \I_p(\Gamma')$.  We define the flow-out $\F_q(\Gamma')$ to be given by the flow-out of $D_p^{\dim \I_p(\Gamma')}$  in the direction of $\nabla{(f_1-f_2)}$.      
 \item[\it $p$ is a $Y_1$-vertex, $q$ is positive]\hfill \\
 Choose $D_p^{n-1} \subset \Pi(\Sigma)$ and define $\I_p(\Gamma')=i_1(D(p_1,t_1)) \cap i_2(D(p_2,t_2))\cap D_p^{n-1}$, where $i_1(D(p_1,t_1)),i_2(D(p_2,t_2))$ are defined analogous as in the case of $Y_0$-vertices. We define the flow-out $\F_q(\Gamma')$ to be given by the flow-out of $\I_p(\Gamma')$ in the direction of $\nabla{(f_1-f_2)}$. 
\end{description}

\noindent
Next we consider the general case when the edge $e$ above is allowed to consist of a general flow line of $\Lambda$. We assume that $q$ is not a self intersection point of $e$, and let $r$ be a point on $e$ so that $\F_r(\Gamma_1')$ is defined using the description above, where $\Gamma_1'$ is the sub flow tree of $\Gamma'$ having $r$ as special puncture. Let $t_0>0$ so that $(p,t_0)\in \F_r(\Gamma_1')$ satisfies $i(p,t_0)=q$, where $i : \F_r(\Gamma_1') \to M$ is the map in \eqref{eq:inclusion}, and let $D(p,t_0) \subset \F_r(\Gamma_1')$ be a neighborhood of $(p,t_0)$.

\begin{defi}\label{def:tangents}
We let $\F_q(\Gamma') = \F_r(\Gamma_1')$, $i_q(\F_q(\Gamma'))=i(D(p,t_0))$ and we define $T_q \F_q(\Gamma')$ to be the  tangent space of $i_q(\F_q(\Gamma'))$ at $q$.
\end{defi}

\begin{rmk}
 It might seem that the definitions of flow outs and intersection manifolds for 2--valent vertices made above are incomplete, but they are in fact sufficient for our algorithm. 
 \end{rmk}

\begin{rmk}\label{rmk:stablemfd}
If $p$ and $q$ belong to the same local flow line it follows that $T_q\F_q(\Gamma') \simeq T_q W^u(p)$ in the case $p$ is a positive 1-valent puncture and $T_q\F_q(\Gamma') \simeq T_q W^s(p)$ in the case $p$ is a negative 1-valent puncture .
\end{rmk}

\begin{figure}[ht]
\labellist
\small\hair 2pt
\pinlabel $\F_q(\Gamma)$ [Br] at 386 -30
\pinlabel $p$ [Br] at  653 273 
\pinlabel $\F_q(\Gamma)\cap D_q$ [Br] at 1398 -10
\pinlabel $p$ [Br] at  1586 273 
\endlabellist
\centering
\includegraphics[height=2.5cm]{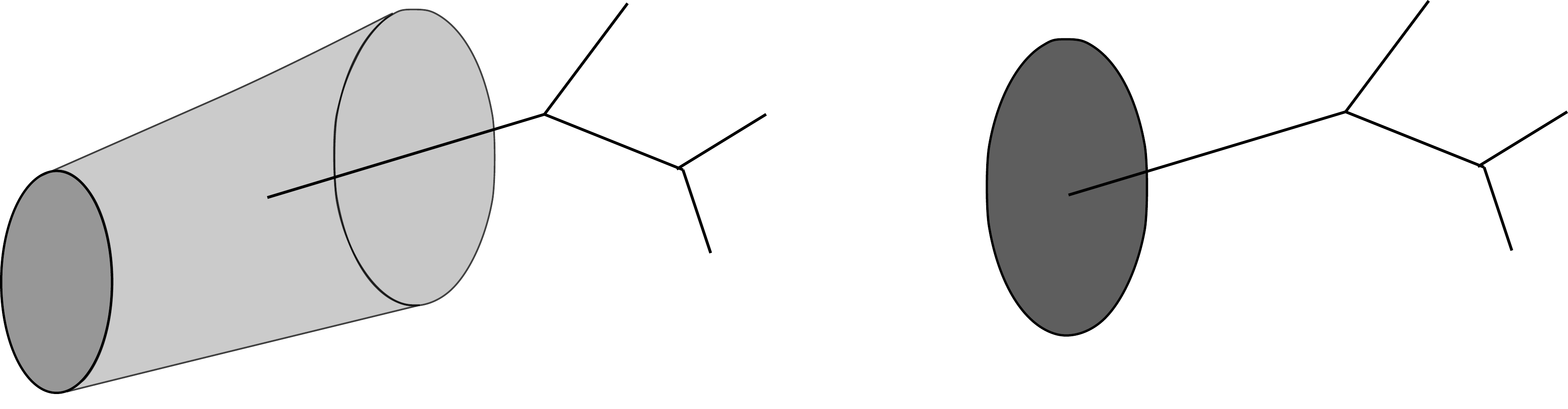}
\caption{The flow-out along $\Gamma$.}
\label{fig:ws}
\end{figure}


\begin{lma}\label{lma:nbhd}
Let $\Gamma$ be a sub flow tree with special puncture $q$. Then there
is a neighborhood 
$\Omega$ of $\Gamma$ in the space of flow
 trees of $\Lambda$, and an $\epsilon>0$ so that  
 \begin{equation*}
  \ev_q(\Omega)=(i_q(\F_q(\Gamma))\cap D_q)\times(-\epsilon,\epsilon).
 \end{equation*}
 Here
$(-\epsilon,\epsilon)$ represents the degree of freedom of $q$ to move along the flow
line itself.
\end{lma}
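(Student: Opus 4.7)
The plan is to induct on the number of true vertices of $\Gamma$, matching the inductive definition of $\F_q(\Gamma)$ and $\I_p(\Gamma)$ in Table \ref{table:flowouts}. Let $e$ be the edge of $\Gamma$ ending at $q$, and let $p$ be its other endpoint. Any deformation of $\Gamma$ in $\Omega$ moves $p$ to a nearby vertex $p'$ (subject to the constraints coming from the type of $p$), and then displaces $q$ to a point $q'$ obtained by flowing from $p'$ along the local function difference defining $e$. Hence $\ev_q(\Omega)$ is the flow-out along $e$ of the admissible locus of $p'$; the proof will amount to identifying this admissible locus with $\I_p(\Gamma) \cap D_p$ (or with $\{p\}$ in the degenerate cases), while the remaining factor $(-\epsilon,\epsilon)$ is the time-of-flight freedom of $q$ along $e$.

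For the base case, $\Gamma$ consists of a single edge with a $1$-valent vertex $p$. If $p$ is a positive puncture, then $p$ is pinned at a critical point of the local function difference $f_1-f_2$, and every nearby tree is obtained by replacing $e$ with a nearby flow line of $-\nabla(f_1-f_2)$ emanating from $p$; the union of these flow lines fills $W^u(p)$ near $e$, so together with the time-of-flight freedom we obtain $(W^u(p) \cap D_q) \times (-\epsilon,\epsilon)$, matching $\F_q(\Gamma)$. The negative puncture case is symmetric, yielding $W^s(p)$. If $p$ is an end vertex, the cusp condition constrains $p$ to $\Pi(\Sigma)$, which is codimension one, and the $(n-1)$-dimensional freedom to slide $p$ along $\Pi(\Sigma)$ combined with the flow along $e$ sweeps out a full open neighborhood of $q$, which coincides with the flow-out of any transverse disk $D_p$.

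For the inductive step I would split $\Gamma$ at $p$ into one or two strictly smaller sub flow trees $\Gamma_1$ (and $\Gamma_2$ if $p$ is $3$-valent), each with a special puncture at $p$, and apply the inductive hypothesis to each. A deformation of $\Gamma$ then corresponds to compatible deformations of the $\Gamma_i$ whose special-puncture evaluations all agree at a common point $p'$, together with the vertex-type constraint that $p'$ lies on $\Pi(\Sigma)$ for switches and $Y_1$-vertices, or is a critical point for $2$-valent punctures (in the latter case $p'=p$, so the freedom of $q$ collapses to the $(-\epsilon,\epsilon)$-factor along $e$). The induction gives $\ev_p(\Omega(\Gamma_i)) = (\F_p(\Gamma_i) \cap D_p) \times (-\epsilon_i,\epsilon_i)$, and intersecting these (with $\Pi(\Sigma)$ where required) and projecting to $D_p$ produces exactly $\I_p(\Gamma) \cap D_p$. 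Flowing this set along $e$ and composing with the time-of-flight freedom yields the claimed $(\F_q(\Gamma) \cap D_q) \times (-\epsilon,\epsilon)$.

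The main obstacle is transversality: one must know that the intersections in Table \ref{table:flowouts} are transverse and cut out $\I_p(\Gamma)$ as a smooth submanifold of the expected dimension, both for the geometric intersection of flow-outs and for the further intersection with $\Pi(\Sigma)$ at cusp-type vertices. This is precisely the content of the assumption that $\Gamma$ is a rigid, transversely cut-out flow tree of $\Lambda$. A secondary point is smoothness of the identification between deformations of $p'$ and of $q'$, which reduces to smoothness of the local flow of $-\nabla(f_1-f_2)$ along $e$; this is routine once we use the flat metric and covariantly-constant sheet data constructed in Section 4.3 of \cite{trees}.
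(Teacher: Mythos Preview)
Your inductive argument is correct and is essentially a reconstruction of the proof the paper cites: the paper's own proof consists of the single line ``Follows from the proof of [\cite{trees}, Proposition 3.14]'', and that proposition in \cite{trees} establishes exactly the dimension and local structure of the space of (partial) flow trees by the same induction on the tree structure that you outline, matching the recursive definition of $\F_q(\Gamma)$ and $\I_p(\Gamma)$.

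One small clarification: when you invoke transversality at the end, the justification is not that $\Gamma$ itself is rigid (a sub flow tree typically has positive dimension), but rather that the metric and Legendrian have been perturbed to general position as in \cite[Section~3]{trees}, which guarantees that all the intersections $\F_p(\Gamma_1)\cap\F_p(\Gamma_2)$, and their further intersections with $\Pi(\Sigma)$, are transverse for \emph{every} sub flow tree appearing in the induction. With that adjustment your argument is complete.
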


\begin{proof}
Follows from the proof of [\cite{trees}, Proposition 3.14].
\end{proof}

\begin{rmk}
When calculating the sign of a rigid flow tree $\Gamma$, we will first orient the sub flow trees $\Gamma'$ of $\Gamma$. This, in turn, is done by orientating the tangent space of the flow out of the tree $\Gamma'$ at its special puncture $q$.
\end{rmk}

\subsection{Standard domains}\label{sec:slit}
Recall the correspondence between rigid flow trees and  rigid
$J$-holomorphic disks with boundary on $\Lambda$, established in \cite{trees}. The domains of the disks are given by standard domains, which are defined as follows.
\begin{defi}
A \emph{standard domain} $\Delta_{m+1}(\bar{\tau})$, $\bar \tau = (\tau_1,\dotsc,\tau_{m-1})$, is a subset of $\R \times [0,m]\subset \R^2$ obtained by removing $m-1$ horizontal slits starting at $(\tau_j,j)$, $j=1,\dotsc,m-1$, and ending at $\infty$. All slits look the same, they are strips ending in a half-circle of width $\epsilon$, $0< \epsilon <<1$. A point $(\tau_j,j)$ where a slit ends will be called a \emph{boundary minimum}.  See Figure \ref{fig:1}. 
\end{defi}

\begin{figure}[ht]
\labellist
\small\hair 2pt
\pinlabel $p_0$ [Br] at 45 112
\pinlabel $p_1$ [Br] at 752 8
\pinlabel $p_2$ [Br] at 752 57
\pinlabel $p_3$ [Br] at 752 106
\pinlabel $p_4$ [Br] at 752 156
\pinlabel $p_5$ [Br] at 752 203
\pinlabel $(\tau_1,1)$ [Br] at  462 29 
\pinlabel $(\tau_2,2)$ [Br] at  367 78 
\pinlabel $(\tau_3,3)$ [Br] at  508 128 
\pinlabel $(\tau_4,4)$ [Br] at  430 175 
\endlabellist
\centering
\includegraphics[height=3.5cm]{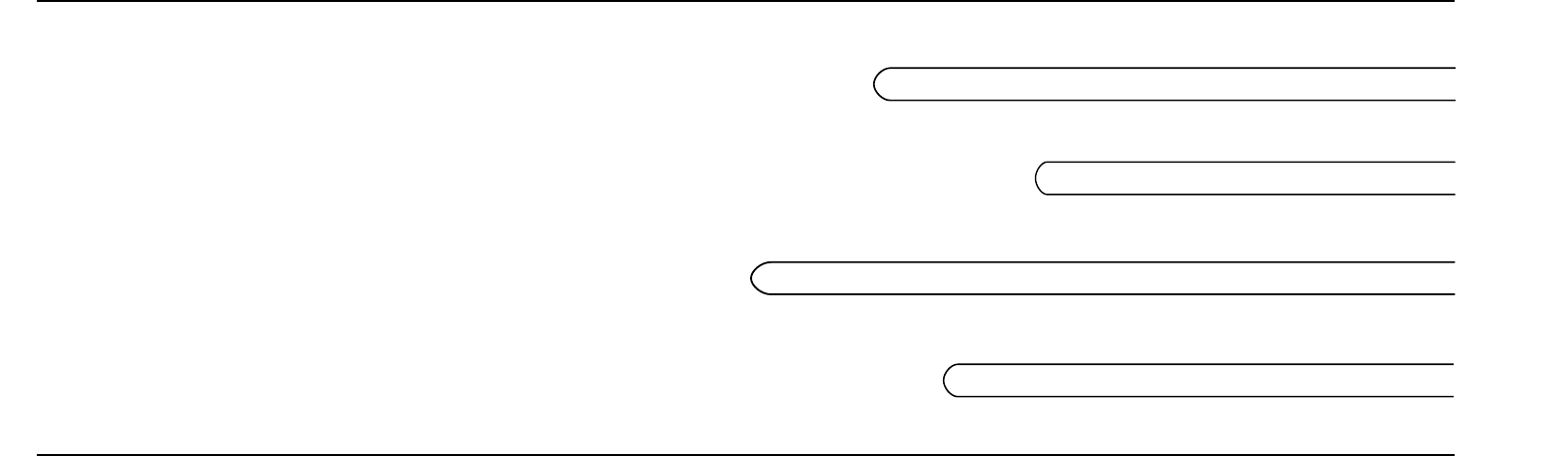}
\caption{A standard domain $\Delta_{m+1}$, $m = 5$.}
\label{fig:1}
\end{figure}

Given a (partial) flow tree $\Gamma$, we can associate a standard domain $\Delta(\Gamma)$ to $\Gamma$ by thinking of the boundary of the standard domain as being mapped to the $1$-jet lift of $\Gamma$. 
More precisely, start at the positive puncture of $\Gamma$. This represents the point $p_0$ at $-\infty$ of the corresponding domain. Go along the $1$-jet lift of $\Gamma$, in the direction of the lifted flow-orientation. This path corresponds to the bottom line of $\Delta(\Gamma)$, connecting $p_0$ to $p_1$, following the notation in Figure \ref{fig:1}. At the end of this flow line we find either a negative puncture or an end-vertex, which we represent by the point $p_1$ at the positive infinity. Jump to the line of $\Delta(\Gamma)$ over $p_1$, which represents the outgoing lifted flow line at $p_1$. Follow this path. 
To continue this description, it helps to think of the standard domain
as patched together by the (compactified) standard domains
corresponding to the elementary trees that the rigid tree is built out
of. These standard domains are given as follows, were we recall that
the elementary trees are given by the partial flow trees listed in
Table \ref{tab:trees} together with partial flow trees without true
punctures.

\begin{description}
 \item[\it Positive 1-valent puncture] A standard domain $\Delta_2$ with the point at $-\infty$ representing the positive puncture.
  \item[\it Negative 1-valent puncture] A standard domain $\Delta_2$ with the point at $+\infty$ representing the negative puncture.
  \item[\it Positive 2-valent puncture] A standard domain $\Delta_3$ with the point at $-\infty$ representing the positive puncture.
  \item[\it Negative 2-valent puncture] A standard domain $\Delta_3$ with one of the points at $+\infty$ representing the negative puncture. If the negative 2-valent puncture corresponds to $p_1$, then we say that the puncture is of \emph{Type 1}, and if it corresponds to $p_2$ we say it is of \emph{Type 2}. See Table \ref{tab:two}.
  \item[\it$Y_0$-vertex] A standard domain $\Delta_3$ with the boundary minimum representing the $Y_0$-vertex. 
    \item[\it End-vertex] A standard domain $\Delta_2$ with the point at $+\infty$ representing the end-vertex.
    \item[\it Switch] A standard domain $\Delta_2$ with a point on $\partial \Delta_2$ representing the point where the $1$-jet lift of the switch passes through $\Sigma$.
  \item[\it $Y_1$-vertex] A standard domain $\Delta_3$ with the
    boundary minimum representing the $Y_1$-vertex.
      \item[\it No true puncture] A standard domain $\Delta_2$.
\end{description}
Now, if we continue to follow the flow orientation of the $1$-jet lift of $\Gamma$, we get a recipe for how to glue these elementary standard domains together to obtain $\Delta(\Gamma)$.

   \begin{table}[ht] \caption{{\bf Different types of negative 2-valent punctures.} }\label{tab:two}
 \begin{center}
 \tabulinesep=1.15mm
  \begin{tabu}{>{\centering\arraybackslash}m{1.2in} | >{\centering\arraybackslash}m{2.2in}|>{\centering\arraybackslash}m{2.2in}}
   \hline
    Type &
  Front projection and tree &   Lagrangian projection and standard domain\\
 \hline
 negative, type 1 &
     \labellist
\small\hair 2pt
\pinlabel $1$ [Br] at 208 245
\pinlabel $2$ [Br] at  208 190 
\pinlabel $3$ [Br] at 208 130
\pinlabel $p_1$ [Br] at 125 40
\pinlabel $a$ [Br] at 15 8
\pinlabel $p_2$ [Br] at 225 8
\endlabellist
\centering
\includegraphics[height=3cm]{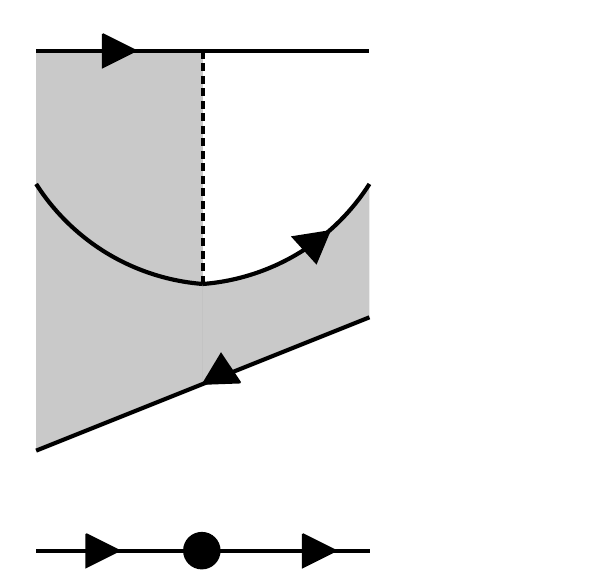} 
&
      \labellist
\small\hair 2pt
\pinlabel $1$ [Br] at 208 160
\pinlabel $2$ [Br] at  208 195 
\pinlabel $3$ [Br] at 208 235
\pinlabel $p_2$ [Br] at 218 53
\pinlabel $a$ [Br] at 30 30
\pinlabel $p_1$ [Br] at 218 10
\endlabellist
\centering
\includegraphics[height=3cm]{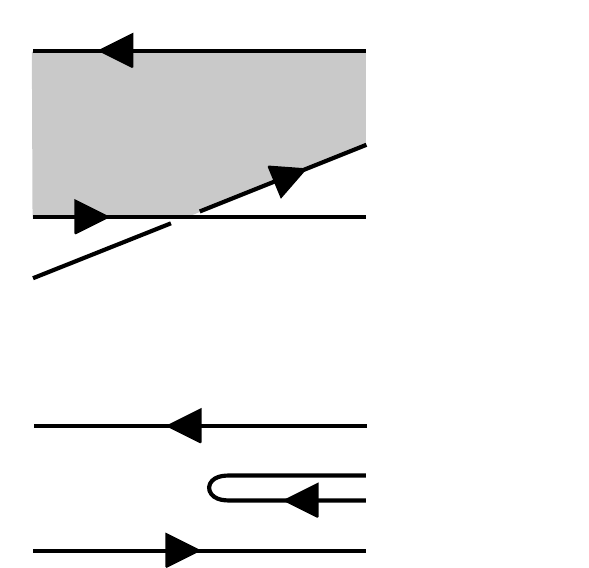} \\
\hline
 negative, type 2 & 
      \labellist
\small\hair 2pt
\pinlabel $1$ [Br] at 208 190
\pinlabel $2$ [Br] at  208 120 
\pinlabel $3$ [Br] at 208 70
\pinlabel $p_1$ [Br] at 225 8
\pinlabel $a$ [Br] at 15 8
\pinlabel $p_2$ [Br] at 125 40
\endlabellist
\centering
\includegraphics[height=3cm]{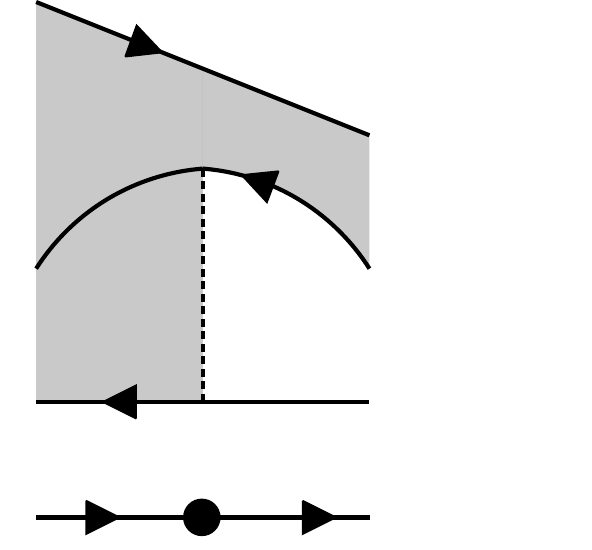} &
      \labellist
\small\hair 2pt
\pinlabel $1$ [Br] at 208 130
\pinlabel $2$ [Br] at  208 175 
\pinlabel $3$ [Br] at 208 210
\pinlabel $p_2$ [Br] at 218 53
\pinlabel $a$ [Br] at 30 30
\pinlabel $p_1$ [Br] at 218 10
\endlabellist
\centering
\includegraphics[height=3cm]{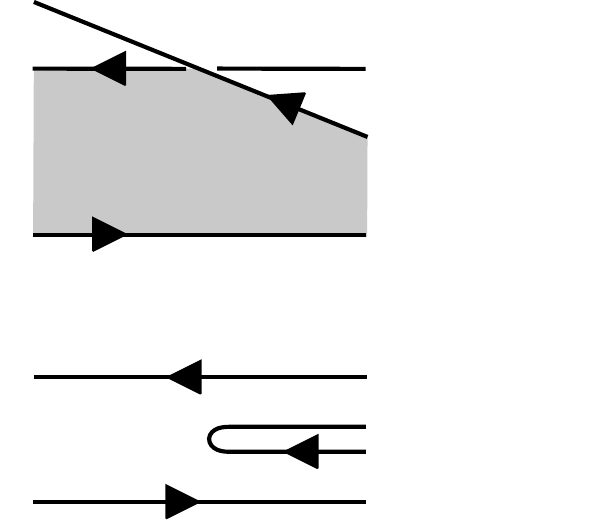} \\
   \hline
   \end{tabu}
   \end{center}

 \end{table}

\begin{rmk}
 Given a flow tree $\Gamma$, we can not read off how the standard domain $\Delta(\Gamma)$ looks like from the immersion of $\Gamma$ in $M$, we also need  data from the $1$-jet lift of $\Gamma$.
\end{rmk}

Besides working with standard domains, we will sometimes consider the domain of a pseudo-holomorphic disk to be given by the punctured unit disk in $\C$. To that end,
let $D_{m+1}$ be the unit disk in $\C$ with $m+1$ marked points on the boundary, where one of them is distinguished and fixed at $1$, say. Assume that $m>1$, and let  $\Co_{m+1}$ denote the space of conformal structures on $D_{m+1}$. A standard domain $\Delta_{m+1}(\bar{\tau})$ induces a conformal structure $\kappa(\bar{\tau})$ on $D_{m+1}$ via the Riemann mapping theorem. Here $\Delta_{m+1}(\bar{\tau})$ is given the flat metric. Moreover, since translations are biholomorphisms we get that $\kappa(\bar{\tau}) = \kappa(t+ \bar \tau)$ for all $t \in \R$, where $t+\bar\tau= (t+\tau_1,\dotsc,t+\tau_{m-1})$. The action $t + \bar\tau$ can be seen as $\R$ acting on $\R^{m-1}$, with  orbit space given by $\R^{m-2}$, and hence the mapping $\bar \tau  \mapsto \kappa(\bar \tau)$ gives an identification of $\Co_{m+1}$ with $\R^{m-2}$ via standard domains. In [\cite{trees}, Lemma 2.2] it is proven that this identification is a diffeomorphism. 
 We give the space $\Co_{m+1}$ the norm induced from the standard norm on $\R^{m-2}$ via this diffeomorphism.
 
 In Section \ref{sec:confvaror} we discuss the orientation of the
 space $\Co_{m+1}$, and the capping orientation of $\Gamma$ will depend on the orientation of this space.


%
%
 \section{Orientation of pseudo-holomorphic disks}\label{sec:background}
Here we review some background materials concerning orientations of pseudo-\linebreak
holomorphic disks. We follow  \cite{orientbok}, except that we will choose slightly different orientation conventions.

In Subsection \ref{sec:jholo} we give a definition of pseudo-holomorphic disks with boundary on $\Lambda$, and also discuss Sobolev spaces associated to the linearization of the $\dbar$-operator at such a disk. In Subsection \ref{sec:orientconv} we fix some orientation conventions. This is related to Subsection \ref{sec:gluew}, where we describe some exact sequences of finite-dimensional vector spaces, which occurs when we glue pseudo-holomorphic disks together at punctures. We then have the materials needed to define the capping orientation of a pseudo-holomorphic disk, which is done in Subsection \ref{sec:nonpunct}.

\subsection{\texorpdfstring{$J$}{J}-holomorphic disks}\label{sec:jholo}
Here we define punctured $J$-holomorphic disks with \linebreak  boundary on $\Lambda$. Then we discuss the Lagrangian boundary conditions that these disks induce, and define the linearization of the $\dbar$-operator  at such a disk. In particular, we specify certain weighted Sobolev spaces that will make this operator Fredholm. We start by describing the almost complex structure $J$.

\subsubsection{Complex structure}\label{sec:complex} 
In \cite{trees} it is proven that after a generic perturbation of $\Lambda$ and a  $C^1$-change of metric on $M$, there is a neighborhood of the rigid trees of $\Lambda$ where the metric is flat. Moreover, the almost complex structure on $T^*M$ induced by this metric can in local coordinates $U \subset M$ be identified with $i$ in $T^*U$.

Hereafter, we will let $J$ denote such an almost complex structure.

\subsubsection{Punctured \texorpdfstring{$J$}{J}-holomorphic disks and Lagrangian boundary conditions}\label{sec:jholodisks}
Let $\Lambda \subset J^1(M)$ be a Legendrian submanifold, and let $J$ be an almost complex structure on $T^*M$ as described in Section \ref{sec:complex}. Let $D_{m+1}$ be the unit disk in $\C$ with $m+1$ distinct punctures
 $q_0,q_1,\dotsc,q_m$ on $\partial D_{m+1}$, where $q_0=1$  and where the punctures are ordered
counterclockwise. Let $I_j$, $j=0,\dotsc,m$, be the part of $\partial D_{m+1}$ going
from $p_j$ to $p_{j+1}$ $(m+1=0)$. 
We think of the disk as having half-infinite
strips 
\begin{equation*}
 E_{p_j}=\{(\tau,t)\in [0,\infty)\times[0,1]\}, \quad (\tau,t)=\tau+it,
\end{equation*}
attached at each
puncture $p_j$, and assume that the coordinates are chosen so that 
$\tau+0i$ belongs to $I_{j-1}$ and $\tau+i$ belongs to $I_j$.

Let $a, b_1,\dotsc,b_m$ be Reeb chords of $\Lambda$, or equivalently, double points of $\Pi_\C(\Lambda)$. Let $c^\pm$ denote the end points of $c$, $c=a,b_1,\dotsc,b_m$, where $c$ is regarded as a Reeb chord. Here $c^+$ represents the point with the largest $z$-coordinate.

\begin{defi}
 A  \emph{$J$-holomorphic
disk of $\Lambda$} with positive puncture $a$ and negative
punctures $b_1,\dotsc,b_m$ is given by a map $u :D_{m+1}\to T^*M$ such that
\begin{itemize}
 \item $\dbar_J u := du + J \circ du \circ i =0$,
  \item $u|_{\partial D_{m+1}}$ admits a continuous lift
$\tilde{u}:\partial D_{m+1} \to \Lambda \subset J^1(M)$,
\item $\lim\limits_{\zeta \to p_0} u(\zeta)=a$, $\lim\limits_{\tau \to
\infty} \tilde{u}(\tau+0)=a^-$,  $\lim\limits_{\tau \to
\infty} \tilde{u}(\tau+i)=a^+$ in $E_{p_0}$,
\item $\lim\limits_{\zeta \to p_j} u(\zeta)=b_j$, $\lim\limits_{\tau \to
\infty} \tilde{u}(\tau+0)=b_j^+$,  $\lim\limits_{\tau \to
\infty} \tilde{u}(\tau+i)=b_j^-$ in $E_{p_j}$.
\end{itemize}
\end{defi}

Any  $J$-holomorphic disk (and Morse flow tree) of $\Lambda$ will induce a Lagrangian boundary condition for the corresponding linearized $\dbar$-operator. We fix some notation.

\begin{defi}
 A \emph{Lagrangian boundary condition} on $\partial D_{m+1}$ is a family of
maps 
\begin{equation*}
 \Theta=(\Theta_0, \Theta_1,\dotsc,\Theta_m), \qquad \Theta_j:I_j\to \Lag(n)
\end{equation*}
where $\Lag(n)$ is the space of Lagrangian subspaces of $\C^n$. 
\end{defi}

\begin{defi}
A \emph{trivialized Lagrangian boundary condition} for the $\dbar$-operator on $D_{m+1}$
is a collection of maps 
\begin{equation*}
 A=\{A_0,\dotsc,A_m\}, \qquad  A_j:I_j\to U(n).
\end{equation*}
\end{defi}

 \begin{rmk}
 A trivialized Lagrangian boundary condition $A$ induces a
 Lagrangian boundary condition $\Theta$ by
 $\Theta_i=A_i\R^n\subset\C^n$.  
 Also, given a Lagrangian boundary condition on the punctured disk we can always find
 a trivialization for it. This need not be the case for the non-punctured disk. In addition, given a \emph{trivialization} $(v_1(s), v_2(s), \cdots , v_n(s))\in \C^n$, $s \in \partial D_{m+1}$ of a Lagrangian boundary condition we will identify it with the trivialized Lagrangian boundary condition $A: \partial D_{m+1} \to U(n)$ given by the  matrices with column vectors  $(v_1(s), v_2(s), \cdots , v_n(s)), s \in \partial D_{m+1}$.
 \end{rmk}
 
If $u$ is a $J$-holomorphic disk with boundary on $\Lambda$, then the
tangent planes of the Lagrangian projection of $\Lambda$ along the
boundary of $u$ give a Lagrangian boundary condition defined on
$\partial D_{m+1}$. This induces a trivialized boundary condition $A_u$, after
some choices. We will use the spin structure of $\Lambda$ to perform
these choices in a controlled way. Similarly, the Lagrangian
projection of the 1-jet lift of a Morse flow tree $\Gamma$ gives also
a Lagrangian boundary condition, which together with the spin
structure induces a trivialized boundary condition on
$\partial\Delta(\Gamma)$. See Sections \ref{sec:triv} and \ref{sec:treetriv}, respectively. 


\begin{rmk}
Since we will use the fiber scaling  $s_\lambda:(x,y,z) \mapsto (x, \lambda y, \lambda z)$ which pushes $\Lambda$ to the zero section in $J^1(M)$, and in this way find sequences of disks converging to trees, we will have a $\lambda$-dependence on $\Lambda$. This we often suppress from the notation, but sometimes we indicate it by writing $ \Lambda_\lambda=s_\lambda(\Lambda)$. Thus, one should keep in mind that when we say that $u$ is a $J$-holomorphic disk of $\Lambda$, we mean that $u$ is a $J$-holomorphic disk of $\Lambda_\lambda$ for some $\lambda>0$.
\end{rmk}

\subsubsection{Weighted Sobolev spaces and Fredholm operators}\label{sec:sobolev}
Let $u: D_{m+1} \to T^*M$ be a $J$-holomorphic disk with boundary on $\Lambda$. We will associate weighted Sobolev spaces $\Hi_{k,\mu}[u]$ to $u$, $k=1,2$, and also a $\dbar$-operator $\dbar_u$ defined between these spaces. This $\dbar$-operator should be thought of as the linearization of $\dbar$ at $u$.

Let $\Hi_k(D_{m+1}, \C^n)$ be the closure of $C^\infty_0(D_{m+1},\C^n)$, with norm $\|f\|_k$  given by the standard Sobolev $\Hi_k$-norm. We will weight these spaces, as follows. 

For each puncture $q_i \in D_{m+1}$ we define a \emph{weight vector} 
\begin{equation*}
 \nu_i = (\nu_i^1,\dotsc,\nu_i^n) \in (-\pi/2, \pi/2)^n
\end{equation*}
and let $\nu= (\nu_0,\dotsc,\nu_m)$.

\begin{defi}\label{def:weight}
  A \emph{weight function $\w_{\nu}$ associated to the weight $\nu$} is a
  continuous function $\w_{\nu}: D_{m+1} \to GL(n)$ satisfying
  \begin{align*}
    \w_{\nu}(\tau,t) =
    \diag(e^{\nu_i^1 |\tau|},\dotsc,e^{\nu_i^n |\tau|})
  \end{align*}
  in $E_{p_i}(M)$, and  that $\w_{\nu}$ is close to the identity
  matrix in compact regions of the disk.
\end{defi}

Let the \emph{weighted Sobolev space $\Hi_{k,\nu}(D_{m+1},\C^n)$} be defined by 
\begin{equation*}
 \Hi_{k,\nu}(D_{m+1},\C^n) = \{f \in \Hi_k^{loc}(D_{m+1},\C^n); \w_{\nu}f \in \Hi_k(D_{m+1},\C^n)\}
\end{equation*}
with norm 
\begin{equation*}
\|f\|_{k,\nu} = \|\w_{\nu}f\|_{k}. 
\end{equation*}

Now assume that $u$ is a $J$-holomorphic disk of $\Lambda$ and let 
\begin{equation*}
\Hi_{2,\nu}[u]= \Hi_{2,\nu}[u](D_{m+1}, u^*TT^*M) 
 \end{equation*}
 denote the closed subspace of  $\Hi_{2,\nu}(D_{m+1}, u^*TT^*M)$, consisting of elements $s$ that are tangent to $\Pi_\C({\Lambda})$ along $\partial D_{m+1}$, and which satisfies 
 $\dbar s |_{\partial D_{m+1}} = 0$. 
Similarly, let 
\begin{equation*}
\Hi_{1,\nu}[0]   = \Hi_{1,\nu}[0](D_{m+1}, T^{*0,1}D_{m+1} \otimes  u^*TT^*M)                                                                         \end{equation*}
 be the closed subspace of  $\Hi_{1,\nu}(D_{m+1}, T^{*0,1}D_{m+1} \otimes  u^*TT^*M)$ consisting of elements $s$ satisfying $ s |_{\partial D_{m+1}} = 0$.

We define the \emph{linearized weighted $\dbar$-operator at $u$}  to be the $\dbar$-operator
\begin{align*}
 \dbar_{u,\nu} :\Hi_{2,\nu}[u] &\to \Hi_{1,\nu}[0], \\
 \dbar_{u,\nu}(v) &= dv + J \circ dv \circ i.  
\end{align*}

\subsubsection{Boundary conditions close to punctures}\label{sec:local}
Let $q$ be a positive puncture of a $J$-holomorphic disk $u$ of $\Lambda$ and let $v \in \Hi_{2,\nu}[u]$.  
After certain deformations of $\Lambda$ and the metric $g$ on $M$, as described in [\cite{trees}, Section 4],  we can find coordinates of $T^*M$ so that the boundary conditions induced by $u$ are given by
 \begin{align*} 
&v(\tau) \in \R^n\\
&v(\tau+i) \in \underbrace{e^{-i\theta_\lambda}\R\times\dotsm\times
^{-i\theta_\lambda}\R}_k\times \underbrace{e^{i\theta_\lambda}\R\times\dotsm\times
e^{i\theta_\lambda}\R}_{n-k}
\end{align*}
in a neighborhood of $q$. Here $\theta_\lambda \in (0, \pi/2)$ and $\theta_\lambda \to 0 $ as $\lambda \to 0$. 
Here the first $k$ directions correspond to $T_qW^u(q)$ and the last $n-k$ directions to $T_q W^s(q)$. Similarly, if $q$ is a negative puncture we might assume that the boundary conditions induced by $u$ are given by  
  \begin{align*} 
&v(\tau) \in \underbrace{e^{-i\theta_\lambda}\R\times\dotsm\times
^{-i\theta_\lambda}\R}_k\times \underbrace{e^{i\theta_\lambda}\R\times\dotsm\times
e^{i\theta_\lambda}\R}_{n-k}, \\
&v(\tau+i) \in \R^n
\end{align*}
 where agian the first $k$ directions correspond to $T_qW^u(q)$ and the last $n-k$ directions to $T_q W^s(q)$.

\subsection{Orientation conventions and initial choices}\label{sec:orientconv}
It is necessary for us to fix some orientation conventions.
First we discuss the \emph{initial choices}, which are the following. 
\begin{itemize}
 \item  choice of orientation of the base manifold $M$;
 \item   choice of orientations of the unstable manifolds associated to the Reeb chords of $\Lambda$;
  \item  choice of orientation of $\R^n$;
 \item choice of orientation of  $\C$;
  \item  choice of spin structure on $\Lambda$.
\end{itemize}

The choice of orientation of $M$ is in fact only a local choice. If we keep track of the local choices we can define the orientation algorithm also in the case when $M$ is non-orientable. However, we will from now on assume that $M$ is oriented, to simplify notation. We will also assume that the orientations of $T_pM$ and $\R^n$ coincide for all $p \in M$.

For each Reeb chord $p$ we let the chosen orientation of $W^u(p)$ induce an orientation of $W^s(p)$ via the identification
\begin{equation*}
 T_pW^u(p)\oplus T_pW^s(p) = T_pM
\end{equation*}
as oriented spaces. This chosen orientation of $W^u(p)$ will be the \emph{capping orientation of $W^u(p)$}, and the induced orientation of $W^s(p)$ is then the \emph{capping orientation of $W^s(p)$}. 

 That the orientation of $\C$ affects the sign of $\Gamma$ follows
 from the constructions in \cite{fooo}, which we will make use of when
 we define the capping orientation of a disk $u$. 
 Indeed, these constructions use determinant line bundles associated to complex
 $\dbar$-operators, which are given orientations induced by the
 orientation of $\C$. For a more detailed discussion, see
 e.g. [\cite{orientbok}, Section 4.5.6]. In Section \ref{sec:signfunctions} we will fix an orientation of $\C$ which makes it possible to describe the signs of rigid trees explicitly.

 In Subsection \ref{sec:triv} we describe how the choice of the spin structure of $\Lambda$ affect the capping orientations. 
In Subsection \ref{sec:ok} we discuss the orientation of finite dimensional vector spaces induced by exact sequences, and in Subsection \ref{sec:modspor} we consider a stabilization of  the linearized $\dbar$-operator, stabilized by the space of conformal variations. In this way we get a surjective operator and we can interpret the orientation of a $J$-holomorphic disk $u$ as a sign. In Subsection \ref{sec:confvaror} we fix an orientation of the space of conformal variations of the punctured disk.

 \subsubsection{Trivializations}\label{sec:triv}
If $u$ is a $J$-holomorphic disk of $\Lambda$, then the capping orientation will depend on the chosen trivialization of $T\Pi_\C(\Lambda)$ along the boundary of $u$, and this choice will depend on the spin structure of $\Lambda$. We describe this dependence here, following \cite{orientbok}.
 
Fix a spin structure $\s$ on $\Lambda$, and let $u:D_{m+1}\to T^*M$ be a
$J$-holomorphic disk of $\Lambda$. Then $\s$ induces a trivialization
$A_\s: \partial D_{m+1} \to U(n)$ of $T\Pi_\C(\Lambda)$ along the
boundary of $u$ in the following way. Assume that we have a
triangulation of $\Lambda$ such that all Reeb chord end points belong
to the $0$-skeleton, and so that all capping paths belong to the
$1$-skeleton. Using the spin structure we find a trivialization of $T\Lambda$ over the
$1$-skeleton of $\Lambda$ and extend it to the $2$-skeleton. If $\tilde \gamma \subset \Pi_\C(\Lambda)$ is a
boundary component of $u$, then we homotope $\tilde \gamma$ to the
$1$-skeleton, keeping the end points fixed, and give
$T\Pi_\C(\Lambda)|_{\tilde \gamma}$ the induced trivialization. The
spin structure $\s$ guarantees that this trivialization is unique, up
to homotopy. By doing this for all boundary components of $u$ we
obtain a trivialized Lagrangian boundary condition $A_\s$ for $u$. See [\cite{orientbok}, Section 3.4.2].

Next we stabilize the tangent bundle $T\Lambda$, by adding a trivial
bundle with fibers consisting of two auxiliary directions $\aux_1
\oplus \aux_2 \simeq \R^2$, so that we obtain the \emph{stabilized
  tangent bundle} $\tilde T\Lambda = T\Lambda \oplus \R^2$. Extend the
spin structure $\s$ of $T\Lambda$ trivially over the two extra
directions, continue to denote it by $\s$. We also extend $A_\s$ to be
equal to the identity in the auxiliary directions, giving us
\emph{the stabilized trivialized boundary condition of $u$ associated to $\s$}, 
 denoted by $\tilde
A_\s$.

We refer to [\cite{orientbok}, Section 4.4] for a discussion on how
the capping orientation changes if we change the spin structure. This is also
dealt with in Section  \ref{sec:treetriv}.

\subsubsection{Linear algebra}\label{sec:ok}
Let $V$ be a finite-dimensional vector space, and let
\begin{equation*}
  v_{\bar{n}} = (v_1,\dotsc,v_n)
\end{equation*}
be an oriented basis for $V$. We then represent the orientation $\Or(V)$ of $V$ by the wedge product $v_1\wedge\dotsm\wedge v_n \in \bigwedge^{\max}V$. Clearly $v_{\bar{n}}$ and $w_{\bar{n}}$ gives the same orientation of $V$  if and only if $v_1\wedge\dotsm\wedge v_n = \alpha \, w_1\wedge\dotsm\wedge w_n$ for some $\alpha \in (0,\infty)$. The oriented basis $v_{\bar{n}}$ of $V$ also induces an orientation of $V^*$ via the wedge $v_1^*\wedge \dotsm \wedge v_n^*$. Here $v_i^*$ is the dual vector of $v_i$.

If $V$ and $W$ are oriented vector spaces then the orientations $\Or(V)$ and $\Or(W)$, represented by $\phi_V \in \bigwedge^{\max}V$ and $\phi_W \in \bigwedge^{\max}W$, respectively, canonically induce an orientation of the $1$-dimensional space $\bigwedge^{\max}V \otimes \bigwedge^{\max}(W^*)$ via the  pair  $(\phi_V,\phi_W) \in \bigwedge^{\max}V \times \bigwedge^{\max} W$. Two pairs  $(\phi_V,\phi_W)$ and  $(\phi'_V,\phi'_W)$ are equivalent if and only if $\phi_V = \alpha \phi_V', \phi_W = \beta \phi_V'$, $\alpha\beta>0$. We call an equivalence class $(\phi_V,\phi_W)$ an \emph{orientation pair of  $\bigwedge^{\max}V \otimes \bigwedge^{\max}W$}.

We will often consider exact sequences involving at most four non-trivial
finite-dimensional vector spaces, as follows
\begin{equation}\label{eq:exact}
 0 \xrightarrow{} V_1 \xrightarrow{\alpha} W_1\xrightarrow{\beta} W_2
\xrightarrow{\gamma} V_2 \xrightarrow{} 0.
\end{equation}
A basic observation when working with orientations in Symplectic Field
Theory is that, after having fixed orientations of three of the spaces
in the sequence above, then the fourth one can be given an induced
orientation. This induced orientation is canonical, up to choices of
orientation conventions. We fix these conventions in what follows, and
this will differ slightly from the conventions in \cite{orientbok}. Also compare with the discussion in \cite{z}.

Pick a basis $(v_1,\dotsc,v_k)$ for $V_1$ and vectors $(u_1,\dotsc,u_l)\in W_2$ so that $(\gamma(u_1),\dotsc,\gamma(u_l))$ gives a basis for $V_2$. Then pick vectors $(w_1,..,w_m) \in W_1$ so that a basis for $W_1$ is given by $(\alpha(v_1),\dotsc,\alpha(v_k), w_1,\dotsc,w_m)$. By exactness of the sequence  \eqref{eq:exact} we then get that $(\beta(w_1),\dotsc,\beta(w_m),u_1,\dotsc,u_l)$ gives a basis for $W_2$. Now consider the map 
\begin{equation}\label{eq:o2}
    \Phi : \bigwedge^{\max}V_1\otimes \bigwedge^{\max}V_2 \to \bigwedge^{\max}W_1\otimes \bigwedge^{\max}W_2 
\end{equation}
given by
\begin{multline*}
   \Phi(v_1\wedge\dotsm \wedge v_k \otimes \gamma(u_1)\wedge\dotsm\wedge \gamma(u_l))\\
   \nonumber
    =v_1\wedge\dotsm \wedge v_k \wedge w_1 \wedge \dotsm \wedge w_m \otimes  u_1\wedge\dotsm\wedge u_l \wedge \beta(w_1)\wedge \dotsm \wedge \beta(w_m).
\end{multline*}

The following is a standard fact, see e.g.\ \cite{Floer-Hofer}.  

\begin{lma}\label{lma:o1o2}
The sequence \eqref{eq:exact} together with the map \eqref{eq:o2} induces an isomorphism 
\begin{equation}
 \phi:\bigwedge^{\max}V_1 \otimes  \bigwedge^{\max}V_2 \xrightarrow{\approx} 
\bigwedge^{\max}W_1 \otimes  \bigwedge^{\max}W_2, 
\end{equation}
and hence a one-to-one correspondence between orientation pairs of $(V_1,V_2)$ and of $(W_1, W_2)$.
\end{lma}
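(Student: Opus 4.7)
The plan is to verify that the prescription $\Phi$ in \eqref{eq:o2} is independent of all the auxiliary choices made in its definition (basis for $V_1$, lifts $u_i$ of a basis for $V_2$, and complementary vectors $w_j$), and then to observe that $\Phi$ sends a nonzero element to a nonzero element. Since both $\bigwedge^{\max}V_1\otimes \bigwedge^{\max}V_2$ and $\bigwedge^{\max}W_1\otimes\bigwedge^{\max}W_2$ are one-dimensional, this will immediately give the isomorphism. The correspondence on orientation pairs then follows by tracking positive scaling through $\Phi$.

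First I would check independence of the choice of complementary vectors $(w_1,\dotsc,w_m)$. Any two such choices differ by elements of $\alpha(V_1)$, so replacing $w_j$ by $w_j+\alpha(v)$ for some $v\in V_1$ leaves the wedge $\alpha(v_1)\wedge\dotsm\wedge\alpha(v_k)\wedge w_1\wedge\dotsm\wedge w_m$ unchanged (the added term involves one extra factor from $\alpha(V_1)$, beyond the top exterior power of that subspace, and thus vanishes); on the $W_2$ side, $\beta\circ\alpha=0$ gives $\beta(w_j+\alpha(v))=\beta(w_j)$, so that factor is also unchanged. Next I would check independence of the lifts $u_j$. By exactness, two choices differ by elements of $\ker\gamma=\im\beta$, so replacing $u_j$ by $u_j+\beta(w)$ for $w\in W_1$ adds to the $W_2$-wedge a term containing $\beta(w)$ alongside all of $\beta(w_1),\dotsc,\beta(w_m)$; expanding $w$ in the basis $(\alpha(v_i),w_r)$ of $W_1$ and using $\beta\circ\alpha=0$ shows $\beta(w)$ lies in the span of $\{\beta(w_r)\}$, hence the added term vanishes. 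Finally, changing the basis $(v_i)$ by a matrix of determinant $\delta$ multiplies both sides of $\Phi$ by $\delta$, and changing the lifts $(u_i)$ by an endomorphism that acts as a matrix of determinant $\delta'$ on the quotient (equivalently, on $V_2$ via $\gamma$) multiplies both sides by $\delta'$. Thus $\Phi$ descends to a well-defined linear map between one-dimensional spaces.

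Non-degeneracy is immediate: if we start with nonzero representatives $v_1\wedge\dotsm\wedge v_k$ and $\gamma(u_1)\wedge\dotsm\wedge\gamma(u_l)$, then $(\alpha(v_i),w_r)$ is a basis of $W_1$ by construction and $(\beta(w_r),u_j)$ a basis of $W_2$ by exactness, so the right-hand side of $\Phi$ is a nonzero top form on each factor. Hence $\Phi$ is an isomorphism. For the orientation pair correspondence, an orientation pair $(\phi_{V_1},\phi_{V_2})$ is an equivalence class under positive scaling of each factor; since $\Phi$ is linear and positive rescaling of $\phi_{V_1}$ or of $\phi_{V_2}=\gamma(u_1)\wedge\dotsm\wedge\gamma(u_l)$ produces a positively rescaled right-hand side (in the first or second factor of the tensor product, respectively), the induced map of orientation pairs is a bijection.

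I do not anticipate a serious obstacle; the content of the lemma is purely linear-algebraic and the only thing to do is to track how the indeterminacy of each of the choices cancels on both sides of $\Phi$. The only place where one needs to be mildly careful is the second invariance check, where exactness at $W_2$ is used in the specific form $\ker\gamma=\im\beta$ together with $\beta\circ\alpha=0$; everything else is bookkeeping.
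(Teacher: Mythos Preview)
Your argument is correct. The paper does not actually supply a proof of this lemma; it simply records it as a standard fact with a reference to Floer--Hofer. Your direct verification---checking that $\Phi$ is independent of the choice of complementary vectors $w_j$, independent of the lifts $u_j$, and equivariant under change of basis in $V_1$ and $V_2$, followed by the observation that $\Phi$ sends nonzero to nonzero---is exactly the elementary linear-algebraic content behind that citation, and it goes through without issue.
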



From now on, unless something else is explicitly stated, when we say that a space is given an orientation induced by an exact sequence, it is understood that we mean the orientation induced by the conventions in Lemma \ref{lma:o1o2}.

%

\subsubsection{The fully linearized \texorpdfstring{$\dbar$}{dbar}-operator and the sign of a rigid disk}\label{sec:modspor}

Recall that a \emph{Fredholm operator} $L$ between linear spaces $V$, $W$ is a bounded linear operator $L: V \to W$ satisfying that the kernel and cokernel of $L$ are finite-dimensional and that the image of $L$ is closed. The \emph{index} of $L$ is given by
\begin{equation*}
 \ind L = \dim \Ker L - \dim \Coker L
\end{equation*}
and the \emph{determinant line} of $L$ is given by 
\begin{equation*}
\det L = \bigwedge^{\max} \Ker L \otimes \bigwedge^{\max} (\Coker L)^*.           \end{equation*}
 The set $F(X,Y)$ of all Fredholm operators is open in the space of bounded linear operators from $V$ to $W$, and the collection of all determinant lines gives rise to a line bundle over $F(X,Y)$, see e.g.\ \cite{quillen}.

Let $u:D_{m+1} \to T^*M$ be a $J$-holomorphic disk of $\Lambda$, and
let $\kappa$ denote the conformal structure of $D_{m+1}$. Recall the
associated linearized operator $\dbar_u :\Hi_{2,\nu}[u] \to
\Hi_{1,\nu}[0]$ introduced in Section \ref{sec:sobolev}. 
In \cite{legsub} it is proven that we can choose the weight $\nu$  so
that  $\dbar_u$ is Fredholm. Also compare with Lemma
\ref{lma:wfred}. From now on we assume that such a choice has been made.

We define the
\emph{fully linearized $\dbar$-operator} $D\dbar_{u,\kappa}$ at
$(u,\kappa)$ to be given by
\begin{equation}\label{eq:conf1}
D\dbar_{u,\kappa} = \dbar_{u}\oplus \Psi_\kappa :  \Hi_{2,\nu}[u] \oplus T_\kappa \Co_{m+1} \to  \Hi_{1,\nu}[0],
\end{equation} 
where 
$\Psi_\kappa : T_\kappa \Co_{m+1} \to  \Hi_{1,\nu}[0]$ is a linear
map. For a more explicit description of $\Psi_\kappa$, see Section
\ref{sec:vectors}. For the moment it suffices to know that for generic
$J$ we have that $D \dbar_{u,\kappa}$ is surjective. See \cite{pr} for
a proof of this fact.

Let $\M(a, {\bf b}),$ ${\bf b} =( b_1,\dotsc.,b_m)$, denote the moduli
space of $J$-holomorphic disks of $\Lambda$ with positive puncture $a$
and negative punctures $b_1,\dotsc,b_m$. These spaces are said to be
\emph{transversely cut out} if $0$ is a regular value of the
corresponding fully linearized $\dbar$-operators, in which case the moduli
space is a manifold of (local)  dimension equal to the Fredholm index of the operators. It is a standard fact that an orientation of
$T_u\M(a,{\bf b})$ can be identified with an orientation of $\Ker D\dbar_{u,\kappa}$. This will be the
\emph{orientation of $u$}.

\begin{defi}
  We say that $u \in\M(a, {\bf b})$ is \emph{rigid} if
  $\M(a, {\bf b})$ is a transversely cut out manifold of dimension
  $0$.
\end{defi}
Thus, a rigid disk belongs to a zero-dimensional component of the moduli
space, and its orientation is therefore given in terms of a sign. This sign is the one that shows up in the signed count of rigid disks in the differential for Legendrian contact homology.

Now we relate the orientation of $u$ with the orientation
of $\det \dbar_{u}$, which we will fix in Section
\ref{sec:nonpunct} (and which will be the capping orientation of $u$).
From \eqref{eq:conf1} it follows that we have an exact sequence 
 \begin{equation}\label{eq:esdet}
  \begin{array}{ccccccccccc}
   0 & \to & \Ker \dbar_{u} & \to &  
   \Ker(\dbar_{u} \oplus \Psi_{\kappa}) & \to &
   T_\kappa \Co_{m+1} & \to 
   & \Coker \dbar_{u} & \to &0 \\
   & & v &\mapsto & (v,0)  & & & & & \\ 
   & &  & & (v,w)  & \mapsto & w & & & \\
      & &  & &  &  & w & \mapsto & \overline{\Psi_{\kappa}(w)}. & 
  \end{array}
 \end{equation}
Using Lemma \ref{lma:o1o2} we get an isomorphism 
\begin{equation}\label{eq:conf2}
\bigwedge^{\max}\Ker  D \dbar_{u,\kappa} \otimes  \bigwedge^{\max}T_\kappa \Co_{m+1}  \simeq
  \bigwedge^{\max}\Ker \dbar_{u} \otimes  \bigwedge^{\max} (\Coker   \dbar_{u})^*. 
\end{equation}
Thus, we see that an orientation of $\det \dbar_u$ together with an orientation of $T_\kappa \Co_{m+1}$ induces an orientation of  $\Ker D\dbar_{u,\kappa}$. In particular, if $D \dbar_{u,\kappa}$ is an isomorphism and $\dbar_u$ is injective (which is the case for rigid disks with more than one negative punctures), then the orientation is given by a sign  coming from comparing the orientation of $\Coker \dbar_u$ with the orientation of $T_\kappa \Co_{m+1}$ under the isomorphism in \eqref{eq:conf2}. 

In the next subsection we fix an orientation of the space of conformal structures.

\begin{rmk}\label{orientrule}
 If $m \leq 1$, then $\Co_{m+1}$ is empty and we have $ D \dbar_{u, \kappa} =  \dbar_{u}$.  The tangent space of the moduli space of rigid $J$-holomorphic disks at $u$ can then be identified with $\kd_{u}/T \T_{m+1}$, where $\T_{m+1}$ is the space of conformal automorphisms of $D_{m+1}$.  Here we embed $T \T_{m+1} \subset \kd_{u} $ via  $v \mapsto du \cdot v$, $v \in T \T_{m+1}$.
 We refer to [\cite{orientbok}, Section 3.4.1] for a discussion of how to orient $\T_{m+1}$, and we pick the same orientation as described there. The sign of $u$ then comes from comparing the orientation of $T \T_{m+1}$ with the one of $\kd_{u}$.
%
\end{rmk}

\subsubsection{Orientation of the space of conformal variations}\label{sec:confvaror}
In this subsection we will fix an orientation of the space $\Co_{m+1}$ of conformal structures on the $m+1$-punctured unit disk in $\C$. We will also carry it over to the setting of Section \ref{sec:slit} where we viewed conformal structures as represented by standard domains.

First consider the case when we view the domain of the punctured $J$-holomorphic disk as the unit disk in $\C$,
with $m+1$ punctures $p_0,\dotsc,p_m$. We assume that the 
positive puncture $p_0$ is located at $z=1$. If we fix the first two negative
punctures $p_1,p_2$ at $z=i$ and $z=-1$, respectively, then the position of the remaining
punctures $p_3,\dotsc,p_m$ uniquely determines the conformal structure $\kappa$ of the disk. Let  $v_i$ be the vector field supported
in a neighborhood of $p_i$ which agrees with the vector field $z \mapsto iz$ near
the puncture (making the puncture move counter-clockwise along the boundary), and which is
tangent to the boundary along the boundary. See Figure \ref{fig:confstrdisk}.  We then get a canonical identification of the
tangent space $T_\kappa\Co_{m+1}$ of $\Co_{m+1}$ at $\kappa$
 with the 
tuple of vector fields $(v_3,\dotsc,v_m)$.  We define the positive orientation of $T_\kappa \Co_{m+1}$ to be the orientation induced by the ordered tuple $(v_3,\dotsc,v_m)$. If $m=2$ we let
the one point space $\Co_3$ be positively oriented.

\begin{figure}[ht]
 \labellist
\small\hair 2pt
\pinlabel $p_0$ [Br] at 450 270
\pinlabel $p_1$ [Br] at 275 430
\pinlabel $p_2$ [Br] at 99 270
\pinlabel $p_3$ [Br] at 185 135 
\pinlabel $v_3$ [Br] at 200 7
\pinlabel $p_4$ [Br] at 309 108
\pinlabel $v_4$ [Br] at 445 40
\pinlabel $p_0$ [Br] at 870 250
\pinlabel $p_1$ [Br] at 1973 125
\pinlabel $p_2$ [Br] at 1973 215
\pinlabel $p_3$ [Br] at 1973 300
\pinlabel $p_4$ [Br] at 1973 380
\pinlabel $t_1$ [Br] at 1410 163
\pinlabel $t_2$ [Br] at 1348 252
\pinlabel $t_3$ [Br] at 1575 340
\endlabellist
\centering
\includegraphics[height=3.5cm]{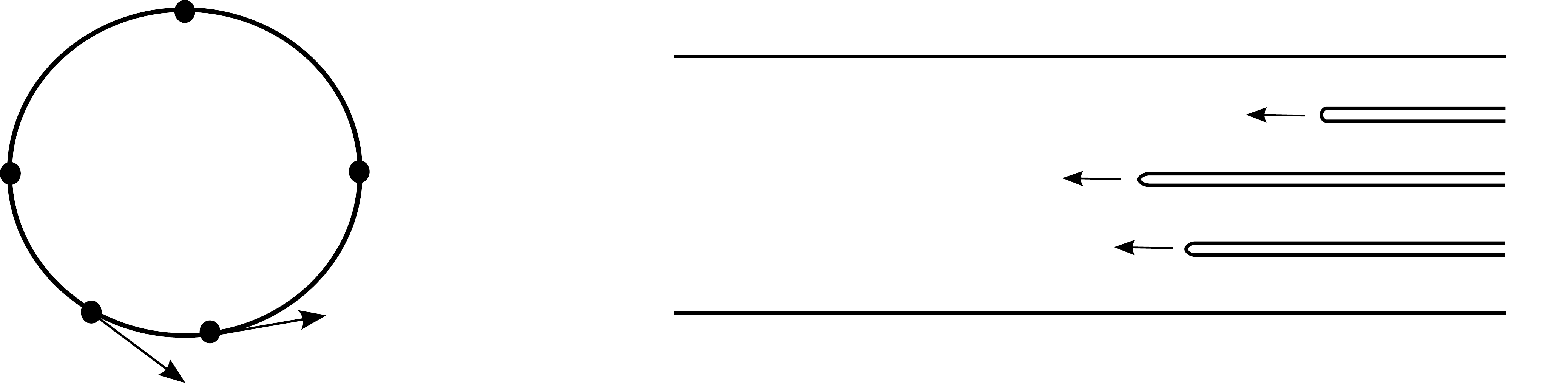}
\caption{To the left a punctured disk with the conformal variations $v_3, v_4$ indicated. To the right a standard domain with the conformal variations $t_1,t_2,t_{3}$ indicated.}
\label{fig:confstrdisk}
\end{figure}

We would like to translate this into the case when we view the disk
$D_{m+1}$ as a standard domain $\Delta_{m+1}$. Recall that in this setting a conformal structure $\kappa$ is
given by the location of $m-2$ out of $m-1$ slits, and that two standard domains are conformally equivalent if and only if their boundary minima differs by an overall translation.

Let $t_j$ be the conformal variation that corresponds to moving the $j$th boundary minimum
towards $-\infty$, and which is cut-off in a neighborhood of this boundary minimum.  See Figure \ref{fig:confstrdisk}. The tangent space $T_\kappa\Co_{m+1}$ is then spanned by $m-2$ of the 
$m-1$ tangent vectors $t_j$. We have the following. 
\begin{lma}[\cite{kch}, Lemma 6.4]\label{lma:confor}
 The orientation of $\Co_{m+1}$ given by $t_2\wedge\dotsm\wedge t_{m-1}$ agrees with the
 orientation of $\Co_{m+1}$ defined above.

 In addition we have
 \begin{equation*}
  t_1\wedge\dotsm\wedge t_{j-1}\wedge
\hat{t}_j\wedge
t_{j+1}\wedge\dotsm\wedge t_{m-1} = (-1)^{j-1} t_2\wedge\dotsm\wedge
t_{m-1}
 \end{equation*}
and in particular
\begin{equation*}
 t_1\wedge\dotsm\wedge t_{m-2} = (-1)^{m-2}t_2\wedge\dotsm\wedge
t_{m-1}.
\end{equation*}
 Here $\hat{v}$ means that the vector $v$ is absent.
\end{lma}

\subsection{Gluing of disks}\label{sec:gluew}
One of the most important tools in this paper is exact sequences of
kernels and cokernels of $\dbar$-operators. These sequences occur when
we glue $J$-holomorphic disks together at (special) punctures.

In [\cite{orientbok}, Section 3.2] these kind of sequences are described in detail, and proven to be exact. Since we will use these sequences repeatedly, we give an outline of the constructions found in that paper and also re-state the results that we will need. 

We mention that these sequences are needed in the inductive procedure of proving that the sign of a tree $\Gamma$ can be described as stated in Theorem \ref{thm:mainthm}. They are also used when we define the capping orientation of a $J$-holomorphic disk. 

\subsubsection{Dbar problems with trivialized Lagrangian boundary conditions}\label{sec:dbarnopunc}
Let $A: \partial D_{m+1} \to U(n)$ be a trivialized Lagrangian boundary condition, $\nu=(\nu_0, \dotsc, \nu_m)$ a weight vector and let
\begin{equation*}
\Hi_{2,\nu}[A]= \Hi_{2,\nu}[A](D_{m+1}, T^{*0,1}D_{m+1} \otimes \C^n) 
 \end{equation*}
 denote the closed subspace of  $\Hi_{2,\nu}(D_{m+1}, T^{*0,1}D_{m+1} \otimes \C^n)$, consisting of elements $s$ that are tangent to $A\R^n$ along $\partial D_{m+1}$, and which satisfies 
 $\dbar s |_{\partial D_{m+1}} = 0$. 
Similarly, let 
\begin{equation*}
\Hi_{1,\nu}[0]   = \Hi_{1,\nu}[0](D_{m+1}, T^{*0,1}D_{m+1} \otimes \C^n)                                                                         \end{equation*}
 be the closed subspace of  $\Hi_{1,\nu}(D_{m+1}, T^{*0,1}D_{m+1} \otimes  \C^n)$ consisting of elements $s$ satisfying $ s |_{\partial D_{m+1}} = 0$.

We define the \emph{linearized weighted $\dbar$-operator with trivialized boundary condition $A$}  to be the $\dbar$-operator
\begin{align*}
 \dbar_{A,\nu} :\Hi_{2,\nu}[A] &\to \Hi_{1,\nu}[0], \\
 \dbar_{A,\nu}(v) &= dv + J \circ dv \circ i.  
\end{align*}

\begin{defi}
   If $\dbar_{A,\nu}:\Hi_{2,\nu}[A] \to \Hi_{1,\nu}[0]$ is Fredholm, then the pair $(A,\nu)$ is called \emph{admissible}.
\end{defi}

We will also consider $\dbar$--problem on the closed (non-punctured) disk $D= D_0$, with trivialized Lagrangian boundary condition $A \colon \partial D \to U(n)$. This gives a Fredholm operator $\dbar_A \colon \Hi_2[A] \to \Hi_1[0]$ of Fredholm index equal to $n + \mu(A)$, where $\mu(A)$ is the Maslov index of $A$. See e.g. [\cite{jholo}, Theorem C.1.10]. Such a $\dbar$-problem we call a \emph{Maslov problem of dimension $n$ with trivialized boundary conditions given by $A$}.

\begin{rmk}
If we have a disk $u: D_{m+1} \to \C^n$ and a trivialization $A$ of $T \Pi_\C(\Lambda)$ along the boundary of $ u$ we often write $\dbar_{A,\nu} :\Hi_{2,\nu}[A] \to \Hi_{1,\nu}[0]$ instead of $\dbar_{u,\nu} :\Hi_{2,\nu}[u] \to \Hi_{1,\nu}[0]$ to indicate the chosen trivialization $A$. Also, if it is clear
 from the context which weight we are using we only write $\dbar_A$.
\end{rmk}

\subsubsection{Gluing weighted disks}\label{sec:glw}
Let $\Delta_A$ and $\Delta_B$ be two standard domains with corresponding
trivialized Lagrangian boundary conditions $A: \partial \Delta_A \to U(n)$ and  $B: \partial \Delta_B \to U(n)$. Let $E_0$ be the strip-like end of $\Delta_A$ at $\tau =- \infty$, and let $E$ be one of the strip-like ends of $\Delta_B$ at $\tau =+ \infty$. Pick coordinates $(\tau,t) \in (-\infty,0] \times [0,1]$ on $E_0$ and $(\tau,t) \in [0,\infty) \times [0,1]$ on $E$.  Assume that $A$ is
constant for $|\tau|$ large in $E_0$,
and that $B$ is constant for $\tau$ large in $E$, and that $A=B$ in these regions. Then we can glue $\Delta_A$ to $\Delta_B$, by joining $E_0$ and $E$, and get a trivialized
Lagrangian boundary condition $A\#B$ on the glued domain $\Delta_A\#\Delta_B$.

Let $\nu>0$ be so large so that $A$ and $B$ are constantly
equal  on $E_0$ and $E$ for $\tau<-\nu$ and $\tau>\nu$, respectively. Let
$\Delta(\rho) = \Delta_A \#_\rho \Delta_B$ be the standard domain obtained by identifying
$\{-\rho\}\times[0,1]$ on  $E_0$ with
$\{\rho\}\times[0,1]$ on $E$, where $\rho>>2\nu$. Let
$A\#_\rho B$ be the
induced boundary condition, which is equal to $A$ on the part of $\Delta(\rho)$ corresponding to $\Delta_A$, equal to $B$ on the part of $\Delta(\rho)$ corresponding to $\Delta_B$, and is constant in the gluing region. 

Let $\w_A$ and $\w_B$ be the weight functions associated to $\Delta_A$ and $\Delta_B$, respectively. Let $\nu_A$ denote the weight vector at the
end $E_0$  and let $\nu_B$ denote the weight vector at the 
end $E$, so that we have weights ${ \w}_A \simeq e^{\nu_A|\tau|}$ and ${ \w}_B \simeq e^{\nu_B|\tau|}$, respectively, at the actual ends.  
Let ${ \w}_\rho$ denote the glued weight function on
$\Delta(\rho)$, which equals ${ \w}_A$ and ${ \w}_B$, respectively, outside the
gluing region $\Delta_A\setminus (- \infty, \nu-\rho)\times[0,1] \cup \Delta_B\setminus (-\rho-\nu, \infty)\times[0,1]$,  and is a smooth interpolation of the weight functions in the
gluing region. 

Assume that we have chosen weights so that $(A, { \w_A} )$ and $(B, { \w_B})$ are both admissible pairs. Then the glued $\dbar$-problem will also be admissible, and the index can be computed as follows. Here we assume that the weight vectors take values in $(-\pi/2, \pi/2)^n$.

\begin{defi}
 If $\nu= (\nu_1,\dotsc,\nu_n)$ is a vector, we say that $ \sgn \nu < 0$ ($\sgn \nu > 0$) if $\sgn \nu_i < 0$ ($\sgn \nu_i > 0$) for all $i=1,\dotsc,n $. Also, if $\tilde\nu = (\tilde\nu_1,\dotsc,\tilde\nu_n)$ is another vector, we say that $\sgn \nu = \sgn \tilde\nu$ ($\sgn \nu = -\sgn \tilde\nu$) if $\sgn \nu_i = \sgn \tilde\nu_i$ ($\sgn \nu_i = -\sgn \tilde\nu_i$) for all $i=1,\dotsc,n$.
\end{defi}

\begin{lma}\label{lma:indexglu}
We have the following.
\begin{itemize}
\item[i)] If $\sgn \nu_A =\sgn \nu_B >0$ then $\ind
\dbar_{A\#B} = \ind \dbar_A +\ind \dbar_B +n$.
\item[ii)] If $\sgn \nu_A =\sgn \nu_B <0$ then  $\ind
\dbar_{A\#B} = \ind \dbar_A +\ind \dbar_B -n$.
\item[iii)] If $\sgn \nu_A =- \sgn \nu_B$ then $\ind \dbar_{A\#B} = \ind \dbar_A +\ind
\dbar_B $.
\end{itemize}
\end{lma}

\begin{proof}
The second case is stated in [\cite{orientbok}, Equation (3.6)]. The other two cases also  follow from this result, by noting that we can change all positive entries of ${ \nu}_A$  and ${ \nu}_B$ to negative, with the cost of raising the indices of $\dbar_A$ and $\dbar_B$ by the number of positive weights, see [\cite{legsub}, Section 6.6 -- Section 6.9]. The glued problem will have the same index as the one we get without changing the weights. Thus, in the first case we have to add $n$ to both the index of $\dbar_A$ and $\dbar_B$, and in the third case we have to add in total $n$ to $\ind \dbar_A + \ind \dbar_B$. 
\end{proof}

\begin{lma}\label{lma:gluweights}
Assume that we are in the situation of gluing as described above. Then, for sufficiently large $\rho$, there are exact gluing
sequences of the following form.
\begin{itemize}
 \item[a)] If $\sgn\nu_A =\sgn \nu_B>0$ then we get 
\begin{align}\label{eq:g1}
 0 \to
\kd_{A\#B} \xrightarrow{\alpha_\rho} 
\begin{bmatrix}
\kd_A \\
 \R^n \\ \kd_B
\end{bmatrix}
\xrightarrow{\beta_\rho}
\begin{bmatrix}
\cd_A \\
\cd_B 
\end{bmatrix}
\xrightarrow{\gamma_\rho}
\cd_{A\#B}
 \to 0.
\end{align}
 \item[b)] If $\sgn\nu_A =\sgn \nu_B<0$ then we get 
\begin{align}\label{eq:g2}
 0 \to
\kd_{A\#B} \xrightarrow{\alpha_\rho}
\begin{bmatrix}
 \kd_A \\
  \kd_B
\end{bmatrix}
\xrightarrow{\beta_\rho}
\begin{bmatrix}
\cd_A \\
 \R^n \\
  \cd_B
\end{bmatrix}
 \xrightarrow{\gamma_\rho}
\cd_{A\#B} 
 \to 0.
\end{align}
 \item[c)] If $\sgn\nu_A =-\sgn \nu_B$ then we get 
\begin{align}\label{eq:g3}
 0 \to
\kd_{A\#B} \xrightarrow{\alpha_\rho} 
\begin{bmatrix}
\kd_A \\
\kd_B
\end{bmatrix}
\xrightarrow{\beta_\rho}
\begin{bmatrix}
\cd_A \\
 \cd_B
\end{bmatrix}
 \xrightarrow{\gamma_\rho}
\cd_{A\#B} 
 \to 0.
\end{align}
\end{itemize}
 In the sequences $\eqref{eq:g1}$ and $\eqref{eq:g2}$ we have a canonical isomorphism $\R^n\simeq
A(-\{\rho\}\times\{1\})$ via evaluation.
\end{lma}
\begin{proof}
 Follows from [\cite{orientbok}, Lemma 3.1] 
\end{proof}

\begin{rmk}\label{rmk:maps}
The maps $\alpha_\rho$, $\beta_\rho$ and $\gamma_\rho$ are given as follows. First embed the kernels and cokernels of the $\dbar_A$ and $\dbar_B$-operators in the Sobolev spaces associated to $\dbar_{A \#B}$, by cutting off the elements with cut-off functions $\phi_A^\rho, \phi_B^\rho: \Delta(\rho) \to [0,1]$. The cut-off functions are required to satisfy
\begin{align}\label{eq:phiA}
 \phi_A^\rho &= 
 \begin{cases}
  1 &\text{ on } \Delta_A\setminus ((-\infty,-1/2\rho] \times [0,1]) \\
  0 &\text{ on } \Delta(\rho) \setminus(\Delta_A \setminus((-\infty,-\rho] \times [0,1]))
 \end{cases} \\ \nonumber & \\
\label{eq:phiB}
 \phi_B^\rho &= 
 \begin{cases}
  1 &\text{ on } \Delta_B\setminus ([1/2\rho, \infty) \times [0,1]) \\
  0 &\text{ on } \Delta(\rho) \setminus(\Delta_B \setminus([\rho,\infty] \times [0,1]))
 \end{cases}
\end{align}
and $|D^k\phi_U^\rho| = O(\rho^{-1})$, $k=1,2$, $U=A,B$.
Also embed the constant functions defined in the gluing region
representing $\R^n$ in a similar way, by using cut-off functions with
support in the gluing region. Then $\alpha_\rho$ is $L^2$-projection
onto the space spanned by the cut-off kernel elements, $\beta_\rho$ is
$\dbar$ composed with $L^2$-projection and $\gamma_\rho$ is projection
to the quotient. See [\cite{orientbok}, Section 3.2.2] for a more detailed description.
\end{rmk}

\begin{rmk}
Using this lemma together with the orientation conventions from Section \ref{sec:orientconv}, we see that orientations $o_A$ and $o_B$ on the determinant lines of $\dbar_A$ and $\dbar_B$, respectively, canonically induces an orientation $o_{A\#B}$ on the determinant line of $\dbar_{A\#B}$.
\end{rmk}

\begin{rmk}
 We have a similar situation concerning iterated gluings of $J$-holomorphic disks, see [\cite{orientbok}, Section 3.2.3], where it is proven that gluing is associative on the level of orientations. Notice that the same proofs go through even if we are using slightly different orientation conventions.
\end{rmk}

\subsection{Capping orientations of \texorpdfstring{$J$}{J}-holomorphic disks}\label{sec:nonpunct}
In this section we will define the capping orientation of a $J$-holomorphic disk $u$ of $\Lambda$. We follow the similar constructions in \cite{orientbok}. 

Briefly this works as follows. Let $u_b$ denote the restriction of $u$ to the boundary, and let $\hat{\Lambda}=\Pi_\C ({\Lambda})$. The choice
of spin structure on $\Lambda$ induces a trivialization of 
$u_b^*T\hat{\Lambda}$, which is well-defined up to homotopy. Closing up these paths of trivializations with something
called capping trivializations, we get a trivialized Lagrangian boundary
condition on the non-punctured disk. The determinant line of the
$\dbar$-operator with this boundary condition is oriented using evaluation, and
if we fix orientations of the $\dbar$-operators over the capping disks, then the sequences from Lemma \ref{lma:gluweights}  will induce an orientation of the determinant
line of the $\dbar$-operator at $u$. This will be the capping orientation
of $u$.

\subsubsection{Orientation of trivialized boundary conditions on the non-punctured disk}

Let $A$ be a trivialized Lagrangian boundary condition on the unit disk $D$ in $\C$, i.e., let $A$ be a loop $A : \partial D \to U(n)$. Let $\dbar_A$ denote the corresponding $\dbar$-problem as defined in Section \ref{sec:dbarnopunc}. 
The determinant lines of these type of operators give rise to a line bundle $\det \dbar$  over the space of trivialized Lagrangian boundary conditions on $D$, which is nothing but the space $\Omega(U(n))$ of free loops in $U(n)$. From \cite{orientbok} and \cite{fooo} we have the following result.

\begin{prp}[\cite{orientbok}, Lemma 3.8]\label{fukayaorient}
 The determinant line bundle $\det \dbar$ over $\Omega(U(n))$ is orientable, and a choice of
orientation of $\R^n$ and of $\C$ induces an orientation of this bundle. Moreover, the orientation is given via evaluation at a point on the boundary $\partial D$, where we identify the image of the evaluation with $\R^n$ using the given trivialization.
\end{prp}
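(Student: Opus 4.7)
My plan is to first establish orientability of $\det \dbar \to \Omega(U(n))$ via a homotopy-theoretic argument, then pin down the orientation by explicit computation on the constant loop, and finally identify it with the evaluation description on the whole of $\Omega(U(n))$.

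For orientability, a real line bundle over $\Omega(U(n))$ is orientable if and only if its first Stiefel--Whitney class vanishes, which reduces to checking that parallel transport around every loop in $\Omega(U(n))$ is orientation-preserving. Such loops are classified by $\pi_1(\Omega(U(n))) \cong \pi_2(U(n))$, and this group vanishes by the classical fact that $\pi_2$ of any Lie group is trivial. Hence $\det \dbar$ is orientable on each path component of $\Omega(U(n))$.

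To pin down the orientation, I would first treat the constant trivialization $A \equiv \mathrm{Id}$. The corresponding $\dbar$-problem on $D$ with totally real boundary condition $\R^n \subset \C^n$ has kernel equal to the constant $\R^n$-valued maps and trivial cokernel, by a standard application of the Schwarz reflection principle. Consequently $\det \dbar_{\mathrm{Id}} \cong \bigwedge^n \R^n$ is canonically oriented by the given orientation of $\R^n$; the orientation of $\C$ enters implicitly in defining $\dbar$ and the complex structure of the Sobolev target. For any $A$ lying in the same component of $\Omega(U(n))$ as the constant loop, propagate this orientation along a chosen path, and use simply-connectedness of the component (established in the first step) to conclude path-independence.

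For $A$ in the other components of $\Omega(U(n))$, indexed by $\pi_0(\Omega(U(n))) \cong \pi_1(U(n)) \cong \Z$, one needs a uniform prescription, and this is the role of the evaluation description. Pick a boundary point $q \in \partial D$; for $A$ where $\dbar_A$ is an isomorphism, evaluation at $q$ identifies $\Ker \dbar_A$ with $A(q) \R^n$, which is oriented as the image of the oriented $\R^n$ under $A(q)$. In the non-generic case, add an auxiliary finite-dimensional stabilization making the operator surjective and orient $\det \dbar_A$ via the resulting short exact sequence together with the standard orientation of the auxiliary space, as in Lemma \ref{lma:o1o2}. One then checks independence of the stabilization by comparing two choices along a homotopy, continuity in $A$ and $q$, and agreement with the identity-component construction. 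The main obstacle is this last comparison: one must verify that the evaluation-based orientation is continuous across $\Omega(U(n))$ and matches the one propagated from $\mathrm{Id}$ on the identity component, which amounts to checking that the signs introduced by stabilizations cancel in the relevant exact sequences. This is essentially the content of [\cite{orientbok}, Lemma 3.8], and the same proof applies with only notational changes for the present conventions.
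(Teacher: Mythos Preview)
The paper does not supply its own proof of this proposition; it is stated as a citation of \cite[Lemma 3.8]{orientbok} and \cite{fooo}. So there is no in-paper argument to compare against, only the approach in those references.

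Your orientability argument via $\pi_1(\Omega(U(n)))\cong\pi_2(U(n))=0$ is correct and is essentially the argument used in the cited sources. Your treatment of the identity component (the Maslov index~$0$ component) is also right: the constant loop gives a $\dbar$-problem whose kernel is the space of constant $\R^n$-valued functions, and evaluation at a boundary point orients it by the chosen orientation of~$\R^n$.

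There are, however, two genuine problems. First, the sentence ``for $A$ where $\dbar_A$ is an isomorphism, evaluation at $q$ identifies $\Ker\dbar_A$ with $A(q)\R^n$'' is self-contradictory: an isomorphism has trivial kernel. Presumably you mean \emph{surjective} and in the Maslov~$0$ component, where $\dim\Ker\dbar_A=n$; but then the argument only covers that component. Second, and more importantly, your handling of the other components (Maslov index $\mu\neq 0$) is incomplete. Evaluation at a single boundary point cannot orient $\det\dbar_A$ when $\ind\dbar_A\neq n$, and your stabilization sketch does not explain where the orientation data comes from. In the cited construction this is exactly where the orientation of~$\C$ enters: one deforms (or splits, or glues) so that the non-trivial Maslov contribution is carried by a $\C$-linear operator, whose determinant line is canonically oriented by the complex structure. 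Your remark that the orientation of $\C$ ``enters implicitly in defining $\dbar$'' misses this; the complex orientation is used substantively, not just formally. Your final sentence, deferring the key comparison back to \cite[Lemma 3.8]{orientbok}, is circular since that is the statement you are proving.
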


Assume that we have fixed an orientation on $\R^n$ and on $\C$. We will call the orientation induced on $\det \dbar_A$ as in the lemma above the \emph{canonical orientation}. 


\subsubsection{Trivializations and weights associated to a punctured \texorpdfstring{$J$}{J}-holomorphic disk}\label{sec:spintriv}
Let  $u$ be a $J$-holomorphic disk of $\Lambda$, and let $u_b$ denote the restriction of $u$ to the boundary.
Fix a spin structure $\s$ on $\Lambda$. As described in Section
\ref{sec:triv}, this induces a stabilized trivialized Lagrangian boundary condition  $\tilde A_u : D_{m+1} \to U(n+2)$ of $u^*_b \tilde T \hat \Lambda$.

 Note that the $\dbar$-problem 
 \begin{equation*}
  \dbar_u:\Hi_2[\tilde A_u](D_{m+1}, u_b^*\tilde{T}\hat \Lambda) \to \Hi_1[0](D_{m+1},  T^{*0,1}D_{m+1} \otimes u_b^*\tilde{T}\hat \Lambda)
 \end{equation*}
is not Fredholm, since the boundary conditions in the auxiliary directions are trivial. To solve this issue, we will weight the involved Sobolev spaces. This we do not only do in the auxiliary directions, but also in the directions corresponding to $\Lambda$, since this will be needed in the degeneration process $\lambda \to 0$.

Recall the local picture in a neighborhood of a puncture $p=p_j$ of $u$, given in Section \ref{sec:local}. 
We choose coordinates at $p$ so that the first $k$ directions correspond to $T_pW^u(p)$  and the last $n-k$ directions corresponds to $T_pW^s(p)$.  We choose the weight vector $\nu_j$ at $p$ to be given by
\begin{equation*}
 \nu_j=(\underbrace{-\delta,\dotsc,-\delta}_{T_pW^u(p)},\underbrace{\delta,\dotsc,\delta}_{T_pW^s(p)},\underbrace{\delta,\delta}_{\aux})
\end{equation*}
if $p$ is positive, and to be given by
\begin{equation*}
 \nu_j=(\underbrace{\delta,\dotsc,\delta}_{T_pW^u(p)},\underbrace{-\delta,\dotsc,-\delta}_{T_pW^s(p)},\underbrace{-\delta,-\delta}_{\aux})
\end{equation*}
if $p$ is negative. Here $0<\delta << \pi$ is some fixed number. Now
define a weight function $\w_u =\w_\nu$ as in Section \ref{sec:sobolev}, using the weight vectors just described.

\begin{lma}\label{lma:wfred}
Let $u$ be a $J$-holomorphic disk of $\Lambda_\lambda$. If $\lambda >0$ is sufficiently small, then the $\dbar$-problem 
\begin{equation*}
 \dbar_u:\Hi_{2,\nu}[\tilde A_u](D_{m+1}, u_b^*\tilde{T}\hat \Lambda) \to \Hi_{1,\nu}[0](D_{m+1}, T^{*0,1}D_{m+1} \otimes u_b^*\tilde{T}\hat \Lambda)
\end{equation*}
is Fredholm, and is split in directions corresponding to $T\hat\Lambda$ and the auxiliary directions. Moreover, the kernel and cokernel of $\dbar_u$ equals the kernel and cokernel of the unweighted problem in the directions corresponding to $T\hat\Lambda$. In the auxiliary directions the problem is an isomorphism.
\end{lma}

\begin{proof}
 Follows from the choice of weights, the local description of the boundary conditions close to a puncture, together with the results in  [\cite{legsub}, Section 6].
\end{proof}

\subsubsection{Capping operators}\label{sec:capping}
To close up the trivialized Lagrangian boundary condition $\tilde A_u$ to give a trivialized Lagrangian boundary condition on the non-punctured disk, we use \emph{capping operators}.
These are $\dbar$-operators defined on $D_1$ with certain associated trivialized boundary conditions.

To get trivialized Lagrangian boundary conditions on $D_0$, we sometimes need to perform rotations in planes of $\C^n$. To that end, let $ R_{\pm \pi}(k,l)\subset SO(n)$, $k,l \in\{1,\dotsc,n\}$, be the path of matrices with entries
\begin{align*}
 &(R_{\pm \pi}(k,l))_{i,i} =1, \qquad i \neq k,l,  \\
 &(R_{\pm \pi}(k,l))_{k,k} =\cos(s\pi),     \\
 &(R_{\pm \pi}(k,l))_{k,l} =\mp\sin(s\pi),     \\
 &(R_{\pm \pi}(k,l))_{l,l} =\cos(s\pi),     \\
 &(R_{\pm \pi}(k,l))_{l,k} =\pm \sin(s\pi), \\    
&(R_{\pm \pi}(k,l))_{i,j} =0, \qquad  \text{ otherwise.}
\end{align*} 
$s \in [0,1]$.
This corresponds to a positive (negative) rotation by $\pi$ in the real subspace spanned by $\partial_{x_k}, \partial_{x_l}$. For example, $R_\pi(2,3) \in SO(3)$ is given by 
\begin{equation*}
 \begin{bmatrix}
  1 & 0 & 0 \\
  0 & \cos \theta & -\sin \theta \\
  0 & \sin \theta & \cos \theta
 \end{bmatrix},
\end{equation*}
$\theta \in [0,\pi]$.

If $A :[0,1] \to U(n)$ is a path of matrices, we define $A \hat * R_{\pm}(k,l)$ to be the concatenation $A *( R_{\pm}(k,l) \cdot A(1))$ (that is, we begin with the path $A$ and then we perform a $\pm\pi$ rotation in the plane spanned by $\partial_{x_k}, \partial_{x_l}$). Some examples:
\begin{align*}
A &\equiv
 \begin{bmatrix}
  -1 & 0 \\
  0 & -1
 \end{bmatrix}  &\Rightarrow A  \hat * R_{\pi}(1,2) &=
  \begin{bmatrix}
  -\cos \theta & \sin \theta \\
  -\sin \theta & -\cos \theta
 \end{bmatrix}, \\
 A &\equiv
 \begin{bmatrix}
  -1 & 0 \\
  0 & 1
 \end{bmatrix} &\Rightarrow A  \hat * R_{-\pi}(1,2) &=
  \begin{bmatrix}
  -\cos \theta & \sin \theta \\
  \sin \theta & \cos \theta
 \end{bmatrix}, \\
  A &\equiv
 \begin{bmatrix}
  1 & 0 \\
  0 & -1
 \end{bmatrix} & \Rightarrow  A \hat * R_{\pi}(1,2) &=
  \begin{bmatrix}
  \cos \theta & \sin \theta \\
  \sin \theta & -\cos \theta
 \end{bmatrix}, \\
\end{align*}
$\theta \in [0,\pi].$

Let $p$ be a Reeb chord of $\Lambda$. Recall that these chords have either odd or even Maslov index. We need to distinguish them even more, in the following way. Note that, close to a Reeb chord, the tangent map of the restriction $\tilde \Pi$ of the base projection  $\Pi: J^1(M) \to M$ to $\Lambda$ induces isomorphisms  $T\tilde \Pi: T_{p_\pm}\Lambda \to T_{\Pi(p)}M$. We define the \emph{type} of $p$ to be the tuple $(k_+,k_-) \in \{0,1\} \times \{0,1\}$ such that  
\begin{equation}\label{eq:sheetsign}
 \begin{cases}
  k_i=0, \text{ if } T \tilde \Pi: T_{p_i}\Lambda \to T_{\Pi(p)}M  \text{ is orientation-preserving}, \\
  k_i=1, \text{if } T\tilde \Pi: T_{p_i}\Lambda \to T_{\Pi(p)}M \text{ is orientation-reversing},
 \end{cases}
\end{equation}
$i \in \{+,-\}$. 
\begin{rmk}
 If $|\mu(p)| = 0$, then the type of $p$ is either $(0,0)$ or $(1,1)$, and if $|\mu(p)| = 1$, then the type of $p$ is either $(0,1)$ or $(1,0)$.
\end{rmk}

We will fix trivializations of the stabilizations of $T \Pi_\C(\Lambda)$ in neighborhoods of $\Pi_\C(p_\pm)$ as follows. Assume that  $(\partial_{x_1},\dotsc,\partial_{x_n})$ is a trivialization of $TM$ in a neighborhood $U$ of $\Pi(p_\pm)$, we extend it to a trivialization $(\partial_{x_1},\dotsc,\partial_{x_n}, \partial_{x_{n+1}}, \partial_{x_{n+2}})$ of $T \tilde M|_U=U \times (\R^n \oplus \aux_1 \oplus \aux_2)$. Now we lift this to trivializations of the two sheets $U_+$, $U_-$ of $\Pi_\C(\Lambda)$ containing $p_+$ and $p_-$, respectively, in the following way. Assume that $p$ is of type $(k_+, k_-)$. Then the lifted trivialization is given by  $(\partial_{x_1},\dotsc,\partial_{x_n}, \partial_{x_{n+1}},(-1)^{k_+}\partial_{x_{n+2}})$ along $U_-$, and by  $(e^{-i\theta_\lambda}\partial_{x_1},\dotsc,e^{-i\theta_\lambda}\partial_{x_k},e^{i\theta_\lambda}\partial_{x_{k+1}},\dotsc,e^{i\theta_\lambda}\partial_{x_n}, \partial_{x_{n+1}},(-1)^{k_-}\partial_{x_{n+2}})$ along $U_+$. Here the first $k$ directions correspond to $T_pW^u(p)$ and the last $n-k$ directions to $T_pW^s(p)$.

We will define two \emph{capping trivializations} $\tilde R_+(p), \tilde R_-(p):\partial D_1 \to U(n+2)$ associated to $p$, as follows. 
Let $\partial D_1$ be parametrized by $s \in [0,1]$, and use the trivialization of $T\Pi_\C(\Lambda)$ to identify $\R^n \subset \C^n$ with $T_p(\Pi_{\C}(\Lambda)) \subset T^*M$. Then the capping trivializations are defined as follows.
\begin{description}
 \item[\it $|\mu(p)|$ even, type $(0,0)$] 
 \begin{align*}
\tilde R_{+}(p) &= \diag
(\underbrace{e^{-i\theta_\lambda s},\dotsc,e^{-i\theta_\lambda s}}_{T_pW^u(p)},\underbrace{e^{-
i(2\pi -\theta_{\lambda})s},\dotsc, e^ {-i(2\pi -\theta_{\lambda})s}}_{T_pW^s(p)},\underbrace{e^{-i\pi s},e^{-i\pi s}}_{\aux})\hat* \\
&{}\qquad R_\pi(n+1, n+2), \\
 \tilde R_{-}(p) &=  R_{-\pi}(n+1, n+2) *\\
&{}\qquad 
\diag(\underbrace{e^{-i \theta_\lambda -i(2\pi -\theta_{\lambda})s},\dotsc, e^ {-i \theta_\lambda -i(2\pi -\theta_{\lambda})s}}_{T_pW^u(p)},\underbrace{e^{i \theta_\lambda -i\theta_\lambda s},\dotsc,e^{i \theta_\lambda -i\theta_\lambda s}}_{T_pW^s(p)},\underbrace{-e^{-i\pi s},-e^{i\pi s}}_{\aux}),
 \end{align*}
  \item[\it $|\mu(p)|$ even, type $(1,1)$]
 \begin{align*}
\tilde R_{+}(p) &= \diag
(\underbrace{e^{-i\theta_\lambda s},\dotsc,e^{-i\theta_\lambda s}}_{T_pW^u(p)},\underbrace{e^{-
i(2\pi -\theta_{\lambda})s},\dotsc, e^ {-i(2\pi -\theta_{\lambda})s}}_{T_pW^s(p)},\underbrace{e^{-i\pi s},-e^{-i\pi s}}_{\aux})\hat * \\
&{}\qquad R_{-\pi}(n+1, n+2), \\
 \tilde R_{-}(p) &= (R_{\pi}(n+1, n+2)\cdot\diag(\underbrace{e^{-i \theta_\lambda},\dotsc, e^{-i \theta_\lambda}}_{T_pW^u(p)},\underbrace{e^{i \theta_\lambda},\dotsc,e^{i \theta_\lambda}}_{T_pW^s(p)},\underbrace{1,-1}_{\aux})  )*\\
&{}\qquad 
\diag(\underbrace{e^{-i \theta_\lambda-i(2\pi -\theta_{\lambda})s},\dotsc, e^{-i \theta_\lambda-i(2\pi -\theta_{\lambda})s}}_{T_pW^u(p)},\underbrace{e^{i \theta_\lambda-i\theta_\lambda s},\dotsc,e^{i \theta_\lambda-i\theta_\lambda s}}_{T_pW^s(p)},\underbrace{-e^{-i\pi s},e^{i\pi s}}_{\aux}),
 \end{align*}
 \item[\it $|\mu(p)|$ odd, type $(1,0)$]
 \begin{align*}
 \tilde R_{+}(p) &= \diag
(\underbrace{e^{-i\theta_\lambda s},\dotsc,e^{-i\theta_\lambda s}}_{T_pW^u(p)},\underbrace{e^{-
i(2\pi -\theta_{\lambda})s},\dotsc, e^ {-i(2\pi -\theta_{\lambda})s}}_{T_pW^s(p)},\underbrace{e^{-2\pi i s},e^{-i\pi s}}_{\aux}), \\
 \tilde R_{-}(p) &= \diag
(\underbrace{e^{-i \theta_\lambda-i(2\pi -\theta_{\lambda})s},\dotsc, e^{-i \theta_\lambda-i(2\pi -\theta_{\lambda})s}}_{T_pW^u(p)}, \underbrace{e^{i \theta_\lambda-i\theta_\lambda s},\dotsc,e^{i \theta_\lambda-i\theta_\lambda s}}_{T_pW^s(p)},\underbrace{1,-e^{i\pi s}}_{\aux}),
 \end{align*}
  \item[\it $|\mu(p)|$ odd, type $(0,1)$, $\dim W^s(p) > 0$]
 \begin{align*}
 &\tilde R_{+}(p) = (R_{\pi}(n,n+2)\cdot\diag(\underbrace{1, \dotsc, 1}_{n+1},-1)   )* \\
 &{}\quad \diag
(\underbrace{e^{-i\theta_\lambda s},\dotsc,e^{-i\theta_\lambda s}}_{T_pW^u(p)},\underbrace{e^{-
i(2\pi -\theta_{\lambda})s},\dotsc, e^ {-i(2\pi -\theta_{\lambda})s}, -e^ {-i(2\pi -\theta_{\lambda})s}}_{T_pW^s(p)},\underbrace{e^{-2\pi i s},e^{-i\pi s}}_{\aux})\hat * \\
&{}\qquad R_{\pi}(n, n+2), \\
  &\tilde R_{-}(p) =  (R_{-\pi}(n, n+2) \cdot\diag
 (\underbrace{e^{-i \theta_\lambda},\dotsc, e^{-i \theta_\lambda}}_{T_pW^u(p)},\underbrace{ e^{i \theta_\lambda},\dotsc,e^{i \theta_\lambda}}_{T_pW^s(p)},\underbrace{1,1}_{\aux}) ) * \\
  &{}\quad \diag
 (\underbrace{e^{-i \theta_\lambda-i(2\pi -\theta_{\lambda})s},\dotsc, e^{-i \theta_\lambda-i(2\pi -\theta_{\lambda})s}}_{T_pW^u(p)},\underbrace{e^{i \theta_\lambda-i\theta_\lambda s},\dotsc,e^{i \theta_\lambda-i\theta_\lambda s},-e^{i \theta_\lambda-i\theta_\lambda s}}_{T_pW^s(p)},\underbrace{1,-e^{i\pi s}}_{\aux})\hat* \\
 &{}\qquad R_{-\pi}(n, n+2).
 \end{align*}
  \item[\it $|\mu(p)|$ odd, type $(0,1)$, $\dim W^s(p) = 0$]
 \begin{align*}
 &\tilde R_{+}(p) = ( R_{\pi}(n,n+2)\cdot\diag(\underbrace{1, \dotsc, 1}_{n+1},-1)  )* \\
 &{}\quad \diag
(\underbrace{e^{-i\theta_\lambda s},\dotsc,e^{-i\theta_\lambda s},-e^{-i\theta_\lambda s}}_{T_pW^u(p)},\underbrace{e^{-2\pi i s},e^{-i\pi s}}_{\aux})\hat * R_{\pi}(n, n+2), \\
  &\tilde R_{-}(p) =  (R_{-\pi}(n, n+2)\cdot \diag
 (\underbrace{e^{-i \theta_\lambda},\dotsc, e^{-i \theta_\lambda}}_{T_pW^u(p)},\underbrace{1,1}_{\aux})  ) * \\
  &{}\quad \diag
 (\underbrace{e^{-i \theta_\lambda-i(2\pi -\theta_{\lambda})s},\dotsc, e^{-i \theta_\lambda-i(2\pi -\theta_{\lambda})s}, -e^{-i \theta_\lambda-i(2\pi -\theta_{\lambda})s}}_{T_pW^u(p)},\,\underbrace{1,-e^{i\pi s}}_{\aux})\hat* \\
 &{}\qquad R_{-\pi}(n, n+2).
 \end{align*}
\end{description}

We also define two weight vectors $\nu_+(p), \nu_-(p)$ associated to $p$, as follows.
\begin{align*}
 &\nu_+(p)=(\underbrace{\delta,\dotsc,\delta}_{T_pW^u(p)},\underbrace{-\delta,\dotsc,-\delta}_{T_pW^s(p)},\underbrace{-\delta,-\delta}_{\aux}) \\
 & \nu_-(p) =(\underbrace{-\delta,\dotsc,-\delta}_{T_pW^u(p)}, \underbrace{\delta,\dotsc,\delta}_{T_pW^s(p)},\underbrace{\delta,\delta}_{\aux})
\end{align*}
and let $\w_{p,\pm} = \w_{\nu_\pm(p)}:D_1 \to \R^{n+2}$ be the associated
weight function.

Let $\dbar_{p,\pm}$ denote the $\dbar$-operator
\begin{equation*}
 \dbar_{p,\pm} : \Hi_{2,\nu_\pm(p)}[\tilde R_{\pm}(p)](D_1, \C^{n+2}) \to  \Hi_{1,\nu_\pm(p)}[0](D_1, \C^{n+2}).
\end{equation*}
\begin{defi}
The operator $\dbar_{p,+}$ ($\dbar_{p,-}$ ) is the \emph{positive (negative) capping operator at $p$}.
\end{defi}

\begin{rmk}\label{rmk:lamdep}
A priori the capping trivializations and the induced capping operators depend on $\lambda$, and we should write $\tilde R_{\pm,\lambda}(p)$ and $\dbar_{p,\pm,\lambda}$, respectively. But this dependence is continuous, and it turns out that there is a $\lambda_0>0$ so that the kernels and cokernels of the operators constitute a vector bundle over $[0,\lambda_0]$. For details, see Section \ref{sec:cappf}. 
\end{rmk}
 

\begin{lma}\label{lma:capbundle0}
For $\lambda$ sufficiently small we have that the kernels and cokernels are given as follows. 

\begin{tabular}{lll}\\
  {\it $p$ positive, $|\mu(p)|$ even:} &
$
 \kd_{p,+} =0, $ &$ \cd_{p,+} \simeq  T_pW^s(p),
 $\\ & \\
  {\it $p$ positive, $|\mu(p)|$ odd:} & 
$
 \kd_{p,+}=0, $ & $\cd_{p,+} \simeq T_pW^s(p)\oplus \aux_1, 
$\\ & \\
 {\it $p$ negative, $|\mu(p)|$ even:} & 
$
  \kd_{p,-} \simeq  \aux_2, $&$  \cd_{p,-} \simeq T_pW^u(p)\oplus \aux_1,   
$ \\ & \\
 {\it $p$ negative, $|\mu(p)|$ odd:} & 
$
  \kd_{p,-} \simeq \aux_2,  $&$  \cd_{p,-} \simeq T_pW^u(p),  
$ \\ & \\
\end{tabular}

\noindent
where the identifications are given by evaluation at $p$.
\end{lma}
\begin{proof}
 Since the problem is split this follows from the similar results in [\cite{legsub}, Section 6.9] and [\cite{orientbok}, Section 3.3.6].
 
 That is, we start by considering the corresponding 1-dimensional problems without putting weights on the Sobolev spaces. From [\cite{jholo}, Appendix C.4] we have that the problem with boundary conditions $e^{-i\pi s}$, $s \in [0,1]$ gives an isomorphism, the problem with boundary conditions $1$ is surjective with a 1-dimensional kernel spanned by a constant solution,   the problem with boundary conditions $e^{i\pi s}$, $s \in [0,1]$ is surjective with 2-dimensional kernel, and that  the problem with boundary conditions $e^{-i2\pi s}$, $s \in [0,1]$ is injective with 1-dimensional cokernel. From Lemma \ref{lma:genkerisoweight} it then follows that the problem with boundary conditions $e^{-i\theta_\lambda s}$, $s \in [0,1]$ is surjective with 1-dimensional kernel for $\theta_\lambda$ sufficiently small, which by [\cite{legsub}, Proposition 6.16] then becomes an isomorphism when we put a positive weight on the corresponding Sobolev space. Similarly, the problem with boundary conditions $e^{-i(2\pi-\theta_\lambda) s}$, $s \in [0,1]$ is injective with 1-dimensional cokernel, and when we put a negative weight on the Sobolev space this cokernel survives, see [\cite{legsub}, Proposition 6.15]. Similarly for the other problems listed above; by imposing a small negative weight we do not change the kernel nor the cokernel, and by imposing a small positive weight the kernel element spanned by the constant solution is killed and hence the dimension of the kernel decreases by one, and in the case we have an isomorphism the positive weight will allow for a cokernel element to exist, that is, we go from having an isomorphism to an injective problem with 1-dimensional cokernel.      
\end{proof}

\begin{cor}\label{cor:grad2}
Let $p$ be a puncture of a $J$-holomorphic disk with boundary on $\Lambda$. Then 
\begin{align*}
    \dim \kd_{p,+} &= 0, 
       &\dim \kd_{p,-} &=1 \\
    \dim \cd_{p,+} &\equiv n + |p|+1, 
       &\dim \cd_{p,-} &\equiv |p| & \pmod{2}.
\end{align*}
\end{cor}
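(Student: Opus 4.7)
The strategy is to read off dimensions directly from Lemma \ref{lma:capbundle0} and then match them against the parity claimed in the corollary using the grading formula \eqref{eq:grading}.

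First, for the kernels, Lemma \ref{lma:capbundle0} gives $\kd_{p,+}=0$ in both parity cases for a positive capping operator, so $\dim\kd_{p,+}=0$. Similarly $\kd_{p,-}\simeq \aux_2$ in both parity cases for a negative capping operator, and since $\aux_2\simeq \R$, we get $\dim\kd_{p,-}=1$.

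For the cokernels, the key identity to use is the grading formula: from $|p|=\mu(\gamma_p)+I(p)-1$ we have
\begin{equation*}
  |p|+1 \equiv |\mu(p)|+I(p) \pmod 2.
\end{equation*}
Now I split on the parity of $|\mu(p)|$. If $p$ is positive and $|\mu(p)|$ is even, then $\cd_{p,+}\simeq T_pW^s(p)$, so $\dim\cd_{p,+}=n-I(p)\equiv n+I(p)\equiv n+|p|+1\pmod 2$. If $p$ is positive and $|\mu(p)|$ is odd, then $\cd_{p,+}\simeq T_pW^s(p)\oplus\aux_1$, so $\dim\cd_{p,+}=n-I(p)+1\equiv n+I(p)+1\equiv n+|p|+1\pmod 2$. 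In both cases the congruence claimed for $\dim\cd_{p,+}$ holds.

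For the negative capping operator, if $|\mu(p)|$ is even, then $\cd_{p,-}\simeq T_pW^u(p)\oplus\aux_1$, giving $\dim\cd_{p,-}=I(p)+1\equiv |p|\pmod 2$; and if $|\mu(p)|$ is odd, then $\cd_{p,-}\simeq T_pW^u(p)$, giving $\dim\cd_{p,-}=I(p)\equiv |p|\pmod 2$. This handles the remaining congruence. There is no substantive obstacle — all the analytic input is packaged in Lemma \ref{lma:capbundle0}, and the corollary is a straightforward parity check using the grading formula together with $\dim W^u(p)+\dim W^s(p)=n$.
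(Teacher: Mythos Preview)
Your proof is correct and follows essentially the same approach as the paper: both read off the kernel and cokernel dimensions from Lemma~\ref{lma:capbundle0}, use the grading formula \eqref{eq:grading} in the form $|p|\equiv I(p)+|\mu(p)|+1\pmod 2$, and then split on the parity of $|\mu(p)|$ to verify the congruences. The only cosmetic difference is the direction of the computation---the paper starts from $|p|$ and rewrites it in terms of the cokernel dimensions, whereas you start from the cokernel dimensions and rewrite them in terms of $|p|$.
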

\begin{proof}
 From the definition of the grading of $p$ we get that
 \begin{align*}
  &|p|\equiv I(p)+ \mu(\gamma_p) +1 \equiv \\
  &\begin{cases}
    \dim W^u(p) +1 \equiv n+ \dim W^s(p) +1 \equiv n+ \dim \cd_{p,+}+1, &  p \text{ pos, $|\mu(p)|$ even},\\
    \dim W^u(p) +1 \equiv  \dim \cd_{p,-}, & p \text{ neg, $|\mu(p)|$ even},\\
    \dim W^u(p)  \equiv n+ \dim W^s(p)  \equiv n+ \dim \cd_{p,+}+1, &  p \text{ pos, $|\mu(p)|$ odd},\\
    \dim W^u(p) \equiv  \dim \cd_{p,-}, & p \text{ neg, $|\mu(p)|$ odd,}\\
  \end{cases}
 \end{align*}
 everything modulo 2.
\end{proof}

\subsubsection{Orientations of capping operators} \label{sec:capopor}
We have to pick
orientations of the determinant lines of the capping operators in a controlled way. To do this, we use the constructions in \cite{orientbok}. This works as follows.

Pick an orientation of $\det \dbar_{p,+}$ for each Reeb chord 
$p$. Then use Lemma \ref{lma:gluweights} and the canonical orientation of $\det \dbar$ on the
non-punctured disk to orient $\det \dbar_{p,-}$.
More precisely, let $\dbar_{p}=\dbar_{\tilde R_- \#\tilde R_+}$ denote the
glued operator. From Lemma  \ref{lma:gluweights} we get the following exact sequence 
\begin{align}\label{eq:capseq}
 0 \to \Ker \dbar_{p} \to
\begin{bmatrix}
 \kd_{p,+} \\
\kd_{p,-}
\end{bmatrix}
\to 
\begin{bmatrix}
 \cd_{p,+} \\
\cd_{p,-}
\end{bmatrix}
\to 
\Coker \dbar_{p} \to 0.
\end{align}
Using that $\det \dbar_{p}$ and $\det \dbar_{p,+}$ already are oriented, we get an induced orientation of  $\det \dbar_{p,-}$. Such a choice of orientations of $\det \dbar_{p,+}$ and $\det \dbar_{p,-}$ will be called a \emph{coherent orientation choice}. 

\begin{rmk}
 In Section \ref{sec:elementor} we will describe a choice of orientations of $\det \dbar_{p,+}$ making it possible for us to compute the signs of the rigid trees of $\Lambda$ explicitly.
\end{rmk}

\begin{rmk}
  Lemma \ref{lma:caplim} implies that we can do this continuously in
  $\lambda$ if $\lambda$ is sufficiently small.
\end{rmk}

\subsubsection{Capping operators at special punctures}\label{sec:capspec}
Let $p$ be a special puncture of a partial flow tree $\Gamma$. To be able to close up the boundary conditions for disks associated to partial flow trees and in that way give them a capping orientation, we define capping operators $\dbar_{p,+}$ and $\dbar_{p,-}$. This is done similar to the case of true punctures, with some modifications in the auxiliary directions.

We define the type of a special puncture in the same way as for true punctures, and we also fix trivializations of the stabilized tangent space of $\Pi_\C(\Lambda)$ in neighbourhoods of these punctures in a completely analogous fasion. 
Then we define the trivialized capping boundary conditions associated to a special puncture by
\begin{description}
 \item[\it $|\mu(p)|$ even, type $(0,0)$] 
 \begin{align*}
\tilde R_{+}(p) &= \diag(\underbrace{e^{-i\theta_\lambda s},\dotsc,e^{-i\theta_\lambda s}}_{T_pM},\underbrace{e^{-i\pi s},e^{-i\pi s}}_{\aux}) \hat * R_\pi(n+1, n+2), \\
  \tilde R_{-}(p) &= \diag
 (\underbrace{e^{-i \theta_\lambda +i\theta_\lambda s},\dotsc,e^{-i \theta_\lambda +i\theta_\lambda s}}_{T_pM},\underbrace{1,1}_{\aux}) 
\end{align*}
  \item[\it $|\mu(p)|$ even, type $(1,1)$]
 \begin{align*}
\tilde R_{+}(p) &= \diag
(\underbrace{e^{-i\theta_\lambda s},\dotsc,e^{-i\theta_\lambda s}}_{T_pM},\underbrace{e^{-i\pi s},-e^{-i\pi s}}_{\aux}) \hat * R_{-\pi}(n+1, n+2), \\
  \tilde R_{-}(p) &= \diag
 (\underbrace{e^{-i \theta_\lambda +i\theta_\lambda s},\dotsc,e^{-i \theta_\lambda +i\theta_\lambda s}}_{T_pM},\underbrace{1,-1}_{\aux}) 
\end{align*}
 \item[\it $|\mu(p)|$ odd, type $(1,0)$]
 \begin{align*}
\tilde R_{+}(p) &= 
\diag(\underbrace{e^{-i\theta_\lambda s},\dotsc,e^{-i\theta_\lambda s}}_{T_pM},\underbrace{e^{-i\pi s},1}_{\aux}) \hat* R_{-\pi}(n+1, n+2), \\
  \tilde R_{-}(p) &= \diag
 (\underbrace{e^{-i \theta_\lambda +i\theta_\lambda s},\dotsc,e^{-i \theta_\lambda +i\theta_\lambda s}}_{T_pM},\underbrace{1,-e^{i\pi s}}_{\aux}) 
 \end{align*}
  \item[\it $|\mu(p)|$ odd, type $(0,1)$]
 \begin{align*}
 \tilde R_{+}(p) &= ( R_{\pi}(n, n+2)\cdot\diag(\underbrace{1,\dotsc,1}_{n+1},-1) ) *\\
 &{}\qquad \diag(\underbrace{e^{-i\theta_\lambda s},\dotsc,e^{-i\theta_\lambda s},-e^{-i\theta_\lambda s}}_{T_pM},\underbrace{e^{-i\pi s},1}_{\aux})\hat* \\
&{}\qquad R_{-\pi}(n+1, n+2)\hat*R_{\pi}(n,n+2), \\
  \tilde R_{-}(p) &=  ( R_{-\pi}(n, n+2)\cdot \diag(\underbrace{e^{-i \theta_\lambda },\dotsc,e^{-i \theta_\lambda}}_{T_pM},\underbrace{1,1}_{\aux}) ) * \\
  &{}\quad \diag
 (\underbrace{e^{-i \theta_\lambda +i\theta_\lambda s},\dotsc,e^{-i \theta_\lambda +i\theta_\lambda s},-e^{-i \theta_\lambda +i\theta_\lambda s}}_{T_pM},\underbrace{1,-e^{i\pi s}}_{\aux}) \hat* \\
 &{}\qquad R_{-\pi}(n, n+2).
 \end{align*}
\end{description}
%


We also define two weight vectors $\nu_+(p), \nu_-(p)$ associated to $p$, as follows.
\begin{align*}
 &\nu_+(p)=(\underbrace{\delta,\dotsc,\delta}_{T_pM},\underbrace{-\delta,-\delta}_{\aux}), \quad
 & \nu_-(p) =(\underbrace{\delta,\dotsc,\delta}_{T_pM},\underbrace{\delta,\delta}_{\aux})
\end{align*}
and let $\w_{p,\pm} = \w_{\nu_+(p)}:D_1 \to \R^{n+2}$ be the associated weight function.
Note that we in fact may assume that $\theta_\lambda = 0$.

\begin{lma}\label{lma:special}
If $p$ is a special puncture we get that 

\begin{tabular}{lll}\\
  {\it $|\mu(p)|$ even:} &
$
 \kd_{p,\pm} =0, $ &$ \cd_{p,\pm} \simeq  0
 $\\ & \\
  {\it $|\mu(p)|$ odd:} & 
$
 \kd_{p,\pm}=\aux_2, $ & $\cd_{p,+} \simeq 0 
$ \\ & \\
\end{tabular}

\noindent
where the identifications are given by evaluation at $p$.
\end{lma}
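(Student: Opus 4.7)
The plan is to reduce to scalar weighted $\dbar$-problems on $D_1$ and then apply the standard Fredholm theory from [\cite{legsub}, Section 6], exactly as in the proof of Lemma \ref{lma:capbundle0}. Since both the trivialized boundary conditions $\tilde R_\pm(p)$ and the weight vectors $\mu_\pm(p)$ are diagonal in the chosen frame, the operator $\dbar_{p,\pm}$ splits as a direct sum of $n+2$ one-dimensional problems, and it suffices to compute each summand separately.

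First I would handle the $n$ summands indexed by the $T_pM$ directions, where the observation that $\theta_\lambda$ may be set to $0$ at a special puncture makes each summand a $\dbar$-problem with trivial $\R$-boundary condition and positive weight $+\delta$. Standard theory then gives vanishing kernel (the positive weight kills the real constants that span the unweighted kernel) and vanishing cokernel, so these $n$ summands contribute nothing to $\kd_{p,\pm}$ or $\cd_{p,\pm}$.

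Next I would analyze the two auxiliary summands case by case. In the even case, $\tilde R_+(p)$ gives boundary condition $e^{-i\pi s}$ with weight $-\delta$ in each of the two auxiliary directions; the weight must be nonzero since $0$ is an asymptotic eigenvalue whenever the rotation is a multiple of $\pi$, and a direct spectral count shows both kernel and cokernel vanish. The $\tilde R_-(p)$ model in the even case is trivial boundary with positive weight and again contributes nothing. In the odd case one auxiliary direction behaves as in the even case, while the other is either a trivial boundary with negative weight $-\delta$ (for $\dbar_{p,+}$) or an $e^{i\pi s}$ boundary with positive weight $+\delta$ (for $\dbar_{p,-}$); in each of these two models the real constant function in the $\aux_2$ factor lies in the weighted kernel while the cokernel remains trivial, producing the identification $\kd_{p,\pm}\simeq\aux_2$ by evaluation at $p$.

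The only delicate point will be the spectral bookkeeping at the half-rotation boundary conditions $e^{\pm i\pi s}$, where $0$ is an asymptotic eigenvalue and a small weight of the correct sign is required for Fredholmness; but this is handled in exactly the same way as in \cite{legsub} and in the proof of Lemma \ref{lma:capbundle0}, so no genuinely new analytic input is needed. Assembling the four cases of the table then completes the proof.
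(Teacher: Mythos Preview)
Your approach is correct and is exactly the (implicit) argument the paper intends---the lemma is stated without proof, and the obvious proof is the one you give: split into $n+2$ one-dimensional weighted problems and invoke \cite{legsub}, just as in Lemma \ref{lma:capbundle0}.

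There is one slip. In the odd case for $\dbar_{p,-}$, the $\aux_2$ summand has boundary condition $e^{i\pi s}$ with positive weight $+\delta$, and you claim that the real constant function spans the kernel. It does not: a nonzero constant $c$ would have to lie in $e^{i\pi s}\R$ for every $s\in(0,1)$, in particular in $i\R$ at $s=1/2$ and in $\R$ at $s\to 0$, forcing $c=0$. The kernel of this one-dimensional problem is still one-dimensional (index $1$, surjective), but it is spanned by a linear function vanishing at the puncture---compare the computation in Lemma \ref{lma:endvcon}, where the analogous $e^{i\pi s}$, $+\delta$ problem has kernel spanned by $i(z-1)$. Your conclusion $\kd_{p,-}\simeq\aux_2$ is therefore correct, but the justification should be an index-plus-surjectivity argument (or the explicit solution $i(z-1)$), not the constant. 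The identification ``by evaluation at $p$'' then refers to the fact that the kernel lies entirely in the $\aux_2$ coordinate, not that the spanning section is constant.
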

 
 The orientations of $\det \dbar_{p-}$ and $\det \dbar_{p+}$ are defined
 analogous to how it is done for true punctures. That is,
 we will fix an orientation of  $\det \dbar_{p+}$ and let it induce an orientation of $\det \dbar_{p-}$ via the exact sequence
  \begin{align}\label{eq:capseqspec}
 0 \to \Ker \dbar_{p} \to
\begin{bmatrix}
 \kd_{p,+} \\
 \R^n\\
\kd_{p,-}
\end{bmatrix}
\to 
\begin{bmatrix}
 \cd_{p,+} \\
\cd_{p,-}
\end{bmatrix}
\to 
\Coker \dbar_{p} \to 0.
\end{align}
The only difference is that we will use $(-1)^{n \cdot|\mu(p)|}$ times the canonical orientation of $\det \dbar_p$. 
 In Section \ref{sec:elementor} we will describe a choice of orientations of $\det \dbar_{p,+}$ making it possible for us to compute the signs of the rigid trees of $\Lambda$ explicitly.

\subsubsection{Capping orientation of a punctured $J$-holomorphic disk}\label{sec:cappingor}
Now we define the orientation of the linearized $\dbar$-operator at a punctured $J$-holomorphic disk $u$.

The orientation of $\dbar_u$ will be induced by the canonical
orientation of the $\dbar$-operator on the non-punctured disk,
together with the capping orientations of the capping operators of the
punctures of $u$. The  capping operators are glued to $u$ at the
punctures, to obtain the \emph{fully capped disk} $\hat u$ with
corresponding trivialized boundary condition $\hat{\tilde A}_u
$ and $\dbar$-problem 
\begin{equation*}
 \dbar_{\hat u}: \Hi_2[\hat{\tilde A}_u](D, \hat u^*\tilde TT^*M) \to \Hi_1[0](D,T^{*0,1}D \otimes  \hat u^*\tilde TT^*M). 
\end{equation*}
We denote this problem the \emph{fully capped problem corresponding to $u$}. 

The order in which we glue the capping operators to $u$ will be important, and we fix it as follows. Assume that the punctures of $u$ are given by $(q_0,\dotsc,q_m)$, where $q_0$ is the positive puncture and the negative punctures are ordered in the counterclockwise direction along $\partial D_{m+1}$. Then we first glue the capping trivialization $\tilde{R}_+(q_0)$ to $\tilde A_u$ at $q_0$, then $\tilde{R}_-(q_m)$ to $\tilde A_u$ at $q_m$, then $\tilde{R}_-(q_{m-1})$ to $\tilde A_u$ at $q_{m-1}$, and so on, until we get the trivialized boundary condition $\hat{\tilde A}_u$ on the closed disk. 

By the iterated version of Lemma \ref{lma:gluweights}, these gluings induce an exact sequence
\begin{align}
\label{eq:cappingseqtilde}
 0 \to \kd_{\hat{u}} \to
 \begin{bmatrix}
 \kd {q_{1,-}} \\
 \vdots\\
 \kd {q_{m,-}} \\
 \kd {q_{0,+}} \\
 \kd_{u}
 \end{bmatrix} 
 \to 
  \begin{bmatrix}
  \cd {q_{1,-}} \\
  \vdots \\
  \cd {q_{m,-}} \\
  \cd {q_{0,+}} \\
  \cd_{u}
  \end{bmatrix}
  \to  
  \cd_{\hat u} 
  \to 0,
\end{align}
and if we give $\det \dbar_{\hat u}$ the canonical orientation and fix
orientations of the capping operators, we then get an induced
orientation of $\det \dbar_{u}$. Using that the stabilized problem
$\dbar_u$ is split and is an isomorphism in the auxiliary directions,
this in turn induces an orientation of the linearized $\dbar$-operator
at $u$ in a canonical way. This is the \emph{capping orientation of
  $u$}. Using the isomorphism \eqref{eq:conf2} and that we
have fixed an orientation of the space of conformal variations we get
that this induces an orientation of $u$ which can be represented by a
sign $\sigma_{\capp}(u)$,  \emph{the capping sign of $u$}.

\begin{rmk}
 For $\lambda>0$ we can as well use operators defined on unweighted spaces in directions corresponding to $\Lambda$, and get an sequence isomorphic to the sequence \eqref{eq:cappingseqtilde}. 
\end{rmk}

\begin{defi}
 We will use the notation
 \begin{align*}
\Ker \capp(u) &= \bigoplus_{i=1}^m \kd_{q_{i,-}} \oplus \kd_{q_{0,+}}  \\                                                                                  
 \Coker \capp(u) &= \bigoplus_{i=1}^m \cd_{q_{i,-}} \oplus \cd_{q_{0,+}}. 
\end{align*}
\end{defi}
\begin{lma}\label{lma:kerisocap}
Let $u$ be a rigid $J$-holomorphic disk of $\Lambda_\lambda$.  If $\lambda$ is sufficiently small, then
\begin{align*}
\kd_{\hat{ u}} &\simeq \kd_u \oplus \Ker \capp(u) \\
\cd_{\hat{ u}} &\simeq \cd_u \oplus \Coker \capp(u).
\end{align*}
\end{lma}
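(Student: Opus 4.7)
The plan is to reduce the claim to showing that the middle map $\beta$ in the iterated gluing exact sequence \eqref{eq:cappingseqtilde} vanishes for $\lambda$ sufficiently small; once $\beta=0$ is established, exactness of \eqref{eq:cappingseqtilde} forces $\alpha\colon \kd_{\hat u}\xrightarrow{\sim}\kd_u\oplus \Ker\capp(u)$ and $\gamma\colon \cd_u\oplus \Coker\capp(u)\xrightarrow{\sim}\cd_{\hat u}$, which are the desired isomorphisms. As a sanity check, a coordinate-wise application of Lemma \ref{lma:indexglu}(iii) (the weights at $u$ and at the matching capping disk have opposite signs in every direction) gives $\ind\dbar_{\hat u}=\ind\dbar_u+\sum_q\ind\dbar_{q,\pm}$, so the alternating dimension sums already agree on the two sides of the claimed isomorphisms.

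The key structural input is that the stabilized trivialized boundary conditions $\tilde A_u$ and the capping trivializations $\tilde R_\pm(q)$ are all diagonal in the splitting $T\tilde\Lambda=T\hat\Lambda\oplus \aux_1\oplus \aux_2$ (compare Sections \ref{sec:triv} and \ref{sec:capping}). Consequently $\dbar_u$, each $\dbar_{q,\pm}$ and the glued $\dbar_{\hat u}$ all split along this decomposition, and so does the entire exact sequence \eqref{eq:cappingseqtilde}. It is therefore enough to check that $\beta^{\Lambda}$, $\beta^{\aux_1}$ and $\beta^{\aux_2}$ all vanish. The auxiliary pieces are immediate from Lemma \ref{lma:capbundle0}: in the $\aux_1$ direction every capping kernel is zero (the $\aux_2$ summand of $\kd_{q,-}$ sits in the other auxiliary factor) and $\dbar_u^{\aux}$ is an isomorphism, so the domain of $\beta^{\aux_1}$ is zero; in the $\aux_2$ direction the domain $\bigoplus \aux_2$ is nonzero but none of the cokernels $\cd_{q,\pm}$ has any $\aux_2$-component and $\cd_u^{\aux}=0$, so the codomain of $\beta^{\aux_2}$ is zero.

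For the $T\hat\Lambda$-part, Lemma \ref{lma:capbundle0} once more kills all capping kernels, so the domain of $\beta^{\Lambda}$ reduces to $\kd_u^{\Lambda}$. When $u$ has at least two negative punctures, rigidity forces $D\dbar_{u,\kappa}$ to be an isomorphism, so the exact sequence \eqref{eq:esdet} gives $\kd_u^{\Lambda}=0$ and $\beta^{\Lambda}=0$ trivially. The remaining case $m\leq 1$ is the main technical obstacle: then $\kd_u^{\Lambda}\simeq T\T_{m+1}$ is spanned by infinitesimal conformal automorphisms of $D_{m+1}$, and one must show that these lift to genuine elements of $\kd_{\hat u}$. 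I would do this via a standard pre-gluing / Newton iteration as in the gluing analysis of \cite{orientbok}: the cut-off of an infinitesimal automorphism has $\dbar_{\hat u}$-error concentrated in the gluing annuli, and since $\theta_\lambda\to 0$ as $\lambda\to 0$ this error shrinks in the weighted norm while a right inverse of $\dbar_{\hat u}$ on the $L^2$-complement of the approximate kernel remains uniformly bounded; Newton iteration then yields an honest element of $\kd_{\hat u}$ whose $\alpha$-projection is the prescribed automorphism. Hence $\kd_u^{\Lambda}$ lies in the image of $\alpha$, $\beta^{\Lambda}$ vanishes, and combining the three sub-sequences gives $\beta=0$.
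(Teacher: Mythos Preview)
Your overall strategy coincides with the paper's: split the exact sequence \eqref{eq:cappingseqtilde} along $T\hat\Lambda$ versus auxiliary directions, dispose of the auxiliary piece using Lemma~\ref{lma:capbundle0} and the fact that $\dbar_u$ is an isomorphism there, and for $m\ge 2$ use that $\kd_u^\Lambda=0$ by rigidity. One small correction: the closing-up $\pi$-rotation of Section~\ref{sec:capping} mixes $\aux_1$ and $\aux_2$, so the clean three-way split you invoke is really only a two-way split $T\hat\Lambda\oplus\aux$; your argument survives this, since the capping kernels still sit in $\aux_2$ and the capping cokernels in $T\hat\Lambda\oplus\aux_1$, hence $\beta^{\aux}$ still vanishes by orthogonality.

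Where you genuinely diverge from the paper is the $m\le 1$ case. You propose a direct pre-gluing and Newton iteration to lift each infinitesimal automorphism in $\kd_u^\Lambda$ to an honest element of $\kd_{\hat u}$. The paper instead passes to the limit $\lambda\to 0$: for $m\le 1$ the operator $\dbar_u$ is surjective (rigidity plus transversality), the boundary conditions degenerate to explicit constant/rotational models, and one checks the isomorphism there; constancy of kernel dimension over the surjective family then propagates the result to small $\lambda>0$. The paper's route is cleaner because it plugs directly into the vector-bundle machinery of Section~\ref{sec:glupf} and Appendix~\ref{app:mini}, avoiding any discussion of uniform right inverses on the capped problem (which is \emph{not} surjective, so your iteration really needs a projection to $\im\dbar_{\hat u}$ and control of the error's cokernel component --- doable, but not addressed in your sketch). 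Your approach has the advantage of being self-contained at this point in the paper, without forward references.
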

\begin{proof}
 First of all, notice that all elements in $\Ker \capp(u)$ are contained in the $\aux_2$-direction, and that we do not have any cokernel elements in this direction. Similarly, we have no kernel elements in the $\aux_1$-direction. Also recall that the $\dbar_u$-operator is an isomorphism in these directions. Thus, from the exact sequence \eqref{eq:cappingseqtilde} it then follows that the statement holds for the operators restricted to the auxiliary directions,
and thus it suffices to consider the spaces restricted to the directions corresponding to $\Lambda$. 

If $u$ has more than 1 negative punctures the statement follows from the exact sequence \eqref{eq:cappingseqtilde} together with the fact that both $\kd_u$ and $\Ker \capp(u)$ is trivial in the directions corresponding to $\Lambda$.

If $u$ has 0 or 1 negative punctures, the statement follows from passing to the limit $\lambda =0$, using that the family of $\dbar$-problems we get are all surjective together with the explicit description of the capping operators and the trivialization at the punctures. 
\end{proof}




%
 \section{Capping orientation of flow trees}\label{sec:cutstab}
In this section we define the capping orientation of a (partial) flow tree $\Gamma$. 
We will make use of the constructions in the proof of [\cite{trees}, Theorem 1.3], where the existence of a family
of rigid $J$-holomorphic disks of ${\Lambda}$ converging to $\Gamma$  is established.  This existence is proven by considering a sequence of pre-glued disks $\{w_\lambda\}_{\lambda \to 0}$, which
has explicitly given boundary conditions that are close to the cotangent lift of $\Gamma$, and then using a Picard-Floer lemma to find a true $J$-holomorphic disk $u_\lambda$ arbitrary close to $w_\lambda$ for each $\lambda$ sufficiently small. For details we refer to \cite{trees}.


In Subsection \ref{sec:analys} we relate Sobolev spaces and
$\dbar$-operators to flow trees, using the Lagrangian  boundary
conditions coming from $w_\lambda$. In Subsection \ref{sec:treetriv}
we describe an explicit trivialization of the boundary conditions in neighborhoods of vertices, and then we describe a way to compare this with the trivialization induced by the chosen spin structure. Then we use this in Subsection \ref{sec:capptree}  where we define the capping orientation of a (partial) flow tree, which will be equal to the  capping orientation of the corresponding (piece of the) pre-glued disk $w_\lambda$. Here we consider $\lambda$ as small and fixed. In Section \ref{sec:glupf}
we will take care of the $\lambda$-dependence in this definition, by introducing vector bundles over $(0, \lambda_0]$ for $\lambda_0$ sufficiently small and prove that the capping orientation of $\Gamma$  is well-defined if $\lambda$ is sufficiently small.

There will be a slight difference between the definition of the capping orientation of $w_\lambda$ given in this section and the one given in Section \ref{sec:background}. That is, in this section we will consider the domain of $w_\lambda$ to be given by the standard domain associated to $\Gamma$, while in Section \ref{sec:background} we considered it as given by the punctured unit disk in $\C$. 
In Subsection \ref{sec:endmp} we describe how this affects  orientations.

\subsection{Banach spaces associated to trees}\label{sec:analys}
Let $\Gamma$ be a rigid flow tree of $\Lambda$, and let $w_\lambda$,
$\lambda \to 0$, denote the associated sequence of pre-glued
disks. Here we will consider the domain of $w_\lambda$ to be given by
a standard domain, which we will denote by $\Delta(\Gamma_\lambda)$ to
indicate the $\lambda$-dependence.
 Namely, from [\cite{trees}, Remark 5.34] it follows the
 $\R^{m-2}$-coordinates of the conformal structure of
 $\Delta_{m+1}(\Gamma_\lambda)$ tend to $\infty$ as $\lambda \to
 0$. However, it is proven that $\Gamma$ determines the coordinates of the conformal structures on  $\Delta_{m+1}(\lambda)$ within $O(\log(\lambda^{-1}))$. 

Let $\Theta (\Gamma_\lambda):\partial \Delta(\Gamma_\Lambda) \to
\Lag(n+2)$ be the Lagrangian boundary condition associated to $w_\lambda$.                                                                         
Note that we are considering the boundary condition as extended to the
auxiliary direction.
By \cite{trees} we may assume that $\Theta(\Gamma_\lambda)$ tends to $\R^{n+2}$-boundary conditions as $\lambda\to 0$, except in parts corresponding to ends, switches and $Y_1$-vertices, where it tends to split boundary conditions $U \times \R^{n+1}$, where $U\subset \C$ is a uniform $+ \pi$-rotation given by the matrix $e^{i\pi s}$ acting on $\R \subset \C$, $s\in [0,1]$, if the vertex is an end, and a  uniform $- \pi$-rotation given by the matrix $e^{-i\pi s}$ acting on $\R \subset \C$, $s\in [0,1]$, otherwise.


For  end-pieces we will make a slight adjustment. That is, notice that
if $\Gamma$ has $k>0$ end vertices,  then the punctured unit disk
corresponding to $\Delta(\Gamma_\lambda)$ under the Riemann mapping
theorem has $k$ more marked points than the punctured unit disk
corresponding  to $w_\lambda$ in the setting of Section
\ref{sec:background}. Due to this, we will define capping operators
for end vertices, and we will use these operators to catch the
$\pi$-rotation performed by the boundary condition corresponding to
$w_\lambda$ at the end point. That is, we assume that the stabilized boundary conditions for the end-piece close to the end-vertex is given by $\R^{n+2}$, and we
define the capping operator of the point corresponding to the end
vertex $e$ to be given by
\begin{equation*}
 \dbar_e : \Hi_{2,\nu_e}[\tilde R_e](D_1,\C^{n+2}) \to \Hi_{1,\nu_e}(D_1, \C^{n+2})
\end{equation*}
where, for $\partial D_1 = \{e^{i\pi s}, s \in [0,1)\} $, and where $p$ is the positive special puncture of the end-piece,
\begin{equation*}
 \tilde R_e = \left(R_{-\pi}(n, n+2) \cdot \diag( \underbrace{1,\dotsc ,1}_{n+1},-1) \right) * \diag( \underbrace{1,\dotsc ,1}_{n-1},-e^{i\pi s},1,1), \quad p \text{ of type } (1,0),
\end{equation*}
\begin{equation*}
 \tilde R_e =  \diag( \underbrace{1,\dotsc ,1}_{n-1},e^{i\pi s},1,1) \hat * R_{-\pi}(n, n+2), \quad p \text{ of type } (0,1),
\end{equation*}
and where 
\begin{equation*}
 \nu_e =(\delta,\dotsc,\delta).
\end{equation*}
We assume that $\Sigma$ is projected to $\{x_n=0\}$ in local
coordinates at the end vertex under consideration. See Figure \ref{fig:cusp}.


In Definition \ref{def:endcap} in Section \ref{sec:endmp} below we
will define an orientation of $\det \dbar_e$. Here we notice the following.
\begin{lma}
 We have
 \begin{equation*}
  \dim \kd_e =1, \qquad \dim \cd_e =0
 \end{equation*}
 and the kernel is spanned by $i(z-1)\partial_{x_n}$.
\end{lma}
\begin{proof}
This is similar to the proof of Lemma \ref{lma:capbundle0}. Namely, from [\cite{jholo}, Appendix C.4] it follows that the $\dbar$-problem on the closed disk with trivialized boundary conditions given by $\pm 1$ is surjective with a 1-dimensional kernel spanned by $1$, while the problem with trivialized boundary conditions given by $e^{i\pi s}$ is surjective with a 2-dimensional kernel spanned by $1+z$ and  $i(z-1)$. If we puncture the disk at $z=1$ and put a positive weight there we see that the constant solution in the first case and the solution $1+z$ in the second case is killed. Hence the statement follows.
\end{proof}


We also associate a weight vector $\nu_i$ to each strip-like end of $\Delta(\Gamma_\lambda)$. For regions corresponding to punctures, we use the same vectors as defined in Section \ref{sec:spintriv}.  For regions corresponding to special punctures $p$ we use the following
\begin{equation*}
 \nu_p=(\underbrace{-\delta,\dotsc,-\delta}_{T_pM}, \underbrace{\delta, \delta}_{\aux})
\end{equation*}
if $p$ is positive and 
\begin{equation*}
 \nu_p=(\underbrace{-\delta,\dotsc,-\delta}_{T_pM}, \underbrace{-\delta, -\delta}_{\aux})
\end{equation*}
if $p$ is negative.
For regions corresponding to end vertices we use the following
\begin{equation*}
 \nu_e=(-\delta,\dotsc,-\delta).
\end{equation*}
Thus, to each elementary piece we get a corresponding $\dbar$-problem
which is Fredholm, and these problems can be glued together just as in
the case for $\dbar$-problem associated to $J$-holomorphic disks. This
implies that for each (partial) flow tree $\Gamma$ of $\Lambda$ and
each $\lambda$ sufficiently small we get a corresponding $\dbar$--problem
\begin{align*}
 \dbar_{\Gamma_{\lambda}} :\Hi_{2,\nu}[\Gamma_{\lambda}] \to \Hi_{1,\nu}[\Gamma_{\lambda}], \qquad
 \dbar_{\Gamma_{\lambda}}(v) = dv + J \circ dv \circ i  
\end{align*}
which is Fredholm.
Here the Sobolev spaces $\Hi_{i,\nu}[\Gamma_{\lambda}]$ are defined as
\begin{align*}
 \Hi_{2,\nu}[\Gamma_{\lambda}] &= \Hi_{2,\nu_{\Gamma}}[\Theta(\Gamma_\lambda)](\Delta(\Gamma_\lambda),
                                 w_\lambda^*TT^*M\oplus \R^2),\\
 \Hi_{1,\nu}[\Gamma_{\lambda}] &=
                                 \Hi_{1,\nu_{\Gamma}}[0](\Delta(\Gamma_\lambda),
                                 T^{*0,1}\Delta(\Gamma_{\lambda})\otimes
                                 w_\lambda^*TT^*M\oplus \R^2),
\end{align*}
following the notation of Section \ref{sec:jholo}. Here we simplify notation and use $w_\lambda$ to
denote the part of the pre-glued disk corresponding to $\Gamma$ in the
case when $\Gamma$ is a partial flow tree, and the weight $\nu_\Gamma$ is defined to consist of the weight vectors associated to the punctures of $\Gamma$.

 \subsection{Trivializations associated to trees}\label{sec:treetriv}
Let $\s$ be the chosen spin structure of $\Lambda$ and consider the  stabilized tangent bundle of $\Pi_{\C}(\Lambda)$;
\begin{equation*}
  T \tilde \Lambda := T \Pi_{\C}(\Lambda) \oplus \aux_1 \oplus
  \aux_2.
\end{equation*}
Let $\tilde \Gamma$ be the cotangent lift of $\Gamma$, and let $\tilde
\Gamma^*T\tilde \Lambda \subset \partial \Delta(\Gamma)\times
\C^{n+2}$ denote the Lagrangian pullback bundle over $\partial
\Delta(\Gamma)$. Recall from Section \ref{sec:triv} that $\s$ induces a Lagrangian trivialization of $\tilde
\Gamma^*T\tilde \Lambda$, which we will denote by $A_\s(\Gamma)$. In this section we will define another, default trivialization $A_d(\Gamma)$ and then we will explain how we can compensate for choosing the default trivialization instead of the one induced by the spin structure by a sign. We start with defining explicit trivializations along the parts of $\Delta(\Gamma)$ that correspond to elementary trees, and then we use the edge point regions to glue together the trivializations to a trivialization $A_d(\Gamma)$.


To that end, consider the lift of an elementary tree $\Gamma' \subset \Gamma$, and let $\Lambda_1(\Gamma'), \dotsc,$ $\Lambda_k(\Gamma')$, $k=2,3$ or $4$ depending on the type of elementary tree, denote the corresponding sheets of $\Lambda$. Assume that the $z$-coordinate restricted to $\Lambda_i(\Gamma')$ is greater than or equal to the $z$-coordinate restricted to $\Lambda_j(\Gamma')$ for $i < j$. Let $\tilde \gamma_i$ be the lift of $\Gamma'$ contained in $\Lambda_i(\Gamma')$, and let $\Lambda_i^\C(\Gamma')$, $\gamma_i$ be the Lagrangian projection of $\Lambda_i(\Gamma')$ and $\tilde \gamma_i$, respectively, for $1\leq i \leq k$. 

Recall that in a neighborhood of the cotangent lift of $\Gamma'$ we can identify $T^*M$ with $\C^n$, where the zero section of $T^*M$ is identified with the real part $\R^n$. Also recall that, using this local identification we may assume that $\Lambda_i^\C(\Gamma')$ is close to being a parallel copy of $\R^n$, $1\leq i \leq k$,  in the case when $\Gamma'$ is an elementary tree where the vertex is not contained in $\Pi(\Sigma)$. In the case when the vertex of $\Gamma'$ is contained in $\Pi(\Sigma)$ we may assume that the sheets $\Lambda_i^\C(\Gamma')$, $1 \leq i \leq k$, are close to parallel copies of $\R^n$ away from a neighborhood of $\Pi_\C(\Sigma)$, and that a pair of the sheets meet along $\Pi_\C(\Sigma)$ as in Figure \ref{fig:bend}. That is, these sheets are given by the product of $\R^{n-1}$ and a curve consisting of a half-circle joining two half-lines parallel to the remaining $\R$-factor (smoothed out).
%

On the linearized level, this half-circle gives rise to the positive (in the end-case) or a negative (in the switch and $Y_1$-case) $\pi$-rotation in the $\C$-factor where the half-circle lives. 

Using the local identification of $T^*M$ with $\C^n$ we can use the projection of  $\Lambda_i^\C(\Gamma')$ to $M$ to lift a given trivialization of $TM$ along $\Gamma'$ to a trivialization of  $T\Lambda_i^\C(\Gamma')$ along $\gamma_i$, $1 \leq i \leq k$. We will do this in a certain way, and to explain this we first need to assign signs to the arcs $\gamma_i$, away from cusps. That is, let $\gamma \subset \gamma_i$ be a connected subarc so that  $\Lambda_i^\C(\Gamma')$ is close to be a parallel copy of $\R^n$ in a neighborhood of  $\gamma$, and let 
\begin{equation}\label{eq:sheetsign}
 \sigma(\gamma) =
 \begin{cases}
  1, \text{ if } T\tilde \Pi: T_p\Lambda_i^\C(\Gamma') \to T_{\Pi(p)}M  \text{ is orientation-preserving for } p \in \gamma \\
    -1, \text{if } T\tilde \Pi: T_p\Lambda_i^\C(\Gamma') \to T_{\Pi(p)}M \text{ is orientation-reversing for } p \in \gamma,
 \end{cases}
\end{equation}
where $\tilde \Pi : \Lambda_i^\C(\Gamma') \to M$ is the restriction of the base projection $\Pi: J^1(M) \to M$ to $\Lambda_i^\C(\Gamma')$.

If now  $(\partial_{x_1},\dotsc,\partial_{x_n})$ is a trivialization of $TM|_{\Gamma'} \simeq \Gamma' \times \R^n$, we extend it to a trivialization $(\partial_{x_1},\dotsc,\partial_{x_n}, \partial_{x_{n+1}}, \partial_{x_{n+2}})$ of $T \tilde M|_{\Gamma'}=\Gamma' \times (\R^n \oplus \aux_1 \oplus \aux_2) $ and lift this to a trivialization 
of $\tilde T\Lambda_i(\Gamma')$ along $\gamma_i$, $1 \leq i \leq k$, as follows. 

First of all, let $f_{-1}, f_0, f_{1} \colon [0,1] \to \R$ satisfy the following. There is an $0<\epsilon <1$ so that 
\begin{align*}
 f_{-1}(s) &= -1, s <\epsilon,& f_{-1}(s) &= 1, s >1-\epsilon,\\   
 f_{0}(s) &= 0, s <\epsilon, & f_{0}(s) &= 1, s >1-\epsilon,\\   
 f_{1}(s) &= 1, s <\epsilon, & f_{1}(s) &= 0, s >1-\epsilon,
\end{align*}
and we assume that $f_{-1}, f_0$ are increasing, $f_{1}$ is decreasing. 


%


 \begin{description}
 \item[\it 1-valent true positive punctures] \hfill \\
 In this case we have two cotangent lifts $\gamma_1, \gamma_2$ of $\Gamma'$. Assume that the true puncture is given by $p$.  Lift the trivialization of $T \tilde M |_{\Gamma'}$ described above to a trivialization of $T \tilde \Lambda$ along $\gamma_1$ given by
\begin{equation}\label{eq:triven1}
   (e^{-i \theta_\lambda}\partial_{x_1},\dotsc, e^{-i \theta_\lambda}\partial_{x_k}, e^{-i \theta_\lambda f_{-1}(s)}\partial_{x_{k+1}},\dotsc,e^{-i \theta_\lambda f_{-1}(s)}\partial_{x_n},\partial_{x_{n+1}},\sigma(\gamma_1)\cdot\partial_{x_{n+2}}),
\end{equation}
where we assume $\gamma_1$ to be parametrized by $[0,1]$, and that the parametrization starts at the positive puncture. Here the first $k$ directions correspond to $T_pW^u(p)$. Similarly, we lift this to a trivialization along $\gamma_2$ given by
\begin{equation}\label{eq:triven}
   (\partial_{x_1},\dotsc,\partial_{x_n},\partial_{x_{n+1}},\sigma(\gamma_i)\cdot\partial_{x_{n+2}}),
\end{equation}
$i=2$.

 \begin{rmk}
  Note that $\sigma(\gamma_1)=\sigma(\gamma_2)$ if and only if  $|\mu(p)|$ is even.
 \end{rmk}

 \item[\it 1-valent true negative punctures] \hfill \\
 In this case we have two cotangent lifts $\gamma_1, \gamma_2$ of $\Gamma'$. Assume that the true puncture is given by $p$.  Lift the trivialization of $T \tilde M |_{\Gamma'}$ described above to a trivialization of $T \tilde \Lambda$ along $\gamma_1$ given by
\begin{equation*}
   (e^{-i \theta_\lambda}\partial_{x_1},\dotsc, e^{-i \theta_\lambda}\partial_{x_k}, e^{i \theta_\lambda f_{-1}(s)}\partial_{x_{k+1}},\dotsc,e^{i \theta_\lambda f_{-1}(s)}\partial_{x_n},\partial_{x_{n+1}},\sigma(\gamma_1)\cdot\partial_{x_{n+2}}),
\end{equation*}
where we assume $\gamma_1$ to be parametrized by $[0,1]$, and that the parametrization starts at the special positive puncture. Here the first $k$ directions correspond to $T_pW^u(p)$. The trivialization along $\gamma_2$ is given by \eqref{eq:triven} with $i =2$.

  \item[\it 2-valent positive punctures] \hfill \\
 The trivialization is given similar to the case of 1-valent punctures. The only difference is that the cotangent lift of $\Gamma'$ now consists of three components $\gamma_1, \gamma_2, \gamma_3$. We define the trivialization along $\gamma_1$ to be given by \eqref{eq:triven1}, where now $k=0$, and to be given by  \eqref{eq:triven} with $i =3$ for $\gamma_3$. For $\gamma_2$, we assume the lifted trivialization is given by 
 \begin{equation}\label{eq:triven2}
   ( e^{-i \theta_\lambda f_{0}(s)}\partial_{x_{1}},\dotsc,e^{-i \theta_\lambda f_{0}(s)}\partial_{x_n},\partial_{x_{n+1}},\sigma(\gamma_2)\cdot\partial_{x_{n+2}}),
\end{equation}
where we assume that $\gamma_2$ is parametrized by $[0,1]$, starting at the puncture labeled by $p_1$ in Figure \ref{fig:1}.

  \item[\it 2-valent negative punctures] \hfill \\
 Again, the cotangent lift of $\Gamma'$ now consists of three components $\gamma_1, \gamma_2, \gamma_3$. We define the trivialization along $\gamma_1$ to be given by \eqref{eq:triven1}, where now $k=n$,  to be given by  \eqref{eq:triven2} for $\gamma_2$, and to be given by  \eqref{eq:triven} with $i =3$ for $\gamma_3$.
  
  \item[\it $Y_0$-vertices] \hfill \\
 The trivialization is given similar to the case of 2-valent negative punctures.
 
 \item[\it ends] \hfill \\
Let $e$ denote the end vertex. In this case we have two cotangent lifts $\gamma_1, \gamma_2$ of $\Gamma'$, which meet at $\Pi_\C(\Lambda)$. Recall that we assume $\gamma_1$ to correspond to the lift of $\Gamma'$ with greatest $z$-coordinate. This implies that if we consider $\gamma_1, \gamma_2$ as parameterized by the corresponding parts of $\partial \Delta(\Gamma)$ then we will pass through $\Pi_\C(\Sigma)$ coming from $\gamma_1$ and leaving along $\gamma_2$, if we follow the direction induced by the orientation of $\partial \Delta(\Gamma)$. 

Assume that $T_e\Pi(\Sigma) = \spn(\partial_{x_1},\dotsc,\partial_{x_{n-1}})$, and that the union $\Lambda^\C_1(\Gamma') \cup \Lambda^\C_2(\Gamma')$ splits as $\R^{n-1} \times W \subset \R^{n-1} \times \C$, where $W$ is as in Figure \ref{fig:bend}. Let $\tilde U(e)$ be a neighborhood of the cotangent lift of $e$ so that we can find a neighborhood of $\gamma_i \setminus \tilde U(e) \subset \Lambda^\C_i(\Gamma')$ in which $\Lambda^\C_i(\Gamma')$ is close to a parallel copy of $\R^n$, $i =1,2$, and let $U(e) = \tilde U(e) \cap(\gamma_1 \cup \gamma_2)$. Lift the trivialization of $T \tilde M |_{\Gamma'}$ to a trivialization of $T \tilde \Lambda|_{\gamma_1 \cup \gamma_2}$ as follows. 
  \begin{itemize}
    \item Along $\gamma_1 \setminus U(e)$, assuming that this path is parametrized by $[0,1]$:
  \begin{equation*}  
  (e^{-i \theta_\lambda f_{1}(s)}\partial_{x_1},\dotsc,e^{-i \theta_\lambda f_{1}(s)}\partial_{x_n},\partial_{x_{n+1}}, \sigma(\gamma_i\setminus U(e))\cdot\partial_{x_{n+2}})                                            \end{equation*}
  \item Along $\gamma_2 \setminus U(e)$:
  \begin{equation*}  
  (\partial_{x_1},\dotsc,\partial_{x_{n+1}},\sigma(\gamma_2\setminus U(e))\cdot\partial_{x_{n+2}})                                            \end{equation*}
\item Along  $U(e)$: 
When passing through the cotangent lift of $e$, coming in along $\gamma_1$ and leaving along $\gamma_2$, in the lifted trivialization induced by the chosen trivialization along $\gamma_1 \setminus U(e)$ the $\partial_{x_n}$-vector will perform a positive $\pi$-rotation in the $\C$-component corresponding to $\partial_{x_n}$. We get two different trivializations, depending on the sign of the sheets.

  \item[] $\sigma(\gamma_1 \setminus U(e)) = -1$:
  \begin{equation*}
   \left(R_{-\pi}(n,n+2) \cdot \diag(\underbrace{1, \dotsc,1}_{n+1}, -1) \right) * \diag(\underbrace{1, \dotsc,1}_{n-1}, -e^{i \pi s}, 1,1),
  \end{equation*}
  \item[] $\sigma(\gamma_1 \setminus U(e)) = 1$:
  \begin{equation*}
   \diag(\underbrace{1, \dotsc,1}_{n-1}, e^{i \pi s}, 1,1)  \hat * R_{-\pi}(n,n+2).
  \end{equation*}

%
 \end{itemize}
 
 \begin{rmk}
 Note that $\sigma(\gamma_1\setminus U(e)) \neq \sigma(\gamma_2\setminus U(e))$. 
 \end{rmk}

 \begin{figure}[ht]
\labellist
\small\hair 2pt
\pinlabel $\Pi^{-1}(v)$ [Br] at  45 230 
\pinlabel ${x_1}$ [Br] at 40 70
\pinlabel $x_2$ [Br] at 10 40
\pinlabel $\Pi(\Lambda_v)$ [Br] at 120 15
\pinlabel $M$ [Br] at -50 5
\pinlabel $\Lambda$ [Br] at 80 150
\endlabellist
\centering
\includegraphics[height=5cm]{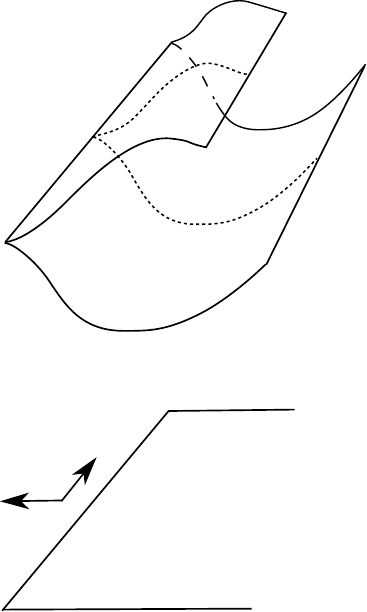}
\caption{Local coordinates at a cusp. Here $v$ represents a switch, end or $Y_1$-vertex, and $\Pi(\Lambda_v)$ is given the orientation from $\partial_{x_1}$.}
\label{fig:cusp}
\end{figure}

 \item[\it switches] \hfill \\
 Let $s$ be the switch-vertex and note that this vertex has two cotangent lifts. Let $\tilde s$ be the lift that is contained in $\Pi_\C(\Sigma)$. We have three components $\gamma_1, \gamma_2, \gamma_3$ of the cotangent lift of $\Gamma'$, where either $\gamma_1$ and $\gamma_2$ or $\gamma_2$ and $\gamma_3$ meet at $\Pi_\C(\Sigma)$. In the case when $\gamma_1$ and $\gamma_2$ meet we pass $\tilde s$  coming from $\gamma_2$ and leaving along $\gamma_1$, and in the other case we pass  $\tilde s$ coming from $\gamma_3$ and leaving along $\gamma_2$. In the first case, we say that we have an \emph{upper switch}, and in the second case we say we have a \emph{lower switch}.

 Consider the case of an upper (lower) switch. Assume that $T_s\Pi(\Sigma) = \spn(\partial_{x_1},\dotsc,\partial_{x_{n-1}})$, and that the union  $\Lambda^\C_1(\Gamma') \cup \Lambda^\C_2(\Gamma')$ ($\Lambda^\C_2(\Gamma') \cup \Lambda^\C_3(\Gamma')$) splits as $\R^{n-1} \times W \subset \R^{n-1} \times \C$, where $W$ is as in Figure \ref{fig:bend}. Note that $\Lambda^\C_3(\Gamma')$  ($\Lambda^\C_1(\Gamma')$) by assumption is close to a parallel copy of $\R^n$.
 
 Let $\tilde U(s) \subset \Lambda^\C_1(\Gamma') \cup \Lambda^\C_2(\Gamma')$ ($\tilde U(s) \subset \Lambda^\C_2(\Gamma') \cup \Lambda^\C_3(\Gamma')$) be a neighborhood of $\tilde s$ so that we can find a neighborhood of $\gamma_i \setminus \tilde U(s) \subset \Lambda^\C_i(\Gamma')$ in which $\Lambda^\C_i(\Gamma')$ is a parallel copy of $\R^n$,  $i =1,2$ ($i =2,3$), and let $U(s) = \tilde U(s) \cap(\gamma_1 \cup \gamma_2)$ ($U(s) =\tilde U(s) \cap(\gamma_2 \cup \gamma_3)$). Lift the trivialization of $T \tilde M |_{\Gamma'}$ to a trivialization of $T \tilde \Lambda|_{\gamma_1 \cup \gamma_2\cup \gamma_3}$ as follows. 
 
 {\bf Upper switch}
  \begin{itemize}
  \item Along $\gamma_1 \setminus U(s)$, parametrized by $s \in [0,1]$ going from the switch to the negative puncture:
  \begin{equation*}  
  (e^{-i \theta_\lambda f_0(s)}\partial_{x_1},\dotsc,e^{-i \theta_\lambda f_0(s)}\partial_{x_n},\partial_{x_{n+1}},\sigma(\gamma_1  \setminus U(s) )\cdot\partial_{x_{n+2}})                                            
  \end{equation*}
   \item Along $\gamma_2 \setminus U(s)$, parametrized by $s \in [0,1]$ going from the positive puncture to the switch:
  \begin{equation*}  
  (e^{-i \theta_\lambda f_1(s)}\partial_{x_1},\dotsc,e^{-i \theta_\lambda f_1(s)}\partial_{x_n},\partial_{x_{n+1}},\sigma(\gamma_2  \setminus U(s) )\cdot\partial_{x_{n+2}})                                            
  \end{equation*}
   \item Along $\gamma_3$:
     \begin{equation*}  
  (\partial_{x_1},\dotsc,\partial_{x_n},\partial_{x_{n+1}},\sigma(\gamma_3)\cdot\partial_{x_{n+2}})                                            
  \end{equation*}

\item Along  $U(s)$, the case when $\sigma(\gamma_2\setminus U(s)) =-1$:
\begin{equation*}
   \left(  R_{-\pi}(n,n+2)\cdot\diag(\underbrace{1, \dotsc,1}_{n+1}, -1)  \right) * \diag(\underbrace{1, \dotsc,1}_{n-1}, -e^{-i \pi s}, 1,1),
\end{equation*}
\item Along  $U(s)$, the case when  $\sigma(\gamma_2\setminus U(s)) =1$:
 \begin{equation*}
   \diag(\underbrace{1, \dotsc,1}_{n-1}, e^{-i \pi s}, 1,1)  \hat * R_{-\pi}(n,n+2).
  \end{equation*}
  \end{itemize}

   {\bf Lower switch}
  \begin{itemize}
   \item Along $\gamma_1$:
     \begin{equation*}  
  (e^{-i \theta_\lambda}\partial_{x_1},\dotsc,e^{-i \theta_\lambda}\partial_{x_n},\partial_{x_{n+1}},\sigma(\gamma_1)\cdot\partial_{x_{n+2}})                                            
  \end{equation*}
 \item Along $\gamma_i \setminus U(s)$, $i=2,3$:
  \begin{equation*}  
  (\partial_{x_1},\dotsc,\partial_{x_n},\partial_{x_{n+1}},\sigma(\gamma_i\setminus U(e))\cdot\partial_{x_{n+2}})                                            
  \end{equation*}
\item Along  $U(s)$, the case when $\sigma(\gamma_3 \setminus U(s)) =-1$:
\begin{equation*}
   \left(R_{-\pi}(n,n+2)\cdot \diag(\underbrace{1, \dotsc,1}_{n+1}, -1)  \right) * \diag(\underbrace{1, \dotsc,1}_{n-1}, -e^{-i \pi s}, 1,1),
\end{equation*}
\item Along  $U(s)$, the case when  $\sigma(\gamma_3 \setminus U(s)) =1$:
 \begin{equation*}
   \diag(\underbrace{1, \dotsc,1}_{n-1}, e^{-i \pi s}, 1,1)  \hat * R_{-\pi}(n,n+2).
  \end{equation*}

\end{itemize}

\item[\it $Y_1$-vertices] \hfill \\
This is similar to the case of switch-vertices, except that we now have two sheets $\Lambda^\C_1(\Gamma')$ and $\Lambda^\C_4(\Gamma')$ which are close to parallel copies of $\R^n$, and two sheets $\Lambda^\C_2(\Gamma')$ and $\Lambda^\C_3(\Gamma')$ meeting at $\Pi_\C(\Sigma)$ and whose union splits as $\R^{n-1} \times W \subset \R^{n-1} \times \C$, where $W$ is as in Figure \ref{fig:bend}. When passing through the cotangent lift of the $Y_1$-vertex $v$ that is contained $\Pi_\C(\Sigma)$, we do this by coming in along $\gamma_3$ and leaving along $\gamma_2$. Let $U(v)$ be a neighborhood of the lift of $v$, similar to how $U(s)$ was defined. The trivialization is now given as follows. 
  \begin{itemize}
   \item Along $\gamma_1$:
 \begin{equation*}  
  (e^{-i \theta_\lambda}\partial_{x_1},\dotsc,e^{-i \theta_\lambda}\partial_{x_n},\partial_{x_{n+1}},\sigma(\gamma_1)\cdot\partial_{x_{n+2}})                                            
  \end{equation*}
  \item Along $\gamma_2 \setminus U(v)$:
  \begin{equation*}  
  (e^{-i \theta_\lambda f_0(s)}\partial_{x_1},\dotsc,e^{-i \theta_\lambda f_0(s)}\partial_{x_n},\partial_{x_{n+1}},\sigma(\gamma_2  \setminus U(v) )\cdot\partial_{x_{n+2}})                                            
  \end{equation*}
   \item Along $\gamma_3 \setminus U(v)$:
  \begin{equation*}  
  (\partial_{x_1},\dotsc,\partial_{x_n},\partial_{x_{n+1}},\sigma(\gamma_3  \setminus U(v) )\cdot\partial_{x_{n+2}})                                            
  \end{equation*}
    \item Along $\gamma_4$:
      \begin{equation*}  
  (\partial_{x_1},\dotsc,\partial_{x_n},\partial_{x_{n+1}},\sigma(\gamma_4)\cdot\partial_{x_{n+2}})                                            
  \end{equation*}
\item Along  $U(v)$, the case when $\sigma(\gamma_3 \setminus U(v)) =-1$:
\begin{equation*}
   \left( R_{-\pi}(n,n+2)\cdot \diag(\underbrace{1, \dotsc,1}_{n+1}, -1) \right) * \diag(\underbrace{1, \dotsc,1}_{n-1}, -e^{-i \pi s}, 1,1),
\end{equation*}
\item Along  $U(s)$, the case when  $\sigma(\gamma_3 \setminus U(v)) =1$:
 \begin{equation*}
   \diag(\underbrace{1, \dotsc,1}_{n-1}, e^{-i \pi s}, 1,1)  \hat * R_{-\pi}(n,n+2).
  \end{equation*}
\end{itemize}
 \item[\it Regions without true vertices] \hfill \\
  The trivialization is given similar to the case of 1-valent positive punctures, where we let $k=n$.
\end{description}


%
%

The following lemma assures that the trivializations above can be glued together to a trivialized Lagrangian boundary condition $A_d(\Gamma)$ along $\partial \Delta(\Gamma)$.
\begin{lma}\label{lma:sumup}
Let $\Gamma$ be a rigid flow tree.
 Then we have that 
 \begin{equation*}
\sum_{q \text{ puncture of } \Gamma }|\mu(q)| \equiv s(\Gamma) + e(\Gamma) + Y_1(\Gamma)  \pmod 2.
 \end{equation*}
\end{lma}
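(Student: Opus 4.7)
The plan is to assemble the $1$-jet lifts of the edges of $\Gamma$ together with capping paths at each puncture into a closed loop in $\Lambda$, and then compare its Maslov index computed in two different ways.

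First I would set up the loop. Starting from the positive puncture and following the flow orientation, the $1$-jet lifts of the edges of $\Gamma$ form a continuous path in $\Lambda$ everywhere except at the punctures, where by definition the lift jumps between two sheets of $\Lambda$ meeting the same Reeb chord. At every puncture $q$ we insert a capping path $\gamma_q$ in $\Lambda$ joining the two endpoints of the jump; by condition \eqref{t4} and the definition of a capping path this produces a closed loop $L \subset \Lambda$. Since $\Lambda$ is spin it is in particular orientable, and for any orientable Lagrangian the Maslov class reduces mod $2$ to the first Stiefel--Whitney class (cf.\ \cite{legsub}); hence $\mu(L) \equiv 0 \pmod 2$.

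Next I would compute $\mu(L)$ by splitting the loop into its constituent pieces. The Maslov index is additive under concatenation, so
\begin{equation*}
\mu(L) \;=\; \sum_{q \text{ puncture}} \mu(\gamma_q) \;+\; \mu\bigl(\tilde{\Gamma}\bigr),
\end{equation*}
where $\mu(\tilde{\Gamma})$ denotes the contribution from the $1$-jet lifts of the edges, which by the very definition of $\mu$ in Section~\ref{sec:outline} counts (with signs) the cusps traversed by $\tilde{\Gamma}$. Inspecting Table~\ref{tab:trees}, the only vertices whose $1$-jet lift crosses $\Sigma$ transversely are the end-vertices, the switches, and the $Y_1$-vertices; away from these vertices the lift misses $\Sigma$. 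Each of these vertices contributes exactly one cusp traversal to $\tilde{\Gamma}$, so
\begin{equation*}
\mu\bigl(\tilde{\Gamma}\bigr) \;\equiv\; e(\Gamma) + s(\Gamma) + Y_1(\Gamma) \pmod 2.
\end{equation*}
Reducing the previous identity modulo $2$ and using $\mu(L) \equiv 0$ yields the claim.

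The only delicate point is the well-definedness and additivity of the mod-$2$ Maslov index on paths. Changing a capping path $\gamma_q$ alters $\mu(\gamma_q)$ by the Maslov index of a loop in $\Lambda$, which is even by orientability, so $|\mu(q)|$ depends only on the puncture; and once this is in place the concatenation argument above goes through without issue. No other subtlety arises, since every piece of $\tilde{\Gamma}$ is an honest path in $\Lambda$ intersecting $\Sigma$ transversely at exactly the vertices enumerated.
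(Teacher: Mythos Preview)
Your argument is correct and essentially the same as the paper's. The only cosmetic difference is that the paper leaves the positive puncture $p$ uncapped and views the concatenation of $\tilde\Gamma$ with the capping paths at the negative punctures as a capping path for $p$, then uses that $|\mu(p)|$ is well-defined; you instead cap all punctures and use that the resulting closed loop has even Maslov index --- these two statements are equivalent, so the proofs are interchangeable.
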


\begin{proof}
Given the tree $\Gamma$, we fix capping paths at all punctures except at the positive puncture $p$.  Concatenating these paths with the $1$-jet lift of $\Gamma$ gives a capping path $\gamma$ for $p$, and from Table \ref{tab:trees} we see that each switch, end and $Y_1$-vertex contributes with $1 \pmod 2$ to the Maslov index of $\gamma$. Thus we get that 
 \begin{equation*}
  \mu(\gamma) \equiv \sum_{q \text{ negative puncture of } \Gamma }|\mu(q)| + s(\Gamma) + e(\Gamma) + Y_1(\Gamma)  \pmod 2.
 \end{equation*}
The result follows.
\end{proof}

%

Note that, in general the trivializations $A_d(\Gamma)$ and $A_\s(\Gamma)$ does not agree. Since the capping orientation of $\Gamma$ depends on the fact that we use the trivialization $A_\s(\Gamma)$ we must compensate with a sign if we use $A_d(\Gamma)$ instead. From [\cite{orientbok}, Theorem 4.29] it follows that the difference in orientation induced by $A_d(\Gamma)$ and $A_\s(\Gamma)$ can be measured by a sign
$\beta(\s,A_d)$, which is computed as follows. 

First homotope the trivialization $A_\s(\Gamma)$ so that it agrees with $A_d(\Gamma)$ at all Reeb chord end points.
 Then, for each maximal connected component $\gamma$ of $\partial\Delta(\Gamma)$, let $A_\s(\gamma)$ denote the trivialization of $\gamma^*T \tilde L$ induced by $A_\s(\Gamma)$, and let $A_d(\gamma)$ denote the trivialization induced by $A_d(\Gamma)$.  By considering the concatenation $ A_\s(\gamma)^{-1}* A_d(\gamma)$ we get a loop of trivializations associated to $\gamma$ (recall that we assume the trivializations to be equal at all infinities of $\Delta(\Gamma)$), and hence an element $\sigma(\gamma) \in \pi_1(SO(n+2)) = \Z_2$. 
 
 \begin{defi}\label{def:triv}
 Let 
\begin{equation}\label{eq:beta}
 \beta(\s, A_d) = \sum_\gamma \sigma(\gamma)
\end{equation}
where the sum is taken over all connected components $\gamma$ of $\partial \Delta(\Gamma)$. For a more detailed description, see [\cite{orientbok}, Section 4.4].  
\end{defi}

\subsection{Capping sequences associated to trees}\label{sec:capptree}
To define the capping orientation of a (partial) flow tree $\Gamma$ we
proceed similar to Section \ref{sec:nonpunct}. The main difference is
that we now have to take the capping operators at the end vertices into
account. This works as follows.

Assume that $\Gamma$ has a positive puncture $q_0$ and negative punctures $q_1,\dotsc,q_m$, ordered as explained
in Section \ref{sec:slit}. Notice that some of the punctures will be
special if $\Gamma$ is a partial flow tree. Also assume that $\Gamma$ has end-vertices
$e_1,\dotsc e_k$ ordered similar as the negative punctures. Let
$\dbar_{q \pm,\lambda}$, $\dbar_{e}$ denote the associated capping
operators as defined in Section \ref{sec:capping} and Section
\ref{sec:analys}, respectively, and let $\hat{\Gamma}_\lambda$ denote
the fully capped trivialized Lagrangian boundary condition associated to $\dbar_{\Gamma_\lambda}$, with the auxiliary directions included. Assume that we glue the capping operators to the $\dbar_{\Gamma_\lambda}$-problem in the order so that we get the induced exact gluing sequence 
\begin{equation}\label{eq:slcap}
 0 \to\Ker \dbar_{\hat{\Gamma}_\lambda} 
\to 
\begin{bmatrix}
 \bigoplus_{i=1}^m \Ker \dbar_{q_i-,\lambda}\\
  \bigoplus_{i=1}^k \Ker \dbar_{e_i}\\
 \Ker \dbar_{q_0+,\lambda}\\
\Ker \dbar_{\Gamma_\lambda}
\end{bmatrix}
\to
\begin{bmatrix}
 \bigoplus_{i=1}^m \cd_{q_i-,\lambda}\\
 \cd_{q_0+,\lambda}\\
\cd_{ \Gamma_\lambda}
\end{bmatrix}
\to
\cd_{\hat{\Gamma}_\lambda}
\to
0,
\end{equation}
which we denote the \emph{capping sequence for
  $\Gamma$}.  

\begin{defi}
The \emph{capping orientation} of $\Gamma$ is given by the orientation of $\det \dbar_{\Gamma_\lambda}$ induced by this sequence, where $\det \dbar_{\hat{\Gamma}_\lambda}$ is given the canonical orientation and the capping operators are given their fixed capping orientation. 
\end{defi}


Similar to the case of disks we will use the notation  
 \begin{align*}
\Ker \capp(\Gamma_\lambda) &= \Ker \capp(\Gamma_\lambda)^-\oplus \Ker \capp(\Gamma_\lambda)^e\oplus\kd_{q_{0,+}}  \\                                                                                  
 \Coker \capp(\Gamma_\lambda) &= \Coker \capp(\Gamma_\lambda)^-\oplus \cd_{q_{0,+}}, 
\end{align*}

where
\begin{align*}
 \Ker \capp(\Gamma_\lambda)^-&=\bigoplus_{i=1}^m \kd_{q_{i,-},\lambda}, \\
 \Ker \capp(\Gamma_\lambda)^e&= \bigoplus_{i=1}^k \Ker \dbar_{e_i},\\
 \Coker \capp(\Gamma_\lambda)^-&=\bigoplus_{i=1}^m \cd_{q_{i,-},\lambda}. 
\end{align*}

\begin{rmk}
 A priori, the capping orientation of $\Gamma$ depends on $\lambda$. In Section \ref{sec:glupf} we will prove that for $\lambda$ sufficiently small, this orientation is well-defined and independent of $\lambda$.
\end{rmk}

\subsection{Viewing ends as punctures, and its affect on orientations}\label{sec:endmp}
Let $\Gamma$ be a rigid flow tree of $\Lambda$, fix $\lambda>0$ and let $w_\lambda$ be the associated pre-glued disk. Recall that when we defined the capping orientation of $w_\lambda$ in Section \ref{sec:nonpunct} we considered the domain of the disk to be given by the unit disk in $\C$, with punctures on the boundary representing the punctures of $w_\lambda$. In Section \ref{sec:analys}, however, we considered the domain of $w_\lambda$ to be given by a standard domain, with punctures on  the boundary representing both the punctures of $w_\lambda$ and also the end-vertices of $\Gamma$. It turns out that the latter setting is more geometrically suitable when it comes to computing the sign of $\Gamma$ explicitly, and thus we need to discuss the affect on orientations of regarding the ends as punctures.

Let $\dbar_{w_\lambda}$ denote the $\dbar$-problem 
\begin{equation*}
 \dbar: \Hi_{2,\nu}[w_\lambda](D_{m+1}, w_\lambda^*TT^*M\oplus \R^2)
 \to \Hi_{1,\nu}[0](D_{m+1}, T^{*0,1}D_{m+1} \otimes w_\lambda^*TT^*M
 \oplus \R^2)
\end{equation*}
as defined in Section \ref{sec:sobolev}, and let $\dbar_{\Gamma_\lambda}$ denote the $\dbar$-problem defined in Section \ref{sec:analys} (where the punctured disk $D_{m+1}$ is replaced by the standard domain $\Delta(\Gamma_\lambda)$). Let $\dbar_{\hat w_\lambda}$ and $\dbar_{\hat \Gamma_\lambda}$ denote the corresponding fully capped problems.

\begin{lma}\label{lma:disktreeiso}
 We have
 \begin{equation*}
  \kd_{\hat w_\lambda} \simeq \kd_{\hat \Gamma_\lambda}, \qquad 
  \cd_{\hat w_\lambda} \simeq \cd_{\hat \Gamma_\lambda}.
 \end{equation*}
\end{lma}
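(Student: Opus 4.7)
The plan is to recognize $\dbar_{\hat w_\lambda}$ and $\dbar_{\hat \Gamma_\lambda}$ as $\dbar$-problems on (conformally) the same closed disk whose trivialized Lagrangian boundary conditions differ only by an isotopy through admissible loops in $U(n+2)$. Topologically: the domain of $\hat w_\lambda$ is $D_{m+1}$ with the $m+1$ Reeb chord capping disks attached, while that of $\hat \Gamma_\lambda$ is $\Delta(\Gamma_\lambda)$ with $m+1+k$ capping disks attached (one at each Reeb chord puncture and one at each of the $k$ end-vertex punctures). Both are closed topological disks, and I would fix a conformal identification under which each end-vertex capping disk of $\hat \Gamma_\lambda$ corresponds to a small neighborhood of the corresponding end point on $\partial(\hat w_\lambda)$.

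Under this identification, the two trivialized boundary conditions agree on most of the boundary: away from end points they are both derived from the same stabilized lift $\tilde A_u$ induced by the chosen spin structure, and at each Reeb chord puncture the same capping operator $\dbar_{q_j,\pm}$ has been glued on. The sole discrepancy is at each end point, where $\hat w_\lambda$ carries a uniform $+\pi$-rotation in the direction normal to $\Pi(\Sigma)$ along a short arc of the main boundary, whereas $\hat \Gamma_\lambda$ has the corresponding arc of $\partial \Delta(\Gamma_\lambda)$ trivialized as $\R^n$ and the same $\pi$-rotation encoded by the factor $e^{i\pi s}$ of $\tilde R_{e_i}$ on the attached end-capping disk. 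Sliding the $\pi$-rotation between these two locations is an isotopy of trivialized boundary conditions that terminates with the two loops equal as maps $\partial D \to U(n+2)$, at which point the two $\dbar$-problems coincide and the isomorphisms $\kd_{\hat w_\lambda} \simeq \kd_{\hat \Gamma_\lambda}$ and $\cd_{\hat w_\lambda} \simeq \cd_{\hat \Gamma_\lambda}$ are immediate.

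The main obstacle is verifying that the isotopy stays within the admissible class — equivalently, that the parity of $\pi$-rotations in the auxiliary directions matches on the two sides. This is exactly what Lemma \ref{lma:sumup} delivers: the identity $\sum_q |\mu(q)| \equiv s(\Gamma) + e(\Gamma) + Y_1(\Gamma) \pmod 2$ balances the auxiliary rotations contributed by the Reeb chord capping trivializations $\tilde R_\pm(q_j)$ (whose pattern in the auxiliary block depends on the parity of $|\mu(q_j)|$) against the auxiliary rotations introduced at ends, switches, and $Y_1$-vertices in the trivialization recipe of Section \ref{sec:triv}, so the required homotopy through admissible trivialized boundary conditions in $U(n+2)$ does exist.
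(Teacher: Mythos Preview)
Your first two paragraphs capture the essential picture: both fully capped problems live on a closed disk and carry the same trivialized boundary loop, the only difference being whether the $+\pi$-rotation at each end sits on a short arc of the main boundary (for $\hat w_\lambda$) or on the attached end-capping disk (for $\hat\Gamma_\lambda$). This is exactly why the paper's proof is one line: the operator $\dbar_e$ was \emph{defined} in Section~\ref{sec:analys} precisely ``to catch the $\pi$-rotation performed by the boundary condition corresponding to $w_\lambda$ at the end point.'' Gluing $\dbar_e$ at an end puncture of $\dbar_{\Gamma_\lambda}$ therefore reproduces the $\dbar_{w_\lambda}$ boundary condition there verbatim, so $\hat\Gamma_\lambda$ and $\hat w_\lambda$ are the \emph{same} $\dbar$-problem on the closed disk, and the isomorphisms are identities. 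No isotopy is needed.

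Your isotopy argument, as written, has a gap. A homotopy of trivialized boundary conditions on the closed disk preserves the index and gives a canonical isomorphism of \emph{determinant lines}, but not of kernels and cokernels separately; their dimensions can jump along the path. Saying that ``at the endpoint the two problems coincide'' does not identify the kernels at the starting point with those at the endpoint. The fix is either to observe that ``sliding the rotation'' is really a conformal reparametrization of the closed disk (so the problems are equal, not merely homotopic), or---cleaner still---to choose the conformal identification in your first paragraph so that the boundary loops already agree on the nose, which the definition of $\tilde R_e$ makes possible.

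Your third paragraph is a red herring on two counts. First, on the closed unpunctured disk every smooth loop in $U(n+2)$ gives a Fredholm $\dbar$-problem, so there is no admissibility condition to verify along a homotopy. Second, Lemma~\ref{lma:sumup} concerns the internal consistency of the trivialization recipe of Section~\ref{sec:triv}---it guarantees that the auxiliary directions close up across the elementary pieces, and is needed equally to make sense of $\hat w_\lambda$ and of $\hat\Gamma_\lambda$. It does not compare the two. Since both problems use the identical auxiliary trivialization, and since $\tilde R_e=\diag(1,\dots,1,e^{i\pi s},1,1)$ is trivial in the auxiliary slots, there is no parity discrepancy to reconcile.
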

\begin{proof}
 Follows from the definition of the capping operators at end vertices.
\end{proof}

%
%
%
To relate the capping orientation of $\det \dbar_{\Gamma_\lambda}$
with that of $\det \dbar_{w_\lambda}$ we need to fix some
conventions. So assume that $\Gamma$ has $m$ negative punctures and
let $\Co_{m+1}$ be the space of conformal structures for the disk
$D_{m+1}$, where we have not put marked points at the end
vertices. Give $\Co_{m+1}$ the orientation as described in Section
\ref{sec:confvaror}. Similarly, let $\Co_{m+1+e(\Gamma)}$ be the space
of conformal structures for the disk $D_{m+1+ e(\Gamma)}$ where we
have put marked points on $\partial D_{m+1}$ representing the end vertices of $\Gamma$. Note that this domain can be given a conformal structure so that it is conformally equivalent to $\Delta(\Gamma_\lambda)$. If $m <2$ then recall that $\Co_{m+1}$ is empty and that we let $\T_{m+1}$ denote the space of conformal automorphisms of $D_{m+1}$. 
 
 Let $e_i\in \partial D_{m+1+e(\Gamma)}$ correspond to the $i$th end of $\Gamma$, where we have numbered the ends in the order they occur along the boundary of $D_{m+1}$ when going along the boundary in the counter-clockwise direction. Let  $r_i$ be a vector field supported in a neighborhood of $e_i$, being tangent to the boundary of $D_{m+1+e(\Gamma)}$ along the boundary, pointing in the counter-clockwise direction of the boundary. Let $V({e(\Gamma)}) = \spn (r_1,\dotsc,r_{e(\Gamma)})$, oriented. If $m + e(\Gamma)\geq 2$, give $\Co_{m+1+e(\Gamma)}$ the orientation induced by the exact sequence
 \begin{equation}\label{eq:coor}
 0 \to  T\T_{m+1} \to   V({e(\Gamma)}) \to T \Co_{m+1 + e(\Gamma)} \to
 T \Co_{m+1} \to 0,
 \end{equation}
where $\Co_{m+1}$ and $\T_{m+1}$ are oriented as in Section \ref{sec:confvaror}. That means that if $ T\T_{m+1} = 0$ we have
\begin{equation}
 T \Co_{m+1 + e(\Gamma)}= T \Co_{m+1} \oplus  V({e(\Gamma)}),
\end{equation}
oriented, and if $T \Co_{m+1}=0$ that 
\begin{equation}
  V({e(\Gamma)})=   T\T_{m+1} \oplus T \Co_{m+1 + e(\Gamma)},
\end{equation}
oriented.

If $m + e(\Gamma)<2$ and $e(\Gamma)\neq 0$, then notice that we must have $m=0$, $e(\Gamma)=1$, and  $\T_{1+e(\Gamma)}=\T_{2}$ gets its orientation from the exact sequence
\begin{equation}\label{eq:enend}
0 \to V({e(\Gamma)})  \to T\T_{1} \to  T\T_{1+e(\Gamma)} \to 0.
\end{equation}
The reason for this is that the orientation of $\T_{2}$ and $\T_{1}$ by assumption is given as in [\cite{orientbok}, Section 3.4.1], and assuming that the orientation of $\T_{1}$ is represented by $v_1 \wedge v_2$, we see that putting a marked point on $D_1$ turns $v_1$ into a conformal variation and $v_2$ then representes the orientation of $\T_{2}$.

Recall that we in Section  \ref{sec:modspor} and Section
\ref{sec:cappingor} defined the orientation $\sigma_{\capp}(w_\lambda)$ of $w_\lambda$ to be given by comparing the capping orientation of $\cd_{w_\lambda}$ with the orientation of $T\Co_{m+1}$ via the isomorphism \eqref{eq:conf2} in the case when $\dbar_{w_\lambda}$ is injective, and by comparing the orientation of $\kd_{w_\lambda}$ with the orientation of $T\T_{m+1}$ in the case when $\dbar_{w_\lambda}$ is surjective. We define the sign $\sigma_{\pre}(\Gamma_\lambda)$ of $\Gamma_\lambda$ to be given completely analogous, with $\cd_{w_\lambda}$, $\kd_{w_\lambda}$ replaced by $\cd_{\Gamma_\lambda}$, $\kd_{\Gamma_\lambda}$ and $T\Co_{m+1}$, $T\T_{m+1}$ replaced by  $T \Co_{m+1 + e(\Gamma)}$, $T\T_{1+e(\Gamma)}$, respectively. 

Now notice the following.

\begin{lma}\label{lma:endvcon}
We have
\begin{equation}\label{eq:capendori}
\kd_{e_i} \simeq r_i
\end{equation}
with the isomorphism induced by gluing. 
\end{lma}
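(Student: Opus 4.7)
The plan is to exhibit an explicit generator of $\kd_{e_i}$ and track it through the cut-off gluing construction of Section \ref{sec:gluew}, verifying that its image corresponds to $v_i$ under the conformal-variation sequence \eqref{eq:coor}.

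First I would write down an explicit generator $\psi$ of $\kd_{e_i}$. Because the capping boundary condition $\tilde R_e = \diag(1,\dotsc,1,e^{i\pi s},1,1)$ is diagonal with only the $n$-th entry non-constant, the weighted $\dbar$-problem splits along the coordinate directions. In the $n-1$ tangent directions to $\Pi(\Sigma)$ and in the two auxiliary directions the boundary condition is trivial and the weight is a positive $\delta$, so $\dbar$ is an isomorphism there and contributes nothing. In the remaining normal direction the problem is the weighted $\dbar$-operator on the one-punctured disk with boundary condition $e^{i\pi s}\R$ and small positive weight at the puncture; its kernel is one-dimensional by the index computation of \cite{legsub}, consistent with the dimension count in the lemma preceding \eqref{eq:capendori}. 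I would fix $\psi$ so that its boundary trace on $\partial D_1$ is a positive multiple of the unit tangent vector pointing counter-clockwise, to match the sign convention used to define $v_i$ in Section \ref{sec:endmp}.

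Next I would apply the cut-off gluing map $\alpha_\rho$ from Lemma \ref{lma:gluweights}, iterated to produce the capping sequence \eqref{eq:slcap} for $\Gamma_\lambda$. The image of $\psi$ is obtained by multiplying with the cut-off $\phi^\rho$ supported near the puncture of the capping disk $D_1$; transported into the standard-domain model for $\hat \Gamma_\lambda$, this element is supported in a small neighborhood of the marked point $e_i \in \partial \Delta(\Gamma_\lambda)$, is tangent to $\partial \Delta(\Gamma_\lambda)$, and retains the counter-clockwise orientation. Pulling back via the Riemann-mapping identification $\Delta(\Gamma_\lambda) \simeq D_{m+1+e(\Gamma)}$, the resulting vector field is a positive scalar multiple of the one used to define $v_i \in V(e(\Gamma))$.

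The main obstacle is aligning this identification with the orientation convention encoded by \eqref{eq:coor}. Concretely, one has to check that after embedding the cut-off kernel element into the fully linearized $\dbar$-problem of Section \ref{sec:modspor} and passing to the quotient by the conformal automorphisms built into $T\T_{m+1}$, the image is $+v_i$ rather than $-v_i$. This is a careful but essentially combinatorial bookkeeping step whose key inputs are the diagonal form of $\tilde R_e$, the explicit description of $\psi$, and the observation that a tangential translation of a boundary puncture is exactly the extra degree of freedom added when going from $T\Co_{m+1}$ to $T\Co_{m+1+e(\Gamma)}$ via the exact sequence \eqref{eq:coor}.
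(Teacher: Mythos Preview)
Your outline has the right shape but is missing the actual computation that constitutes the proof. The sentence ``I would fix $\psi$ so that its boundary trace on $\partial D_1$ is a positive multiple of the unit tangent vector pointing counter-clockwise'' assumes precisely what has to be shown. The kernel is one-dimensional, so $\psi$ is determined up to a real scalar; you do not get to choose its \emph{direction}, only its sign. The boundary condition $e^{i\pi s}\R$ in the normal coordinate is not the tangent direction to $\partial D_1$ except at the antipode of the puncture, so tangentiality of the boundary trace is not automatic and cannot be ``fixed'' by a choice. Your later paragraphs (cut-off gluing, transport to the standard domain, the bookkeeping through \eqref{eq:coor}) then rest on this unjustified step, and the ``main obstacle'' you flag is left as an assertion rather than a verification.

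The paper's proof avoids all of this by an explicit calculation. One reduces to $n=1$ and bubbles off the capping disk so that the $\pi$-rotation is uniform around it; the unweighted Maslov-index-$1$ problem on this disk has two-dimensional kernel spanned by $z+1$ and $i(z-1)$. The positive weight at the puncture $z=1$ kills $z+1$ (which does not vanish there) and retains $i(z-1)$ (which does), so $\kd_{e_i} = \spn(i(z-1))$. Evaluating at $z=-1$ gives $-2i$, a positive multiple of the counter-clockwise tangent $-i$ to the unit circle at that point; under the gluing identification this is exactly $v_i$. This direct computation both identifies the generator and pins down its sign in one step, replacing the schematic tracking you propose.
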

\begin{proof}
 Assume that the end under consideration is located at $z=-1$ on the boundary of the unit disk in $\C$, and w.l.o.g.\ assume $n=1$. Now we bubble off a disk in a neighborhood of $e_i$, so that the $\pi$-rotation occurring at $e_i$ is made uniformly around this disk. This gives us a standard Maslov problem on the bubbled disk, which we again identify with the unit disk in $\C^n$ with positive puncture at $z=1$. This problem has a solution space spanned by $z+1$ and $i(z-1)$. If we now identify this bubbled off disk with the capping disk at $e_i$, we see that the solution $z+1$ is killed by the positive weight at the positive puncture, and thus we can identify $\kd_e \simeq \spn(i(z-1))$. But the vector $i(z-1)$ is tangent to the boundary at $z=-1$, and thus can be identified with $r_i$ under gluing.
 \end{proof}

\begin{defi}\label{def:endcap}
We define the \emph{capping orientation of $\kd_{e_i}$} to be the orientation so that the identification \eqref{eq:capendori} is orientation-preserving. That is, the capping orientation is represented by $i(z-i)\partial_{x_n}$.
\end{defi}

Using this orientation in the capping sequence \eqref{eq:slcap} we get the following.

\begin{lma}\label{lma:signeq1}
If $\Gamma$ is a rigid flow tree and $\lambda$ is sufficiently small, then $\sigma_{\capp}(w_\lambda) = \sigma_{\pre}(\Gamma_\lambda)$.
\end{lma}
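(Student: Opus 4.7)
The plan is to deduce the equality by matching up the two sign computations term-by-term, exploiting the fact that the fully capped problems on both sides are canonically isomorphic. First, I would observe that Lemma~\ref{lma:disktreeiso} provides canonical isomorphisms $\kd_{\hat w_\lambda}\simeq\kd_{\hat\Gamma_\lambda}$ and $\cd_{\hat w_\lambda}\simeq\cd_{\hat\Gamma_\lambda}$. Since Proposition~\ref{fukayaorient} defines the canonical orientation on a fully capped problem via evaluation at a boundary point, and the two sides differ only by how the $\pi$-rotation at each end is absorbed (into the boundary condition on the $w_\lambda$-side, into an end-capping operator $\dbar_{e_i}$ on the $\Gamma_\lambda$-side), the canonical orientations of $\det\dbar_{\hat w_\lambda}$ and $\det\dbar_{\hat\Gamma_\lambda}$ agree under this identification.

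Next, I would compare the two capping sequences \eqref{eq:cappingseqtilde} and \eqref{eq:slcap}. They share the capping data at the true punctures with identical fixed capping orientations, so the only new contribution on the $\Gamma_\lambda$-side is the column $\bigoplus_{i=1}^{e(\Gamma)}\kd_{e_i}$; the corresponding cokernels vanish. By Lemma~\ref{lma:endvcon} together with the prescribed capping orientation, each $\kd_{e_i}$ is identified in an orientation-preserving way with $v_i$, so that $\bigoplus_i\kd_{e_i}\simeq V(e(\Gamma))$ as oriented vector spaces, in the boundary-ordering of the ends.

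Then I would bring in the conformal variations. Convention \eqref{eq:coor} was designed precisely so that $T\Co_{m+1+e(\Gamma)}\simeq T\Co_{m+1}\oplus V(e(\Gamma))$ as oriented spaces, modulo $T\T_{m+1}$. Applying Lemma~\ref{lma:o1o2} to both capping sequences and then invoking the isomorphism \eqref{eq:esdet} that defines the signs, the extra $V(e(\Gamma))$-factor appearing in $T\Co_{m+1+e(\Gamma)}$ cancels against the extra $\bigoplus_i\kd_{e_i}\simeq V(e(\Gamma))$-factor appearing in $\Ker\capp(\Gamma_\lambda)^e$. This cancellation reduces the sign $\sigma_{\pre}(\Gamma_\lambda)$ to $\sigma(w_\lambda)$.

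The main obstacle will be the bookkeeping of signs. One must verify that the order in which the end-capping operators $\dbar_{e_i}$ are inserted into \eqref{eq:slcap} matches the ordering of the $v_i$'s used in $V(e(\Gamma))$, so that no permutation sign appears. A secondary issue is the case distinction for small $m+e(\Gamma)$: when $\Co_{m+1}$ is empty one must replace it by $\T_{m+1}$ and use the appropriate variant of \eqref{eq:coor}, following Remark~\ref{orientrule}. Finally, the $\lambda$-dependence built into the capping operators (Remark~\ref{rmk:lamdep}) needs to be invoked to ensure that all of these identifications, and hence the equality $\sigma_{\pre}(\Gamma_\lambda)=\sigma(w_\lambda)$, are valid for all $\lambda$ below some threshold $\lambda_0>0$.
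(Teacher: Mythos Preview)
Your proposal is correct and follows essentially the same approach as the paper. The paper's own proof is a terse one-liner pointing to the definition of the orientations of $\T_{1+e(\Gamma)}$ and $\Co_{m+1+e(\Gamma)}$ together with a comparison of the sequences \eqref{eq:slcap} and \eqref{eq:cappingseqtilde}; your write-up simply unpacks this, invoking Lemma~\ref{lma:disktreeiso}, Lemma~\ref{lma:endvcon}, and the convention \eqref{eq:coor} explicitly and flagging the small-$m+e(\Gamma)$ case distinction and ordering bookkeeping that the paper leaves implicit.
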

\begin{proof}
 If $m>1$ this follows from directly from the definition of the orientation of $\T_{1+e(\Gamma)}$ and $\Co_{m+1+e(\Gamma)}$ together with a comparison of the sequences \eqref{eq:slcap} and \eqref{eq:cappingseqtilde} for $\Gamma_\lambda$ and $w_\lambda$, respectively.
 
 If $m=0$, then $e(\Gamma)=1$. In this case $\Ker \dbar_{\hat w_\lambda} = \Ker \dbar_{\hat \Gamma_\lambda}$ is spanned by $(z+1)\partial_{x_n}$ and $i(z-1) \partial_{x_n}$, and $\Coker \dbar_{\hat w_\lambda} = \Coker \dbar_{\hat \Gamma_\lambda}$ can be identified with the span of $\partial_{x_1}, \dotsc, \partial_{x_{n-1}}, \partial_{x_{n+1}}$. Without loss of generality we can assume that the orientation of $\det \dbar_{\hat w_\lambda} = \det \dbar_{\hat \Gamma_\lambda}$ is given by 
 \begin{equation*}
  (z+1)\partial_{x_n}\wedge i(z-1) \partial_{x_n} \otimes \partial_{x_1}\wedge \dotsm \wedge \partial_{x_{n-1}} \wedge \partial_{x_{n+1}},
 \end{equation*}
and that the orientation of the determinant line of the positive capping operator of $\Gamma$ is given by 
 \begin{equation*}
 \partial_{x_1}\wedge \dotsm \wedge \partial_{x_{n-1}} \wedge \partial_{x_{n+1}}.
 \end{equation*}
 From the capping sequence for the $\dbar_{w_\lambda}$-problem it follows that the induced orientaton on $\det \dbar_{w_\lambda}$ is given by 
  \begin{equation*}
  (z+1)\partial_{x_n}\wedge i(z-1)\partial_{x_n},
 \end{equation*}
 while the capping sequence for the $\dbar_{\Gamma_\lambda}$-problem induces the orientation 
  \begin{equation*}
  (-1)^{1}\partial_{x_n}
 \end{equation*}
 on  $\det \dbar_{\Gamma_\lambda}$. 
 
 To get $\sigma_{\capp}(w_\lambda)$ we should compare the orientation of $\T_1$ with $ (z+1)\partial_{x_n}\wedge i(z-1)\partial_{x_n}$, and to get $\sigma_{\pre}(\Gamma_\lambda)$ it follows from \eqref{eq:enend} and Definition \ref{def:endcap} that we should compare the orientation of $\T_1$ with $ (-1)^1 i(z-1)\partial_{x_n}\wedge\partial_{x_n}$. Since $\partial_{x_n}\wedge i(z-1)\partial_{x_n}$ correspond to the orientation $(z+1)\partial_{x_n}\wedge i(z-1)\partial_{x_n}$  the result follows.
\end{proof}

\begin{rmk}
     In Section \ref{sec:glupf} we prove that $\sigma_{\pre}(\Gamma_\lambda)$ is independent of $\lambda$ for $\lambda$ sufficiently small.
     \end{rmk}


%

\section{Stabilization of capping sequences}\label{sec:stab}
 Let $\Gamma$ be a (sub) flow tree. We will stabilize the operator
 $\dbar_{\Gamma_\lambda}$  so that it becomes surjective. This will
 allow us to understand the capping orientation of $\Gamma$ as an orientation
 of the flow out $\F(\Gamma)$ of $\Gamma$ in $M$ in the case when
 $\Gamma$ is a sub flow tree, and by a sign if $\Gamma$ is rigid.

To perform this stabilization we use the similar constructions found in \cite{kch} and \cite{trees}. Namely, we define a vector space $V_{\con}(\Gamma_\lambda)$ spanned by vector fields along $\Gamma$ representing the conformal variations. We then add this space to the domain of $\dbar_{\Gamma_\lambda}$, and extend the operator over this space so that it becomes a surjective operator. See Subsection \ref{sec:vectors}. 

This stabilization will affect also the operator $\dbar_{\hat \Gamma_\lambda}$ and the orientations of the $\dbar$-operators in the capping- and gluing sequences. In Subsection \ref{sec:stabsign} and Subsection \ref{sec:glueback} we calculate the signs needed to compensate for the stabilization. It is these signs that show up in the formulas in \cite{korta}.

In this section we will throughout work with problems where we regard the end-vertices of $\Gamma$ as marked points on the domains of the disks.


\subsection{Stabilization by adding conformal variations}
\label{sec:vectors}
Let $\Gamma$ be a rigid tree.
We start by describing how to view representatives of cokernel
elements as vector fields along $\Gamma$.

Let $\{w_\lambda:\Delta(\Gamma_\lambda) \to T^*M\}$ be a  sequence of pre-glued disks converging to
$\Gamma$, and let $\kappa_\lambda$ be the conformal structure of
$\Delta(\Gamma_\lambda)$.  Then there is an $\lambda$-family
of fully linearized operators 
\begin{equation}
D\dbar_{(\Gamma_\lambda,\kappa_\lambda)} := \dbar_{\Gamma_\lambda}\oplus \Psi_{\kappa_\lambda} :  \Hi_{2,\nu}[\Gamma_\lambda] \oplus T_{\kappa_\lambda} \Co_{m+1+e(\Gamma)} \to  \Hi_{1,\nu}[\Gamma_\lambda].
\end{equation}
This stabilization was introduced in Section
\ref{sec:modspor} for the case of $J$-holomorphic disks, and can
be defined in completely the same way when we consider the domain to
be given by a standard domain. If the corresponding $\dbar$-problem is injective this induces an isomorphism 
\begin{equation}\label{eq:conf22}
 T_{\kappa_\lambda}\Co_{m+1+e(\Gamma)} \simeq\Psi_{\kappa_\lambda}(T_{\kappa_\lambda} \Co_{m+1+e(\Gamma)})\simeq \cd_{\Gamma_\lambda},
\end{equation}
where $\Psi_{\kappa_\lambda}$ is given by 
\begin{equation}\label{eq:conf3}
\Psi_{\kappa_\lambda}: {T_{\kappa_\lambda} \Co_{m+1+e(\Gamma)}} \to  \Hi_{1,\nu}[0], \qquad   
\gamma \mapsto J \circ dw_\lambda\circ \gamma.
\end{equation}
Here the conformal variation $\gamma$ is interpreted as  $\gamma= \partial_t
\gamma_t|_{t=0}$ where $\gamma_t$ is a path of endomorphisms on $T\Delta_{m+1+e(\Gamma)}$
satisfying $\gamma_0=i$, $\gamma_t^2=-1$.  

Moreover, let $\gamma_i$ denote the conformal variation corresponding to moving the $i$th boundary minimum of $\Delta(\Gamma_\lambda)$ towards $-\infty$. Following [\cite{trees}, Section 6.3.5], we get that the map \eqref{eq:conf3} can be given as 
\begin{equation}\label{eq:linconfvcon}
 \Psi_{\kappa_\lambda}(\gamma_i) = -\dbar_{\kappa_\lambda} (\beta_i \partial w_\lambda \cdot c \lambda^{-1} \partial_\tau)
\end{equation}
where  $\beta_i$ is a smooth cut-off function supported in a neighborhood of the $i$th boundary minimum, $c>0$ is a small constant, and  $\lambda^{-1}$ is needed to get good asymptotic behavior as $\lambda \to 0$. 

We let $t_i(\lambda) = - \beta_i \partial w_\lambda \cdot c \lambda^{-1} \partial_t$
and we define 
\begin{equation}\label{eq:vcon}
 V_{\con}(\Gamma_\lambda) := \spn(t_2(\lambda),\dotsc,t_{m+e(\Gamma)-1}(\lambda)).
\end{equation}
We give $V_{\con}(\Gamma_\lambda)$ the orientation from this identification.

\begin{lma}\label{lma:wlimit}
 The vector $t_i(\lambda)$ has a well-defined limit as $\lambda \to 0$, realized as a vector field in $M$ along $\Gamma$, pointing in the direction of $\Gamma$ against the flow direction (towards the positive puncture), and supported in a neighborhood of the vertex of $\Gamma$ corresponding to the $i$th boundary minimum. 
\end{lma}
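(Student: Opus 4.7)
The plan is to unfold the definition of $v_i(\lambda)$ using the explicit asymptotic form of the pre-glued disk $w_\lambda$ near the $i$th boundary minimum, as established in \cite{trees}. On the strip $[T_1,T_2]\times[0,1]\subset\Delta(\Gamma_\lambda)$ containing this boundary minimum, the two sheets of $\Lambda_\lambda=s_\lambda(\Lambda)$ involved are the $1$-jet graphs $J^1(\lambda f_\alpha)$ and $J^1(\lambda f_\beta)$ for two locally defining functions $f_\alpha,f_\beta\colon M\to\R$, and the corresponding edge of $\Gamma$ is a flow line $\gamma\colon[0,L]\to M$ of $-\nabla(f_\alpha-f_\beta)$. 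The pre-gluing construction of \cite{trees} places $w_\lambda$ in canonical cotangent coordinates on $T^*M$ in the form
\[
w_\lambda(\tau,t)=\bigl(\gamma(\lambda\tau),\,\lambda\bigl((1-t)\,df_\alpha+t\,df_\beta\bigr)(\gamma(\lambda\tau))\bigr)+O(\lambda^2),
\]
with $C^1$-error in $\lambda$. The time rescaling $\tau\mapsto\lambda\tau$ reflects the fact that the conformal length of the strip stretches like $\lambda^{-1}$ as $\lambda\to 0$.

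Next I would differentiate in the $\tau$-direction (the direction appearing in \eqref{eq:linconfvcon}, which by the conformal-variation argument recalled above is what is intended in the definition of $v_i(\lambda)$). By the chain rule,
\[
\partial_\tau w_\lambda=\bigl(\lambda\,\dot\gamma(\lambda\tau),\,O(\lambda^2)\bigr)=-\lambda\,\nabla(f_\alpha-f_\beta)(\gamma(\lambda\tau))+O(\lambda^2),
\]
where the base component is leading order and the fiber component is of order $\lambda^2$ because the fiber value of $w_\lambda$ is itself of order $\lambda$ and the chain rule produces an additional factor of $\lambda$. Consequently
\[
v_i(\lambda)=-c\lambda^{-1}\beta_i\,\partial_\tau w_\lambda=c\,\beta_i\,\nabla(f_\alpha-f_\beta)(\gamma(\lambda\tau))+O(\lambda),
\]
which has a well-defined limit as $\lambda\to 0$, realized as a vector field on $\Gamma$ with values in $TM$ embedded in $TT^*M$ along the zero section.

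Finally, the cut-off $\beta_i$ is fixed in $\tau$-coordinates, so under the rescaling $\tau\mapsto\lambda\tau$ its support is compressed to an arbitrarily small neighborhood of the point $v\in\Gamma$ corresponding to the $i$th boundary minimum. The flow orientation on the adjacent edge is $-\nabla(f_\alpha-f_\beta)$, directed away from the positive puncture, so the limit vector $+\nabla(f_\alpha-f_\beta)$ points against this flow, toward the positive puncture, yielding all three asserted properties. The only nontrivial step is the $C^1$ asymptotic expansion in the first paragraph; this is a direct consequence of the pre-gluing construction together with the exponential decay estimates for strip solutions in \cite{trees}, so I expect no real obstacle beyond careful bookkeeping of the various coordinate normalizations.
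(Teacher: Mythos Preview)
Your argument is correct and follows the same approach as the paper: the paper's proof is a three-line sketch (``$w_\lambda$ is flow-like, the derivative is of order $\lambda$, the $\lambda^{-1}$ rescales'') referring to \cite{trees}, and you have simply made each of these three points explicit by writing out the asymptotic form of $w_\lambda$ on the relevant region, differentiating, and cancelling the $\lambda$. Your observation that the paper's $\partial_t$ in the definition of $v_i(\lambda)$ should be read as $\partial_\tau$ (consistent with \eqref{eq:linconfvcon}) is well taken; one minor imprecision is that the region supporting $\beta_i$ is not a strip but a slit neighborhood of the boundary minimum, though this does not affect the computation since the base projection is still governed by $\gamma(\lambda\tau)$ on either side of the slit.
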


\begin{proof}
 This follows from the fact that $w_\lambda$ is flow-like for small $\lambda$, that the order of the derivative of $w_\lambda$ is $\lambda$, and the scaling by $\lambda^{-1}$. See [\cite{trees}, Section 6].
\end{proof}

Using Lemma \ref{lma:wlimit} we define a vector bundle $V_{\con}(\ul{\Gamma}) \simeq [0, \lambda_0]\times \R^{m+e(\Gamma)-2}$ with fibers $V_{\con}(\Gamma_\lambda)$. We give  $V_{\con}(\ul{\Gamma})$ the topology as described in [\cite{trees}, Section 6]. 
Then the evaluation map from $V_{\con}(\ul{\Gamma})$ composed with the projection to $TM$ is well-defined and continuous, and gives  vector fields in $TM$  along $\Gamma$ tangent to $\Gamma$, supported in neighborhoods of $Y_0$- and $Y_1$-vertices and $2$-valent punctures.

Notice that if we consider the domain of $w_\lambda$ to be given by
the punctured unit disk $D_{m+1+e(\Gamma)}$, using the notation from
Section \ref{sec:endmp}, then $T_{\kappa_\lambda}\Co_{m+1+e(\Gamma)}$ is
  considered to be spanned by the vectors $(\tilde v_3,...,\tilde
  v_{m+e(\Gamma)}) = (v_3, \cdots, v_m, r_1, \cdots,
  r_{e(\Gamma)})$. Thus we get a  a natural vector bundle $ T_{\kappa_\lambda}
      \Co_{m+1}(\ul{\Gamma}) \simeq [0,\lambda_0] \times \R^{m-2}$
      with a trivialization given by $(\tilde v_3,\dotsc,\tilde v_{m+e(\Gamma)}) \mapsto(\partial_{x_1},\dotsc,\partial_{x_{m+e(\Gamma)-2}})$. 
It follows that for each pre-glued disk $w_\lambda$ we can define an isomorphism 
\begin{equation*}\label{eq:etaneed}
 \eta_\lambda: T_{\kappa_\lambda} \Co_{m+1+e(\Gamma)}(\Gamma_\lambda) \to V_{\con}(\Gamma_\lambda)
\end{equation*}
given by
\begin{equation}\label{eq:confvariso}
 \tilde v_j \mapsto \eta_\lambda(\tilde v_j) = t_{j-1}(\lambda), \quad j =
 3,\dotsc,m +e(\Gamma),
\end{equation}
and this map extends to a bundle map ${\eta}: T_{\kappa_\lambda} \Co_{m+1+e(\Gamma)}(\ul{\Gamma}) \to V_{\con}(\ul{\Gamma})$.

 \begin{rmk}
  Since we identify the space $V_{\con}(\Gamma)$ with
  $T_{\kappa_\lambda} \Co_{m+1+e(\Gamma)}$, we sometimes think of
  $V_{\con}(\Gamma_\lambda)$ as spanned by the vectors
  $t_1(\lambda),\dotsc,\hat t_{j}(\lambda),
  \dotsc,t_{m+e(\Gamma)-1}(\lambda)$. The map $\eta$ is extended to
  this setting by defining $\eta_\lambda(\tilde
  v_2)=-\eta_\lambda(\tilde v_1) =t_1(\lambda)$.
 \end{rmk}

  To calculate the determinant of the map \eqref{eq:confvariso}, let
  $q_1,\dotsc, q_m$ denote the negative punctures of $\Gamma$, and
  $e_1,\dotsc,e_{e(\Gamma)}$ denote the end vertices of $\Gamma$. Pick
    indices $k_1<\dotso<k_{e(\Gamma)}, l_1<\dotso<l_m$ so that
    $\{k_1,\dotsc,k_{e(\Gamma)},l_1,\dotsc ,l_m\} = \{1,\dotsc,m+e(\Gamma)\}$
    and so that the ends and punctures of $\Gamma$ are occurring along
    the boundary of $\Delta(\Gamma)$ in a way so that  $p_{l_i}=q_i$,
    $i=1,\dotsc,m$, and $p_{k_i} =e_i$, $i=1,\dotsc,e(\Gamma)$, with
    notation as in Figure \ref{fig:1}. Let $l =m+e(\Gamma)$.
 
For $l\geq 2$ we identify the ordered tuple $(p_1,\dotsc,p_l)$ with the oriented standard basis for $\R^{l}$, so that we can use wedge products of the points.

 \begin{defi}\label{def:musign}
 Let  $\mu(\Gamma)\in \{0,1\}$ satisfy 
   \begin{align*}
  (-1)^{\mu(\Gamma)}  p_1  \wedge p_2 \wedge \dotsm \wedge p_l 
    = p_{l_1} \wedge p_{l_2} \wedge \dotsm \wedge p_{l_m} \wedge p_{k_1} \wedge \dotsm \wedge p_{k_{e(\Gamma)}}.
   \end{align*}

 See Figure \ref{fig:endconf}.
 \end{defi}

 \begin{figure}[ht]
 \labellist
\small\hair 2pt
\pinlabel $q_0$ [Br] at 439 305
\pinlabel $-\tilde v_1$ [Br] at  610 390
\pinlabel $e_1$ [Br] at 392 410
\pinlabel $q_1$ [Br] at 278 459
\pinlabel $q_2$ [Br] at 99 304
\pinlabel $e_2$ [Br] at 177 170 
\pinlabel $\tilde v_4$ [Br] at 208 40
\pinlabel $q_3$ [Br] at 310 155
\pinlabel $\tilde v_5$ [Br] at 440 60
\pinlabel $q_0$ [Br] at 890 240
\pinlabel $e_1$ [Br] at 1940 70
\pinlabel $q_1$ [Br] at 1940 163
\pinlabel $q_2$ [Br] at 1940 252
\pinlabel $e_2$ [Br] at 1940 335
\pinlabel $q_3$ [Br] at 1940 420
\pinlabel $t_1$ [Br] at 1566 113
\pinlabel $t_2$ [Br] at 1394 200
\pinlabel $t_3$ [Br] at 1340 290
\pinlabel $t_4$ [Br] at 1240 371
\endlabellist
\centering
\includegraphics[width=13cm]{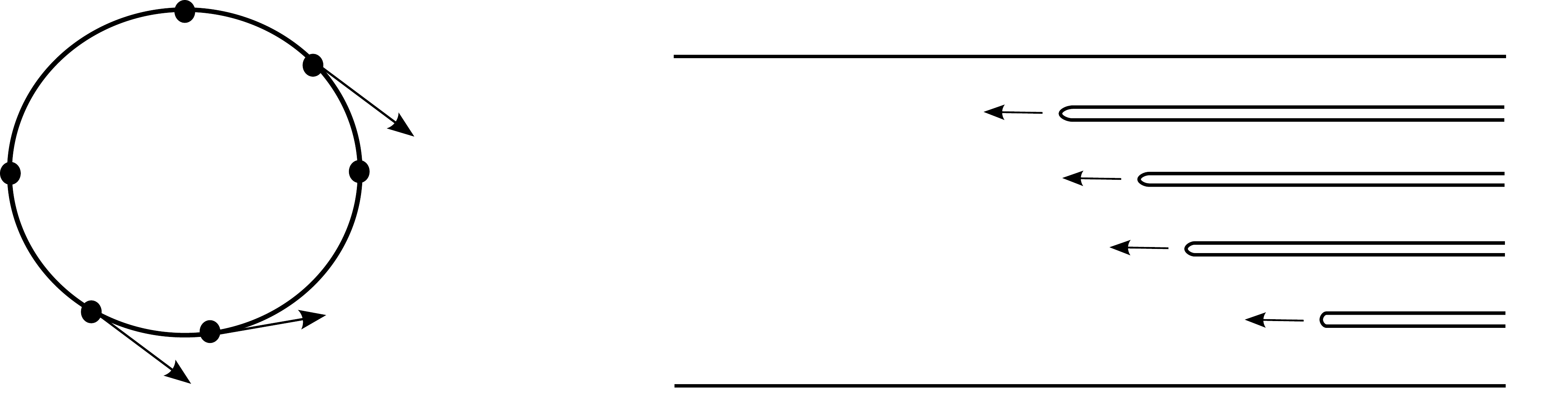}
\caption{In this example we have $l_1=2$, $l_2=3$, $l_3=5$, $k_1=1$, $k_2=4$ and $\mu(\Gamma)=0$, since $e_1 \wedge q_{1} \wedge q_{2} \wedge e_2 \wedge q_3 = q_1 \wedge q_2 \wedge q_3 \wedge e_1 \wedge e_2$. The vectors $t_1,\dotsc,t_4,
  \tilde v_1,\tilde v_4,\tilde v_5$ indicate the conformal variations,
  where $t_1 =-\tilde v_1$, $t_3=\tilde v_4$, $t_4 =\tilde v_5$.}
\label{fig:endconf}
\end{figure}

\begin{lma}\label{lma:end4}
 Let  $\Co_{m+1+e(\Gamma)}$ have the orientation given by \eqref{eq:coor}. Then the isomorphism \eqref{eq:confvariso} is orientation-preserving up to the sign $(-1)^{\mu(\Gamma)}$, 
where $\mu(\Gamma)$ is given in Definition \ref{def:musign}.
\end{lma}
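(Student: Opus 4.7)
The plan is to reduce the claim to comparing two orientations on $T_\kappa\Co_{m+1+e(\Gamma)}$: the standard one from Section~\ref{sec:confvaror}, represented by $t_2 \wedge \dotsm \wedge t_{m-1}$ (equivalently $v_{p_3}\wedge\dotsm\wedge v_{p_m}$ via Lemma~\ref{lma:confor}), versus the one induced by the exact sequence~\eqref{eq:coor}. Since $\eta_\lambda$ sends $t_j$ to $v_j(\lambda)$ and $V_{\con}(\Gamma_\lambda)$ is oriented by the ordered basis $v_2(\lambda),\dotsc,v_{m-1}(\lambda)$, this reformulation is immediate.

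For the main case $l\geq 2$, $k\geq 1$, the space $T\T_{m+1}$ vanishes and \eqref{eq:coor} collapses to the short exact sequence $0 \to V(e(\Gamma)) \to T\Co_{m+1+e(\Gamma)} \to T\Co_{m+1} \to 0$. Applying Lemma~\ref{lma:o1o2} with the oriented basis $(v_{q_3},\dotsc,v_{q_l})$ of $T\Co_{m+1}$ lifted to the marked-point movements $(v_{p_{l_3}},\dotsc,v_{p_{l_l}})$ in $T\Co_{m+1+e(\Gamma)}$ (with $p_0, p_{l_1}, p_{l_2}$ held fixed), the induced \eqref{eq:coor}-orientation is represented by
\begin{equation*}
v_{p_{l_3}}\wedge\dotsm\wedge v_{p_{l_l}}\wedge v_{p_{k_1}}\wedge\dotsm\wedge v_{p_{k_k}}.
\end{equation*}
Converting this back to the natural convention (with $\{p_0, p_1, p_2\}$ fixed instead of $\{p_0, p_{l_1}, p_{l_2}\}$) via iterated use of Lemma~\ref{lma:confor} introduces a sign $(-1)^{l_1+l_2+1}$, and the further permutation sending $(l_3,\dotsc,l_l,k_1,\dotsc,k_k)$ into the natural order $(3,\dotsc,m)$ contributes $(-1)^{\sigma_p}$, combining to the required $(-1)^{\sigma_p+l_1+l_2+1}$.

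The edge cases follow by the same bookkeeping with modifications for the nontrivial kernel $T\T_{m+1}$. When $l\cdot k = 0$, either $V(e(\Gamma)) = 0$ or $T\Co_{m+1} = 0$, and the exact sequence reduces to an identity (if $k=0$) or is handled by the dimension convention on $T\T_{1+e(\Gamma)}$. When $l=1$ and $k\geq 1$, the one-dimensional $T\T_2$ corresponds to the infinitesimal hyperbolic motion fixing $p_0$ and $q_1 = p_{l_1}$; its image in $V(e(\Gamma))$ plays the role of the first basis vector in Lemma~\ref{lma:o1o2}, and the resulting comparison yields exactly $(-1)^{l_1+1}$ from shifting the omitted index $l_1$ past $l_1-1$ slots.

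The main technical subtlety is managing the representation of the orientation on $T_\kappa\Co_{m+1+e(\Gamma)}$ when different triples of marked points are held fixed; Lemma~\ref{lma:confor} provides the systematic rule for switching between such conventions, after which all remaining signs arise from routine wedge-permutation bookkeeping.
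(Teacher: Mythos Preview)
Your argument follows essentially the same route as the paper's proof: both identify the $\eqref{eq:coor}$-orientation as $v_{p_{l_3}}\wedge\dotsm\wedge v_{p_{l_l}}\wedge v_{p_{k_1}}\wedge\dotsm\wedge v_{p_{k_k}}$ with the triple $\{p_0,p_{l_1},p_{l_2}\}$ held fixed, then compare with the standard orientation by splitting the sign into a permutation part $(-1)^{\sigma_p}$ and a ``change of fixed triple'' part $(-1)^{l_1+l_2+1}$. The paper only spells out the special case $p_1=e_1,\,p_2=e_2$ and invokes \cite[Lemma~4.6]{orientbok} for the fixed-triple sign, whereas you treat the general case and appeal to Lemma~\ref{lma:confor}; note that Lemma~\ref{lma:confor} governs swaps among the $t_j$'s rather than the $v_{p_i}$'s directly, so the cleaner citation for that step is the one the paper uses, but the outcome and the overall structure are the same.
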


\begin{proof}
We prove the formula in the case $p_1=e_1$, $p_2=e_2$. The other cases are similar.

We must replace $p_{l_1} =q_1,p_{l_2} =q_2 \in \{p_3,\dotsc,p_l\}$ with $p_1=e_1$ and $p_2=e_2$, respectively. From [\cite{orientbok}, Lemma 4.6], we get that
\begin{align*}
 &p_2 \wedge p_2\wedge \dotsm\wedge p_{l_1-1}\wedge p_{l_1+1} \wedge\dotsm\wedge p_{l_2-1}\wedge p_{l_2+1} \wedge\dotsm\wedge p_l \\
 &=(-1)^{l_2-1-l_1} p_3 \wedge\dotsm \wedge p_l.
\end{align*}
The result follows.
\end{proof}

 If $\Gamma' \subset \Gamma$ is a sub flow tree, then 
 let $V_{\con}(\Gamma'_\lambda)$  be the conformal variations of
 $\Gamma'$ inherited by those from ${\Gamma}$. That is,
 $V_{\con}(\Gamma'_\lambda)$ consists of the vectors in
 $V_{\con}({\Gamma}_\lambda)$ having support on the part of the
 standard domain of ${\Gamma}$ corresponding to $\Gamma'$. Let
 $\bm(\Gamma')$ be the number of boundary minima of
 $\Delta(\Gamma')$. Then notice that if $\bm(\Gamma') = 0$ we have
 $V_{\con}(\Gamma') = 0$, and if $0 <\bm(\Gamma') < \bm(\Gamma)$, $\bm(\Gamma)>1$ and
 $\Gamma'$ has a special positive puncture, then $\bm(\Gamma') =
 \dim V_{\con}(\Gamma')$.

 Moreover, assume that $\Gamma'$ is a partial flow tree obtained by
 gluing a sub flow tree $\Gamma_1$ to a $Y_0$- or $Y_1$-piece $\tilde \Gamma$. Then we let 
\begin{equation*}
 V_{\con}(\Gamma') =  V_{\con}(\Gamma_1).
\end{equation*}
Here we have omitted the $\lambda$-dependence to simplify notation.

Next assume that $\Gamma'$ is a sub flow tree obtained by gluing two sub flow
trees $\Gamma_1$ and $\Gamma_2$ together via a $Y_0$- or $Y_1$-piece
$\tilde \Gamma$. Then we let 
\begin{equation*}
 V_{\con,\pre}(\Gamma') =  V_{\con}(\Gamma_1) \oplus  V_{\con}(\Gamma_2).
\end{equation*}
Similarly, if $\Gamma'$ is obtained by gluing  a sub flow tree
$\Gamma_1$ to a $2$-piece $\tilde \Gamma$, we let 
\begin{equation*}
 V_{\con,\pre}(\Gamma') =  V_{\con}(\Gamma_1).
\end{equation*}
Notice that if $\Gamma$ is the true flow tree containing $\Gamma'$ and if
$\bm(\Gamma) = \bm(\Gamma')$, then $V_{\con,\pre}(\Gamma') = V_{\con}(\Gamma')$.

Now let $\Gamma'$ be a rigid flow tree, a sub flow tree, or a sub flow tree glued to a $Y_0$- or $Y_1$-vertex. We define two stabilized operators associated to $\Gamma'$, by 
\begin{align}
 \dbar_{\Gamma'_\lambda,s} &:  \Hi_{2,\nu}[\Gamma'_\lambda] \oplus V_{\con}(\Gamma'_\lambda)  \to \Hi_{1,\nu}[\Gamma'_\lambda], \\
 \dbar_{\Gamma'_\lambda,ps} &: \Hi_{2,\nu}[\Gamma'_\lambda] \oplus V_{\con,\pre}(\Gamma'_\lambda) \to \Hi_{1,\nu}[\Gamma'_\lambda],
\end{align}
acting as $\dbar$ on both spaces. If $\Gamma \supseteq \Gamma'$ is the
rigid flow tree from which $\Gamma'$ inherits its conformal
structures, and if $\bm(\Gamma) = \bm(\Gamma')$, then these two
operators coincide, and if $\Gamma = \Gamma'$ they give isomorphisms. 

 To simplify
notation, if $V_{\con}(\Gamma)$ is empty then we will use the notation
$\dbar_{\Gamma_\lambda,s}$ and $\dbar_{\Gamma_{\lambda}}$
    interchangeably, and similar for the pre-stabilized operator.

The following result is found in \cite{trees}.

\begin{lma}[\cite{trees}, Proposition 6.20]\label{lma:virker1}
Assume that $\Gamma$ is a sub flow tree or a rigid flow tree. If $\lambda$ sufficiently small, then
 the operators  $\dbar_{\Gamma_\lambda,s}$ and $\dbar_{\Gamma_\lambda,ps}$ are surjective Fredholm operators and
 \begin{align*}
  \ind(\dbar_{\Gamma_\lambda,s})&=\ind(\dbar_{\Gamma_\lambda}) + \dim V_{\con}(\Gamma_\Lambda), \\
  \ind(\dbar_{\Gamma_\lambda,ps})&= \ind(\dbar_{\Gamma_\lambda}) + \dim V_{\con,\pre}(\Gamma_\Lambda).
 \end{align*}

 Moreover, if $\Gamma$ is a sub flow tree with special puncture $q$ and $
\ev_q: \Hi_{2,\nu}[\Gamma_\lambda] \oplus V_{\con}(\Gamma_\lambda) \to T_qM, v \mapsto \lim_{z \to \pm \infty} v(z)$, where the limit is taken in the infinite strip associated
 to $q$ in  $\Delta(\Gamma)$ , then  $ev_q$ restricted to $\kd_{\Gamma_{\lambda,s}}$ is injective, and $\ev_q(\kd_{\Gamma_\lambda,s}) \to T_q\F_q(\Gamma)$ as $\lambda \to 0$.
\end{lma}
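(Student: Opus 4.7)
The plan is to reduce everything to the fact that rigidity of $\Gamma$ forces the fully linearized operator at the nearby genuine holomorphic disk $u_\lambda$ to be an isomorphism, and then transfer this to the pre-glued disk $w_\lambda$ by a standard perturbation argument for Fredholm operators.

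First I would dispose of the Fredholm and index statements. The operator $\dbar_{\Gamma_\lambda}$ is Fredholm by the weighted Sobolev analysis of Section \ref{sec:sobolev}, following \cite{legsub}. Enlarging its source by the finite-dimensional subspace $V_{\con}(\Gamma_\lambda)$ (resp.\ $V_{\con,\pre}(\Gamma_\lambda)$) preserves the Fredholm property and raises the index by exactly the dimension of the added subspace, which gives the two index formulas.

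The substantive point is surjectivity. By the formula \eqref{eq:linconfvcon} we have $\dbar_{\Gamma_\lambda,s}(v_i(\lambda)) = -\Psi_{\kappa(\lambda)}(\gamma_i)$, so enlarging the source of $\dbar_{\Gamma_\lambda}$ by $V_{\con}(\Gamma_\lambda)$ enlarges its image by the image of the conformal-variation map $\Psi_{\kappa(\lambda)}$. For the genuine holomorphic disk $u_\lambda$ produced near $w_\lambda$ by the Picard iteration of \cite{trees}, rigidity of $\Gamma$ forces the fully linearized operator $D\dbar_{u_\lambda,\kappa}=\dbar_{u_\lambda}\oplus\Psi_{\kappa}$ of Section \ref{sec:modspor} to be an isomorphism, hence in particular surjective. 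Since $w_\lambda$ converges to $u_\lambda$ in the relevant Sobolev norm as $\lambda\to 0$ and the set of surjective Fredholm operators is open, surjectivity of $\dbar_{\Gamma_\lambda,s}$ follows for $\lambda$ sufficiently small. The argument for $\dbar_{\Gamma_\lambda,ps}$ is the same; in the sub-tree case the omitted conformal variation corresponds geometrically to translating the special puncture along its own flow line.

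For the final claim, surjectivity together with the index formula pins down $\dim\kd_{\Gamma_\lambda,s}$, and matching with Definition \ref{def:dim} and Lemma \ref{lma:nbhd} gives $\dim\kd_{\Gamma_\lambda,s} = \dim\ev_q\Omega(\Gamma)$. Injectivity of $\ev_q$ on $\kd_{\Gamma_\lambda,s}$ and the convergence $\ev_q(\kd_{\Gamma_\lambda,s})\to\ev_q\Omega(\Gamma)$ would then follow from the linearized tree--disk correspondence: kernel elements parametrize, via the inverse function theorem applied to the Picard iteration of \cite{trees}, infinitesimal deformations of $w_\lambda$ preserving the asymptotic data, and as $\lambda\to 0$ these concentrate along $\Gamma$ and converge to infinitesimal deformations of the tree in $\Omega(\Gamma)$. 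The hard part, and the main obstacle, will be establishing this convergence with enough uniformity in $\lambda$; this rests on the weighted estimates from \cite{trees} that control the pre-gluing error near the vertices where the $v_i(\lambda)$ localize, and the $\lambda^{-1}$-scaling built into the definition of $v_i(\lambda)$ is precisely what guarantees the non-degenerate limit of Lemma \ref{lma:wlimit}.
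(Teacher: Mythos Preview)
The paper does not prove this lemma; it is stated as a citation of \cite{trees}, Proposition 6.20, and the inductive gluing machinery behind it is what the paper later reuses in Proposition~\ref{prp:comiso}.

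Your surjectivity argument has a genuine circularity. You want to deduce surjectivity of $\dbar_{\Gamma_\lambda,s}$ at the pre-glued disk $w_\lambda$ from surjectivity of $D\dbar_{u_\lambda,\kappa}$ at the true holomorphic disk $u_\lambda$, invoking openness of surjectivity. But the existence of $u_\lambda$ in \cite{trees} is established \emph{by} the Picard--Floer iteration, and that iteration takes as input a uniformly bounded right inverse for the linearized operator at the approximate solution $w_\lambda$---i.e., precisely the surjectivity of $\dbar_{\Gamma_\lambda,s}$ you are trying to prove. The logical order in \cite{trees} is: first establish surjectivity at $w_\lambda$ (by an inductive gluing argument over the elementary pieces of the tree), then run the Newton iteration to find $u_\lambda$. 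You cannot reverse this.

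Separately, the lemma is stated for sub flow trees as well as rigid flow trees, and your argument does not cover that case. A sub flow tree has a special puncture and no associated genuine holomorphic disk $u_\lambda$ to compare with; there is nothing ``rigid'' forcing any operator to be an isomorphism. The actual proof in \cite{trees} handles both cases uniformly by cutting $\Gamma$ into elementary pieces, showing surjectivity for each piece directly (where the boundary conditions are explicit and the kernels/cokernels can be computed by hand, as in Section~\ref{sec:elementor} of the present paper), and then gluing, controlling the kernel and the evaluation map inductively. That inductive gluing is also what produces the injectivity of $\ev_q$ and the convergence $\ev_q(\kd_{\Gamma_\lambda,s})\to T_q\F_q(\Gamma)$; it does not come from comparing with an $u_\lambda$.
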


 \subsection{Stabilized capping orientation}\label{sec:stabsign}
 Let $\Gamma$ be a (sub) flow tree of $\Lambda$, and recall the capping sequence \eqref{eq:slcap} of $\Gamma$ defined in Section \ref{sec:cutstab}. In this section we will examine how the stabilization of the $\dbar_{\Gamma_\lambda}$-operators, as defined in Section \ref{sec:vectors}, affect this sequence. 
 
 First we have to understand how the stabilization affects the fully capped problem $\dbar_{\hat{\Gamma}_\lambda}$. So let $\dbar_{\hat{\Gamma}_{\lambda,s}}$ and $\dbar_{\hat{\Gamma}_{\lambda,ps}}$, respectively, denote the $\dbar$-problems
\begin{align*}
 \dbar_{\hat{\Gamma}_{\lambda,s}} &:  \Hi_{2,\nu}[\hat{\Gamma}_\lambda] \oplus V_{\con}(\Gamma_\lambda)  \to \Hi_{1,\nu}[\hat{\Gamma}_\lambda] \\
 \dbar_{\hat{\Gamma}_{\lambda,ps}} &: \Hi_{2,\nu}[\hat{\Gamma}_\lambda] \oplus V_{\con,\pre}(\Gamma_\lambda) \to \Hi_{1,\nu}[\hat{\Gamma}_\lambda],
\end{align*}
where these operators are obtained by gluing the capping operators to the stabilized and pre-stabilized $\dbar$-problem associated to $\Gamma$. 

To simplify notation we let 
\begin{align*}
  \Ker  {\hat{\Gamma}_{\lambda,s}} &\coloneqq  \kd_{\hat{\Gamma}_{\lambda,s}} &
  \Coker  {\hat{\Gamma}_{\lambda,s}} &\coloneqq  \cd_{\hat{\Gamma}_{\lambda,s}} \\
 \Ker  {\hat{\Gamma}_{\lambda,ps}} &\coloneqq  \kd_{\hat{\Gamma}_{\lambda,ps}} &
  \Coker  {\hat{\Gamma}_{\lambda,ps}} &\coloneqq  \cd_{\hat{\Gamma}_{\lambda,ps}}.
\end{align*}
Thus the stabilized version of the  capping sequence  \eqref{eq:slcap} is given by 
\begin{equation}\label{eq:sslcap}
 0 \to 
 \Ker  {\hat{\Gamma}_{\lambda,s}}  
 \xrightarrow{\alpha_\lambda}
 \begin{bmatrix}
  \Ker \capp(\Gamma_{\lambda}) \\
   \kd_{\Gamma_{\lambda,s}}
 \end{bmatrix}
 \xrightarrow{\beta_\lambda}
    \Coker \capp(\Gamma_{\lambda})
     \xrightarrow{\gamma_\lambda}
 \Coker  {\hat{\Gamma}_{\lambda,s}}  
 \to 0.
\end{equation}
To get a better understanding of this sequence we will make use of the following lemma, which is similar to Lemma \ref{lma:kerisocap}.

\begin{lma}\label{lma:kerisocaptree}
Let $\Gamma$ be either a rigid flow tree or a sub flow tree. Then 
\begin{align}\label{eq:gamma}
&\gamma_\lambda: \Coker \capp (\Gamma_\lambda) \to \Coker \hat \Gamma_{\lambda,s}\\
&\alpha_\lambda:\Ker \hat \Gamma_{\lambda,s} \to \Ker \capp (\Gamma_\lambda)\oplus\kd_{\Gamma_{\lambda,s}}
\end{align}
are isomorphisms if $\lambda$ is sufficiently small.
\end{lma}
\begin{proof}
 First notice that Lemma \ref{lma:kerisocap} holds for pre-glued disks $w_\lambda$ corresponding to sub flow trees $\Gamma_\lambda$, and that also Lemma \ref{lma:disktreeiso} holds in this situation. Thus we get that  
 \begin{align*}
\Ker \hat \Gamma_{\lambda} &\simeq \kd_{\hat w_\lambda} \simeq \Ker \capp (w_\lambda)  \oplus \kd_{w_\lambda}\\
\Coker \hat \Gamma_{\lambda} &\simeq \cd_{\hat w_\lambda} \simeq \Coker \capp (w_\lambda) \oplus \cd_{w_\lambda}.
\end{align*}
Consider first the case when $\Gamma$ is a rigid flow tree, and let
$V_{\con}(\Gamma_\lambda)=\spn(t_2,\dotsc,t_{m-1})$. By construction
of the stabilization there is an $2 \leq l\leq m-1$ such that we might assume that $t_2,\dotsc,t_l$ correspond to the negative punctures of $\Gamma$ in a way so that they kill the space $\cd_{w_\lambda}$. Moreover, the remaining vectors $t_{l+1},\dotsc,t_{m-1}$ can be chosen to correspond to the end-vertices of $\Gamma$ in such a way so that they kill the subspace of $\cd_{\Gamma_\lambda}$ which is mapped onto by $\Ker \capp (\Gamma_\lambda)^e$ as described in Lemma \ref{lma:endvcon}. This means that when we add $t_{l+1},\dotsc,t_{m-1}$ to the domain of the $\dbar_{\hat \Gamma_\lambda}$-problem, the kernel grows in dimension:
\begin{equation*}
 \dim \Ker \hat \Gamma_{\lambda,s} = \dim \kd_{\hat \Gamma_\lambda}
 +m-1-l = \dim \kd_{\hat \Gamma_\lambda} + \dim V_{\con}(\Gamma_\lambda) - \dim \cd_{w_\lambda}.
\end{equation*}
Hence the statement follows from the exact sequence \eqref{eq:sslcap}.

The case when $\Gamma$ is a sub flow tree is similar except that some of the elements in $V_{\con}(\Gamma_\lambda)$ now may contribute to $\kd_{\Gamma_{\lambda,s}}$.
\end{proof}

We use the map $\gamma_\lambda$ to stabilize the $\dbar_{\hat{\Gamma}_{\lambda,s}}$-problem so that it becomes surjective, which gives rise to an exact sequence 
 \begin{equation}\label{eq:exactstabs}
  0 \to  \kd_{\hat{\Gamma}_{\lambda}} \to  \Ker  \hat{\Gamma}_{\lambda,s} \to \Coker \capp(\Gamma_{\lambda}) \oplus V_{\con}(\Gamma_\lambda) \to  \cd_{\hat{\Gamma}_{\lambda}} \to 0.
 \end{equation}
 The canonical orientation of $\det \dbar_{\hat{\Gamma}_{\lambda}}$, together with the fixed orientation of the capping operators and on $V_{\con}(\Gamma_\lambda)$ induces an orientation of $\Ker {\hat{\Gamma}_{\lambda,s}}$ via this sequence. This, in turn, induces an orientation of  $\kd_{\Gamma_{\lambda,s}}$ via the stabilized version of 
the sequence \eqref{eq:sslcap}, which reads 
\begin{equation}\label{eq:ssslcap}
 0 \to 
 \Ker  {\hat{\Gamma}_{\lambda,s}}  
 \xrightarrow{\alpha_\lambda}
 \begin{bmatrix}
  \Ker \capp(\Gamma_{\lambda}) \\
   \kd_{\Gamma_{\lambda,s}}
 \end{bmatrix}
 \to 0.
\end{equation}
This will be called the \emph{stabilized capping orientation} of $\Gamma$, and induces in turn an orientation on $\det \dbar_{\Gamma_\lambda}$ via the exact sequence
 \begin{equation}\label{eq:exactstab}
  0 \to  \kd_{ \Gamma_{\lambda}} \to  \kd_{ \Gamma_{\lambda,s}} \to  V_{\con}(\Gamma_\lambda) \to  \cd_{\Gamma_\lambda} \to 0.
 \end{equation}
 
 In particular, if $\Gamma$ is a rigid flow tree with
 $\kd_{\Gamma_\lambda}=0$, then notice that
 $\kd_{\Gamma_{\lambda,s}}=0$, $\cd_{\Gamma_\lambda}\simeq
 V_{\con}(\Gamma)$ and the stabilized capping orientation is given by
 a sign  $\sigma_{\stab}(\Gamma)$. We denote this sign \emph{the
   stabilized sign  of $\Gamma$}.



\begin{prp}\label{prp:mainstabsign}
Let $w_\lambda$ be the pre-glued disk corresponding to the rigid flow tree $\Gamma$ for $\lambda$ sufficiently small. Then $\sigma_{\capp}(w_\lambda) = (-1)^{\mu(\Gamma)}\sigma_{\stab}(\Gamma)$, where $\mu(\Gamma)$ is given in Definition \ref{def:musign}.
\end{prp}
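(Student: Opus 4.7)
The plan is to reduce the statement, via Lemma \ref{lma:signeq1}, to comparing $\sigma_{\pre}(\Gamma_\lambda)$ with $\sigma(\Gamma_\lambda)$, and then to identify the sign discrepancy using Lemma \ref{lma:end4}. Since Lemma \ref{lma:signeq1} already yields $\sigma(w_\lambda) = \sigma_{\pre}(\Gamma_\lambda)$, only the identity $\sigma_{\pre}(\Gamma_\lambda) = (-1)^\mu \sigma(\Gamma_\lambda)$ remains to be proved.

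To prove this, I will unwind the two definitions side by side. The sign $\sigma_{\pre}(\Gamma_\lambda)$, defined at the end of Section \ref{sec:endmp}, comes from the fully linearized operator $D\dbar_{(\Gamma_\lambda,\kappa)} = \dbar_{\Gamma_\lambda} \oplus \Psi_\kappa$ through the exact sequence \eqref{eq:esdet} (with $u$ replaced by $\Gamma_\lambda$): it compares the capping orientation of $\cd_{\Gamma_\lambda}$ with the orientation of $T_\kappa \Co_{m+1+e(\Gamma)}$ coming from \eqref{eq:coor}, via the map $\Psi_\kappa$. In contrast, $\sigma(\Gamma_\lambda)$ is read off as the orientation of the one-point space $\kd_{\Gamma_{\lambda,s}}$ induced by the stabilized sequence \eqref{eq:exactstab}, where $V_{\con}(\Gamma_\lambda)$ carries its fixed orientation coming from the basis $(v_2(\lambda),\ldots,v_{m-1}(\lambda))$.

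These two sign comparisons are bridged by the commuting diagram \eqref{eq:compareorient}, in which the leftmost and rightmost vertical arrows are identities on $\kd_{\Gamma_\lambda}$ and $\cd_{\Gamma_\lambda}$ respectively, while the vertical arrow on the space of conformal variations is $\eta_\lambda(t_j) = v_j(\lambda)$. A direct diagram chase shows that $\sigma_{\pre}(\Gamma_\lambda)$ and $\sigma(\Gamma_\lambda)$ differ only by the sign of $\eta_\lambda$, computed with respect to the orientation on $T_\kappa\Co_{m+1+e(\Gamma)}$ coming from \eqref{eq:coor} and the fixed orientation on $V_{\con}(\Gamma_\lambda)$. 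By Lemma \ref{lma:end4}, this sign is exactly $(-1)^\mu$, which finishes the argument.

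The main obstacle I expect is orientation bookkeeping: one must carefully check that the stabilization step producing \eqref{eq:exactstab} and the dual realization of conformal variations through $\Psi_\kappa$ in \eqref{eq:esdet} contribute the same overall sign, so that no stray sign is introduced beyond the one controlled by $\eta_\lambda$. The diagram \eqref{eq:compareorient} was engineered precisely so that these two operations are equivalent modulo the sign of $\eta_\lambda$, thereby reducing the computation to the combinatorial reordering handled in Lemma \ref{lma:end4}.
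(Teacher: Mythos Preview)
Your overall strategy matches the paper's: reduce to $\sigma_{\pre}(\Gamma_\lambda)$ via Lemma~\ref{lma:signeq1}, then isolate the sign of $\eta_\lambda$ via Lemma~\ref{lma:end4}. The gap is in the step you flag as ``the main obstacle'' and then resolve by appealing to the diagram~\eqref{eq:compareorient}. That diagram does not do what you claim.

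The diagram~\eqref{eq:compareorient} compares the two four-term sequences \eqref{eq:esdet} and \eqref{eq:exactstab}, both of which have $\kd_{\Gamma_\lambda}$ and $\cd_{\Gamma_\lambda}$ as outer terms. It tells you that \emph{given a fixed orientation on $\det\dbar_{\Gamma_\lambda}$}, the two signs one reads off differ precisely by the sign of $\eta_\lambda$. What it does \emph{not} tell you is that the stabilized capping orientation on $\kd_{\Gamma_{\lambda,s}}$---which is defined through the separate chain $\det\dbar_{\hat\Gamma_\lambda}\to\Ker\hat\Gamma_{\lambda,s}\to\kd_{\Gamma_{\lambda,s}}$ via \eqref{eq:exactstabs} and \eqref{eq:ssslcap}---feeds back through \eqref{eq:exactstab} to recover the \emph{original} capping orientation on $\det\dbar_{\Gamma_\lambda}$ coming from \eqref{eq:slcap}. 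This consistency of the two routes from $\det\dbar_{\hat\Gamma_\lambda}$ to $\det\dbar_{\Gamma_\lambda}$ is a genuine compatibility check between the capped and uncapped stabilizations, and it is exactly here that the paper invokes Lemma~\ref{lma:kerisocaptree}: the splitting $\Ker\hat\Gamma_{\lambda,s}\simeq\Ker\capp(\Gamma_\lambda)\oplus\kd_{\Gamma_{\lambda,s}}$ and $\Coker\hat\Gamma_{\lambda,s}\simeq\Coker\capp(\Gamma_\lambda)$ is what makes the sequences \eqref{eq:sslcap}, \eqref{eq:exactstabs}, \eqref{eq:ssslcap}, \eqref{eq:exactstab} fit together without introducing an extra sign. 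You need to cite Lemma~\ref{lma:kerisocaptree} and verify this compatibility of the four sequences; the commutativity of \eqref{eq:compareorient} alone is not enough.
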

\begin{proof}
From Lemma \ref{lma:signeq1} we have that
$\sigma_{\capp}(w_\lambda)=\sigma_{\pre}(\Gamma)$ for sufficiently
small $\lambda$. Also notice that we have a commuting diagram
\begin{align}\label{eq:compareorient}
\xymatrix{
0 \ar[r]
&\Ker \dbar_{\Gamma_\lambda} 
\ar[r] 
\ar[d]^{\id}
&\Ker(\dbar_{\Gamma_\lambda} \oplus \Psi_{\kappa_\lambda})
\ar[r]
\ar[d]^{(\id \oplus\eta)}
&  T_{\kappa_\lambda} \Co_{m+1} 
\ar[r]
\ar[d]^{\eta}
& \cd_{\Gamma_\lambda}
\ar[r]
\ar[d]^{\id}
&0
\\
0 \ar[r]
 &\Ker \dbar_{\Gamma_\lambda}  
 \ar[r]
& \Ker \dbar_{\Gamma_{\lambda,s}} 
\ar[r]
&  V_{\con}(\Gamma_\lambda) 
\ar[r]
&\cd_{\Gamma_\lambda}
\ar[r]
&0,
} 
\end{align}
where the top exact sequence is given by \eqref{eq:esdet}  and the
lower one by \eqref{eq:exactstab}. The maps $\eta$ and $\Psi_{\kappa_\lambda}$ are defined in  Section \ref{sec:vectors}.

Now the result follows from Lemma \ref{lma:end4}.

\end{proof}

Next we consider the pre-stabilized case, that is, the case when we have operators $\dbar_{\Gamma_{\lambda,ps}}$ and $\dbar_{\hat{\Gamma}_{\lambda,ps}}$ and when we have a corresponding \emph{pre-stabilized capping sequence}
\begin{equation}\label{eq:pslcap}
 0 \to 
 \Ker  {\hat{\Gamma}_{\lambda,ps}}  
 \xrightarrow{\alpha_\lambda}
 \begin{bmatrix}
  \Ker \capp(\Gamma_{\lambda}) \\
   \kd_{\Gamma_{\lambda,ps}}
 \end{bmatrix}
 \xrightarrow{\beta_\lambda}
    \Coker \capp(\Gamma_{\lambda})
     \xrightarrow{\gamma_\lambda}
 \Coker  {\hat{\Gamma}_{\lambda,ps}}  
 \to 0.
\end{equation}

W.l.o.g. we may assume that 
\begin{equation*}
 V_{\con,\pre}(\Gamma_\lambda) = \spn(t_1(\lambda),\dotsc,t_{j-1}(\lambda),t_{j+1}(\lambda)\dotsc,t_{m-1}(\lambda))
\end{equation*}
following the notation in Section \ref{sec:vectors}. If now
$V_{\con,\pre}(\Gamma_\lambda) \neq V_{\con}(\Gamma_\lambda)$, it
follows from  \cite{trees} that we have
\begin{align}
\label{eq:hatkeriso}
  \kd_{\Gamma_{\lambda,s}} &\simeq \kd_{\Gamma_{\lambda,ps}} \oplus \langle t_j(\lambda) \rangle\\
  \label{eq:hatcokeriso}
  \Coker{\hat{\Gamma}_{\lambda,s}} &\simeq \Coker{\hat{\Gamma}_{\lambda,ps}}. 
\end{align}
We define the orientation of $V_{\con, \pre}(\Gamma_\lambda)$ to be given by
\begin{equation*}
 \Or(V_{\con, \pre}(\Gamma_\lambda)) = t_1(\lambda)\wedge \dotsm \wedge t_{j-1}(\lambda)\wedge t_{j+1}(\lambda)\wedge \dotsm \wedge t_{m-1}(\lambda).
\end{equation*}
Then this fixed orientation together with the canonical orientation of $\det \dbar_{\hat{\Gamma}_{\lambda}}$, the fixed orientations of the capping operators of $\Gamma$ and the isomorphisms \eqref{eq:gamma} and \eqref{eq:hatcokeriso} induces an orientation of $ \Ker  {\hat{\Gamma}_{\lambda,ps}} $ via the exact sequence
 \begin{equation}\label{eq:exactstabspp}
  0 \to  \kd_{\hat{\Gamma}_{\lambda}} \to  \Ker  \hat{\Gamma}_{\lambda,ps} \to \Coker \capp(\Gamma_{\lambda}) \oplus V_{\con,\pre}(\Gamma_\lambda) \to  \cd_{\hat{\Gamma}_{\lambda}} \to 0.
 \end{equation}
This orientation, in turn, induces an orientation of
$\kd_{\Gamma_{\lambda,ps}}$ via the exact sequence \eqref{eq:pslcap},
and this we call the \emph{pre-stabilized capping orientation} of $\Gamma_\lambda$. In addition, this induces an orientation on $\det \dbar_\Gamma$ via the exact sequence 
 \begin{equation}\label{eq:exactstabpre}
  0 \to  \kd_{ \Gamma_{\lambda}} \to  \kd_{ \Gamma_{\lambda,ps}} \to  V_{\con,\pre}(\Gamma_\lambda) \to  \cd_{\Gamma_\lambda} \to 0.
 \end{equation}

\begin{lma}\label{lma:prestab}
Assume that $\kd_{\Gamma_{\lambda,s}}$ is given its stabilized
orientation and that $\kd_{\Gamma_{\lambda,ps}}$ is given its
pre-stabilized orientation. If $V_{\con,\pre}(\Gamma_\lambda) \neq
V_{\con}(\Gamma_\lambda)$, then
\begin{equation*}
 \Or(\kd_{\Gamma_{\lambda,s}}) = (-1)^\sigma \Or(\kd_{\Gamma_{\lambda,ps}})\wedge t_j(\lambda)
\end{equation*}
where $\sigma$ satisfies
\begin{equation*}
 \Or(V_{\con}(\Gamma_\lambda)) = (-1)^\sigma \Or( V_{\con,\pre}(\Gamma_\lambda)) \wedge t_j(\lambda).
\end{equation*}
\end{lma}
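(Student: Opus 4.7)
The plan is to compare the orientation-defining exact sequences for the stabilized and pre-stabilized problems and show that the only discrepancy between the two induced orientations is exactly the sign $(-1)^\sigma$ from the hypothesis. Both orientations are built from the same data -- the canonical orientation of $\det \dbar_{\hat\Gamma_\lambda}$, the fixed capping orientations of the punctures and end-vertices, and the fixed orientations of $\Coker\capp(\Gamma_\lambda)$ and $\Ker\capp(\Gamma_\lambda)$ -- with the sole difference being the use of $V_{\con}(\Gamma_\lambda)$ in \eqref{eq:exactstabs} versus $V_{\con,\pre}(\Gamma_\lambda)$ in \eqref{eq:exactstabspp}, and correspondingly $\Ker\hat\Gamma_{\lambda,s}$ versus $\Ker\hat\Gamma_{\lambda,ps}$ in the second term. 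All other terms are identical.

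Next I substitute $\det V_{\con}(\Gamma_\lambda) = (-1)^\sigma \det V_{\con,\pre}(\Gamma_\lambda)\wedge v_j(\lambda)$ into \eqref{eq:exactstabs}. Applying Lemma \ref{lma:o1o2} to both \eqref{eq:exactstabs} and \eqref{eq:exactstabspp} and using the natural inclusion $\Ker\hat\Gamma_{\lambda,ps}\hookrightarrow\Ker\hat\Gamma_{\lambda,s}$ dual to \eqref{eq:hatkeriso}, I conclude
\[
\det \Ker\hat\Gamma_{\lambda,s} \;=\; (-1)^\sigma\,\det\Ker\hat\Gamma_{\lambda,ps}\wedge v_j(\lambda),
\]
where on the right-hand side $v_j(\lambda)$ is lifted to an element of $\Ker\hat\Gamma_{\lambda,s}$ of the form $(s,v_j(\lambda))$ (such an $s$ exists since $\Psi_\kappa(v_j(\lambda))$ lies in the image of $\dbar_{\hat\Gamma_\lambda}$, the cokernels being unchanged by the pre-stabilization, cf.\ \eqref{eq:hatcokeriso}). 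The crucial compatibility here is that $\Psi_\kappa(v_j(\lambda))$ enters $\Coker\capp(\Gamma_\lambda)$ in exactly the same way in both five-term sequences, so adjoining $v_j(\lambda)$ to an oriented basis of $V_{\con,\pre}(\Gamma_\lambda)$ corresponds bijectively, and orientation-compatibly, to adjoining the new generator of $\Ker\hat\Gamma_{\lambda,s}$.

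Finally I transfer this identity through the capping sequence \eqref{eq:ssslcap} and its pre-stabilized counterpart obtained from \eqref{eq:pslcap} via \eqref{eq:hatcokeriso}. Both are two-term exact sequences, i.e.\ isomorphisms onto $\Ker\capp(\Gamma_\lambda)\oplus\kd_{\Gamma_{\lambda,s}}$ and $\Ker\capp(\Gamma_\lambda)\oplus\kd_{\Gamma_{\lambda,ps}}$, respectively. Under the stabilized isomorphism the extra generator $v_j(\lambda)\in\Ker\hat\Gamma_{\lambda,s}$ maps to the extra generator $v_j(\lambda)\in\kd_{\Gamma_{\lambda,s}}$ from \eqref{eq:hatkeriso}, because $v_j(\lambda)$ is compactly supported on the non-capped part $\Gamma_\lambda$ and therefore has trivial image in $\Ker\capp(\Gamma_\lambda)$. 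Cancelling the common factor $\det\Ker\capp(\Gamma_\lambda)$ on both sides yields the claimed identity.

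The main obstacle will be the sign tracking in the middle step: one must verify that, under the convention of Lemma \ref{lma:o1o2}, splitting the $W_2$-factor as $\Coker\capp(\Gamma_\lambda)\oplus V_{\con,\pre}(\Gamma_\lambda)\oplus\langle v_j(\lambda)\rangle$ and the $W_1$-factor as $\Ker\hat\Gamma_{\lambda,ps}\oplus\langle v_j(\lambda)\rangle$ produces no permutation sign beyond the $(-1)^\sigma$ already recorded in the reordering of $V_{\con}(\Gamma_\lambda)$. This is a delicate but routine bookkeeping exercise that relies on the compatibility of the two splittings with the projection $\Ker\hat\Gamma_{\lambda,s}\to V_{\con}(\Gamma_\lambda)$ induced by \eqref{eq:exactstabs}.
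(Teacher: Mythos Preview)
Your proposal is correct and follows essentially the same approach as the paper's own proof, which simply states that the result ``follows by straightforward calculations similar to those in the proof of Proposition~\ref{prp:mainstabsign}.'' You have spelled out what those calculations amount to: comparing the parallel exact sequences \eqref{eq:exactstabs}, \eqref{eq:exactstabspp} and then \eqref{eq:ssslcap}, \eqref{eq:pslcap}, and tracking the extra generator $v_j(\lambda)$ through both steps. Your observation that the map $\Ker\hat\Gamma_{\lambda,s}\to\Coker\capp(\Gamma_\lambda)\oplus V_{\con}(\Gamma_\lambda)$ in \eqref{eq:exactstabs} is, on the $V_{\con}$-factor, just the projection $(f,v)\mapsto v$ is the key point that makes the sign bookkeeping go through cleanly with no extra permutation sign.
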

\begin{proof}
 Follows by straightforward calculations similar to those in the proof
 of Proposition \ref{prp:mainstabsign}.

\end{proof}

\subsection{The stabilized gluing sign}\label{sec:glueback}
Now we analyze the situation when we glue together two partial sub flow trees $\Gamma_1$ and $\Gamma_2$, to get a tree $\Gamma = \Gamma_1 \# \Gamma_2$. 
Below we assume that the gluing gives the pre-stabilized problem of $\Gamma$. The case when we get the stabilized problem (i.e.\ when $V_{\con}(\Gamma)=V_{\con}(\Gamma_1)\oplus V_{\con}(\Gamma_2)$) is completely similar.

From the gluing we get an exact gluing sequence 
 \begin{equation}
 \begin{array}{ccccccccccc}\label{eq:glu10}
0 
&\to 
&\kd_{\Gamma_\lambda,ps}
&\to
& \begin{bmatrix}
\kd_{\Gamma_{\lambda,1,s}} \\
\kd_{\Gamma_{\lambda,2,s}}
\end{bmatrix}
&\to 
&\R^{n}
&\to 
& 0. 
\end{array}
\end{equation}
In what follows we will drop the $\lambda$-dependence when convenient, referring to Section \ref{sec:glupf} for the motivation of why we can do this if $\lambda$ is sufficiently small.

Let $\Or_{\indu}(\Gamma)$ denote the orientation of $\kd_{\Gamma_{ps}}
$ induced by \eqref{eq:glu10}, assuming that  $\kd_{\Gamma_{i,s}}$ is
given its stabilized capping orientation, $i=1,2$. We are about to
compare this orientation with the pre-stabilized capping orientation
of $\kd_{\Gamma_{ps}}$. The sign we get from this comparison shows up
in the combinatorial formula for the capping orientation of $\Gamma$. 

First let us 
assume that $\Gamma$ is a partial flow tree, that $\Gamma_1$ is either
an elementary tree or a  sub flow tree, and that $\Gamma_2$ is either
an elementary tree or a  sub flow tree glued to a $Y_0$- or
$Y_1$-piece. Also, we assume that the trees are numbered in such a way
so that the positive puncture of $\Gamma$ coincides with the positive
puncture of $\Gamma_2$, and so that $V_{\con,\pre}(\Gamma) =
V_{\con}(\Gamma_1) \oplus V_{\con}(\Gamma_2)$, oriented. This means
that the true punctures of $\Gamma$ are ordered by first taking the
ordered true punctures of $\Gamma_1$ and then the ordered true
punctures of $\Gamma_2$, except in the case when we are gluing a tree
$\Gamma_1$ to a  negative $2$-piece $\Gamma_2$ of type 1, where we
have that the order is given by first taking the true negative puncture of $\Gamma_2$ and then the ordered true punctures of $\Gamma_1$. 
To that end, we let $\tp_2(\Gamma_2)=1$ if $\Gamma_2$ is a $2$-piece
of type 1, and $\tp_2(\Gamma_2)=0$ otherwise. 
 
Let $p$ denote the special puncture at which we are performing the
gluing. 
The sign we get will depend on whether $p$ is the first negative
puncture of $\Gamma_2$ or not, with respect to the standard ordering
of the punctures. See Figure \ref{fig:gluediff}, and we will use the
notation from there.

\begin{figure}[ht]
\labellist
\small\hair 2pt
\pinlabel $p_0$ [Br] at 10 250
\pinlabel $\Gamma_2$ [Br] at 101 300
\pinlabel $\Gamma_1$ [Br] at 280 120
\pinlabel $p$ [Br] at  140 205 
\pinlabel $p_1$ [Br] at 160 290
\pinlabel ${\Gamma_2}$ [Br] at 456 215
\pinlabel $\Gamma_1$ [Br] at 656 305
\pinlabel $p$ [Br] at 570 282
\pinlabel $p_0$ [Br] at 420 250
\pinlabel $p$ [Br] at  160 42 
\pinlabel $p_1$ [Br] at 168 20
\pinlabel $p$ [Br] at  575 25 
\endlabellist
\centering
\includegraphics[height=7cm]{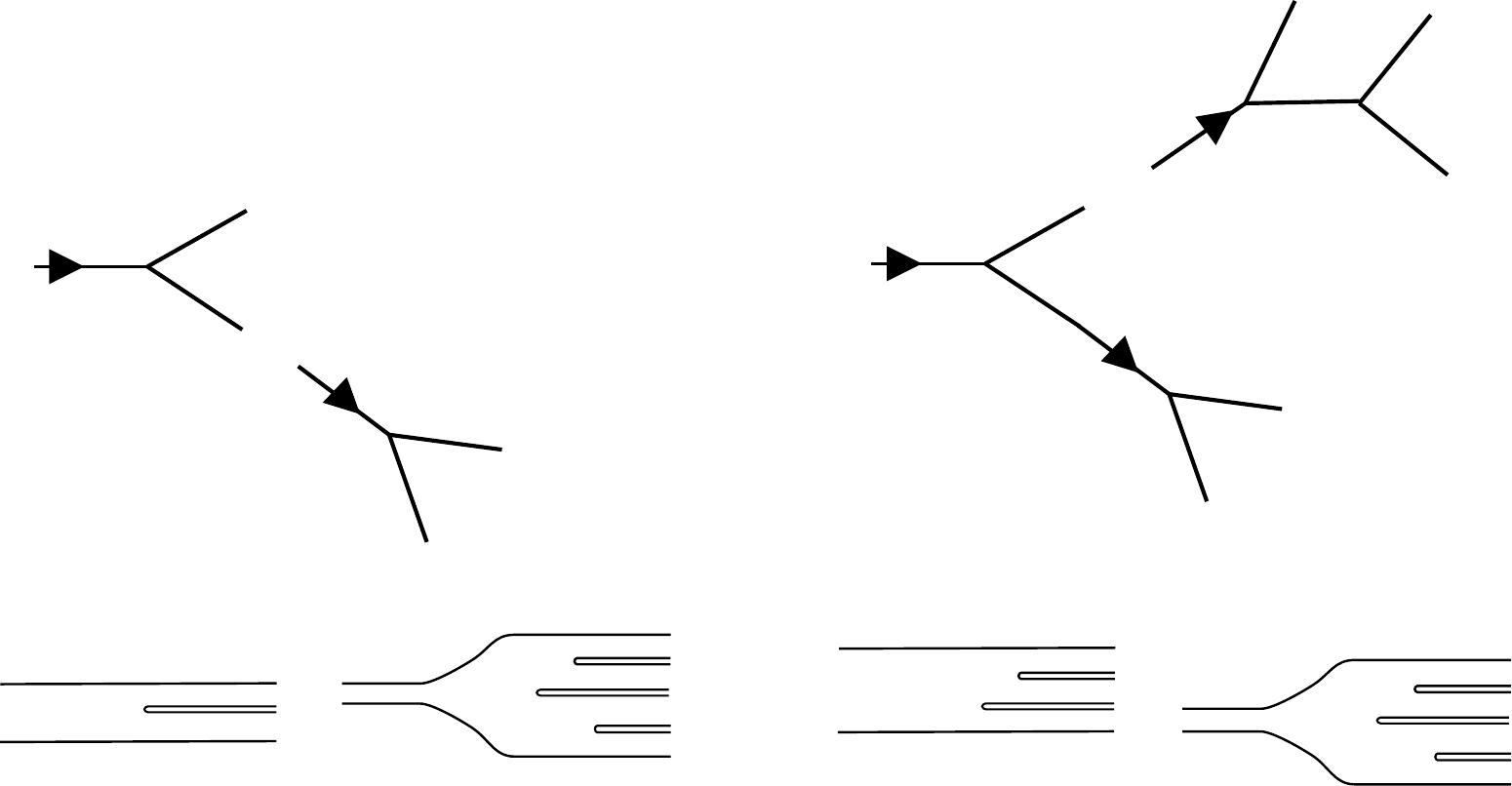}
\caption{The two different types of gluing, with Case 1 on the left hand side and Case 2 on the right hand side.}
\label{fig:gluediff}
\end{figure}

\begin{prp}\label{prp:megalemma4paux}
Let $\kd_{\Gamma_{\lambda,i,s}}$, $i=1,2$, be given its stabilized capping orientation, and assume that ${v_\lambda^1} \wedge \dotsm \wedge {v_\lambda^m}$ represents the orientation of $\kd_{\Gamma_{\lambda,ps}}$ induced by the gluing sequence \eqref{eq:glu10}.
Then the pre-stabilized capping orientation of $\Gamma$ is given by 
 $(-1)^{\sigma}v^1_\lambda \wedge\dotsm\wedge v^m_\lambda$, where $\sigma$ is given by:
 
 Case 1, $p$ is not the first negative puncture of $\Gamma_2$:
 \begin{align*}
 \sigma &= \dim \kd_{\Gamma_1}\cdot [|\mu(p_0)| +  |\mu(p)| +
   |\mu(p_1)|+  \tp_2(\Gamma_2)\cdot n] \\
 &\quad + \dim \Ker \capp (\Gamma_1)\cdot[ 
          |\mu(p_1)|+  \tp_2(\Gamma_2)\cdot n]  \\
 &\quad {}+ [\dim \kd_{\Gamma_1}+\dim \kd_{\Gamma_{1,s}}] \cdot [\dim \kd_{\Gamma_1} + \dim \kd_{\Gamma}]\\
 &\quad {}+[\dim \kd_{\Gamma}+\dim \kd_{\Gamma_{ps}}] \cdot[\dim \cd_{\Gamma} + \dim \cd_{\Gamma_1}  ].
\end{align*}

  Case 2, $p$ is the first negative puncture of $\Gamma_2$:
    \begin{align*}
 \sigma &=  e (\Gamma_1)\cdot[|\mu(p)| + |\mu(p_0)| + \dim \Ker \capp
          (\Gamma_2)+ e(\Gamma_2)]\\
      &\quad {} + \dim \kd_{\Gamma_1} \cdot \dim \Ker \capp (\Gamma_2)\\
  &\quad {}+  (\dim \cd_{\Gamma_1}+n)\cdot \dim \Coker \capp (\Gamma_2)+ n \cdot  \dim \cd_{\Gamma_2} \\
  &\quad {}+ [\dim \kd_{\Gamma_1}+\dim \kd_{\Gamma_{1,s}}] \cdot [\dim \cd_{\Gamma_2} +\dim \kd_{\Gamma_1} + \dim \kd_{\Gamma}]\\
 &\quad {}+ [\dim \kd_{\Gamma_2}+\dim \kd_{\Gamma_{2,s}}] \cdot [\dim \kd_{\Gamma_1} + \dim \kd_{\Gamma} + \dim \kd_{\Gamma_2}]\\
 &\quad {}+[\dim \kd_{\Gamma}+\dim \kd_{\Gamma_{ps}}] \cdot[\dim \cd_{\Gamma} + \dim \cd_{\Gamma_1} +\dim \cd_{\Gamma_2} ].
\end{align*}
\end{prp}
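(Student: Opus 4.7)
The plan is to start from the two stabilized capping sequences \eqref{eq:sslcap} that orient $\kd_{\Gamma_{\lambda,i,s}}$ for $i=1,2$, and then use associativity of gluing on the level of orientations (as recorded in Section \ref{sec:gluew} and Remark after Lemma \ref{lma:gluweights}) to rewrite the resulting orientation of $\kd_{\Gamma_{\lambda,ps}}$ via \eqref{eq:glu10} in terms of the pre-stabilized capping orientation defined by \eqref{eq:pslcap}. All the signs we need come from careful bookkeeping of permutations in the wedge products of the relevant determinant lines, combined with the index shifts from Lemma \ref{lma:indexglu}. Throughout, since $\Gamma_1$ and $\Gamma_2$ share the puncture $p$ (positive for one, negative for the other), we will use the capping exact sequence \eqref{eq:capseq} at $p$ to trade a pair $(\cd_{p,+},\cd_{p,-})$ against $(\kd_{p,+},\kd_{p,-})$, which effectively inserts the $\R^n$ factor appearing in \eqref{eq:g1}--\eqref{eq:g3} into the large gluing sequence for the fully capped problems.

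The first step is to produce, from the stabilized sequences \eqref{eq:sslcap} for $\Gamma_1$ and $\Gamma_2$, an \emph{iterated} gluing sequence in which both fully capped problems $\dbar_{\hat \Gamma_{\lambda,i,s}}$ are glued to each other at $p$ (which is a negative puncture of $\Gamma_1$ or $\Gamma_2$, depending on orientation). This gives an exact sequence relating $\Ker\hat\Gamma_{\lambda,ps}$ and $\Coker\hat\Gamma_{\lambda,ps}$ to the direct sum of the kernel/cokernel data of the two fully capped stabilized problems plus an $\R^n$ factor, in the style of Lemma \ref{lma:gluweights}. By the associativity of gluing on orientations, the canonical orientation of $\det\dbar_{\hat\Gamma_\lambda}$ (which defines the pre-stabilized capping orientation of $\Gamma_\lambda$ via \eqref{eq:exactstabspp} and \eqref{eq:pslcap}) coincides, up to a computable sign, with the orientation induced from the canonical orientations of $\det\dbar_{\hat\Gamma_{\lambda,1}}$ and $\det\dbar_{\hat\Gamma_{\lambda,2}}$. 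This step absorbs the term $n$ (from the gluing shift of Lemma \ref{lma:indexglu}, case (iii), since at $p$ the weight signs are opposite on the two sides after passing to the correct capping weights) and the terms involving $n\cdot\dim\cd_{\Gamma_i}$ coming from commuting the cokernel blocks of the two sides past the inserted $\R^n$.

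Second, I will compare the ordering of the capping operators along $\partial\hat\Gamma$ with the ordering we obtain when we glue $\hat\Gamma_1$ and $\hat\Gamma_2$ along their copies of $p$. In $\hat\Gamma$ the capping operators are glued in the prescribed order of Section \ref{sec:cappingor} (first $\dbar_{q_0,+}$, then the $\dbar_{q_i,-}$ in the cyclic order on $\partial\Delta(\Gamma)$, with end-caps $\dbar_{e_i}$ interspersed). In the glued sequence of the previous step, however, the capping operators at the true punctures and end vertices of $\Gamma_1$ and $\Gamma_2$ appear in two separate blocks $\Ker\capp(\Gamma_1)$ and $\Ker\capp(\Gamma_2)$, each ordered according to the boundary of the respective sub flow tree. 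The case distinction arises precisely at this point: in Case 1 the insertion point $p$ lies strictly inside the negative punctures of $\Gamma_2$, so the block $\Ker\capp(\Gamma_1)$ must be shuffled through a portion of $\Ker\capp(\Gamma_2)$, producing the factors $\dim\Ker\capp(\Gamma_1)\cdot|\mu(p_1)|$ and the terms involving $\dim\kd_{\Gamma_1}\cdot(|\mu(p_0)|+|\mu(p)|)$ (the latter from moving $\kd_{\Gamma_{1,s}}$ past the pieces of $\Coker\capp$ it must cross, whose parities are controlled by Corollary \ref{cor:grad2}). In Case 2, $p$ itself is the first negative puncture of $\Gamma_2$, so the block $\Ker\capp(\Gamma_1)$ slides in immediately after $\dbar_{q_0,+}$ of $\Gamma_2$, and one picks up instead the mixed terms $\dim\Ker\capp(\Gamma_2)^-|_\Gamma \cdot \dim\Ker\capp(\Gamma_1)^e$ and $\dim\kd_{\Gamma_1}\cdot\dim\Ker\capp(\Gamma_2)$. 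Both the type of the $2$-valent puncture (tracked by $\tp_2(\Gamma_2)$) and the parity $|\mu(p_1)|$ of the new positive puncture after gluing contribute explicit shuffle signs that reproduce the stated expression.

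Third, I will convert back and forth between the orientations on $\kd_{\Gamma_{i,s}}$ and those on $\kd_{\hat\Gamma_{i,s}}$ via Lemma \ref{lma:kerisocaptree} and the exact sequence \eqref{eq:exactstabs}, which produces the four mixing terms of the shape $[\dim\kd_{\Gamma_i}+\dim\kd_{\Gamma_{i,s}}]\cdot[\,\cdots\,]$ in both cases (and the analogous term for $\Gamma$). Each of these comes from swapping the block $\kd_{\Gamma_{i,s}}$ inside $\Ker\hat\Gamma_{i,s}$ past $\Ker\capp(\Gamma_i)$ (whose dimension is congruent to $\dim\kd_{\hat\Gamma_i}-\dim\kd_{\Gamma_i}$ mod $2$ by Lemma \ref{lma:kerisocap}), and similarly on the cokernel side for $\Gamma$ itself. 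The last step is bookkeeping: collect all the individual contributions and verify they sum, modulo $2$, to the asserted $\sigma$ in each case. The main obstacle is purely combinatorial accounting, and the principal risk is losing track of a parity in the repeated invocation of Lemma \ref{lma:o1o2}; to minimize this I will display at each step the explicit ordered wedge representing the orientation, as in the calculations underlying Proposition \ref{prp:mainstabsign}, so that the final sign is read off by direct comparison of two ordered bases of the same one-dimensional determinant line.
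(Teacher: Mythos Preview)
Your overall strategy is essentially that of the paper: compare the capping orientation of $\Gamma$ with the orientation glued from $\Gamma_1$ and $\Gamma_2$ via an auxiliary fully capped object, then separately account for the reordering of capping operators and for the passage between stabilized and pre-stabilized problems. Your use of the capping sequence \eqref{eq:capseq} at $p$ to trade the pair of caps for an $\R^n$ factor is precisely what the paper does, though the paper packages this by introducing the explicit auxiliary problem $\dbar_G = \dbar_{\hat\Gamma \# p}$ and working with its gluing sequence. So at the architectural level you are on the right track.

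There are, however, two concrete issues. First, your attribution of the standalone term $n$ to Lemma~\ref{lma:indexglu}(iii) is wrong: case~(iii) has \emph{no} index shift. In the paper the $n$ appears from moving the $\R^n$ block to the bottom of the exact sequence \eqref{eq:stabdone22}, which costs $(-1)^{\nu_1}$ with $\nu_1 \equiv n|\mu(p)| + n$; combined with the $(-1)^{n|\mu(p)|}$ coming from the convention for $\det\dbar_p$ at a special puncture, this leaves exactly $(-1)^n$. Second, and more seriously, your step~3--4 (the conversion between $\kd_{\Gamma_{i,s}}$ and $\kd_{\hat\Gamma_{i,s}}$ and back) is too coarse. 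The paper does \emph{not} do this by shuffling $\kd_{\Gamma_{i,s}}$ past $\Ker\capp(\Gamma_i)$; instead it separates the computation into two independent pieces. The first, $(-1)^{\nu_2+n}$, is computed entirely at the \emph{un-stabilized} level by comparing the capping orientation of $\det\dbar_\Gamma$ with the orientation induced from $\det\dbar_{\Gamma_1}$ and $\det\dbar_{\Gamma_2}$ via the un-stabilized sequence \eqref{eq:st1}. The second, $(-1)^{\nu_3}$, compares the glued orientation on $\kd_{\Gamma_{ps}}$ from \eqref{eq:glu10} with the one obtained by first orienting $\det\dbar_\Gamma$ via \eqref{eq:st1} and then passing through \eqref{eq:exactstabspp}. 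This second piece requires a splitting $\R^n = C \oplus K$ with $\cd_\Gamma \simeq \cd_{\Gamma_1}\oplus\cd_{\Gamma_2}\oplus C$, together with the virtual-kernel decompositions $\kd_{\Gamma_{i,s}} = \kd_{\Gamma_i}\oplus\vk(\Gamma_i)$ and the identity $C\oplus\vk(\Gamma)_{ps} \simeq \vk(\Gamma_1)\oplus\vk(\Gamma_2)$; the four bracketed ``mixing'' terms in the statement come from reordering these $\vk$ blocks, not from shuffling kernel elements past capping kernels as you describe. Without introducing the $C\oplus K$ splitting and the $\vk$ spaces explicitly, you will not be able to extract the terms involving $\dim\cd_\Gamma + \dim\cd_{\Gamma_1} + \dim\cd_{\Gamma_2}$ in the last line of each case.
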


\begin{proof}
Consider the fully capped problem $\dbar_{{\hat \Gamma}}$ associated to $\Gamma$. Glue this problem to the $\dbar$-problem $\dbar_{p}$, where $p$ is the special puncture at which we are gluing $\Gamma_1$ to $\Gamma_2$. Denote the glued problem by $\dbar_G \coloneqq \dbar_{\hat{\Gamma} \#  p}$. Notice that this gluing induces an exact sequence  
\begin{equation}\label{eq:stabdone2}
0 \to
 \Ker \dbar_{G} 
\xrightarrow{\alpha_1}
  \begin{bmatrix}
\kd_{p}\\
\kd_{\hat{\Gamma}}
  \end{bmatrix}
 \xrightarrow{\beta_1}
  \begin{bmatrix}
\cd_{p}\\
\R^n\\
\aux_1\oplus \aux_2\\
\cd_{\hat{\Gamma}}
  \end{bmatrix}
\xrightarrow{\gamma_1}
 \Coker \dbar_{G}
\to 0.
\end{equation}
Also recall the definition of the boundary conditions of the $\dbar_p$-operator, which gives
\begin{align*}
 &\kd_p \simeq \R^n, & \cd_p \simeq 0, & \qquad\text{if } |\mu(p)|=0,\\
  &\kd_p \simeq \R^n\oplus \aux_2^2,  &\cd_p \simeq 0, &\qquad \text{if } |\mu(p)|=1,
\end{align*}
together with an exact sequence
\begin{equation}\label{eq:finnsredan}
 0 \to \kd_p \to
 \begin{bmatrix}
  \kd_{p,+}\\
  \R^n\\
  \kd_{p,-}
 \end{bmatrix}
\to 0 \to 0 \to 0.
\end{equation}
Here we use $\aux_2^2$ to denote a $2$-dimensional kernel in the $\aux_2$-direction. Also notice that we get 
\begin{align}\label{eq:sur}
 \Ker \dbar_{G} \simeq \kd_{\hat{\Gamma}}, \qquad 
 \Coker \dbar_{G} \simeq \cd_{\hat{\Gamma}}\oplus \aux_1 \oplus \aux_2, & \qquad\text{if } |\mu(p)|=0,\\\label{eq:sur2}
  \Ker \dbar_{G} \simeq \kd_{\hat{\Gamma}}\oplus \aux_2, \qquad 
 \Coker \dbar_{G} \simeq \cd_{\hat{\Gamma}}\oplus \aux_1 , & \qquad\text{if } |\mu(p)|=1,
\end{align}
unoriented.

Recall that the orientation of $\kd_{p,{\pm}}$ is chosen so that the orientation of $\det \dbar_p$ induced by the gluing sequence \eqref{eq:finnsredan} is given by $(-1)^{n \cdot |\mu(p)|}$ times the canonical orientation. From [\cite{orientbok}, Lemma 3.11] it then follows that this orientation together with the canonical orientation of $\det \dbar_{\hat \Gamma}$ induces $(-1)^{n \cdot |\mu(p)|}$ times the canonical orientation of $\det \dbar_G$ via the gluing sequence \eqref{eq:stabdone2}. (Notice that we use slightly different orientation conventions compared to \cite{orientbok}, so that we can keep the chosen orientation of $\R^n$).

By the associativity results from [\cite{orientbok}, Section 3.2.3] it follows that we can interpret the sequence \eqref{eq:stabdone2} as
\begin{equation}\label{eq:stabdone22}
0 \to
\kd_G
\xrightarrow{\alpha_1}
  \begin{bmatrix}
  \kd_{p,+}\\
\R^n\\
\kd_{p,-}\\
\Ker \capp (\Gamma)\\
\kd_\Gamma
  \end{bmatrix}
 \xrightarrow{\beta_1}
  \begin{bmatrix}
\R^n\\
\aux_1\oplus \aux_2\\
\Coker \capp (\Gamma)\\
\cd_\Gamma
  \end{bmatrix}
\xrightarrow{\gamma_1}
\cd_{G}
\to 0.
\end{equation}
From the identifications \eqref{eq:sur} and \eqref{eq:sur2}, together
with the nature of the maps $\alpha_1$, $\beta_1$ and $\gamma_1$, it
follows that the restriction and projection of $\beta_1$ to the
$\R^n$-summands may be assumed to be given by $v \mapsto -v$. 
Let us now remove $\R^n$ from the second and third columns by moving it to the bottom, compensating by a sign $(-1)^{\nu_1}$, where
\begin{equation*}
 \nu_1 \equiv n \cdot(|\mu(p)| + \ind(\dbar_{\hat \Gamma})) + n \equiv n \cdot |\mu(p)|  \pmod{2}.
\end{equation*}

It follows that the sequence 
\begin{equation}\label{eq:stabdone23}
0 \to
\kd_G
\xrightarrow{\alpha_1}
  \begin{bmatrix}
  \kd_{p,+}\\
\kd_{p,-}\\
\Ker \capp (\Gamma)\\
\kd_\Gamma
  \end{bmatrix}
 \xrightarrow{\beta_1}
  \begin{bmatrix}
\aux_1\oplus \aux_2\\
\Coker \capp (\Gamma)\\
\cd_\Gamma
  \end{bmatrix}
\xrightarrow{\gamma_1}
\cd_{G}
\to 0
\end{equation}
induces the canonical orientation on $\det\dbar_G$, given that $\aux_1 \oplus \aux_2$  and all the capping spaces are given their fixed orientation, and that $\det \dbar_\Gamma$ is given its capping orientation.

Now use associativity again, to replace $\kd_\Gamma$ and $\cd_\Gamma$ by $\kd_{\Gamma_1}\oplus \kd_{\Gamma_2}$ and $\cd_{\Gamma_1}\oplus \R^n \oplus \cd_{\Gamma_2}$, respectively, using that we have an exact sequence 
\begin{equation}\label{eq:st1}
 0 \to 
 \kd_{ \Gamma}
 \to
 \begin{bmatrix}
 \kd_{{\Gamma}_1}\\
 \kd_{{\Gamma}_2}
 \end{bmatrix}
\to  
  \begin{bmatrix}
 \cd_{{\Gamma}_1}\\
 \R^n\\
 \cd_{{\Gamma}_2}
 \end{bmatrix}
 \to 
\cd_{\Gamma}
\to 0.
\end{equation}

Thus, what we get is an exact sequence
\begin{equation}\label{eq:stabdone234}
0 \to
\kd_G
\xrightarrow{\alpha_1}
  \begin{bmatrix}
  \kd_{p,+}\\
\kd_{p,-}\\
\Ker \capp (\Gamma)\\
\kd_{\Gamma_1}\\
\kd_{\Gamma_2}
  \end{bmatrix}
 \xrightarrow{\beta_1}
  \begin{bmatrix}
\aux_1\oplus \aux_2\\
\Coker \capp (\Gamma)\\
\cd_{\Gamma_1}\\
\R^n\\
\cd_{\Gamma_2}
  \end{bmatrix}
\xrightarrow{\gamma_1}
\cd_{G}
\to 0.
\end{equation}

Let 
\begin{align*}
\Ker \capp(\Gamma_i)^s|_{\Gamma} =  \Ker \capp(\Gamma)^s\cap  \Ker \capp(\Gamma_i)^s \qquad s=-,e,                                                                                   \end{align*}
 and recall that we have to consider two different cases of gluings. In the first case we have
\begin{align*}
 \Ker \capp(\Gamma) &= \Ker\dbar_{p_{1},-} \oplus \Ker\capp (\Gamma_1)^{-}\oplus \Ker\capp (\Gamma_1)^{e} \oplus \kd_{ p_{0},+} , \\
  \Coker\capp(\Gamma) &= \cd_{ p_{1},-}\oplus \Coker\capp(\Gamma_1),\\
 \Ker \capp(\Gamma_1) &= \Ker \capp(\Gamma_1)^- \oplus \Ker\capp (\Gamma_1)^{e}\oplus \Ker \dbar_{ p,+},\\
 \Ker \capp(\Gamma_2) &= \Ker \dbar_{ p_1,-} \oplus \Ker \dbar_{ p,-}\oplus\Ker \dbar_{ p_0,+},\\
  \Coker \capp(\Gamma_2) &=  \Coker \dbar_{p_{1},-}, 
\end{align*}
oriented, and in the second case we have
\begin{align*}
  &\Ker \capp(\Gamma) \\
  &\qquad {} = \Ker\capp (\Gamma_1)^{-} \oplus \Ker\capp (\Gamma_2)^{-}|_\Gamma \oplus \Ker\capp (\Gamma_1)^{e}\oplus \Ker\capp (\Gamma_2)^{e}\oplus \kd_{ p_{0},+}, \\
  &\Coker\capp(\Gamma) =  \Coker\capp(\Gamma_1)\oplus \Coker\capp(\Gamma_2),\\
 &\Ker \capp(\Gamma_1) = \Ker \capp(\Gamma_1)^- \oplus \Ker\capp (\Gamma_1)^{e}\oplus \Ker \dbar_{ p,+},\\
 &\Ker \capp(\Gamma_2) = \Ker \dbar_{ p,-}\oplus \Ker\capp (\Gamma_2)^{-}|_\Gamma \oplus \Ker\capp (\Gamma_2)^{e}\oplus \Ker \dbar_{ p_0,+},
\end{align*}
oriented, using the notation from Figure \ref{fig:gluediff}. Note that $\Coker \dbar_{p_{1-}} $ equals zero if $\Gamma_2$ is not a negative 2-piece of type 1. 

Now define, in the first case,
\begin{equation*}
 \nu_2= \dim \kd_{\Gamma_1}\cdot (|\mu(p_0)| + |\mu(p_1)| + |\mu(p)|) 
 + \dim \Ker \capp (\Gamma_1)  \cdot  |\mu(p_1)|
\end{equation*}
if $\tp_2(\Gamma_2)=0$, and 
\begin{equation*}
 \nu_2= n \cdot\dim \kd_{\Gamma_1} 
 + (n+|\mu(p_1)|)\cdot  \dim \Ker \capp(\Gamma_1)
\end{equation*}
if $\tp_2(\Gamma_2)=1$, and
\begin{align*}
\nu_2&=  \dim \Ker \capp (\Gamma_2)^-|_{\Gamma} \cdot \dim \Ker \capp (\Gamma_1)^e + \dim \kd_{\Gamma_1} \cdot \dim \Ker \capp (\Gamma_2)\\
  &\quad {}+  (\dim \cd_{\Gamma_1}+n)\cdot \dim \Coker \capp (\Gamma_2)                       
\end{align*}
in the second case. It follows that if we compensate by $(-1)^{\nu_2 }$, then we can interpret the sequence \eqref{eq:stabdone234} as  
\begin{equation}\label{eq:stabdone2345}
0 \to
\kd_G
\xrightarrow{}
  \begin{bmatrix}
  \Ker \capp (\Gamma_1) \\
  \kd_{\Gamma_1}\\
\Ker \capp (\Gamma_2)\\
\kd_{\Gamma_2}
  \end{bmatrix}
\to 
  \begin{bmatrix}
  \Coker \capp (\Gamma_1)\\
  \cd_{\Gamma_1}\\
  \R^n\\
\aux_1\oplus \aux_2\\
\Coker \capp (\Gamma_2)\\
\cd_{\Gamma_2}
  \end{bmatrix}
\xrightarrow{}
\cd_{G}
\to 0.
\end{equation}
From [\cite{orientbok}, Lemma 3.11] it follows that this sequence induces the canonical orientation of  $\det \dbar_G$, given that the capping operators and $\R^n\oplus \aux_1 \oplus \aux_2$ have their fixed orientation and that $\det \dbar_{\Gamma_i}$ has its capping orientation, $i=1,2$.

Thus, what we have computed so far is that the difference between the capping orientation of $\det \dbar_{\Gamma}$ and the orientation $\Or_{\indu}(\Gamma)$ of $\det \dbar_\Gamma$ induced by the sequence \eqref{eq:st1} from the capping orientations of $\det \dbar_{\Gamma_i}$, $i=1,2$, is given by $(-1)^{\nu_2}$. 
It remains to compute the difference between the orientation of $\kd_{ \Gamma_{ps}}$ given by the sequence \eqref{eq:glu10} and the orientation of $\kd_{ \Gamma_{ps}}$ induced by the sequence \eqref{eq:exactstabpre} from the orientation $\Or_{\indu}(\Gamma)$. The sought sign will then be given by this difference times $(-1)^{\nu_2}$.


To that end, from \cite{quilt} it follows that we can rewrite the sequence \eqref{eq:st1} as 
\begin{equation*}
 0 \to 
 \kd_{ \Gamma}
\xrightarrow{\alpha}
 \begin{bmatrix}
 \kd_{{\Gamma}_1}\\
 \kd_{{\Gamma}_2}
 \end{bmatrix}
\xrightarrow{\beta} \R^n
\xrightarrow{\gamma}
\cd_{\Gamma}
\xrightarrow{\delta}
  \begin{bmatrix}
 \cd_{{\Gamma}_1}\\
 \cd_{{\Gamma}_2}
 \end{bmatrix}
\to 0.
\end{equation*}
This means that we can choose a splitting of $\R^n$ into 
\begin{equation}\label{eq:st4}
 \R^n = C \oplus K
\end{equation}
with corresponding projections $\pi_A: \R^n \to A$, $A=C,K$, so that 
\begin{equation}\label{eq:st3}
 0 \to 
 \kd_{ \Gamma}
\xrightarrow{\alpha}
 \begin{bmatrix}
 \kd_{{\Gamma}_1}\\
 \kd_{{\Gamma}_2}
 \end{bmatrix}
\xrightarrow{\pi_K \circ\beta} K
\to 0
\end{equation}
is exact, and so that we have an identification
\begin{equation}\label{eq:st2}
 \cd_\Gamma \simeq \cd_{\Gamma_1} \oplus \cd_{\Gamma_2} \oplus C,
\end{equation}
where we might assume that $\gamma|_C$ and $\delta$ both are given by the identity.

Moreover, we see that if we fix an orientation of $C$ so that \eqref{eq:st2} is orientation-preserving, assuming that we have fixed orientations of $\cd_{\Gamma_i}$, $i=1,2, \emptyset$, and if we give $K$ the orientation induced by \eqref{eq:st4}, then the sequence \eqref{eq:st3} induces the same orientation on $\kd_\Gamma$ as the sequence \eqref{eq:st1} does, times $(-1)^{\sigma_1}$, where 
\begin{equation*}
 \sigma_1 = n \cdot  \dim \cd_{\Gamma_2}.
\end{equation*}
This sign comes from moving $\R^n$ to the bottom in the sequence \eqref{eq:st1}.

Next consider the sequence \eqref{eq:exactstab} for $\Gamma_1$ and $\Gamma_2$, and the sequence \eqref{eq:exactstabpre} for $\Gamma$. It follows that we can find splittings 
\begin{align*}
 & \Ker \Gamma_{ps} = \kd_{\Gamma} \oplus \vk(\Gamma)_{ps}\\
 & \Ker \Gamma_{i,s} = \kd_{\Gamma_i} \oplus \vk(\Gamma_i), \quad i=1,2,
\end{align*}
such that 
\begin{align*}
 &V_{\con}(\Gamma_i) \simeq \cd_{\Gamma_i} \oplus \vk(\Gamma_i),\quad i=1,2,\\
 & V_{\con,\pre}(\Gamma) \simeq \cd_{\Gamma} \oplus \vk(\Gamma)_{ps} \simeq \cd_{\Gamma_1} \oplus \cd_{\Gamma_2} \oplus C \oplus \vk(\Gamma)_{ps}.
\end{align*}

Using the assumption that
\begin{equation*}
 V_{\con,\pre}(\Gamma) = V_{\con}(\Gamma_1) \oplus V_{\con}(\Gamma_2)
\end{equation*}
oriented, we see that 
\begin{equation}\label{eq:st5}
C \oplus \vk(\Gamma)_{ps} \simeq \vk(\Gamma_1) \oplus \vk(\Gamma_2),
\end{equation}
and this is an oriented identification up to a sign $(-1)^{\sigma_2}$, where 
\begin{equation*}
 \sigma_2 = \dim \cd_{\Gamma_2} \cdot \dim \vk(\Gamma_1).
\end{equation*}

Now it remains to compute the oriented identification of these spaces as induced by \eqref{eq:glu10}. That is, let us rewrite \eqref{eq:glu10} as 
\begin{equation*}
  0 \to 
  \begin{bmatrix}
   \kd_\Gamma\\
   \vk(\Gamma)_{ps}
  \end{bmatrix}
  \to
  \begin{bmatrix}
   \begin{pmatrix}
   \kd_{\Gamma_1}\\
   \vk(\Gamma_1)
   \end{pmatrix}\\
   \begin{pmatrix}
      \kd_{\Gamma_2}\\
   \vk(\Gamma_2)
    \end{pmatrix}
  \end{bmatrix}
  \to
  \begin{bmatrix}
   C \\K
  \end{bmatrix}
\to 0,
\end{equation*}
and then use \eqref{eq:st1} to see that there is a splitting $\kd_{\Gamma_1} =\kd_\Gamma \oplus C_1$ for some subspace $C_1$. Then we reorder the spaces to get the sequence 
\begin{equation*}
   0 \to 
  \begin{bmatrix}
   \kd_\Gamma\\
   \vk(\Gamma)_{ps}
  \end{bmatrix}
  \to
  \begin{bmatrix}
   \kd_{\Gamma}\\
   \vk(\Gamma_1)\\
   \vk(\Gamma_2)\\
   C_1\\
   \kd_{\Gamma_2}
  \end{bmatrix}
  \to
  \begin{bmatrix}
   C \\K
  \end{bmatrix}
\to 0,
\end{equation*}
which costs $(-1)^{\sigma_3}$,
\begin{multline*}
 \sigma_3 = \dim \vk(\Gamma_1) \cdot (\dim \kd_{\Gamma_1} + \dim \kd_{\Gamma})\\
 + \dim \vk(\Gamma_2) \cdot (\dim \kd_{\Gamma_1} + \dim \kd_{\Gamma} + \dim \kd_{\Gamma_2}).
\end{multline*}Now, this sequence induces the same orientation on $\kd_\Gamma$ as the sequence \eqref{eq:st3} does, in the same time as it induces the orientation 
\begin{equation*}
 \vk(\Gamma)_{ps} \oplus C =  \vk(\Gamma_1) \oplus \vk(\Gamma_2)
\end{equation*}
 on $\vk(\Gamma)_{ps} \oplus C $. Thus, if we add the sign $(-1)^{\sigma_4}$,
 \begin{equation*}
  \sigma_4 = \dim  \vk(\Gamma)_{ps} \cdot \dim C,
 \end{equation*}
we get that the orientation induced on $\kd_{\Gamma_{ps}}$ from the sequence \eqref{eq:glu10} differs from the orientation induced by \eqref{eq:st1} and \eqref{eq:exactstabpre} by a sign $(-1)^{\nu_3}$, where 
\begin{align*}
 \nu_3& = \sigma_1+\sigma_2+\sigma_3+\sigma_4  = n \cdot  \dim \cd_{\Gamma_2} \\
&\quad {}+ \dim \vk(\Gamma_1) \cdot [\dim \cd_{\Gamma_2} +\dim \kd_{\Gamma_1} + \dim \kd_{\Gamma}]\\
 &\quad {}+ \dim \vk(\Gamma_2) \cdot [\dim \kd_{\Gamma_1} + \dim \kd_{\Gamma} + \dim \kd_{\Gamma_2}]\\
 &\quad {}+\dim  \vk(\Gamma)_{ps} \cdot[\dim \cd_{\Gamma} + \dim \cd_{\Gamma_1} +\dim \cd_{\Gamma_2} ],
\end{align*}
using that 
\begin{equation*}
 \dim C \equiv \dim \cd_{\Gamma} + \dim \cd_{\Gamma_1} +\dim \cd_{\Gamma_2} 
\end{equation*}
modulo 2.

The result follows.
\end{proof}

We will also need the similar result when we glue a sub flow tree or
an elementary tree $\Gamma_1$ to a sub flow tree $\Gamma_2$ with true
positive puncture $a$ and special negative puncture $p$, say. We will
focus on the case when the true vertices of $\Gamma_2$ are given by
$k\geq 0$ switches and $a$. We have
the following.

\begin{prp}\label{prp:sista}
 Let $\Gamma_1$ be a sub flow tree or an elementary switch tree with
 special positive puncture $p$, and let $\Gamma_2$ be a sub flow tree
 with a true positive 1-valent puncture $a$, $k\geq 0$ switches,
 negative special puncture $p$, and no other vertices.

 Let $\kd_{\Gamma_{\lambda,i,s}}$, $i=1,2$, be given its stabilized capping orientation, and assume that ${v_\lambda^1} \wedge \dotsm \wedge {v_\lambda^m}$ represents the orientation of $\kd_{\Gamma_{\lambda,s}}$ induced by the gluing sequence \eqref{eq:glu10}.
Then the stabilized capping orientation of $\Gamma$ is given by 
 $(-1)^{\sigma}v^1_\lambda \wedge\dotsm\wedge v^m_\lambda$, where
 $\sigma$ is given by
\begin{align*}
 \sigma &= [|\mu(a)| +k+\dim \kd_{\Gamma_{1,s}}+1] \cdot \dim
 \kd_{\Gamma_1}\\
 &\quad{}+ [\dim\cd_{\Gamma_1} +n]\cdot[\dim W^u(a) + n + |\mu(a)|].
\end{align*}
\end{prp}
\begin{proof}
 This follows similar to the arguments in the proof of Proposition \ref{prp:megalemma4paux}.
\end{proof}

\begin{defi}
 We will denote the sign $\sigma$ from Proposition
 \ref{prp:megalemma4paux} and Proposition \ref{prp:sista} the \emph{gluing sign}. 
\end{defi}

To simplify the expression of the gluing sign, we will make use of the following observations.

\begin{lma}\label{lma:cokersimp}
If $\Gamma$ is a sub flow tree with a positive special puncture $p$
and with $\bm(\Gamma) = \dim V_{\con}(\Gamma)$, then
\begin{equation*}
 \dim \Coker \capp (\Gamma) \equiv \dim \kd_{\Gamma,s} +n+1 + |\mu(p)| \pmod{2}.
\end{equation*}
\end{lma}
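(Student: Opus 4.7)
My plan is to reduce the identity to a Fredholm index computation for the fully capped operator $\dbar_{\hat\Gamma}$ and then evaluate it via a Maslov-index calculation.

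First I would apply Lemma \ref{lma:virker1}: surjectivity of $\dbar_{\Gamma_\lambda,s}$ for small $\lambda$ gives
$$\dim\kd_{\Gamma,s} = \ind(\dbar_{\Gamma,s}) = \ind(\dbar_\Gamma) + \dim V_{\con}(\Gamma).$$
Next, the alternating sum of dimensions in the capping exact sequence \eqref{eq:slcap} gives
$$\dim \Coker\capp(\Gamma) - \dim \Ker\capp(\Gamma) = \ind(\dbar_\Gamma) - \ind(\dbar_{\hat\Gamma}).$$
Eliminating $\ind(\dbar_\Gamma)$ between these two identities, the lemma is equivalent to
$$\dim \Ker\capp(\Gamma) - \dim V_{\con}(\Gamma) - \ind(\dbar_{\hat\Gamma}) = n + 1 + |\mu(p)|.$$

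I would then evaluate the dimensions on the left side of the reduced identity. Writing $l$ for the number of true negative punctures of $\Gamma$ and $k$ for the number of end-vertices, Lemma \ref{lma:capbundle0} gives $\dim \kd_{q_i,-} = 1$ for every true negative puncture, Lemma \ref{lma:special} gives $\dim \kd_{p,+} = |\mu(p)|$ for the special positive puncture, and each end-vertex contributes $\dim \kd_{e_j} = 1$, so $\dim \Ker\capp(\Gamma) = l + k + |\mu(p)|$. The standard domain $\Delta(\Gamma) = \Delta_{l+k+1}$ has $l+k-1$ boundary minima, and $V_{\con}(\Gamma)$ omits one of them (the translational direction), so $\dim V_{\con}(\Gamma) = l+k-2$ when $l+k\geq 2$, yielding $\dim \Ker\capp(\Gamma) - \dim V_{\con}(\Gamma) = 2 + |\mu(p)|$.

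It then remains to verify $\ind(\dbar_{\hat\Gamma}) = 1 - n$. Because the capping weights are set up to cancel the weights of $\dbar_\Gamma$ at each puncture, the glued operator $\dbar_{\hat\Gamma}$ is an unweighted Cauchy-Riemann operator on the closed disk with totally real boundary condition given by the loop $\hat{\tilde A}_\Gamma : S^1 \to U(n+2)$, and by Riemann-Roch its Fredholm index equals $(n+2) + \mu(\hat{\tilde A}_\Gamma)$, so I need to verify $\mu(\hat{\tilde A}_\Gamma) = -(2n+1)$. In the limit $\lambda \to 0$ the $\theta_\lambda$-rotations in the capping trivializations vanish, and what remains are the $\pm \pi$-rotations in the auxiliary directions coming from $\tilde R_\pm$, $\tilde R_e$, and the closing-up rotations of Sections \ref{sec:capping} and \ref{sec:capspec}, the $\pi$-rotations in the $x_n$-direction from each end capping operator $\tilde R_e$, and the $\pi$-rotations along the main boundary at ends, switches and $Y_1$-vertices, which by Lemma \ref{lma:sumup} close up consistently. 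The main technical obstacle is this direction-by-direction Maslov bookkeeping across all $n+2$ split directions; the few-puncture boundary cases $l+k\leq 1$ reduce to separate direct calculations on the smaller standard domain.
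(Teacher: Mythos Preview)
Your approach has a genuine gap: the lemma is a parity statement, not an exact equality, and your strategy of reducing to a single fixed value $\ind(\dbar_{\hat\Gamma}) = 1 - n$ cannot succeed because this index is not constant across sub flow trees. Concretely, take $\Gamma$ to be a single edge joining a true negative $1$-valent puncture $b$ to the special positive puncture $p$ (so $l=1$, $k=0$, $\bm(\Gamma)=0$, $V_{\con}(\Gamma)=0$). Then $\dim\kd_{\Gamma,s} = \dim W^s(b) = n - I(b)$, while from Lemma~\ref{lma:capbundle0} and Lemma~\ref{lma:special} one has $\dim\Coker\capp(\Gamma) = I(b) + 1 - |\mu(b)|$. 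Since there are no cusps on this edge, $|\mu(b)| = |\mu(p)|$, and the asserted identity becomes $2I(b) = 2n + 2|\mu(p)|$, which fails for generic Morse index. The paper's proof is explicitly modulo~$2$ throughout; the equals sign in the statement should be read that way, consistent with how the lemma is used only to simplify gluing signs.

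A second, smaller error: for a sub flow tree with positive special puncture one has $\dim V_{\con}(\Gamma) = \bm(\Gamma) = l + k - 1$, not $l+k-2$. The translational direction you subtract is the one omitted for the ambient rigid tree; in the inductive construction of $V_{\con}$ for sub flow trees (glue two stabilized sub flow trees at a $Y_0$- or $Y_1$-piece, then restabilize by adding the new boundary-minimum variation) every boundary minimum of the sub flow tree ends up contributing. With this correction your reduction asks for $\ind(\dbar_{\hat\Gamma}) = -n$, again false exactly but true mod~$2$; so a corrected mod-$2$ version of your argument would still hinge on the Maslov bookkeeping you flag as the main obstacle. The paper bypasses the capped operator entirely: it computes $\dim\kd_{\Gamma,s}$ mod~$2$ directly from the tree-dimension formula (Definition~\ref{def:dim}) and $\dim\Coker\capp(\Gamma)$ mod~$2$ from Corollary~\ref{cor:grad2}, and then closes the gap with the parity relation of Lemma~\ref{lma:sumup}.
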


\begin{proof}
By the dimension formula for trees we have
\begin{align*}
 \dim \kd_{\Gamma,s} \equiv \sum_{b \in \Pu^-(\Gamma)} \dim W^s(b) + (n+1)\cdot \bm(\Gamma) + n \cdot e(\Gamma) + Y_1(\Gamma) + s(\Gamma) 
\end{align*}
modulo 2, and 
\begin{multline*}
 \dim \Coker \capp (\Gamma) \equiv \sum_{b \in \Pu^-(\Gamma)} (\dim W^s(b) + n+1+ |\mu(b)|) \\
 \equiv \sum_{b \in \Pu^-(\Gamma)} \dim W^s(b) + (n+1)\cdot \bm(\Gamma)+ (n+1) \cdot (e(\Gamma)+1) + \sum_{b \in \Pu^-(\Gamma)}|\mu(b)| \pmod{2}.
\end{multline*}
Now the result follows since 
\begin{equation*}
 \sum_{b \in \Pu(\Gamma)}|\mu(b)| \equiv e(\Gamma) + Y_1(\Gamma) + s(\Gamma) \pmod{2}.
\end{equation*}
\end{proof}


%
 \section{Vector bundles and exact sequences}\label{sec:glupf}

In this section we prove Theorem \ref{thm:mainthm}. In particular, we prove that the capping orientation of $\Gamma$ is well-defined for $\lambda$ sufficiently small. We also prove that we can pass to the limit $\lambda =0$ in the gluing sequence \eqref{eq:glu10} and understand the orientation of the trees as oriented subspaces in $M$ via evaluation.

In Subsection \ref{sec:glulim} we prove that the exact gluing sequences introduced in Section \ref{sec:cutstab} and Section \ref{sec:stab} can be interpreted as sequences of vector bundles over the parameter space $\lambda \in (0,\lambda_0]$ for $\lambda_0$ sufficiently small. 
In Subsection \ref{sec:elementor} we prove
that the kernels of the $\dbar$-operators corresponding to elementary
trees give oriented vector bundles over $(0, \lambda_0]$ for
$\lambda_0$ sufficiently small, where the orientations of the fibers
coincide with the capping orientation. This uses results from Appendix
\ref{app:mini}. Together with the results in Subsection
\ref{sec:glulim} and Section \ref{sec:stab} this implies that we can
calculate the capping orientation of $\Gamma$ via a combinatorial
algorithm in combination with oriented intersections of flow manifolds
in $M$.

The remaining statements in Theorem \ref{thm:mainthm} are proven in
Subsection  \ref{sec:proof}. That is, we show that the capping orientation of the pre-glued disk $w_\lambda$ equals the capping orientation of the true $J$-holomorphic disk corresponding to $\Gamma$ for $\lambda$ sufficiently small.

We conclude this section by Subsection \ref{sec:stabstab}, where we indicate the procedure of calculating orientations by gluing the elementary pieces back together, and how the signs from Section \ref{sec:stab} come into play. 

\subsection{Stabilized gluing sequences in the limit}\label{sec:glulim}
Recall that if $p$ is a special puncture of a sub flow tree $\Gamma$, then we have a well-defined evaluation map $\ev_p:
\kd_{\Gamma_{\lambda},s} \to T_pM$ for  all $\lambda$ sufficiently small, so that $\lim_{\lambda \to 0}\ev_p(\kd_{\Gamma_{\lambda},s}) = T_p\F_p(\Gamma)$, where $\F_p(\Gamma)$ denotes the flow-out of $\Gamma$ at $p$. See Section \ref{sec:geomint} and Lemma \ref{lma:virker1}.

\begin{rmk}
Note that the evaluation map in fact takes value in planes
$\Theta_\lambda(p)\R^n$, but by the limiting process $\lambda \to 0$ we can
canonically identify it with $T_pM$.
\end{rmk}

Let $\Gamma_1$ and $\Gamma_2$ be two partial flow trees satisfying the
assumptions of Proposition \ref{prp:megalemma4paux} or Proposition \ref{prp:sista}, and assume that $\ev_p:\kd_{\Gamma_{i,s,\lambda}} \to T_p\F_p(\Gamma_i)$ is an isomorphism in the limit, $i=1,2$.  Here $p$ is the point at which $\Gamma_1$ and $\Gamma_2$ are glued. For example, this holds if the trees are sub flow trees or elementary trees. We will show that the gluing sequence \eqref{eq:glu10} can be understood as an exact sequence of subspaces in $T_pM$. We make the following definition.

\begin{defi}
 The \emph{stabilized gluing sequence in the limit} is given by
  \begin{equation}
 \begin{array}{ccccccccc}\label{eq:glu10l}
0 
&\to 
& T_p\F_p(\Gamma_1)\cap T_p \F_p(\Gamma_2)
&\to
& \begin{bmatrix}
T_p\F_p(\Gamma_1)  \\
T_p\F_p(\Gamma_2) 
\end{bmatrix}
&\to 
& T_pM 
&\to 
& 0 \to 0\\
& &v &\mapsto &(v,v) &   & & &\\
& & & & (u,w) &\mapsto & w-u.  & & 
\end{array}
\end{equation} 
\end{defi}

\begin{rmk}
  In some cases we will replace the tangent space of the flow-out of $\Gamma_i$ in
  \eqref{eq:glu10l} with the the tangent space of the intersection manifold, for $i=1$ or $i=2$.
\end{rmk}

Let now $\Gamma$ be either a rigid flow tree or a tree that satisfies
the assumptions of Proposition \ref{prp:megalemma4paux} or
Proposition \ref{prp:sista}. In the 2 latter
cases we let $p$ be a special puncture of $\Gamma$. To prove that the capping orientation of $\Gamma$ is well-defined, and that it can be computed via the stabilized gluing sequence in the limit, we construct certain vector bundles over the parameter space $\lambda$. 

First consider the capping sequence \eqref{eq:slcap}, which a priori seems to depend on $\lambda$. But, since the orientation of the fully capped problem is preserved under homotopies of the
trivialized boundary condition, and since $\Pi_\C(\Lambda_\lambda)$ is arbitrarily close to the
$0$-section in $T^*M$,  
we can homotope the boundary conditions to be
constantly equal to $\R^n$ for this problem, except for in uniformly small neighborhoods of ends, switches and $Y_1$-vertices where we homotope the boundary conditions to have a $\pm \pi$-rotation, like in Figure \ref{fig:bend}. This homotopy
changes the boundary conditions for the capping operators and for the
linearized $\dbar$-problem corresponding to $\Gamma$. We claim that, with our choices of weights on the Sobolev spaces corresponding to the $\dbar_{\Gamma_\lambda}$-problem and to the capping operators, we can use the determinant
lines over these perturbed problems to orient $\det \dbar_{\Gamma_\lambda}$ for $\lambda$ sufficiently small.

\begin{rmk}
In the following we will focus on what is happening in the directions corresponding to $\Lambda$ as $\lambda \to 0$, since the trivializations in the auxiliary directions are independent of $\lambda$. 
\end{rmk}

To make the above rigorous, we will define vector bundles 
\begin{align}
 &\pi_{\hat k}: \Ker  \ul{\hat\Gamma} \to (0, \lambda_0], & \pi_{\hat k}^{-1}(\lambda) &= \kd_{\hat \Gamma_\lambda}, \label{eq:hkb}\\  
  &\pi_{k}: \Ker \ul{ \Gamma} \to (0, \lambda_0], & \pi_{ k}^{-1}(\lambda) &= \kd_{ \Gamma_\lambda} \label{eq:kb},\\
   &\pi_{c}: \Coker \ul{ \Gamma} \to (0, \lambda_0], & \pi_{c}^{-1}(\lambda) &= \cd_{\Gamma_\lambda} \label{eq:cb},\\ 
   &\pi_{\hat c}: \Coker\ul{\hat \Gamma}  \to (0, \lambda_0], & \pi_{\hat c}^{-1}(\lambda) &= \cd_{\hat \Gamma_\lambda}, \label{eq:hcb}
\end{align}
 for some $\lambda_0>0$ sufficiently small. Note that the fact that
 $\dbar_{\Gamma_\lambda}$, $\dbar_{\hat \Gamma_\lambda}$ are Fredholm
 only assures that $\det \dbar_{\Gamma_\lambda}$, $\det \dbar_{\hat
   \Gamma_\lambda}$ give vector bundles over $(0, \lambda_0]$, but tells us nothing about whether the kernels and cokernels constitute bundles. We will prove that if $\lambda$ is sufficiently small, then in fact the dimension of the kernels and the cokernels of these operators are independent of $\lambda$, and we indeed get vector bundles as above. This will be proved via an inductive procedure where we glue together the elementary trees of the tree.

 To that end, we will interpret the $\dbar$-operators as bundle maps
\begin{align*}
 \dbar_\Gamma: \Hi_{2,\nu}[\ul{\Gamma}] \to \Hi_{1,\nu}[\ul{\Gamma}]
\end{align*}
\begin{align*}
 \dbar_{\hat\Gamma}: \Hi_{2,\nu}[\ul{\hat\Gamma}] \to \Hi_{1,\nu}[\ul{\hat\Gamma}].
\end{align*}
for bundles 
\begin{align*}
   &\pi_{i,\Gamma}: \Hi_{i,\nu}[\ul{\Gamma}] \to (0, \lambda_0], \quad \pi_{i,\Gamma}^{-1}(\lambda) = \Hi_{i,\nu}[{\Gamma_\lambda}], \qquad i=1,2, \label{eq:himu}
\end{align*}
and similar for the fully capped problems. To simplify notation we focus on the un-capped problem from now on, but we point out that all constructions goes through also in the capped case.

Now we define trivializations $\psi_{i,\Gamma}: (0,\lambda_0] \times
\Hi_{i,\nu}[\Gamma_{\lambda_0}] \to \Hi_{i,\nu}[\ul{\Gamma}]$,
$i=1,2$, given by identifying $\Delta(\Gamma_\lambda)$ with
$\Delta(\Gamma_{\lambda_0})$ for $\lambda$ sufficiently small, in the
same time as we also adjust for the change of the boundary conditions. That is, by the construction of the gluings that we use,  described below and coming from \cite{trees}, we get that $\Delta(\lambda_0) \subset \Delta(\lambda)$ if $\lambda_0 > \lambda$. 
Moreover, the standard domains may be assumed to coincide in neighborhoods of all infinities and of all boundary minima, while the inner pieces (where the gluings are performed) of $\Delta(\lambda)$ are longer than the corresponding ones for $\Delta(\lambda_0)$. More precisely, we have that the length of the inner pieces is of $O(\lambda^{-1})$. 
%
%

It follows that we can define a diffeomorphism $\psi_{\Gamma_\lambda}:\Delta(\Gamma_\lambda) \to \Delta(\Gamma_{\lambda_0})$ which equals the identity in neighborhoods of all infinities and of all boundary minima. Moreover, we might assume that the weight $\w_\lambda$ equals $1$ on the connected component containing the set $\{\psi_{\Gamma_\lambda} \neq \id\}$ for all $\lambda \in (0,\lambda_0]$ and that $\w_\lambda = \w_{\lambda_0}$ outside this set. 

In this way we can interpret the bundle trivializations as 
\begin{equation*}
 \psi_{i, \Gamma}(\lambda,u) = \tilde A(\Gamma_\lambda) {\tilde A(\Gamma_{\lambda_0})}^{-1}\circ \psi_{\Gamma_\lambda} \cdot u \circ \psi_{\Gamma_\lambda}
\end{equation*}
where $\tilde A(\Gamma_\lambda)$ are the  boundary conditions defined in Section
\ref{sec:treetriv}. By standard arguments, see e.g. \cite{husem}, if we can prove that $\dim \kd_{\Gamma_\lambda}$ is constant in $\lambda \in(0,\lambda_0]$, then this trivialization also gives trivializations for \eqref{eq:kb} and \eqref{eq:cb} (and similar arguments give trivializations for \eqref{eq:hkb} and \eqref{eq:hcb}). 

If $\Gamma$ is a sub flow tree, we also consider a bundle 
\begin{equation}\label{eq:kbs}
\pi_{\Gamma_s}: \Ker \ul{\Gamma}_s \to [0,\lambda_0]
\end{equation}
with fibers
\begin{equation*}
 \pi_{\Gamma_s}^{-1}(\lambda) =
 \begin{cases}
  \kd_{\Gamma_{s,\lambda}}, \quad \lambda > 0, \\
  T_p\F_p(\Gamma),  \quad \lambda =0.
 \end{cases}
\end{equation*}
We will prove that this gives a vector bundle with trivialization
given by the evaluation map at the special puncture $p$. A similar construction is made also for the pre-stabilized problem.

\begin{defi}
We call the bundle  $\Ker\ul{\Gamma}_{s}$ the \emph{stabilized kernel bundle} of $\Gamma$.
\end{defi}

Before we prove that \eqref{eq:hkb} -- \eqref{eq:hcb}, \eqref{eq:kbs} define vector bundles, we discuss some materials on exact sequences of bundles. Let $\pi_1:E \to B,\pi_2:F \to B $ be two vector bundles over the same base space $B$.
We say that 
 a vector bundle map $v: E \to F$ is of \emph{rank $k$} if $v_b : \pi^{-1}_1(b) \to \pi^{-1}_2(b)$ is of rank $k$ for each $b \in B$.  

\begin{defi}
 An \emph{exact sequence of vector bundles} is a sequence
 \begin{equation*}
  0 \to E_0 \xrightarrow{\alpha_1} E_1 \xrightarrow{\alpha_2} \dotsc \xrightarrow{\alpha_n} E_n \to 0
 \end{equation*}
such that the rank of $\alpha_i$ is constant for all $1\leq i \leq n$, and $\im \alpha_i = \Ker \alpha_{i+1}$ for all $1\leq i \leq n-1$.
\end{defi}

The following is straightforward after having fixed orientation conventions as in Section \ref{sec:orientconv}. Compare with the similar statement in \cite{hutch2}.
\begin{lma}
 Let $E_1, E_2, F_1, F_2$ be trivialized vector bundles over $[0,1]$, and let
 \begin{equation*}
  0 \to E_1 \xrightarrow{\alpha} F_1 \xrightarrow{\beta} F_2 \xrightarrow{\gamma} E_2 \to 0
 \end{equation*}
be an exact sequence. Then orientations of three of the spaces $E_1(0), E_2(0), F_1(0), F_2(0)$ canonically induce  orientations on $E_1(\lambda), E_2(\lambda), F_1(\lambda), F_2(\lambda)$ for all $\lambda \in[0,1]$. 
\end{lma}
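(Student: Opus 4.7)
The plan is to fiberwise invoke Lemma~\ref{lma:o1o2} and then promote the pointwise orientations to a coherent one over $[0,1]$ using the trivializations and the constant-rank hypothesis.

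First, I would use the trivializations of $E_1, E_2, F_1, F_2$ to transport any chosen orientations of three of the fibers at $\lambda = 0$ to orientations of those three fibers at every $\lambda \in [0,1]$. This is unambiguous because a trivialization identifies a bundle with $[0,1] \times V$ for a fixed vector space $V$, and any orientation of $V$ gives the same orientation on every fiber. Thus, without loss of generality we may assume throughout that the three chosen spaces have been given orientations at each $\lambda$ that are locally (and in fact globally) constant in $\lambda$ with respect to the fixed trivializations.

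Next, for each fixed $\lambda \in [0,1]$ the sequence
\begin{equation*}
 0 \to E_1(\lambda) \xrightarrow{\alpha_\lambda} F_1(\lambda) \xrightarrow{\beta_\lambda} F_2(\lambda) \xrightarrow{\gamma_\lambda} E_2(\lambda) \to 0
\end{equation*}
is a short exact sequence of finite dimensional vector spaces, and hence by Lemma~\ref{lma:o1o2} the three chosen orientations induce a canonical orientation on the fourth fiber. Doing this pointwise yields an orientation of every fiber of the remaining bundle; the content of the lemma is then that this pointwise prescription is continuous (equivalently, locally constant) in $\lambda$, and hence gives an honest orientation of the remaining trivialized bundle compatible with the others at $\lambda = 0$.

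For continuity, I would argue locally in a small interval around any $\lambda_0 \in [0,1]$. Since $\alpha, \beta, \gamma$ have constant rank, by a standard elementary-row/column-reduction argument one may choose continuous local frames of $E_1$, $F_1$, $F_2$, $E_2$ adapted to the filtration by kernels and images exactly as in the statement of Lemma~\ref{lma:o1o2}: namely a continuous basis $(v_j(\lambda))$ of $E_1(\lambda)$, continuous vectors $(u_k(\lambda)) \subset F_2(\lambda)$ whose images $\gamma_\lambda(u_k(\lambda))$ form a basis of $E_2(\lambda)$, and continuous vectors $(w_l(\lambda)) \subset F_1(\lambda)$ so that $(\alpha_\lambda(v_j(\lambda)), w_l(\lambda))$ is a basis of $F_1(\lambda)$, with $(\beta_\lambda(w_l(\lambda)), u_k(\lambda))$ then automatically a basis of $F_2(\lambda)$. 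The orientation recipe of Lemma~\ref{lma:o1o2} is a fixed algebraic formula in these bases, so the induced orientation on the fourth space is indeed locally constant in $\lambda$.

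The only real subtlety is the existence of such continuous adapted frames, which is where the constant-rank hypothesis in the definition of an exact sequence of vector bundles is used; everything else is a fiberwise application of Lemma~\ref{lma:o1o2} combined with the ability to pull orientations of one fiber back to every fiber through a trivialization. Compatibility at $\lambda = 0$ with the originally chosen orientations is automatic from the construction, so the proof concludes.
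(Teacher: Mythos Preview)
Your proposal is correct and is exactly the natural argument: transport the three given orientations via the trivializations, then apply Lemma~\ref{lma:o1o2} fiberwise, using the constant-rank hypothesis to choose continuous adapted frames so that the induced orientation on the fourth bundle is locally constant. The paper does not actually write out a proof here; it simply records the statement as straightforward once the orientation conventions of Section~\ref{sec:orientconv} are fixed and refers to the analogous statement in \cite{hutch2}, so your write-up is a faithful (and more explicit) rendering of what is being asserted.
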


Now we return to our particular cases. 
\begin{prp}\label{prp:comiso}
Let $\Gamma_1$ and $\Gamma_2$ be two partial flow trees satisfying the
assumptions of Proposition \ref{prp:megalemma4paux} or
Proposition \ref{prp:sista}. Also assume that there is a $\lambda_0>0$ so that they satisfy the following.
\begin{enumerate}
 \item\label{iih1} We have vector bundles  $\Ker \ul{\Gamma}_i$, $\Ker \ul{\Gamma}_{i,s}$,  $\Coker \ul{\Gamma}_i$ over $(0,\lambda_0]$, $i=1,2$,  as described above.
 \item\label{iih2} If $p_i$ is any special puncture of $\Gamma_i$, $i=1,2$, then  $\ev_{p_i}:\Ker \ul{\Gamma}_i \to \R^n $ is continuous.  
 \item\label{iih3} $\dim \ev_p(\kd_{\Gamma_{i,\lambda}}) = \dim \kd_{\Gamma_{i,\lambda}}$ for $\lambda \in (0, \lambda_0]$, $i=1,2$, where $p$ is the point where we glue $\Gamma_1$ to $\Gamma_2$.
\end{enumerate}
Then there is a $0 < \tilde \lambda_0 \leq \lambda_0$ so that \eqref{iih2} -- \eqref{iih3} hold for the glued tree $\Gamma= \Gamma_1 \#\Gamma_2$, and so that \eqref{iih1} hold for $\Gamma$ if $\dim V_{\con}(\Gamma_1)+\dim V_{\con}(\Gamma_2)= \dim V_{\con}(\Gamma)$, and so that \eqref{iih1} hold for $\Gamma$ if we replace the stabilized kernel by the pre-stabilized kernel in the case when $\dim V_{\con}(\Gamma_1)+\dim V_{\con}(\Gamma_2)< \dim V_{\con}(\Gamma)$.
\end{prp}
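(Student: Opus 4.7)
The plan is to proceed via the exact gluing sequences supplied by Lemma \ref{lma:gluweights}, together with their stabilized analog \eqref{eq:glu10}. First, I would set up the geometric framework: pick identifications so that the glued standard domain $\Delta(\Gamma_\lambda)$ contains $\Delta(\Gamma_{i,\lambda})$ as subdomains near the special puncture $p$, and trivialize the Sobolev-space bundles $\Hi_{k,\mu}[\underline{\Gamma}]$, $\Hi_{k,\mu}[\underline{\Gamma_i}]$ exactly as in the discussion immediately preceding the proposition. The gluing lemma then supplies, for each $\lambda \in (0,\lambda_0]$, exact sequences of finite-dimensional vector spaces with transition maps depending continuously on $\lambda$. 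The whole proof reduces to showing that the ranks of these transition maps are independent of $\lambda$ for $\lambda$ sufficiently small; once that is known, the exact sequences become exact sequences of (locally) trivializable vector bundles by a routine linear-algebra argument.

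The key dimension count takes place at the stabilized level, where the gluing sequence takes the form
\begin{equation*}
0 \to \kd_{\Gamma_{(p)s,\lambda}} \to \kd_{\Gamma_{1,s,\lambda}} \oplus \kd_{\Gamma_{2,s,\lambda}} \xrightarrow{\rho_\lambda} \R^n \to 0,
\end{equation*}
with $\rho_\lambda(u,v) = \ev_p(u)-\ev_p(v)$. By hypothesis (1) the middle summands have constant dimension, and by hypothesis (3) together with Lemma \ref{lma:virker1} each map $\ev_p\colon \kd_{\Gamma_{i,s,\lambda}} \to T_pM$ is injective. Hence $\mathrm{rk}(\rho_\lambda) = \dim\bigl(\ev_p(\kd_{\Gamma_{1,s,\lambda}})+\ev_p(\kd_{\Gamma_{2,s,\lambda}})\bigr)$. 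Applying Lemma \ref{lma:virker1} again, these subspaces of $T_pM$ converge to $T_p\F_p(\Gamma_1)$ and $T_p\F_p(\Gamma_2)$ as $\lambda \to 0$, whose sum is all of $T_pM$ by the transversality intrinsic to Ekholm's construction of the partial flow tree $\Gamma_1\#\Gamma_2$. Since rank is lower semi-continuous in $\lambda$ and bounded above by $n$, there is $\lambda_0'\in(0,\lambda_0]$ such that $\mathrm{rk}(\rho_\lambda)=n$ for all $\lambda\in(0,\lambda_0']$; thus $\rho_\lambda$ is surjective and $\kd_{\Gamma_{(p)s,\lambda}}$ has constant dimension $\dim\kd_{\Gamma_{1,s}}+\dim\kd_{\Gamma_{2,s}}-n$. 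Feeding this back into the unstabilized gluing sequences of Lemma \ref{lma:gluweights} and using that the index of $\dbar_{\Gamma_\lambda}$ is $\lambda$-independent gives constant dimensions for $\kd_{\Gamma_\lambda}$ and $\cd_{\Gamma_\lambda}$ as well.

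With dimensions constant, the exact sequences become exact sequences of vector bundles over $(0,\lambda_0']$, establishing (1) for $\Gamma=\Gamma_1\#\Gamma_2$. For (2), any special puncture $q$ of $\Gamma$ is inherited from one of the $\Gamma_i$, and $\ev_q$ factors as
\begin{equation*}
\Ker\underline{\Gamma}_s \hookrightarrow \Ker\underline{\Gamma}_{1,s}\oplus \Ker\underline{\Gamma}_{2,s} \twoheadrightarrow \Ker\underline{\Gamma}_{i,s}\xrightarrow{\ev_q}\R^n,
\end{equation*}
which is continuous by the inductive hypothesis together with the continuity of the bundle maps just constructed. Property (3) for $\Gamma$ then follows from Lemma \ref{lma:virker1} applied to the stabilized kernel of $\Gamma$, or equivalently by chasing $\ev_q$ through the factorization above and invoking injectivity on $\Gamma_i$.

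The main obstacle is the control of $\mathrm{rk}(\rho_\lambda)$: this is the place where the transversality built into the definition of rigid (or admissible partial) flow trees must be combined with the semi-continuity properties of families of Fredholm operators. Once this is done cleanly, every other verification is a straightforward bundle-theoretic consequence of the gluing sequences of Lemma \ref{lma:gluweights} and the induction hypotheses.
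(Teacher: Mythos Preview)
Your approach is different from the paper's, which consists of a one-line citation to Proposition~6.20 of \cite{trees}. That result (restated here as Lemma~\ref{lma:virker1}) establishes surjectivity of the stabilized operator and injectivity of $\ev_q$ on its kernel for any sub flow tree or rigid tree directly, via an explicit Newton-iteration construction of the kernel elements of the glued problem from those of the pieces; given surjectivity, constancy of the (co)kernel dimensions follows from the index formula. Your route through the gluing sequence and rank semi-continuity is a legitimate alternative in principle, and it has the virtue of making the inductive mechanism visible rather than hiding it inside a black-box citation.

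There is, however, a genuine gap at exactly the point you flag. The middle map in the sequence \eqref{eq:glu10} is \emph{not} $(u,v)\mapsto \ev_p(u)-\ev_p(v)$: by the remark following Lemma~\ref{lma:gluweights}, $\beta_\rho$ is obtained by applying $\dbar$ to cut-off extensions and then $L^2$-projecting onto the embedded $\R^n$-factor. Your transversality computation identifies $\mathrm{im}(\rho_\lambda)$ with $\ev_p(\kd_{\Gamma_{1,s,\lambda}})+\ev_p(\kd_{\Gamma_{2,s,\lambda}})$, which is only valid once you know that $\beta_\lambda$ is asymptotic to the evaluation-difference map as $\lambda\to 0$. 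That asymptotic identification is precisely the analytical content---in this paper it is carried out only later, in the proof of Proposition~\ref{prp:glu}, and the argument there uses the explicit description of kernel elements in the gluing region furnished by \cite{trees}, Proposition~6.20. So as written you have not bypassed the analysis; you have relocated it into the unproved claim $\rho_\lambda(u,v)=\ev_p(u)-\ev_p(v)$, which is false for finite $\lambda$ and whose asymptotic version is the heart of the matter.

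A smaller point: hypothesis~\eqref{iih3} concerns the \emph{unstabilized} kernel, while Lemma~\ref{lma:virker1} as stated applies to sub flow trees; under the assumptions of Proposition~\ref{prp:megalemma4paux} the piece $\Gamma_2$ may be a sub flow tree glued to a $Y_0$- or $Y_1$-piece, so you should say why injectivity of $\ev_p$ on the \emph{stabilized} kernel of $\Gamma_2$ holds in that case as well.
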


\begin{rmk}
 If $\Gamma_i$ has $\dim V_{\con}(\Gamma_i)=0$ then we let $ \Ker \ul{\Gamma}_{i,s}=\Ker \ul{\Gamma}_i $, and similar for the pre-stabilized bundle.
\end{rmk}

\begin{rmk}
 That the assumptions in the proposition hold for elementary trees is proved in Section \ref{sec:elementor}.
\end{rmk}

\begin{rmk}\label{rmk:hat}
 We have similar results for the fully capped problems, and also if we consider the $\dbar$-problems discussed in Section \ref{sec:background}, where we do not consider the end vertices as marked points.
\end{rmk}

\begin{proof}[Proof of Proposition \ref{prp:comiso}]
This follows from the proof of [\cite{trees}, Proposition 6.20].
\end{proof}

\begin{prp}\label{prp:megalemma2}
 Assume that $\Gamma_i$, $i=1,2$, satisfies the assumptions of Proposition \ref{prp:comiso}. Then the associated gluing sequences from Lemma \ref{lma:gluweights} lift to exact sequences of vector bundles. 
\end{prp}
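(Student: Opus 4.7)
The plan is to first assemble the terms of each gluing sequence from Lemma \ref{lma:gluweights} as vector bundles over the parameter interval $(0,\lambda_0]$, then to construct the connecting maps as continuous bundle morphisms of constant rank, so that fiber-wise exactness from Lemma \ref{lma:gluweights} upgrades to exactness of vector bundles.

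First I would apply Proposition \ref{prp:comiso} to the glued tree $\Gamma=\Gamma_1\#\Gamma_2$. Together with the hypothesis that $\Ker\ul{\Gamma}_i$, $\Ker\ul{\Gamma}_{i,s}$, $\Coker\ul{\Gamma}_i$ are already vector bundles (and the analogous statement for the capped problems, cf.\ Remark \ref{rmk:hat}), this supplies every term that appears in the sequences of cases (a), (b), (c) of Lemma \ref{lma:gluweights} as a vector bundle over $(0,\lambda_0]$, after possibly shrinking $\lambda_0$. The trivial summand $\R^n$ appearing in cases (a) and (b) is a genuine trivial bundle identified via evaluation at the $\rho$-slice inside the gluing region of $\Delta(\Gamma_\lambda)$.

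Next I would interpret the fiber-wise maps $\alpha_\rho,\beta_\rho,\gamma_\rho$ described in the remark following Lemma \ref{lma:gluweights} as sections of the appropriate bundles of linear maps. These are built from the cut-off functions $\phi_A^\rho,\phi_B^\rho$ of \eqref{eq:phiA}--\eqref{eq:phiB}, followed by $L^2$-projection, the operator $\dbar$, and projection to the quotient. Using the bundle trivializations $\psi_{i,\mu,\Gamma_j}$ from Section \ref{sec:glulim} together with the diffeomorphisms $\psi_{\Gamma_\lambda}$ that pin down the asymptotic regions and the boundary-minima regions, the cut-off data transports continuously across fibers once one chooses $\rho=\rho(\lambda)$ so that the gluing neck of $\Delta(\Gamma_\lambda)$ is long enough; this is possible by the $O(\lambda^{-1})$ length estimates recalled in Section \ref{sec:glulim}. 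Continuity of $\alpha_\rho,\beta_\rho,\gamma_\rho$ in $\lambda$ follows from continuity of the underlying trivializations and of the cut-offs.

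Fiber-wise exactness is exactly the content of Lemma \ref{lma:gluweights}. For constancy of rank, note that exactness of the fiber sequence forces $\rank \alpha_\rho(\lambda)$, $\rank\beta_\rho(\lambda)$ and $\rank\gamma_\rho(\lambda)$ to be alternating sums of fiber dimensions of the bundles appearing in the sequence; since these dimensions are constant by Proposition \ref{prp:comiso} and the inductive hypothesis, so are the ranks. Thus each of $\alpha_\rho,\beta_\rho,\gamma_\rho$ is a bundle map of constant rank, yielding an exact sequence of vector bundles. The main obstacle is the coordinated choice of the gluing parameter $\rho=\rho(\lambda)$ with the trivializations $\psi_{i,\mu,\Gamma}$ so that the Picard-type estimates from \cite{trees} used to establish Lemma \ref{lma:gluweights} hold uniformly in $\lambda\in(0,\lambda_0]$; this is addressed by the same uniformity argument that underlies the proof of Proposition \ref{prp:comiso}.
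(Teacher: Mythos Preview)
Your proposal is correct and follows essentially the same route as the paper: assemble the terms as bundles via Proposition \ref{prp:comiso}, show the maps $\alpha_\lambda,\beta_\lambda,\gamma_\lambda$ vary continuously using the trivializations $\psi_{i,\mu,\Gamma}$ and the cut-offs, and deduce constant rank from fiber-wise exactness together with constancy of the fiber dimensions. The paper makes two points more explicit than your sketch: it fixes the gluing parameter as $\rho=\lambda^{-1}$ (rather than an unspecified $\rho(\lambda)$), and it carries out the continuity of $\alpha_\lambda$ by directly estimating the $L^2$-projection integrals $\int_{\Delta(\Gamma_\lambda)}\langle\,\cdot\,,\phi_i^\lambda x_{i,\lambda}\rangle\w^2\,dm$ after conjugating by the trivializations, reducing to the three convergences $\|\tilde A(\Gamma_{\lambda_n})\circ\psi_{\Gamma_{\lambda_n}}^{-1}-\tilde A(\Gamma_c)\circ\psi_{\Gamma_c}^{-1}\|_\infty\to 0$, $\|\phi_i^{\lambda_n}-\phi_i^c\|_\infty\to 0$, and $\||\det D\psi_{\Gamma_{\lambda_n}}^{-1}|-|\det D\psi_{\Gamma_c}^{-1}|\|_\infty\to 0$; this is precisely the step you summarize as ``continuity follows from continuity of the underlying trivializations and of the cut-offs.''
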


\begin{proof}
 We prove the statement for the sequence \eqref{eq:g3}, the other cases are similar.
  First we modify the notation as follows. Let $\alpha_\lambda = (\alpha_\lambda^1, \alpha_\lambda^2)$ denote the  first  non-trivial map, $\beta_\lambda=(\beta_\lambda^1, \beta_\lambda^2)$ the second one and $\gamma_\lambda$ the third one, where  we assume the gluing parameter $\rho$ to be given by $\rho=1/\lambda$. Indeed, from \cite{trees} it follows that  we may assume the gluing region to be of size $O(\lambda^{-1})$, and also that the gluing region corresponding to $\lambda_1$ is contained in the gluing region corresponding to $\lambda_2$ if $\lambda_2<\lambda_1$. I.e., we may replace $\rho$ in Lemma \ref{lma:gluweights} with $\lambda^{-1}$. Note that the $\dbar_A$-problem from \eqref{eq:g3} is assumed to correspond to the $\dbar_{\Gamma_1}$-problem, the $\dbar_B$-problem correspond to the $\dbar_{\Gamma_2}$-problem, and the $\dbar_{A\#B}$-problem to the $\dbar_\Gamma$-problem.
 
 Let $\phi_1^\lambda$ and $\phi_2^\lambda$ be the cut-off functions corresponding to the cut-off functions $\phi_A^\rho=\phi_A^{\lambda^{-1}}$ and $\phi_B^\rho=\phi_B^{\lambda^{-1}}$ defined by the formulas \eqref{eq:phiA} and \eqref{eq:phiB}, respectively. We assume that  $||D^k\phi_i^\lambda||_\infty = O(\lambda)$, $k=1,2$,  $||\phi^{\lambda_1}_i-\phi^{\lambda_2}_i||_\infty = O(\frac{\lambda_1 - 2\lambda_2}{2\lambda_1})$, $||D(\phi^{\lambda_1}_i-\phi^{\lambda_2}_i)||_\infty = O(\lambda_1-\lambda_2)$,  $i=1,2$.

 We prove that the family $\alpha_\lambda$ gives rise to a vector bundle map, the proofs for $\beta_\lambda$ and $\gamma_\lambda$ are similar and left to the reader.
%
 Let $\bar{u}=(u_1,\dotsc,u_{k_1})$ be an orthonormal basis for $\kd_{\Gamma_{\lambda_0}}$, $\bar{v}=(v_1,\dotsc,v_{k_2})$ an orthonormal basis for $\kd_{\Gamma_{1,\lambda_{0}}}$, $\bar{w} = (w_1,\dotsc,w_{k_3})$ an orthonormal basis for $\kd_{\Gamma_{2,\lambda_{0}}}$. 
 Let 
 \begin{equation*}
  \bar{u}_\lambda = (u_1^\lambda,\dotsc,u_{k_1}^\lambda)=( \psi_{2, \Gamma}(\lambda,u_1),\dotsc, \psi_{2, \Gamma}(\lambda,u_{k_1}))
 \end{equation*}
be a basis for $\kd_{\Gamma_{\lambda}}$. We define bases for the other spaces in a completely analogous way.
 
 To prove that the family $\alpha_\lambda$ gives a vector bundle map it is enough to prove that the family of functions
 \begin{multline*}
   g_{1,\lambda}:=( \psi_{2, \Gamma_1}^{-1} \circ \alpha_\lambda^1 \circ \psi_{2, \Gamma},\psi_{2, \Gamma_2}^{-1} \circ \alpha_\lambda^2 \circ \psi_{2, \Gamma})\\ \in L(\kd_{\Gamma_{\lambda_0}}, \iota(\kd_{\Gamma_{1,\lambda_{0}}})\oplus  \iota(\kd_{\Gamma_{2,\lambda_{0}}})), 
    \end{multline*}
depends continuously on $\lambda$. Here $\iota$ denotes the inclusion of the kernels into $\Hi_{2,\nu}[\Gamma_{i,\lambda_0}]$, $i=1,2$, which is done by multiplying by the cut-off functions $\phi_1^{\lambda_0}$ and $\phi_2^{\lambda_0}$.

 We must prove that for each fixed $c \in(0,\lambda_0]$ we have that 
\begin{equation*}
 \|g_{1,\lambda}(u)-g_{1,c}(u)\|_{\nu,\lambda_0} \to 0
\end{equation*}
as $\lambda \to c$.

We will prove this component-wise, and thus it is enough to prove that for each $u \in \kd_{\Gamma_{\lambda_0}}$ we have that 
\begin{align*}
 &I(g_1) :=\\
 &\left\|\left(\int_{\Delta(\Gamma_{\lambda})}\langle \tilde A(\Gamma_{\lambda}) {\tilde A(\Gamma_{\lambda_0})}^{-1}\circ \psi_{\Gamma_{\lambda}} \cdot u \circ \psi_{\Gamma_{\lambda}}, \phi_i^{\lambda} x_{j,\lambda} \rangle \w^2 dm \right) \psi_{2,\Gamma_{i}}^{-1}(\phi_i^{\lambda} x_{j,\lambda} ) \right. \\
  &\qquad{}- \left.\left( \int_{\Delta(\Gamma_{c})}\langle \tilde A(\Gamma_{c}) {\tilde A(\Gamma_{\lambda_0})}^{-1}\circ \psi_{\Gamma_{c}} \cdot u \circ \psi_{\Gamma_{c}}, \phi_i^{c} x_{j,c} \rangle \w^2 dm \right) \psi_{2,\Gamma_{i}}^{-1}(\phi_i^{c} x_{j,c} )\right \|_{\nu,\lambda_0} \to 0
\end{align*}
as $\lambda \to c$, where $i=1,2 $, $x_{j,\lambda} =v_j^\lambda$ if
$i=1$, $j=1,\dotsc,k_2$ and $x_{j,\lambda} = w_j^\lambda$ if $i=2$, $j=1,\dotsc,k_3$, and similar for $c$. 

We estimate this from above, and recall that we assume the weight to be invariant under $\psi_{\Gamma_\lambda}$. We get
\begin{multline*}
 {I(g_1) \leq} I_1 \cdot \|  \phi_i^{\lambda} \circ \psi_{\Gamma_{\lambda}}^{-1}\cdot x_j  \|_{\nu,\lambda_0}  \\
+\left( \int_{\Delta(\Gamma_{c})}\langle \tilde A(\Gamma_{c}) {\tilde A(\Gamma_{\lambda_0})}^{-1}\circ \psi_{\Gamma_{c}} \cdot u \circ \psi_{\Gamma_{c}}, \phi_i^{c} x_{j,c} \rangle \w^2 dm \right)\cdot N_1
\end{multline*}
 where 
 \begin{align*}
  I_1 =
  &\left |\left(\int_{\Delta(\Gamma_{\lambda_0})}\langle \tilde A(\Gamma_{\lambda})\circ \psi_{\Gamma_{\lambda}}^{-1} \cdot{\tilde A(\Gamma_{\lambda_0})}^{-1} \cdot u, \phi_i^{\lambda} \circ \psi_{\Gamma_{\lambda}}^{-1} \cdot  x_{i} \rangle \w^2 |\det (D\psi_{\Gamma_{\lambda}}^{-1})| dm \right) \right. \\
 & \qquad{}- \left.\left(\int_{\Delta(\Gamma_{\lambda_0})}\langle \tilde A(\Gamma_{c})\circ \psi_{\Gamma_{c}}^{-1} \cdot{\tilde A(\Gamma_{\lambda_0})}^{-1} \cdot u, \phi_i^{c} \circ \psi_{\Gamma_{c}}^{-1} \cdot x_{i} \rangle \w^2 |\det (D\psi_{\Gamma_{c}}^{-1})| dm \right) \right |
 \end{align*}
and
 \begin{equation*}
  N_1 =  \| ( \phi_i^{\lambda} \circ \psi_{\Gamma_{\lambda}}^{-1} -  \phi_i^{c} \circ \psi_{\Gamma_{c}}^{-1})\cdot x_j)  \|_{\nu,\lambda_0}
 \end{equation*}
 
 Now the result follows from the fact that 
 \begin{align*}
 & \| \tilde A(\Gamma_{\lambda})\circ \psi_{\Gamma_{\lambda}}^{-1} -
   \tilde A(\Gamma_{c})\circ \psi_{\Gamma_{c}}^{-1}\|_{\infty} \to
   0,\\
& \|  \phi_i^{\lambda} \circ \psi_{\Gamma_{\lambda}}^{-1} -  \phi_i^{c} \circ \psi_{\Gamma_{c}}^{-1})  \|_\infty \to 0,\\
  &\||\det (D\psi_{\Gamma_{\lambda}}^{-1})|-|\det (D\psi_{\Gamma_{C}}^{-1})|\|_\infty \to 0,
 \end{align*}
as $\lambda \to c$. The first estimate we get from the constructions
in \cite{trees}, the second one we get from the assumption on
$\|\phi_i^\lambda - \phi_i^c\|_\infty$ together with the construction
of $ \psi_{\Gamma_{\lambda}}$, and the third estimate we get from the
fact that we might assume that $|\det (D\psi_{\Gamma_{\lambda}}^{-1})|
= O(\frac{\lambda_0}{\lambda})$ in the inner pieces, and equal to 1 otherwise.
%
%
%
%

We get similar estimates for $\beta_\lambda$ and $\gamma_\lambda$, but
here we in addition need to use that
$\|D(\phi_i^{\lambda_1}-\phi_i^{\lambda_2})\|_\infty= O(|\lambda_2 -
\lambda_1|)$, $i=1,2$.

That the ranks of $\alpha_\lambda$, $\beta_\lambda$ and
$\gamma_\lambda$ are constant for $\lambda$ sufficiently small follows
from Proposition \ref{prp:comiso} together with the nature of the mappings. 
 
\end{proof}

\begin{prp}\label{prp:glu}
 Assume that $\Gamma_i$, $i=1,2$, satisfies the assumptions of Proposition \ref{prp:comiso}. Also assume that we can define an orientation of $\Ker \ul{\Gamma}_{i,s}$ so that the orientation of each fiber gives the stabilized capping orientation of $\Gamma_i$, $i=1,2$.

 Then there is a $\lambda_0'>0$ so that this also hold for the glued tree $\Gamma$ if $\dim V_{\con}(\Gamma_1)+\dim V_{\con}(\Gamma_2)= \dim V_{\con}(\Gamma)$, and with the stabilized kernel replaced by the pre-stabilized kernel if $\dim V_{\con}(\Gamma_1)+\dim V_{\con}(\Gamma_2)< \dim V_{\con}(\Gamma)$.
 
 Moreover, suppose that $\F_p(\Gamma_i)$ is given the stabilized capping orientation of the tree $\Gamma_{i}$, $i=1,2$, and suppose that 
 $(v_1,\dotsc,v_k)$ gives an oriented basis for $T_p\F_p(\Gamma_1)\cap T_p \F_p(\Gamma_2)$, where the orientation is induced by the sequence \eqref{eq:glu10l}. Let 
 \begin{equation*}
  \xi: \Ker \ul{\Gamma}_{s} \to [0,\lambda_0'] \times T_p\F_p(\Gamma_1)\cap T_p \F_p(\Gamma_2), \text{ if }\dim V_{\con}(\Gamma_1)+\dim V_{\con}(\Gamma_2)= \dim V_{\con}(\Gamma),
 \end{equation*}
\begin{equation*}
  \xi: \Ker \ul{\Gamma}_{ps} \to [0,\lambda_0'] \times T_p\F_p(\Gamma_1)\cap T_p \F_p(\Gamma_2),  \text{ if }\dim V_{\con}(\Gamma_1)+\dim V_{\con}(\Gamma_2)< \dim V_{\con}(\Gamma),
 \end{equation*}
 be the bundle-trivialization from Lemma \ref{prp:comiso}, and let
 $v_i^\lambda=\xi_\lambda^{-1}(v_i)$. If  $\sigma$ is the gluing sign
 in Section \ref{sec:glueback}, then
 \begin{equation*}
 (-1)^\sigma v_1^\lambda \wedge\cdots \wedge v_k^\lambda 
 \end{equation*} 
gives the 
 stabilized capping orientation of $\Gamma$  
 if $\dim V_{\con}(\Gamma_1)+\dim V_{\con}(\Gamma_2)= \dim V_{\con}(\Gamma)$, and gives the 
 pre-stabilized capping orientation of $\Gamma$
 if $\dim V_{\con}(\Gamma_1)+\dim V_{\con}(\Gamma_2)< \dim V_{\con}(\Gamma)$.
\end{prp}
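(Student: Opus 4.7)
The plan is to combine the vector bundle machinery from Proposition \ref{prp:comiso} and Proposition \ref{prp:megalemma2} with the gluing sign computation from Proposition \ref{prp:megalemma4paux}, and then pass to the limit $\lambda = 0$ using the evaluation maps.

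First I would apply Proposition \ref{prp:comiso} to $\Gamma = \Gamma_1 \# \Gamma_2$ to obtain, for some sufficiently small $\lambda_0' > 0$, vector bundles $\Ker \ul{\Gamma}$, $\Ker \ul{\Gamma}_s$ (or $\Ker \ul{\Gamma}_{ps}$ in the pre-stabilized case), $\Coker \ul{\Gamma}$ over $(0, \lambda_0']$, with continuous evaluation maps at every special puncture. Then Proposition \ref{prp:megalemma2} lifts the gluing sequence \eqref{eq:glu10} to an exact sequence of vector bundles over $(0, \lambda_0']$. At each fixed $\lambda$, Proposition \ref{prp:megalemma4paux} tells us that the pre-stabilized (respectively stabilized) capping orientation of $\kd_{\Gamma_{\lambda,ps}}$ (respectively $\kd_{\Gamma_{\lambda,s}}$) differs from the orientation induced on it by the gluing sequence from the stabilized capping orientations of $\kd_{\Gamma_{i,\lambda,s}}$ by the explicit sign $(-1)^\sigma$.

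Next I would extend this sequence of bundles to $\lambda = 0$. By the assumption that $\ev_p$ restricted to $\kd_{\Gamma_{i,\lambda,s}}$ is injective and tends to $T_p\F_p(\Gamma_i)$ as $\lambda \to 0$ (Lemma \ref{lma:virker1}), the bundles $\Ker \ul{\Gamma}_{i,s}$ extend continuously over $[0, \lambda_0']$ with fiber $T_p\F_p(\Gamma_i)$ at $\lambda = 0$. Combining the gluing sequence over $(0, \lambda_0']$ with its $\lambda = 0$ avatar, the limit sequence \eqref{eq:glu10l}, yields an exact sequence of vector bundles over the closed interval $[0, \lambda_0']$, where the glued tree's stabilized (resp. pre-stabilized) kernel bundle extends with fiber $T_p\F_p(\Gamma_1) \cap T_p\F_p(\Gamma_2)$ at $\lambda = 0$ (possibly after shrinking $\lambda_0'$ so that the intersection is transverse with constant dimension). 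Orientation induction along an exact sequence of bundles is continuous in the parameter, so the orientations on $\Ker \ul{\Gamma}_{i,s}$ together with the fixed orientation of $\R^n \simeq T_pM$ canonically induce a continuous orientation of $\Ker \ul{\Gamma}_s$ (respectively $\Ker \ul{\Gamma}_{ps}$) over $[0, \lambda_0']$.

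At $\lambda = 0$, the induced orientation is by construction the one represented by $v_1 \wedge \cdots \wedge v_k$ from the limit sequence \eqref{eq:glu10l}. Under the bundle trivialization $\xi$, this transports to $v_1^\lambda \wedge \cdots \wedge v_k^\lambda$ at each $\lambda$. Multiplying by $(-1)^\sigma$ as dictated by Proposition \ref{prp:megalemma4paux} converts this glued orientation into the pre-stabilized (resp. stabilized) capping orientation of $\Gamma_\lambda$, which is exactly the claim.

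The main obstacle is verifying that the gluing sequence of bundles genuinely extends continuously across $\lambda = 0$: one must check that the rank of each morphism remains constant down to $\lambda = 0$, so that the sequence remains exact and the induced orientation is indeed continuous. This amounts to confirming that the dimensions of $\ev_p(\kd_{\Gamma_{i,\lambda,s}})$ and their intersection do not jump in the limit, which follows from assumption \eqref{iih3} of Proposition \ref{prp:comiso} and the transversality built into Lemma \ref{lma:virker1}, possibly after shrinking $\lambda_0'$. Once this continuity is in hand, the remainder of the argument is bookkeeping with the canonical orientation conventions of Section \ref{sec:orientconv} and a direct application of the sign formulas already established.
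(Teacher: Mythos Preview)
Your overall strategy matches the paper's: establish the gluing sequence as a bundle sequence over $(0,\lambda_0']$, extend it to $\lambda=0$, and invoke Proposition~\ref{prp:megalemma4paux} for the sign. However, your treatment of the extension to $\lambda=0$ is where the real work lies, and your justification is not adequate. You write that the obstacle ``amounts to confirming that the dimensions of $\ev_p(\kd_{\Gamma_{i,\lambda,s}})$ and their intersection do not jump in the limit.'' But constancy of dimensions only tells you that the \emph{spaces} extend; it does not tell you that the bundle maps $\alpha_\lambda$, $\beta_\lambda$ from the gluing sequence \eqref{eq:glu10} converge, under the evaluation trivializations, to the specific maps $v\mapsto(v,v)$ and $(u,w)\mapsto w-u$ that appear in the limit sequence \eqref{eq:glu10l}. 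A continuous family of linear maps over $(0,\lambda_0']$ need not have a limit at $0$, and even if it does, there is no a~priori reason that limit coincides with the maps written down in \eqref{eq:glu10l}.

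The paper closes this gap by an explicit computation. One chooses bases $\ul v^\lambda$, $\ul w^\lambda$, $\ul u^\lambda$ for $\kd_{\Gamma_{ps,\lambda}}$, $\kd_{\Gamma_{1,s,\lambda}}$, $\kd_{\Gamma_{2,s,\lambda}}$ built from the cut-off construction of [\cite{trees}, Proposition~6.20], so that $v_i^\lambda = \phi_1^\lambda w_i^\lambda + \phi_2^\lambda u_i^\lambda + r_i^\lambda$ with $\|r_i^\lambda\|=O(\lambda)$ and with prescribed coordinate form in the gluing region. One then directly estimates the $L^2$ inner products that define $\alpha_\lambda$ and $\beta_\lambda$ in these bases: for $\alpha_\lambda$ one shows $\bigl|\int\langle v_i^{\lambda},\phi_j^{\lambda} x_l^{\lambda}\rangle\,\w^2\,dm - \delta_i^l\bigr|\to 0$, and for $\beta_\lambda$ one computes $\int\langle\dbar(\phi_j^{\lambda}x_i^{\lambda}),\psi_\lambda\partial_{x_l}\rangle\,\w^2\,dm$ and sees that the sign of $D\phi_j^\lambda$ produces exactly the $\pm$ pattern of the difference map. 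These estimates rely on the exponential decay of the weight in the gluing region, the disjointness of the supports of $\phi_1^\lambda$ and $\phi_2^\lambda$, and the $O(\lambda)$ control on $r_i^\lambda$. Only after this analytic verification is in hand can one invoke continuity of orientation induction along exact sequences and apply Proposition~\ref{prp:megalemma4paux}. So your outline is on the right track, but the step you flag as an obstacle genuinely requires this integral computation rather than a dimension count.
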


\begin{proof} 
We focus on the case when $\dim V_{\con}(\Gamma_1)+\dim V_{\con}(\Gamma_2)< \dim V_{\con}(\Gamma)$, the other case is similar.

From [\cite{trees}, Proposition 6.20] it follows that we can find a basis $\ul v^\lambda= (v_1^\lambda,\dotsc,v_k^\lambda)$ of $\kd_{\Gamma_{ps,\lambda}}$ satisfying
\begin{equation*}
 v_i^\lambda = \phi_1^\lambda w_i^\lambda + \phi_2^\lambda u_i^\lambda + r_i^\lambda
\end{equation*}
where $w_i^\lambda \in \kd_{\Gamma_{1,s,\lambda}}$, $u_i^\lambda \in \kd_{\Gamma_{2,s,\lambda}}$ are continuous families of vectors satisfying $\lim_{\lambda \to 0}\ev_{p}(w_i^\lambda) = \lim_{\lambda \to 0} \ev_{p}(u_i^\lambda)$, and $r_i^\lambda \in \Hi_{2,\nu}[\Gamma_\lambda]\oplus V_{\con}(\Gamma_\lambda)$ satisfies $\|r_i^\lambda\|_{W,\lambda} = O(\lambda)$. Here $\|\cdot\|_{W,\lambda}$ denotes the norm on $\Hi_{2,\nu}[\Gamma_\lambda]\oplus V_{\con}(\Gamma_\lambda)$, and $\phi_{k}^\lambda$, $k=1,2$, are the cut-off functions from the proof of Proposition \ref{prp:megalemma2}. Moreover, we assume that 
we have a  trivialization of $T_pM$ so that 
\begin{equation*}
 \lim_{\lambda \to 0} \ev_p(v_i^\lambda) = \partial_{x_i}, \quad i=1,
 \cdots,k, \qquad T_p(\F_p(\Gamma_1) \cap \F_p(\Gamma_2)) = \spn(\partial_{x_1},\dotsc,\partial_{x_k}).
\end{equation*}

To prove that the gluing sequence \eqref{eq:glu10} can be evaluated in
the limit, we complete the bases $(w_1^\lambda,\dotsc,w_k^\lambda)$
and $(u_1^\lambda,\dotsc,u_k^\lambda)$ to bases $\ul
w^\lambda=(w_1^\lambda,\dotsc,w_{k_1}^\lambda)$ and $\ul
u^\lambda=(u_1^\lambda,\dotsc,u_{k_2}^\lambda)$ of
$\kd_{\Gamma_{1,s,\lambda}}$ and $\kd_{\Gamma_{2,s,\lambda}}$,
respectively. We may moreover assume that the vectors are chosen so that 
\begin{align}
 &w_i^\lambda = \partial_{x_i} + O(\lambda), & i&=1,\dotsc,k_1, \label{eq:triv1}\\
 &u_i^\lambda = \partial_{x_i} + O(\lambda), &i&=1,\dotsc,k, \label{eq:triv2}\\
  &u_i^\lambda = \partial_{x_{k_1+i}} + O(\lambda), & i&=k+1,\dotsc,k_2, \label{eq:triv3}
\end{align}
in the gluing region. Here $O(\lambda)$ means a vector of norm $O(\lambda)$.

Following the arguments in the proof of Proposition \ref{prp:comiso}, we see that to prove that the left-most, non-trivial map in the gluing sequence \eqref{eq:glu10} behaves as stated in the limit  we must show that 
\begin{equation*}
 \left|\int_{\Delta(\Gamma_{\lambda})}\langle v_i^{\lambda} , \phi_1^{\lambda}w_j^{\lambda} \rangle \w^2 dm - \delta_i^j \right| \to 0
\end{equation*}
as $\lambda \to 0$, $i=1,\dotsc,k$, $j=1,\dotsc,k_1$,
and that
\begin{equation*}
 \left|\int_{\Delta(\Gamma_{\lambda})}\langle v_i^{\lambda} , \phi_2^{\lambda}u_j^{\lambda} \rangle \w^2 dm - \delta_i^j \right| \to 0
\end{equation*}
as $\lambda \to 0$, $i=1,\dotsc,k$, $j=1,\dotsc,k_2$. Here we have used bundle-trivializations given by the bases  $\ul{v}^\lambda$, $\ul{w}^\lambda$ and $\ul{u}^\lambda$, respectively. We prove that this holds for the first integral, the second one is completely similar. 

We get 
\begin{align*}
  \MoveEqLeft[36] {\left|\int_{\Delta(\Gamma_{\lambda})}\langle v_i^{\lambda} , \phi_1^{\lambda}w_j^{\lambda} \rangle \w^2 dm - \delta_i^j \right|} &  \\
   \leq    \left|\int_{\Delta(\Gamma_{1,\lambda})}\langle \phi_1^{\lambda} w_i^{\lambda}, \phi_1^{\lambda}w_j^{\lambda} \rangle \w^2 dm - \delta_i^j \right|  
    +    \left|\int_{\Delta(\Gamma_{\lambda})}\langle r_i^{\lambda} , \phi_1^{\lambda}w_j^{\lambda} \rangle \w^2 dm \right|\qquad{}&\\
  \quad {} +   \left|\int_{\Delta(\Gamma_{\lambda})}\langle \phi_2^{\lambda} u_i^{\lambda} , \phi_1^{\lambda}w_j^{\lambda} \rangle \w^2 dm \right|&.
\end{align*}
Now 
\begin{align*}
         \MoveEqLeft[36] \left|\int_{\Delta(\Gamma_{1,\lambda})}\langle \phi_1^{\lambda} w_i^{\lambda}, \phi_1^{\lambda}w_j^{\lambda} \rangle \w^2 dm - \delta_i^j \right| &  \\
     \MoveEqLeft[29] \leq      \left|\int_{\Delta(\Gamma_{1,\lambda})}\langle  w_i^{\lambda}, w_j^{\lambda} \rangle \w^2 dm - \delta_i^j \right|  +
          \left|\int_{\Delta(\Gamma_{1,\lambda})}((\phi_1^{\lambda})^2 -1)\langle w_i^{\lambda}, w_j^{\lambda} \rangle \w^2 \right|& \\  
         \leq  O(\lambda) + \left|\int_{\Delta(\Gamma_{1,\lambda})}((\phi_1^{\lambda})^2 -1)\langle \partial_{x_i}, \partial_{x_j} \rangle \w^2 \right| + O(\lambda) = O(\lambda).&
\end{align*}
Here we use \eqref{eq:triv1} and the fact that $ \w^2 \sim e^{-2\delta |\tau|}$ on $\{\phi_1^{\lambda}\neq1\}$ (where $|\tau| \to \infty$) and   $\{\phi_1^{\lambda}=1\} \to \Delta(\Gamma_{1,\lambda}) $ as $\lambda \to 0$.

We also have 
\begin{align*}
  \left|\int_{\Delta(\Gamma_{\lambda})}\langle r_i^{\lambda} , \phi_1^{\lambda}w_j^{\lambda} \rangle \w^2 dm \right| \leq
   \|r_i^{\lambda}\|_{\nu,\lambda}\|\phi_1^{\lambda}w_j^{\lambda}\|_{\nu,\lambda}  = O(\lambda)
\end{align*}
and 
\begin{align*}
  \left|\int_{\Delta(\Gamma_{\lambda})}\langle \phi_2^{\lambda} u_i^{\lambda} , \phi_1^{\lambda}w_j^{\lambda} \rangle \w^2 dm \right| =0
\end{align*}
since the cut-off functions $\phi_1^{\lambda}$ and $\phi_2^\lambda$ are assumed to have disjoint support for all $\lambda$. Hence the leftmost map behaves as claimed.

To prove that the second map also behaves as claimed in the limit, we must consider the integrals
\begin{equation*}
 \int_{\Delta(\Gamma_{\lambda})}\langle \dbar(\phi_1^{\lambda}w_i^{\lambda} ), \psi_{\lambda}\partial_{x_j} \rangle \w^2 dm \qquad \text{ and } \qquad \int_{\Delta(\Gamma_{\lambda})}\langle \dbar(\phi_2^{\lambda}u_i^{\lambda} ), \psi_{\lambda}\partial_{x_j} \rangle \w^2 dm, 
\end{equation*}
$j=1,\dotsc,n$, $i=1,\dotsc, k_1$ and $i=1,\dotsc, k_2$, respectively. Here $\psi_{\lambda}$ is a cut-off function having support in the gluing region, compare [\cite{orientbok}, Lemma 3.1]. But by the choice of the basis vectors the integrals can be understood, up to $O(\lambda)$, as 
\begin{align*}
  \int_{\Delta(\Gamma_{\lambda})}\langle \dbar(\phi_k^{\lambda})\partial_{x_i} , \psi_{\lambda}\partial_{x_j} \rangle \w^2 dm = 
  \begin{cases}
   -c_{\lambda}\delta_i^j, & k=1 \\
    c_{\lambda}\delta_i^j, & k=2
  \end{cases}
\end{align*}
where $c_{\lambda}$ is some positive constant uniformly bounded
away from zero. The sign comes from the derivative of the cut-off function, compare [\cite{orientbok}, Lemma 3.11].

Moreover, from Section \ref{sec:stab}, Proposition \ref{prp:comiso} and Proposition \ref{prp:megalemma2} it follows that all the constructions in the proof of Proposition \ref{prp:megalemma4paux} can be made continuously in $\lambda$.
This completes the proof.
\end{proof}

Now consider the situation when we only get the pre-stabilized problem from the gluing above. This is the case when we glue at a vertex that has a boundary minima associated to it (either a $Y_0$- or $Y_1$- vertex, or a $2$-valent puncture).   
To calculate the stabilized orientation of $\Gamma$, we must stabilize the $\dbar_{\Gamma_{ps,\lambda}}$-problem by adding the missing conformal variation $t^\lambda$ to the domain of the operator. Recall that this family of  vectors has a well-defined limit $\lim_{\lambda\to 0}t^\lambda =t$,  corresponding to the covariantly constant vector field tangent to the edge that ends at the special positive puncture of $\Gamma$, pointing in the direction opposite to the flow direction.

\begin{prp}\label{prp:stababund}
Let $\Gamma$ be the glued tree from Proposition \ref{prp:glu}, and assume that  $\dim V_{\con}(\Gamma_1)+\dim V_{\con}(\Gamma_2)< \dim V_{\con}(\Gamma)$. Then the stabilized problem $\dbar_{\Gamma_{s,\lambda}}$ satisfies the assumption of Proposition \ref{prp:glu}, and 
 \begin{equation*}
 (-1)^{\sigma+\dim V_{\con}(\Gamma_2)} v_1^\lambda \wedge\cdots \wedge v_k^\lambda \wedge t^\lambda 
 \end{equation*} 
 gives the stabilized capping orientation of $\Gamma$.   
\end{prp}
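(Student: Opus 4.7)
The plan is in two parts: first promote the pre-stabilized kernel bundle to a stabilized one, then compute the orientation via Lemma \ref{lma:prestab} and Proposition \ref{prp:glu}.

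For the bundle I would enlarge $\Ker\ul{\Gamma}_{ps}$ by adjoining the section $w^\lambda$, namely the conformal variation associated to the one boundary minimum of $\Delta(\Gamma)$ that lies in $V_{\con}(\Gamma)\setminus V_{\con,\pre}(\Gamma)$ (i.e.\ at the gluing vertex). By Lemma \ref{lma:wlimit} the family $w^\lambda$ has a well-defined limit $w^0$ as $\lambda\to 0$, realized as a covariantly constant vector field along the edge adjacent to the gluing vertex that points against the flow direction. The description of the flow-out in Lemma \ref{lma:nbhd} shows that this limit is transverse to $T_p\F_p(\Gamma_1)\cap T_p\F_p(\Gamma_2)$ inside $T_p\F_p(\Gamma)$, so adjoining $w^\lambda$ to the bundle basis of $\Ker\ul{\Gamma}_{ps}$ supplied by Proposition \ref{prp:glu} produces a bundle basis of $\Ker\ul{\Gamma}_s$ whose evaluation at $p$ remains injective with constant dimension. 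This verifies the hypotheses of Proposition \ref{prp:comiso} for $\Gamma$ with the stabilized bundle in place of the pre-stabilized one on some interval $[0,\lambda_0']$.

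For the orientation I would invoke Lemma \ref{lma:prestab} with $v_j(\lambda)=w^\lambda$. By the convention of Section \ref{sec:vectors} one has $V_{\con,\pre}(\Gamma)=V_{\con}(\Gamma_1)\oplus V_{\con}(\Gamma_2)$ as an oriented direct sum, while the natural ordering of the boundary minima of $\Delta(\Gamma)$ places $w^\lambda$ at the interface of the two sub-domains, i.e.\ immediately after the $V_{\con}(\Gamma_1)$-block and immediately before the $V_{\con}(\Gamma_2)$-block. Consequently
$$\det V_{\con}(\Gamma_\lambda)=\det V_{\con}(\Gamma_1)\wedge w^\lambda\wedge\det V_{\con}(\Gamma_2)=(-1)^{\dim V_{\con}(\Gamma_2)}\det V_{\con,\pre}(\Gamma_\lambda)\wedge w^\lambda,$$
so Lemma \ref{lma:prestab} yields
$$\det\kd_{\Gamma_{\lambda,s}}=(-1)^{\dim V_{\con}(\Gamma_2)}\det\kd_{\Gamma_{\lambda,ps}}\wedge w^\lambda.$$
Substituting the pre-stabilized formula $\det\kd_{\Gamma_{\lambda,ps}}=(-1)^\sigma v_1^\lambda\wedge\dotsm\wedge v_k^\lambda$ from Proposition \ref{prp:glu} delivers the claimed stabilized orientation.

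The main obstacle will be verifying the placement of $w^\lambda$ in the ordering of $V_{\con}(\Gamma)$ across all subcases of Proposition \ref{prp:megalemma4paux}; this amounts to a careful inspection of the recipe for assembling $\Delta(\Gamma)$ from $\Delta(\Gamma_1)$, $\Delta(\Gamma_2)$ and the $\Delta_3$ of the gluing $Y_0$- or $Y_1$-piece. Note that the anomalous ordering of Case 1 with $\tp_2(\Gamma_2)=1$ occurs precisely when $\Gamma_2$ is a negative $2$-piece, in which event $V_{\con}(\Gamma_2)=0$ and the displayed sign is trivial, so the formula is unaffected.
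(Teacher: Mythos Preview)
Your overall strategy matches the paper's: both reduce the orientation statement to Lemma~\ref{lma:prestab} applied on top of Proposition~\ref{prp:glu}, and your explicit identification of the sign $(-1)^{\dim V_{\con}(\Gamma_2)}$ via the position of the new boundary minimum in the ordering of $V_{\con}(\Gamma)$ is correct and in fact more transparent than the paper, which only writes ``by the asymptotic behavior of $w^\lambda$ as $\lambda\to 0$ the claim follows from Lemma~\ref{lma:prestab}'' without unpacking the permutation.

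The bundle paragraph is where you are looser than the paper. Two points. First, ``adjoining the section $w^\lambda$'' is imprecise: $w^\lambda\in V_{\con}(\Gamma_\lambda)$ by itself is not an element of $\kd_{\Gamma_{\lambda,s}}$; one needs a pair $(h^\lambda,w^\lambda)\in\Hi_{2,\mu}[\Gamma_\lambda]\oplus V_{\con}(\Gamma_\lambda)$ with $\dbar h^\lambda+\dbar w^\lambda=0$ varying continuously down to $\lambda=0$. The paper supplies this explicitly: it picks $u_j^\lambda\in\kd_{\Gamma_{j,s,\lambda}}$ whose evaluations tend to the flow directions $f_j$, and invokes [\cite{trees}, Proposition~6.20] to produce small corrections $r_j^\lambda$ so that $w^\lambda+u_1^\lambda+u_2^\lambda+r_1^\lambda+r_2^\lambda\in\kd_{\Gamma_{\lambda,s}}$; this construction is what carries the bundle structure and the evaluation injectivity across $[0,\lambda_0]$. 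Second, your transversality sentence is not well-posed: after gluing, $p$ is no longer a special puncture of $\Gamma$, so $T_p\F_p(\Gamma)$ is undefined in the sense of Table~\ref{table:flowouts}; and even informally, $w^0$ is a cut-off field supported near the $Y$-vertex, so its value \emph{at} $p$ is not the object one wants. The clean reason the new element is independent of the pre-stabilized kernel is simply that its $V_{\con}$-component lands in $\langle w^\lambda\rangle\subset V_{\con}(\Gamma)\setminus V_{\con,\pre}(\Gamma)$, whereas the pre-stabilized kernel projects into $V_{\con,\pre}(\Gamma)$; no evaluation argument is needed for that.
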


\begin{proof}
  Consider the case when we are gluing at a $Y_0$-vertex, the
  other cases are similar. With abuse of notation, let $\Gamma_v$ denote
  the $Y_0$-piece we are gluing at, and let $\Gamma_1$, $\Gamma_2$
  denote the sub flow trees that we glue to $\Gamma_v$ to get
  $\Gamma$. Let $p_i$ denote the special puncture of $\Gamma_i$,
  $i=1,2$, and let $u_i^\lambda \in \kd_{\Gamma_{i,s,\lambda}}$ such
  that $\ev_{p_i}(u_i^\lambda) \to f_i$ as $\lambda \to 0$, where $f_i
  \in T_{p_i}M$ represents the flow direction of $\Gamma_i$ at
  $p_i$. From the proof of [\cite{trees}, Proposition 6.20] it follows
  that we can find functions $r_i^\lambda$, $i=1,2$, so that $ t^\lambda + u_1^\lambda + u_2^\lambda+ r_1^\lambda + r_2^\lambda \in \kd_{\Gamma_{\lambda,s}}$ and so that $\|r_i^\lambda\| \to 0$ as $\lambda \to 0$. Moreover we can choose $u_i^\lambda$ continuously in $\lambda$, $i=1,2$. By the asymptotic behavior of $t^\lambda$ as $\lambda \to 0$ the claim now follows from Lemma \ref{lma:prestab}
\end{proof}

\subsection{Capping orientation of elementary trees}\label{sec:elementor}
In this section we prove the base case of the inductive assumptions in Proposition \ref{prp:comiso} and Proposition \ref{prp:glu}. That is, we prove that these assumptions hold for elementary trees. We also show that the capping orientation of the elementary trees can be given in terms of oriented submanifolds in $M$ times a sign which depends only on combinatorial data of the tree.  

We begin by proving that the kernels and cokernels of the capping operators constitute bundles over $(0,\lambda_0]$ for $\lambda_0$ sufficiently small, and that these bundles moreover can be compactified by adding a fiber at $\lambda =0$ to get the isomorphisms stated in Lemma \ref{lma:capbundle0}. 


%

\subsubsection{Capping operators}\label{sec:cappf}
Recall the capping operators $\dbar_{ p,\pm} = \dbar_{p,\pm, \lambda}$ defined in Section \ref{sec:capping} and Section \ref{sec:capspec}. More precisely, these are operators defined on the $1$-punctured disk with associated boundary conditions $\tilde R_{\pm} =\tilde  R_{\pm, \lambda}$, which are given by diagonal matrices composed with rotation matrices and which depend on $\lambda$ in such a way so that the diagonal elements tend to $e^{ik\pi  s}$, $k\in\{-2,-1,0,1\}$, as $\lambda \to 0$. We also defined operators $\dbar_{ p} = \dbar_{p, \lambda}$ on the non-punctured disk, with boundary conditions obtained  by gluing $\tilde R_{+,\lambda}$ to $\tilde R_{-,\lambda}$, and where this glued boundary condition tend to a diagonal matrix with entries $e^{ik\pi s}$, $k\in\{-2,0\}$, as $\lambda \to 0$, composed with rotation matrices. In addition we put weights on the Sobolev spaces associated to $\dbar_{ p,\pm,\lambda}$. These weights allow us to extend the definition of  $\dbar_{ p,\pm,\lambda}$ and $\dbar_{ p,\lambda}$ to also be Fredholm operators for $\lambda =0$. More precisely, we get that the $\dbar$-operators 
\begin{equation*}
 \dbar_{ p,\pm,0}: \Hi_{2,\nu_{\pm}(p)}[\tilde R_{\pm,0}] \to \Hi_{1,\nu_{\pm}(p)}[0], \qquad \dbar_{ p,0}: \Hi_{2}[\tilde R_{+,0}\# \tilde R_{-,0}] \to \Hi_{1}[0]
\end{equation*}
are Fredholm. Here $\tilde R_{\pm,0} = \lim_{\lambda \to 0} \tilde R_{\pm,\lambda}$ and the weights $\nu_{\pm}(p)$ are the ones defined in Section \ref{sec:capping} and Section \ref{sec:capspec}. We have the following.

\begin{lma}\label{lma:caplim}
Assume that $p$ is a true puncture. Then there is a $\lambda_0 >0$ so that we have oriented bundles $\Coker\ul{\dbar_{p,\pm}}$, $\Ker\ul{\dbar_{p,\pm}}$, $\Coker\ul{\dbar_{p}}$ and $\Ker\ul{\dbar_{p}}$ over $[0,\lambda_0]$, with fibers $\cd_{p,\pm,\lambda}$, $\kd_{p,\pm,\lambda}$, $\cd_{p,\lambda}$ and $\kd_{p,\lambda}$, respectively, satisfying
\begin{align*}
\Coker\ul{\dbar_{p,+}} \simeq [0,\lambda_0] \times T_pW^s(p) \qquad \Coker\ul{\dbar_{p ,-}} \simeq [0,\lambda_0] \times \left(T_pW^u(p)\oplus \aux_1\right),
\end{align*}
if $|\mu(p)|=0$, and 
\begin{align*}
\Coker\ul{\dbar_{p,+}} \simeq [0,\lambda_0] \times \left(T_pW^s(p)\oplus \aux_1 \right) \qquad \Coker\ul{\dbar_{p ,-}} \simeq [0,\lambda_0] \times T_pW^u(p),
\end{align*}
if $|\mu(p)|=1$, and
\begin{align*}
\Ker\ul{\dbar_{p,+}} & \simeq [0,\lambda_0] \times 0 \qquad \Ker\ul{\dbar_{p ,-}} \simeq \Ker\ul{\dbar_{p}} \simeq [0,\lambda_0] \times \aux_2, \quad |\mu(p)|=0,1,\\
\Coker\ul{\dbar_p} & \simeq [0,\lambda_0] \times \left(T_pM \oplus \aux_1\right),  \quad |\mu(p)|=0,1,
\end{align*}
with the trivialization given by evaluation at the puncture $p$. Moreover, the gluing sequence \eqref{eq:capseq} lifts to a well-defined exact sequence vector bundles
\begin{align}\label{eq:capglubun}
 0 \to \Ker \ul{\dbar_{p}} \to
\begin{bmatrix}
\Ker\ul{\dbar_{p ,-}} \\
\Ker\ul{\dbar_{p ,+}}
\end{bmatrix}
\to 
\begin{bmatrix}
 \Coker\ul{\dbar_{p ,-}} \\
\Coker\ul{\dbar_{p ,+}}
\end{bmatrix}
\to 
\Coker \ul{\dbar_{p}} \to 0.
\end{align}
We have an analogous result for the special punctures.
\end{lma}

\begin{proof}
The first statements follow from Corollary \ref{cor:canonbundle} and Corollary 
\ref{cor:canonbundleweight} together with Remark \ref{rmk:megarmk},
and the fact that the boundary conditions $\tilde R_{\pm,\lambda}$ are
split (we can neglect the rotation matrices here). The last statement follows similar to the proof of Proposition
\ref{prp:megalemma2} and Proposition \ref{prp:glu}.
\end{proof}

Now we discuss the orientations of the capping operators. Recall from Section \ref{sec:capopor} and Section \ref{sec:capspec} that if we pick an orientation of $\det \dbar_{p,+,\lambda}$ then the sequence \ref{eq:capseq}, respectively \ref{eq:capseqspec} in the case when $p$ is special, induces an orientation of $\det \dbar_{p,-,\lambda}$. By Lemma \ref{lma:caplim} we see that it is enough to choose an orientation of  $W^u(p)$, $W^s(p)$ and the auxiliary spaces to get induced orientations of the bundles  $\det\ul{\dbar_{p,\pm}}$.

That is, recall that we have fixed orientations of the auxiliary directions represented by the vectors $\partial_{x_{n+1}}, \partial_{x_{n+2}}$, say. Also recall that we have chosen a capping orientation $\Or_{\capp}(T_p W^u(p))$ of $W^u(p)$ and
that we have fixed an orientation of $T_pM$. Then we give $W^s(p)$ the orientation $\Or_{\capp}(T_pW^s(p))$ induced by the identification 
\begin{equation}\label{eq:os00}
 \Or(T_pM) = \Or_{\capp}(T_p W^u(p)) \wedge \Or_{\capp}(T_p W^s(p)).
\end{equation}
To relate these choices to the choice of orientation of $\det \dbar_{p,+,0}$ and the induced orientation of $\det \dbar_{p,-,\lambda}$ for $p$ a Reeb chord of $\Lambda$, let $\sigma_{\capp,+}(p) \in \{0,1\}$ so that 
\begin{align*}
 \Or(\kd_{p,+,0}) &= (-1)^{\sigma_{\capp,+}(p)} \mathbbm{1}, &  \Or(\cd_{p,+,0}) &= \Or_{\capp}(T_p W^s(p)), & |\mu(p)|&=0,\\
 \Or(\kd_{p,+,0}) &= (-1)^{\sigma_{\capp,+}(p)} \mathbbm{1}, &  \Or(\cd_{p,+,0}) &= \Or_{\capp}(T_p W^s(p)) \wedge \partial_{x_{n+1}}, & |\mu(p)|&=1
\end{align*}
represents this orientation. Here $\mathbbm{1}$ represents the standard orientation of the trivial vector space. Then let $\sigma_{\capp,-}(p) \in \{0,1\}$ so that
\begin{align*}
 \Or(\kd_{p,-,0}) &= (-1)^{\sigma_{\capp,-}(p)} \partial_{x_{n+2}}, &  \Or(\cd_{p,-,0}) &= \Or_{\capp}(T_p W^u(p))\wedge \partial_{x_{n+1}}, & |\mu(p)|&=0,\\
 \Or(\kd_{p,-,0}) &= (-1)^{\sigma_{\capp,-}(p)}  \partial_{x_{n+2}}, &  \Or(\cd_{p,-,0}) &= \Or_{\capp}(T_p W^u(p)), & |\mu(p)|&=1
\end{align*}
represents the induced capping orientation of $\det \dbar_{p,-,0}$. Then we use Lemma \ref{lma:caplim} to let these orientations induce orientations of the capping operators $\dbar_{p,\pm,\lambda}$ for $\lambda >0$. 

We make a similar construction for $p$ a special puncture. That is, assume that we have chosen an orientation of  $\det \dbar_{p,+,0}$ and  let $\sigma_{\capp,+,s}(p) \in \{0,1\}$ so that 
\begin{align*}
 \Or(\kd_{p,+,0}) &= (-1)^{\sigma_{\capp,+,s}(p)} \mathbbm{1}, &  \Or(\cd_{p,+,0}) &= \mathbbm{1}, & |\mu(p)|&=0,\\
 \Or(\kd_{p,+,0}) &= (-1)^{\sigma_{\capp,+,s}(p)} \partial_{x_{n+2}}, &  \Or(\cd_{p,+,0}) &= \mathbbm{1}, & |\mu(p)|&=1
\end{align*}
represents this orientation. Then let  $\sigma_{\capp,-,s}(p) \in \{0,1\}$ so that 
\begin{align*}
 \Or(\kd_{p,-,0}) &= (-1)^{\sigma_{\capp,-,s}(p)} \mathbbm{1}, &  \Or(\cd_{p,+,0}) &= \mathbbm{1}, & |\mu(p)|&=0,\\
 \Or(\kd_{p,-,0}) &= (-1)^{\sigma_{\capp,-,s}(p)} \partial_{x_{n+2}}, &  \Or(\cd_{p,+,0}) &= \mathbbm{1}, & |\mu(p)|&=1
\end{align*}
represents the induced orientation of  $\det \dbar_{p,-,0}$. Again we use the results from Lemma \ref{lma:caplim} to get induced orientations of $\dbar_{p,\pm,\lambda}$ for $\lambda >0$. 

In the following subsections we discuss a particular choice of the signs  $\sigma_{\capp,+}(p)$, $\sigma_{\capp,+,s}(p)$ making it possible to find explicit expressions of the signs of rigid flow trees.

\subsubsection{Orientations of end-pieces}\label{sec:endorient}
Let $\Gamma$ be a sub flow tree containing an end $e$ and one positive special puncture $q_1$ , and no other vertices.
Choose a trivialization of $TM$ along $\Gamma$ so that $T_e\Pi(\Sigma) = \spn(\partial_{x_1},\dotsc,\partial_{x_{n-1}})$, and recall that the induced boundary conditions of $\Gamma$ tend to constant $\R^n$-boundary conditions as $\lambda \to 0$, except for in a neighborhood of the end point where it splits as $\R^{n-1} \oplus U_+$, where $U_+$ denotes a uniform $+ \pi$ rotation in $\C$, as in Figure \ref{fig:bend}. Assume that $\partial_{x_n}$ points in the outward normal direction of $\Pi(\Lambda_e)$ at $e$, where $\Lambda_e$ is the union  of the two sheets  of $\Lambda$ containing $e$, and that $(\partial_{x_1},\dotsc,\partial_{x_n})$ gives an oriented basis for $T_eM$. We extend the trivialized boundary conditions from Section \ref{sec:treetriv} to the limit $\lambda =0$, using that we have control over the region where the $+\pi$-rotation is performed. Recall that we assume this rotation to be captured by the capping operator at $e$.  We also extend the family of operators   
  $\dbar_{\Gamma_\lambda}: \Hi_{2,\nu}[\Gamma_\lambda] \to \Hi_{1,\nu}[\Gamma_\lambda]$ to $\lambda =0$.

\begin{lma}\label{lma:kere}
For $\lambda$ sufficiently small the kernel of $\dbar_{\Gamma_\lambda}$ is spanned by functions $v_1^\lambda,\dotsc,v_{n}^\lambda$ satisfying
\begin{align*}
  \lim_{\lambda \to 0}\|v_i^\lambda - \partial_{x_i}\|_{2,\nu} +|\ev_{q_1}(v_i^\lambda) - \ev_{q_1}(\partial_{x_i})| &= 0, & i&=1,\dotsc,n.
\end{align*}
In particular, we have a vector bundle 
\begin{equation*}
\Ker \ul{{\Gamma}} =\{(\lambda, v \in \kd_{\Gamma_\lambda}) \} \subset [0,\lambda_0] \times\Hi_{2,\nu}(\Delta_2, \C^n) 
\end{equation*}
which is isomorphic to $[0, \lambda_0] \times \R^{n}$ and with
topology given as in Section \ref{sec:glulim}.  A trivialization $\xi$ is given by $\xi_\lambda(v_i^\lambda) = \partial_{x_i}$, $i=1,\dotsc,n$. 
\end{lma}

\begin{proof}
This follows from Lemma \ref{lma:genkerisoweight}, together with the explicit understanding of the behavior of the kernel elements in neighborhoods of the special punctures,  as in \cite{trees}.
\end{proof}

To give an orientation to this problem, recall that we have fixed an orientation of the capping operator corresponding to the end-vertex, and notice that the positive special puncture $q_1$ must have $|\mu(q_1)|=1$. For any special puncture $p$ with $|\mu(p)|=1$ (of any tree $\tilde \Gamma$), choose an orientation of $\det  \ul{\dbar_{p,+}}$ so that the capping sequence for $\dbar_{\Gamma_\lambda}$ gives the orientation 
\begin{equation*}
 \Or ( \kd_{\Gamma_\lambda}) =  v_1^\lambda \wedge \dotsm \wedge v_n^\lambda
\end{equation*}
for all $\Gamma$ being elementary end-pieces in the case when $n >1$, and so that it is given by 
\begin{equation*}
 \Or ( \kd_{\Gamma_\lambda}) =  -v^\lambda 
\end{equation*}
in the case when $n=1$, where $v^\lambda$ is the vector that evaluates to the flow direction of the elementary piece at the end vertex.

%
%

\subsubsection{Orientations of switch-pieces}\label{sec:switchorient}
Let now $\Gamma$ be an elementary tree with a switch-vertex $s$. 
 Choose a trivialization of $TM$ along $\Gamma$ so that $T_s\Pi(\Sigma) = \spn(\partial_{x_1},\dotsc,\partial_{x_{n-1}})$, and so that the induced boundary conditions tend to either $(\R^{n-1} \oplus U_-(\R),\R^n)$ or $(\R^n,\R^{n-1}\oplus U_-(\R))$ as $\lambda \to 0$, where $U_-: \R \to U(1)$ is equal to the identity except for in a neighborhood of the switch point where it performs a uniform $-\pi$-rotation in $\C$, as in Figure \ref{fig:bend}.
Assume that $\partial_{x_n}$ points in the outward normal direction of $\Pi(\Lambda_s)$ at $s$, where $\Lambda_s$ is the union of the two sheets of $\Lambda$ containing $s$, and that $(\partial_{x_1},\dotsc,\partial_{x_n})$ gives an oriented basis for $T_sM$. 
Again we extend the trivialized boundary conditions from Section \ref{sec:treetriv} to the limit $\lambda=0$ by using that we have control of the region where the $-\pi$-rotation is performed. Using this we extend the family of operators 
$\dbar_{\Gamma_\lambda}: \Hi_{2,\nu}[\Gamma_\lambda] \to \Hi_{1,\nu}[\Gamma_\lambda]$ to $\lambda =0 $. 

\begin{lma}\label{lma:kers}
For $\lambda$ sufficiently small the kernel of $\dbar_{\Gamma_\lambda}$ is spanned by functions $v_1^\lambda,\dotsc,v_{n-1}^\lambda$ satisfying
\begin{equation*}
 \|v_i^\lambda - \partial_{x_i}\|_{2,\nu} + |\ev_p(v_i^\lambda) - \ev_p(\partial_{x_i})| \to 0, \, \lambda \to 0, \, i=1,\dotsc,n-1,
\end{equation*}
where $p$ is a special puncture of $\Gamma$.

In particular, we have a vector bundle
\begin{equation*}
 \Ker \ul{\Gamma}=\{(\lambda, v \in \kd_{\Gamma_\lambda}) \} \subset [0,\lambda_0] \times\Hi_{2,\nu}(\Delta_2, \C^n)
\end{equation*}
which is isomorphic to $[0, \lambda_0] \times \R^{n-1}$ and with topology given as in Section \ref{sec:glulim}. A trivialization $\xi$ is given by $\xi_\lambda(v_i^\lambda) = \partial_{x_i}$, $i=1,\dotsc,n-1$. 
\end{lma}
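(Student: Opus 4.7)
The plan is to proceed in direct analogy with the proof of Lemma \ref{lma:kere}, exploiting the fact that the chosen trivialization splits the problem in the limit $\lambda \to 0$. With the $(\partial_{x_1},\dots,\partial_{x_n})$-trivialization, the Lagrangian boundary conditions on the two components of $\partial \Delta(\Gamma_\lambda)$ tend to $(\R^{n-1}\oplus U_-(\R),\R^n)$ (or the symmetric pair), where $U_-$ is the uniform $-\pi$-rotation concentrated in a fixed small subarc around the switch point. Because this rotation is performed entirely in the last factor and the trivialization is covariantly constant in the first $n-1$ factors, the limit operator $\dbar_{\Gamma_0}$ splits as an orthogonal sum of $n$ scalar Cauchy--Riemann problems on $\Delta_2$.

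First I would analyze each of these $n$ scalar problems separately under the weights $\mu$ chosen in Section \ref{sec:analys} at the two special punctures. In the $n-1$ tangential directions $\partial_{x_1},\dots,\partial_{x_{n-1}}$ the boundary condition is constantly $\R$ on both sides of the strip, and the weight vector has the form that makes the constant function the unique bounded harmonic function lying in the weighted Sobolev space; hence $\dim\kd = 1$ and $\dim\cd = 0$, with kernel spanned by the constant $\partial_{x_i}$. In the normal direction $\partial_{x_n}$ the boundary condition makes a uniform $-\pi$-rotation along one side of the strip (a Maslov $-1$ problem), and with our weight conventions both the kernel and cokernel vanish, by the standard index computation for split boundary conditions found in \cite{legsub}. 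Adding these factors up, we obtain that $\dbar_{\Gamma_0}$ is surjective with kernel exactly $\spn(\partial_{x_1},\dots,\partial_{x_{n-1}})$.

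To pass from $\lambda = 0$ to $\lambda$ small and positive, I would invoke Lemma \ref{lma:genkeriso} from Appendix \ref{app:mini}: since $\dbar_{\Gamma_0}$ is surjective and the family $\lambda \mapsto \dbar_{\Gamma_\lambda}$ is continuous as a family of Fredholm operators (using the trivializations $\psi_{i,\mu,\Gamma}$ from Section \ref{sec:glulim} and the uniform control over the rotation subarc), the dimensions of kernel and cokernel are locally constant and the kernel vectors $\partial_{x_i}$ can be continued to a continuous family $v_i^\lambda \in \kd_{\Gamma_\lambda}$ with $\|v_i^\lambda-\partial_{x_i}\|_{2,\mu}\to 0$. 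The convergence of the evaluation $\ev_p(v_i^\lambda)\to \ev_p(\partial_{x_i})$ at the special puncture then follows from the exponential decay of $v_i^\lambda-\partial_{x_i}$ in the strip-like end together with the asymptotic analysis of kernel elements near punctures carried out in \cite{trees}. The continuity of this trivialization upgrades the fiber description to the vector bundle isomorphism $\Ker\ul{\Gamma}\simeq [0,\lambda_0]\times \R^{n-1}$ claimed in the statement.

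The main obstacle will be verifying that the limiting split operator $\dbar_{\Gamma_0}$ is genuinely Fredholm with the prescribed kernel/cokernel dimensions, so that the abstract family argument of Lemma \ref{lma:genkeriso} applies. This requires a careful bookkeeping of the weight vector at each special puncture of the switch-piece (in particular the sign of $\delta$ in the $\partial_{x_n}$-direction on each side of the switch, where the rotation concentrates) to ensure that the Maslov $-1$ scalar factor indeed contributes no kernel and no cokernel under our conventions from Section \ref{sec:capspec}. Once this local weight matching is in place, the rest of the argument is a direct transcription of the end-piece case.
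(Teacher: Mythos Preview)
Your approach is essentially the same as the paper's: analyze the limit $\lambda=0$ to see that $\dbar_{\Gamma_0}$ is surjective with $(n-1)$-dimensional kernel spanned by $\partial_{x_1},\dotsc,\partial_{x_{n-1}}$, then use the family Fredholm result from Appendix~\ref{app:mini} to propagate this to small $\lambda>0$, and finally invoke the asymptotic description of kernel elements from \cite{trees} for the evaluation statement. The paper simply cites \cite[Lemma~6.6]{trees} for the limit analysis, whereas you spell out the split into $n$ scalar problems; this is fine and arguably more informative.

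One correction: you invoke Lemma~\ref{lma:genkeriso}, but that lemma is stated for the closed disk $D_0$ with $A_0=\id$, neither of which holds here (the domain is $\Delta_2$ and the limiting boundary condition carries a $-\pi$-rotation in the $\partial_{x_n}$-factor). The correct reference is the more general Proposition~\ref{prp:keriso}, which is exactly what the paper uses and whose hypotheses you have verified. With that citation fixed, your argument goes through.
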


\begin{proof}
For $\lambda =0$  it follows from [\cite{trees}, Lemma 6.6] that the $\dbar_{\Gamma_0}$-problem is surjective and that $\kd_{\Gamma_0}$ is $n-1$-dimensional, spanned by functions of the form $\partial_{x_1},\dotsc,\partial_{x_{n-1}}$ in our chosen trivialization. The statement now follows from Proposition \ref{prp:keriso}, together with the explicit understanding of the behavior of the kernel elements in neighborhoods of the special punctures,  as in \cite{trees}.
\end{proof}

To give an expression for the capping orientation of $\Gamma$, notice that
one of the special punctures of $\Gamma$ must be of odd Maslov index
while the other one is of even Maslov index. Assume first that it is
the negative puncture which  is of odd Maslov index, and denote it by
$p_1$. Recall that we already have fixed an orientation of $\det
\ul{\dbar_{p_1,+}}$, and that this induces one on $\det
\ul{\dbar_{p_1,-}}$ from the gluing sequence \eqref{eq:capglubun} and
the canonical orientation of $\det \ul{\dbar_{p_1}}$ times $(-1)^{n \cdot|\mu(p_1)|}$. 
Now if $q$ is any positive  special puncture of even Maslov index, choose an orientation of $\det \ul{\dbar_{q,+}}$ so that it gives the following capping orientation of elementary switch-trees with one positive special puncture of even Maslov index, one negative  special puncture of odd Maslov index and where the switch is located on the lower sheet:
\begin{equation*}
\Or(\kd_{\Gamma_\lambda}) =  v_1^\lambda \wedge \dotsm \wedge v_{n-1}^\lambda.
\end{equation*}

These choices of orientations of capping operators also induces an orientation of $\det \ul{\dbar_{p,-}}$ for $p$ any  special puncture of even Maslov index, and this together with the chosen orientation of $\det \ul{\dbar_{q,+}}$ for $q$ any special puncture of odd Maslov index induces an orientation 
\begin{equation*}
 \Or( \kd_{\Gamma_\lambda}) = (-1)^{\sigma_{\swi}^{(k,k')}(l)} v_1^\lambda \wedge \dotsm \wedge v_{n-1}^\lambda
\end{equation*}
in the case when $\Gamma$ is an elementary switch-tree with the switch on the lower sheet, and with positive puncture of odd Maslov index of type $(k,k')$.

Similarly, if $\Gamma$ is an elementary switch-tree with the switch on the upper sheet and a positive special puncture of type $(k,k')$ (where $k=k'$ if and only if the positive puncture has even Maslov index) then there is a $\sigma_{\swi}^{(k,k')}(u) \in \{0,1\}$ so that our choices of orientations of capping operators as above induces the orientation 
\begin{equation*}
 \Or( \kd_{\Gamma_\lambda}) = (-1)^{\sigma_{\swi}^{(k,k')}(u)} v_1^\lambda \wedge \dotsm \wedge v_{n-1}^\lambda
\end{equation*}
on $\Gamma$.

For completeness, we define $\sigma_{\swi}^{(0,0)}(l) = \sigma_{\swi}^{(1,1)}(l) = 0$, so that 
\begin{equation*}
 \Or( \kd_{\Gamma_\lambda}) = (-1)^{\sigma_{\swi}^{(k,k')}(i)} v_1^\lambda \wedge \dotsm \wedge v_{n-1}^\lambda
\end{equation*}
gives the orientation for $\Gamma$ an elementary switch-tree with the switch on the upper sheet if $i=u$, lower sheet if $i=l$, and where $(k,k')$ gives the type of the positive special puncture. We say that such a switch-piece is of \emph{type $\sigma_{\swi}^{(k,k')}(i)$}.

%
%
%

\subsubsection{Orientations of pieces associated to punctures}\label{sec:orp}
Now let $\Gamma$ be an elementary tree containing a true  puncture $p$. Here $p$ can be either positive or negative, $1$- or $2$-valent. 

Choose a trivialization of $TM$ along $\Gamma$ so that $TM= T W^u(p) \oplus W$ for some complementary space $W$ in the case when $p$ is positive, and so that $TM= T W^s(p) \oplus W$ for some complementary space $W$ in the case when $p$ is negative. Recall that we have an associated 
operator $\dbar_{\Gamma_\lambda}: \Hi_{2,\nu}[\Gamma_\lambda] \to \Hi_{1,\nu}[\Gamma_\lambda]$, as described in Section \ref{sec:analys}. By taking the limit $\lambda \to 0$, we see that the boundary conditions tend to constant boundary conditions, and due to our choice of weights $\nu_\Gamma$ we get a well-defined Fredholm problem in the limit $\lambda = 0$, given by
\begin{equation*}
 \dbar_{\Gamma_0}: \Hi_{2,\nu}[\id] \to \Hi_{1,\nu}[0].
\end{equation*}

 By Lemma \ref{lma:genkerisoweight} we have that $\dbar_{\Gamma_\lambda}$ is surjective for $\lambda$ sufficiently small, with a kernel of dimension $k=\dim W^u(p)$ if $p$ is positive, and of dimension $k=\dim W^s(p)$ if $p$ is negative, spanned by functions $u_1^\lambda,\dotsc,u_k^\lambda$ satisfying
 \begin{equation*}
  \|u_i^\lambda - \partial_{x_i} \|_{2,\nu} \to 0, \qquad \lambda \to
  0,\quad  i=1,\dotsc,k.
 \end{equation*}
  Moreover, the tuple $(\partial_{x_1},\dotsc,\partial_{x_k})$ gives a basis for $\kd_{\Gamma_0}$,
 and by the explicit descriptions of the vectors $u_1^\lambda,\dotsc,u_k^\lambda$ from \cite{trees} we see that we can choose them so that 
 \begin{equation*}
   |\ev_q(u_i^\lambda) - \ev_q(\partial_{x_i})| \to 0
 \end{equation*}
 as $\lambda \to 0$. Here $q$ is a special puncture of $\Gamma$. 


Hence we get the following. 
\begin{cor}\label{cor:osbundle}
 We have a vector bundle $\Ker\ul{{\Gamma}} \subset [0,\lambda_0]
 \times \Hi_{2,\nu}(\R\times [0,1], \C^n)$ given by $\Ker{\ul{\Gamma}}
 =\{(\lambda, v \in \kd_{\Gamma_\lambda}) \}$, which is  isomorphic to
 $[0, \lambda_0] \times \kd_{\Gamma_0}$ and with topology given as in
 Section \ref{sec:glulim}.  A trivialization $\xi$ is given by $\xi_\lambda(u_i^\lambda) = \partial_{x_i}$, $i=1,\dotsc,k$. 
\end{cor}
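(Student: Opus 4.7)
The plan is to read off the vector bundle structure directly from the surjectivity and constant kernel dimension established in the discussion preceding the corollary. First I would fix $\lambda_0 > 0$ small enough so that $\dbar_{\Gamma_\lambda}$ is surjective for all $\lambda \in [0,\lambda_0]$, which is available from Lemma \ref{lma:genkerisoweight} together with the limiting Fredholm problem $\dbar_{\Gamma_0}$. Since the index is locally constant on the space of Fredholm operators and surjectivity is an open condition, this implies that $\dim \kd_{\Gamma_\lambda}$ is the constant $k = \dim W^u(p)$ or $\dim W^s(p)$ throughout $[0,\lambda_0]$.

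Next I would construct an explicit trivialization. Using the $\lambda$-family of diffeomorphisms $\psi_{\Gamma_\lambda}: \Delta(\Gamma_\lambda) \to \Delta(\Gamma_{\lambda_0})$ and the bundle maps $\psi_{2,\mu,\Gamma}$ from Section~\ref{sec:glulim}, the domain spaces $\Hi_{2,\mu}[\Gamma_\lambda]$ are pulled back to a single ambient space, so $\Ker \ul{\Gamma}$ embeds as a subset of $[0,\lambda_0] \times \Hi_{2,\mu}(\R\times[0,1],\C^n)$. The explicit basis $u_1^\lambda,\dotsc,u_k^\lambda$ from the preceding paragraph gives a continuous section-frame: the convergence $\|u_i^\lambda - \partial_{x_i}\|_{2,\mu} \to 0$ combined with the continuity in $\lambda$ of $\dbar_{\Gamma_\lambda}$ (inherited from the continuity of the boundary conditions $\tilde A(\Gamma_\lambda)$ and the weight $\w_\lambda$, exactly as in the proof of Proposition \ref{prp:megalemma2}) shows that $\lambda \mapsto u_i^\lambda$ is continuous into $\Hi_{2,\mu}(\R \times [0,1], \C^n)$.

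From here I would define the trivialization
\begin{equation*}
\xi : \Ker \ul{\Gamma} \xrightarrow{\;\simeq\;} [0,\lambda_0] \times \kd_{\Gamma_0}, \qquad \xi_\lambda(u_i^\lambda) = \partial_{x_i},
\end{equation*}
and check that $\xi_\lambda$ is an isomorphism on fibers because $(u_1^\lambda,\dotsc,u_k^\lambda)$ is a basis of $\kd_{\Gamma_\lambda}$ for each $\lambda \in [0,\lambda_0]$, and $(\partial_{x_1},\dotsc,\partial_{x_k})$ is a basis of $\kd_{\Gamma_0}$. Continuity of $\xi$ and its inverse follows from the convergence estimates. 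The same convergence, combined with the continuity of $\ev_q$ noted just before the corollary, confirms that this trivialization is compatible with the topology described in Section \ref{sec:glulim}, since both topologies are induced from the ambient Sobolev norm after pullback by $\psi_{\Gamma_\lambda}$.

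The main obstacle, minor here, is bookkeeping the $\lambda$-dependence of the domain $\Delta(\Gamma_\lambda)$ and of the boundary conditions, so that the identification with a single fiber at $\lambda_0$ is genuinely continuous rather than merely fiberwise defined; but the estimates in the proof of Proposition \ref{prp:megalemma2} apply verbatim in the present elementary situation since $\Gamma$ has exactly one true vertex and no gluing region of size $O(\lambda^{-1})$ to control.
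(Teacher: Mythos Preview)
Your proposal is correct and follows the same line as the paper, which treats the corollary as an immediate consequence of the preceding paragraph (surjectivity of $\dbar_{\Gamma_\lambda}$ and the explicit basis $u_i^\lambda \to \partial_{x_i}$ from Lemma~\ref{lma:genkerisoweight}). One small remark: for an elementary piece the domain $\Delta(\Gamma_\lambda)$ does not vary with $\lambda$, so the diffeomorphisms $\psi_{\Gamma_\lambda}$ are trivial here and the continuity in $\lambda$ already comes directly from Proposition~\ref{prp:keriso} rather than from the gluing estimates of Proposition~\ref{prp:megalemma2}; your final paragraph essentially acknowledges this.
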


In the case of a positive (negative) puncture we can replace
$\kd_{\Gamma_0}$ by $T_pW^u(p)$ ($T_pW^s(p)$),
and an orientation of this space induces canonically an orientation on
$\kd_{\Gamma_\lambda}$ via the trivialization $\xi$, given the
orientations we already have fixed. To that end, assume that 
\begin{equation*}
\partial_{x_1}\wedge \cdots \wedge \partial_{x_k}=\Or_{\capp}(T_pW^s(p))
\end{equation*}
 in the
case $p$ is negative, and that
\begin{equation*}
\partial_{x_1}\wedge \cdots \wedge \partial_{x_k}=\Or_{\capp}(T_pW^u(p))
\end{equation*}
in the case $p$ is positive.  

\begin{lma}\label{lma:signtrup}
Let $p$ be a true puncture and $\Gamma$ the elementary piece containing it. Assume that we have chosen orientations of the capping operators corresponding to special punctures as in Section \ref{sec:endorient} and Section \ref{sec:switchorient}. Then there is a choice of orientations of the capping operators corresponding to true punctures so that the following holds. 

If $p$ is positive and 1-valent, then 
\begin{equation}\label{eq:pos1valor}
\Or( \kd_{\Gamma_\lambda})= u_1^\lambda\wedge \cdots \wedge u^\lambda_{k}.
\end{equation}

If $p$ is positive of type $(k,k')$ and 2-valent, and $q_1$, $q_2$ are the negative punctures of $\Gamma$ ordered using the notation in Section \ref{sec:slit}, then there is a $\sigma_{\pos,2}^{(k,k')}(|\mu(q_1)|, |\mu(q_2)|) \in \{0,1\}$ so that 
\begin{equation*}
\Or( \kd_{\Gamma_\lambda})= (-1)^{\sigma_{\pos,2}^{(k,k')}(|\mu(q_1)|, |\mu(q_2)|)},
\end{equation*}
where 
\begin{equation*}
 \sigma_{\pos,2}^{(0,0)}(0, 0) = \sigma_{\pos,2}^{(1,1)}(0, 0) = \sigma_{\pos,2}^{(0,1)}(1, 0) = \sigma_{\pos,2}^{(1,0)}(1, 0) = 0.
\end{equation*}

If $p$ is negative and 1-valent of type $(k,k')$ then there is a $\sigma_{\negg,1}^{(k,k')}(I(p)) \in \{0,1\}$
so that 
\begin{equation}\label{eq:pos1valor2}
\Or( \kd_{\Gamma_\lambda})= (-1)^{\sigma_{\negg,1}^{(k,k')}(I(p))} u_1^\lambda\wedge \cdots \wedge u^\lambda_{k}.
\end{equation}

If $p$ is negative and 2-valent and where the positive puncture of $\Gamma$ is of type $(k,k')$ and the other negative puncture of $\Gamma$ is denoted by $q$, then there is a $\sigma_{\negg,2}^{(k,k')}(|\mu(p)|, |\mu(q)|, \tp(p)) \in \{0,1\}$
so that 
\begin{equation*}
\Or( \kd_{\Gamma_\lambda})= (-1)^{\sigma_{\negg,2}^{(k,k')}(|\mu(p)|, |\mu(q)|, \tp(p))} u_1^\lambda\wedge \cdots \wedge u^\lambda_{k}.
\end{equation*}

\end{lma}

\begin{rmk}
 From this we get that the orientation of $\Gamma$ only depends on the chosen orientation of $T_pW^u(p)$ and the type of the positive puncture of $\Gamma$, together with the following data
 \begin{itemize}
  \item the index of $p$, if $p$ is 1-valent and negative,
  \item the parity of the punctures of $\Gamma$, in the case $p$ is 2-valent and positive,
  \item the parity of the punctures of $\Gamma$ and the type of $p$ as in Table \ref{tab:two}, in the case $p$ is 2-valent and negative.
 \end{itemize}
In Section \ref{sec:signs1val} and Section \ref{sec:2valentsigns} we will compute explicit values for  \linebreak $\sigma_{\pos,2}^{(k,k')}(|\mu(q_1)|, |\mu(q_2)|) , \sigma_{\negg,1}^{(k,k')}(I(p)), \sigma_{\negg,2}^{(k,k')}(|\mu(p)|, |\mu(q)|, \tp(p))$ in the case $n>1$.  
\end{rmk}

\begin{proof}[Proof of Lemma \ref{lma:signtrup}]
For $p$ positive and 1-valent, choose $\sigma_{\capp,+}(p)$ so that \eqref{eq:pos1valor} holds. Notice that this choice does not depend on the particular $p$, but only on its type and index. This can be seen by considering the capping sequence for $\Gamma$ in the limit $\lambda = 0$ and noting that the signs $\sigma_{\capp,-,s}(q)$ for $q$ a special negative puncture only depend on the type of $q$.

For $p$ positive and 2-valent of type $(k,k')$ choose $\sigma_{\capp,+}(p)$ so that 
\begin{equation*}
 \Or( \kd_{\Gamma_\lambda})= \mathbbm{1}
\end{equation*}
 in the case when $|\mu(q_1)| = |\mu(q_2)| = 0$ and $p$ is of type $(0,0)$ or $(1,1)$, and also in the case when $|\mu(q_1)| = 1, |\mu(q_2)| = 0$ and $p$ is of type $(0,1)$ or $(1,0)$. This fixes the choice of $\sigma_{\capp,+}$ for all positive 2-valent punctures ,and hence induce orientations 
 \begin{equation*}
\Or( \kd_{\Gamma_\lambda})= (-1)^{\sigma_{\pos,2}^{(k,k')}(|\mu(q_1)|, |\mu(q_2)|)},
\end{equation*}
on the remaining positive 2-valent pieces. The dependence on $|\mu(q_1)|, |\mu(q_2)|$ comes from the capping sequences for $\Gamma$, or more precisely, from $\sigma_{\capp,-,s}(q_1)$ and $\sigma_{\capp,-,s}(q_2)$ which in turn are determined by the type of $p$ and $|\mu(q_1)|, |\mu(q_2)|$.

For $p$ negative and 1-valent, assume that the positive special puncture of $\Gamma$ is given by $q$. Then the capping sequence for $\Gamma_\lambda$ is given by  
\begin{equation*}
0 \to 
\kd_{\hat \Gamma_\lambda}
\to
\begin{bmatrix}
\kd_{p-}\\
\kd_{q+}\\
\kd_{\Gamma_\lambda}
\end{bmatrix}
\to
\begin{bmatrix}
\cd_{p-}\\
\cd_{q+}
\end{bmatrix}
\to 
\cd_{\hat \Gamma_\lambda}
\to 0.
\end{equation*}
By passing to the limit we see that this sequence can be understood as
\begin{equation*}
0 \to 
\begin{bmatrix}
T_pW^s(p) \\
\aux_2
\end{bmatrix}
\to
\begin{bmatrix}
\aux_2 \\
0\\
T_pW^s(p) 
\end{bmatrix}
\to
\begin{bmatrix}
T_pW^u(p) \\
\aux_1 \\
0
\end{bmatrix}
\to 
\begin{bmatrix}
T_pW^u(p) \\
\aux_1
\end{bmatrix}
\to 
 0
\end{equation*}
if $|\mu(p)|$ is even, and as 
\begin{equation*}
0 \to 
\begin{bmatrix}
T_pW^s(p) \\
\aux_2^2
\end{bmatrix}
\to
\begin{bmatrix}
\aux_2 \\
\aux_2\\
T_pW^s(p) 
\end{bmatrix}
\to
\begin{bmatrix}
T_pW^u(p) \\
0\\
0
\end{bmatrix}
\to 
\begin{bmatrix}
T_pW^u(p)
\end{bmatrix}
\to 
 0
\end{equation*}
if $|\mu(p)|$ is odd. 

In both cases we see that the induced orientation on $\kd_{\Gamma_\lambda}$ only
depend on the parity of $|\mu(p)|$, the index of $p$,  and on
the chosen orientation of the capping operators. For the positive capping operator, this only depends on the type of $q$, which agrees with the type of $p$. For the negative capping operator, this is induced by the choice of orientation of the positive capping operator, which in turn only depends on the chosen orientation of $W^u(p)$, the dimension of $W^u(p)$, and the type of $p$. Since the type of $p$ determines the parity of $\mu(p)$, it follows that the orientation of $\Gamma$ can be given as in \eqref{eq:pos1valor2}.

For $p$ negative and 2-valent, the result follows in a similar way.
\end{proof}

\begin{rmk}
 From now on, we assume that we have fixed orientations of the capping operators as described above. To indicate that this only depend on the index and type $(k,k')$ of $p$ when $p$ is a true puncture, we write $\sigma^{t,(k,k')}_{\pm}(I(p))$ instead of $\sigma_{\capp,\pm}(p)$. 
\end{rmk}

\subsubsection{Orientations of \texorpdfstring{$Y_0$}{Y}-pieces}
We have a similar result for the case of elementary $Y_0$-trees $\Gamma$, but here we get that the problem associated to the tree is surjective with an $n$-dimensional kernel isomorphic to $\R^n$ via evaluation. We pick a trivialization of $TM$ along $\Gamma$ using the flat coordinates defined here, and we let  
 $\dbar_{\Gamma_\lambda}: \Hi_{2,\nu}[\Gamma_\lambda] \to \Hi_{1,\nu}[\Gamma_\lambda]$ denote the $\dbar$-operator associated to $\Gamma$. 
 
 The following is proven in a similar way as for the pieces containing true punctures.
 \begin{lma}\label{lma:orientpiece4}
  We have a vector bundle $\Ker \ul{\Gamma}\subset [0,\lambda_0] \times \Hi_{2,\nu}(\Delta_3, \C^n)$ given by $\Ker \ul{\Gamma} =\{(\lambda, v \in \kd_{\Gamma_\lambda}) \}$, which is isomorphic to $[0, \lambda_0] \times \R^n$ and with topology given  as in Section \ref{sec:glulim}. A trivialization $\xi:\Ker \ul{\Gamma}\to [0, \lambda_0] \times \kd_{\Gamma_0}$ is given by $\xi_\lambda(u_i^\lambda) =(\lambda,\partial_{x_i})$, where $(u_1^\lambda,\dotsc,u_n^\lambda)$ is a basis for $\kd_{\Gamma_\lambda}$ satisfying 
   \begin{equation*}
  \|u_i^\lambda - \partial_{x_i} \|_{2,\nu} + |\ev_p(u_i^\lambda) - \ev_p(\partial_{x_i})| \to 0, \qquad \lambda \to 0,
 \end{equation*}
 where $p$ is a special puncture of $\Gamma$.
  \end{lma}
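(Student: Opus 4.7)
The plan is to treat this statement in parallel with the analogous results already established for ends (Lemma \ref{lma:kere}), switches (Lemma \ref{lma:kers}), and puncture pieces (Corollary \ref{cor:osbundle}), since the $Y_0$-piece is actually the simplest of these: there is no rotation of the boundary condition (the vertex is not contained in $\Pi(\Sigma)$) and no true Reeb chord puncture. First I would pin down the $\lambda = 0$ model. After the deformation of \cite{trees} the defining sheets $f_1,f_2,f_3$ along the three edges of an elementary $Y_0$-piece are covariantly constant, so in the flat local coordinates $(x_1,\dotsc,x_n)$ on a neighborhood of the $Y_0$-vertex the trivialized boundary condition $\tilde A(\Gamma_\lambda)$ converges to the constant boundary condition $\R^n \oplus \R^2 \subset \C^{n+2}$ on the standard domain $\Delta_3$ as $\lambda \to 0$. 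The weights at the three special punctures are all positive in the $T_pM$-directions and appropriately signed in the auxiliary directions, so the limit problem
\[
\dbar_{\Gamma_0} : \Hi_{2,\mu}[\id] \to \Hi_{1,\mu}[0]
\]
is Fredholm.

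Next I would compute $\kd_{\Gamma_0}$ and $\cd_{\Gamma_0}$ explicitly. Because the boundary condition is constant $\R^n$ and the weights are positive, the standard results recalled in \cite{legsub} give that $\dbar_{\Gamma_0}$ is surjective in the $\Lambda$-directions with kernel exactly the constant real vector fields, so $\kd_{\Gamma_0} \simeq \R^n$ spanned by $\partial_{x_1},\dotsc,\partial_{x_n}$. In the auxiliary directions the weighted problem is an isomorphism, as for the earlier elementary pieces. Thus $\dim \kd_{\Gamma_0} = n$ and $\cd_{\Gamma_0} = 0$ (in the relevant directions).

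Now I would invoke the semicontinuity / continuity package from Appendix \ref{app:mini} — concretely Proposition \ref{prp:keriso} together with Corollary \ref{cor:canonbundle}, the same tools that produced the bundles in Lemma \ref{lma:caplim}, Lemma \ref{lma:kere}, Lemma \ref{lma:kers}, and Corollary \ref{cor:osbundle}. Since $\dbar_{\Gamma_\lambda}$ depends continuously on $\lambda$ (after applying the trivialization $\psi_{i,\mu,\Gamma}$ of Section \ref{sec:glulim} to identify $\Delta(\Gamma_\lambda)$ with $\Delta(\Gamma_{\lambda_0})$), the limit operator is surjective, and surjectivity is an open condition, there is $\lambda_0>0$ such that $\dbar_{\Gamma_\lambda}$ is surjective for all $\lambda \in [0,\lambda_0]$ and $\dim \kd_{\Gamma_\lambda} = n$. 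Moreover, Proposition \ref{prp:keriso} furnishes a continuous basis $(u_1^\lambda,\dotsc,u_n^\lambda)$ of $\kd_{\Gamma_\lambda}$ with $\|u_i^\lambda - \partial_{x_i}\|_{2,\mu} \to 0$. This gives $\Ker\ul{\Gamma}$ the structure of a rank-$n$ vector bundle over $[0,\lambda_0]$, and the map $\xi_\lambda(u_i^\lambda) = (\lambda, \partial_{x_i})$ is the desired trivialization.

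The only step requiring a little more care is the evaluation control $|\ev_p(u_i^\lambda) - \ev_p(\partial_{x_i})| \to 0$ at a special puncture $p$, since convergence in the weighted $\Hi_{2,\mu}$-norm is weaker than pointwise convergence at the infinities. The hard part, such as it is, is this pointwise-at-the-puncture control; but this is exactly what is achieved by the explicit asymptotic description of kernel elements near a special puncture given in \cite{trees} (and invoked in the proofs of Lemma \ref{lma:kere} and Lemma \ref{lma:kers}): after exponential weighting the leading order behavior of $u_i^\lambda$ at the end $E_p$ is captured by its boundary evaluation, and continuity in $\lambda$ of this leading coefficient follows from the same Floer-Picard type argument that produces the basis $(u_1^\lambda,\dotsc,u_n^\lambda)$. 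Combining the Sobolev convergence with this asymptotic information yields $\ev_p(u_i^\lambda) \to \ev_p(\partial_{x_i})$, completing the proof.
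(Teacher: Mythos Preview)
Your proposal is correct and follows essentially the same approach as the paper, which simply states that the result is ``proven in a similar way as for the pieces containing true punctures.'' You have unpacked exactly that: identify the $\lambda=0$ problem as the constant $\R^n$ boundary condition on $\Delta_3$ with kernel spanned by $\partial_{x_1},\dotsc,\partial_{x_n}$, apply Proposition~\ref{prp:keriso} to get the bundle structure and Sobolev convergence, and then invoke the asymptotic description from \cite{trees} for the evaluation control at the special punctures.
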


The capping orientation of $\Gamma$ is given similar as the case of
2-pieces. That is, let $p_0, p_1, p_2$ be the punctures of $\Gamma$
ordered in the counterclockwise direction along the boundary of the
standard domain of $\Gamma$ so that $p_0$ represents the positive
puncture. Assume further that
\begin{equation*}
 \partial_{x_1}\wedge \cdots \wedge \partial_{x_n}
\end{equation*}
gives the positive orientation of $T_{p_0}M$.
Then we have the following.
\begin{lma}\label{lma:y0}
Let $(k,k')$ be the type of the positive puncture $p_0$ of $\Gamma$. Then there is a $\sigma_{Y_0}^{(k,k')}(|\mu(p_1)|,
  |\mu(p_2)|) \in \{0,1\}$ so that the capping orientation of $\Gamma$ is given by
\begin{equation}
\Or(\kd_{\Gamma_\lambda}) = (-1)^{\sigma_{Y_0}^{(k,k')}(|\mu(p_1)|,
  |\mu(p_2)|)} u_1^\lambda \wedge \cdots \wedge u_n^\lambda. 
\end{equation}
\end{lma}
\begin{proof}
Again this follows by studying the capping sequences for $\Gamma$ in the different cases.
 \end{proof}
%
%

\subsubsection{Orientations of \texorpdfstring{$Y_1$}{Y}-pieces}
We have a similar result for the case of elementary trees $\Gamma$ with a $Y_1$-vertex $v$.  Choose a trivialization of $TM$ along $\Gamma$ so that $T_v\Pi(\Sigma) = \spn(\partial_{x_1},\dotsc,\partial_{x_{n-1}})$, and recall that the induced boundary conditions tend to $(\R^n, \R^{n-1} \oplus U_-(\R), \R^n)$ as $\lambda \to 0$, where  $U_-: \R \to U(1)$ is equal to the identity except for in a neighborhood of the boundary minimum where it perform a uniform $-\pi$-rotation in $\C$, as in Figure \ref{fig:bend}. Assume that $\partial_{x_n}$ points in the outward normal direction of $\Pi(\Lambda_v)$ at $v$, where $\Lambda_v$ is the union of the two sheets of $\Lambda$ containing $v$, and that $(\partial_{x_1},\dotsc,\partial_{x_n})$ gives an oriented basis for $T_vM$.  Again we extend the trivialized boundary conditions from Section \ref{sec:treetriv} to the limit $\lambda=0$ by using that we have control of the region where the $-\pi$-rotation is performed. Using this we extend the family of operators  
 $\dbar_{\Gamma_\lambda}: \Hi_{2,\nu}[\Gamma_\lambda] \to \Hi_{1,\nu}[\Gamma_\lambda]$  to $\lambda =0$. 
 
 The following is proven in a similar way as for the switch-pieces.
 \begin{lma}\label{lma:orientpiece5}
  We have a vector bundle $\Ker \ul{\Gamma}\subset [0,\lambda_0] \times \Hi_{2,\nu}(\Delta_3, \C^n)$ given by $\Ker \ul{\Gamma}=\{(\lambda, v \in \kd_{\Gamma_\lambda}) \}$, which is isomorphic to $[0, \lambda_0] \times \R^{n-1}$, and with topology given as in Section \ref{sec:glulim}. A trivialization $\xi$ is given by $\xi_\lambda(u_i^\lambda) = \partial_{x_i}$, where $(u_1^\lambda,\dotsc,u_{n-1}^\lambda)$ is a basis for $\kd_{\Gamma_\lambda}$ satisfying 
   \begin{equation*}
  \|u_i^\lambda - \partial_{x_i} \|_{2,\nu} +|\ev_p(u_i^\lambda) - \ev_p(\partial_{x_i})| \to 0, \qquad \lambda \to 0.
 \end{equation*}
  \end{lma}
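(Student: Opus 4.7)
The plan is to mirror the argument used for switch-pieces in Lemma \ref{lma:kers}, with the obvious modification that the standard domain is $\Delta_3$ (so there are three special punctures instead of two) rather than $\Delta_2$. The chosen trivialization of $TM$ along $\Gamma$ diagonalizes everything: in the directions $\partial_{x_1},\dotsc,\partial_{x_{n-1}}$ (tangent to $T_v\Pi(\Sigma)$) the limiting boundary conditions are constant $\R$, while in the $\partial_{x_n}$ direction we have a Maslov-type problem on $\Delta_3$ with a uniform $-\pi$-rotation performed in a small neighborhood of the boundary minimum and constant $\R$-conditions elsewhere. Because the defining functions were arranged to be covariantly constant in the tangential directions and the $-\pi$-rotation is localized, the $\lambda = 0$ limit is a genuinely split $\dbar$-problem.

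First I would pass to $\lambda = 0$ and compute $\kd_{\Gamma_0}$ component-wise. In each of the $n-1$ tangential directions we have the standard split $\dbar$-problem on $\Delta_3$ with weight $-\delta$ at the positive puncture and $+\delta$ at the two negative punctures, whose kernel is spanned by the constant function $\partial_{x_i}$ (this is the same calculation underlying Corollary \ref{cor:osbundle}). In the $\partial_{x_n}$ direction the uniform $-\pi$-rotation kills the kernel and produces a surjective problem, exactly as in [\cite{trees}, Lemma 6.6] invoked for switches. Combining these, $\dbar_{\Gamma_0}$ is surjective and $\kd_{\Gamma_0} \cong \R^{n-1}$, spanned by $(\partial_{x_1}, \dotsc, \partial_{x_{n-1}})$.

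Next, since the family $\dbar_{\Gamma_\lambda}$ depends continuously on $\lambda \in [0,\lambda_0]$ as a family of Fredholm operators (surjectivity at $\lambda = 0$ propagates to a neighborhood by standard Fredholm perturbation theory), Proposition \ref{prp:keriso} from Appendix \ref{app:mini} applies and produces a vector bundle $\Ker \ul{\Gamma} \to [0, \lambda_0]$ of constant rank $n-1$ for sufficiently small $\lambda_0$, with a trivializing frame $(u_1^\lambda, \dotsc, u_{n-1}^\lambda)$ converging in $\|\cdot\|_{2,\mu}$ to $(\partial_{x_1}, \dotsc, \partial_{x_{n-1}})$.

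The remaining point is the evaluation statement $|\ev_p(u_i^\lambda) - \ev_p(\partial_{x_i})| \to 0$ at a special puncture $p$. The Sobolev convergence alone does not imply pointwise convergence at infinity, so the main obstacle (as in the switch case) is to upgrade $L^2$-convergence to convergence of the asymptotic value. This is done by invoking the explicit exponential-decay description of kernel elements in strip-like ends near special punctures from \cite{trees}: each $u_i^\lambda$ has an asymptotic constant vector at $p$, and the uniform exponential decay rate (controlled by $\delta$) together with the already-established $\Hi_{2,\mu}$-convergence forces these asymptotic constants to converge to $\partial_{x_i}$. Once this is in place, the lemma follows exactly as the switch-piece version did.
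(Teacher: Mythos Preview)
Your proposal is correct and follows essentially the same approach as the paper, which simply remarks that the $Y_1$-piece case is proven in the same way as the switch-piece case (Lemma~\ref{lma:kers}). Your expansion of the details---splitting the $\lambda=0$ problem into the $n-1$ tangential directions (constant solutions) and the $\partial_{x_n}$ direction (killed by the $-\pi$-rotation via \cite[Lemma~6.6]{trees}), then applying Proposition~\ref{prp:keriso} and the asymptotic control from \cite{trees} for the evaluation statement---is exactly what the paper's one-line reference to the switch argument unpacks to.
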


Let $p_0, p_1,p_2$ be the punctures of $\Gamma$. The capping orientation of $\Gamma$ is given similar as in the case of $Y_0$-pieces:
\begin{lma}
Let $(k,k')$ be the type of the positive puncture $p_0$ of $\Gamma$. Then there is a $\sigma_{Y_1}^{(k,k')}(|\mu(p_1)|,
  |\mu(p_2)|) \in \{0,1\}$ so that the capping orientation is given by
\begin{equation}
\Or(\kd_{\Gamma_\lambda}) = (-1)^{\sigma_{Y_1}^{(k,k')}( |\mu(p_1)|, |\mu(p_2)|)} u_1^\lambda \wedge \dotsm \wedge u_{n-1}^\lambda
\end{equation}
where $u_1^\lambda, \dotsc u_{n-1}^\lambda$ are the vectors from Lemma \ref{lma:orientpiece5}.
\end{lma}
\begin{proof}
Again this follows by studying the capping sequences for $\Gamma$ in the different cases.
 \end{proof}

\begin{rmk}\label{rmk:n1choice}
 If $n=1$, we do not have trees with switches, and hence the way of choosing orientations of the cappping operators as described above does not work. Instead, we choose orientation for the capping operators at special punctures of odd Maslov index as described above, and then we choose orientations of the capping operators at special punctures of even Maslov index so that the elementary trees of type $\sigma_{Y_1}^{(0,1)}( 0, 0)$ has sign equal to $(-1)^0$. The discussion and choices of signs for punctures and the remaining $Y_1$--vertices then remain the same. 
\end{rmk}

The signs $\sigma_{\pos,2}^{(k,k')}(|\mu(q_1)|, |\mu(q_2)|) , \sigma_{\negg,1}^{(k,k')}(I(p)), \sigma_{\negg,2}^{(k,k')}(|\mu(p)|, |\mu(q)|, \tp(p)),
\sigma_{Y_0}^{(k,k')}(|\mu(q_1)|$, $|\mu(q_2)|), \sigma_{Y_1}^{(k,k')}(|\mu(q_1)|, |\mu(q_2)|), \sigma_{\swi}^{(k,k')}(i)$ introduced above will be called the \emph{sign functions}, since it is natural to view them as functions of the type of the positive puncture of the elementary tree, the index of the puncture in the case when it is an 1-valent elementary tree, the parity of the negative punctures in the case of an elementary tree with a 2-valent puncture or a 3-valent tree, and also if the switch is upper or lower in the case of an elementary switch-tree. 

In Sections \ref{sec:examples}, \ref{sec:der} and \ref{sec:analder} we will derive explicit formulas for these signs, after some additional orientation choices.


\subsection{Proof of that the capping orientation of a tree is well-defined}
\label{sec:proof}
In this section we prove that the capping orientation of the pre-glued disk $w_\lambda$ associated to $\Gamma$ coincides with the capping orientation of the corresponding true $J$-holomorphic disk $u_\lambda$ for $\lambda$ sufficiently small. 

Again we will use the vector bundle set-up, and to that end we use the
constructions from [\cite{trees}, Section 6]. That is, for each
$\lambda$ sufficiently small we will connect $w_\lambda$ to $u_\lambda$ by a path $(u_\lambda(s),\kappa_\lambda(s))$, $s \in[0,1]$,  of  punctured disks $u_\lambda(s)$ with boundary on $\Pi_\C (\Lambda_\lambda)$ and with domain $\Delta(\kappa_\lambda(s))$. Here $\kappa_\lambda(s)$ indicates the conformal structure. We assume that $u_\lambda(0) = w_\lambda$ and $u_\lambda(1) = u_\lambda$. We give an outline of the construction of this path, and refer to \cite{trees} for the details. 

First let $\phi_s: \R^2 \to \R^2$, $s \in[0,1]$, be a family of diffeomorphisms so that $\phi_0 = \id$ and so that $\phi_1$ maps the standard domain of $u_\lambda$ to the standard domain of $w_\lambda$. An explicit description of $\phi_s$ is given in [\cite{trees}, Section 6.2], 
we summarize the most important features here: 
\begin{enumerate}
\item $\frac{d \phi_s}{d s} = b \lambda^{-1} \alpha(\tau,t) \partial_\tau,$
 where $\alpha: \C \to \C$ is a function satisfying $\alpha = 1$ in
 uniform neighborhoods of boundary minima, and has support in a larger
 neighborhood of the boundary minima, and where $b$ is a constant that
 is needed to compensate for $|D^k\alpha|$, $k=1,2$;
\item $\|d \phi_s - \id\|_{\infty} = O(s)$, $s \in [-1,1]$. 
\end{enumerate} 


By pre-composing $w_\lambda$ with $\phi_{-s}$ we get a disk $\tilde
u_\lambda(s)$ with boundary arbitrarily close to the boundary of
$w_\lambda$ and with standard domain
$\phi_{1-s}(\Delta(u_\lambda))$. Using the exponential map along the
path $\tilde u_\lambda(s)$, $s \in[0,1]$, we can define the path $u_\lambda(s)$. In addition, by the constructions in \cite{trees} we may assume that 
\begin{equation*}
\|A_{s_1,\lambda}\circ\phi_{-s_1} - A_{s_2,\lambda}\circ \phi_{-s_2}\|_{C^2} \to 0, \, \text{ as } s_1 \to s_2,
\end{equation*}
where $A_{s,\lambda}:\partial \Delta(\kappa_\lambda(s)) \to U(n)$ is
the  boundary condition induced by $u_\lambda(s)$. We may also assume
that $A_{s,\lambda}\circ\phi_{-s}$ is constant in $s$ in neighborhoods
of each puncture of $\Delta(w_\lambda)$, so that the same system of
capping operators can be used for all $u_\lambda(s)$, $s \in [0,1]$. 

By rigidity we may assume that the linearizations of $\dbar$ at
$u_\lambda$ and $w_\lambda$, restricted to $\Hi_{2,\nu}[u_\lambda]$
and $\Hi_{2,\nu}[w_\lambda]$, respectively, are either both injective
or both surjective, and we will assume the
former. The case of surjective problems are treated in a similar fashion. 

First note that $\phi_1$ induces an isomorphism
\begin{equation*}
  \phi_{1,*}:T_{\kappa_\lambda(1)}\Co_{m+1} \to T_{\kappa_\lambda(0)}\Co_{m+1},
\end{equation*}
and we may assume that this map is orientation-preserving.
From Section \ref{sec:modspor} it then follows that it is enough to prove the following result to get that the capping orientations of $u_\lambda$ and $w_\lambda$ coincide.

\begin{prp}\label{prp:iso}
 For each $\lambda>0$ sufficiently small, there exists an isomorphism 
 \begin{equation*}
  \chi_\lambda:\Coker \dbar_{u_\lambda} \to \Coker \dbar_{w_\lambda}
 \end{equation*}
so that
 \begin{align}\label{eq:chi1}
   \xymatrix{
&T_{\kappa_\lambda(1)}\Co_{m+1} 
 \ar[r]^{\phi_{1,*}}
\ar[d]^{\Psi_{\kappa_\lambda(1)}}_\simeq
   &T_{\kappa_\lambda(0)}\Co_{m+1}
   \ar[d]^{\Psi_{\kappa_\lambda(0)}}_\simeq\\
&\Coker \dbar_{u_\lambda} 
\ar[r]^{\chi_\lambda} 
&\Coker\dbar_{w_\lambda}
} 
\end{align}
and
\begin{align}\label{eq:chi2}
\xymatrix{
0 \ar[r]
&\Ker \dbar_{\hat u_\lambda} 
\ar[r] 
\ar[d]
&\capp(\Gamma)
\ar[r]
\ar[d]^{\id}
& \Coker \dbar_{u_\lambda} 
\ar[r]
\ar[d]^{\chi_\lambda}
& \Coker \dbar_{\hat u_\lambda}
\ar[r]
\ar[d]
&0
\\
0 \ar[r]
&\Ker \dbar_{\hat w_\lambda} 
\ar[r] 
&\capp(\Gamma)
\ar[r]
& \Coker \dbar_{w_\lambda} 
\ar[r]
& \Coker \dbar_{\hat w_\lambda}
\ar[r]
&0
} 
\end{align}
commute. Here the vertical maps in the first diagram are defined as in
Section \ref{sec:modspor}, the
leftmost and rightmost vertical maps in the second diagram are induced by
homotopy via the path $u_\lambda(s)$, $s \in [0,1]$, and the maps in
the horizontal rows are induced by the capping sequences for the disks.
\end{prp}
\begin{proof}
 Since the linearization of $\dbar$ is injective at both $w_\lambda$ and $u_\lambda$, and since we can choose $\lambda$ so that $w_\lambda$ and $u_\lambda$ are arbitrarily close to each other, we can find $\lambda_0>0$ and paths $u_\lambda(s)$ taking $w_\lambda$ to $u_\lambda$, as described above, so that $\dbar_{u_\lambda(s)}$ is injective for all $0 <\lambda\leq\lambda_0$ and all $s\in[0,1]$. 
 
 Now consider the vector bundles 
 \begin{align*}
  &\pi_i: \V_i(\Gamma,\lambda) \to [0,1] \quad i=1,2,
 \end{align*}
with fibers
\begin{equation*}
\pi_2^{-1}(s) := \V_{2,s,\lambda}[\Gamma] = \Hi_{2,\nu}[u_\lambda(s)](\Delta_{m+1}(\kappa_\lambda(s)), u_\lambda(s)^*TT^*M)
\end{equation*}
and 
\begin{equation*}
\pi_1^{-1}(s):=\V_{1,s,\lambda}[\Gamma]  = \Hi_{1,\nu}[0](\Delta_{m+1}(\kappa_\lambda(s)), T^{*0,1} \Delta_{m+1}(\kappa_\lambda(s)) \otimes u_\lambda(s)^*TT^*M),
\end{equation*}
respectively.

 We define trivializations 
 \begin{equation*}
  \hat{\Phi} :  \V_2(\Gamma,\lambda) \to  [0,1] \times \V_{2,0,\lambda}[\Gamma]
 \end{equation*}
 and 
 \begin{equation*}
  \hat{\Psi}: \V_1(\Gamma,\lambda) \to [0,1] \times \V_{1,0,\lambda}[\Gamma]
 \end{equation*}
 given by
 \begin{align*}
 \begin{array}{cccl}
  \hat{\Phi}_s:& \V_{2,s,\lambda}[\Gamma] &\to& [0,1] \times  \V_{2,0,\lambda}[\Gamma]\\ 
  &  g &\mapsto& (s,A_{0,\lambda}  (A_{s,\lambda}\circ \phi_{s})^{-1} \cdot g \circ \phi_{s})
 \end{array}
 \end{align*}
 and  
 \begin{align*}
 \begin{array}{cccl}
 \hat{\Psi}_s:& \V_{1,s,\lambda}[\Gamma]  &\to& [0,1] \times \V_{1,0,\lambda}[\Gamma] \\
   & \alpha \otimes g &\mapsto& (s, A_{0,\lambda}  (A_{s,\lambda}\circ \phi_{s})^{-1} \cdot\phi_s^*\alpha \otimes  g \circ \phi_{s}).
  \end{array}
 \end{align*}
 
 From [\cite{trees}, Section 6.3] it follows that the restrictions of
the fully linearized $\dbar$-operators  $D \dbar_{u_\lambda(s),
  \kappa_\lambda(s)}$ to  $\V_{2,s,\lambda}[\Gamma]$ give rise to a bundle map 
\begin{equation*}
 \dbar_J: \V_2(\Gamma,\lambda) \to \V_1(\Gamma, \lambda),
\end{equation*}
and since the dimension of the cokernel of $\dbar_{u_\lambda(s)}$ is
assumed to be constant in $s$ it follows from standard Fredholm
arguments that we have a vector bundle 
 $\Coker \dbar_J$ over $[0,1]$, with fibers $\Coker
 \dbar_{u_\lambda(s)}$. Choose a trivialization induced by precomposition with $\phi_s$ as above and let $\chi_1:
 \Coker \dbar_{u_\lambda(1)} \to \cd_{u_\lambda(0)}$ be the induced linear
 map. Then it follows that \eqref{eq:chi1} is commutative. Moreover,
 by Proposition \ref{prp:megalemma2}, it follows that also
 \eqref{eq:chi2} commutes.
\end{proof}

\begin{proof}[Proof of Theorem \ref{thm:mainthm}]
The results in Section \ref{sec:elementor} state that the assumptions of Proposition \ref{prp:comiso} and Proposition  \ref{prp:glu} hold for elementary trees, and that the capping orientation of these pieces can be expressed in terms of oriented submanifolds in $M$ together with combinatorial data coming from the tree. Thus, the results in Propositions \ref{prp:comiso} -- \ref{prp:stababund} imply that the stabilized capping orientation of a rigid tree $\Gamma$ can be computed in terms of oriented intersections of submanifolds in $M$ times signs coming from combinatorial data from the tree. By Proposition \ref{prp:mainstabsign}, this orientation gives the capping orientation of the corresponding pre-glued disk $w_\lambda$ if $\lambda$ is sufficiently small, up to a combinatorial sign. 

Since Proposition  \ref{prp:iso} implies that there is a $\lambda_0>0$ so that for all $\lambda < \lambda_0$ the capping orientation of $w_\lambda$ equals the capping orientation of the true $J$-holomorphic disk $u_\lambda$ corresponding to $\Gamma$, the result follows.
\end{proof}

\subsection{Computing the gluing algorithm formula}\label{sec:stabstab}
In this section we give a sketch of how the algorithm in \cite{korta} is derived from the earlier results in this paper. The algorithm is given in an inductive way, where the induction runs over the number of the true vertices of the sub flow trees of $\Gamma$. For technical reasons we only allow certain sub flow trees, described below. 

The induction is performed as follows. 
\begin{description}
\item[Base case]
The elementary trees are given orientations as in Section \ref{sec:elementor}.

\item[Inductive hypothesis]
After step $k \geq 1$ we assume that the following sub flow trees of
$\Gamma$ are given  orientations:
\begin{itemize}
\item  all sub flow trees with at most $k$ true vertices and a special
  positive puncture,
\item all sub flow trees with $k-1$ switch vertices, a true positive
  puncture, a negative special puncture, and no other vertices.
\end{itemize}
We also assume that these orientations can be realized as orientations of either the flow-outs of the trees at the special puncture, or as orientations of the intersection manifolds at the vertex adjacent to the special puncture.

\item[Inductive step]
In  step $k+1$ we create sub flow trees with $k+1$ true vertices. This
is done by gluing sub flow trees together at $3$-valent vertices, or
by joining a sub flow tree to a $2$-valent piece or a switch piece. The trees in the input of the gluing are given by one of the following collection of oriented partial flow trees.
\begin{enumerate}
 \labitem{{(y0g)}}{y0g} One $Y_0$-piece, oriented as in the base step, and two sub flow trees with positive special punctures and oriented flow-outs. The sub flow trees are glued to the negative special punctures of the $Y_0$-piece. See Figure \ref{fig:gluedy0flow}.
  \labitem{{(y1g)}}{y1g} One $Y_1$-piece, oriented as in the base step,  and two sub flow trees with positive special punctures and oriented flow-outs. The sub flow trees are glued to the negative special punctures of the $Y_1$-piece. See Figure \ref{fig:gluedy1flow}.
   \labitem{{(ng)}}{ng} One $2$-valent negative piece, oriented as in the base step, and one sub flow tree with a positive special puncture and oriented flow-out.  The sub flow tree is glued to the negative special puncture of the $2$-piece. See Figure \ref{fig:glue2whole}.
  \labitem{{(sgn)}}{sgn} One switch-piece, oriented as in the base step, and one sub flow tree with a positive special puncture and oriented flow-out. The sub flow tree is glued to the negative special puncture of the switch-piece. See Figure \ref{fig:glueswichp}.
  \labitem{{(sgp)}}{sgp} One switch-piece, oriented as in the base step, and one sub flow tree with a negative special puncture, a true $1$-valent positive puncture, $k \geq 0$ switches, and no other vertices. The latter tree has oriented flow-out at the negative special puncture.  The sub flow tree is glued to the positive special puncture of the switch-piece. See Figure \ref{fig:glueswichn}.
\end{enumerate}

We use Propositions \ref{prp:glu} and \ref{prp:stababund} to complete the induction step, in the following way. Let $\Gamma$ be a sub flow tree with special puncture $p$, true puncture $q$ adjacent to $p$ and let $e$ be the edge connecting them. Then, in all cases except for when $q$ is a switch we have that $\F_p(\Gamma) \simeq \I_q\times [0,\infty)$ (or $\F_p(\Gamma) \simeq \I_q\times [0,A]$ for some $A>0$). In the case when $q$ is a switch we have that $\F_p(\Gamma) \simeq I_q$. Thus, with Propositions \ref{prp:glu} and \ref{prp:stababund} together with our induction algorithm we can give an orientation to the intersection manifold of the glued tree from an oriented intersection of the trees in the input, times some additional gluing signs. This implies that also the flow-out of the glued tree can be given an orientation (by using the orientation of $e$ in the cases when it is needed), and the induction step is then completed. In Section \ref{sec:stabstabstab} we show in detail how this works in the case of gluing at a $Y_0$-vertex. The other cases are similar and are outlined in the following sections.

\item[The final step]
To finish the algorithm
we perform the \emph{final gluing}, in which we will recover $\Gamma$. This gluing is similar to the one in the induction step, but for technical reasons the inputs will be restricted to be of one of the following kind.
\begin{enumerate}
 \labitem{{(f0)}}{f11}  Two sub flow trees $\Gamma_1$, $\Gamma_2$. Here $\Gamma_1$ has one true $1$-valent vertex, one positive special puncture $p$, and no other vertices.
 The tree $\Gamma_2$ has a true $1$-valent positive puncture $a$, $k \geq 0$ switches, a negative special puncture $p$ and no other vertices. The orientations of $\Gamma_1$ and $\Gamma_2$ are given by  orientations of the tangent spaces of their flow outs at their common special puncture. 
\labitem{{(f1)}}{f1}  Two sub flow trees $\Gamma_1$, $\Gamma_2$. Here $\Gamma_1$ has one positive special puncture $p$, at least $2$ true vertices where the vertex $q$ adjacent to $p$ is not a switch, and the orientation of $\Gamma_1$ is given by the orientation of the intersection manifold at $q$. The tree $\Gamma_2$ has a true $1$-valent positive puncture, $k \geq 0$ switches, a negative special puncture and no other vertices. The orientation of $\Gamma_2$ is given by an orientation of the tangent space of the flow out at $q$. 
\labitem{{(f2)}}{f2} One $2$-valent positive piece,  oriented as in the base step, and $2$ sub flow trees $\Gamma_1$, $\Gamma_2$. The sub flow trees have positive special punctures, and their orientations are given in terms of orientations of the tangent spaces of the flow-outs at their positive punctures.
\end{enumerate}
The orientation of $\Gamma$ is given by the oriented intersection of the trees in the input, times some additional gluing signs. This we explain in Subsection \ref{sec:finish}. 
\end{description}

\begin{figure}[ht]
      \labellist
\small\hair 2pt
\pinlabel $p_0$ [Br] at 20 400
\pinlabel $p_1$ [Br] at  490 304 
\pinlabel $p_2$ [Br] at 490 520
\pinlabel $\Gamma_v$ [Br] at 260 480
\pinlabel $\Gamma_1$ [Br] at 850 201
\pinlabel $\Gamma_2$ [Br] at 850 726
\pinlabel $\F_{p_1}(\Gamma_1)\cap{D_{p_1}}$ [Br] at 820 0
\pinlabel $\F_{p_2}(\Gamma_2)\cap{D_{p_2}}$ [Br] at 1230 460 
\endlabellist
\centering
\includegraphics[height=4.cm]{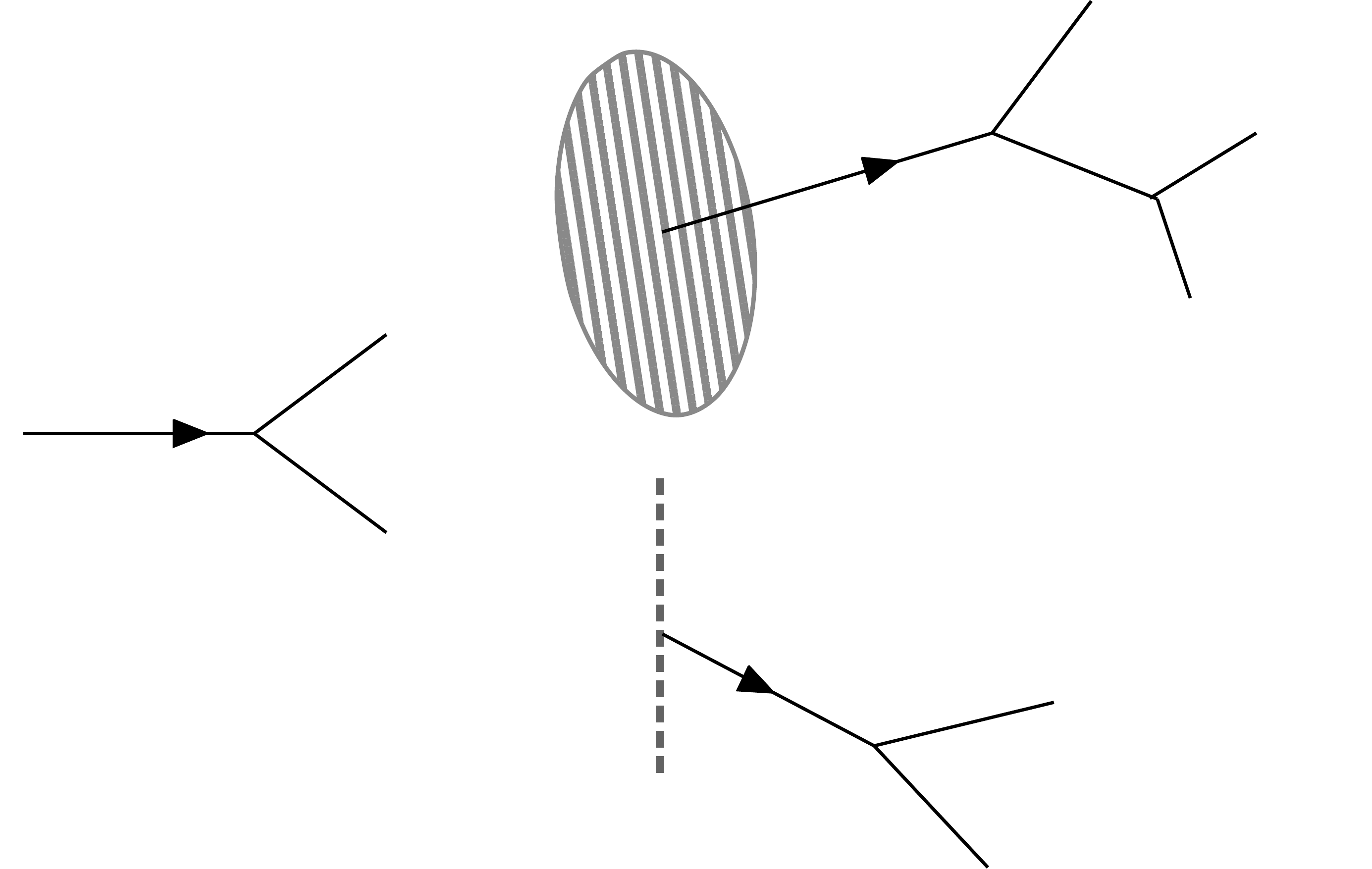} 
\hspace{1.8cm}
      \labellist
\small\hair 2pt
\pinlabel $t$ [Br] at 100 210
\pinlabel $x_1$ [Br] at  535 505
\pinlabel $x_2$ [Br] at 400 705
\pinlabel $x_3$ [Br] at 260 530
\pinlabel $\F_{p_0}(\Gamma)\cap{D_{p_0}}$ [Br] at 420 10
\endlabellist
\centering
\includegraphics[height=3.8cm, width=5.cm]{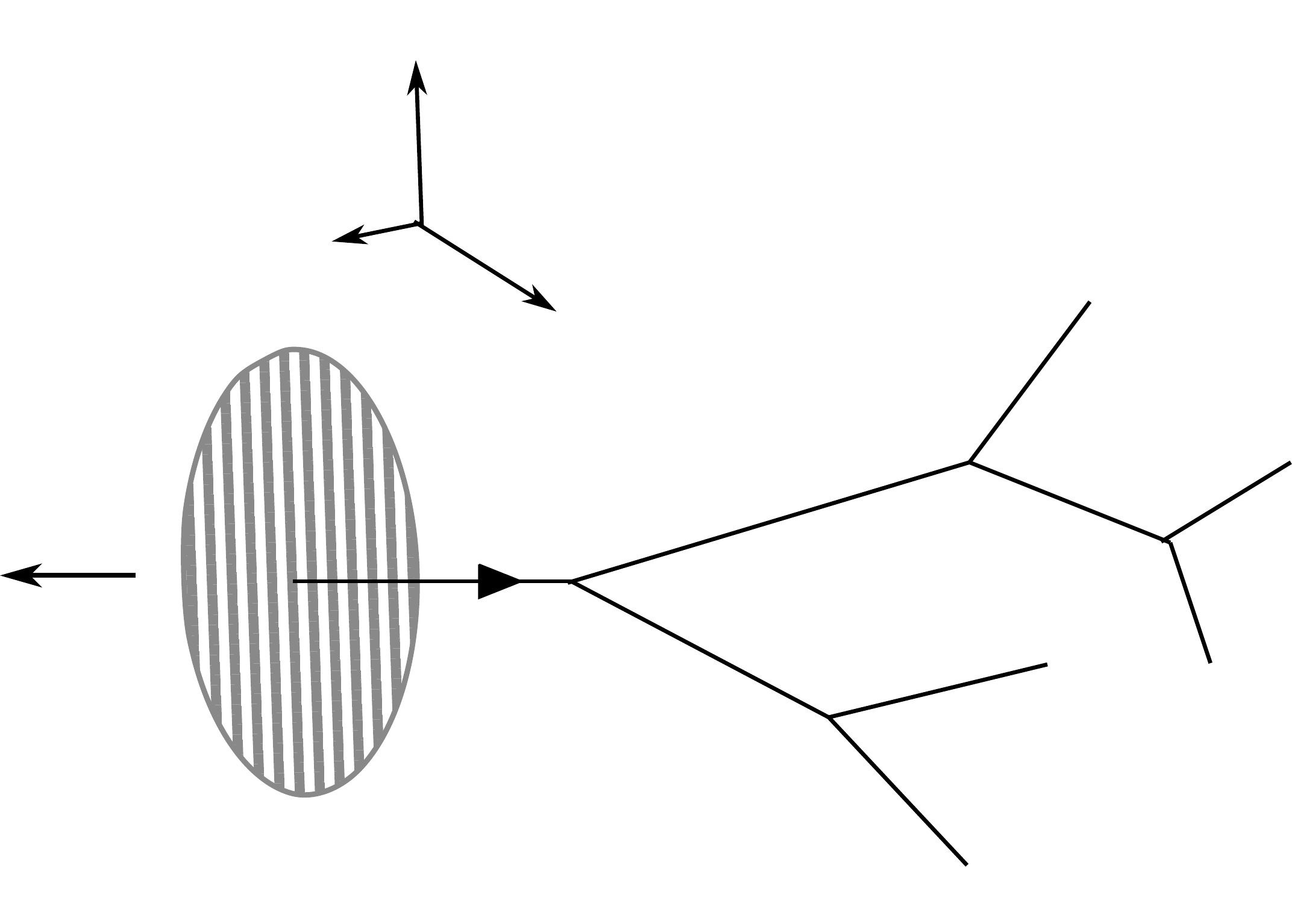} 
\caption{Gluing of two trees at a $Y_0$-vertex. The dashed disks are the flow-outs of the trees, intersected with a co-dimensional 1 disk transverse to the tree. The vector $t$ added when considering the flow-out of the glued tree is indicated in the second picture.}
\label{fig:gluedy0flow}
\end{figure}

\begin{figure}[ht]
      \labellist
\small\hair 2pt
\pinlabel $p_0$ [Br] at 20 400
\pinlabel $p_1$ [Br] at  490 304 
\pinlabel $p_2$ [Br] at 490 520
\pinlabel $\Sigma$ [Br] at 220 260
\pinlabel $\Gamma_v$ [Br] at 200 480
\pinlabel $\Gamma_1$ [Br] at 850 201
\pinlabel $\Gamma_2$ [Br] at 850 726
\pinlabel $\F_{p_1}(\Gamma_1)\cap{D_{p_1}}$ [Br] at 820 0
\pinlabel $\F_{p_2}(\Gamma_2)\cap{D_{p_2}}$ [Br] at 1230 460 
\endlabellist
\centering
\includegraphics[height=4.cm]{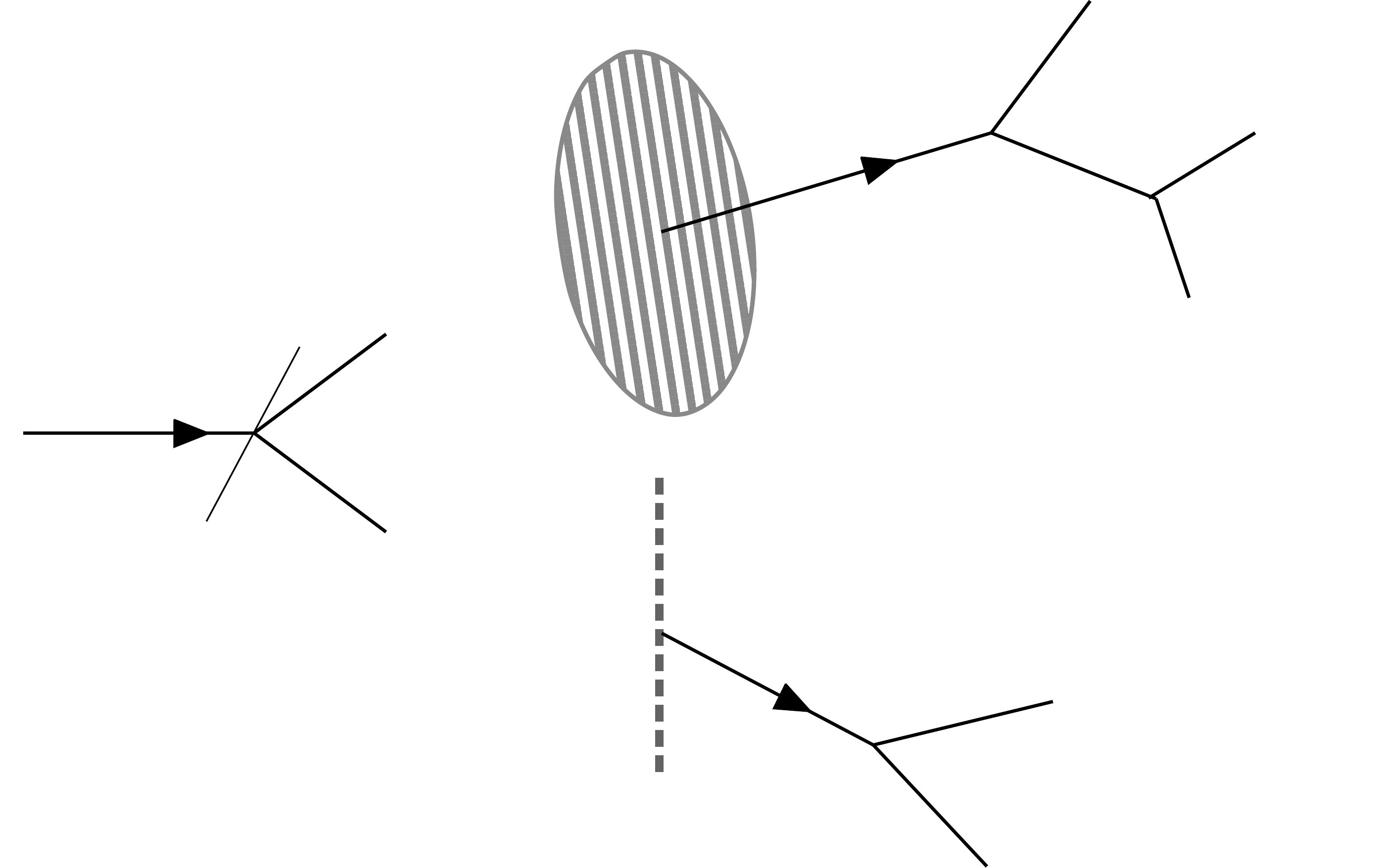} 
\hspace{1.8cm}
      \labellist
\small\hair 2pt
\pinlabel $t$ [Br] at 150 205
\pinlabel $x_1$ [Br] at  505 490
\pinlabel $x_2$ [Br] at 310 650
\pinlabel $x_3$ [Br] at 299 480
\pinlabel $\F_{p_0}(\Gamma)\cap{D_{p_0}}$ [Br] at 365 60
\endlabellist
\centering
\includegraphics[height=3.8cm]{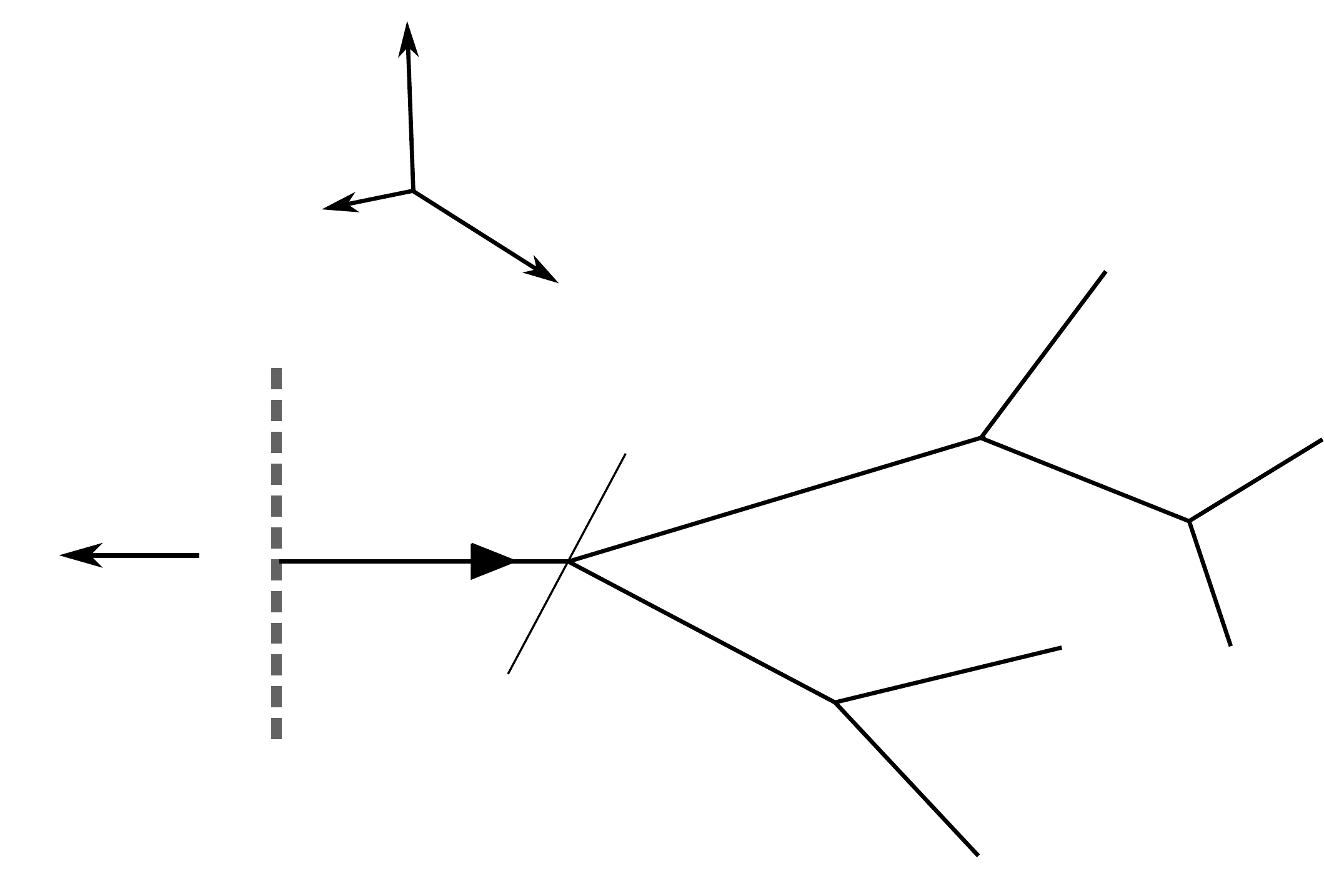} 
\caption{Gluing of two trees at a $Y_1$-vertex. The dashed disks are the flow-outs of the trees, intersected with a co-dimensional 1 disk transverse to the tree. The vector $t$ added when considering the flow-out of the glued tree is indicated in the second picture.}
\label{fig:gluedy1flow}
\end{figure}

\begin{figure}[ht]
\labellist
\small\hair 2pt
\pinlabel ${p_0}$ [Br] at  5 95 
\pinlabel ${p_1}$ [Br] at 225 115
\pinlabel $q$ [Br] at 115 125
\pinlabel $\Gamma_{q}$ [Br] at 80 50
\pinlabel $\Gamma_1$ [Br] at 310 50
\pinlabel $\Gamma$ [Br] at 800 50
\endlabellist
\centering
\includegraphics[height=2.cm]{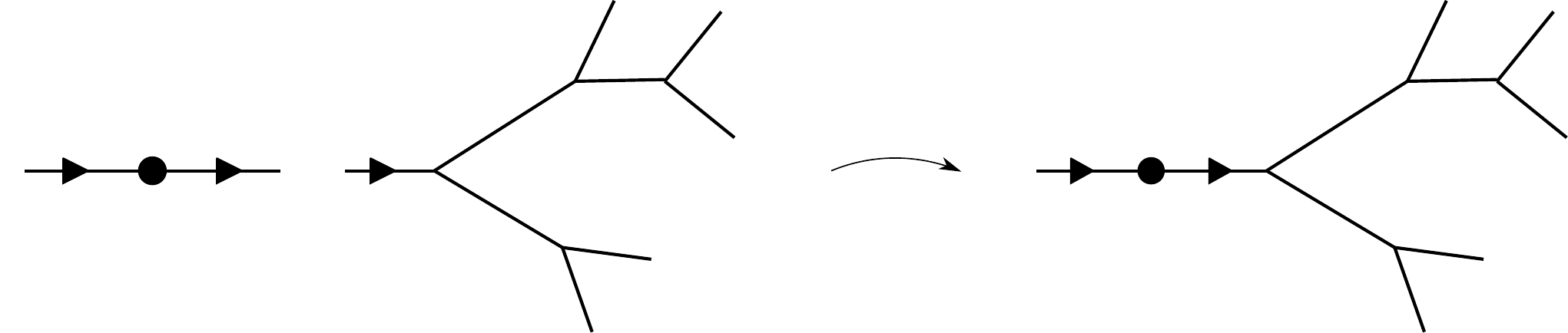}
\caption{Gluing a sub flow tree $\Gamma_1$ to a $2$-valent negative puncture.}
\label{fig:glue2whole}
\end{figure}

\begin{figure}[ht]
\labellist
\small\hair 2pt
\pinlabel ${p_0}$ [Br] at  35 125 
\pinlabel ${p_1}$ [Br] at 330 125
\pinlabel $s$ [Br] at 170  110
\pinlabel $\Gamma_{s}$ [Br] at 100 50
\pinlabel $\Sigma$ [Br] at 275 60
\pinlabel $\Gamma_1$ [Br] at 450 50
\pinlabel $\Gamma$ [Br] at 1140 50
\endlabellist
\centering
\includegraphics[height=2.cm]{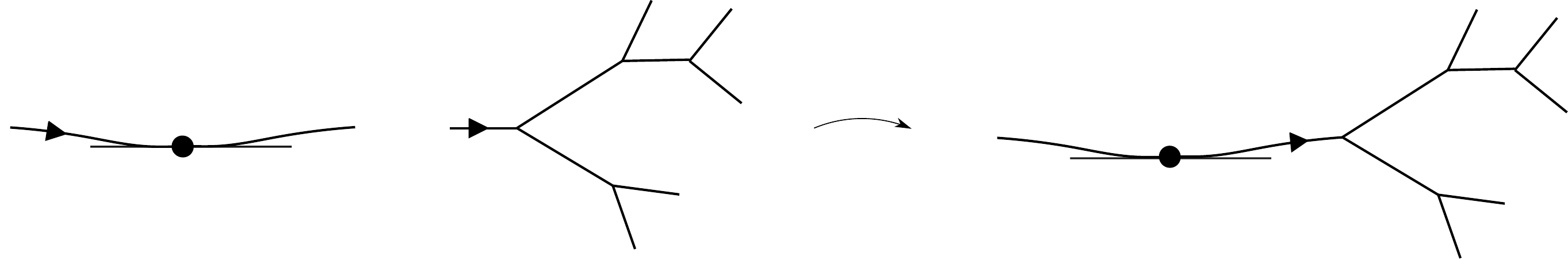}
\caption{Gluing a sub flow tree $\Gamma_1$ to a switch-piece
  $\Gamma_s$ with positive puncture $p_0$, negative puncture $p_1$ and
  switch-vertex $s$.}
\label{fig:glueswichp}
\end{figure}

\begin{figure}[ht]
\labellist
\small\hair 2pt
\pinlabel $a$ [Br] at  2 262 
\pinlabel ${p_0}$ [Br] at  715 290 
\pinlabel ${p_1}$ [Br] at 1010 290
\pinlabel $s$ [Br] at 845  275
\pinlabel $\Gamma_{s}$ [Br] at 860 210
\pinlabel $\Gamma$ [Br] at 500 50
\pinlabel $\Gamma_2$ [Br] at 360 210
\endlabellist
\centering
\includegraphics[height=3.cm]{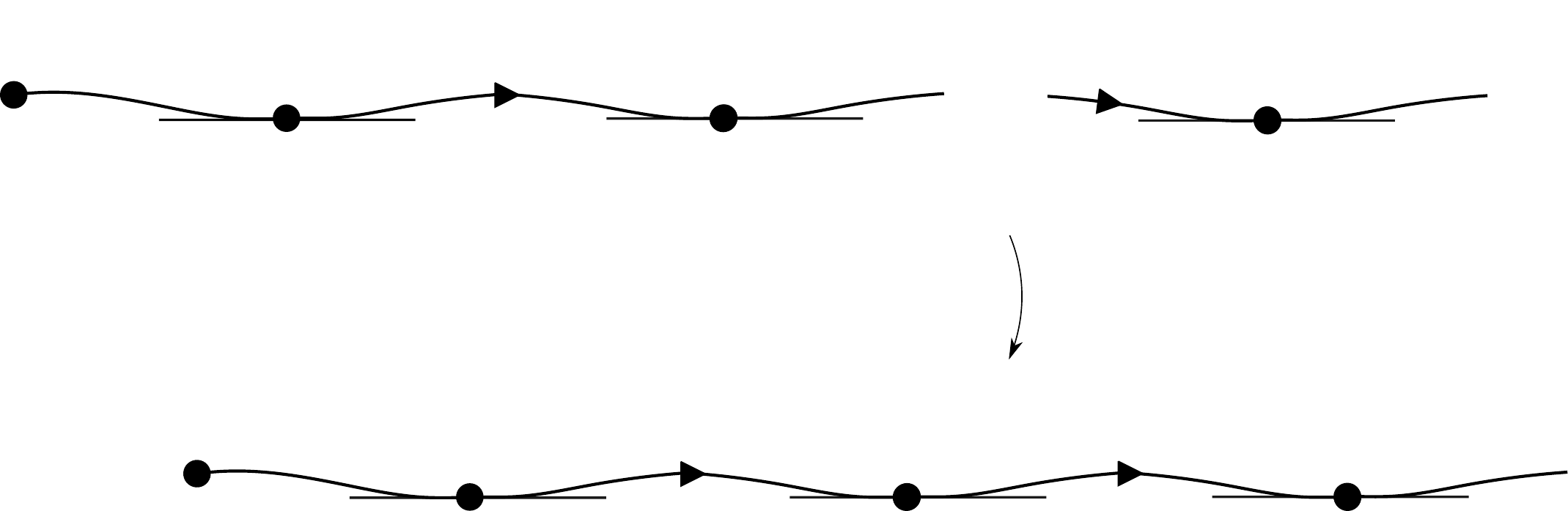}
\caption{Gluing a sub flow tree $\Gamma_2$  with $k$ switches and one true positive puncture $a$ to a switch-piece
  $\Gamma_s$ with positive puncture $p_0$, negative puncture $p_1$ and
  switch-vertex $s$.}
\label{fig:glueswichn}
\end{figure}

%
%
 \begin{rmk}
 In the cases \ref{y0g}, \ref{y1g}, \ref{ng}, the pre-stabilized capping orientation of $\Gamma$ in the limit equals the orientation of the intersection manifold. 
\end{rmk}

%

\subsubsection{Gluing at a $Y_0$-vertex}\label{sec:stabstabstab}
Let $\Gamma_v$ be the $Y_0$-piece we are gluing at, denote its
$Y_0$-vertex by $v$, and let $\Gamma_1$ and $\Gamma_2$ denote the two
sub flow trees that are to be glued to $\Gamma_v$ to give the sub flow
tree $\Gamma$. Let  $p_0,p_1, p_2$ denote the punctures of $\Gamma_v$,
ordered using the notation in Section \ref{sec:slit}.
  Assume that $\Gamma_1$, $\Gamma_2$ are numbered so that $\Gamma_i$ is attached to $\Gamma_v$ at $p_i$, $i=1,2$.  

By applying Proposition \ref{prp:glu} twice we get, for $\lambda$ sufficiently small, that the pre-stabilized kernel of $\Gamma$ is spanned by vectors $v_1^\lambda,\dotsc,v_m^\lambda$ satisfying 
\begin{align*}
 & \| v_i^{\lambda_1} - v_i^{\lambda_2}\|_{2,\nu} \to 0, &\lambda_1 &\to \lambda_2, \\
 & | \ev_v(v_i^\lambda) - \partial_{x_i}| \to 0 , & \lambda &\to 0,
\end{align*}
for $i=1,\dotsc,m$ and for a suitable choice of trivialization of $TM$.
We also get that the pre-stabilized capping orientation of $\Gamma$ can be computed using the sequence \eqref{eq:glu10l} twice, if we for each gluing multiply the result by the gluing sign coming from Proposition \ref{prp:megalemma4paux}.

 To perform this computation, we start by gluing $\Gamma_2$ to
 $\Gamma_v$, which will give us an intermediate tree
 $\Gamma_{2v}$. From the gluing sequence \eqref{eq:glu10l} we get that
 $\kd_{\Gamma_{2,s}} \simeq \kd_{\Gamma_{2v,s}}$, and the gluing sign
 from Proposition \ref{prp:megalemma4paux}, Case 1, equals 
 \begin{equation*}
  \nu_1\equiv |\mu(p_1)| \cdot(\bm(\Gamma_2) +1 + |\mu(p_2)|), \pmod{2}.
 \end{equation*}
Here we have used that 
\begin{equation}\label{eq:mu0f}
 |\mu(p_0)|+|\mu(p_1)|+|\mu(p_2)|\equiv 0, \pmod{2}
\end{equation}
for $Y_0$-pieces,
and that
\begin{equation}\label{eq:kerdim}
 \dim \Ker \capp (\Gamma') \equiv \bm(\Gamma') +1 + |\mu(p)|, \pmod{2}
\end{equation}
for any sub flow tree $\Gamma'$ with positive special puncture
$p$. Recall that $\bm(\Gamma') $ is the number of boundary minima of $\Delta(\Gamma')$. Moreover, from Lemma \ref{lma:orientpiece4} we get
\begin{equation}\label{eq:kercoker}
 \dim \kd_{\Gamma_{2v}} = \dim \kd_{\Gamma_2}, \quad
 \dim \cd_{\Gamma_{2v}} = \dim \cd_{\Gamma_2}.
\end{equation}

 Now we glue $\Gamma_1$ to $\Gamma_{2v}$. 
From Proposition \ref{prp:glu} we get that the orientation of 
\begin{equation*}
T_v \I_v(\Gamma) = T_v \F_{v}(\Gamma_1) \cap T_{v} \F_v(\Gamma_{2})
\end{equation*}
induced by the gluing sequence \eqref{eq:glu10l} equals the pre-stabilized orientation of $\Gamma$, times $(-1)^{\nu_1+\nu_2}$, where $\nu_2$ comes from gluing $\Gamma_1$ to $\Gamma_{2v}$ and is given by Proposition \ref{prp:megalemma4paux}, Case 2. Using that 
\begin{align}\label{eq:dimkercap}
 \dim \Ker \capp \Gamma_{2v} &\equiv 1 + \bm(\Gamma_2) + |\mu(p_0)| + |\mu(p_1)|,\\
 \label{eq:dimcokercap}
 \dim \Coker \capp \Gamma_{2v}  &\equiv \dim \Coker \capp \Gamma_{2} \equiv \dim \kd_{\Gamma_{2,s}} + n + 1 + |\mu(p_2)|, 
\end{align}
modulo 2, 
which follows from Lemma \ref{lma:capbundle0} and Lemma \ref{lma:special} together with Lemma \ref{lma:cokersimp}, and that 
\begin{align}\label{eq:sistacoker}
 \dim \cd_{\Gamma_i} &\equiv \bm(\Gamma_i) + \dim \vk( \Gamma_i), \pmod{2}, \quad i =1,2, \\ \nonumber
 \dim \cd_{\Gamma} &\equiv 1+\bm(\Gamma) + \dim \vk( \Gamma)_{ps}, \pmod{2}, \\ \nonumber
   \bm(\Gamma) &= \bm(\Gamma_1) +\bm(\Gamma_2) +1,
\end{align}
and also  \eqref{eq:kercoker} and \eqref{eq:mu0f}, we get that $\nu_2$ is given by 
\begin{align*}
  \nu_2 &\equiv e(\Gamma_1)\cdot[\bm(\Gamma_2) + e(\Gamma_2) +1] +\dim \kd_{\Gamma_1}\cdot[\bm(\Gamma_2) + |\mu(p_2)| +1] \\
  &\quad {}+ [\bm(\Gamma_1) + \dim \vk(\Gamma_1) +n]\cdot[|\mu(p_2)| + \dim \kd_{\Gamma_{2,s}} + n +1]\\
  &\quad {}+ n \cdot [\bm(\Gamma_2) + \dim \vk(\Gamma_2)]\\
  &\quad {} + \dim \vk(\Gamma_1) \cdot [\bm(\Gamma_2) + \dim \vk(\Gamma_2) + \dim \kd_{\Gamma_1}
 + \dim \kd_{\Gamma}] \\
   &\quad {}+ \dim \vk(\Gamma_2)\cdot [  \dim \kd_{\Gamma_1} + \dim \kd_{\Gamma} + \dim \kd_{\Gamma_2}] \\
   &\quad {}+  \dim \vk(\Gamma)_{ps} \cdot [ \dim \vk(\Gamma)_{ps} + \dim \vk(\Gamma_1) + \dim \vk(\Gamma_2)],
\end{align*}
modulo 2. 
This can be rearranged to 
\begin{align*}
  \nu_2 &\equiv e(\Gamma_1)\cdot[\bm(\Gamma_2) + e(\Gamma_2) +1] +[n + \dim \kd_{\Gamma_{1,s}}]\cdot[\bm(\Gamma_2) + |\mu(p_2)| +1] + n\\
  &\quad {}+ \bm(\Gamma_1) \cdot[|\mu(p_2)| + \dim \kd_{\Gamma_{2,s}} + n +1] + n \cdot \dim \kd_{\Gamma_{2,s}}\\
  &\quad {} + \dim \vk(\Gamma_1) \cdot [\dim \kd_{\Gamma_{2,s}} + n + \dim \kd_{\Gamma_1}
 + \dim \kd_{\Gamma_{ps}}] \\
   &\quad {}+ \dim \vk(\Gamma_2)\cdot [  n+\dim \kd_{\Gamma_{1,s}} + \dim \kd_{\Gamma_{ps}} + \dim \kd_{\Gamma_2}] \\
   &\quad {}+  \dim \vk(\Gamma)_{ps}, \pmod{2}. 
\end{align*}


%
%
%
%
  Now if we use that  
 \begin{equation*}
  \dim \kd_{\Gamma_{ps}}  + n =  \dim \kd_{\Gamma_{1,s}} + \dim \kd_{\Gamma_{2,s}}
 \end{equation*}
we see that
 \begin{align}\nonumber
 &\dim \vk(\Gamma_1) \cdot [\dim \kd_{\Gamma_{2,s}} + n + \dim \kd_{\Gamma_1}
 + \dim \kd_{\Gamma_{ps}}] \\ \nonumber
   &\quad {} + \dim \vk(\Gamma_2)\cdot [  n+\dim \kd_{\Gamma_{1,s}} + \dim \kd_{\Gamma_{ps}} + \dim \kd_{\Gamma_2}] \\ \nonumber 
   &\quad {} +  \dim \vk(\Gamma)_{ps}  \\ \nonumber 
   &\equiv \dim \vk(\Gamma_1) \cdot \dim \vk(\Gamma_1) + \dim \vk(\Gamma_2)\cdot \dim \vk(\Gamma_2)\\ \nonumber 
    &\quad {} +\dim \vk(\Gamma)_{ps} \\ \label{eq:bra}
    &\equiv n +  \dim \kd_{\Gamma_1} + \dim \kd_{\Gamma_2} + \dim \kd_{\Gamma}, \pmod{2}.
\end{align}
 
If we now sum everything up we get  
\begin{align*}
\nu_1 + \nu_2 &\equiv 
 e(\Gamma_1)\cdot[\bm(\Gamma_2) + e(\Gamma_2) +1] + \bm(\Gamma_1)\cdot[|\mu(p_2)| + \dim \kd_{\Gamma_{2,s}} + n +1] \\
 &\quad {}+[n+\dim \kd_{\Gamma_{1,s}}+|\mu(p_1)|]\cdot[\bm(\Gamma_2) + |\mu(p_2)| +1] \\
  &\quad {}+ \dim \kd_{\Gamma_1} + \dim \kd_{\Gamma_2} + \dim \kd_{\Gamma} + n \cdot \dim \kd_{\Gamma_{2,s}}, \pmod{2}.
\end{align*}

This is exactly the sign we add at each $Y_0$-vertex in [\cite{korta},
Section 6], except that in that case we also add the sign in Lemma \ref{lma:prestab}, which comes from going from the pre-stabilized problems to the stabilized problems of the incoming trees $\Gamma_1$ and $\Gamma_2$, and also the sign related to the capping orientation of the piece $\Gamma_v$ from Lemma \ref{lma:y0}.

\subsubsection{Gluing at a $Y_1$-vertex}\label{sec:y1glu}
 This is similar to the $Y_0$-case. The only difference is that the $Y_1$-piece does not have kernel isomorphic to $\R^n$, but to $\R^{n-1}$. Thus, in the first gluing sequence, where we glue $\Gamma_2$ to $\Gamma_v$ to get an intermediate tree $\Gamma_{2v}$ with pre-stabilized kernel isomorphic to $T_v\F_v(\Gamma_2) \cap T_v \Pi(\Sigma)$, we get that the gluing sign from Proposition \ref{prp:megalemma4paux} is given by 
\begin{align*}
  \nu_1&\equiv \dim \kd_{\Gamma_2} +  |\mu(p_1)| \cdot(\bm(\Gamma_2) +1 + |\mu(p_2)|)\\
   &\quad {} + \dim \vk(\Gamma_2) \cdot [\dim \kd_{\Gamma_2} + \dim \kd_{\Gamma_{2v}}] \\
   &\quad {} + \dim \vk(\Gamma_{2v}) \cdot [ \dim \cd_{\Gamma_{2}} + \dim \cd_{\Gamma_{2v}}], \pmod{2}.
 \end{align*}
Here we have used \eqref{eq:kerdim} and that  
\begin{equation}\label{eq:mu1}
 |\mu(p_0)|+|\mu(p_1)|+|\mu(p_2)|\equiv 1, \pmod{2}
\end{equation}
for $Y_1$-pieces. Now notice that 
\begin{align*}
 \dim \kd_{\Gamma_2} + \dim \kd_{\Gamma_{2v}} &\equiv \dim \kd_{\Gamma_{2,s}} + \dim \kd_{\Gamma_{2v,s}}\\
  &\quad {}+    \dim \vk(\Gamma_{2})+ \dim \vk(\Gamma_{2v}) \\
 & \equiv 1 + \dim \vk(\Gamma_{2})+ \dim \vk(\Gamma_{2v}), \pmod{2},
\end{align*}
and that 
\begin{align*}
 \dim \cd_{\Gamma_2} + \dim \cd_{\Gamma_{2v}} &\equiv \bm(\Gamma_2) + \bm(\Gamma_{2v}) + 1 \\
   &\quad {}+   
 \dim \vk(\Gamma_{2})+ \dim \vk(\Gamma_{2v}) \\
 & \equiv \dim \vk(\Gamma_{2})+ \dim \vk(\Gamma_{2v}), \pmod{2}.
\end{align*}
Thus we get 
\begin{align*}
  \nu_1&\equiv \dim \kd_{\Gamma_2} +  |\mu(p_1)| \cdot(\bm(\Gamma_2) +1 + |\mu(p_2)|) + \dim \vk(\Gamma_2) \cdot \dim \vk(\Gamma_{2v}) \\
   &\quad {} + \dim \vk(\Gamma_{2v}) \cdot [ \dim \vk(\Gamma_{2}) + \dim \vk(\Gamma_{2v})]\\
   &\equiv \dim \kd_{\Gamma_2} +  |\mu(p_1)| \cdot(\bm(\Gamma_2) +1 + |\mu(p_2)|) + \dim \vk(\Gamma_{2v}), \pmod{2}. 
\end{align*}

For the second gluing, the only difference compared to the $Y_0$-case is that we should use \eqref{eq:mu1} instead of \eqref{eq:mu0f}, and that we should replace $\vk(\Gamma_2)$ with $\vk(\Gamma_{2v})$ and $\kd_{\Gamma_2}$ with $\kd_{\Gamma_{2v}}$. Note that we should not replace $\kd_{\Gamma_{2,s}}$ with $\kd_{\Gamma_{2v,s}}$, since this summand comes from \eqref{eq:dimcokercap}. 
Thus we  get that $\nu_2$ is given by 
\begin{align*}
  \nu_2 &\equiv e(\Gamma_1)\cdot[\bm(\Gamma_2) + e(\Gamma_2) +1] +\dim \kd_{\Gamma_1}\cdot[\bm(\Gamma_2) + |\mu(p_2)| ] \\
  &\quad {}+ [\bm(\Gamma_1) + \dim \vk(\Gamma_1) +n]\cdot[|\mu(p_2)| + \dim \kd_{\Gamma_{2,s}} + n +1]\\
  &\quad {}+ n \cdot [\bm(\Gamma_2) + \dim \vk(\Gamma_{2v})]\\
  &\quad {} + \dim \vk(\Gamma_1) \cdot [\bm(\Gamma_2) + \dim \vk(\Gamma_{2v}) + \dim \kd_{\Gamma_1}
 + \dim \kd_{\Gamma}] \\
   &\quad {}+ \dim \vk(\Gamma_{2v})\cdot [  \dim \kd_{\Gamma_1} + \dim \kd_{\Gamma} + \dim \kd_{\Gamma_{2v}}] \\
   &\quad {}+  \dim \vk(\Gamma)_{ps} \cdot [ \dim \vk(\Gamma)_{ps} + \dim \vk(\Gamma_1) + \dim \vk(\Gamma_{2v})],
\end{align*}
modulo 2.
This can be rearranged to 
\begin{align*}
  \nu_2 &\equiv e(\Gamma_1)\cdot[\bm(\Gamma_2) + e(\Gamma_2) +1] +[n + \dim \kd_{\Gamma_{1,s}}]\cdot[\bm(\Gamma_2) + |\mu(p_2)|] \\
  &\quad {}+ \bm(\Gamma_1) \cdot[|\mu(p_2)| + \dim \kd_{\Gamma_{2,s}} + n +1] + n \cdot \dim \kd_{\Gamma_{2,s}}\\
  &\quad {} + \dim \vk(\Gamma_1) \cdot [\dim \kd_{\Gamma_{2,s}} + n +1+ \dim \kd_{\Gamma_1}
 + \dim \kd_{\Gamma_{ps}}] \\
   &\quad {}+ \dim \vk(\Gamma_{2v})\cdot [  n+\dim \kd_{\Gamma_{1,s}} + \dim \kd_{\Gamma_{ps}} + \dim \kd_{\Gamma_{2v}}] \\
   &\quad {}+  \dim \vk(\Gamma)_{ps}, \pmod{2}. 
\end{align*}
 
Now use that  $\dim \kd_{\Gamma_{2,s}} \equiv  \dim \kd_{\Gamma_{2v,s}} +1, \pmod{2},$ and that 
\begin{equation*}
 \dim \kd_{\Gamma_{2v,s}} + n   + \dim \kd_{\Gamma_{1,s}}
 + \dim \kd_{\Gamma_{ps}} \equiv 0, \pmod{2},
\end{equation*}
to get 
 \begin{align*}
  \nu_2 &\equiv e(\Gamma_1)\cdot[\bm(\Gamma_2) + e(\Gamma_2) +1] +[n + \dim \kd_{\Gamma_{1,s}}]\cdot[\bm(\Gamma_2) + |\mu(p_2)|] \\
  &\quad {}+ \bm(\Gamma_1) \cdot[|\mu(p_2)| + \dim \kd_{\Gamma_{2,s}} + n +1] + n \cdot \dim \kd_{\Gamma_{2,s}}\\
  &\quad {} + \dim \vk(\Gamma_1)+ \dim \vk(\Gamma_{2v}) + \dim \vk(\Gamma)_{ps} \\
  &\equiv e(\Gamma_1)\cdot[\bm(\Gamma_2) + e(\Gamma_2) +1] +[n + \dim \kd_{\Gamma_{1,s}}]\cdot[\bm(\Gamma_2) + |\mu(p_2)|] \\
  &\quad {}+ \bm(\Gamma_1) \cdot[|\mu(p_2)| + \dim \kd_{\Gamma_{2,s}} + n +1] + n \cdot \dim \kd_{\Gamma_{2,s}}\\
  &\quad {} + \dim \kd_{\Gamma_1}+ \dim \kd_{\Gamma_{2v}} + \dim \kd_\Gamma +n, \pmod{2}.  
\end{align*}
Summing up we get 
\begin{align*}
 \nu_1 + \nu_2 &\equiv e(\Gamma_1)\cdot[\bm(\Gamma_2) + e(\Gamma_2) +1] +[n + \dim \kd_{\Gamma_{1,s}}]\cdot[\bm(\Gamma_2) + |\mu(p_2)|] \\
  &\quad {}+ \bm(\Gamma_1) \cdot[|\mu(p_2)| + \dim \kd_{\Gamma_{2,s}} + n +1] + n \cdot \dim \kd_{\Gamma_{2,s}}\\
  &\quad {}+ \dim \kd_{\Gamma_1}+ \dim \kd_{\Gamma_{2v}} + \dim \kd_\Gamma +n \\
   &\quad {}+\dim \kd_{\Gamma_2} +  |\mu(p_1)| \cdot[\bm(\Gamma_2) +1 + |\mu(p_2)|] + \dim \vk(\Gamma_{2v}) \\
   &\equiv e(\Gamma_1)\cdot[\bm(\Gamma_2) + e(\Gamma_2) +1] +[n + \dim \kd_{\Gamma_{1,s}}]\cdot[\bm(\Gamma_2) + |\mu(p_2)|+1] \\
  &\quad {}+ \bm(\Gamma_1) \cdot[|\mu(p_2)| + \dim \kd_{\Gamma_{2,s}} + n +1] + n \cdot \dim \kd_{\Gamma_{2,s}}\\
  &\quad {}+ \dim \kd_{\Gamma_1}+\dim \kd_{\Gamma_2} +  \dim \kd_\Gamma + \dim \kd_{\Gamma_{1,s}} + \dim \kd_{\Gamma_{2,s}}+1  \\
   &\quad {} +  |\mu(p_1)| \cdot(\bm(\Gamma_2) +1 + |\mu(p_2)|), \pmod{2}.
\end{align*}

\subsubsection{Gluing at a negative 2-valent puncture}
This is again similar to the case of gluing at a $Y_0$-vertex. 
 The only difference is that now we only have to perform one gluing, and that  $\kd_{\Gamma_{1,s}} \simeq \R^n$ while $\kd_{\Gamma_q}$ is trivial. Here we use the notation from Figure \ref{fig:glue2whole}. 

 Recall that we have two different types of negative 2-pieces. For pieces of Type 1, the gluing sign in Proposition \ref{prp:megalemma4paux} is given by Case 1 and equals
 \begin{align*}
  \nu &\equiv \dim \kd_{\Gamma_1} \cdot n + [\bm(\Gamma_1) + 1 + |\mu(p_1)|] \cdot[|\mu(q)| + n] \\
   &\quad {} +[\dim \kd_{\Gamma_1} + n] \cdot \dim \kd_{\Gamma_1}\\
  &\equiv \dim \kd_{\Gamma_1}  + [\bm(\Gamma_1) + 1 + |\mu(p_1)|] \cdot[|\mu(q)| + n], \pmod{2}. 
 \end{align*}

 For pieces of Type 2, the gluing sign in Proposition \ref{prp:megalemma4paux} is given by Case 2 and equals 
 \begin{align*}
  \nu &\equiv e(\Gamma_1) \cdot[|\mu(p_1)| + |\mu(p_0)| + 1 + |\mu(p_1)| + |\mu(p_0)|] +
  \dim \kd_{\Gamma_1} \cdot [1 + |\mu(p_1)| + |\mu(p_0)|] \\
   &\quad {} + [\bm(\Gamma_1) + \dim \vk(\Gamma_1) + n] \cdot [n +  |\mu(q)| + 1] +[\dim \kd_{\Gamma_1} + n] \cdot \dim \kd_{\Gamma_1}\\
  &\equiv e(\Gamma_1) \cdot 1 +
  \dim \kd_{\Gamma_1} \cdot [n + |\mu(q)|] + \dim \vk(\Gamma_1) \cdot [n +  |\mu(q)| + 1]\\
   &\quad {} + \bm(\Gamma_1) \cdot [n +  |\mu(q)| + 1] +n \cdot |\mu(q)| \\
     &\equiv e(\Gamma_1)  + n \cdot [n + |\mu(q)|] + \dim \vk(\Gamma_1)
    + \bm(\Gamma_1) \cdot [n +  |\mu(q)| + 1] +n \cdot |\mu(q)|\\
    &\equiv e(\Gamma_1)  +\dim \kd_{\Gamma_1}
    + \bm(\Gamma_1) \cdot [n +  |\mu(q)| + 1], \pmod{2}. 
   \end{align*}
   Here we have used \eqref{eq:mu0f}, \eqref{eq:kerdim}  and Lemma \ref{lma:cokersimp}. We have also used that 
   \begin{equation*}
    \dim \vk(\Gamma_1) \equiv  \dim \kd_{\Gamma_1} +  \dim \kd_{\Gamma_{1,s}} \equiv \dim \kd_{\Gamma_1} + n, \pmod 2.
   \end{equation*}

 %
%
%
%
%
%
%

\subsubsection{Gluing at a switch-piece, gluing at the negative special puncture}\label{sec:switchglu}
  This is similar to the case of $Y_1$-pieces, but now we have only one gluing and the sign from Proposition \ref{prp:megalemma4paux} is given by Case 2. Here we use the notation that $\Gamma_2$ is the switch-piece with positive puncture $p_0$ and negative puncture $p_1$, and notice that 
  \begin{equation*}
 \dim \Ker \capp (\Gamma_2) \equiv |\mu(p_0)| + |\mu(p_1)| \equiv 1, \pmod{2},   
  \end{equation*}
and that  
\begin{equation*}
 \dim \Coker \capp (\Gamma_2) \equiv 0, \pmod{2}.   
  \end{equation*}
  
  It follows that we get a gluing sign
\begin{align*}
   \nu&\equiv  e(\Gamma_1)\cdot[|\mu(p_0)| + |\mu(p_1)|+|\mu(p_0)| + |\mu(p_1)|] +\dim \kd_{\Gamma_1}\cdot[|\mu(p_0)| + |\mu(p_1)|] \\
  &\quad {} + \dim \vk(\Gamma_1) \cdot [ \dim \kd_{\Gamma_1}
 + \dim \kd_{\Gamma}] \\
   &\quad {}+  \dim \vk(\Gamma) \cdot [ \dim \vk(\Gamma) + \dim \vk(\Gamma_1)]\\
   &\equiv \dim \kd_{\Gamma_1} +  \dim \vk(\Gamma_1) \cdot [ \dim \kd_{\Gamma_{1,s}} +1 
 + \dim \kd_{\Gamma_s}] + \dim \vk(\Gamma)\\
    &\equiv \dim \kd_{\Gamma_1} +  \dim \vk(\Gamma_1) \cdot 0 + \dim \vk(\Gamma)\\
    &\equiv \dim \kd_{\Gamma_1} +   \dim \kd_{\Gamma_s} + \dim \kd_{\Gamma}, \mod{2}.
\end{align*}

%
\subsubsection{Gluing at a switch-piece, gluing at the positive special puncture}
 Again this is similar to the first step in the case of $\Y_1$-vertices. But now the gluing sign is given by Proposition \ref{prp:sista} and equals 
 \begin{align*}
 \sigma &\equiv [|\mu(a)| +k+n+1+1] \cdot [n+1]+ [0 +n]\cdot[\dim W^u(a) + n + |\mu(a)|]\\
 &\equiv |\mu(a)| + k\cdot[n+1] + n\cdot[1 + \dim W^u(a)], \pmod{2}, 
\end{align*}
 where we use the notation from Figure \ref{fig:glueswichn}.


\subsubsection{The final gluing}\label{sec:finish}
Here we explain the last step in the algorithm, where we finally get the whole expression for the capping orientation of the rigid flow tree. Recall from the introduction of this section that this step is divided into three different cases \ref{f11}--\ref{f2}. We will use the notation from there.

\begin{description}[style=unboxed,leftmargin=0cm]

\item[\ref{f11} -- \it  Trees with nontrivial automorphism group]
In this case the kernel of the associated $\dbar$-operator will be $1$-dimensional, and $\dim V_{\con}(\Gamma)=0$ (so that the stabilized problem coincides with the un-stabilized).

 From Remark \ref{orientrule} it follows that the sign of $\Gamma$ is given by comparing the capping orientation of $\Gamma$ with the negative flow orientation of $\Gamma$. 
Indeed, in the case when $\Gamma$ has one negative puncture, let $w_\lambda$ be the the pre-glued disk associated to $\Gamma$ for $\lambda$ sufficiently small. The capping sign of $\Gamma$ comes from comparing the capping orientation of $\kd_{w_\lambda}$ with the orientation of $\T_2$ under the isomorphism $T\T_2 \ni v \mapsto dw_\lambda \cdot v \in \kd_{w_\lambda}$. Remark \ref{orientrule} implies that we use the orientation on $\T_2$ as in [\cite{orientbok}, Section 3.4.1], where this orientation is induced by evaluation at a point in the lower hemisphere of $\partial D_2$. Since this corresponds to $\R + 1i$ on the strip $\Delta_2 = \Delta(\Gamma)$, and since this part of the strip is mapped to the lower part of the Lagrangian lift of the tree $\Gamma$ which is oriented by the negative flow direction of $\Gamma$, it follows that the capping
orientation of $w_\lambda$ coincides with the positive
orientation of $T\T_2$ if and only if it agrees with the negative flow direction of $\Gamma$ in $M$. In the case when $\Gamma$ has no negative puncture but an end it follows from the orientation conventions of $\T_1$ in [\cite{orientbok}, Section 3.4.1], together with the conventions in \eqref{eq:enend}, that the sign of $\Gamma$ again is given as described above. Compare with Lemma \ref{lma:signeq1}.

Let $p$ be the point where we cut $\Gamma$ to get $\Gamma_1$ and $\Gamma_2$, and assume that the flow orientation of $\Gamma$ at $p$ is given by $\partial_{x_1}$. Also assume that the orientation of $T_p\F_p(\Gamma_1)\cap T_p\F_p(\Gamma_2)$ induced by the sequence \eqref{eq:glu10l} is given by
\begin{equation*}
 (-1)^{\sigma_1} \partial_{x_1},
\end{equation*}
where $T_{p}\F_p(\Gamma_i)$ is given the inductively defined capping orientation of $\Gamma_i$, $i=1,2$.


Now it follows from Proposition \ref{prp:glu} together with
Proposition \ref{prp:mainstabsign} that the capping orientation of $\Gamma$ is given by $(-1)^\nu$,
\begin{align*}
 \nu= 1+\sigma_1 + \sigma +  \beta(\s, A_d) +\mu(\Gamma),
\end{align*}
where $\sigma$ is the sign from Proposition \ref{prp:sista} and equals
\begin{align*}
 \sigma &\equiv [|\mu(a)| +k+\dim \kd_{\Gamma_{1}}+1] \cdot \dim
 \kd_{\Gamma_1} + [0 +n]\cdot[\dim W^u(a) + n + |\mu(a)|]\\
 &\equiv [|\mu(a)| +k] \cdot [\dim W^u(a) + k + n + 1]  + n\cdot[\dim W^u(a) + n + |\mu(a)|]\\
 &\equiv k \cdot [\dim W^u(a) +|\mu(a)| + n ] + \dim W^u(a)\cdot[n + |\mu(a)|] + |\mu(a)| + n, \pmod{2},
\end{align*}
and where $ \beta(\s, A_d)$ is the sign from Definition \ref{def:triv} which is needed to compensate for the fact that we have been using a trivialization which may differ from the one induced by the spin structure $\s$ in the calculations above.
Moreover, notice that in this case $\mu(\Gamma)=0$ (recall that $\mu$ was defined in Definition \ref{def:musign}).

Summing up we get that  
\begin{align*}
 \nu&\equiv 1+\sigma_1 +k \cdot [\dim W^u(a) +|\mu(a)| + n ] + \dim W^u(a)\cdot[n + |\mu(a)|] + |\mu(a)| + n\\
 &\quad {}+   \beta(\s, A_d), \pmod{2}.
\end{align*}

\begin{rmk}\label{rmk:sigmastab}
 That this equals the sign from \cite{korta} follows from the fact that in that paper, we collect the $k$ gluings of switches to the sub flow tree with $a$ as a positive puncture in the sign called $\nu_{\stab}$. In the formula above, however, we assume that the signs from these gluings  already are captured by the orientation of $\F_p(\Gamma_2)$.
\end{rmk}

\item[\ref{f1} -- \it Trees with trivial automorphism group, the positive puncture is $1$-valent]
If the automorphism group of the domain corresponding to the rigid
flow tree $\Gamma$ is trivial, then we have that $\kd_{\Gamma_s}$ also
is trivial, and by Proposition \ref{prp:mainstabsign} we have that the
stabilized sign of $\Gamma$ gives the capping sign of $w_\lambda$ times $(-1)^{\mu(\Gamma)}$. 

To calculate the stabilized sign of $\Gamma$, we use the arguments
from Proposition \ref{prp:glu}, with the pre-stabilized tree
$\Gamma_1$ and the (un-stabilized) tree $\Gamma_2$ as inputs. See
Figure \ref{fig:finish1val}. Let $q$ denote the vertex of $\Gamma_1$
adjacent to $p$, and assume that  $T_q\I_q(\Gamma_1)$ is given the
pre-stabilized orientation of $\Gamma_1$. 
Further assume that $T_q\F_q(\Gamma_2)$ is given the  capping
orientation of $\Gamma_2$.

By rigidity and transversality reasons we have that
\begin{equation*}
 T_q\I_q(\Gamma_1)\cap T_q\F_q(\Gamma_2)=0, \qquad
 T_q\I_q(\Gamma_1)\oplus T_q\F_q(\Gamma_2)=T_qM.
\end{equation*}
Let  $\sigma_1$ satisfy
\begin{equation}\label{eq:fel}
 \Or( T_qM) = (-1)^{\sigma_1}  \Or(T_q\I_q(\Gamma_1)) \wedge  \Or(T_q\F_q(\Gamma_2)).
\end{equation}
By arguments similar to the case \ref{f11} we get that the capping sign of $\Gamma$ is given by $(-1)^{\sigma_2}$
\begin{align*}
  \sigma_2 &= \sigma_1+ \dim \I_q(\Gamma_1) + \beta(\s, A_d)+j+1 + \mu(\Gamma)
 +\sigma.
\end{align*}
Here $j$ is the order of the boundary minimum of $\Delta(\Gamma)$ with smallest $\tau$-value. 
 This sign comes from the fact that we have inductively given
 $V_{\con}(\Gamma)$ the orientation \linebreak $(v_1,\dotsc,\hat{v}_j,\dotsc,v_{m-1})$, which by Lemma \ref{lma:confor} differs from the standard orientation of $V_{\con}(\Gamma)$ by a sign 
$(-1)^{j+1}$. The summand $\dim \I_q(\Gamma_1)$ comes from the fact that we should use \eqref {eq:glu10l} to give the orientation to the one point space $\Ker \Gamma_s$, instead of \eqref{eq:fel}. Also, the sign $\sigma$ is the gluing sign given in Proposition \ref{prp:sista}, with $\kd_{\Gamma_{1,s}}$ replaced by $\kd_{\Gamma_{1,ps}}$, and equals
\begin{align*}
 \sigma &\equiv [|\mu(a)| +k+\dim W^u(a) + k + n+1] \cdot \dim
 \kd_{\Gamma_1}\\
 &\quad{}+ [\bm(\Gamma_1) + 1 + \dim \vk(\Gamma_1)_{ps} +n]\cdot[\dim W^u(a) + n + |\mu(a)|],
\end{align*}
modulo 2.
Here we have used that 
\begin{equation*}
 \dim\cd_{\Gamma_1} \equiv \bm(\Gamma_1) + 1 + \dim \vk(\Gamma_1)_{ps}, \pmod{2}.
\end{equation*}
We can simplify the expression for $\sigma$ to get 
\begin{align*}
 \sigma &\equiv[ \kd_{\Gamma_1} + \dim \vk(\Gamma_1)_{ps}] \cdot  [|\mu(a)| +\dim W^u(a) + n] + \dim
 \kd_{\Gamma_1}\\
 &\quad{}+ [\bm(\Gamma_1) + 1  +n]\cdot[\dim W^u(a) + n + |\mu(a)|]\\
 &\equiv[ \dim W^u(a) + k + n] \cdot  [|\mu(a)|+\dim W^u(a) + n] + \dim
 \kd_{\Gamma_1}\\
 &\quad{}+ [\bm(\Gamma_1) + 1  +n]\cdot[\dim W^u(a) + n + |\mu(a)|]\\
  &\equiv k \cdot  [|\mu(a)|+\dim W^u(a) + n] + \dim W^u(a)\cdot [1 + |\mu(a)| + \bm(\Gamma_1) + 1  +n]\\
 &\quad{}+ [n+ |\mu(a)|] \cdot[\bm(\Gamma_1) + 1]  + \dim
 \kd_{\Gamma_1}, \pmod{2}.
\end{align*}

\begin{figure}[ht]
\labellist
\small\hair 2pt
\pinlabel ${a}$ [Br] at  35 385 
\pinlabel ${p}$ [Br] at 310 380
\pinlabel $s$ [Br] at 170  360
\pinlabel $\Gamma_{2}$ [Br] at 100 280
\pinlabel $\Sigma$ [Br] at 275 310
\pinlabel $\Gamma_1$ [Br] at 450 280
\pinlabel $\Gamma$ [Br] at 1140 280
\pinlabel $q$ [Br] at 445 380
\pinlabel $a$ [Br] at 850 100
\pinlabel $p_3$ [Br] at 580 488
\pinlabel $p_1$ [Br] at 650 480
\pinlabel $p_2$ [Br] at 670 370
\pinlabel $p_5$ [Br] at 620 295
\pinlabel $p_4$ [Br] at 580 245
\pinlabel $p_1$ [Br] at 1333 10
\pinlabel $p_2$ [Br] at 1333 57
\pinlabel $p_3$ [Br] at 1333 100
\pinlabel $p_4$ [Br] at 1333 143
\pinlabel $p_5$ [Br] at 1333 189
\endlabellist
\centering
\includegraphics[height=5cm]{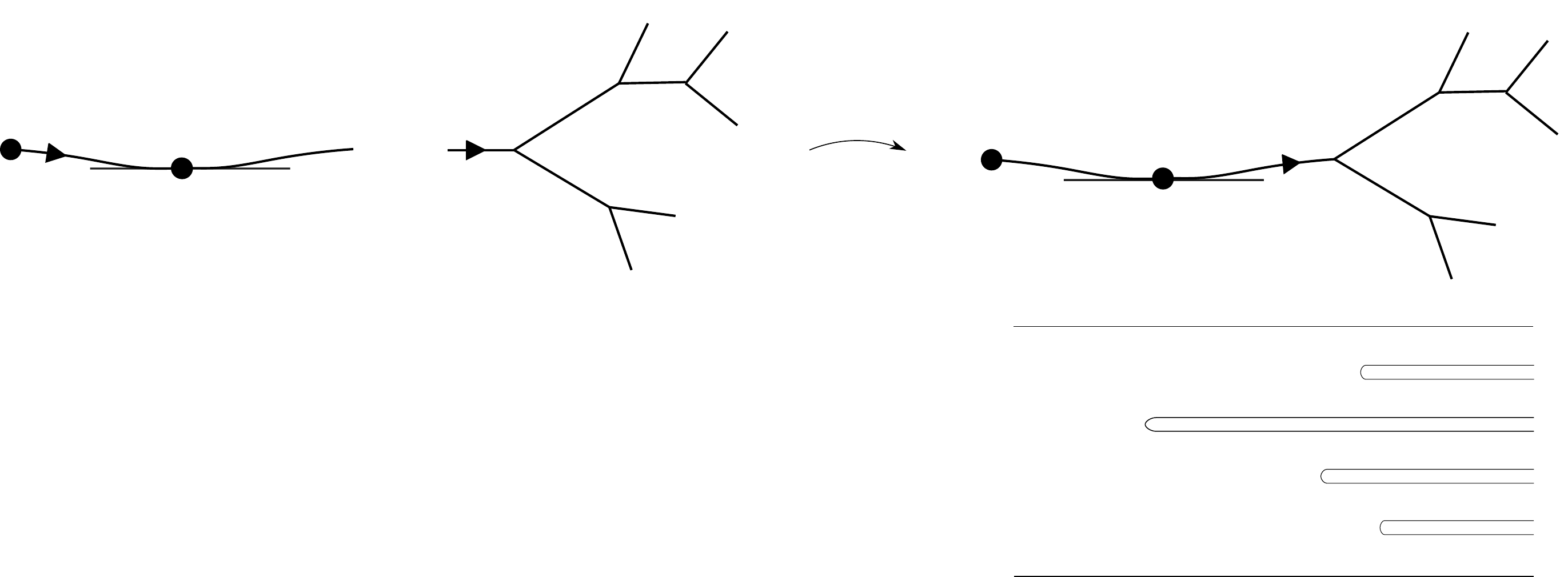}
\caption{The final gluing, giving the rigid flow tree. In this case $j= 3$.}
\label{fig:finish1val}
\end{figure}

If we now recall Remark \ref{rmk:sigmastab} we see that we get the sign from \cite{korta}.

\item[\ref{f2} -- \it Trees with trivial automorphism group, the positive puncture is 2-valent]
This is similar to the case of $1$-valent punctures, but now we have to perform two gluings to get the stabilized orientation of $\Gamma$.

Denote the $2$-valent positive puncture by $a$. Let $p_1$, $p_2$ be points on each of the edges of $\Gamma$ that contains $a$. By cutting $\Gamma$ at these points we get the following 3 partial flow trees.
\begin{itemize}
 \item An elementary tree $\Gamma_a$ that contains the positive 2-valent puncture $a$, and has $p_1$ and $p_2$ as negative special punctures.
 \item Sub flow trees $\Gamma_i$ with $p_i$ as a positive special puncture, $i=1,2$. 
\end{itemize}
See Figure \ref{fig:finish2val}. 
 Here we let $\Gamma_1$ denote the sub flow tree that corresponds to the lower part of the standard domain of $\Gamma$.
Let $\nu_i \in \{0,1\}$ so that 
\begin{equation*}
  \Or(T_{p_i}M) = (-1)^{\nu_i}\Or(T_{p_i} \F_{p_i}(\Gamma_i)), \qquad i=1,2,
 \end{equation*}
 where $T_{p_i}M$ is given the orientation from $M$, and  $T_{p_i} \F_{p_i}(\Gamma_i)$ is given the inductively defined stabilized capping orientation of $\Gamma_i$, $i=1,2$.

\begin{figure}[ht]
\labellist
\small\hair 2pt
\pinlabel ${a}$ [Br] at  285 470 
\pinlabel ${a}$ [Br] at 1005 470
\pinlabel $\Gamma_{2}$ [Br] at 115 480
\pinlabel $\Gamma_a$ [Br] at 305 390
\pinlabel $\Gamma_1$ [Br] at 485 480
\pinlabel $\Gamma$ [Br] at 1135 480
\pinlabel $p_1$ [Br] at 395 470
\pinlabel $p_2$ [Br] at 195 470
\pinlabel $q_6$ [Br] at 40 515
\pinlabel $q_7$ [Br] at 40 370
\pinlabel $q_5$ [Br] at 610 570
\pinlabel $q_4$ [Br] at 690 555
\pinlabel $q_3$ [Br] at 695 460
\pinlabel $q_2$ [Br] at 645 390
\pinlabel $q_1$ [Br] at 605 335
\pinlabel $a$ [Br] at 830 130
\pinlabel $q_1$ [Br] at 1310 7
\pinlabel $q_2$ [Br] at 1310 53
\pinlabel $q_3$ [Br] at 1310 98
\pinlabel $q_4$ [Br] at 1310 142
\pinlabel $q_5$ [Br] at 1310 185
\pinlabel $q_6$ [Br] at 1310 228
\pinlabel $q_7$ [Br] at 1310 273
\endlabellist
\centering
\includegraphics[height=5.5cm]{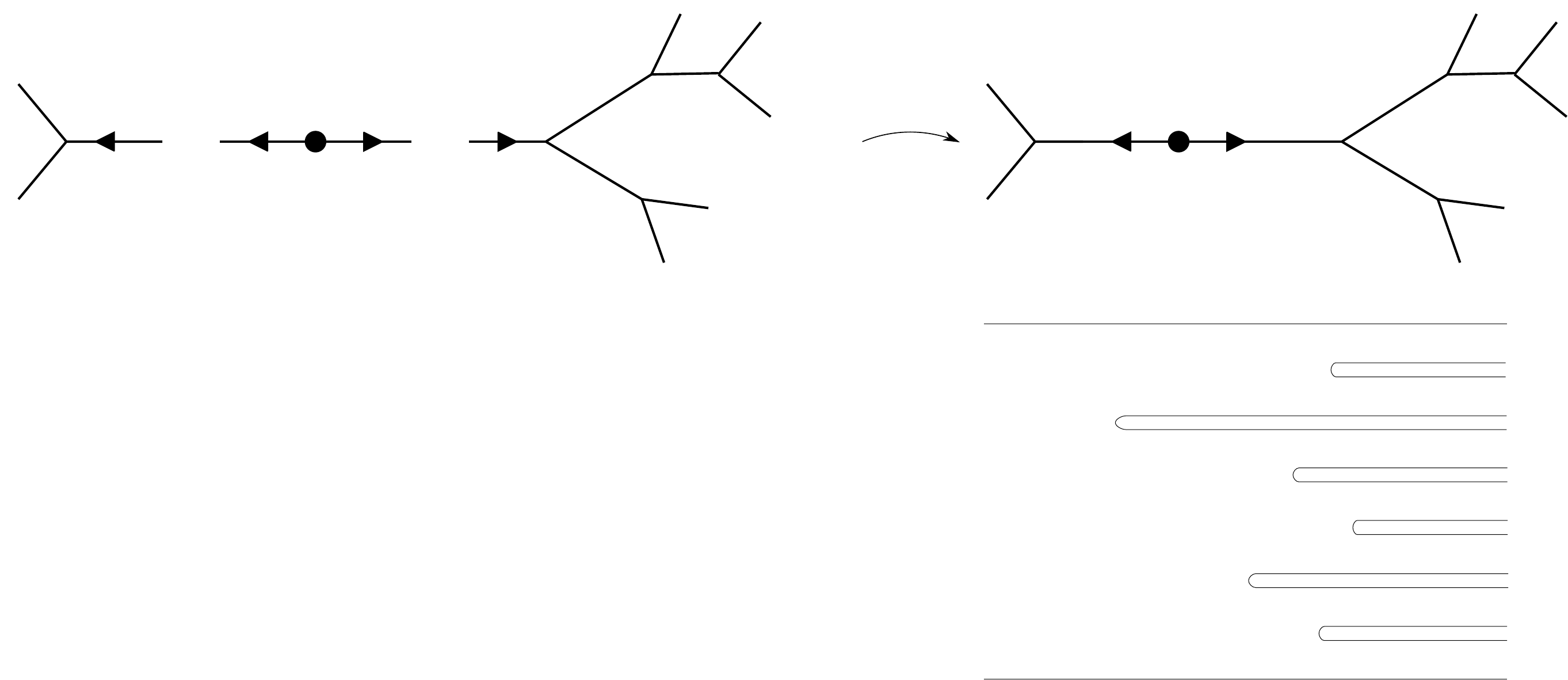}
\caption{The final gluing when the positive puncture is $2$-valent. In this example $j= 5$.}
\label{fig:finish2val}
\end{figure}

\noindent

\begin{rmk}
Notice that in this case we cannot have a switch adjacent to the positive puncture, because of transversality reasons. 
\end{rmk}

Now we glue $\Gamma_2$ to $\Gamma_a$, to get an intermediate tree $\Gamma_{2a}$, and then we glue $\Gamma_1$ to this tree. 

We get two gluing signs from these gluings, calculated using similar techniques as in Proposition \ref{prp:megalemma4paux}. That is, from the first gluing we get a sign 
\begin{align*}
 \sigma_1& \equiv|\mu(p_1)|\cdot  \dim \Ker \capp (\Gamma_2)  + |\mu(a)|\cdot \dim \kd_{\Gamma_2}   \\
 &\quad {}+ [\dim \cd_{\Gamma_2} +n]\cdot(n+|\mu(a)|)+ \dim \vk(\Gamma_2) \cdot \dim \kd_{\Gamma_2}, \pmod{2},
\end{align*}
and from the second gluing we get a sign 
\begin{align*}
 \sigma_2& \equiv \dim \kd_{\Gamma_1}\cdot[\dim \Ker
   \capp (\Gamma_2) +|\mu(a)|] + [\dim \cd_{\Gamma_1}+n]\cdot[n+1 +|\mu(p_1)|] \\
 &\quad {} + e(\Gamma_1)\cdot [\dim \Ker \capp(\Gamma_2)+ |\mu(p_2)| + e(\Gamma_2)] + \dim \cd_{\Gamma_{2a}} \cdot n \\
 &\quad{}+ \dim \vk(\Gamma_1) \cdot[ \dim \cd_{\Gamma_{2a}} + \dim \kd_{\Gamma_1}], \pmod{2}.
\end{align*}
Adding together these signs in the same time as we remember  \eqref{eq:kerdim} and \eqref{eq:sistacoker} we get 
\begin{align*}
 \sigma_1 + \sigma_2& \equiv |\mu(p_1)|\cdot  [\bm(\Gamma_2) + 1 + |\mu(p_2)|] + |\mu(a)|\cdot \dim \kd_{\Gamma_2}  \\
&\quad {} + [\bm(\Gamma_2)+\dim \vk(\Gamma_2) +n]\cdot[n+|\mu(a)|] 
 + [n+1]\cdot \dim \kd_{\Gamma_2} \\
 &\quad{} + \dim \kd_{\Gamma_1}\cdot[\bm(\Gamma_2) + 1 + |\mu(p_2)| +|\mu(a)|] \\
 &\quad{}+ [\bm(\Gamma_1)+\dim \vk(\Gamma_1)+n]\cdot[n+1 +|\mu(p_1)|] \\
 &\quad {} + e(\Gamma_1)\cdot [\bm(\Gamma_2) + 1  + e(\Gamma_2)] + \bm(\Gamma_2) \cdot n \\
 &\quad{}+ \dim \vk(\Gamma_1) \cdot \bm(\Gamma_2) + \dim \kd_{\Gamma_1}\cdot[n +1]\\
 &\equiv |\mu(p_1)|\cdot  [\bm(\Gamma_1) + \bm(\Gamma_2) + 1 + |\mu(p_2)|+n] + \bm(\Gamma_1)\cdot [n+1]\\
 &\quad{} +  [\bm(\Gamma_2) + n]\cdot|\mu(a)| + n +  e(\Gamma_1)\cdot [\bm(\Gamma_2) + 1  + e(\Gamma_2)]\\
 &\quad{} + \dim \kd_{\Gamma_1}\cdot[\bm(\Gamma_2) + 1 + |\mu(p_1)|+1 +n] \\
 &\quad{} + \dim \vk(\Gamma_1)\cdot[\bm(\Gamma_2)+|\mu(p_1)| +1 +n]\\
 &\quad{} + \dim \kd_{\Gamma_2}\cdot[n+1+|\mu(a)|] + \dim \vk(\Gamma_2)\cdot [n+|\mu(a)|]\\
  &\equiv |\mu(p_1)|\cdot  [\bm(\Gamma_1) + \bm(\Gamma_2) + 1 + |\mu(p_2)|] + \bm(\Gamma_1)\cdot [n+1]\\
 &\quad{} +  \bm(\Gamma_2) \cdot[|\mu(a)| + n] +  e(\Gamma_1)\cdot [\bm(\Gamma_2) + 1  + e(\Gamma_2)]\\
 &\quad{} + \dim \kd_{\Gamma_1}
+ \dim \kd_{\Gamma_2}, \pmod{2}.
\end{align*}
The remaining steps are completely similar to the case \ref{f1}.

\end{description}

 \section{An example}\label{sec:examples}
In this section we calculate the signs of some of the flow trees occurring in [\cite{trees}, Section 7]. First we recall the set up given there, and we modify it slightly to get rigid flow trees.

 We consider two fronts $F_1$, $F_2$ in $J^1(\R^2)$, where $F_1$ is given by the $0$-section, and $F_2$ is given by the $1$-jet lift of $f(x_1,x_2) = K - x_1$, $K \gg 0$. We consider the restriction of these fronts to the square $-R \leq |x_j| \leq R$, $j=1,2$. This gives rise to a 1-parameter family of Morse flow lines in the square.

We perturb the front $F_2$ slightly, so that we get a Reeb chord $a$ at $(-R,0)$ of index $1$, and a Reeb chord $b$ at $(R,0)$ of index $0$, and so that we have exactly one gradient flow line connecting $a$ to $b$, given by the $x_1$-axis restricted to the square. Thus we get a rigid flow tree $\Gamma$ with positive puncture $a$ and negative puncture $b$.

Now we perform the Legendrian isotopy of the $0$-section described in
[\cite{trees}, Section 7], and sketched in Figure \ref{fig:isot}. By
moving this construction in the $x_2$-direction, we get the flow tree
in Figure \ref{fig:tree5a}, and it
will be rigid. We denote this tree $\Gamma_A$,
, and we will show how to compute its capping orientation.

\begin{figure}[ht]
\centering
\includegraphics[height=3cm]{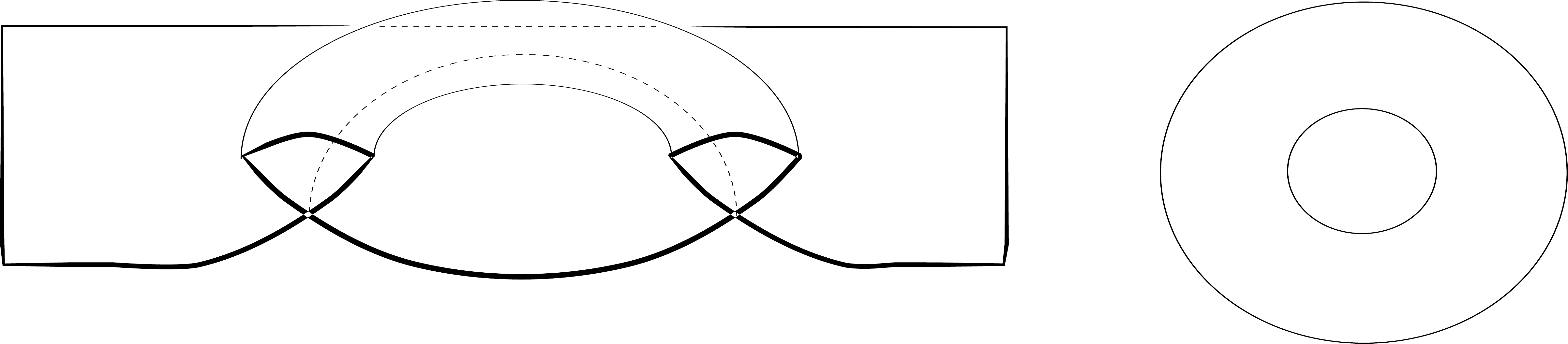}
\caption{The $0$-section after the Legendrian isotopy, to the left a cross-section and to the right the top-view.}
\label{fig:isot}
\end{figure}

\begin{figure}[ht]
\centering
\includegraphics[height=2.2cm]{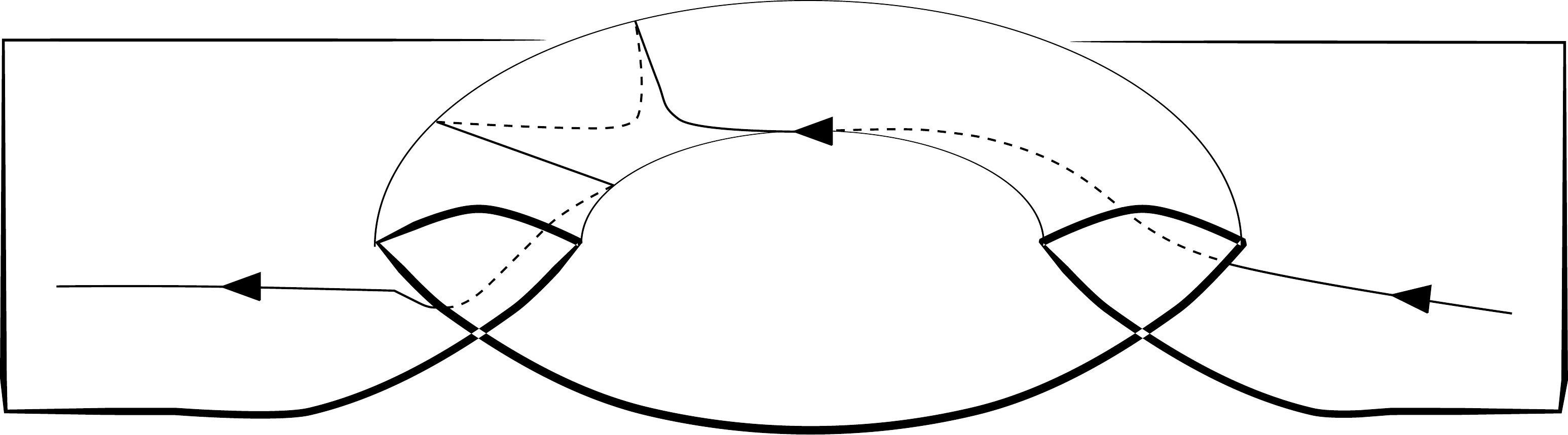}\\
\vspace{.5cm}
\labellist
\small\hair 2pt
\pinlabel $a$ [Br] at 27 70
\pinlabel $v_1$ [Br] at  135 40 
\pinlabel ${e_2}$ [Br] at 317 10
\pinlabel $s$ [Br] at  360 105
\pinlabel $b$ [Br] at  842 62 
\pinlabel $w_1$ [Br] at 349 38
\pinlabel $w_2$ [Br] at 410 29
\pinlabel $u_2$ [Br] at 57 105
\pinlabel $u_1$ [Br] at 130 155
\pinlabel ${e_1}$ [Br] at 280 255
\pinlabel ${v_0}$ [Br] at 280 115
\endlabellist
\centering
\includegraphics[height=2.7cm, width=8.5cm]{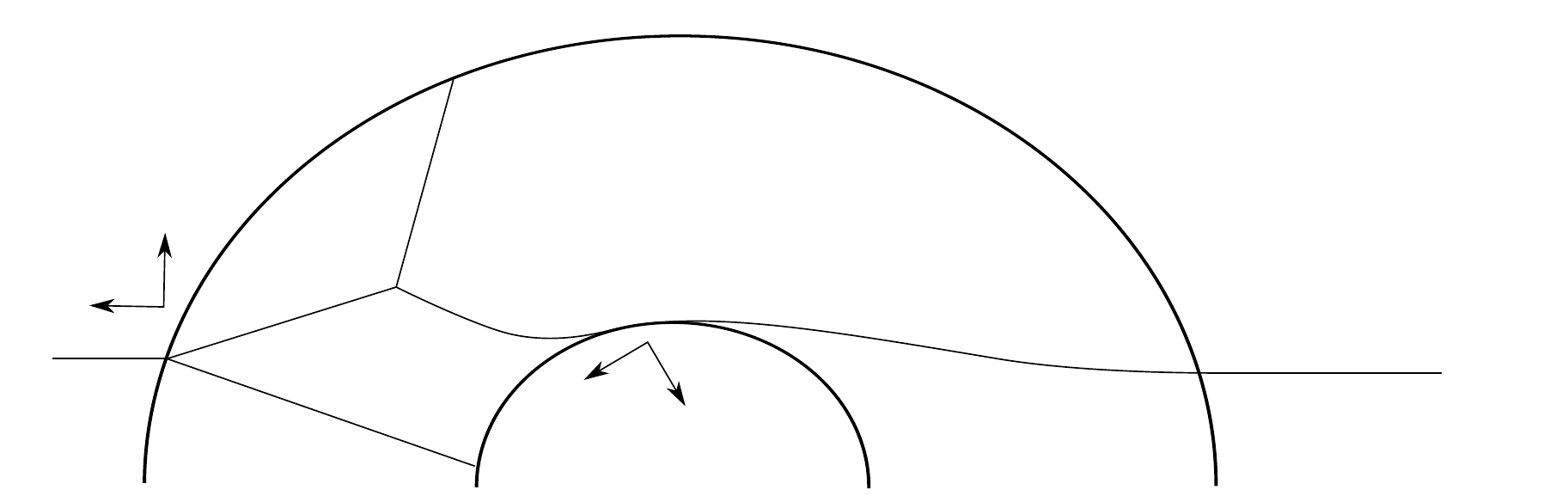}
\hspace{1.5cm}
\labellist
\small\hair 2pt
\pinlabel $a$ [Br] at 10  60
\pinlabel $v_1$ [Br] at  145 95 
\pinlabel $v_0$ [Br] at  240 40 
\pinlabel $s$ [Br] at 395 8
\pinlabel $b$ [Br] at 535 2
\pinlabel $e_1$ [Br] at 555 68
\pinlabel $e_2$ [Br] at 555 123
\endlabellist
\centering
\includegraphics[height=1.3cm]{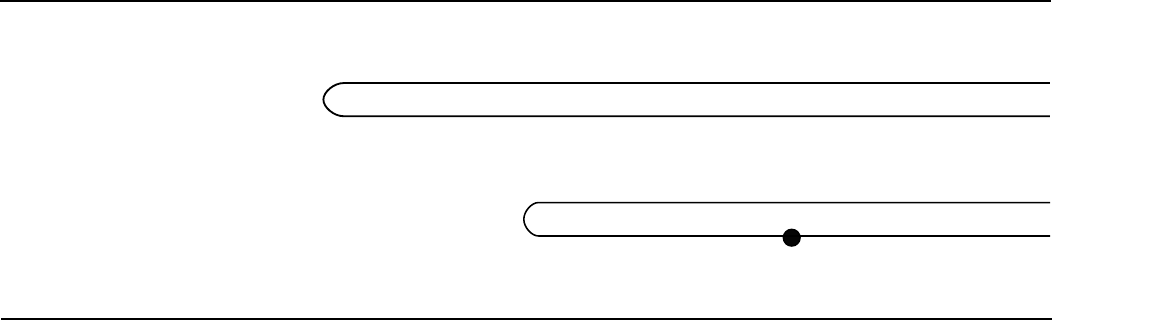}
%
\caption{In the top picture we find the lift of the flow lines of $\Gamma_A$ to the isotoped $0$-section. To the left we find the tree $\Gamma_A$ drawn in $\R^2$, together with the projection of the cusps.  To the right we see the corresponding standard domain.}
\label{fig:tree5a}
\end{figure}



\subsection{Initial choices}
We assume that  $|\mu(a)| = |\mu(b)|=0$. Then we can lift the trivialization of $T\R^2$ along $\Gamma$ to both the Legendrian sheets, to get a trivialization that we might assume to coincide with the one induced by the spin structure.

 To choose capping orientations of the stable and unstable manifolds associated to $a$ and $b$, we 
note that 
\begin{align*}
 &T_a W^u(a) = \spn (\partial_{x_1}) &  T_a W^s(a) &= \spn (\partial_{x_2}) \\
 &T_b W^u(b) = 0 &  T_b W^s(b)& = \spn (\partial_{x_1},\partial_{x_2}) .
\end{align*}
We choose the following orientations of the unstable manifolds
\begin{align*}
 &\Or(T_a W^u(a)) = +\partial_{x_1}   &\Or(T_b W^u(b)) = +0, &
\end{align*}
and hence we get the induced orientations 
\begin{align*}
 &\Or(T_a W^s(a)) = +\partial_{x_2}
& \Or(T_b W^s(b)) = \partial_{x_1} \wedge \partial_{x_2}. &
\end{align*}
\subsection{Orientation of  \texorpdfstring{$\Gamma$}{g}}\label{sec:orientshorttree}
We divide $\Gamma$ into two sub flow trees, at $p=(0,0)$, say. Let $\Gamma_a$ denote the tree with puncture $a$, and $\Gamma_b$ the tree with puncture $b$. 

From Section \ref{sec:orp} it follows that the capping orientation of $\Gamma_a$ is given by
\begin{equation*}
 \Or(T_p\F_p(\Gamma_a)) = \partial_{x_1}
\end{equation*}
and that the capping orientation of $\Gamma_b$ is given by
\begin{equation*}
  \Or(T_p\F_p(\Gamma_b)) = (-1)^{\sigma^{(0,0)}_{\negg,1}(0)} \partial_{x_1} \wedge \partial_{x_2}.
\end{equation*}

Now we glue $\Gamma_a$ and $\Gamma_b$ back together at $p$, to recover the capping orientation of $\Gamma$.
We see that 
\begin{equation*}
 T_p\F_p(\Gamma_b) \cap T_p\F_p(\Gamma_a) = \spn(\partial_{x_1}),
\end{equation*}
and that the sequence \eqref{eq:glu10l} induces the orientation
\begin{equation*}
  (-1)^{\sigma^{(0,0)}_{\negg,1}(0)}\partial_{x_1}
\end{equation*}
on this space, given the capping orientation of $\Gamma_a$ and
$\Gamma_b$. Thus, from the formula in Section \ref{sec:finish}, Case
\ref{f11}, we get that the capping orientation of $\Gamma$ is
given by the sign $(-1)^\sigma$,
\begin{equation*}
  \sigma =  \sigma^{(0,0)}_{\negg,1}(0)+1+0+1\cdot [n+0]+0+n \equiv
   \sigma^{(0,0)}_{\negg,1}(0) +1\pmod{2}.
\end{equation*}
Notice that $\beta(\s,A_d)= \mu(\Gamma) = 0$ in this case.

\subsection{Orientation of  \texorpdfstring{$\Gamma_A$}{G}}\label{sec:gammaasign}
In this case we have a tree with vertices given by a positive puncture $a$, a negative puncture $b$, a switch $s$, a $Y_0$-vertex $v_0$, a $Y_1$-vertex $v_1$, and two ends $e_1$, $e_2$. See Figure \ref{fig:tree5a}, where we also find the corresponding standard domain.

Again we divide the tree into the corresponding elementary pieces $\Gamma_a$, $\Gamma_b$, $\Gamma_s$, $\Gamma_{v_0}$, $\Gamma_{v_1}$, $\Gamma_{e_1}$ and $\Gamma_{e_2}$, where
the index indicates the vertex of the piece. We will orient these
pieces, and then glue them together, using the inductively described
scheme at the beginning of Section \ref{sec:stabstab}.

From Section \ref{sec:elementor} we see that the pieces should be given the following orientation.
\begin{itemize}
 \item The pieces $\Gamma_a$ and $\Gamma_b$ are oriented in the same way as when we calculated the orientation of $\Gamma$.
 \item The pieces $\Gamma_{e_1}$  and $\Gamma_{e_2}$ are given the orientation $\partial_{x_1}
   \wedge \partial_{x_2}$.
   \item The piece $\Gamma_{v_0}$ is given the orientation
     $(-1)^{\sigma^{(0,0)}_{Y_0}(1,1)}\partial_{x_1} \wedge \partial_{x_2}$.
 \item To orient $\Gamma_s$, consider Figure \ref{fig:tree5a}, where
   the arrow $w_2$  gives the outward normal of
   $\Pi(\Lambda_s)$, and $w_1=-(\partial_{x_1} +\partial_{x_2})$
   indicates the orientation of $T_s\Pi(\Sigma)$, oriented as the
   boundary of $\Pi(\Lambda_s)$. Notice that the negative puncture of
   $\Gamma_s$ has even Maslov
   index, and that the lifted tree passes the cusp in the lower sheet. Thus the capping
   orientation is given by $(-1)^{\sigma^{(0,1)}_{\swi}(l)}w_1=(-1)^{\sigma^{(0,1)}_{\swi}(l)+1}(\partial_{x_1} +\partial_{x_2})$.
 \item To orient $\Gamma_{v_1}$, consider Figure
\ref{fig:tree5a}. It follows that the capping orientation is given by
$(-1)^{\sigma^{(0,0)}_{Y_1}(0,1)} \partial_{x_2}$.
\end{itemize}

Now we are ready to glue. The scheme from Section \ref{sec:stabstab}
tells us that we should start by gluing $\Gamma_b$ to $\Gamma_s$, to get a sub flow tree $\Gamma_{bs}$. First we calculate the orientation of 
\begin{equation*}
  T_s\F_s(\Gamma_b) \cap T_s\Pi(\Sigma) = \spn(w_1),
\end{equation*}
induced by the sequence \eqref{eq:glu10l}, where $\Gamma_1 = \Gamma_b$, $\Gamma_2 = \Gamma_s$ and $T_s\Pi(\Sigma)$ is given the capping orientation of $\Gamma_s$. We get
that this orientation is given by
\begin{equation*}
  (-1)^{\sigma^{(0,0)}_{\negg,1}(0) + \sigma^{(0,1)}_{\swi}(l)}w_1,
\end{equation*}
and by Section \ref{sec:switchglu} it follows
that this coincides with the capping orientation of $\Gamma_{bs}$. This since $\kd_{\Gamma_1} = \spn(\partial_{x_1}, \partial_{x_2}) =  \kd_{\Gamma_{1,s}}$, $\kd_{\Gamma} = \spn(w_1)$.

Next we glue $\Gamma_{bs}$ to $\Gamma_{e_1}$ via $\Gamma_{v_0}$, to obtain the sub flow tree $\Gamma_{bsv_0e_1}$. In this case we have $\Gamma_1 = \Gamma_{bs}$, $\Gamma_2 = \Gamma_{e_1}$. If we let $w$ be the vector tangent to the edge between $v_0$ and $s$ at $v_0$, chosen so that it points against the flow direction, we see that $T_{v_0}\F_{v_0}(\Gamma_{bs}) = w$. Hence we get that the orientation of 
\begin{equation*}
  T_v\F_v(\Gamma_{bs}) \cap T_v\F_v(\Gamma_e) = \spn(w)
\end{equation*}
induced by the sequence \eqref{eq:glu10l} is given by
\begin{equation*}
  (-1)^{\sigma^{(0,0)}_{\negg,1}(0) + \sigma^{(0,1)}_{\swi}(l)+\sigma^{(0,0)}_{Y_0}(1,1)}w,
\end{equation*}
and the gluing sign from Section
\ref{sec:stabstabstab} is given by 
\begin{equation*}
  (-1)^{0+ 0 + [n+1+1]\cdot[0+1+1] + 1+2+1+n\cdot n} = 1.
\end{equation*}

Next we have to stabilize the tree
$\Gamma_{bsv_0e_1}$ after the gluing, by adding the vector field $t$
as described in Lemma \ref{lma:prestab}. Recall that this vector is nothing but the vector tangent to the edge between $v_1$ and $v_0$ oriented against the flow direction. From Figure \ref{fig:tree5a} we see that $w \wedge t = \partial_{x_1} \wedge \partial_{x_2}$, and thus we get that the flow-out of $\Gamma_{bsv_0e_1}$ is given the orientation 
\begin{equation*}
 (-1)^{\sigma^{(0,0)}_{\negg,1}(0) + \sigma^{(0,1)}_{\swi}(l) + \sigma^{(0,0)}_{Y_0}(1,1)+0} \partial_{x_1} \wedge \partial_{x_2}. 
\end{equation*}

Now we glue this tree to $\Gamma_{e_2}$ via $\Gamma_{v_1}$, to obtain
a sub flow tree $\Gamma_{B'}$. To do this, we first glue
$\Gamma_{e_2}$ to $\Gamma_{v_1}$. We get that the orientation of 
\begin{equation}\label{eq:avi1}
   T_{v_1}\F_{v_1}(\Gamma_{e_2}) \cap T_{v_1}\Pi(\Sigma) = \spn(\partial_{x_2})
 \end{equation}
 induced by the sequence \eqref{eq:glu10l} 
 is given by $(-1)^{\sigma^{(0,0)}_{Y_1}(0,1)} \partial_{x_2}$, where $T_{v_1} \Pi(\Sigma)$ is given the orientation from capping orientation of $\Gamma_{v_1}$. 
By intersecting $T_{v_1}\F_{v_1}(\Gamma_{bsv_0e_1})$ with this
oriented space and then calculating the sign from Section \ref{sec:y1glu} we get that the stabilized capping orientation of $\Gamma_{B'}$ is
given by
\begin{equation*}
  (-1)^{\sigma^{(0,0)}_{\negg,1}(0) + \sigma^{(0,1)}_{\swi}(l) +
      \sigma^{(0,0)}_{Y_0}(1,1)+\sigma^{(0,0)}_{Y_1}(0,1)}
    \partial_{x_2}.
  \end{equation*}
Here we have that $|\mu(p_1)| =0$, $|\mu(p_2)| = 1$, $\dim \kd_{\Gamma_1} = 1$, $\dim \kd_{\Gamma_2} = \dim \kd_{\Gamma_{2,s}} = \dim \kd_{\Gamma_{1,s}} =2$, $\dim \kd_\Gamma = 0$. 

Now it is time for the final gluing. We get that
 \begin{equation*}
 \Or(T_{v_1}\I_{v_1}(\Gamma_{B'})) \wedge \Or(T_{v_1}
    \F_{v_1}(\Gamma_a)) =
  (-1)^{ \sigma^{(0,0)}_{\negg,1}(0)
    + \sigma^{(0,1)}_{\swi}(l) + \sigma^{(0,0)}_{Y_0}(1,1)+\sigma^{(0,0)}_{Y_1}(0,1)+1}\partial_{x_1}\wedge \partial_{x_2}.
\end{equation*}
Moreover,  $\beta(s,A_d) = 0$, $\mu(\Gamma) = 0$, and $j=2$. So
by the formula in Section \ref{sec:finish}, Case \ref{f1}, we get that
the capping orientation of $\Gamma_B$ is given by $(-1)^\sigma$, where
\begin{align*}
  \sigma =&  \sigma^{(0,0)}_{\negg,1}(0)
    + \sigma^{(0,1)}_{\swi}(l) + \sigma^{(0,0)}_{Y_0}(1,1)+\sigma^{(0,0)}_{Y_1}(0,1)+1+0+1+0+2+1+0\\
  &\quad{} +0+ 1\cdot[1+0+2+1+n]+ [n+0]\cdot[2+1] +0\\
  =&  \sigma^{(0,0)}_{\negg,1}(0)
    + \sigma^{(0,1)}_{\swi}(l) + \sigma^{(0,0)}_{Y_0}(1,1)+\sigma^{(0,0)}_{Y_1}(0,1)+1.
\end{align*}

\begin{rmk}
  Since rigid flow trees can be used to define the differential in
  Legendrian contact homology, and since this homology theory gives
  an invariant for Legendrian submanifolds, it should be expected that the
  signs of $\Gamma$ and $\Gamma_A$ coincide. Thus we
  must have that
  \begin{align*}
    &\sigma^{(0,1)}_{\swi}(l) + \sigma^{(0,0)}_{Y_0}(1,1)+\sigma^{(0,0)}_{Y_1}(0,1) \equiv 0, \pmod{2}
  \end{align*}
  for $n=2$. In Subsection \ref{sec:longexgen} we consider a generalization of this example to derive more explicit relations, also in higher dimensions. 
\end{rmk}

\begin{rmk}
 It is often tricky to keep track of the dimensions of the true kernels $\kd_{\Gamma_1}, \kd_{\Gamma_2}, \kd_{\Gamma}$ since they are not geometric. Luckily, all these tend to cancel each other when summing up the total sign of a rigid tree.
\end{rmk}


 %

\section{Geometric derivation of sign functions}\label{sec:der}
In this section we will use Legendrian isotopies, similar to the one in Section \ref{sec:examples}, to find some of the values of the sign functions $\sigma_{Y_0}, \sigma_{Y_1}, \sigma_{\swi}$, and also some relations for the sign function $\sigma_{\negg,2}$. In Section \ref{sec:analder} we will use these results together with analytical computations to derive the remaining values of the sign functions to be able to explicitly compute the signs of rigid flow trees.

\subsection{Relations for \texorpdfstring{$\sigma_{Y_1}$}{Y}}\label{sec:y1rel}
To derive the expressions for $\sigma_{Y_1}$, consider first the case $n=1$, and let $\Gamma$ be a rigid flow tree with one positive puncture $a$, one negative puncture $b$, and no other vertices. The lift of $\Gamma$ then consists of two sheets, let $\sigma(u), \sigma(l) \in \Z_2$ be the sign of the upper respectively lower sheet, as defined in \eqref{eq:sheetsign}, but where we now use the additive structure. That is, we replace $-1$ with $1$, $1$ with $0$. Using similar notation as in Section \ref{sec:orientshorttree}, assume that 
\begin{equation*}
 \Or(T_p\F_p(\Gamma_a)) = \partial_{x_1}, \qquad
  \Or(T_p\F_p(\Gamma_b)) = (-1)^{\sigma^{(\sigma(u),\sigma(l))}_{\negg,1}(0)} \partial_{x_1},
\end{equation*}
where $\partial_{x_1}$ represents the positive orientation of $\R$.

Similar to how it is done in Section \ref{sec:examples} one computes that the capping orientation of $\Gamma$ is given by 
\begin{equation*}
 (-1)^{\sigma^{(\sigma(u),\sigma(l))}_{\negg,1}(0) + 1}.
\end{equation*}
Here we assume that the lifted trivialization, given in Section \ref{sec:treetriv}, coincides with the one induced by the spin structure (so that $\beta(\s,A_d) = 0$) and that the flow direction is given by $\partial_{x_1}$.

Now consider the two isotoped sheets in Figures \ref{fig:Y1eA} and \ref{fig:Y1eB} , which transform $\Gamma$ into rigid flow trees $\Gamma_1$ and $\Gamma_2$, respectively, both having one $Y_1$-vertex $v$, one end $e$ and true punctures $a,b$. Moreover, both these trees should have the same sign as $\Gamma$. To compute these signs, we use similar techniques as in Section \ref{sec:examples}. That is, we divide these trees into elementary trees $\Gamma_a^i, \Gamma_b^i, \Gamma_v^i, \Gamma_e^i$, $i=1,2$. The trees $  \Gamma_a^i, \Gamma_b^i$ are oriented as $\Gamma_a$ and $\Gamma_b$ above, $i = 1,2$, and the other trees have capping orientation
\begin{align*}
 \Or(\Gamma_v^1) &= (-1)^{\sigma_{Y_1}^{(\sigma(u),\sigma(l))}(1,\sigma(l) + \sigma(u)) +1} \\
  \Or(\Gamma_v^2) &= (-1)^{\sigma_{Y_1}^{(\sigma(u),\sigma(l))}(\sigma(l) + \sigma(u),1) +1} \\
   \Or(\Gamma_e^i) &= -\partial_{x_1}, \qquad i =1,2.
\end{align*}
Notice that the outward normal of the projection of the sheets meeting at the cusp at $v$ is given by $-\partial_{x_1}$, giving rise to the extra minus in the capping orientation of the elementary trees containing the $Y_1$-vertex. 

\begin{figure}[ht]
\centering
\labellist
\small\hair 2pt
\pinlabel $\partial_{x_1}$ [Br] at  1150 127 
\endlabellist
\centering
\includegraphics[height=2cm]{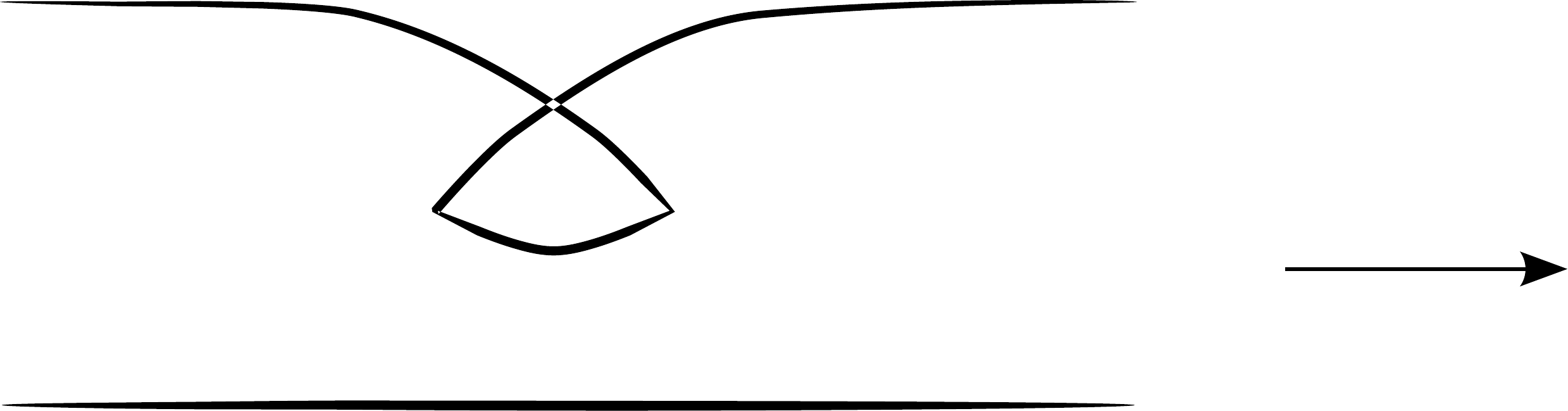}\\
\vspace{1.5cm}
\labellist
\small\hair 2pt
\pinlabel $a$ [Br] at -27 30
\pinlabel ${e}$ [Br] at 200 -5
\pinlabel $b$ [Br] at  200 72 
\endlabellist
\centering
\includegraphics[height=1.5cm, width=3.5cm]{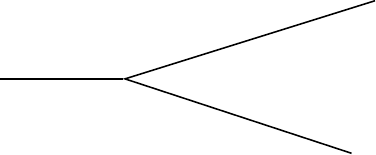}
\hspace{1.5cm}
\labellist
\small\hair 2pt
\pinlabel $a$ [Br] at 10  40
\pinlabel $b$ [Br] at 500 62
\pinlabel $e$ [Br] at 500 8
\pinlabel $v$ [Br] at 230 40
\endlabellist
\centering
\includegraphics[height=1.5cm]{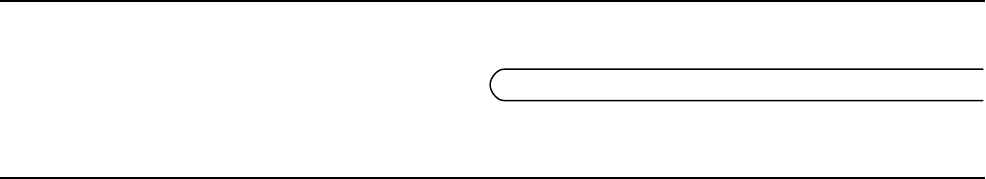}

\caption{The tree $\Gamma_1$.}
\label{fig:Y1eA}
\end{figure}

\begin{figure}[ht]
\centering
\labellist
\small\hair 2pt
\pinlabel $\partial_{x_1}$ [Br] at  1150 122 
\endlabellist
\centering
\includegraphics[height=2cm]{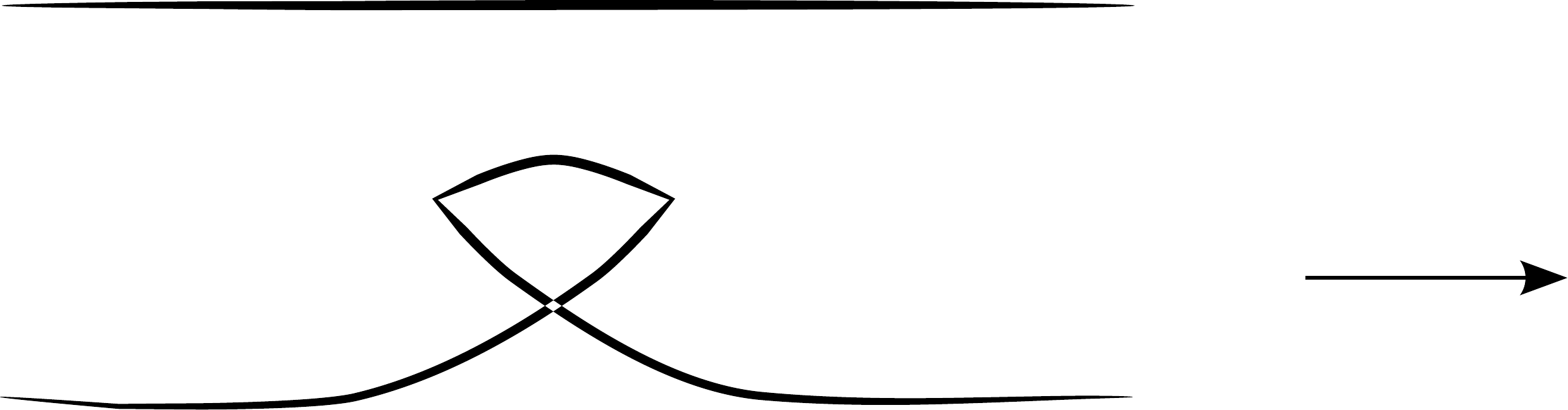}\\
\vspace{1.5cm}
\labellist
\small\hair 2pt
\pinlabel $a$ [Br] at -27 30
\pinlabel ${b}$ [Br] at 200 -5
\pinlabel $e$ [Br] at  200 72 
\endlabellist
\centering
\includegraphics[height=1.5cm, width=3.5cm]{endY1A}
\hspace{1.5cm}
\labellist
\small\hair 2pt
\pinlabel $a$ [Br] at 10  40
\pinlabel $e$ [Br] at 500 62
\pinlabel $b$ [Br] at 500 8
\pinlabel $v$ [Br] at 230 40
\endlabellist
\centering
\includegraphics[height=1.5cm]{domainY1eA}

\caption{The tree $\Gamma_2$.}
\label{fig:Y1eB}
\end{figure}

Now one computes that the capping orientation of $\Gamma_1$ is given by 
$(-1)^{\nu_1}$
where 
\begin{equation*}
 \nu_1 = \sigma^{(\sigma(u),\sigma(l))}_{\negg,1}(0)+\sigma_{Y_1}^{(\sigma(u),\sigma(l))}(1,\sigma(l) + \sigma(u)) 
\end{equation*} 
and that the  capping orientation of $\Gamma_2$ is given by $(-1)^{\nu_2}$ where
\begin{equation*}
 \nu_2 = \sigma^{(\sigma(u),\sigma(l))}_{\negg,1}(0)+\sigma_{Y_1}^{(\sigma(u),\sigma(l))}(\sigma(l) + \sigma(u),1) + \sigma(u) + \sigma(l) +1.  
\end{equation*} 
In both cases, we have $\beta(\s,A_d) = 1$. Indeed, for $\Gamma_1$ we get that $A_d$ performs  a $-2 \pi$--rotation in the $(\partial_{x_1}, \partial_{x_3})$--plane in the case when $\sigma(u)=0$ while we might assume that $A_\s$ is homotopic to a trivialization which only performs the geometric induced rotations, and that $A_s$ and $A_d^{-1}$ are forced to be concatenated to a $\pm 2\pi$ rotation in the $(\partial_{x_1}, \partial_{x_3})$--plane along one of the connected components of $\partial \Delta(\Gamma_1)$ in the case when $\sigma(u)=1$. For the $\Gamma_2$--tree we get a similar situation, but in this case we get a $-2\pi$-rotation in $A_d$ when $\sigma(l)=0$ and that $A_s$ and $A_d^{-1}$ are forced to be concatenated to a $\pm 2\pi$ rotation when $\sigma(l)=1$.

\begin{rmk}
 Using the notation from \cite{korta} we get that 
 \begin{align*}
  \nu_\en(\Gamma_1) &= (-1)^1, \\
  \nu_{\inter}(\Gamma_1) &= (-1)^0, \\
  \nu_{\stab}(\Gamma_1) &= (-1)^{\sigma^{(\sigma(u),\sigma(l))}_{\negg,1}(0)+\sigma_{Y_1}^{(\sigma(u),\sigma(l))}(1,\sigma(l) + \sigma(u)) }\\
  \nu_{\triv}(\Gamma_1) &= (-1)^1 \\
  \nu_\en(\Gamma_2) &= (-1)^0, \\ 
  \nu_{\inter}(\Gamma_2) &= (-1)^{0}, \\ 
  \nu_{\stab}(\Gamma_2) &= (-1)^{\sigma^{(\sigma(u),\sigma(l))}_{\negg,1}(0)+\sigma_{Y_1}^{(\sigma(u),\sigma(l))}(\sigma(l) + \sigma(u),1) + \sigma(u) + \sigma(l)} \\
  \nu_{\triv}(\Gamma_2) &= (-1)^1.
  \end{align*}
\end{rmk}

Since the sign of $\Gamma$ should coincide with the sign of both $\Gamma_1$ and $\Gamma_2$ we get that 
\begin{equation*}
 \nu_1 \equiv \nu_2 \equiv \sigma^{(\sigma(u),\sigma(l))}_{\negg,1}(0)+1, \pmod{2}
\end{equation*}
and hence that 
\begin{equation*}
 \sigma_{Y_1}^{(\sigma(u),\sigma(l))}(1,\sigma(l)+\sigma(u)) \equiv 1, \pmod{2}
\end{equation*}
for all choices of $\sigma(l),\sigma(u) \in \Z_2$, while 
\begin{equation*}
 \sigma_{Y_1}^{(0,0)}(0,1) \equiv \sigma_{Y_1}^{(1,1)}(0,1) \equiv 0 \pmod{2}.
\end{equation*}

Now we derive similar expressions when $n > 1$. To that end, let $W_1, W_2 \subset \R^n$ be oriented subspaces so that $\spn(\partial_{x_1}) \oplus W_1 \oplus W_2 = \R^n$ (unoriented) and so that 
\begin{equation*}
\Or(W_1) \wedge \Or( W_2) \wedge \partial_{x_1} = \Or(\R^n).
\end{equation*}
Assume that $T_aW^u(a) = \spn(\partial_{x_1}) \oplus W_2 $, $T_bW^s(b) = \spn(\partial_{x_1}) \oplus W_1 $ and that the capping orientation of $\Gamma_a$ and $\Gamma_b$ is given by  
\begin{equation*}
 \Or(\Gamma_a) = \Or(W_2) \wedge \partial_{x_1}
\end{equation*}
and
\begin{equation*}
\Or(\Gamma_b) = (-1)^{\sigma^{(\sigma(u),\sigma(l))}_{\negg,1}(\dim W_2)}\Or(W_1) \wedge \partial_{x_1},
\end{equation*}
respectively. 

Notice that 
\begin{align*}
 \Or(\Gamma_v^1) &= (-1)^{\sigma_{Y_1}^{(\sigma(u),\sigma(l))}(1,\sigma(l)+\sigma(u)) + 1} \Or(W_1) \wedge \Or(W_2), \\ 
  \Or(\Gamma_v^2) &= (-1)^{\sigma_{Y_1}^{(\sigma(u),\sigma(l))}(\sigma(l)+\sigma(u),1) + 1} \Or(W_1) \wedge \Or(W_2), \\
 \Or(\Gamma_e^i) &= \Or(W_1) \wedge \Or(W_2) \wedge \partial_{x_1}, \qquad i = 1,2.
\end{align*}

By computing the capping signs of these trees, similar to how it is done above, we get that the capping orientation of $\Gamma$ is given by $ (-1)^{\sigma^{(\sigma(u),\sigma(l))}_{\negg,1}(\dim W_2)+\nu}$, where 
\begin{equation*}
\nu = [\sigma(l) +\sigma(u)]\cdot[\dim W^u(a) + 1] + n\cdot[\dim W^u(a) + 1] + 1.
\end{equation*}
Here we assume that the lifted trivialization coincides with the one induced by the spin structure. In addition, we get that the capping orientation of $\Gamma_i$ is given by $ (-1)^{\sigma^{(\sigma(u),\sigma(l))}_{\negg,1}(\dim W_2)+\nu_i}$, $i=1,2$, where 
\begin{equation*}
 \nu_1 =   {[\sigma(l) +\sigma(u)]\cdot[\dim W^u(a) + 1] + n\cdot\dim W^u(a)  +  \sigma_{Y_1}^{(\sigma(u),\sigma(l))}(1,\sigma(l)+\sigma(u))},
\end{equation*}
\begin{equation*}
 \nu_2 = [\sigma(l) +\sigma(u)]\cdot\dim W^u(a) + n\cdot\dim W^u(a) +1 + \sigma_{Y_1}^{(\sigma(u),\sigma(l))}(\sigma(l)+\sigma(u),1).
\end{equation*}

 Since we should have $\nu \equiv \nu_1 \equiv \nu_2$ we see that 
 \begin{equation*}
  \sigma_{Y_1}^{(\sigma(u),\sigma(l))}(1,\sigma(l)+\sigma(u)) \equiv n +1\pmod{2}
 \end{equation*}

for all $\sigma(l), \sigma(u) \in \Z_2$, and that 
\begin{equation*}
 \sigma_{Y_1}^{(\sigma(u),\sigma(l))}(\sigma(l)+\sigma(u),1) \equiv \sigma(l) +\sigma(u) + n, \pmod{2}.
\end{equation*}

That is, we get 
\begin{align}\label{eq:y11}
&\sigma_{Y_1}^{(0,0)}(1,0) \equiv  \sigma_{Y_1}^{(1,1)}(1,0) \equiv  \sigma_{Y_1}^{(1,0)}(1,1) \equiv  \sigma_{Y_1}^{(0,1)}(1,1)   \equiv n + 1,  \\ \label{eq:y12}
&\sigma_{Y_1}^{(0,0)}(0,1) \equiv  \sigma_{Y_1}^{(1,1)}(0,1)  \equiv n, \pmod{2}.
\end{align}

\begin{rmk}
 Using the notation from \cite{korta} we get that 
 \begin{align*}
  \nu_{\en}(\Gamma) &= (-1)^0, \\
  \nu_{\inter}(\Gamma) &=(-1)^0, \\
  \nu_{\stab}(\Gamma) &= (-1)^{[\sigma(l) +\sigma(u)]\cdot[\dim W^u(a) + 1] + n\cdot[\dim W^u(a) + 1] + 1 + \sigma^{(\sigma(u),\sigma(l))}_{\negg,1}(\dim W_2)} \\
   \nu_{\triv}(\Gamma) &= (-1)^0 \\
  \nu_{\en}(\Gamma_1) &= (-1)^1, \\
  \nu_{\inter}(\Gamma_1) &=(-1)^{1 + n\cdot[1 + \dim W^u(a)]}, \\
  \nu_{\stab}(\Gamma_1) &= (-1)^{[\sigma(l) +\sigma(u)]\cdot[\dim W^u(a) + 1] + n + 1 + \sigma^{(\sigma(u),\sigma(l))}_{\negg,1}(\dim W_2)+ \sigma_{Y_1}^{(\sigma(u),\sigma(l))}(1,\sigma(l)+\sigma(u))} \\
   \nu_{\triv}(\Gamma_1) &= (-1)^1 \\
    \nu_{\en}(\Gamma_2) &= (-1)^0, \\
  \nu_{\inter}(\Gamma_2) &=(-1)^1, \\
  \nu_{\stab}(\Gamma_2) &= (-1)^{[\sigma(l) +\sigma(u)]\cdot\dim W^u(a) + n\cdot\dim W^u(a) + 1 + \sigma^{(\sigma(u),\sigma(l))}_{\negg,1}(\dim W_2) + \sigma_{Y_1}^{(\sigma(u),\sigma(l))}(\sigma(l)+\sigma(u),1)} \\ 
   \nu_{\triv}(\Gamma_2) &= (-1)^1.
 \end{align*}
\end{rmk}

\subsection{Relations for \texorpdfstring{$\sigma_{Y_0}$}{Y0} and \texorpdfstring{$\sigma_{\negg,2}$}{P2}}\label{sec:y0rel}
Here we compute some relations for $\sigma_{Y_0}$, and also state similar relations for $\sigma_{\negg,2}$, which can be computed in an analogous fashion.

 First consider the case when $n = 2$, and let $U \subset M= \R^2$ be a connected region so that we have four sheets of $\Lambda$ in $\Pi^{-1}(U)$ and a Morse flow tree $\Gamma_1$ as shown in Figure \ref{fig:reltree}, consisting of two $Y_0$-vertices, three negative punctures $b_1,b_2,b_3$ and one positive puncture $a$. Assume that the tree has a lift so that after a small Legendrian perturbation of the sheets in $\Pi^{-1}(U)$, supported away from the lifts of the punctures, the tree $\Gamma_1$ transforms to the tree $\Gamma_2$ in Figure \ref{fig:reltree2}. Then the sign of $\Gamma_1$ should coincide with the sign of $\Gamma_2$.

 \begin{figure}[ht]
\labellist
\small\hair 2pt
\pinlabel $\R$ [Br] at 65 280
\pinlabel ${1}$ [Br] at  290 240 
\pinlabel ${2}$ [Br] at 280 190
\pinlabel $3$ [Br] at 270 140
\pinlabel $4$ [Br] at 260 90
\pinlabel $b_1$ [Br] at 150 25
\pinlabel $b_2$ [Br] at 235 30
\pinlabel $b_3$ [Br] at 240 55
\pinlabel $\Gamma_1$ [Br] at 185 25
\pinlabel $M$ [Br] at 47 25
\pinlabel $b_1$ [Br] at 440 85
\pinlabel $b_2$ [Br] at 440 105
\pinlabel $b_3$ [Br] at 440 125
\pinlabel $v_1$ [Br] at 380 115
\pinlabel $v_2$ [Br] at 370 95
\endlabellist
\centering
\includegraphics[height=8cm, width=12cm]{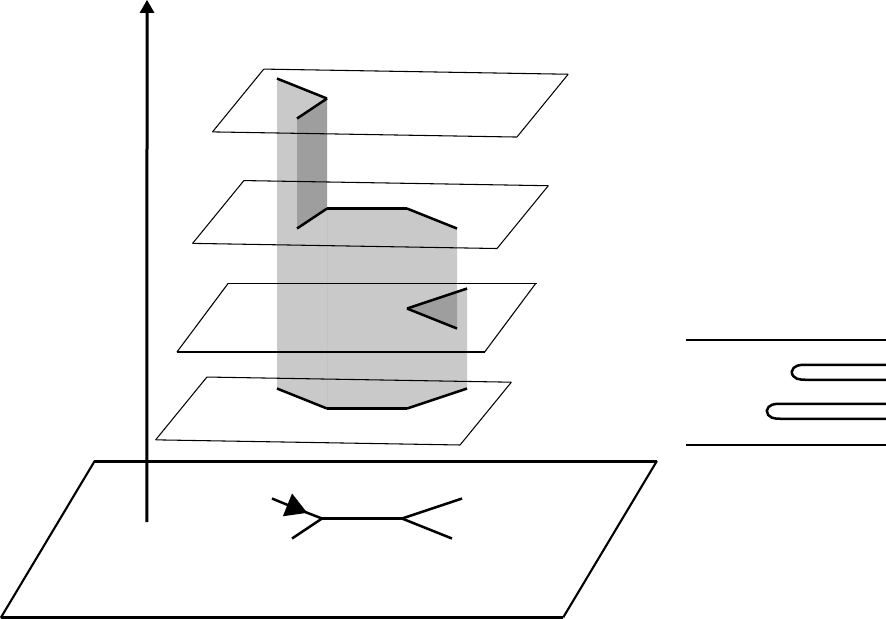}
\caption{The tree $\Gamma_1$ together with its lift and standard domain.}
\label{fig:reltree}
\end{figure}

 \begin{figure}[ht]
\labellist
\small\hair 2pt
\pinlabel $\R$ [Br] at 65 290
\pinlabel ${1}$ [Br] at  290 270 
\pinlabel ${2}$ [Br] at 280 210
\pinlabel $3$ [Br] at 270 160
\pinlabel $4$ [Br] at 260 110
\pinlabel $b_1$ [Br] at 150 25
\pinlabel $b_2$ [Br] at 205 30
\pinlabel $b_3$ [Br] at 210 75
\pinlabel $\Gamma_2$ [Br] at 185 55
\pinlabel $M$ [Br] at 47 25
\pinlabel $b_1$ [Br] at 480 100
\pinlabel $b_2$ [Br] at 480 120
\pinlabel $b_3$ [Br] at 480 140
\pinlabel $v_2$ [Br] at 400 130
\pinlabel $v_1$ [Br] at 410 110
\endlabellist
\centering
\includegraphics[height=8cm, width=12cm]{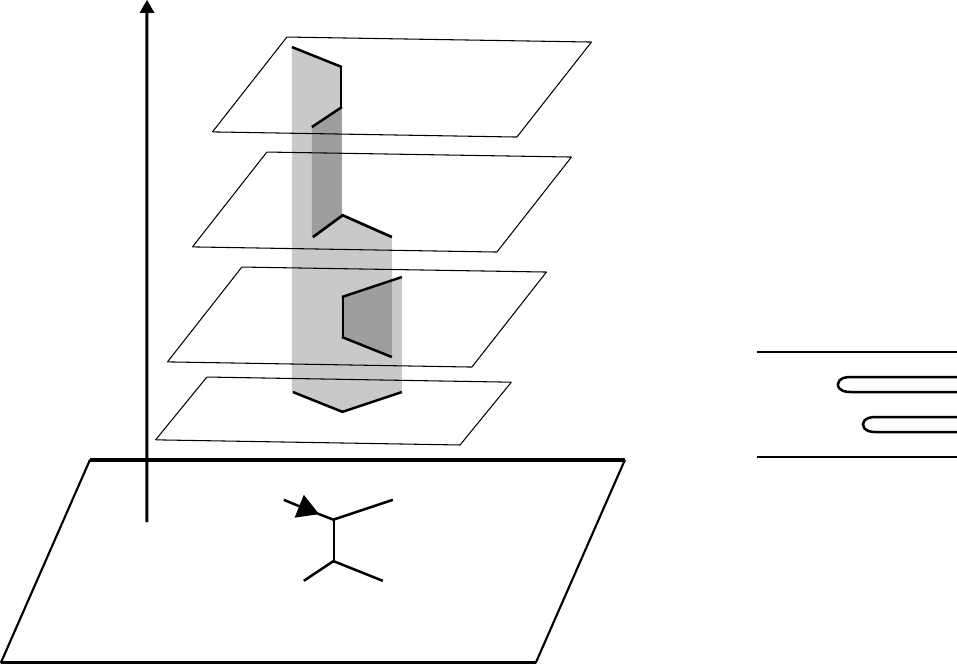}
\caption{The tree $\Gamma_1$ together with its lift and standard domain.}
\label{fig:reltree2}
\end{figure}
 
  Number the sheets as in Figure \ref{fig:reltree} and let $\sigma(i) \in \Z_2$, $i=1, \dotsc, 4$, be the signs of the sheets as defined in \eqref{eq:sheetsign}, using additive notation. Pick an orientation of $\R^2$.  To compute the capping orientation of $\Gamma_1$ and $\Gamma_2$, notice first that there are four different combinations of the dimensions of the unstable manifolds of the punctures $a, b_1, b_2, b_3$ which can occur making $\Gamma_1$ and $\Gamma_2$ rigid. Namely, exactly one of these can have even dimension, and the other three must have dimension 1. Moreover, if $W^u(a)$ is of even dimension, then it has dimension 2, and if $W^u(b_i)$ has even dimension, $i=1,2$ or $3$, then it has dimension 0. 
  
  In each case, we divide the tree $\Gamma_i$ into elementary pieces $\Gamma_a^i, \Gamma_{b_1}^i, \Gamma_{b_2}^i, \Gamma_{b_3}^i, \Gamma_{v_1}^i, \Gamma_{v_2}^i$, $i=1,2$, using notation similar to the one in Section \ref{sec:examples}, and where $v_1, v_2$ represents the $Y_0$-vertices. 
  
  The capping orientation of $\Gamma_{v_1}^i, \Gamma_{v_2}^i$, $i=1,2$ is then given by
  \begin{align*}
     \Or(\Gamma_{v_1}^1) &=  (-1)^{\sigma_{Y_0}^{(\sigma(2), \sigma(4))}(|\mu(b_2)|,|\mu(b_3)|)} \Or(\R^2) \\
   \Or(\Gamma_{v_2}^1) &=  (-1)^{\sigma_{Y_0}^{(\sigma(1), \sigma(4))}(|\mu(b_1)|,|\mu(b_2)| +| \mu(b_3)|)} \Or(\R^2) \\
      \Or(\Gamma_{v_1}^2) &=  (-1)^{\sigma_{Y_0}^{(\sigma(1), \sigma(3))}(|\mu(b_1)|,|\mu(b_2)|)} \Or(\R^2) \\
   \Or(\Gamma_{v_2}^2) &=  (-1)^{\sigma_{Y_0}^{(\sigma(1), \sigma(4))}(|\mu(b_1)| + |\mu(b_2)|, | \mu(b_3)|)} \Or(\R^2). 
  \end{align*}
The capping orientation of the other elementary trees we assume to be given as follows. Choose orientations 
 \begin{equation*}
  \Or_{\capp}(T_aW^u(a)), \quad \Or_{\capp}(T_{b_i}W^s(b_i)), \quad i = 1,2,3,
 \end{equation*}
of the stable and unstable manifolds related to the punctures as in Table \ref{table:0cases}, for each configuration. 
  \begin{table}[hhh]
 \begin{center}
   \caption{{\bf Different cases of configurations for the inputs and choices of orientations of flow manifolds.} 
   }\label{table:0cases}
 \tabulinesep=1.15mm
  \begin{tabu}{|>{\arraybackslash}m{1.2in} |>{\arraybackslash}m{.9in} | >{\arraybackslash}m{1in} |>{\arraybackslash}m{1in} |>{\arraybackslash}m{1in} |}
  \hline &$\Or(T_aW^u(a))$  & $\Or(T_{b_1}W^s(b_1))$ & $\Or(T_{b_2}W^s(b_2))$ & $\Or(T_{b_3}W^s(b_3))$ \\
   \hline 
   $\dim W^u(a) =2$&$\Or(\R^2) $ & $\partial_{x_1} + \partial_{x_2}$ & $-\partial_{x_1} + \partial_{x_2}$ & $-\partial_{x_1} - \partial_{x_2}$ \\
      \hline
     $\dim W^u(b_1) =0$&$\partial_{x_1} - \partial_{x_2}$ & $\Or(\R^2) $ &$-\partial_{x_1} + \partial_{x_2}$ & $-\partial_{x_1} - \partial_{x_2}$   \\
   \hline 
   $\dim W^u(b_2) =0$ &$\partial_{x_1} - \partial_{x_2}$ & $\partial_{x_1} + \partial_{x_2}$ & $\Or(\R^2) $  & $-\partial_{x_1} - \partial_{x_2}$   \\
       \hline 
   $\dim W^u(b_3) =0$ &$\partial_{x_1} - \partial_{x_2}$ & $\partial_{x_1} + \partial_{x_2}$ & $-\partial_{x_1} + \partial_{x_2}$ & $\Or(\R^2) $   \\
    \hline
\end{tabu}
 \end{center}
\end{table}
Then we get that the capping orientations of the elementary pieces containing true punctures are given as follows:
\begin{align*}
 \Or(\Gamma_a^1) &= \Or(\Gamma_a^2) = \Or(T_aW^u(a)) \\
  \Or(\Gamma_{b_1}^1) &= \Or(\Gamma_{b_1}^2) = (-1)^{\sigma^{(\sigma(1),\sigma(2))}_{\negg,1}( \dim(W^u(b_1))}\Or(T_{b_1}W^s(b_1)) \\
    \Or(\Gamma_{b_2}^1) &= \Or(\Gamma_{b_2}^2) = (-1)^{\sigma^{(\sigma(2),\sigma(3))}_{\negg,1}( \dim(W^u(b_2))}\Or(T_{b_2}W^s(b_2)) \\
      \Or(\Gamma_{b_3}^1) &= \Or(\Gamma_{b_3}^2) = (-1)^{\sigma^{(\sigma(3),\sigma(4))}_{\negg,1}( \dim(W^u(b_3))}\Or(T_{b_3}W^s(b_3)).
\end{align*}

Let 
\begin{align*}
 \nu_0 &= \sigma^{(\sigma(1),\sigma(2))}_{\negg,1}( \dim(W^u(b_1)) +\sigma^{(\sigma(2),\sigma(3))}_{\negg,1}( \dim(W^u(b_2)) \\
 &\quad{}+  \sigma^{(\sigma(3),\sigma(4))}_{\negg,1}( \dim(W^u(b_3)).
\end{align*}
Now, similarly to how it is done in Section 7, one computes that the capping orientation of $\Gamma_i$ is given by $(-1)^{\nu_0 + \nu_i}$, $i = 1,2,$ where 
\begin{align*}
 \nu_1 \equiv & \quad{} \sigma_{Y_0}^{(\sigma(2), \sigma(4))}(|\mu(b_2)|,|\mu(b_3)|) + \sigma_{Y_0}^{(\sigma(1), \sigma(4))}(|\mu(b_1)|,|\mu(b_2)|+|\mu(b_3)|) \\
 &\quad{} +\dim W^s(b_1) \cdot[|\mu(b_2)| + |\mu(b_3)| + 1]  + \dim W^s(b_2) \cdot[|\mu(b_3)| + n]  \\
&\quad{} + |\mu(b_1)|\cdot[|\mu(b_2)| + |\mu(b_3)|] + |\mu(b_2)|\cdot[n+1 + |\mu(b_3)|] \\
&\quad{} + \dim W^s(b_3) + \dim W^u(a)\cdot[n + |\mu(a)| + 1] +  |\mu(a)| + n \\
&\quad{} + (n+1) \cdot(\dim W^s(b_1) + \dim W^s(b_3)) + n \\
 \nu_2 \equiv & \quad{} \sigma_{Y_0}^{(\sigma(1), \sigma(3))}(|\mu(b_1)|,|\mu(b_2)|)+ 
 \sigma_{Y_0}^{(\sigma(1), \sigma(4))}(|\mu(b_1)| + |\mu(b_2)|, | \mu(b_3)|) \\
 &\quad{} + \dim W^s(b_1) \cdot[|\mu(b_2)| + |\mu(b_3)| + 1]  + \dim W^s(b_2) \cdot[|\mu(b_3)| + n]  \\
&\quad{} + |\mu(b_1)|\cdot[|\mu(b_2)| + |\mu(b_3)|] + |\mu(b_2)|\cdot[n+1 + |\mu(b_3)|] \\
&\quad{} + \dim W^s(b_3)\cdot n + \dim W^u(a)\cdot[n + |\mu(a)| + 1] +  |\mu(a)| + 1 \\
&\quad{} + (n+1)\cdot \dim W^s(b_1)+ 1, \\
\end{align*}
modulo 2. Here we assume that $\beta(\s,A_d) = 0$ for both trees. Since we should have $\nu_1 \equiv \nu_2$ modulo 2, we get that the following should be satisfied:
\begin{align}\label{eq:sigmayrelnotsimpl}
 &\sigma_{Y_0}^{(\sigma(2), \sigma(4))}(|\mu(b_2)|,|\mu(b_3)|) + \sigma_{Y_0}^{(\sigma(1), \sigma(4))}(|\mu(b_1)|,|\mu(b_2)| +| \mu(b_3)|) \\ \nonumber
 &\equiv 
 \sigma_{Y_0}^{(\sigma(1), \sigma(3))}(|\mu(b_1)|,|\mu(b_2)|)+ 
 \sigma_{Y_0}^{(\sigma(1), \sigma(4))}(|\mu(b_1)| + |\mu(b_2)|, | \mu(b_3)|), \pmod{2}.
\end{align}
Since 
\begin{align*}
 |\mu(b_1)| &\equiv  \sigma(1) + \sigma(2) \\
  |\mu(b_2)| &\equiv  \sigma(2) + \sigma(3) \\
   |\mu(b_3)| &\equiv  \sigma(3) + \sigma(4), \pmod{2}
\end{align*}
we get the following relations (after simplifying)
\begin{align}\label{eq:y01}
&\sigma_{Y_0}^{(0, 0)}(0, 0) \equiv \sigma_{Y_0}^{(0, 1)}(0, 1) \equiv \sigma_{Y_0}^{( 1, 0)}(1, 0) \\ \label{eq:y02}
&\sigma_{Y_0}^{(0, 1)}(1, 0)  \equiv \sigma_{Y_0}^{( 1, 0)}(0, 1)  \equiv \sigma_{Y_0}^{(1, 1)}(0, 0) \\ \label{eq:y03}
&\sigma_{Y_0}^{(0, 0)}(1, 1) + \sigma_{Y_0}^{( 1, 1)}(1, 1) \equiv \sigma_{Y_0}^{(0, 1)}(1, 0) + \sigma_{Y_0}^{(0, 1)}(0, 1) \equiv \sigma_{Y_0}^{(1, 0)}(1, 0) + \sigma_{Y_0}^{(1, 0)}(0, 1),
\end{align}
modulo 2.

In fact, these relations hold in all dimensions $\geq 2$. To see this, one can consider the same pair of trees in higher dimensions. That is, consider $\tilde \Gamma_i = \Gamma_i \times \{0\} \subset \R^2 \oplus \R^{n-2} = \R^n $, $i=1,2$. We assume that the unstable manifolds of the punctures  of $\tilde \Gamma_i$ coincide with the unstable manifolds of the punctures of $\Gamma_i$ in $\R^2$ as described above, and that the stable manifolds of the punctures of $\tilde \Gamma_i$ are given by the stable manifolds of $\Gamma_i$ in $\R^2$ as above times $\R^{n-2}$, $i=1,2$. Then one computes that the difference in capping orientation between $\tilde \Gamma_1$ and $\tilde \Gamma_2$ again is given by \eqref{eq:sigmayrelnotsimpl}.

\begin{rmk}
 Using the notation from \cite{korta} we get that 
 \begin{align*}
  \nu_{\en}(\tilde \Gamma_i) &= (-1)^0,\qquad i = 1,2, \\
  \nu_{\inter}(\tilde \Gamma_1) &=(-1)^{[n+1] \cdot\dim W^s(b_1) +1}, \\
   \nu_{\inter}(\tilde \Gamma_2) &=(-1)^{[n+1] \cdot[\dim W^s(b_1)+\dim W^s(b_3)]+n}\nu_{\inter}(\tilde \Gamma_2), \\
  \nu_{\stab}(\tilde \Gamma_1) &= (-1)^{\nu_0 + \nu_1 + [n+1] \cdot\dim W^s(b_1) +1} \\
  \nu_{\stab}(\tilde \Gamma_2) &= (-1)^{\nu_0 + \nu_2 + [n+1] \cdot[\dim W^s(b_1)+\dim W^s(b_3)]+n }
 \end{align*}
and without loss of generality we might assume that $\nu_{\triv} = (-1)^0$ for all trees. 
\end{rmk}

\begin{rmk}
 If $n = 1$ we cannot have rigid trees looking like $\Gamma_1$ and $\Gamma_2$.
\end{rmk}

We also perform similar calculations for 2-valent negative punctures. That is, consider the variations of Figures \ref{fig:reltree} and \ref{fig:reltree2} given by Figures \ref{fig:reltree3} -- \ref{fig:reltree10}.

 \begin{figure}[ht]
\labellist
\small\hair 2pt
\pinlabel $\R$ [Br] at 65 280
\pinlabel ${1}$ [Br] at  290 240 
\pinlabel ${2}$ [Br] at 280 190
\pinlabel $3$ [Br] at 270 140
\pinlabel $4$ [Br] at 260 90
\pinlabel $b_1$ [Br] at 175 15
\pinlabel $b_2$ [Br] at 235 40
\pinlabel $b_3$ [Br] at 200 30
\pinlabel $\Gamma_3$ [Br] at 185 55
\pinlabel $M$ [Br] at 47 25
\pinlabel $b_1$ [Br] at 440 85
\pinlabel $b_2$ [Br] at 440 105
\pinlabel $b_3$ [Br] at 440 125
\endlabellist
\centering
\includegraphics[height=8cm, width=12cm]{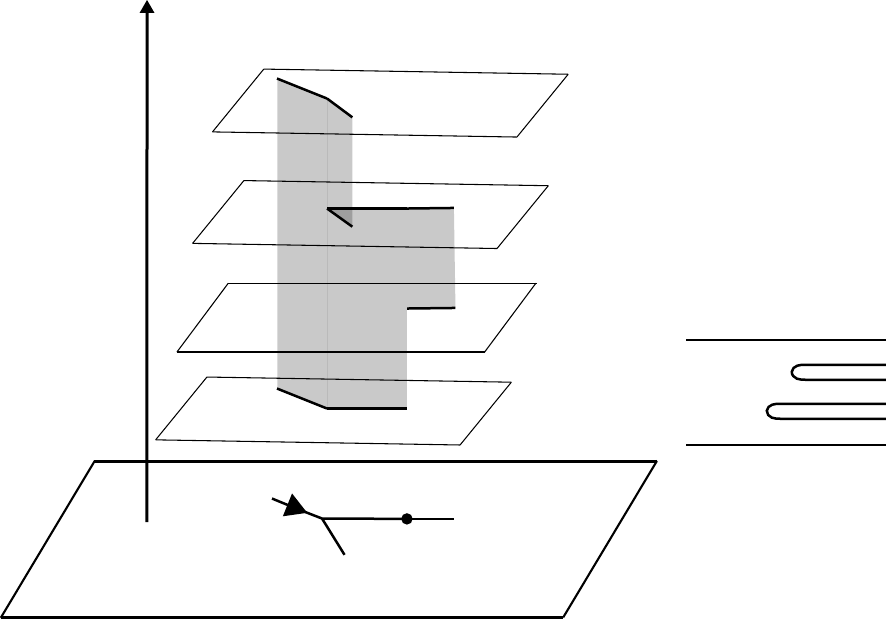}
\caption{The tree $\Gamma_3$ together with its lift and standard domain.}
\label{fig:reltree3}
\end{figure}

 \begin{figure}[ht]
\labellist
\small\hair 2pt
\pinlabel $\R$ [Br] at 65 280
\pinlabel ${1}$ [Br] at  290 240 
\pinlabel ${2}$ [Br] at 280 190
\pinlabel $3$ [Br] at 270 140
\pinlabel $4$ [Br] at 260 90
\pinlabel $b_3$ [Br] at 150 35
\pinlabel $b_1$ [Br] at 180 25
\pinlabel $b_2$ [Br] at 190 50
\pinlabel $\Gamma_3'$ [Br] at 170 60
\pinlabel $M$ [Br] at 47 25
\pinlabel $b_1$ [Br] at 440 85
\pinlabel $b_2$ [Br] at 440 105
\pinlabel $b_3$ [Br] at 440 125
\endlabellist
\centering
\includegraphics[height=8cm, width=12cm]{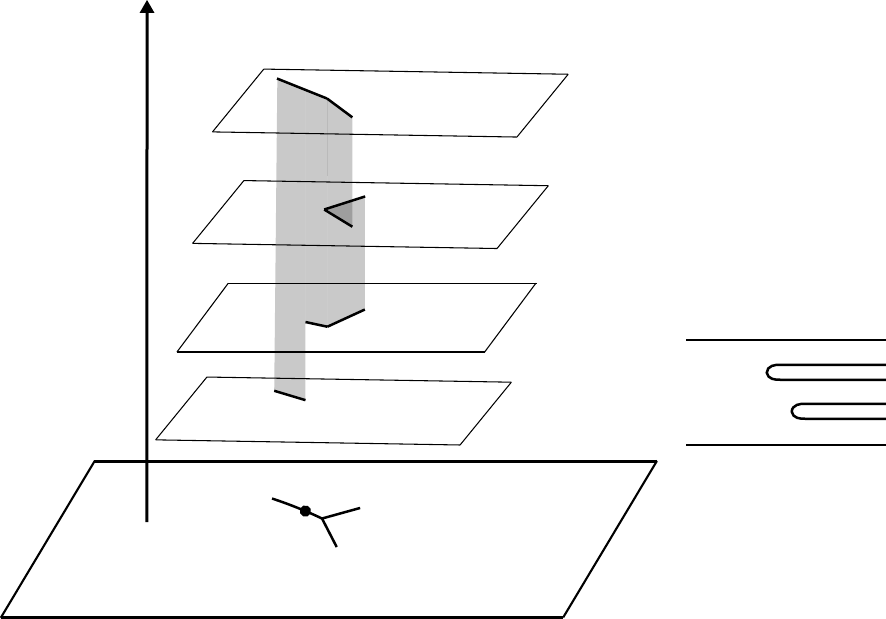}
\caption{The tree $\Gamma_3'$ together with its lift and standard domain.}
\label{fig:reltree4}
\end{figure}

 \begin{figure}[ht]
\labellist
\small\hair 2pt
\pinlabel $\R$ [Br] at 65 280
\pinlabel ${1}$ [Br] at  290 240 
\pinlabel ${2}$ [Br] at 280 190
\pinlabel $3$ [Br] at 270 140
\pinlabel $4$ [Br] at 260 90
\pinlabel $b_1$ [Br] at 165 35
\pinlabel $b_2$ [Br] at 235 10
\pinlabel $b_3$ [Br] at 240 45
\pinlabel $\Gamma_4$ [Br] at 205 55
\pinlabel $M$ [Br] at 47 25
\pinlabel $b_1$ [Br] at 440 85
\pinlabel $b_2$ [Br] at 440 105
\pinlabel $b_3$ [Br] at 440 125
\endlabellist
\centering
\includegraphics[height=8cm, width=12cm]{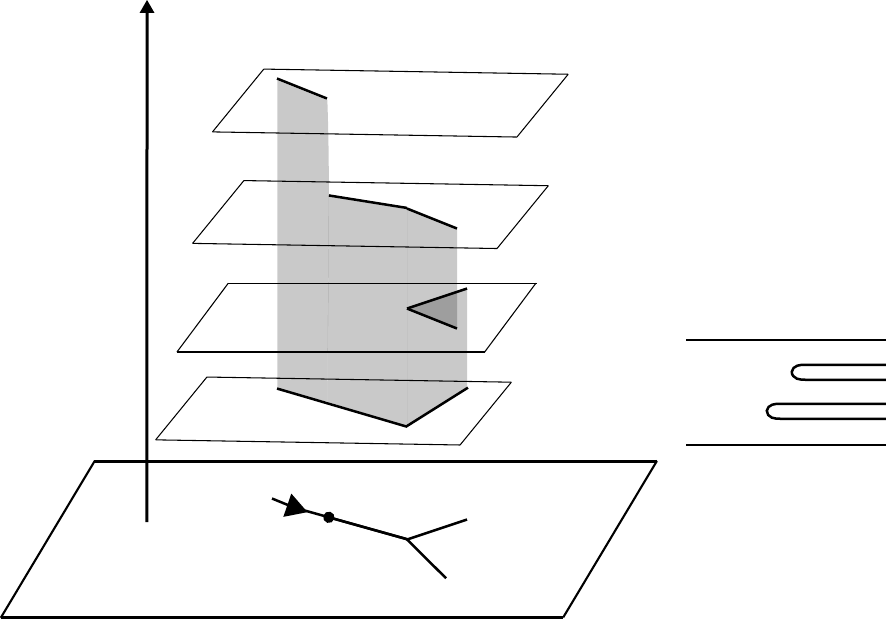}
\caption{The tree $\Gamma_4$ together with its lift and standard domain.}
\label{fig:reltree5}
\end{figure}

 \begin{figure}[ht]
\labellist
\small\hair 2pt
\pinlabel $\R$ [Br] at 65 280
\pinlabel ${1}$ [Br] at  290 240 
\pinlabel ${2}$ [Br] at 280 190
\pinlabel $3$ [Br] at 270 140
\pinlabel $4$ [Br] at 260 90
\pinlabel $b_1$ [Br] at 190 25
\pinlabel $b_2$ [Br] at 225 10
\pinlabel $b_3$ [Br] at 230 55
\pinlabel $\Gamma_4'$ [Br] at 185 55
\pinlabel $M$ [Br] at 47 25
\pinlabel $b_1$ [Br] at 440 85
\pinlabel $b_2$ [Br] at 440 105
\pinlabel $b_3$ [Br] at 440 125
\endlabellist
\centering
\includegraphics[height=8cm, width=12cm]{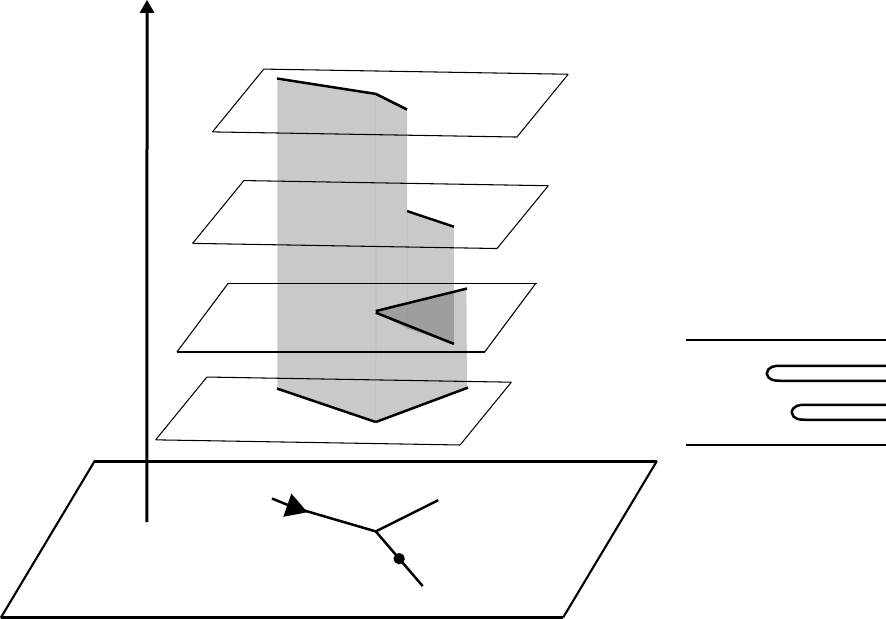}
\caption{The tree $\Gamma_4'$ together with its lift and standard domain.}
\label{fig:reltree6}
\end{figure}

 \begin{figure}[ht]
\labellist
\small\hair 2pt
\pinlabel $\R$ [Br] at 65 280
\pinlabel ${1}$ [Br] at  290 240 
\pinlabel ${2}$ [Br] at 280 190
\pinlabel $3$ [Br] at 270 140
\pinlabel $4$ [Br] at 260 90
\pinlabel $b_1$ [Br] at 180 20
\pinlabel $b_2$ [Br] at 205 30
\pinlabel $b_3$ [Br] at 240 40
\pinlabel $\Gamma_5$ [Br] at 135 25
\pinlabel $M$ [Br] at 47 25
\pinlabel $b_1$ [Br] at 440 85
\pinlabel $b_2$ [Br] at 440 105
\pinlabel $b_3$ [Br] at 440 125
\endlabellist
\centering
\includegraphics[height=8cm, width=12cm]{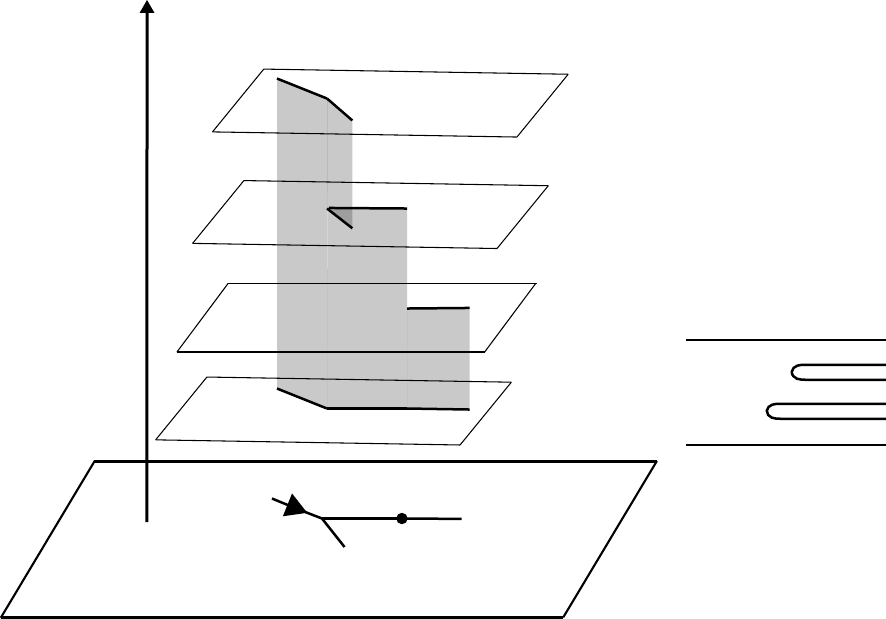}
\caption{The tree $\Gamma_5$ together with its lift and standard domain.}
\label{fig:reltree7}
\end{figure}

 \begin{figure}[ht]
\labellist
\small\hair 2pt
\pinlabel $\R$ [Br] at 65 280
\pinlabel ${1}$ [Br] at  290 240 
\pinlabel ${2}$ [Br] at 280 190
\pinlabel $3$ [Br] at 270 140
\pinlabel $4$ [Br] at 260 90
\pinlabel $b_1$ [Br] at 200 15
\pinlabel $b_2$ [Br] at 165 25
\pinlabel $b_3$ [Br] at 200 55
\pinlabel $\Gamma_5'$ [Br] at 125 25
\pinlabel $M$ [Br] at 47 25
\pinlabel $b_1$ [Br] at 440 85
\pinlabel $b_2$ [Br] at 440 105
\pinlabel $b_3$ [Br] at 440 125
\endlabellist
\centering
\includegraphics[height=8cm, width=12cm]{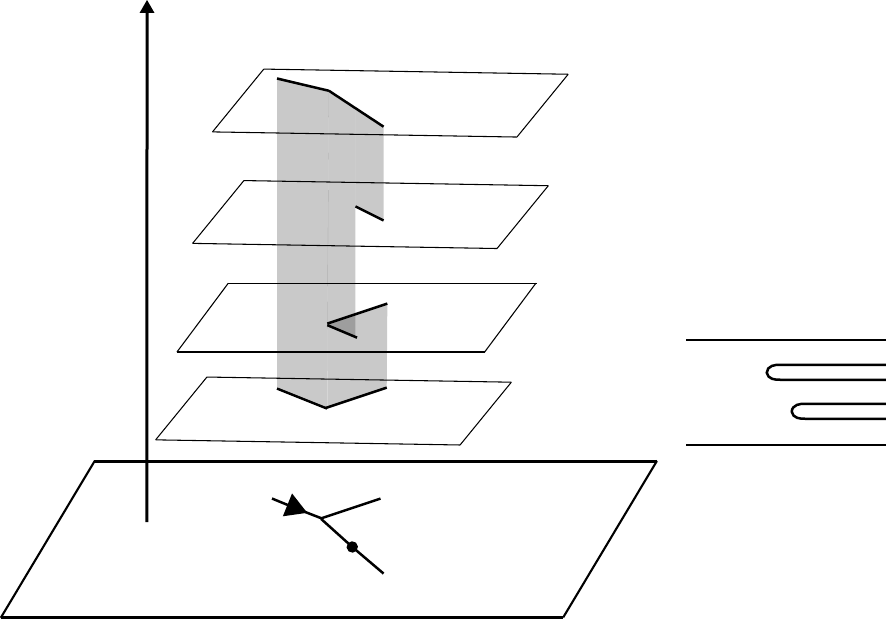}
\caption{The tree $\Gamma_5'$ together with its lift and standard domain.}
\label{fig:reltree8}
\end{figure}

 \begin{figure}[ht]
\labellist
\small\hair 2pt
\pinlabel $\R$ [Br] at 65 280
\pinlabel ${1}$ [Br] at  290 240 
\pinlabel ${2}$ [Br] at 280 190
\pinlabel $3$ [Br] at 270 140
\pinlabel $4$ [Br] at 260 90
\pinlabel $b_3$ [Br] at 200 30
\pinlabel $b_2$ [Br] at 245 45
\pinlabel $b_1$ [Br] at 165 30
\pinlabel $\Gamma_6$ [Br] at 185 60
\pinlabel $M$ [Br] at 47 25
\pinlabel $b_1$ [Br] at 440 85
\pinlabel $b_2$ [Br] at 440 105
\pinlabel $b_3$ [Br] at 440 125
\endlabellist
\centering
\includegraphics[height=8cm, width=12cm]{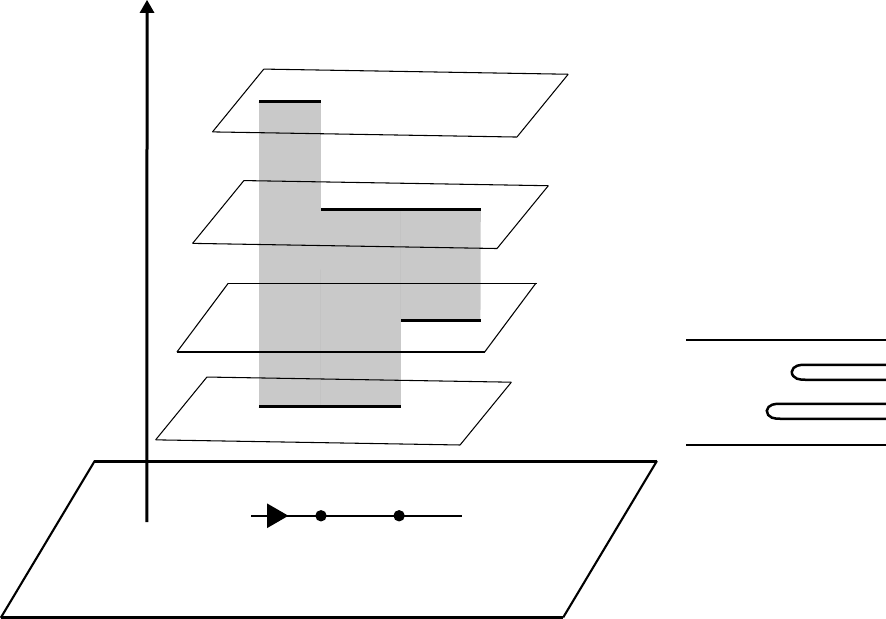}
\caption{The tree $\Gamma_6$ together with its lift and standard domain.}
\label{fig:reltree9}
\end{figure}

 \begin{figure}[ht]
\labellist
\small\hair 2pt
\pinlabel $\R$ [Br] at 65 280
\pinlabel ${1}$ [Br] at  290 250 
\pinlabel ${2}$ [Br] at 280 190
\pinlabel $3$ [Br] at 270 140
\pinlabel $4$ [Br] at 260 90
\pinlabel $b_1$ [Br] at 200 30
\pinlabel $b_2$ [Br] at 245 45
\pinlabel $b_3$ [Br] at 165 30
\pinlabel $\Gamma_6'$ [Br] at 185 60
\pinlabel $M$ [Br] at 47 25
\pinlabel $b_1$ [Br] at 440 85
\pinlabel $b_2$ [Br] at 440 105
\pinlabel $b_3$ [Br] at 440 125
\endlabellist
\centering
\includegraphics[height=8cm, width=12cm]{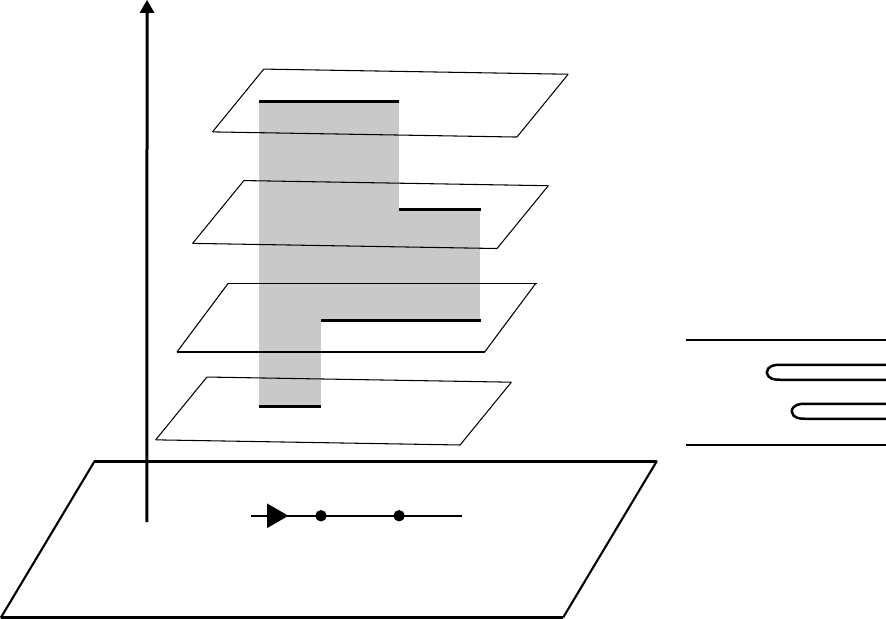}
\caption{The tree $\Gamma_6'$ together with its lift and standard domain.}
\label{fig:reltree10}
\end{figure}

The trees $\Gamma_i$ and $\Gamma_i'$ are related by a Legendrian isotopy, $i =3,4,5,6$, so they should have the same sign. Notice that $\Gamma_6$ and $\Gamma_6'$ only exist for $n=1$, and that  $\Gamma_i$ and $\Gamma_i'$ only exist for $n>1$, $i =3,4,5$.

To compute the difference in the expressions of the capping orientations of $\Gamma_i$ and $\Gamma_i'$, $i =3,4,5,6$, assume that we have chosen orientations of $W^u(a), W^u(b_i)$, $i=1,2,3,4$ and let this induce orientations of the elementary pieces as described in Section \ref{sec:elementor}. 

Using the similar notation as above, we get that the difference between the capping orientation of $\Gamma_i$ and $\Gamma_i'$ is given by $(-1)^{\nu_i}$, $i = 3,4,5,6$,
\begin{align*}
 \nu_3 &= \sigma_{\negg,2}^{(\sigma(2), \sigma(4))}(\sigma(3) + \sigma(4), \sigma(2) + \sigma(3), 2) + \sigma_{\negg,2}^{(\sigma(1), \sigma(4))}(\sigma(3) + \sigma(4), \sigma(1) + \sigma(3), 2) \\
 &{} \quad + \sigma_{Y_0}^{(\sigma(1), \sigma(4))}(\sigma(1) + \sigma(2), \sigma(2) + \sigma(4)) + \sigma_{Y_0}^{(\sigma(1), \sigma(3))}(\sigma(1) + \sigma(2), \sigma(2) + \sigma(3)) \\
 &{} \quad + (\sigma(1) + \sigma(2))\cdot(\sigma(3) + \sigma(4) + 1),
\end{align*}
\begin{align*}
 \nu_4 &= \sigma_{\negg,2}^{(\sigma(1), \sigma(4))}(\sigma(1) + \sigma(2), \sigma(2) + \sigma(4), 1) + \sigma_{\negg,2}^{(\sigma(1), \sigma(3))}(\sigma(1) + \sigma(2), \sigma(2) + \sigma(3), 1) \\
 &{} \quad + \sigma_{Y_0}^{(\sigma(2), \sigma(4))}(\sigma(2) + \sigma(3), \sigma(3) + \sigma(4)) + \sigma_{Y_0}^{(\sigma(1), \sigma(4))}(\sigma(1) + \sigma(3), \sigma(3) + \sigma(4)),
\end{align*}
\begin{align*}
 \nu_5 &= \sigma_{\negg,2}^{(\sigma(2), \sigma(4))}(\sigma(2) + \sigma(3), \sigma(3) + \sigma(4), 1) + \sigma_{\negg,2}^{(\sigma(1), \sigma(3))}(\sigma(2) + \sigma(3), \sigma(1) + \sigma(2), 2) \\
 &{} \quad + \sigma_{Y_0}^{(\sigma(1), \sigma(4))}(\sigma(1) + \sigma(2), \sigma(2) + \sigma(4)) + \sigma_{Y_0}^{(\sigma(1), \sigma(4))}(\sigma(1) + \sigma(3), \sigma(3) + \sigma(4)) \\
 &{} \quad + (\sigma(1) + \sigma(2))\cdot(\sigma(2) + \sigma(3) + 1),
\end{align*}
\begin{align*}
 \nu_6 &= \sigma_{\negg,2}^{(\sigma(1), \sigma(4))}(\sigma(1) + \sigma(2), \sigma(2) + \sigma(4), 1) + \sigma_{\negg,2}^{(\sigma(2), \sigma(4))}(\sigma(3) + \sigma(4), \sigma(2) + \sigma(3), 2) \\
 &{} \quad +\sigma_{\negg,2}^{(\sigma(1), \sigma(3))}(\sigma(1) + \sigma(2), \sigma(2) + \sigma(3), 1) + \sigma_{\negg,2}^{(\sigma(1), \sigma(4))}(\sigma(3) + \sigma(4), \sigma(1) + \sigma(3), 2) \\
 &{} \quad + (\sigma(1) + \sigma(2))\cdot(\sigma(3) + \sigma(4) + 1).
\end{align*}

Simplifying, we get the following relations for $n>1$
\begin{align*}
 \sigma^{(0,0)}_{\negg,2}(0, 0, 1) &\equiv  \sigma^{(0,0)}_{\negg,2}(0, 0, 2) \equiv   \sigma^{(0,1)}_{\negg,2}(0, 1, 1) \\
 &\equiv   \sigma^{(1,0)}_{\negg,2}(0, 1, 2) + 1 \\
  \sigma^{(1,1)}_{\negg,2}(0, 0, 1) &\equiv   \sigma^{(1,1)}_{\negg,2}(0, 0, 2) \equiv   \sigma^{(1,0)}_{\negg,2}(0, 1, 1)  \\
  &\equiv   \sigma^{(0,1)}_{\negg,2}(0, 1, 2)+ 1 
\end{align*}
\begin{align*}
  \sigma^{(0,1)}_{\negg,2}(1, 0, 1) &\equiv  \sigma^{(0,1)}_{\negg,2}(1, 0, 2) + \sigma_{Y_0}^{(0,1)}(0,1) + \sigma_{Y_0}^{(0,1)}(1,0) \\
  \sigma^{(0,0)}_{\negg,2}(1, 1, 1) &\equiv   \sigma^{(1,1)}_{\negg,2}(1, 1, 2) + \sigma_{Y_0}^{(1,0)}(1,0) + \sigma_{Y_0}^{(1,0)}(0,1) \\
    \sigma^{(1,0)}_{\negg,2}(1, 0, 1) &\equiv  \sigma^{(1,0)}_{\negg,2}(1, 0, 2) + \sigma_{Y_0}^{(1,0)}(0,1) + \sigma_{Y_0}^{(1,0)}(1,0)  \\
 \sigma^{(1,1)}_{\negg,2}(1, 1, 1) &\equiv  \sigma^{(0,0)}_{\negg,2}(1, 1, 2) + \sigma_{Y_0}^{(0,1)}(1,0) + \sigma_{Y_0}^{(0,1)}(0,1) \\
  \sigma^{(0,1)}_{\negg,2}(1, 0, 1) &\equiv  \sigma^{(1,1)}_{\negg,2}(1, 1, 2) + \sigma_{Y_0}^{(1,1)}(1,1) + \sigma_{Y_0}^{(1,1)}(0,0) \\  
    \sigma^{(1,0)}_{\negg,2}(1, 0, 2) &\equiv  \sigma^{(1,1)}_{\negg,2}(1, 1, 1) + \sigma_{Y_0}^{(1,1)}(1,1) + \sigma_{Y_0}^{(1,1)}(0,0), 
  \end{align*}
  modulo 2. Also, for $n=1$ we get 
  \begin{align*}
 \sigma^{(0,0)}_{\negg,2}(0, 0, 1) &\equiv  \sigma^{(0,1)}_{\negg,2}(0, 1, 1), \\
  \sigma^{(1,0)}_{\negg,2}(0, 1, 2) &\equiv   \sigma^{(0,0)}_{\negg,2}(0, 0, 2)+ 1, \\
  \sigma^{(1,1)}_{\negg,2}(0, 0, 2) &\equiv  \sigma^{(0,1)}_{\negg,2}(0, 1, 2) +1, \\
    \sigma^{(1,0)}_{\negg,2}(0, 1, 1) &\equiv  \sigma^{(1,1)}_{\negg,2}(0, 0, 1), 
  \end{align*}
  \begin{align*}
\sigma^{(0,0)}_{\negg,2}(1, 1, 1) +  \sigma^{(1,1)}_{\negg,2}(1, 1, 2) & \equiv \sigma^{(0,1)}_{\negg,2}(1, 0, 1) +  \sigma^{(0,1)}_{\negg,2}(1, 0, 2), \\
\sigma^{(0,0)}_{\negg,2}(1, 1, 1) +  \sigma^{(0,0)}_{\negg,2}(1, 1, 2) & \equiv
\sigma^{(0,1)}_{\negg,2}(1, 0, 1) +  \sigma^{(1,0)}_{\negg,2}(1, 0, 2)  \\
 \sigma^{(1,1)}_{\negg,2}(1, 1, 1) +  \sigma^{(1,1)}_{\negg,2}(1, 1, 2)& \equiv \sigma^{(1,0)}_{\negg,2}(1, 0, 1) +  \sigma^{(0,1)}_{\negg,2}(1, 0, 2) , \\
\sigma^{(1,1)}_{\negg,2}(1, 1, 1) +  \sigma^{(0,0)}_{\negg,2}(1, 1, 2) & \equiv \sigma^{(1,0)}_{\negg,2}(1, 0, 1) +  \sigma^{(1,0)}_{\negg,2}(1, 0, 2), \\
  \end{align*}
modulo 2.

%
%
%
%

\subsection{More relations for \texorpdfstring{$\sigma_{Y_0}$}{Y} and relations for \texorpdfstring{$\sigma_{\swi}$}{s}}\label{sec:longexgen}

Using the results in the former subsections and a generalization of the example in Section \ref{sec:examples}, we can derive some more relations for $\sigma_{Y_0}$ and also for $\sigma_{\swi}$.

To that end, assume that the example in Section \ref{sec:examples} now takes place in $J^1(\R^n)$, and let $\sigma(u), \sigma(l) \in \Z_2$ be the sign of the upper respectively lower sheet of the lift of $\Gamma$, as defined in \eqref{eq:sheetsign}, using additive notation. Let $W_1$, $W_2 \subset \R^n$ be oriented subspaces  so that 
 $\spn(\partial_{x_1},\partial_{x_2}) \oplus W_1 \oplus W_2 = \R^n$ and so that 
\begin{equation*}
\Or(W_1) \wedge \Or( W_2) \wedge \partial_{x_1} \wedge \partial_{x_2}= \Or(\R^n).
\end{equation*}
Let $T_aW^u(a) = \spn(\partial_{x_1}) \oplus W_2 $, $T_bW^s(b) = \spn(\partial_{x_1},\partial_{x_2}) \oplus W_1 $ and choose capping orientation $\Or(\Gamma_a)$ and $\Or(\Gamma_b)$ of $\Gamma_a$ and $\Gamma_b$ so that 
\begin{equation*}
 \Or(\Gamma_a) = \partial_{x_1} \wedge \Or(W_2),
\end{equation*}
\begin{equation*}
\Or(\Gamma_b) = (-1)^{\sigma^{(\sigma(u),\sigma(l))}_{\negg,1}(\dim W_2)}\partial_{x_1}\wedge \partial_{x_2} \wedge \Or(W_1).
\end{equation*}

The capping orientation of $\Gamma$ is then given by 
$ (-1)^{\sigma^{(\sigma(u),\sigma(l))}_{\negg,1}(\dim W_2)+\nu}$, where 
\begin{equation*}
\nu = [\sigma(l) +\sigma(u)]\cdot[\dim W^u(a) + 1] + n\cdot[\dim W^u(a) + 1] + 1.
\end{equation*}

If one computes the capping orientation of the tree $\Gamma_A$ from Section \ref{sec:gammaasign} in this higher-dimensional setting one gets that the capping sign is given by $ (-1)^{\sigma^{(\sigma(u),\sigma(l))}_{\negg,1}(\dim W_2)+\nu_1}$, where 
\begin{align*}
 \nu_1 &= [\sigma(l) + \sigma(u)]\cdot[\dim W^u(a)+1] + n\cdot\dim W^u(a) +n + 1 \\
 &+   \sigma_{Y_0}^{(\sigma(u),\sigma(l))}(1+\sigma(l)+\sigma(u),1) + \sigma_{\swi}^{(\sigma(u),\sigma(l)+1)}(l) + \sigma_{Y_1}^{(\sigma(u),\sigma(l))}(\sigma(l)+\sigma(u),1).
\end{align*}
Since we should have $\nu \equiv \nu_1$ modulo 2, it follows that 
\begin{align}\label{eq:firstrel}
 \sigma_{Y_0}^{(\sigma(u),\sigma(l))}(1+\sigma(l)+\sigma(u),1) + \sigma_{\swi}^{(\sigma(u),\sigma(l)+1)}(l) + \sigma_{Y_1}^{(\sigma(u),\sigma(l))}(\sigma(l)+\sigma(u),1) \equiv 0, 
\end{align}
modulo 2, for all $\sigma(l), \sigma(u) \in \Z_2$. 
Moreover, since by assumption we have $\sigma_{\swi}^{(0, 0)}(l) =0 = \sigma_{\swi}^{(1, 1)}(l)$ and by \eqref{eq:y11} that $ \sigma_{Y_1}^{(0, 1)}(1,1) \equiv \sigma_{Y_1}^{(1,0)}(1,1) \equiv n+1$, it follows that 
\begin{align*}
 \sigma_{Y_0}^{(0, 1)}(0,1)  \equiv \sigma_{Y_0}^{(1, 0)}(0,1)  \equiv n+1. 
\end{align*}
From the relations \eqref{eq:y01}--\eqref{eq:y03} for $ \sigma_{Y_0}$ it then follows that 
\begin{align*}
& \sigma_{Y_0}^{(0, 1)}(1,0)  \equiv \sigma_{Y_0}^{(1, 0)}(1,0)   \equiv \sigma_{Y_0}^{(1, 1)}(0,0)  \equiv \sigma_{Y_0}^{(0, 0)}(0,0)   \equiv n+1, \\
& \sigma_{Y_0}^{(0, 0)}(1,1)  \equiv \sigma_{Y_0}^{(1, 1)}(1,1), \pmod{2}. 
\end{align*}
Returning to \eqref{eq:firstrel} with $\sigma(l) + \sigma(u) = 0$  and using \eqref{eq:y12} it follows that 
\begin{equation*}
 \sigma_{\swi}^{(1, 0)}(l) \equiv  \sigma_{\swi}^{(0, 1)}(l) \equiv \sigma_{Y_0}^{(0,0)}(1,1) + n, \pmod{2}.
\end{equation*}

Now consider a slight variation of the isotopy in Section \ref{sec:examples}, where it is the upper sheet that is isotoped, and where the isotopy is similar to the one in Figure \ref{fig:isot} but turned upside down. Then we get another rigid flow tree $\Gamma_B$, see Figure \ref{fig:tree7a}, which again should have the same capping sign as $\Gamma$. The main difference between $\Gamma_A$ and $\Gamma_B$ is the order of the vertices as they occur along the boundary of the standard domain, and that the turn of the switch is now in the upper sheet instead of the  lower. Similar to above, we compute that the capping orientation of $\Gamma_B$ is given by 
 $ (-1)^{\sigma^{(\sigma(u),\sigma(l))}_{\negg,1}(\dim W_2)+\nu_2}$, where 
\begin{align*}
 \nu_2 &= [\sigma(l) + \sigma(u)]\cdot[\dim W^u(a) + 1]+ n\cdot[\dim W^u(a) + 1]\\
  &+ \sigma_{Y_0}^{(\sigma(u),\sigma(l))}(1,\sigma(l)+\sigma(u)+1) + \sigma_{\swi}^{(\sigma(u)+1,\sigma(l))} (u)+ \sigma_{Y_1}^{(\sigma(u),\sigma(l))}(1,\sigma(l)+\sigma(u)).
\end{align*}

\begin{figure}[ht]
\vspace{.5cm}
\labellist
\small\hair 2pt
\pinlabel $a$ [Br] at 27 70
\pinlabel $v_1$ [Br] at  135 40 
\pinlabel ${e_2}$ [Br] at 317 10
\pinlabel $s$ [Br] at  360 105
\pinlabel $b$ [Br] at  842 62 
\pinlabel $w_1$ [Br] at 349 38
\pinlabel $w_2$ [Br] at 410 29
\pinlabel $u_2$ [Br] at 57 105
\pinlabel $u_1$ [Br] at 130 155
\pinlabel ${e_1}$ [Br] at 280 255
\pinlabel ${v_0}$ [Br] at 280 115
\endlabellist
\centering
\includegraphics[height=2.7cm, width=8.5cm]{extreeB}
\hspace{1.5cm}
\labellist
\small\hair 2pt
\pinlabel $a$ [Br] at 10  60
\pinlabel $v_0$ [Br] at  240 95 
\pinlabel $v_1$ [Br] at  140 40 
\pinlabel $s$ [Br] at 395 115
\pinlabel $e_2$ [Br] at 535 2
\pinlabel $e_1$ [Br] at 535 68
\pinlabel $b$ [Br] at 525 123
\endlabellist
\centering
\includegraphics[height=1.3cm]{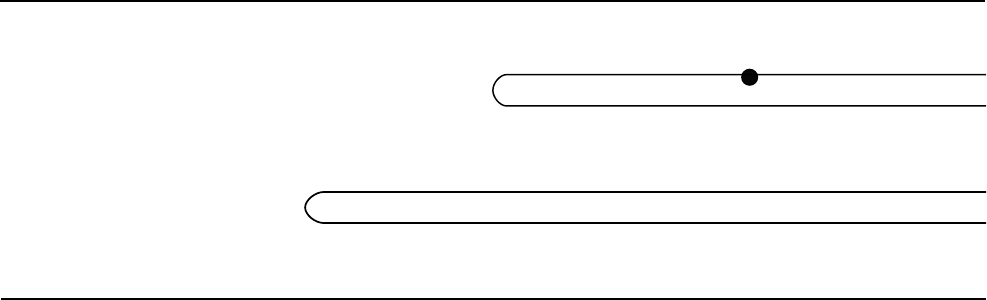}
\caption{The tree $\Gamma_B$.}
\label{fig:tree7a}
\end{figure}

Since we again should have $\nu \equiv \nu_2$ modulo 2 it follows that 
\begin{equation*}
 \sigma_{Y_0}^{(\sigma(u),\sigma(l))}(1,\sigma(l)+\sigma(u)+1) + \sigma_{\swi}^{(\sigma(u)+1,\sigma(l))} (u)+ \sigma_{Y_1}^{(\sigma(u),\sigma(l))}(1,\sigma(l)+\sigma(u)) \equiv 1 \pmod{2}.
\end{equation*}
Similar to above we get that 
\begin{equation*}
 \sigma_{\swi}^{(0, 0)}(u) \equiv  \sigma_{\swi}^{(1, 1)}(u) \equiv 1, \pmod{2}.
\end{equation*}
and that 
\begin{equation*}
 \sigma_{\swi}^{(1, 0)}(u) \equiv  \sigma_{\swi}^{(0, 1)}(u) \equiv \sigma_{\swi}^{(1, 0)}(l) \equiv \sigma_{Y_0}^{(0,0)}(1,1) + n , \pmod{2}.
\end{equation*}

\begin{rmk}
 Using the notation from \cite{korta} we get that 
 \begin{align*}
  \nu_{\en}(\Gamma) &= (-1)^0, \\
  \nu_{\inter}(\Gamma) &=(-1)^0, \\
  \nu_{\stab}(\Gamma) &= (-1)^{[\sigma(l) +\sigma(u)]\cdot[\dim W^u(a) + 1] + n\cdot[\dim W^u(a) + 1] + 1 + \sigma^{(\sigma(u),\sigma(l))}_{\negg,1}(\dim W_2)} \\
  \nu_{\en}(\Gamma_A) &= (-1)^0, \\
  \nu_{\inter}(\Gamma_A) &=(-1)^{n+1}, \\
  \nu_{\stab}(\Gamma_A) &= (-1)^{[\sigma(l) + \sigma(u)]\cdot[\dim W^u(a)+1] + n\cdot\dim W^u(a) +\sigma^{(\sigma(u),\sigma(l))}_{\negg,1}(\dim W_2)} \cdot \\
  & (-1)^{ \sigma_{Y_0}^{(\sigma(u),\sigma(l))}(1+\sigma(l)+\sigma(u),1) + \sigma_{\swi}^{(\sigma(u),\sigma(l)+1)}(l) + \sigma_{Y_1}^{(\sigma(u),\sigma(l))}(\sigma(l)+\sigma(u),1)} \\
    \nu_{\en}(\Gamma_B) &= (-1)^0, \\
  \nu_{\inter}(\Gamma_B) &=(-1)^{1}, \\
  \nu_{\stab}(\Gamma_B) &= (-1)^{[\sigma(l) + \sigma(u)]\cdot[\dim W^u(a) + 1]+ n\cdot[\dim W^u(a) + 1]+1+ \sigma^{(\sigma(u),\sigma(l))}_{\negg,1}(\dim W_2)} \cdot \\
  & (-1)^{\sigma_{Y_0}^{(\sigma(u),\sigma(l))}(1,\sigma(l)+\sigma(u)+1) + \sigma_{\swi}^{(\sigma(u)+1,\sigma(l))} (u)+ \sigma_{Y_1}^{(\sigma(u),\sigma(l))}(1,\sigma(l)+\sigma(u))} 
 \end{align*}
and without loss of generality we might assume that $\nu_{\triv} = (-1)^0$ for all trees. 
\end{rmk}

\subsection{Results}\label{sec:results}
Now we give a summary of some of the results derived in this section.

\subsubsection{Values and relations for \texorpdfstring{$\sigma_{Y_0}$}{Y}, modulo 2}

\begin{align*}
&\sigma_{Y_0}^{(1, 1)}(0,0)  \equiv \sigma_{Y_0}^{(0, 0)}(0,0)   \equiv n+1, \\
& \sigma_{Y_0}^{(0, 1)}(0,1)  \equiv \sigma_{Y_0}^{(1, 0)}(0,1)  \equiv n+1 \\ 
& \sigma_{Y_0}^{(0, 1)}(1,0)  \equiv \sigma_{Y_0}^{(1, 0)}(1,0)    \equiv n+1, \\
& \sigma_{Y_0}^{(0, 0)}(1,1)  \equiv \sigma_{Y_0}^{(1, 1)}(1,1). 
\end{align*}



\subsubsection{Values and relations for \texorpdfstring{$\sigma_{Y_1}$}{Y}, modulo 2}
$ $ \\
$n=1$:
\begin{align*}
&\sigma_{Y_1}^{(0,0)}(1,0) \equiv  \sigma_{Y_1}^{(1,1)}(1,0) \equiv  \sigma_{Y_1}^{(1,0)}(1,1) \equiv  \sigma_{Y_1}^{(0,1)}(1,1)   \equiv 1,  \\ 
&\sigma_{Y_1}^{(0,0)}(0,1) \equiv  \sigma_{Y_1}^{(1,1)}(0,1)  \equiv 0,
\end{align*}
$n>1$:
\begin{align*}
&\sigma_{Y_1}^{(0,0)}(1,0) \equiv  \sigma_{Y_1}^{(1,1)}(1,0) \equiv  \sigma_{Y_1}^{(1,0)}(1,1) \equiv  \sigma_{Y_1}^{(0,1)}(1,1)   \equiv n + 1,  \\ 
&\sigma_{Y_1}^{(0,0)}(0,1) \equiv  \sigma_{Y_1}^{(1,1)}(0,1)  \equiv n.
\end{align*}

\subsubsection{Values and relations for \texorpdfstring{$\sigma_{{\swi}}$}{sw}, modulo 2}
\begin{align*}
 &\sigma_{\swi}^{(0, 0)}(l) \equiv \sigma_{\swi}^{(1, 1)}(l) \equiv 0, \\
 &\sigma_{\swi}^{(0, 0)}(u) \equiv  \sigma_{\swi}^{(1, 1)}(u) \equiv 1, \\
 &\sigma_{\swi}^{(1, 0)}(u) \equiv  \sigma_{\swi}^{(0, 1)}(u) \equiv \sigma_{\swi}^{(1, 0)}(l) \equiv  \sigma_{\swi}^{(0, 1)}(l) \equiv \sigma_{Y_0}^{(0,0)}(1,1) + n.
\end{align*}

\subsubsection{Values and relations for \texorpdfstring{$\sigma_{{\negg,2}}$}{neg2}, modulo 2}
$ $ \\

$n>1:$
\begin{align*}
 \sigma^{(0,0)}_{\negg,2}(0, 0, 1) &\equiv  \sigma^{(0,0)}_{\negg,2}(0, 0, 2) \equiv   \sigma^{(0,1)}_{\negg,2}(0, 1, 1) \\
 &\equiv   \sigma^{(1,0)}_{\negg,2}(0, 1, 2) + 1 \\
  \sigma^{(1,1)}_{\negg,2}(0, 0, 1) &\equiv   \sigma^{(1,1)}_{\negg,2}(0, 0, 2) \equiv   \sigma^{(1,0)}_{\negg,2}(0, 1, 1)  \\
  &\equiv   \sigma^{(0,1)}_{\negg,2}(0, 1, 2)+ 1 
\end{align*}
\begin{align*}
  \sigma^{(0,1)}_{\negg,2}(1, 0, 1) &\equiv  \sigma^{(0,1)}_{\negg,2}(1, 0, 2)  \\
  \sigma^{(0,0)}_{\negg,2}(1, 1, 1) &\equiv   \sigma^{(1,1)}_{\negg,2}(1, 1, 2)  \\
    \sigma^{(1,0)}_{\negg,2}(1, 0, 1) &\equiv  \sigma^{(1,0)}_{\negg,2}(1, 0, 2)  \\
 \sigma^{(1,1)}_{\negg,2}(1, 1, 1) &\equiv  \sigma^{(0,0)}_{\negg,2}(1, 1, 2) \\
  \sigma^{(0,1)}_{\negg,2}(1, 0, 1) &\equiv  \sigma^{(1,1)}_{\negg,2}(1, 1, 2) + \sigma_{Y_0}^{(1,1)}(1,1) + n+1 \\  
    \sigma^{(1,0)}_{\negg,2}(1, 0, 2) &\equiv  \sigma^{(1,1)}_{\negg,2}(1, 1, 1) + \sigma_{Y_0}^{(1,1)}(1,1) + n+1
  \end{align*}
  $n=1:$
  \begin{align*}
 \sigma^{(0,0)}_{\negg,2}(0, 0, 1) &\equiv  \sigma^{(0,1)}_{\negg,2}(0, 1, 1), \\
  \sigma^{(1,0)}_{\negg,2}(0, 1, 2) &\equiv   \sigma^{(0,0)}_{\negg,2}(0, 0, 2)+ 1 \\
  \sigma^{(1,1)}_{\negg,2}(0, 0, 2) &\equiv  \sigma^{(0,1)}_{\negg,2}(0, 1, 2) +1 \\
    \sigma^{(1,0)}_{\negg,2}(0, 1, 1) &\equiv  \sigma^{(1,1)}_{\negg,2}(0, 0, 1) 
  \end{align*}
  \begin{align*}
\sigma^{(0,0)}_{\negg,2}(1, 1, 1) +  \sigma^{(1,1)}_{\negg,2}(1, 1, 2) & \equiv \sigma^{(0,1)}_{\negg,2}(1, 0, 1) +  \sigma^{(0,1)}_{\negg,2}(1, 0, 2) \\
\sigma^{(0,0)}_{\negg,2}(1, 1, 1) +  \sigma^{(0,0)}_{\negg,2}(1, 1, 2) & \equiv
\sigma^{(0,1)}_{\negg,2}(1, 0, 1) +  \sigma^{(1,0)}_{\negg,2}(1, 0, 2)  \\
 \sigma^{(1,1)}_{\negg,2}(1, 1, 1) +  \sigma^{(1,1)}_{\negg,2}(1, 1, 2)& \equiv \sigma^{(1,0)}_{\negg,2}(1, 0, 1) +  \sigma^{(0,1)}_{\negg,2}(1, 0, 2)  \\
\sigma^{(1,1)}_{\negg,2}(1, 1, 1) +  \sigma^{(0,0)}_{\negg,2}(1, 1, 2) & \equiv \sigma^{(1,0)}_{\negg,2}(1, 0, 1) +  \sigma^{(1,0)}_{\negg,2}(1, 0, 2)
  \end{align*}

\section{Analytic derivation of the remaining sign functions}\label{sec:analder}
\subsection{Derivation of sign functions}\label{sec:signfunctions}
In this section finish the derivation of the explicit expressions for the sign functions for $n>1$, given the choices from Section \ref{sec:elementor}. 

From Section \ref{sec:elementor} it follows that the sign functions are independent of $\lambda$ for $\lambda$ small enough, and we will throughout work in the limit $\lambda = 0$.

\subsubsection{Assumptions}\label{sec:assumptions}
Recall the capping operators of a special puncture $p$ with odd Maslov index. Both are surjective with one-dimensional kernel in the direction corresponding to the second auxiliary coordinate, which we represent with the vector $\partial_{x_{n+2}}$. Moreover, the kernel corresponding to the positive capping operator can be seen as given by constant functions, while the kernel corresponding to the negative capping operator is spanned (over $\R$) by the function $i(z-1)$. Let $\sigma_{1,+}^{(0,1)}, \sigma_{1,+}^{(1,0)} \in \{0,1\}$ so that the orientation of the determinant line of the positive capping operator is given by
 \begin{equation*}
  \det \dbar_{p,+} = (-1)^{\sigma_{1,+}^{(0,1)}} \partial_{x_{n+2}}
 \end{equation*}
when $p$ is of type $(0,1)$, and by  
  \begin{equation*}
  \det \dbar_{p,+} = (-1)^{\sigma_{1,+}^{(1,0)}} \partial_{x_{n+2}}
 \end{equation*}
  when $p$ is of type $(1,0)$. Similarly, let $\sigma_{1,-}^{(0,1)}, \sigma_{1,-}^{(1,0)} \in \{0,1\}$ so that the orientation of the determinant line of the negative capping operator is given by
 \begin{equation*}
  \det \dbar_{p,-} = (-1)^{\sigma_{1,-}^{(0,1)}} i(z-1)\partial_{x_{n+2}}
 \end{equation*}
 when $p$ is of type $(0,1)$, and by  
  \begin{equation*}
  \det \dbar_{p,-} = (-1)^{\sigma_{1,-}^{(1,0)}} i(z-1)\partial_{x_{n+2}}
 \end{equation*}
   when $p$ is of type $(1,0)$.
 Recall that the two former are induced by our choice of orientation of the end-piece, and that the two latter then get their orientation from the gluing sequence \eqref{eq:capseqspec} together with the canonical orientation of $\det \dbar_p$ times $(-1)^n$. We notice the following.
 
 \begin{lma}\label{lma:oddsigmarel}
  Let $\Gamma_e$ be an end-piece with $p$ as positive special puncture and choose orientation of $\det \dbar_{p,+}$ so that the capping orientation of $\Gamma_e$ is given by $\R^n$ both in the case when $p$ is of type $(0,1)$ and $(1,0)$. Then 
  \begin{equation}
   \sigma_{1,+}^{(0,1)} \equiv \sigma_{1,+}^{(1,0)} + 1, \pmod{2}.
  \end{equation} 
 \end{lma}
 \begin{proof}
  We have two different model end-pieces, depending on the type of $p$. Consider first the fully capped problem $\dbar_{\hat \Gamma_e}$ in the case when $p$ is of type $(1,0)$. The trivialized Lagrangian boundary condition for this problem is given by 
  \begin{equation}\label{eq:trivend}
    \diag(\underbrace{1,\dotsc,1}_{n}, e^{-i\pi s},1) \hat * R_{-\pi}(n+1,n+2) \hat * R_{-\pi}(n,n+2) * \diag(\underbrace{1,\dotsc,1}_{n-1}, -e^{i\pi s},1,1).
  \end{equation} 
  Similarly, in the case when $p$ is of type $(0,1)$ the trivialized Lagrangian boundary condition for the $\dbar_{\hat \Gamma_e}$--problem is given by 
    \begin{align}\label{eq:trivend10}
    &\left( R_{\pi}(n,n+2)\cdot\diag(\underbrace{1,\dotsc,1}_{n+1},-1) \right) *\diag(\underbrace{1,\dotsc,1}_{n-1},-1, e^{-i\pi s},1) \hat * R_{-\pi}(n+1,n+2) \hat *\\ \nonumber
    &  \qquad{} R_\pi(n,n+2) * \diag(\underbrace{1,\dotsc,1}_{n-1}, e^{i\pi s},1,1) \hat * R_{-\pi}(n,n+2).
  \end{align} 
  We see that the latter boundary condition is homotopic to 
  \begin{align}\label{eq:trivend10homo}
   &\diag(\underbrace{1,\dotsc,1}_{n-1},-1, e^{-i\pi s},1) \hat * R_{-\pi}(n+1,n+2) \hat * R_\pi(n,n+2) * \diag(\underbrace{1,\dotsc,1}_{n-1}, e^{i\pi s},1,1) 
  \end{align} 
  which is nothing but the trivialized Lagrangian boundary condition for the $(0,1)$--case after applying the change of coordinates $\partial_{x_n} \leftrightarrow -\partial_{x_n}$. Hence these 2 problems have opposite canonical orientation. 
  
  If we now consider the corresponding capping sequences we get 
    \begin{align}\label{eq:cappingend}
 0 \to \kd_{\hat \Gamma_e}\xrightarrow{\alpha} 
\begin{bmatrix}
\kd_e\\
 \kd_{p,+}\\
\kd_{\Gamma_e}
\end{bmatrix}
\to 
\begin{bmatrix}
 0 \\
0
\end{bmatrix}
\to 
0 \to 0.
\end{align}
Since we assume that $\kd_{\Gamma_e} = \R^n$ oriented in both cases, and since the orientation chosen on $\kd_e$, see Section \ref{sec:endmp}, is independent of trivialization, it follows that we must have 
 \begin{equation*}
   \sigma_{1,+}^{(0,1)} \equiv \sigma_{1,+}^{(1,0)} + 1, \pmod{2}
  \end{equation*} 
  to get opposite orientation induced on $\kd_{\hat \Gamma_e}$ from the gluing sequence in the two cases. 
 \end{proof}

Next consider the $(n+2)$--dimensional Maslov problem $\dbar_{-1,1}$ with trivialized boundary condition given by 
 \begin{equation}\label{eq:bc-+}
 \diag(\underbrace{1,\dotsc,1}_{n}, e^{-i\pi s}, 1)\hat * R_{-\pi}(n+1,n+2)  *\diag(\underbrace{1,\dotsc,1}_{n}, 1, -e^{i\pi s}).
 \end{equation} 
This is a surjective problem with $(n+2)$--dimensional kernel, spanned by \linebreak 
$\partial_{x_1}, \dotsc, \partial_{x_n}, v_1\partial_{x_{n+2}}, v_2\partial_{x_{n+2}}$, where $v_1$ is close to the cutoff constant solution on the part of the disk corresponding to the constant boundary condition in the $\partial_{x_{n+2}}$--direction, and $v_2$ is close to the cutoff of the solution $i(z-1)$ on the part of the disk corresponding to the boundary condition $-e^{i\pi s}$ in the $\partial_{x_{n+2}}$--direction. Let $\mu(0) \in \{0,1\}$ such that $\det \dbar_{-1,1}$ has canonical orientation given by 
\begin{equation}\label{eq:orientmu0}
 (-1)^{\mu(0)} \partial_{x_1} \wedge \dotsm \wedge \partial_{x_n} \wedge v_1\partial_{x_{n+2}} \wedge v_2\partial_{x_{n+2}}.
\end{equation} 

\begin{lma}\label{lma:mu0}
 We have that 
 \begin{equation}
\mu(0) \equiv \sigma_{1,+}^{(1,0)} + \sigma_{1,-}^{(1,0)} 
\pmod{2}.              
\end{equation} 
\end{lma}
\begin{proof}
 Consider the exact sequence \eqref{eq:capseqspec} in the case $p$ is a special puncture of type $(1,0)$. In this case we get that the trivialized boundary conditions for the glued $\dbar_p$--problem is given by \eqref{eq:bc-+}. Hence $\det \dbar_p$ has canonical orientation given by \eqref{eq:orientmu0}. 
 
 To simplify the notation, let $\langle v\rangle$ be the span of the vector $v$.
 Then the sequence \eqref{eq:capseqspec} is given by 
  \begin{align*}
 0 \to 
 \begin{bmatrix}
 \R^n\\
\langle v_1 \partial_{x_{n+2}} \rangle \\
\langle v_2 \partial_{x_{n+2}} \rangle
 \end{bmatrix}
 \xrightarrow{\alpha} 
\begin{bmatrix}
 \langle (-1)^{\sigma_{1,+}^{(1,0)}}\partial_{x_{n+2}} \rangle\\
 \R^n\\
\langle (-1)^{\sigma_{1,-}^{(1,0)}}i(z-1) \partial_{x_{n+2}} \rangle
\end{bmatrix}
\to 
\begin{bmatrix}
 0 \\
0
\end{bmatrix}
\to 
0 \to 0,
\end{align*} 
which by assumption induces the canonical orientation \eqref{eq:orientmu0} on $\det \dbar_p$, times $(-1)^n$. 

It remains to understand the map $\alpha$. Clearly, $\alpha|_{\R^n}$ is given by the identity, and by the definition of $v_1$ and $v_2$ we have that $\alpha$ maps $v_1 \partial_{x_{n+2}}$ to a positive multiple of $\partial_{x_{n+2}}$ and  maps $v_2 \partial_{x_{n+2}}$ to a positive multiple of $i(z-1) \partial_{x_{n+2}}$. 

Since moving the $(-1)^{\sigma_{1,+}^{(1,0)}}\partial_{x_{n+2}}$--term over $\R^n$ in the second non-trivial column costs $(-1)^n$, it follows that this sequence induces the orientation 
\begin{equation*}
 (-1)^{\sigma_{1,+}^{(1,0)} + \sigma_{1,-}^{(1,0)} +n} \partial_{x_1} \wedge \dotsm \wedge \partial_{x_n} \wedge v_1\partial_{x_{n+2}} \wedge v_2\partial_{x_{n_2}}
\end{equation*}
on $\det \dbar_p$. Hence it follows that $\mu(0) \equiv \sigma_{1,+}^{(1,0)}+ \sigma_{1,-}^{(1,0)}, \pmod{2} $.
\end{proof}

 Now we turn to the capping operators of a special puncture $p$ of even Maslov index. Recall that the corresponding $\dbar$--problems are isomorphisms. Let $\sigma_{0,\pm}^{(0,0)}, \sigma_{0,\pm}^{(1,1)} \in \{0,1\}$ so that the orientation of the determinant line of the positive capping operator is given by
 \begin{equation*}
  \det \dbar_{p,+} = (-1)^{\sigma_{0,+}^{(0,0)}} \mathbbm{1}  \qquad (  \det \dbar_{p,+} = (-1)^{\sigma_{0,+}^{(1,1)}} \mathbbm{1})
 \end{equation*}
 when $p$ is of type $(0,0)$ ($(1,1)$), 
 and the orientation of the determinant line of the negative capping operator is given by
 \begin{equation*}
  \det \dbar_{p,-} = (-1)^{\sigma_{0,-}^{(0,0)}} \mathbbm{1} \qquad (  \det \dbar_{p,-} = (-1)^{\sigma_{0,-}^{(1,1)}} \mathbbm{1}) 
 \end{equation*}
   when $p$ is of type $(0,0)$ ($(1,1)$).
 Recall that the two former are induced by our choice that $\sigma_s^{(1,1)}(l) = \sigma_s^{(0,0)}(l) = 0$, and that the latter two then get their orientation from the gluing sequence \eqref{eq:capseqspec} together with the canonical orientation of $\det \dbar_p$.

\subsubsection{Choice of orientation of $\C$}\label{sec:choiceofC}
Now it is time to fix an orientation of $\C$. Recall that this is part of our initial choices, as discussed in Section \ref{sec:orientconv}. First we notice the following.

\begin{lma}\label{lma:orientC}
 The canonical orientation of Maslov problems of Maslov index $4j+2$ changes if we change the orientation of $\C$, while the canonical orientation of Maslov problems of Maslov index $4j$ remains unchanged.
\end{lma}
\begin{proof}
 This follows from the proof of [\cite{fooo}, Proposition 8.1.4] together with the discussion in [\cite{orientbok}, Section 4.5.6].
\end{proof}

Consider the  $n+2$--dimensional Maslov problem with boundary conditions 
\begin{equation}\label{eq:bcevencap}
 \diag(\underbrace{1,\dotsc,1}_{n}, e^{-i\pi s}, e^{-i\pi s}) \hat * R_\pi(n+1,n+2).
\end{equation}
This problem has Maslov index $-2$, is surjective and has kernel spanned by $\partial_{x_1}, \dotsc, \partial_{x_n}$. 
By Lemma \ref{lma:orientC} we can choose an orientation of $\C$ so that this problem gets canonical orientation represented by $\partial_{x_1} \wedge \dotsm \wedge \partial_{x_n}$. 
{\bf From now on, assume that we have fixed this orientation on $\C$.}

Now consider the 1-dimensional Maslov problem    $\dbar_{1,-2}$ with boundary conditions given by $\diag(e^{-2\pi i s})$. This is injective with 1-dimensional cokernel. We notice the following.

\begin{lma}\label{lma:maslovnegturn}
 There is a canonical identification 
 \begin{equation}\label{eq:cokerident}
  \gamma: \Coker(\dbar_{1,-2}) \simeq \R.
 \end{equation} 
\end{lma}
\begin{proof}
 Consider the 1-dimensional $\dbar$-problems $\dbar_{-1,\pm}$ defined on the once punctured disk with a small negative weight at the puncture and with trivialized boundary conditions given by $\diag(e^{-i\pi s})$ and $\diag(-e^{-i\pi s})$, respectively. These problems give isomorphisms, and if we glue them to each other we obtain $\dbar_{1,-2}$ together with the gluing sequence
  \begin{align*}
 0 \to  0 \to
\begin{bmatrix}
0 \\
0
\end{bmatrix}
\to 
\begin{bmatrix}
 0 \\ 
 \R \\
0
\end{bmatrix} 
 \xrightarrow{\gamma} 
\Coker \dbar_{1,-2} \to 0,
\end{align*}
where $\gamma$ is given as in Remark \ref{rmk:maps}.
\end{proof}

Using this lemma, we will study the two $n+2$--dimensional Maslov problems on the closed disk with boundary conditions given by 
\begin{equation*}
 I_0 = \diag(\underbrace{1,\dotsc,1}_{n+1}, e^{-i\pi s}) * \diag(\underbrace{1,\dotsc,1}_{n+1}, -e^{i\pi s})
\end{equation*}
 and
\begin{equation*}
 J_1 = \diag(\underbrace{1,\dotsc,1}_{n+1}, e^{-2\pi i s}),
\end{equation*}
respectively.
With our choice of orientation of $\C$, we get $\nu, \tilde \nu \in \{0,1\}$ so that the canonical orientation of $\det \dbar_{I_0}$ is given by 
\begin{equation*}
 (-1)^\nu \partial_{x_1} \wedge \dotsm \wedge \partial_{x_{n+1}} \wedge v\partial_{x_{n+2}} 
\end{equation*}
and the canonical orientation of $\det \dbar_{J_1}$ is given by 
\begin{equation*}
 (-1)^{\tilde \nu} \partial_{x_1} \wedge \dotsm \wedge \partial_{x_{n+1}} \otimes \partial_{x_{n+2}}. 
\end{equation*}
Here the cokernel elements are identified with constants via the map $\gamma$ from \eqref{eq:cokerident}, and $ v = v(z)$ is the solution of norm 1 that satisfies $\alpha(v) = \kappa i (z-1)$ for some $\kappa >0$, where $\alpha $ is the gluing map 
\begin{equation*}
\spn( v) \xrightarrow{\alpha} \spn(i(z-1)) \oplus 0 
\end{equation*}
induced by the gluing of two once-punctured disks, where the first one has trivialized Lagrangian boundary condition given by $e^{-i \pi s}$ and has a small negative weight at the puncture, and the second disk has trivialized Lagrangian boundary condition given by $-e^{i \pi s}$ and has a small positive weight at the puncture.

\begin{lma}\label{lma:IJsign}
Let 
\begin{equation*}
 \sigma(k) = 
 \begin{cases}
  k / 2, & k \text{ even,}\\
  \frac{k-1}{2} + \tilde v, & k \text{ odd}.
 \end{cases}
\end{equation*}
Then the $(n+2)$--dimensional Maslov problem with boundary conditions given by 
 \begin{equation*}
  J_k = \diag(\underbrace{1,\dotsc,1}_{n+2-k}, \underbrace{e^{-2\pi i s}, \dotsc, e^{-2\pi i s}}_{k})
 \end{equation*}
has canonical orientation 
\begin{equation*}
 (-1)^{\sigma(k)} \partial_{x_1} \wedge \dotsm \wedge \partial_{x_{n-k+2}} \otimes \partial_{x_{n-k+3}} \wedge \dotsm \wedge \partial_{x_{n+2}}, 
\end{equation*}
and the  $(n+2)$--dimensional Maslov problem with boundary conditions given by 
 \begin{equation*}
  I_k = \diag(\underbrace{1,\dotsc,1}_{n+1-k}, \underbrace{e^{-2\pi i s}, \dotsc, e^{-2\pi i s}}_{k}, e^{-i\pi s}) * \diag(\underbrace{1,\dotsc,1}_{n+1}, -e^{i\pi s})
 \end{equation*}
has canonical orientation 
\begin{equation*}
 (-1)^{\sigma(k) + \nu + k} \partial_{x_1} \wedge \dotsm \wedge \partial_{x_{n-k+1}} \wedge v \partial_{x_{n+2}} \otimes \partial_{x_{n-k+2}} \wedge \dotsm \wedge \partial_{x_{n+1}}.
\end{equation*}
\end{lma}
\begin{proof}
 We start with proving the first assertion. It clearly holds for $k=0$ and $k=1$. Assume, by induction, that it holds for $k-1$ and glue the Maslov problem with boundary conditions
  \begin{equation}\label{eq:bcweglue}
  C_{k-1} = \diag(\underbrace{1,\dotsc,1}_{n+2-k}, \underbrace{e^{-2\pi i s}, \dotsc, e^{-2\pi i s}}_{k-1}, 1) 
 \end{equation}
 to the Maslov problem $\partial_{J_1}$. The former problem has by assumption the canonical orientation 
 \begin{equation*}
 (-1)^{\sigma(k-1) + k-1 } \partial_{x_1} \wedge \dotsm \wedge \partial_{x_{n-k+2}} \wedge \partial_{x_{n+2}}\otimes \partial_{x_{n-k+3}} \wedge \dotsm \wedge \partial_{x_{n+1}}. 
\end{equation*}

The gluing above induces the exact sequence 
  \begin{align}
 0 \to 
 \begin{bmatrix}
  \langle\partial_{x_1}\rangle \\
  \vdots \\
   \langle\partial_{x_{n-k+2}}\rangle
 \end{bmatrix}
 \xrightarrow{\alpha} 
 \begin{bmatrix}
 \begin{bmatrix}
  \langle \partial_{x_1}\rangle \\
  \vdots \\
   \langle \partial_{x_{n-k+2}}\rangle\\
      \langle \partial_{x_{n+2}}\rangle
 \end{bmatrix}\\
\R^{n+1} 
\end{bmatrix}
\xrightarrow{\beta}  
\begin{bmatrix}
\begin{bmatrix}
 \langle \partial_{x_{n-k+3}}\rangle \\
 \vdots \\
 \langle \partial_{x_{n+1}}\rangle
 \end{bmatrix} \\
\R^{n+2} \\
\begin{bmatrix}
\langle \partial_{x_{n+2}}\rangle
\end{bmatrix}
\end{bmatrix} 
\xrightarrow{\gamma}  
\begin{bmatrix}
 \langle \partial_{x_{n-k+3}}\rangle \\
 \vdots \\
\langle\partial_{x_{n+2}}\rangle
 \end{bmatrix} 
 \to 0.
\end{align}
Moving $\langle\partial_{x_{n+2}} \rangle$ in the second non-trivial column to the bottom costs $(-1)^{n+1}$, and moving the bottommost $\langle\partial_{x_{n+2}} \rangle$ in the third column over $\R^{n+2}$ costs $(-1)^n$. Since $\beta(\partial_{x_1}, \dotsc, \partial_{x_{n+2}}) = (\partial_{x_1}, \dotsc, \partial_{x_{n+1}}, -\partial_{x_{n+2}} )$ we see that this gluing sequence induces the orientation 
\begin{equation*}
 (-1)^{\sigma(k-1) + k - 1 + \tilde \nu} \partial_{x_1} \wedge \dotsm \wedge \partial_{x_{n-k+2}} \otimes \partial_{x_{n-k+3}} \wedge \dotsm \wedge \partial_{x_{n+2}}
\end{equation*}
on $\det \dbar_{J_k}$, and since this is induced by the canonical orientation of the two glued problems, this gives the canonical orientation. Now, if $k$ is odd we have by assumption that $\sigma(k-1) = \frac{k-1}{2}$ and hence 
\begin{equation*}
 \sigma(k-1) + k - 1 + \tilde \nu \equiv \frac{k-1}{2} + \tilde \nu, \pmod{2},
\end{equation*}
and if $k$ is even we have by assumption that $\sigma(k-1) = \frac{k-2}{2} + \tilde \nu$ and hence 
\begin{equation*}
 \sigma(k-1) + k - 1 + \tilde \nu \equiv \frac{k-2}{2} + 1 \equiv \frac{k}{2}, \pmod{2}.
\end{equation*}
Hence the first assertion holds.

To prove the second assertion, we glue the Maslov problem with boundary conditions $C_k$ to the $\dbar_{I_0}$-problem, which gives a Maslov problem with boundary condition homotopic to $I_k$. We get a gluing sequence similar to the one above, and from this we see that the glued problem gets canonical orientation
\begin{equation*}
 (-1)^{\sigma(k) + k + \nu} \partial_{x_1} \wedge \dotsm \wedge \partial_{x_{n-k+1}} \wedge v  \partial_{x_{n+2}} \otimes \partial_{x_{n-k+2}} \wedge \dotsm \wedge \partial_{x_{n+1}}.
\end{equation*}
Hence the second assertion also holds. 
\end{proof}

 We will need the following.
 \begin{lma}\label{lma:sigmarelations}
  Let $n,k \in \Z_{\geq 0}$ and let 
  \begin{equation*}
   \sigma(n,k) = \sigma(n+1) + \sigma(k) + \sigma(n-k+1).
  \end{equation*}
  Then $\sigma(n,k) \equiv n\cdot k, \pmod{2}$. 
 \end{lma}
\begin{proof}
 Follows by straightforward calculations. Namely,
 \begin{align*}
  \sigma(2m,2l) \equiv & \frac{2m}{2} + \tilde v + \frac{2l}{2} + \frac{2m-2l}{2} + \tilde \nu \equiv 2m \\
  \sigma(2m,2l+1) \equiv & \frac{2m}{2} + \tilde v + \frac{2l}{2} + \tilde v + \frac{2m-2l-1+1}{2} \equiv 2m \\  
  \sigma(2m + 1,2l) \equiv & \frac{2m+2}{2} + \frac{2l}{2} + \frac{2m+1-2l+1}{2}\equiv 2m +2 \\
  \sigma(2m + 1,2l+1) \equiv & \frac{2m+2}{2} + \frac{2l}{2} + \tilde \nu + \frac{2m+1-2l-1}{2}+ \tilde \nu\equiv 2m +1, \\
 \end{align*}
 everything modulo 2. 
\end{proof}

\subsection{Explicit description of the orientation of the capping operators}
Now we are ready to give explicit values for the chosen orientations of the capping operators, using the results from Section \ref{sec:der}.

First we notice the following.

\begin{lma}\label{lma:sigma0}
 With the chosen orientation of $\C$ we get that 
 \begin{align}\label{eq:mu0}
 \sigma_{0,+}^{(0,0)} \equiv &\sigma_{0,-}^{(0,0)}, \\
   \sigma_{0,+}^{(1,1)} \equiv &\sigma_{0,-}^{(1,1)}, \pmod{2}.
 \end{align} 
\end{lma}
\begin{proof}
 Consider the exact sequence \eqref{eq:capseqspec} in the case when $p$ is a special puncture of type $(0,0)$. We get that the trivialized boundary condition of the glued $\dbar_p$--problem is homotopic to the  $n+2$--dimensional Maslov problem with boundary condition \eqref{eq:bcevencap}.  
By our choice of orientation of $\C$ it then follows that $\det \dbar_p$ gets canonical orientation $\partial_{x_1} \wedge \dotsm \wedge \partial_{x_n}$. Moreover, since \eqref{eq:capseqspec} in this case is given by 
  \begin{align}\label{eq:glglgl1}
 0 \to \R^n \xrightarrow{\alpha} 
\begin{bmatrix}
 (-1)^{\sigma_{0,+}^{(0,0)}}\mathbbm{1} \\
 \R^n\\
(-1)^{\sigma_{0,-}^{(0,0)}}\mathbbm{1}
\end{bmatrix}
\to 
\begin{bmatrix}
 0 \\
0
\end{bmatrix}
\to 
0 \to 0,
\end{align}
where $\alpha$ is given by the identity, it follows that $\sigma_{0,+}^{(0,0)} \equiv \sigma_{0,-}^{(0,0)} $ modulo 2. 

 In the case when $p$ is of type $(1,1)$, we get the same gluing sequence (with $\sigma_{0,\pm}^{(0,0)}$ replaced by $\sigma_{0,\pm}^{(1,1)}$), but now the trivialized boundary condition for the $\dbar_p$--problem is given by 
 \begin{equation*}
  \diag(\underbrace{1,\dotsc,1}_{n}, e^{-i\pi s}, -e^{-i\pi s}) * R_{-\pi}(n+1,n+2),
 \end{equation*}
 which differs from the one above by the coordinate change $\partial_{x_{n+2}} \leftrightarrow -\partial_{x_{n+2}}$, and hence this problem has canonical orientation  given by $-\partial_{x_1} \wedge \dotsm \wedge \partial_{x_n}$. Thus we get that $\sigma_{0,+}^{(1,1)} \equiv \sigma_{0,-}^{(1,1)}, \pmod{2}$.
\end{proof}

\begin{lma}\label{lma:sigmay0}
Assume that $n>1$. Then we have that 
 \begin{equation}
  \sigma_{0,+}^{(0,0)} \equiv \sigma_{Y_0}^{(0,0)} (0,0) \equiv \sigma_{0,+}^{(1,1)} + 1 \pmod{2}.
 \end{equation} 
\end{lma}
\begin{proof}
 Consider the capping sequence for the $Y_0$-piece with all special punctures being of even Maslov index:  
 \begin{align*}
 0 \to \R^n \xrightarrow{\alpha} 
\begin{bmatrix}
(-1)^{\sigma_{0,-}^{(k,k)} }\mathbbm{1} \\
(-1)^{\sigma_{0,-}^{(k,k)} }\mathbbm{1} \\
 (-1)^{\sigma_{0,+}^{(k,k)} }\mathbbm{1} \\
 (-1)^{\sigma_{Y_0}^{(k,k)} (0,0)}\R^n
\end{bmatrix}
\to 
\begin{bmatrix}
 0 \\
 0 \\
 0 \\
0
\end{bmatrix}
\to 
0 \to 0,
\end{align*}
where $k=0$ or $k=1$, depending on the type of vertex, and $\alpha$ is given by the identity. The induced $\hat \Gamma_0$-problems have boundary conditions given by 
\begin{align*}
 &\diag(\underbrace{1,\dotsc,1}_{n}, e^{-i\pi s}, e^{-i\pi s}) \hat * R_\pi(n+1,n+2), \quad k = 0, \\
 &\diag(\underbrace{1,\dotsc,1}_{n}, e^{-i\pi s}, -e^{-i\pi s}) \hat * R_{-\pi}(n+1,n+2), \quad k = 1. 
\end{align*}
Since the boundary condition for $k=1$ differs from the boundary condition for $k=0$ by the changes of coordinates $\partial_{x_{n+2}} \leftrightarrow -\partial_{x_{n+2}}$, it follows that $\det \dbar_{\hat \Gamma_0}$ has canonical orientation $(-1)^{k}\partial_{x_1} \wedge \dotsm \wedge \partial_{x_n}$. On the other hand, these problems get induced orientation 
\begin{equation*}
 (-1)^{\sigma_{Y_0}^{(k,k)}(0,0) + \sigma_{0,-}^{(k,k)}+ \sigma_{0,-}^{(k,k)} + \sigma_{0,+}^{(k,k)}} \partial_{x_1} \wedge \dotsm \wedge \partial_{x_n}
\end{equation*}
from the gluing sequence. Hence $\sigma_{Y_0}^{(k,k)}(0,0) + \sigma_{0,+}^{(k,k)} \equiv k$ modulo 2. Now the result follows from Section \ref{sec:results}, where we have $\sigma_{Y_0}^{(1, 1)}(0,0)  \equiv \sigma_{Y_0}^{(0, 0)}(0,0)$.
\end{proof}

We also have the following.
\begin{lma}\label{lma:sigmay0cor}
Assume that $n>1$. Then we have that 
 \begin{equation}
  \sigma_{0,-}^{(0,0)} \equiv \sigma_{0,-}^{(1,1)} \equiv \sigma_{Y_1}^{(1,0)} (0,0) + \sigma_{{\swi}}^{(1,0)}(l) \pmod{2}.
 \end{equation} 
\end{lma}
\begin{proof}
Consider the trivialized Lagrangian boundary condition $A$ for the fully capped $\sigma_{Y_1}^{(1,0)}(0,0)$-- problem. Notice that it coincides with the trivialized Lagrangian boundary condition for the fully capped  $ \sigma_{\swi}^{(1,0)}(l)$--problem. Then notice that the orientation induced on $\det \dbar_A$ by the two different capping sequences corresponding to these problems differ by a factor $(-1)^{\sigma_{0,-}^{(0,0)}+ \sigma_{Y_1}^{(1,0)} (0,0) + \sigma_{{\swi}}^{(1,0)}(l)}$. 

On the other hand, $A$ also coincides with the trivialized Lagrangian boundary condition for the fully capped  $ \sigma_{\swi}^{(1,0)}(u)$--problem, but now the orientation induced on $\det \dbar_A$ by the two different capping sequences corresponding to these problems differ by a factor $(-1)^{\sigma_{0,-}^{(1,1)}+ \sigma_{Y_1}^{(1,0)} (0,0) + \sigma_{{\swi}}^{(1,0)}(u)}$. Since we from Section \ref{sec:results} have that $\sigma_{{\swi}}^{(1,0)}(l) \equiv \sigma_{{\swi}}^{(1,0)}(u)$ modulo 2, the result follows.
\end{proof}

\begin{cor}\label{lma:signposcapev}
Assume that $n>1$. Then we have that  
 \begin{align*}
  \sigma_{0,+}^{(0,0)} &\equiv n+1, & \sigma_{0,-}^{(0,0)} &\equiv n+1, \\
  \sigma_{0,+}^{(1,1)} &\equiv n,  & \sigma_{0,-}^{(1,1)} &\equiv n+1, \pmod{2}.
 \end{align*} 
\end{cor}
\begin{proof}
 This follows from Section \ref{sec:results}, Lemma \ref{lma:sigma0}, Lemma \ref{lma:sigmay0} and Lemma \ref{lma:sigmay0cor}.
\end{proof}

To derive similar results for the capping operators at special punctures of odd Maslov index, we need the following.

\begin{lma}\label{lma:rpiI}
 The loop 
 \begin{equation*}
  R_{\pi}(n,n+1) \hat * R_{\pi}(n,n+2) \hat * R_{\pi}(n+1,n+2) 
 \end{equation*}
 represents the identity of $\pi_1(SO(n+2))$.
\end{lma}
\begin{proof}
 This can be seen by considering the corresponding loop in $SO(3)$ in terms of the \emph{plate trick}, see \cite{plate_trick}.
\end{proof}

\begin{lma}\label{lma:oddend}
 With our orientation choices we have that 
 \begin{equation*}
  \sigma_{1,-}^{(1,0)} \equiv 1 \pmod{2}.
 \end{equation*}
\end{lma}
\begin{proof}
 Consider the fully capped problem for an end-piece $\Gamma$ with positive puncture of type $(1,0)$. It has trivialized Lagrangian boundary conditions given by  \eqref{eq:trivend}, and after the change of coordinates $\partial_{x_{n}} \leftrightarrow \partial_{x_{n+2}}$ this turns into 
   \begin{equation}\label{eq:trivend2}
    \diag(\underbrace{1,\dotsc,1}_{n}, e^{-i\pi s},1) \hat * R_{\pi}(n,n+1) \hat * R_{\pi}(n,n+2) * \diag(\underbrace{1,\dotsc,1}_{n+1}, -e^{i\pi s}).
  \end{equation}

  Using Lemma \ref{lma:rpiI} we get that 
  \begin{equation*}
R_{\pi}(n,n+1) \hat * R_{\pi}(n,n+2) \hat *R_{\pi}(n+1,n+2)
  \end{equation*}
represents a trivial homotopy class, which in turn implies 
that \eqref{eq:trivend2} represents the same homotopy class as \eqref{eq:bc-+} does. By our assumptions it then follows that the Maslov problem with boundary condition given by \eqref{eq:trivend2} has canonical orientation 
\begin{equation*}
 (-1)^{\mu(0)} \partial_{x_1} \wedge \dotsm \wedge \partial_{x_n} \wedge v_1\partial_{x_{n+2}} \wedge v_2\partial_{x_{n+2}},
\end{equation*} 
and hence our fully capped problem $\dbar_{\hat \Gamma}$ has canonical orientation given by 
\begin{equation*}
 (-1)^{\mu(0)+1} \partial_{x_1} \wedge \dotsm \wedge \partial_{x_{n-1}} \wedge v_1\partial_{x_{n}} \wedge v_2\partial_{x_{n}} \wedge \partial_{x_{n+2}}.
\end{equation*} 

On the other hand, the corresponding capping sequence is given by 
  \begin{align*}
 0 \to 
 \begin{bmatrix}
  \langle\partial_{x_1}\rangle \\
  \vdots \\
   \langle\partial_{x_{n-1}}\rangle \\
   \langle v_1\partial_{x_n}\rangle \\
   \langle v_2\partial_{x_n}\rangle \\
   \langle \partial_{x_{n+2}}\rangle
 \end{bmatrix}
 \xrightarrow{\alpha} 
 \begin{bmatrix}
  \langle i(z-1)\partial_{x_{n}}\rangle \\
    \langle (-1)^{\sigma_{1,+}^{(1,0)}}\partial_{x_{n+2}}\rangle\\
 \begin{bmatrix}
  \langle \partial_{x_1}\rangle \\
  \vdots \\
   \langle \partial_{x_{n}}\rangle
 \end{bmatrix}
\end{bmatrix}
 \to 0,
\end{align*}
which induces the orientation 
\begin{equation*}
 (-1)^{\sigma_{1,+}^{(1,0)}} \partial_{x_1} \wedge \dotsm \wedge \partial_{x_{n-1}} \wedge v_1\partial_{x_{n}} \wedge v_2\partial_{x_{n}} \wedge \partial_{x_{n+2}}
\end{equation*} 
on $\det \dbar_{\hat \Gamma}$. By assumption, this should give the canonical orientation. Using this together with Lemma \ref{lma:mu0} it follows that 
\begin{equation*}
 \sigma_{1,-}^{(1,0)} \equiv 1 \pmod{2}.
\end{equation*}
\end{proof}

\begin{lma}\label{lma:oddend2}
  Assume that $n>1$. Then we have that 
 \begin{equation*}
  \sigma_{1,-}^{(1,0)} \equiv  \sigma_{1,-}^{(0,1)} \pmod{2}.
 \end{equation*}
\end{lma}
\begin{proof}
Consider the fully capped problems corresponding to the $Y_0$--vertices of type   $\sigma_{Y_0}^{(1,0)}(0,1)$ and $\sigma_{Y_0}^{(0,1)}(1,0)$, respectively. The first has trivialized Lagrangian boundary condition given by \eqref{eq:bc-+}, and the second has trivialized Lagrangian boundary condition given by  
\begin{align*}
 &\left(R_{\pi}(n,n+2)\cdot\diag(\underbrace{1,\dotsc,1}_{n-+1},-1)  \right) *\diag(\underbrace{1,\dotsc,1}_{n-1},-1, e^{-i\pi s}, 1)\hat * R_{-\pi}(n+1,n+2) \hat* \\
 & \qquad R_{\pi}(n,n+2) \hat* R_{-\pi}(n,n+2)*\diag(\underbrace{1,\dotsc,1}_{n-1},-1, 1, -e^{i\pi s})\hat * R_{-\pi}(n,n+2),
\end{align*}
which is homotopic to 
\begin{align}\label{eq:oddcapp01homo}
 &\diag(\underbrace{1,\dotsc,1}_{n-1},-1, e^{-i\pi s}, 1) \hat * R_{-\pi}(n+1,n+2)*\diag(\underbrace{1,\dotsc,1}_{n-1},-1, 1, -e^{i\pi s}).
\end{align}
But the latter is nothing but the boundary condition given by \eqref{eq:bc-+} after applying the change of coordinates $\partial_{x_n} \leftrightarrow -\partial_{x_n}$. Hence, the fully capped $\sigma_{Y_0}^{(1,0)}(0,1)$--problem has canonical orientation given by 
\begin{equation*}
 (-1)^{\mu(0)} \partial_{x_1} \wedge \dotsm \wedge \partial_{x_n} \wedge v_1\partial_{x_{n+2}} \wedge v_2\partial_{x_{n+2}},
\end{equation*} 
and the the fully capped $\sigma_{Y_0}^{(0,1)}(1,0)$--problem has canonical orientation given by 
\begin{equation*}
 (-1)^{\mu(0) +1 } \partial_{x_1} \wedge \dotsm \wedge \partial_{x_n} \wedge v_1\partial_{x_{n+2}} \wedge v_2\partial_{x_{n+2}}.
\end{equation*} 

Now we consider the capping sequences for these two problems, which by assumption should induce the canonical orientation of the fully capped problems. In the $\sigma_{Y_0}^{(1,0)}(0,1)$--case this sequence is given by 
   \begin{align*}
 0 \to 
 \begin{bmatrix}
  \langle\partial_{x_1}\rangle \\
  \vdots \\
   \langle\partial_{x_{n}}\rangle \\
   \langle v_1\partial_{x_{n+2}}\rangle \\
   \langle v_2\partial_{x_{n+2}}\rangle 
 \end{bmatrix}
 \xrightarrow{\alpha} 
  \begin{bmatrix}
   (-1)^{\sigma_{0,-}^{(1,1)}}\mathbbm{1} \\
  \langle (-1)^{\sigma_{1,-}^{(1,0)}}i(z-1)\partial_{x_{n+2}}\rangle \\
  \langle (-1)^{\sigma_{1,+}^{(1,0)}}\partial_{x_{n+2}}\rangle \\
\R^n
 \end{bmatrix}
  \to 0,
 \end{align*}
and in the $\sigma_{Y_0}^{(0,1)}(1,0)$--case this sequence is given by 
  \begin{align*}
 0 \to 
 \begin{bmatrix}
  \langle\partial_{x_1}\rangle \\
  \vdots \\
   \langle\partial_{x_{n}}\rangle \\
   \langle v_1\partial_{x_{n+2}}\rangle \\
   \langle v_2\partial_{x_{n+2}}\rangle 
 \end{bmatrix}
 \xrightarrow{\alpha} 
 \begin{bmatrix}
  \langle (-1)^{\sigma_{1,-}^{(0,1)}}i(z-1)\partial_{x_{n+2}}\rangle \\
  (-1)^{\sigma_{0,-}^{(1,1)}}\mathbbm{1} \\
  \langle (-1)^{\sigma_{1,+}^{(0,1)}}\partial_{x_{n+2}}\rangle \\
\R^n
\end{bmatrix}
 \to 0.
\end{align*}
We see that these sequences induce the orientation 
\begin{equation*}
 (-1)^{\sigma_{1,-}^{(1,0)} + \sigma_{0,-}^{(1,1)} + \sigma_{1,+}^{(1,0)} +  \sigma_{Y_0}^{(1,0)}(0,1)+1} \partial_{x_1} \wedge \dotsm \wedge \partial_{x_n} \wedge v_1\partial_{x_{n+2}} \wedge v_2\partial_{x_{n+2}}
\end{equation*} 
on the $\sigma_{Y_0}^{(1,0)}(0,1)$--problem and 
\begin{equation*}
 (-1)^{\sigma_{1,-}^{(0,1)} + \sigma_{0,-}^{(1,1)} + \sigma_{1,+}^{(0,1)} +  \sigma_{Y_0}^{(0,1)}(1,0)+1} \partial_{x_1} \wedge \dotsm \wedge \partial_{x_n} \wedge v_1\partial_{x_{n+2}} \wedge v_2\partial_{x_{n+2}}
\end{equation*} 
on the $\sigma_{Y_0}^{(0,1)}(1,0)$--problem. 
Hence we get that 
\begin{equation*}
 \sigma_{1,-}^{(0,1)} + \sigma_{1,+}^{(0,1)} +  \sigma_{Y_0}^{(0,1)}(1,0) \equiv \sigma_{1,-}^{(1,0)}  + \sigma_{1,+}^{(1,0)} +  \sigma_{Y_0}^{(1,0)}(0,1) + 1 \pmod{2}.
\end{equation*}
From Section \ref{sec:results} we have that $\sigma_{Y_0}^{(0,1)}(1,0) \equiv   \sigma_{Y_0}^{(1,0)}(0,1) \pmod{2}$, and by Lemma \ref{lma:oddsigmarel} we have that $\sigma_{1,+}^{(0,1)}  \equiv \sigma_{1,+}^{(1,0)} +1 \pmod{2}$. Hence the result follows.
\end{proof}

\begin{lma}\label{lma:megalemma21}
 Assume that $n>1$. Then we have that 
 \begin{equation*}
  \sigma_{1,+}^{(1,0)} \equiv n + 1 + \sigma_{0,+}^{(0,0)}.
 \end{equation*}
\end{lma}
\begin{proof}
 Consider the $\dbar$--problem corresponding to a switch-piece $\Gamma$ of type $\sigma^{(0,0)}_{\swi}(u)$. By our results from Section \ref{sec:results} we have that $\sigma_{\swi}^{(0,0)}(u) = 1$. The trivialized Lagrangian boundary condition for the fully capped problem is given by 
  \begin{align}\label{eq:switch1lbc}
   &\diag(\underbrace{1,\dotsc,1}_{n},e^{-i\pi s},e^{-i\pi s}) \hat * R_{\pi}(n+1,n+2) * \\ \nonumber
    &{}\qquad   \diag(\underbrace{1,\dotsc,1}_{n-1},e^{-i\pi s},1,1) \hat * R_{-\pi}(n,n+2)*\diag(\underbrace{1,\dotsc,1}_{n},1,-e^{i\pi s}). 
  \end{align} 
  Notice that this boundary condition can be given by gluing the trivialized Lagrangian boundary condition \eqref{eq:bcevencap} to the trivialized Lagrangian boundary condition
  \begin{equation*}
    \diag(\underbrace{1,\dotsc,1}_{n-1},e^{-i\pi s},1,1) \hat* R_{-\pi}(n,n+2)*\diag(\underbrace{1,\dotsc,1}_{n},1,-e^{i\pi s}).
  \end{equation*}
After a change of coordinates $\partial_{x_n} \leftrightarrow \partial_{x_{n+1}}$, the latter boundary condition turns into the one given by \eqref{eq:bc-+}, and hence the corresponding Maslov problem has canonical orientation given by 
\begin{equation*}
 (-1)^{\mu(0)+1} \partial_{x_1} \wedge \dotsm \wedge \partial_{x_{n-1}} \wedge \partial_{x_{n+1}}\wedge v_1\partial_{x_{n+2}} \wedge v_2\partial_{x_{n+2}}.
\end{equation*} 
Also recall that the Maslov problem with boundary conditions \eqref{eq:bcevencap} has canonical orientation 
\begin{equation*}
 \partial_{x_1} \wedge \dotsm \wedge \partial_{x_{n}}.
\end{equation*} 
In addition, gluing these two Maslov problems together induces the gluing sequence 
  \begin{align}
 0 \to 
 \begin{bmatrix}
  \langle\partial_{x_1}\rangle \\
  \vdots \\
   \langle\partial_{x_{n-1}}\rangle \\
   \langle v\partial_{x_{n+2}}\rangle \\
 \end{bmatrix}
 \xrightarrow{\alpha} 
 \begin{bmatrix}
 \begin{bmatrix}
  \langle \partial_{x_1}\rangle \\
  \vdots \\
   \langle \partial_{x_{n}}\rangle
 \end{bmatrix}\\
 \begin{bmatrix}
  \langle \partial_{x_1}\rangle \\
  \vdots \\
   \langle \partial_{x_{n-1}}\rangle \\
   \langle \partial_{x_{n+1}}\rangle \\
    \langle v_1\partial_{x_{n+2}}\rangle \\
    \langle v_2\partial_{x_{n+2}}\rangle 
 \end{bmatrix}
\end{bmatrix}
\xrightarrow{\beta}  
\R^{n+2} 
 \to 0.
\end{align}
Since $v_1$ by assumption is close to a cutoff constant function, and $v_2$ is close to the cutoff of the  solution $i(z-1)$, which $v$ is mapped to under $\alpha$ (possibly times a positive constant), we see that we might assume that 
\begin{equation*}
 \alpha(v\partial_{x_{n+2}}) =  v_2\partial_{x_{n+2}}, \qquad \beta(v_1\partial_{x_{n+2}}) = \partial_{x_{n+2}}.
\end{equation*}
Hence this sequence induces the orientation 
\begin{equation*}
 (-1)^{(\mu(0)+1) + 1 + (n-1) + (n+2)} \partial_{x_1} \wedge \dotsm \wedge \partial_{x_{n-1}} \wedge v\partial_{x_{n+2}}
\end{equation*} 
on $\det \dbar_{\hat \Gamma}$. Here one 1 comes from $\beta(\partial_{x_n})=-\partial_{x_n}$, the $(n-1)$--summand comes from moving $\partial_{x_n}$ in the second non-trivial column over $\partial_{x_1}, \dotsc, \partial_{x_{n-1}}$, and the $(n+2)$--summand comes from moving $v_2\partial_{x_{n+2}}$ over $\partial_{x_1}, \dotsc, \partial_{x_n},\partial_{x_{n+1}},v_2\partial_{x_{n+2}}$. It follows that the canonical orientation of $\det \dbar_{\hat \Gamma}$ is given by 
\begin{equation*}
 (-1)^{\mu(0)+1} \partial_{x_1} \wedge \dotsm \wedge \partial_{x_{n-1}} \wedge v\partial_{x_{n+2}}.
\end{equation*} 

On the other hand, if we consider the capping sequence for $\Gamma$, which is given by 
  \begin{align}
 0 \to 
 \begin{bmatrix}
  \langle\partial_{x_1}\rangle \\
  \vdots \\
   \langle\partial_{x_{n-1}}\rangle \\
   \langle v\partial_{x_{n+2}}\rangle \\
 \end{bmatrix}
 \xrightarrow{\alpha} 
 \begin{bmatrix}
  \langle (-1)^{\sigma_{1,-}^{(1,0)}}i(z-1)\partial_{x_{n+2}}\rangle \\
 (-1)^{\sigma_{0,+}^{(0,0)}}\mathbbm{1} \\
 \begin{bmatrix}
  \langle \partial_{x_1}\rangle \\
  \vdots \\
   \langle \partial_{x_{n-1}}\rangle
 \end{bmatrix}
\end{bmatrix}
 \to 0,
\end{align}
we see that this sequence induces the orientation 
\begin{equation*}
 (-1)^{\sigma_{1,-}^{(1,0)}+\sigma_{0,+}^{(0,0)} + \sigma_{{\swi}}^{(0,0)}(u) + (n -1)} \partial_{x_1} \wedge \dotsm \wedge \partial_{x_{n-1}} \wedge v\partial_{x_{n+2}}
\end{equation*} 
on $\det \dbar_{\hat \Gamma}$. Since this should give the canonical orientation it follows that 
\begin{equation*}
 \mu(0) +1\equiv \sigma_{1,-}^{(1,0)}+\sigma_{0,+}^{(0,0)} + n -1 +1\pmod{2}.
\end{equation*}
Using Lemma \ref{lma:mu0} we get that 
\begin{equation*}
 \sigma_{1,+}^{(1,0)} \equiv \sigma_{0,+}^{(0,0)} + n  + 1 \pmod{2}.
\end{equation*}
\end{proof}

\begin{cor}\label{lma:signallspeccap}
Assume that $n>1$. Then we have that  
 \begin{align*}
  \sigma_{0,+}^{(0,0)} &\equiv n+1, & \sigma_{0,-}^{(0,0)} &\equiv n+1, \\
  \sigma_{0,+}^{(1,1)} &\equiv n,  & \sigma_{0,-}^{(1,1)} &\equiv n+1, \\
  \sigma_{1,+}^{(1,0)} & \equiv 0, & \sigma_{1,-}^{(1,0)} & \equiv 1, \\
    \sigma_{1,+}^{(0,1)} & \equiv 1, & \sigma_{1,-}^{(0,1)} & \equiv 1,\pmod{2}
 \end{align*} 
\end{cor}
\begin{proof}
 This follows from Corollary \ref{lma:signposcapev}, Lemma \ref{lma:oddsigmarel}, Lemma \ref{lma:oddend}, Lemma \ref{lma:oddend2} and  Lemma \ref{lma:megalemma21}.
 
\end{proof}

\subsection{Explicit values of remaining switch, $Y_1$ and $Y_0$--signs}
Now we can compute the values of the switch, $Y_1$ and $Y_0$--signs that we were not able to compute in Section \ref{sec:der}.

\begin{lma}\label{lma:signswitch}
 With our orientation choices we get that 
 \begin{equation*}
 \sigma_{Y_1}^{(1,0)}(0,0) \equiv \sigma_{0,-}^{(1,1)} +\sigma_{0,-}^{(0,0)}+ \sigma_{1,+}^{(1,0)}  + n+1.
 \end{equation*}
\end{lma}
\begin{proof}
 Consider the fully capped trivialized boundary conditions for a $\sigma_{Y_1}^{(1,0)}(0,0)$-problem, given by 
  \begin{align}\label{eq:switch2lbc}
   &\diag(\underbrace{1,\dotsc,1}_{n},e^{-i\pi s},1) \hat * R_{-\pi}(n+1,n+2) \hat * R_{-\pi}(n,n+2) *   \diag(\underbrace{1,\dotsc,1}_{n-1},-e^{-i\pi s},1,1). 
  \end{align}  
 After applying the change of coordinates $\partial_{x_n} \leftrightarrow \partial_{x_{n+2}}$, this boundary condition turns into
   \begin{align*}
    &\diag(\underbrace{1,\dotsc,1}_{n},e^{-i\pi s},1) \hat * R_{\pi}(n,n+1) \hat * R_{\pi}(n,n+2) * 
    \diag(\underbrace{1,\dotsc,1}_{n},1,-e^{-i\pi s}),
  \end{align*}  
 which is homotopic to 
    \begin{align*}
   & \diag(\underbrace{1,\dotsc,1}_{n},e^{-i\pi s},-e^{-i\pi s}) \hat * R_{\pi}(n,n+1) \hat * R_{\pi}(n,n+2). 
  \end{align*} 
  After applying the change of coordinates $\partial_{x_{n+2}} \leftrightarrow -\partial_{x_{n+2}}$ we get the boundary condition 
   \begin{align}\label{eq:switch5lbc}
   & \diag(\underbrace{1,\dotsc,1}_{n},e^{-i\pi s},e^{-i\pi s}) \hat * R_{\pi}(n,n+1) \hat * R_{-\pi}(n,n+2). 
  \end{align} 
Since $ R_{\pi}(n,n+1) \hat * R_{-\pi}(n,n+2) \hat *  R_{-\pi}(n+1,n+2)$ represents the trivial homotopy class in $\pi_1(SO(n+2))$ it follows that the Maslov problem with trivialized boundary conditions \eqref{eq:switch5lbc} has the same canonical orientation as the Maslov problem with the trivialized boundary condition \eqref{eq:bcevencap}.
  
  Summing up, we get that the canonical orientation of the $\dbar_{\hat \Gamma}$--problem is given by 
$(-1)^{1+1}\partial_{x_1} \wedge \dotsm \wedge \partial_{x_{n-1}} \wedge \partial_{x_{n+2}}$.

On the other hand, the capping sequence for $\Gamma$ is given by 
  \begin{align*}
 0 \to 
 \begin{bmatrix}
  \langle\partial_{x_1}\rangle \\
  \vdots \\
   \langle\partial_{x_{n-1}}\rangle \\
   \langle \partial_{x_{n+2}}\rangle
 \end{bmatrix}
 \xrightarrow{\alpha} 
 \begin{bmatrix}
  (-1)^{\sigma_{0,-}^{(1,1)}} \mathbbm{1} \\
  (-1)^{\sigma_{0,-}^{(0,0)}} \mathbbm{1} \\
    \langle (-1)^{\sigma_{1,+}^{(1,0)}} \partial_{x_{n+2}}\rangle \\
 \begin{bmatrix}
  \langle \partial_{x_1}\rangle \\
  \vdots \\
   \langle \partial_{x_{n-1}}\rangle
 \end{bmatrix}
\end{bmatrix}
 \to 0,
\end{align*}
  and thus induces the orientation 
  \begin{equation*}
   (-1)^{\sigma_{0,-}^{(1,1)} +\sigma_{0,-}^{(0,0)}+ \sigma_{1,+}^{(1,0)} + \sigma_{Y_1}^{(1,0)}(0,0) + n + 1} \partial_{x_1} \wedge \dotsm \wedge \partial_{x_{n-1}} \wedge \partial_{x_{n+2}}
  \end{equation*}
on $\det \dbar_{\hat \Gamma}$. Since this orientation should coincide with the canonical one we obtain 
   \begin{equation*}
   \sigma_{Y_1}^{(1,0)}(0,0) \equiv \sigma_{0,-}^{(1,1)} +\sigma_{0,-}^{(0,0)}+ \sigma_{1,+}^{(1,0)}  + n+1  \pmod{2}.
   \end{equation*}
\end{proof}

\begin{lma}\label{lma:remainingY1}
Assume that $n>1$. With our orientation choices we have that 
 \begin{align*}
  \sigma_{Y_1}^{(1,0)}(0,0) & \equiv  n+1, \\
   \sigma_{\swi}^{(1,0)}(l) & \equiv 0 \\
   \sigma_{Y_1}^{(0,1)}(0,0) & \equiv  n+1, \pmod{2}.
 \end{align*}
\end{lma}
\begin{proof}

The first statement follows from Lemma \ref{lma:signswitch} and Corollary \ref{lma:signallspeccap}.

 Similarly, the second statement follows from the first statement together with Lemma \ref{lma:sigmay0cor} and Corollary \ref{lma:signallspeccap}.  

To prove the third statement we use the argument from the proof of Lemma \ref{lma:sigmay0cor}. That is, the trivialized Lagrangian boundary condition for the fully capped $\sigma_{Y_1}^{(0,1)}(0,0) $--problem coincides with the one for the fully capped $ \sigma_{\swi}^{(0,1)}(l)$--problem. But now the orientation induced on this problem by the two different capping sequences differs by a factor $(-1)^{ \sigma_{0,-}^{(1,1)}}$. The result now follows from 
the second statement together with Corollary \ref{lma:signallspeccap} and the fact that $\sigma_{\swi}^{(0,1)}(l)=\sigma_{\swi}^{(0,1)}(l)$ from Section \ref{sec:results}.
\end{proof}

\begin{lma}\label{cor:y1rem}
 If $n=1$ we have that 
  \begin{align*}
  \sigma_{Y_1}^{(0,1)}(0,0) & \equiv  
   \sigma_{Y_1}^{(1,0)}(0,0)  \pmod{2}.
 \end{align*}
\end{lma}
\begin{proof}
This follows from the results in Lemma \ref{lma:oddsigmarel} and Lemma \ref{lma:remainingY1}. Indeed, in all dimensions we get that the canonical orientation of the fully capped $\sigma_{Y_1}^{(1,0)}(0,0)$--problem differs from the canonical orientation of the fully capped $\sigma_{Y_1}^{(0,1)}(0,0)$--problem by a factor 
\begin{equation*}
(-1)^{  \sigma_{Y_1}^{(1,0)}(0,0) +   \sigma_{Y_1}^{(0,1)}(0,0) + \sigma_{1,+}^{(0,1)} + \sigma_{1,+}^{(1,0)}},                                                                                                                                                                                                                                                                                                                                                                                                                                                                                                                                                                                                                                                   \end{equation*}
which by Lemma \ref{lma:oddsigmarel} reduces to 
\begin{equation*}
(-1)^{  \sigma_{Y_1}^{(1,0)}(0,0) +   \sigma_{Y_1}^{(0,1)}(0,0) +1}.                                                                                                                                                                                                                                                                                                                                                                                                                                                                                                                                                                                                                                                   \end{equation*}
From Lemma \ref{lma:remainingY1} we get that these two problems thus have opposite canonical orientation if $n>1$, and thus their trivialized Lagrangian boundary conditions represent different homotopy classes. But since the boundary conditions are constant in the first $n-1$ directions, they also represent different homotopy classes for $n=1$. Thus we must have 
\begin{equation*}
 \sigma_{Y_1}^{(1,0)}(0,0) \equiv  \sigma_{Y_1}^{(0,1)}(0,0) \pmod 2
\end{equation*}
also in the case when $n=1$. 
\end{proof}

\begin{cor}
If $n=1$ we have that $\sigma_{0,-}^{(1,1)} +\sigma_{0,-}^{(0,0)}+ \sigma_{1,+}^{(1,0)} \equiv 0$, modulo 2.
\end{cor}
\begin{proof}
This follows from Lemma \ref{lma:signswitch}, Lemma \ref{cor:y1rem} and the choice of orientations of capping operators in Remark \ref{rmk:n1choice}.
\end{proof}

\begin{cor}\label{lma:y011sign}
 With our orientation choices we have that 
 \begin{equation*}
  \sigma_{Y_0}^{(0,0)}(1,1) \equiv n \pmod{2}.
 \end{equation*}
\end{cor}
\begin{proof}
This follows from Section \ref{sec:results} and Lemma \ref{lma:signswitch}. 
\end{proof}

\subsection{Derivation of the signs of elementary trees corresponding to true 1-valent negative punctures}\label{sec:signs1val}

Now we are ready to derive explicit formulas for the signs $\sigma_{\negg,1}$. 
First we need some auxiliarly results.

\begin{lma}\label{lma:tp1}
 Assume that $k>0$. With our orientation choices we get that 
\begin{align*}
 \sigma^{t,(l,l)}_{+}(k) &= \sigma_{0,-}^{(l,l)} + \sigma(n-k) + l, \qquad l \in \{0,1\},\\
 \sigma^{t,(1,0)}_{+}(k) &= \sigma_{1,-}^{(1,0)} + \sigma(n-k+1)+\nu + n+1, \\
  \sigma^{t,(0,1)}_{+}(k) &= \sigma_{1,-}^{(0,1)}  + \sigma(n-k+1)+\nu + n, \pmod{2}.
\end{align*}
\end{lma}
\begin{proof}
Let $\Gamma$ be an elementary tree corresponding to a true 1-valent positive puncture $p$ of index $k$. Then $\kd_\Gamma = \spn(\partial_{x_1}, \dotsc, \partial_{x_k})$ = $T_p(W^u(p))$. 

First assume that $|\mu(p)| =0$ and consider the fully capped problem $\dbar_{\hat \Gamma}$ corresponding to $\Gamma$. This satisfies $\kd_{\hat \Gamma} = \spn(\partial_{x_1}, \dotsc, \partial_{x_k})$ and the cokernel can be identified with $\spn(\partial_{x_{k+1}}, \dotsc, \partial_{x_n})$ via \eqref{eq:cokerident}. Moreover, the trivialized Lagrangian boundary condition for the $\dbar_{\hat \Gamma}$--problem is given by 
 \begin{equation}\label{eq:firstcase}
  \diag(\underbrace{1,\dotsc,1}_{k}, \underbrace{e^{-2\pi i s}, \dotsc, e^{-2\pi i s}}_{n-k}, e^{-i\pi s}, e^{-i\pi s}) \hat * R_\pi(n+1,n+2) 
 \end{equation}
 in the case $p$ is of type $(0,0)$, and is given by 
  \begin{equation*}
  \diag(\underbrace{1,\dotsc,1}_{k}, \underbrace{e^{-2\pi i s}, \dotsc, e^{-2\pi i s}}_{n-k}, e^{-i\pi s}, -e^{-i\pi s}) \hat * R_{-\pi}(n+1,n+2)  
 \end{equation*}
 in the case $p$ is of type $(1,1)$.
 
 In the $(0,0)$--case, notice that the boundary conditions in \eqref{eq:firstcase} can be given by gluing the trivialized boundary condition
 \begin{equation*}
  A =  \diag(\underbrace{1,\dotsc,1}_{k}, \underbrace{e^{-2\pi i s}, \dotsc, e^{-2\pi i s}}_{n-k}, 1,1) 
 \end{equation*}
to the trivialized boundary condition 
\begin{equation*}
 B = \diag(\underbrace{1,\dotsc,1}_{n}, e^{-i\pi s}, e^{-i\pi s}) \hat* R_\pi(n+1,n+2).
\end{equation*}
Recall that, by Lemma \ref{lma:IJsign}, the canonical orientation of $\det \dbar_A$ is given by 
\begin{equation*}
 (-1)^{\sigma(n-k)} \partial_{x_1} \wedge \dotsm \wedge \partial_{x_{k}} \wedge \partial_{x_{n+1}} \wedge \partial_{x_{n+2}}\otimes \partial_{x_{k+1}} \wedge \dotsm \wedge \partial_{x_{n}}, 
\end{equation*}
and, by our choice of orientation of $\C$, that the canonical orientation of $\det \dbar_B$ is given by 
\begin{equation*}
 \partial_{x_1} \wedge \dotsm \wedge \partial_{x_{n}}.
\end{equation*}
Moreover, the gluing of $\dbar_A$ to $\dbar_B$ induces the exact sequence 
  \begin{align}
 0 \to 
 \begin{bmatrix}
  \langle\partial_{x_1}\rangle \\
  \vdots \\
   \langle\partial_{x_{k}}\rangle
 \end{bmatrix}
 \xrightarrow{\alpha} 
 \begin{bmatrix}
 \begin{bmatrix}
  \langle \partial_{x_1}\rangle \\
  \vdots \\
   \langle \partial_{x_{k}}\rangle\\
   \langle \partial_{x_{n+1}}\rangle\\
      \langle \partial_{x_{n+2}}\rangle
 \end{bmatrix}\\
 \begin{bmatrix}
  \langle \partial_{x_1}\rangle \\
  \vdots \\
   \langle \partial_{x_{n}}\rangle
 \end{bmatrix}
\end{bmatrix}
\xrightarrow{\beta}  
\begin{bmatrix}
\begin{bmatrix}
 \langle \partial_{x_{k+1}}\rangle \\
 \vdots \\
 \langle \partial_{x_{n}}\rangle
 \end{bmatrix} \\
\R^{n+2} \\
\begin{bmatrix}
0
\end{bmatrix}
\end{bmatrix} 
\xrightarrow{\gamma}  
\begin{bmatrix}
 \langle \partial_{x_{k+1}}\rangle \\
 \vdots \\
 \langle \partial_{x_{n}}\rangle
 \end{bmatrix} 
 \to 0.
\end{align}
It follows that the canonical orientation of $\det \dbar_{\hat \Gamma}$ is given by 
\begin{equation*}
 (-1)^{\sigma(n-k)} \partial_{x_1} \wedge \dotsm \wedge \partial_{x_{k}}\otimes \partial_{x_{k+1}} \wedge \dotsm \wedge \partial_{x_{n}}
\end{equation*}
in the $(0,0)$-case. Also, since the trivialized Lagrangian boundary condition for the $\dbar_{\hat \Gamma}$--problem in the $(1,1)$--case coincides with \eqref{eq:bcevencap} under the change of coordinates $\partial_{x_{n+2}} \leftrightarrow - \partial_{x_{n+2}}$, it follows that the canonical orientation of $\det \dbar_{\hat \Gamma}$ is given by 
\begin{equation*}
 (-1)^{\sigma(n-k)+1} \partial_{x_1} \wedge \dotsm \wedge \partial_{x_{k}}\otimes \partial_{x_{k+1}} \wedge \dotsm \wedge \partial_{x_{n}}
\end{equation*}
in the $(1,1)$-case.

By comparing with the capping sequence for $\Gamma$, given by
  \begin{align}
 0 \to 
 \begin{bmatrix}
  \langle\partial_{x_1}\rangle \\
  \vdots \\
   \langle\partial_{x_{k}}\rangle
 \end{bmatrix}
 \xrightarrow{\alpha} 
 \begin{bmatrix}
  0 \\
  0\\
 \begin{bmatrix}
  \langle \partial_{x_1}\rangle \\
  \vdots \\
   \langle \partial_{x_{k}}\rangle
 \end{bmatrix}
\end{bmatrix}
\xrightarrow{\beta}  
\begin{bmatrix}
 0\\
\begin{bmatrix}
 \langle \partial_{x_{k+1}}\rangle \\
 \vdots \\
 \langle \partial_{x_{n}}\rangle
 \end{bmatrix} \\
  0\\
\end{bmatrix} 
\xrightarrow{\gamma}  
\begin{bmatrix}
 \langle \partial_{x_{k+1}}\rangle \\
 \vdots \\
 \langle \partial_{x_{n}}\rangle
 \end{bmatrix} 
 \to 0,
\end{align}
and recalling that $ \sigma^{t,(0,0)}_{+}(k)$ and $ \sigma^{t,(1,1)}_{+}(k)$ are choosen so that the capping orientation of $\Gamma$ is given by $ \partial_{x_1} \wedge \dotsm \wedge \partial_{x_{k}}$, we see that this sequence induces the orientation 
\begin{equation*}
 (-1)^{\sigma_{0,-}^{(l,l)} + \sigma^{t,(l,l)}_{+}(k)}  \partial_{x_1} \wedge \dotsm \wedge \partial_{x_{k}}\otimes \partial_{x_{k+1}} \wedge \dotsm \wedge \partial_{x_{n}}
\end{equation*}
of $\det \dbar_{\hat \Gamma}$, $l\in \{0,1\}$. Since this should coincide with the canonical orientation of $\det \dbar_{\hat \Gamma}$ it follows that 
\begin{equation*}
 \sigma^{t,(l,l)}_{+}(k) \equiv \sigma_{0,-}^{(l,l)} + \sigma(n-k) + l \pmod{2}, \qquad l \in \{0,1\}.
\end{equation*}

In the case when  $|\mu(p)| =1$ we have that the fully capped problem $\dbar_{\hat \Gamma}$ has $\kd_{\hat \Gamma} = \spn(\partial_{x_1}, \dotsc, \partial_{x_k}, v\partial_{x_{n+2}})$ and the cokernel can be identified with $\spn(\partial_{x_{k+1}}, \dotsc, \partial_{x_n}, \partial_{x_{n+1}})$ via \eqref{eq:cokerident}. Moreover, its trivialized Lagrangian boundary condition is given by 
 \begin{equation}\label{eq:secondcase}
  \diag(\underbrace{1,\dotsc,1}_{k}, \underbrace{e^{-2\pi i s}, \dotsc, e^{-2\pi i s}}_{n-k}, e^{-2\pi i s}, e^{-\pi i s}) * \diag(\underbrace{1,\dotsc,1}_{n}, 1, -e^{\pi i s})
 \end{equation}
 in the case $p$ is of type $(1,0)$, and is given by 
  \begin{align*}
&\left(R_{\pi}(n,n+2)\cdot\diag(\underbrace{1,\dotsc,1}_{n+1},-1)  \right)*\diag(\underbrace{1,\dotsc,1}_{k}, \underbrace{e^{-2\pi i s}, \dotsc, e^{-2\pi i s}}_{n-k-1},-e^{-2\pi i s}, e^{-2\pi i s}, e^{-\pi i s}) \hat *
\\
& \qquad R_\pi(n,n+2) \hat *R_{-\pi}(n,n+2) * \diag(\underbrace{1,\dotsc,1}_{n-1},-1, 1, -e^{\pi i s}) \hat * R_{-\pi}(n,n+2)
 \end{align*}
in the case $p$ is of type $(0,1)$. Note that the latter is homotopic to the boundary condition of the former after having applied the change of coordinates $\partial_x \leftrightarrow -\partial_x$.

From Lemma \ref{lma:IJsign} follows that the canonical orientation of $\det \dbar_{\hat \Gamma}$ is given by 
\begin{equation*}
 (-1)^{\sigma(n-k+1)+\nu + n-k+1} \partial_{x_1} \wedge \dotsm \wedge \partial_{x_{k}} \wedge v\partial_{x_{n+2}}\otimes \partial_{x_{k+1}} \wedge \dotsm \wedge \partial_{x_{n}}\wedge \partial_{x_{n+1}}
\end{equation*}
in the case when $p$ is of type $(1,0)$, and by 
\begin{equation*}
 (-1)^{\sigma(n-k+1)+\nu + n-k} \partial_{x_1} \wedge \dotsm \wedge \partial_{x_{k}} \wedge v\partial_{x_{n+2}}\otimes \partial_{x_{k+1}} \wedge \dotsm \wedge \partial_{x_{n}}\wedge \partial_{x_{n+1}}
\end{equation*}
in the case $p$ is of type $(0,1)$. 

Next consider the capping sequence for $\Gamma$, given by   
  \begin{align}
 0 \to 
 \begin{bmatrix}
  \langle\partial_{x_1}\rangle \\
  \vdots \\
   \langle\partial_{x_{k}}\rangle \\
   \langle v\partial_{x_{n+2}}\rangle
 \end{bmatrix}
 \xrightarrow{\alpha} 
 \begin{bmatrix}
  \langle i(z-1)\partial_{x_{n+2}}\rangle \\
  0\\
 \begin{bmatrix}
  \langle \partial_{x_1}\rangle \\
  \vdots \\
   \langle \partial_{x_{k}}\rangle
 \end{bmatrix}
\end{bmatrix}
\xrightarrow{\beta}  
\begin{bmatrix}
 0\\
\begin{bmatrix}
 \langle \partial_{x_{k+1}}\rangle \\
 \vdots \\
 \langle \partial_{x_{n}}\rangle \\
 \langle \partial_{x_{n+1}}\rangle
 \end{bmatrix} \\
  0
\end{bmatrix} 
\xrightarrow{\gamma}  
\begin{bmatrix}
 \langle \partial_{x_{k+1}}\rangle \\
 \vdots \\
 \langle \partial_{x_{n}}\rangle \\
  \langle \partial_{x_{n+1}}\rangle
 \end{bmatrix} 
 \to 0.
\end{align}
We see that this sequence induces the orientation 
\begin{equation*}
 (-1)^{\sigma_{0,-} + \sigma^t_{+}(k) + 0 + k}  \partial_{x_1} \wedge \dotsm \wedge \partial_{x_{k}}\wedge v\partial_{x_{n+2}}\otimes \partial_{x_{k+1}} \wedge \dotsm \wedge \partial_{x_{n}}
\end{equation*}
of $\det \dbar_{\hat \Gamma}$, where we should decorate $\sigma_{1,-}, \sigma^t_{+}(k)$ with $(0,1)$ or $(1,0)$, depending on which case we are considering. Hence it follows that 
\begin{align*}
 \sigma^{t,(1,0)}_{+}(k) &\equiv \sigma_{1,-}^{(1,0)} + k + \sigma(n-k+1)+\nu + n-k+1, \\
  \sigma^{t,(0,1)}_{+}(k) &\equiv \sigma_{1,-}^{(0,1)} + k + \sigma(n-k+1)+\nu + n-k,
\end{align*}
modulo 2. 
\end{proof}
%

We should also consider the case when $k=0$. 
\begin{lma}\label{lma:tp3}
With our orientation choices we get that 
\begin{align*}
\sigma^{t,(l,l)}_{+}(0) &=  \sigma(n) + l, \qquad l \in \{0,1\},\\
\sigma^{t,(1 ,0)}_{+}(0) & = \sigma_{1,-}^{(1,0)} + \sigma_{0,-}^{(0,0)} + \sigma(n+1)+\nu + n+1, \\
\sigma^{t,(0,1)}_{+}(0) & =  \sigma_{1,-}^{(0,1)} + \sigma_{0,-}^{(1,1)} + \sigma(n+1)+\nu + n.
\end{align*}
\end{lma}
\begin{proof}
By assumption, the sign $\sigma^{t,(0,0)}_{+}(0)$ ($\sigma^{t,(1,1)}_{+}(0)$) is choosen so that $\det \dbar_\Gamma$ gets orientation $(-1)^0 \mathbbm{1}$, where $\Gamma$ is an elementary tree with a 2-valent positive puncture $p$ of type $(0,0)$ ($(1,1)$), and 2 negative punctures of even Maslov index in the case when $|\mu(p)| = 0$. Similarly, $\sigma^{t,(0,1)}_{+}(0)$ ($\sigma^{t,(1,0)}_{+}(0)$) is choosen so that $\det \dbar_\Gamma$ gets orientation $(-1)^0 \mathbbm{1}$ where $\Gamma$ is an elementary tree with a 2-valent positive puncture $p$ of type $(0,1)$ ($(1,0)$) and 2 negative punctures $p_1, p_2$ such that $|\mu(p_1)| = 1, |\mu(p_1)| = 0$ in the case when $|\mu(p)| = 1$. 

We start with the case when $|\mu(p)| =0$. If $p$ is of type $(0,0)$, the fully capped problem $\dbar_{\hat \Gamma}$ has trivialized Lagrangian boundary condition 
 \begin{equation}\label{eq:fifthcase}
  \diag(\underbrace{e^{-2\pi i s}, \dotsc, e^{-2\pi i s}}_{n}, e^{-i\pi s}, e^{-i\pi s}) \hat * R_\pi(n+1,n+2),
 \end{equation}
 and if $p$ is of type $(1,1)$ it is given by 
  \begin{equation*}
  \diag(\underbrace{e^{-2\pi i s}, \dotsc, e^{-2\pi i s}}_{n}, e^{-i\pi s}, -e^{-i\pi s}) \hat * R_{-\pi}(n+1,n+2). 
 \end{equation*}

Similar to the arguments in the proof of Lemma \ref{lma:tp1}  it follows that the canonical orientation of $\det \dbar_{\hat \Gamma}$ is given by 
\begin{equation*}
 (-1)^{\sigma(n)+ l} \mathbbm{1} \otimes \partial_{x_1} \wedge \dotsm \wedge \partial_{x_{n}}
\end{equation*}
in the case when $p$ is of type $(l,l)$, $l \in \{0,1\}$.

On the other hand, the capping sequence for $\Gamma$ is now given by
  \begin{align}
 0 \to 
0
 \xrightarrow{\alpha} 
 \begin{bmatrix}
  0 \\
  0\\
  0\\
0\\
\end{bmatrix}
\xrightarrow{\beta}  
\begin{bmatrix}
 0\\
 0\\
\begin{bmatrix}
 \langle \partial_{x_{1}}\rangle \\
 \vdots \\
 \langle \partial_{x_{n}}\rangle
 \end{bmatrix} \\
 0
\end{bmatrix} 
\xrightarrow{\gamma}  
\begin{bmatrix}
 \langle \partial_{x_{1}}\rangle \\
 \vdots \\
 \langle \partial_{x_{n}}\rangle
 \end{bmatrix} 
 \to 0,
\end{align}
which induces the orientation 
\begin{equation*}
 (-1)^{\sigma_{0,-} + \sigma_{0,-} + \sigma^t_{+}(0) + 0}  \mathbbm{1} \otimes \partial_{x_{1}} \wedge \dotsm \wedge \partial_{x_{n}}.
\end{equation*}
on $\det \dbar_{\hat \Gamma}$, where $\sigma_{0,-},\sigma_{0,-}, \sigma^t_{+}(0)$ should be decorated with $(0,0)$ or $(1,1)$, depending on which case we are considering. Hence it follows that 
\begin{equation*}
\sigma^{t,(l,l)}_{+}(0) \equiv  \sigma_{0,-}^{(l,l)} + \sigma_{0,-}^{(l,l)} + \sigma(n)+ l \pmod{2}, \qquad l \in \{0,1\}.
\end{equation*}

In the case when  $|\mu(p)| =1$ the trivialized Lagrangian boundary condition for the $\dbar_{\hat \Gamma}$--problem is given by 
 \begin{equation}\label{eq:sixthcase}
  \diag(\underbrace{e^{-2\pi i s}, \dotsc, e^{-2\pi i s}}_{n}, e^{-2i\pi s}, e^{-i\pi s}) * \diag(\underbrace{1,\dotsc,1}_{n}, 1, -e^{i\pi s}),
 \end{equation}
 when $p$ is of type $(1,0)$, and is given by 
  \begin{align*}
&\left(R_{\pi}(n,n+2)\cdot\diag(\underbrace{1,\dotsc,1}_{n+1},-1)  \right)*\diag(\underbrace{e^{-2\pi i s}, \dotsc, e^{-2\pi i s}}_{n-1},-e^{-2\pi i s}, e^{-2\pi i s}, e^{-\pi i s}) \hat *
\\
& \qquad R_\pi(n,n+2) \hat *R_{-\pi}(n,n+2) * \diag(\underbrace{1,\dotsc,1}_{n-1},-1, 1, -e^{\pi i s}) \hat * R_{-\pi}(n,n+2)
\end{align*}
  when $p$ is of type $(0,1)$. We see that the latter boundary condition is homotopic to \eqref{eq:sixthcase}, after applying the change of coordinates $\partial_x \leftrightarrow -\partial_x$.

From Lemma \ref{lma:IJsign} it follows that the canonical orientation of $\det \dbar_{\hat \Gamma}$ is given by 
\begin{equation*}
 (-1)^{\sigma(n+1)+\nu + n+1}  v\partial_{x_{n+2}}\otimes \partial_{x_{1}} \wedge \dotsm \wedge \partial_{x_{n}}\wedge \partial_{x_{n+1}}
\end{equation*}
in the case when $p$ is of type $(1,0)$, and by 
\begin{equation*}
 (-1)^{\sigma(n+1)+\nu + n}  v\partial_{x_{n+2}}\otimes \partial_{x_{1}} \wedge \dotsm \wedge \partial_{x_{n}}\wedge \partial_{x_{n+1}}
\end{equation*}
when $p$ is of type $(0,1)$. 
On the other hand, the capping sequence for $\Gamma$ is given by   
  \begin{align}
 0 \to 
   \langle v\partial_{x_{n+2}}\rangle
 \xrightarrow{\alpha} 
 \begin{bmatrix}
  \langle i(z-1)\partial_{x_{n+2}}\rangle \\
  0\\
  0\\
0\\
\end{bmatrix}
\xrightarrow{\beta}  
\begin{bmatrix}
 0\\
 0\\
\begin{bmatrix}
 \langle \partial_{x_{1}}\rangle \\
 \vdots \\
 \langle \partial_{x_{n}}\rangle \\
 \langle \partial_{x_{n+1}}\rangle
 \end{bmatrix}\\
 0
\end{bmatrix} 
\xrightarrow{\gamma}  
\begin{bmatrix}
 \langle \partial_{x_{1}}\rangle \\
 \vdots \\
 \langle \partial_{x_{n}}\rangle \\
  \langle \partial_{x_{n+1}}\rangle
 \end{bmatrix} 
 \to 0,
\end{align}
in both cases. 
We see that this sequence induces the orientation 
\begin{equation*}
 (-1)^{\sigma_{1,-} + \sigma_{0,-} + \sigma^t_{+}(0) }  v\partial_{x_{n+2}}\otimes \partial_{x_{1}} \wedge \dotsm \wedge \partial_{x_{n}}.
\end{equation*}
of $\det \dbar_{\hat \Gamma}$, where $\sigma_{1,-}, \sigma^t_{+}(0)$ should be decorated with $(1,0)$ or $(0,1)$ and $ \sigma_{0,-}$ with $(0,0)$ or $(1,1)$, depending on which case we are considering. Hence it follows that 
\begin{align*}
\sigma^{t,(1,0)}_{+}(0) & \equiv  \sigma_{1,-}^{(1,0)} + \sigma_{0,-}^{(0,0)} + \sigma(n+1)+\nu + n+1 \\
\sigma^{t,(0,1)}_{+}(0) & \equiv  \sigma_{1,-}^{(0,1)} + \sigma_{0,-}^{(1,1)} + \sigma(n+1)+\nu + n
\end{align*}
modulo 2.

\end{proof}

Now we derive similar results for $\sigma_{-}^t$ from the gluing sequences of the capping operators at a true negative puncture.

\begin{lma}\label{lma:tp2}
 With our orientation choices we get that 
\begin{align*}
 \sigma^{t,(l,l)}_{-}(k) &= \sigma(n+1) + \nu + n+1 + \sigma^{t,(l,l)}_{+}(k)  + k \cdot(n-1) + l , \qquad l \in \{0,1\},\\
  \sigma^{t,(1,0)}_{-}(k) &= \sigma(n+1) + \nu + n+1 + \sigma^{t,(1,0)}_{+}(k)  + k \cdot n, \\
  \sigma^{t,(0,1)}_{-}(k) &= \sigma(n+1) + \nu + n+ \sigma^{t,(0,1)}_{+}(k)  + k \cdot n.
\end{align*} 
\end{lma}

\begin{proof}
 Start with the case when $p$ is a true puncture of index $k$ and with $|\mu(p)|=0$, and consider the $\dbar_p$-problem associated to $p$, obtained by gluing the positive capping operator at $p$ to  the negative capping operator. In the case when $p$ is of type $(0,0)$, the trivialized Lagrangian boundary condition of $\dbar_p$ is given by 
   \begin{align*}
  &\diag(\underbrace{1,\dotsc,1}_{k}, \underbrace{e^{-2\pi i s}, \dotsc, e^{-2\pi i s}}_{n-k}, e^{-i\pi s}, e^{-i\pi s}) \hat* R_\pi(n+1,n+2) \hat *\\
  & {} \qquad R_{-\pi}(n+1,n+2) * \diag(\underbrace {e^{-2\pi i s}, \dotsc, e^{-2\pi i s}}_{k}, \underbrace{1, \dotsc, 1}_{n-k}, -e^{-i\pi s}, -e^{i\pi s}) 
 \end{align*}
 where the first $k$ directions correspond to $T_p(W^u(p))$. This boundary condition is homotopic to 
   \begin{equation*}
  \diag(\underbrace{e^{-2\pi i s}, \dotsc, e^{-2\pi i s}}_{n}, e^{-2\pi i s}, e^{-i\pi s})* \diag(\underbrace{1, \dotsc, 1}_{n}, 1, -e^{i\pi s}), 
 \end{equation*}
 so by Lemma \ref{lma:IJsign} we have  that the canonical orientation of $\det \dbar_p$ is given by 
\begin{equation*}
 (-1)^{\sigma(n+1) + \nu + n+1} v \partial_{x_{n+2}} \otimes \partial_{x_1} \wedge \dotsm \wedge \partial_{x_{n+1}}.
\end{equation*}

Similarly, when $p$ is of type $(1,1)$, the trivialized Lagrangian boundary condition of $\dbar_p$ is given by    \begin{align*}
  &\diag(\underbrace{1,\dotsc,1}_{k}, \underbrace{e^{-2\pi i s}, \dotsc, e^{-2\pi i s}}_{n-k}, e^{-i\pi s}, -e^{-i\pi s}) \hat * R_{-\pi}(n+1,n+2)\hat * \\
  &{} \qquad R_{\pi}(n+1,n+2) * \diag(\underbrace {e^{-2\pi i s}, \dotsc, e^{-2\pi i s}}_{k}, \underbrace{1, \dotsc, 1}_{n-k}, -e^{-i\pi s}, e^{i\pi s}) 
 \end{align*}
which is homotopic to 
   \begin{equation*}
  \diag(\underbrace{e^{-2\pi i s}, \dotsc, e^{-2\pi i s}}_{n}, e^{-2\pi i s},-e^{-i\pi s})* \diag(\underbrace{1, \dotsc, 1}_{n}, 1, e^{i\pi s}), 
 \end{equation*}
and hence we have  that the canonical orientation of $\det \dbar_p$ is given by 
\begin{equation*}
 (-1)^{\sigma(n+1) + \nu + n} v \partial_{x_{n+2}} \otimes \partial_{x_1} \wedge \dotsm \wedge \partial_{x_{n+1}}.
\end{equation*}

Now, if we consider the gluing sequence induced by gluing $\dbar_{p+}$ to $\dbar_{p-}$, this is given by 
  \begin{align}
 0 \to 
   \langle v\partial_{x_{n+2}}\rangle
 \xrightarrow{\alpha} 
 \begin{bmatrix}
  0\\
  \langle i(z-1)\partial_{x_{n+2}}\rangle \\
\end{bmatrix}
\xrightarrow{\beta}  
\begin{bmatrix}
\begin{bmatrix}
 \langle \partial_{x_{k+1}}\rangle \\
 \vdots \\
 \langle \partial_{x_{n}}\rangle 
 \end{bmatrix} \\
\begin{bmatrix}
 \langle \partial_{x_{1}}\rangle \\
 \vdots \\
 \langle \partial_{x_{k}}\rangle \\
 \langle \partial_{x_{n+1}}\rangle
 \end{bmatrix} 
\end{bmatrix} 
\xrightarrow{\gamma}  
\begin{bmatrix}
 \langle \partial_{x_{1}}\rangle \\
 \vdots \\
  \langle \partial_{x_{n+1}}\rangle
 \end{bmatrix} 
 \to 0,
\end{align}
and gives that the canonical orientation of $\det \dbar_p$ is given by 
\begin{equation*}
 (-1)^{\sigma^t_{+}(k) + \sigma^t_{-}(k) + k \cdot(n-k)} v \partial_{x_{n+2}} \otimes \partial_{x_1} \wedge \dotsm \wedge \partial_{x_{n+1}},
\end{equation*}
where $\sigma^t_{+}(k),\sigma^t_{-}(k) $ should be decorated with $(0,0)$ or $(1,1)$, depending on which case we are considering. 
Hence it follows that 
\begin{equation*}
 \sigma^{t,(l,l)}_{-}(k) \equiv \sigma(n+1) + \nu + n+1 + \sigma^{t,(l,l)}_{+}(k)  + k \cdot(n-1) + l \pmod{2}, \qquad l \in \{0,1\}.
\end{equation*}

Now consider the case when $|\mu(p)| =1$. In the case of a $(1,0)$--puncture we get that the trivialized Lagrangian boundary condition of $\dbar_p$ is homotopic to $I_{n+1}$ and hence $\det \dbar_p$ has canonical orientation given by 
\begin{equation*}
 (-1)^{\sigma(n+1) + \nu + n+1} v \partial_{x_{n+2}} \otimes \partial_{x_1} \wedge \dotsm \wedge \partial_{x_{n+1}}.
\end{equation*}

Similarly, in the case of a $(0,1)$--puncture we get that the trivialized Lagrangian boundary condition for $\dbar_p$ is homotopic to $I_{n+1}$ after the change of coordinates  $\partial_x \leftrightarrow -\partial_x$.
%
It follows that
$\det \dbar_p$ has canonical orientation given by 
\begin{equation*}
 (-1)^{\sigma(n+1) + \nu + n} v \partial_{x_{n+2}} \otimes \partial_{x_1} \wedge \dotsm \wedge \partial_{x_{n+1}}.
\end{equation*}

In both cases the gluing sequence induced by gluing $\dbar_{p+}$ to $\dbar_{p-}$ is given by 
  \begin{align}
 0 \to 
   \langle v\partial_{x_{n+2}}\rangle
 \xrightarrow{\alpha} 
 \begin{bmatrix}
  0\\
  \langle i(z-1)\partial_{x_{n+2}}\rangle \\
\end{bmatrix}
\xrightarrow{\beta}  
\begin{bmatrix}
\begin{bmatrix}
 \langle \partial_{x_{k+1}}\rangle \\
 \vdots \\
 \langle \partial_{x_{n+1}}\rangle 
 \end{bmatrix} \\
\begin{bmatrix}
 \langle \partial_{x_{1}}\rangle \\
 \vdots \\
 \langle \partial_{x_{k}}\rangle 
 \end{bmatrix} 
\end{bmatrix} 
\xrightarrow{\gamma}  
\begin{bmatrix}
 \langle \partial_{x_{1}}\rangle \\
 \vdots \\
  \langle \partial_{x_{n+1}}\rangle
 \end{bmatrix} 
 \to 0.
\end{align}
Thus we get that 
\begin{align*}
 \sigma^{t,(1,0)}_{-}(k) \equiv \sigma(n+1) + \nu + n+1 + \sigma^{t,(1,0)}_{+}(k)  + k \cdot n, \pmod{2} \\
  \sigma^{t,(0,1)}_{-}(k) \equiv \sigma(n+1) + \nu + n+ \sigma^{t,(0,1)}_{+}(k)  + k \cdot n, \pmod{2} 
\end{align*}
\end{proof}

Now we are ready to derive formulas for the signs $\sigma_{\negg,1}$.

\begin{lma}\label{lma:1valsign}
With our orientation choices we get that, for $k > 0$,
\begin{align*}
 \sigma_{\negg,1}^{(l,l)}(k) &=  \sigma_{0,-}^{(l,l)} +\sigma_{0,+}^{(l,l)}  +l + n\cdot (k + 1), \qquad l \in \{0,1\},\\
 \sigma_{\negg,1}^{(1,0)}(k) &= n\cdot k +k + 1, \\
 \sigma_{\negg,1}^{(0,1)}(k) &=  n\cdot k +k + \sigma_{1,-}^{(0,1)} \qquad \pmod{2} 
\end{align*} 
 and 
\begin{align*}
 \sigma_{\negg,1}^{(l,l)}(0) &=  n + l + \sigma_{0,+}^{(l,l)} ,\qquad l \in \{0,1\},\\
   \sigma_{\negg,1}^{(1,0)}(0) &= \sigma_{0,-}^{(0,0)} + 1, \\
\sigma_{\negg,1}^{(0,1)}(0) &= \sigma_{0,-}^{(1,1)} + \sigma_{1,-}^{(0,1)} \qquad \pmod{2}.
\end{align*} 
\end{lma}
\begin{proof}
Let $\Gamma$ be an elementary tree corresponding to a true negative puncture $p$ of index $k$. Then $\kd_\Gamma = \spn(\partial_{x_{k+1}}, \dotsc, \partial_{x_n})$ = $T_pW^s(p)$. 

First assume that $|\mu(p)| =0$ and consider the fully capped problem $\dbar_{\hat \Gamma}$ corresponding to $\Gamma$. This satisfies $\kd_{\hat \Gamma} = \spn(\partial_{x_{k+1}}, \dotsc, \partial_{x_n},v\partial_{x_{n+2}})$ and the cokernel can be identified with $\spn(\partial_{x_{1}}, \dotsc, \partial_{x_k},\partial_{x_{n+1}})$ via \eqref{eq:cokerident}. Moreover, the trivialized Lagrangian boundary condition for the $\dbar_{\hat \Gamma}$--problem is given by 
 \begin{align}\label{eq:tirdcase}
   &\diag(\underbrace{1,\dotsc,1}_{n},  e^{-i\pi s}, e^{-i\pi s}) \hat * R_\pi(n+1,n+2) \hat *\\ \nonumber
   & \qquad R_{-\pi}(n+1,n+2) *\diag( \underbrace{e^{-2\pi i s}, \dotsc, e^{-2\pi i s}}_{k}, \underbrace{1,\dotsc,1}_{n-k}, -e^{-i\pi s}, -e^{i\pi s}) 
 \end{align}
 in the case when $p$ is of type $(0,0)$, and is given by 
  \begin{align*}
 &\diag(\underbrace{1,\dotsc,1}_{n},  e^{-i\pi s}, -e^{-i\pi s}) \hat * R_{-\pi}(n+1,n+2) \hat* \\
   &\qquad R_{\pi}(n+1,n+2) *\diag( \underbrace{e^{-2\pi i s}, \dotsc, e^{-2\pi i s}}_{k}, \underbrace{1,\dotsc,1}_{n-k}, -e^{-i\pi s}, e^{i\pi s}) 
 \end{align*}
 in the case when $p$ is of type $(1,1)$. 
 
The boundary condition in \eqref{eq:tirdcase} is homotopic to the boundary condition $I_{k+1}$ after a change of coordinates, and from Lemma \ref{lma:IJsign} it then follows that the canonical orientation of $\det \dbar_{\hat \Gamma}$ is given by 
\begin{equation*}
 (-1)^{k\cdot(n-k) + \sigma(k+1)+\nu + k+1} \partial_{x_{k+1}} \wedge \dotsm \wedge \partial_{x_{n}}\wedge v\partial_{x_{n+2}}\otimes \partial_{x_1} \wedge \dotsm \wedge \partial_{x_{k}}\wedge \partial_{x_{n+1}}
\end{equation*} 
in the case when $p$ is of type $(0,0)$.
Here the $(-1)^{k \cdot (n-k)}$--factor comes from the change of coordinates.

Similarly we get that the canonical orientation of $\det \dbar_{\hat \Gamma}$ is given by 
\begin{equation*}
 (-1)^{k\cdot(n-k) + \sigma(k+1)+\nu + k} \partial_{x_{k+1}} \wedge \dotsm \wedge \partial_{x_{n}}\wedge v\partial_{x_{n+2}}\otimes \partial_{x_1} \wedge \dotsm \wedge \partial_{x_{k}}\wedge \partial_{x_{n+1}}
\end{equation*} 
in the case when $p$ is of type $(1,1)$.

By comparing with the capping sequence for $\Gamma$, given by
  \begin{align}
 0 \to 
 \begin{bmatrix}
  \langle\partial_{x_{k+1}}\rangle \\
  \vdots \\
   \langle\partial_{x_{n}}\rangle\\
   \langle v\partial_{x_{n+2}} \rangle
 \end{bmatrix}
 \xrightarrow{\alpha} 
 \begin{bmatrix}
  \langle i(z-1) \partial_{x_{n+2}} \rangle\\
  0\\
 \begin{bmatrix}
  \langle\partial_{x_{k+1}}\rangle \\
  \vdots \\
   \langle\partial_{x_{n}}\rangle
 \end{bmatrix}
\end{bmatrix}
\xrightarrow{\beta}  
\begin{bmatrix}
\begin{bmatrix}
 \langle \partial_{x_{1}}\rangle \\
 \vdots \\
 \langle \partial_{x_{k}}\rangle \\
 \langle \partial_{x_{n+1}}\rangle
 \end{bmatrix} \\
 0\\
 0
\end{bmatrix} 
\xrightarrow{\gamma}  
\begin{bmatrix}
 \langle \partial_{x_{1}}\rangle \\
 \vdots \\
 \langle \partial_{x_{k}}\rangle \\
 \langle \partial_{x_{n+1}}\rangle
 \end{bmatrix}
 \to 0,
\end{align}
 we see that this sequence induces the orientation 
\begin{equation*}
 (-1)^{\sigma^{t}_{-}(k) + \sigma_{0,+} +  \sigma_{\negg,1}(k) + n-k}  \partial_{x_{k+1}} \wedge \dotsm \wedge \partial_{x_{n}}\wedge v\partial_{x_{n+2}}\otimes \partial_{x_1} \wedge \dotsm \wedge \partial_{x_{k}}\wedge \partial_{x_{n+1}}.
\end{equation*}
of $\det \dbar_{\hat \Gamma}$. Here $ \sigma^t_{-}(k),\sigma_{0,+},  \sigma_{\negg,1}(k) $ should be decorated with $(0,0)$ or $(1,1)$ depending on which case we are considering. Hence we get that 
\begin{align*}
\sigma_{\negg,1}^{(l,l)}(k) \equiv & \sigma^{t,(l,l)}_{-}(k) + \sigma_{0,+}^{(l,l)} +  n-k + k\cdot(n-k) + \sigma(k+1)+\nu + k+1+l \\
\equiv & \sigma^{t,(l,l)}_{-}(k) + \sigma_{0,+}^{(l,l)} + \sigma(k+1)+\nu + n+ k\cdot(n-1) + 1+l,
\end{align*}
modulo 2, where $l \in \{0,1\}$.

Now, if we use the results from Lemma \ref{lma:tp2} and \ref{lma:tp1}, we get, if $k >0 $, that  
\begin{align*}
\sigma_{\negg,1}^{(l,l)}(k) \equiv &  \sigma(n+1) + \nu + n+1 + \sigma^{t,(l,l)}_{+}(k)  + k \cdot(n-1)+l\\
&\quad{} + \sigma_{0,+}^{(l,l)} + \sigma(k+1)+\nu + n+ k\cdot(n-1) + 1 + l\\
\equiv & \sigma^{t,(l,l)}_{+}(k)+ \sigma_{0,+}^{(l,l)} +  \sigma(n+1) + \sigma(k+1)\\
\equiv &  \sigma_{0,-}^{(l,l)} + \sigma(n-k)  + l +\sigma_{0,+}^{(l,l)} + \sigma(n+1) + \sigma(k+1) \pmod{2}.
\end{align*}
Using Lemma \ref{lma:sigmarelations} with the change of variables $l = n-k$ it follows that $\sigma(n-k)  + \sigma(n+1) + \sigma(k+1) \equiv n\cdot (k+1), \pmod 2$. 
Hence the result follows.

If $k=0$ we get that 
\begin{align*}
\sigma_{\negg,1}^{(l,l)}(0) \equiv &  \sigma(n+1) + \nu + n+1 + \sigma^{t,(l,l)}_{+}(0)+l  \\
&\quad{} + \sigma_{0,+}^{(l,l)} + \sigma(1)+\nu + n + 1 +l\\
\equiv &  \sigma(n+1) + \sigma^{t,(l,l)}_{+}(0)  + \sigma_{0,+}^{(l,l)} + \sigma(1) \\
\equiv & \sigma(n+1) + \sigma(1) + \sigma(n) + l + \sigma_{0,+}^{(l,l)} \\
\equiv & n + l + \sigma_{0,+}^{(l,l)} \pmod{2}.
\end{align*}

In the case when  $|\mu(p)| =1$ we have that the fully capped problem $\dbar_{\hat \Gamma}$ has $\kd _{\hat \Gamma} = \spn(\partial_{x_{k+1}}, \dotsc, \partial_{x_n},v_1\partial_{x_{n+2}}, v_2\partial_{x_{n+2}})$ and the cokernel can be identified with \linebreak
$\spn(\partial_{x_{1}}, \dotsc, \partial_{x_k})$ via \eqref{eq:cokerident}. Recall that $v_1, v_2$ are defined in Section \ref{sec:choiceofC}. Moreover, the trivialized Lagrangian boundary condition for the $\dbar_{\hat \Gamma}$--problem is given by 
 \begin{align}\label{eq:forthcase}
   \diag(\underbrace{1,\dotsc,1}_{n},  e^{-i\pi s},1) \hat * R_{-\pi}(n+1,n+2) *\diag( \underbrace{e^{-2\pi i s}, \dotsc, e^{-2\pi i s}}_{k}, \underbrace{1,\dotsc,1}_{n-k}, 1, -e^{i\pi s}) 
 \end{align}
 in the case when $p$ is of type $(1,0)$, and is given by 
 \begin{align*}
   &\left( R_{\pi}(n,n+2)\cdot\diag(\underbrace{1,\dotsc,1}_{n+1},-1) \right)*\diag(\underbrace{1,\dotsc,1}_{n-1},-1, e^{-i\pi s},1) \hat* R_{-\pi}(n+1,n+2)  \hat *\\ 
   & {} \qquad R_\pi(n,n+2) \hat * R_{-\pi}(n,n+2) *\diag( \underbrace{e^{-2\pi i s}, \dotsc, e^{-2\pi i s}}_{k}, \underbrace{1,\dotsc,1}_{n-k-1},-1, 1, -e^{i\pi s})  \hat *\\ 
   & {} \qquad R_{-\pi}(n,n+2) 
 \end{align*}
in the case when $p$ is of type $(0,1)$. Notice that the latter is homotopic to
 \begin{equation*}
   \diag(\underbrace{1,\dotsc,1}_{n-1},-1, e^{-i\pi s},1) \hat * R_{-\pi}(n+1,n+2) *\diag( \underbrace{e^{-2\pi i s}, \dotsc, e^{-2\pi i s}}_{k}, \underbrace{1,\dotsc,1}_{n-k-1},-1, 1, -e^{i\pi s})  
 \end{equation*}
 and that this turns into the trivialized Lagrangian boundary condition for the $(1,0)$--case under the change of coordinates $\partial_{x_n} \leftrightarrow -\partial_{x_n}$. Notice that we have $\dim W^s(p)>0$ if $p$ is a 1-valent negative puncture.

 Notice that the boundary condition in \eqref{eq:forthcase} can be given by gluing the trivialized boundary condition
 \begin{equation*}
  A =  \diag(\underbrace{1,\dotsc,1}_{n}, e^{-i\pi s},  1) \hat * R_{-\pi}(n+1,n+2) *\diag( \underbrace{1, \dotsc, 1}_{n}, 1, -e^{i\pi s}) 
 \end{equation*}
to the trivialized boundary condition 
\begin{equation*}
 B = \diag( \underbrace{e^{-2\pi i s}, \dotsc, e^{-2\pi i s}}_{k}, \underbrace{1,\dotsc,1}_{n-k}, 1, 1) .
\end{equation*}
Notice that $A$ coincides with the trivialized boundary condition in \eqref{eq:bc-+}, and that the canonical orientation of $\det \dbar_A$ hence is given by \eqref{eq:orientmu0}. Moreover, $B$ is the trivialized boundary condition $J_k$ after a change of coordinates, and hence has canonical orientation 
\begin{equation*}
 (-1)^{\sigma(k) + k \cdot(n-k)} \partial_{x_{k+1}} \wedge \dotsm \wedge \partial_{x_{n+2}}\otimes  \partial_{x_1} \wedge \dotsm \wedge \partial_{x_{k}}.
\end{equation*}


Moreover, the gluing of $\dbar_A$ to $\dbar_B$ induces the exact sequence 
  \begin{align}
 0 \to 
 \begin{bmatrix}
  \langle\partial_{x_{k+1}}\rangle \\
  \vdots \\
   \langle\partial_{x_{n}}\rangle \\
   \langle v_1\partial_{x_{n+2}}\rangle \\
   \langle v_2\partial_{x_{n+2}}\rangle 
 \end{bmatrix}
 \xrightarrow{\alpha} 
 \begin{bmatrix}
 \begin{bmatrix}
  \langle \partial_{x_1}\rangle \\
  \vdots \\
   \langle \partial_{x_{n}}\rangle\\
   \langle v_1\partial_{x_{n+2}}\rangle \\
   \langle v_2\partial_{x_{n+2}}\rangle 
 \end{bmatrix}\\
 \begin{bmatrix}
 \langle\partial_{x_{k+1}}\rangle \\
  \vdots \\
   \langle\partial_{x_{n+2}}\rangle \\
 \end{bmatrix}
\end{bmatrix}
\xrightarrow{\beta}  
\begin{bmatrix}
 0 \\
 \R^{n+2}\\
\begin{bmatrix}
 \langle \partial_{x_{1}}\rangle \\
 \vdots \\
 \langle \partial_{x_{k}}\rangle
 \end{bmatrix} \\
\end{bmatrix} 
\xrightarrow{\gamma}  
\begin{bmatrix}
 \langle \partial_{x_{1}}\rangle \\
 \vdots \\
 \langle \partial_{x_{k}}\rangle
 \end{bmatrix} 
 \to 0.
\end{align}
It follows that the canonical orientation of $\det \dbar_{\hat \Gamma}$ is given by 
\begin{equation*}
 (-1)^{\mu(0) + \sigma(k) + k \cdot(n-k) + k + k\cdot(n-k) + n\cdot k}\partial_{x_{k+1}} \wedge \dotsm \wedge \partial_{x_{n}} \wedge v_1 \partial_{x_{n+2}} \wedge v_2 \partial_{x_{n+2}} \otimes \partial_{x_1} \wedge \dotsm \wedge \partial_{x_{k}}
\end{equation*}
in the case when $p$ is of type $(1,0)$.
Here the last $k\cdot (n-k)$ comes from rearrangements in the second non-trivial column, and the $k$ before that from the determinant of the map $\beta$. The $n\cdot k$--summand comes from rearrangements in the third non-trivial column.

Similarly, it follows that the canonical orientation of $\det \dbar_{\hat \Gamma}$ is given by 
\begin{equation*}
 (-1)^{\mu(0) + \sigma(k) + k \cdot(n-k) + k + k\cdot(n-k) + n\cdot k+1} \partial_{x_{k+1}} \wedge \dotsm \wedge \partial_{x_{n}} \wedge v_1 \partial_{x_{n+2}} \wedge v_2 \partial_{x_{n+2}} \otimes \partial_{x_1} \wedge \dotsm \wedge \partial_{x_{k}}
\end{equation*}
in the case when $p$ is of type $(0,1)$.

Now consider the capping sequence for $\Gamma$, given by   
  \begin{align}
 0 \to 
 \begin{bmatrix}
  \langle\partial_{x_{k+1}}\rangle \\
  \vdots \\
   \langle\partial_{x_{n}}\rangle \\
   \langle v_1\partial_{x_{n+2}}\rangle \\
   \langle v_2\partial_{x_{n+2}}\rangle 
 \end{bmatrix}
 \xrightarrow{\alpha} 
 \begin{bmatrix}
  \langle(i(z-1)\partial_{x_{n+2}} \rangle \\
  \langle \partial_{x_{n+2}} \rangle \\
  \begin{bmatrix}
  \langle \partial_{x_{k+1}}\rangle \\
  \vdots \\
   \langle \partial_{x_{n}}\rangle
 \end{bmatrix}
\end{bmatrix}
\xrightarrow{\beta}  
\begin{bmatrix}
   \begin{bmatrix}
  \langle \partial_{x_1}\rangle \\
  \vdots \\
   \langle \partial_{x_{k}}\rangle
 \end{bmatrix}\\
 0
\end{bmatrix} 
\xrightarrow{\gamma}  
\begin{bmatrix}
 \langle \partial_{x_{1}}\rangle \\
 \vdots \\
 \langle \partial_{x_{k}}\rangle
 \end{bmatrix} 
 \to 0.
\end{align}
We see that this sequence induces the orientation 
\begin{equation*}
 (-1)^{\sigma_{-}^t(k) + \sigma_{1,+} + 1 + \sigma_{\negg,1}(k)}  \partial_{x_{k+1}} \wedge \dotsm \wedge \partial_{x_{n}} \wedge v_1 \partial_{x_{n+2}} \wedge v_2 \partial_{x_{n+2}} \otimes \partial_{x_1} \wedge \dotsm \wedge \partial_{x_{k}}.
\end{equation*}
of $\det \dbar_{\hat \Gamma}$, where $\sigma_{-}^t(k), \sigma_{1,+}, \sigma_{\negg,1}(k)$ should be decorated with $(1,0)$ or $(0,1)$ depending on which case we are considering. Hence it follows that 
\begin{align*}
  \sigma_{\negg,1}^{(1,0)}(k) &\equiv \sigma_{-}^{t,(1,0)}(k) + \sigma_{1,+}^{(1,0)} + 1 + \mu(0) + \sigma(k)  + k\cdot(n-k), \pmod{2}, \\  
  \sigma_{\negg,1}^{(0,1)}(k) &\equiv \sigma_{-}^{t,(0,1)}(k) + \sigma_{1,+}^{(0,1)} + \mu(0) + \sigma(k)  + k\cdot(n-k), \pmod{2}.
  \end{align*}

 Now, if we use the results from Lemma \ref{lma:tp2} and \ref{lma:tp1}, we get that, in the case $k>0$, 
 \begin{align*}
 \sigma_{\negg,1}^{(1,0)}(k) & \equiv  \sigma(n+1) + \nu + n+1 + \sigma^{t,(1,0)}_{+}(k)  + k \cdot n\\
 &\quad{}+ \sigma_{1,+}^{(1,0)} + 1  +\mu(0)+  \sigma(k)  + k\cdot(n-k) \\
 & \equiv   \sigma(n+1) + \nu + n+1 + \sigma(n-k+1)+\nu + n+1+ \sigma_{1,-}^{(1,0)}  + k \cdot n\\
 &\quad{}+ \sigma_{1,+}^{(1,0)} + 1  +\mu(0) + \sigma(k) + k \cdot (n-k)\\
 &\equiv   \sigma(n+1)  + \sigma(n-k+1) +  \sigma(k) + \sigma_{1,-}^{(1,0)} + \sigma_{1,+}^{(1,0)} + \mu(0) + k +1 \\
 &\equiv  n\cdot k +k + 1, \pmod{2},
 \end{align*}
where the last step follows from Lemma \ref{lma:sigmarelations} and Lemma \ref{lma:mu0}. Similarly we get that 
\begin{equation*}
 \sigma_{\negg,1}^{(0,1)}(k) \equiv \sigma(n+1)  + \sigma(n-k+1) +  \sigma(k) + \sigma_{1,-}^{(0,1)} + \sigma_{1,+}^{(0,1)} + \mu(0) + k , \pmod{2},
\end{equation*}
and by using Lemma \ref{lma:sigmarelations}  we hence get that 
\begin{equation*}
 \sigma_{\negg,1}^{(0,1)}(k) \equiv n\cdot k + \sigma_{1,-}^{(0,1)} + \sigma_{1,+}^{(0,1)} + \mu(0) + k , \pmod{2}.
\end{equation*}
Now it follows from Lemma \ref{lma:mu0}, Lemma \ref{lma:oddsigmarel} and Lemma \ref{lma:oddend} that 
\begin{equation}\label{eq:good}
  \sigma_{1,-}^{(0,1)} + \sigma_{1,+}^{(0,1)} + \mu(0) \equiv  \sigma_{1,-}^{(0,1)},
\end{equation} 
and hence 
\begin{equation*}
 \sigma_{\negg,1}^{(0,1)}(k) \equiv n\cdot k + \sigma_{1,-}^{(0,1)}  + k , \pmod{2}.
\end{equation*}
 
 In the case $k=0$ we get that 
  \begin{align*}
 \sigma_{\negg,1}^{(1,0)}(0) & \equiv  \sigma(n+1) + \nu + n+1 + \sigma_{1,-}^{(1,0)} + \sigma_{0,-}^{(0,0)}  + \sigma(n+1)  + \nu + n+1\\
 &\quad{}+\sigma_{1,+}^{(1,0)} + 1  +\mu(0) + \sigma(0) \\
 &\equiv   \sigma_{1,-}^{(1,0)} + \sigma_{0,-}^{(0,0)}+\sigma_{1,+}^{(1,0)} + 1  +\mu(0) \\
 &\equiv \sigma_{0,-}^{(0,0)} + 1, \pmod{2},
 \end{align*}
 and similarly that 
 \begin{equation*}
  \sigma_{\negg,1}^{(0,1)}(0) \equiv \sigma_{0,-}^{(1,1)}+\sigma_{1,-}^{(0,1)} + \sigma_{1,+}^{(0,1)} + \mu(0), \pmod{2},
 \end{equation*}
 so in this case the result again follows from \eqref{eq:good}.
\end{proof}

\begin{cor}
Assume that $n>1$. Then
\begin{align*}
 \sigma_{\negg,1}^{(l,l)}(k) &= n\cdot (k + 1), \qquad l \in \{0,1\},\\
 \sigma_{\negg,1}^{(0,1)}(k) &= n\cdot k +k + 1, \\
 \sigma_{\negg,1}^{(1,0)}(k) &=  n\cdot k +k + 1,
\end{align*} 
and 
\begin{align*}
 \sigma_{\negg,1}^{(l,l)}(0) &=  1,\qquad l \in \{0,1\},\\
   \sigma_{\negg,1}^{(0,1)}(0) &= n, \\
\sigma_{\negg,1}^{(1,0)}(0) &= n.
\end{align*} 
\end{cor}
\begin{proof}
This follows from Lemma \ref{lma:1valsign} and Corollary \ref{lma:signallspeccap}.
\end{proof}

\subsection{Derivation of the signs of elementary trees corresponding to 2--valent punctures}\label{sec:2valentsigns}
Now we derive the signs for the elementary trees with 2--valent punctures in the case $n>1$. Recall that we have choosen signs of the capping operators corresponding to positive punctures of index $0$ so that we get
\begin{equation}\label{eq:sigma2posrel}
 \sigma_{\pos,2}^{(0,0)}(0,0) = \sigma_{\pos,2}^{(1,1)}(0,0)=\sigma_{\pos,2}^{(0,1)}(1,0)=\sigma_{\pos,2}^{(1,0)}(1,0)=0.                                                                                                                                                                                                                                                                                                               \end{equation}
 Hence it remains to compute the signs of $\sigma_{\pos,2}^{(1,1)}(0,0), \sigma_{\pos,2}^{(1,1)}(1,1), \sigma_{\pos,2}^{(1,0)}(0,1)$ and $\sigma_{\pos,2}^{(0,1)}(0,1)$.
 Also, from  Section \ref{sec:results} and Lemma \ref{lma:y011sign} we see that it is enough to compute the signs  $\sigma_{\negg,2}^{(0,0)}(0,0,1),\sigma_{\negg,2}^{(1,1)}(0,0,1),\sigma_{\negg,2}^{(0,1)}(1,0,1),\sigma_{\negg,2}^{(1,0)}(1,0,1)$ for $n>1$. We also find expressions of some of the $\sigma_{\pos,2}$- and $\sigma_{\negg,2}$-signs in the case when $n=1$. 

 All this is computed similarly to how we computed the sign functions in Section \ref{sec:signs1val}, that is, we compare the expression for the canonical orientation of the corresponding fully capped problem derived from the capping sequence with a known expression for this canonical orientation.
 
 \begin{lma}\label{lma:p2+1}
  We have that 
  \begin{align*}
   \sigma_{\pos,2}^{(0,1)}(0,1) &\equiv \sigma_{0,-}^{(0,0)} + \sigma_{0,-}^{(1,1)}, \\
   \sigma_{\pos,2}^{(1,0)}(0,1) &\equiv \sigma_{0,-}^{(0,0)} + \sigma_{0,-}^{(1,1)}.
  \end{align*}
 \end{lma}

 \begin{proof}
  The trivialized Lagrangian boundary condition for the fully capped $\sigma_{\pos,2}^{(0,1)}(0,1)$--problem equals that of the fully capped $\sigma_{\pos,2}^{(0,1)}(1,0)$--problem. By comparing the capping sequences for these two problems we see that 
  \begin{equation*}
   \sigma_{\pos,2}^{(0,1)}(0,1) \equiv \sigma_{\pos,2}^{(0,1)}(1,0) + \sigma_{0,-}^{(0,0)} + \sigma_{0,-}^{(1,1)}.
  \end{equation*}
The result now follows from the assumptions \eqref{eq:sigma2posrel} . 

The argument in the $\sigma_{\pos,2}^{(1,0)}(0,1)$--case is completely analogous.
 \end{proof}
 \begin{cor}
  Assume that $n>1$. Then we have that 
  \begin{align*}
   \sigma_{\pos,2}^{(0,1)}(0,1) &\equiv 0, \\
   \sigma_{\pos,2}^{(1,0)}(0,1) &\equiv 0.
  \end{align*}
 \end{cor}
 \begin{proof}
  This follows from Lemma \ref{lma:p2+1} and Corollary \ref{lma:signposcapev}.
 \end{proof}

 \begin{lma}
  Assume that $n>1$. Then we have that
 \begin{align*}
    \sigma_{\pos,2}^{(0,0)}(1,1) & \equiv 1,\\
    \sigma_{\pos,2}^{(1,1)}(1,1) & \equiv 1, \pmod{2}.
 \end{align*}
 \end{lma}
\begin{proof}
Consider first the  $\sigma_{\pos,2}^{(0,0)}(1,1)$--problem. 
 The corresponding capping sequence is given by
    \begin{align}\label{eq:cappingneed}
 0 \to 
 \begin{bmatrix}
   \langle w_1\partial_{x_{n+2}}\rangle \\
    \langle w_2\partial_{x_{n+2}}\rangle 
 \end{bmatrix}
 \xrightarrow{\alpha} 
 \begin{bmatrix}
   \langle i(z-1)\partial_{x_{n+2}} \rangle \\
  \langle i(z-1)\partial_{x_{n+2}} \rangle \\
0\\
0
\end{bmatrix}
\xrightarrow{\beta}  
\begin{bmatrix}
 0\\
 0\\
   \begin{bmatrix}
  \langle \partial_{x_1}\rangle \\
  \vdots \\
   \langle \partial_{x_{n}}\rangle
   \\
 0
 \end{bmatrix}
\end{bmatrix} 
\xrightarrow{\gamma}  
\begin{bmatrix}
 \langle \partial_{x_{1}}\rangle \\
 \vdots \\
 \langle \partial_{x_{n}}\rangle
 \end{bmatrix} 
 \to 0,
\end{align}
where $w_1,w_2$ is a choice of basis of the kernel of the Maslov problem with trivialized Lagrangian boundary condition $e^{2\pi i s}$ so that $w_1\partial_{x_{n+2}}$ is mapped to a positive multiple of the $(0,1)$--capping kernel and $w_2\partial_{x_{n+2}}$ is mapped to a positive multiple of the $(1,0)$--capping kernel under the gluing map $\alpha$.
Hence the capping sequence induces the orientation
  \begin{equation}\label{eq:lat1}
   (-1)^{\sigma^{(1,0)}_{1,-} + \sigma^{(0,1)}_{1,-} + \sigma^{t,(0,0)}_{+}(0) + \sigma_{\pos,2}^{(0,0)}(1,1)} w_1 \partial_{x_{n+2}} \wedge  w_2 \partial_{x_{n+2}} \otimes \partial_{x_1} \wedge \dotsm \wedge \partial_{x_{n}}
  \end{equation}
  on the fully capped problem. 
  
 Next we note that the fully capped problem has trivialized Lagrangian boundary condition given by
  \begin{align}\label{eq:n1ter}
   &\diag(\underbrace{e^{-2 \pi i s}, \dotsc,e^{-2 \pi i s}}_{n},e^{-\pi i s},e^{-\pi i s})  \hat * R_{\pi}(n+1,n+2) \hat *\\ 
   &{}\quad R_{-\pi}(n,n+2) * \diag(\underbrace{1,\dotsc,1}_{n-1},-1,1, -e^{\pi i s})\hat *R_{-\pi}(n,n+2) * \diag(\underbrace{1,\dotsc,1}_{n+1}, -e^{\pi i s}). \nonumber
  \end{align}
  This boundary condition we get by gluing the boundary condition $J_n$ (after a change of coordinates) to the fully capped boundary condition for the $\sigma_{Y_0}^{(0,0)}(1,1)$--problem. The former has canonical orientation 
  \begin{equation*}
   (-1)^{\sigma(n)} \partial_{x_{n+1}} \wedge \partial_{x_{n+2}} \otimes \partial_{x_1} \wedge \dotsm \wedge \partial_{x_n}
  \end{equation*}
  and the latter has canonical orientation 
  \begin{equation*}
   (-1)^{\sigma_{Y_0}^{(0,0)}(1,1)+\sigma^{(1,0)}_{1,-} + \sigma^{(0,1)}_{1,-} + \sigma^{(0,0)}_{0,+} }\partial_{x_1} \wedge \dotsm \wedge \partial_{x_n}\wedge w_1 \partial_{x_{n+2}} \wedge w_2 \partial_{x_{n+2}}.
  \end{equation*}
Gluing them together gives the canonical orientation 
  \begin{equation}\label{eq:lat2}
   (-1)^{\sigma_{Y_0}^{(0,0)}(1,1)+\sigma^{(1,0)}_{1,-} + \sigma^{(0,1)}_{1,-} + \sigma^{(0,0)}_{0,+}+ \sigma(n)} w_1 \partial_{x_{n+2}} \wedge w_2 \partial_{x_{n+2}}\otimes \partial_{x_1} \wedge \dotsm \wedge \partial_{x_n}
  \end{equation}
to the fully capped problem of our positive 2--piece, 
  
 The result now follows by comparing \eqref{eq:lat1} and \eqref{eq:lat2} and using Lemma \ref{lma:tp3}, Corollary \ref{lma:y011sign} and Corollary \ref{lma:signposcapev}.

  The argument for the $\sigma_{\pos,2}^{(1,1)}(1,1)$--problem is similar. That is, the corresponding fully capped problem has trivialized Lagrangian boundary condition given by 
  \begin{align*}
   &\diag(\underbrace{e^{-2 \pi i s}, \dotsc,e^{-2 \pi i s}}_{n},e^{-\pi i s},-e^{-\pi i s})  \hat * R_{-\pi}(n+1,n+2)*\\
   &{} \diag(\underbrace{1,\dotsc,1}_{n+1}, -e^{\pi i s})\hat*R_{-\pi}(n,n+2) * \diag(\underbrace{1,\dotsc,1}_{n-1},-1,1, -e^{\pi i s}) \hat * R_{-\pi}(n,n+2) .
  \end{align*}
  This boundary condition we get by gluing the boundary condition $J_n$ (after a change of coordinates) to the fully capped boundary condition for the $\sigma_{Y_0}^{(1,1)}(1,1)$--problem. The former has canonical orientation 
  \begin{equation*}
   (-1)^{\sigma(n)+1} \partial_{x_{n+1}} \wedge \partial_{x_{n+2}} \otimes \partial_{x_1} \wedge \dotsm \wedge \partial_{x_n}
  \end{equation*}
  and the latter has canonical orientation 
  \begin{equation*}
   (-1)^{\sigma_{Y_0}^{(1,1)}(1,1)+\sigma^{(0,1)}_{1,-} + \sigma^{(1,0)}_{1,-} + \sigma^{(1,1)}_{0,+} }\partial_{x_1} \wedge \dotsm \wedge \partial_{x_n}\wedge w_1 \partial_{x_{n+2}} \wedge w_2 \partial_{x_{n+2}}.
  \end{equation*}
Gluing them together gives the canonical orientation 
  \begin{equation*}
   (-1)^{\sigma_{Y_0}^{(1,1)}(1,1)+\sigma^{(0,1)}_{1,-} + \sigma^{(1,0)}_{1,-} + \sigma^{(1,1)}_{0,+}+ (\sigma(n)+1) +1} w_1 \partial_{x_{n+2}} \wedge w_2 \partial_{x_{n+2}}\otimes \partial_{x_1} \wedge \dotsm \wedge \partial_{x_n}
  \end{equation*}
to the fully capped problem of our positive 2--piece. Here we get one factor $(-1)^1$ from the gluing sequence, since we are gluing in the trivialization given by $-\R^{n+2}$. 
 
  The capping sequence is given by \eqref{eq:cappingneed} and
induces the orientation
  \begin{equation*}
   (-1)^{\sigma^{(0,1)}_{1,-} + \sigma^{(1,0)}_{1,-} + \sigma^{t,(1,1)}_{+}(0) + \sigma_{\pos,2}^{(1,1)}(1,1)} w_1 \partial_{x_{n+2}} \wedge  w_2 \partial_{x_{n+2}} \otimes \partial_{x_1} \wedge \dotsm \wedge \partial_{x_{n}}
  \end{equation*}
  
  Using that this should give the canonical orientation together with Lemma \ref{lma:tp3}  we get that 
\begin{equation*}
  \sigma_{\pos,2}^{(1,1)}(1,1) \equiv \sigma_{Y_0}^{(1,1)}(1,1) + \sigma_{0,+}^{(1,1)} + 1.
\end{equation*}

Now the result follows Section \ref{sec:results}, which says that $\sigma_{Y_0}^{(0, 0)}(1,1)  \equiv \sigma_{Y_0}^{(1, 1)}(1,1)$ modulo 2, together with Corollary \ref{lma:y011sign} and Corollary \ref{lma:signposcapev}.

\end{proof}

\begin{lma}
 We have that 
   \begin{align*}
  \sigma_{\negg,2}^{(0,0)}(0,0,1) &\equiv  \sigma_{\negg,2}^{(0,0)}(0,0,2) \equiv \sigma_{0,+}^{(0,0)}, \\
  \sigma_{\negg,2}^{(1,1)}(0,0,1) &\equiv  \sigma_{\negg,2}^{(1,1)}(0,0,2) \equiv \sigma_{0,+}^{(1,1)} + 1, \pmod{2}.  
 \end{align*}
 In particular, if $n>1$ we get 
 \begin{align*}
  \sigma_{\negg,2}^{(0,0)}(0,0,1) &\equiv  \sigma_{\negg,2}^{(0,0)}(0,0,2) \equiv n+1, \\
  \sigma_{\negg,2}^{(1,1)}(0,0,1) &\equiv  \sigma_{\negg,2}^{(1,1)}(0,0,2) \equiv n +1, \pmod{2}.  
 \end{align*}

\end{lma}

\begin{proof}
We focus on the $\sigma_{\negg,2}^{(0,0)}(0,0,1)$, $\sigma_{\negg,2}^{(1,1)}(0,0,1)$--problems. The other two are completely similar. 

 First we have that the trivialized Lagrangian boundary condition for the fully capped $\sigma_{\negg,2}^{(0,0)}(0,0,1)$--problem is given by
  \begin{align*}
   &\diag(\underbrace{1,\dotsc,1}_{n},,e^{-\pi i s},e^{-\pi i s}) \hat* R_\pi(n+1,n+2) \hat * \\
   &{}\qquad R_{-\pi}(n+1,n+2)*\diag(\underbrace{e^{-2 \pi i s}, \dotsc,e^{-2 \pi i s}}_{n},-e^{- \pi i s},-e^{\pi i s}),
  \end{align*}
  and the trivialized Lagrangian boundary condition for the fully capped $\sigma_{\negg,2}^{(1,1)}(0,0,1)$--problem is given by
    \begin{align*}
   &\diag(\underbrace{1,\dotsc,1}_{n},e^{-\pi i s},-e^{-\pi i s}) \hat* R_{-\pi}(n+1,n+2) \hat * \\
   &{}\qquad R_{\pi}(n+1,n+2)*\diag(\underbrace{e^{-2 \pi i s}, \dotsc,e^{-2 \pi i s}}_{n},-e^{- \pi i s},e^{\pi i s}).
  \end{align*}
  
  We see that the first one is homotopic to the boundary condition $I_{n+1}$, and the second one is also homotopic to $I_{n+1}$ after the change of coordinates $\partial_{x_{n+2}} \leftrightarrow -\partial_{x_{n+2}}$. Hence the fully capped  $\sigma_{\negg,2}^{(0,0)}(0,0,1)$--problem has canonical orientation 
    \begin{equation*}
   (-1)^{\sigma(n+1) + \nu + n+1} v \partial_{x_{n+2}} \otimes \partial_{x_1} \wedge \dotsm \wedge \partial_{x_{n+1}},
  \end{equation*}
  and the fully capped  $\sigma_{\negg,2}^{(1,1)}(0,0,1)$--problem has canonical orientation 
    \begin{equation*}
   (-1)^{\sigma(n+1) + \nu + n} v \partial_{x_{n+2}} \otimes \partial_{x_1} \wedge \dotsm \wedge \partial_{x_{n+1}}.
  \end{equation*}
  Both problems have corresponding capping sequence
    \begin{align*}
 0 \to 
   \langle v\partial_{x_{n+2}}\rangle 
 \xrightarrow{\alpha} 
 \begin{bmatrix}
  \langle i(z-1)\partial_{x_{n+2}} \rangle \\
  0 \\
0\\
0
\end{bmatrix}
\xrightarrow{\beta}  
\begin{bmatrix}
 0\\
 0\\
   \begin{bmatrix}
  \langle \partial_{x_1}\rangle \\
  \vdots \\
   \langle \partial_{x_{n+1}}\rangle
 \end{bmatrix}
\end{bmatrix} 
\xrightarrow{\gamma}  
\begin{bmatrix}
 \langle \partial_{x_{1}}\rangle \\
 \vdots \\
 \langle \partial_{x_{n+1}}\rangle
 \end{bmatrix} 
 \to 0,
\end{align*}
which induces the orientation
  \begin{equation*}
   (-1)^{\sigma_{0,+} + \sigma_{0,-} + \sigma^{t}_{-}(n) + \sigma_{\negg,2}(0,0,1)} v \partial_{x_{n+2}} \otimes \partial_{x_1} \wedge \dotsm \wedge \partial_{x_{n+1}}
  \end{equation*}
  on the fully capped problems. Here $\sigma_{0,+},\sigma_{0,-},\sigma^{t}_{-}(n), \sigma_{\negg,2}(0,0,1)$ should be decorated with $(0,0)$ or $(1,1)$, depending on which case we are considering.
  
 The result now follows by using Lemma \ref{lma:tp2}, Lemma \ref{lma:tp1} for $n=1$, and also Corollary \ref{lma:signposcapev} for $n>1$. 
 \end{proof}

\begin{lma}
 We have that
  \begin{align*}
      \sigma_{\negg,2}^{(1,0)}(1,0,1) &\equiv \sigma_{0,-}^{(0,0)} + 1, \\
      \sigma_{\negg,2}^{(1,0)}(1,0,2) &\equiv \sigma_{0,-}^{(1,1)} + 1 ,\\
     \sigma_{\negg,2}^{(0,1)}(1,0,1) &\equiv  \sigma_{0,-}^{(1,1)} + \sigma^{(0,1)}_{1,-},\\
     \sigma_{\negg,2}^{(0,1)}(1,0,2) &\equiv \sigma_{0,-}^{(0,0)} + \sigma^{(0,1)}_{1,-} ,\pmod{2}.
 \end{align*}
 
 In particular, if $n>1$ we get that 
 \begin{align*}
      \sigma_{\negg,2}^{(0,1)}(1,0,1) &\equiv  \sigma_{\negg,2}^{(0,1)}(1,0,2) \equiv n ,\\
     \sigma_{\negg,2}^{(1,0)}(1,0,1) &\equiv  \sigma_{\negg,2}^{(1,0)}(1,0,2) \equiv n ,\pmod{2}.
 \end{align*}
\end{lma}

 \begin{proof}
 We first consider the $\sigma_{\negg,2}^{(0,1)}(1,0,1)$-- and $\sigma_{\negg,2}^{(1,0)}(1,0,1)$--problems, and then we point out the adjustments for the  $\sigma_{\negg,2}^{(0,1)}(1,0,2)$-- and $\sigma_{\negg,2}^{(1,0)}(1,0,2)$--problems.
 
  The fully capped $\sigma_{\negg,2}^{(1,0)}(1,0,1)$--problem has trivialized Lagrangian boundary condition given by
  \begin{align}\label{eq:bcnegpos2}
   &\diag(\underbrace{1,\dotsc,1}_{n},e^{-\pi i s},1) \hat * R_{-\pi}(n+1,n+2)*\diag(\underbrace{e^{-2 \pi i s}, \dotsc,e^{-2 \pi i s}}_{n},1,-e^{\pi i s}),
  \end{align}
  and the fully capped $\sigma_{\negg,2}^{(0,1)}(1,0,1)$--problem has trivialized Lagrangian boundary condition given by
 \begin{align*}
   &\left( R_{\pi}(n,n+2)\cdot\diag(\underbrace{1,\dotsc,1}_{n+1},-1)  \right)*\diag(\underbrace{1,\dotsc,1}_{n-1},-1,e^{-\pi i s},1) \hat * R_{-\pi}(n+1,n+2) \hat * \\
   &{}\quad R_\pi(n,n+2) \hat *R_{-\pi}(n,n+2)* \diag(\underbrace{e^{-2 \pi i s}, \dotsc,e^{-2 \pi i s}}_{n-1},-e^{-2 \pi i s},1,-e^{\pi i s}) \hat * R_{-\pi}(n,n+2),
  \end{align*}
  which is homotopic to the Lagrangian boundary condition \eqref{eq:bcnegpos2} after the change of coordinates $\partial_{x_n} \leftrightarrow -\partial_{x_n}$. Hence these two problems have opposite canonical orientation. To compute this, notice that the boundary condition \eqref{eq:bcnegpos2} is homotopic to the boundary condition we get by gluing the boundary condition \eqref{eq:bc-+} to the boundary condition $J_{n}$ (after a change of coordinates for the latter). The former has canonical orientation
      \begin{equation*}
   (-1)^{\mu(0)}  \partial_{x_1} \wedge \dotsm \wedge \partial_{x_{n}} \wedge v_1 \partial_{x_{n+2}} \wedge v_2 \partial_{x_{n+2}}, 
  \end{equation*}
  and the latter has canonical orientation 
    \begin{equation*}
   (-1)^{\sigma(n)}  \partial_{x_{n+1}} \wedge \partial_{x_{n+2}} \otimes  \partial_{x_1} \wedge \dotsm \wedge \partial_{x_{n}}. 
  \end{equation*}
  Gluing these two problems gives that the Maslov problem with boundary conditions given by \eqref{eq:bcnegpos2} has canonical orientation
    \begin{equation}\label{eq:10010}
   (-1)^{\sigma(n) + \mu(0) + n+n} v_1 \partial_{x_{n+2}} \wedge v_2 \partial_{x_{n+2}}\otimes \partial_{x_1} \wedge \dotsm \wedge \partial_{x_{n}}.
  \end{equation}
  Here one of the $n$--summands comes from the determinant of the gluing map $\beta$ in the corresponding gluing sequence, and the other one from rearrangements in the third non-trivial column of the gluing sequence. Hence the fully capped   $\sigma_{\negg,2}^{(0,1)}(1,0,1)$--problem has canonical orientation given by \eqref{eq:10010}, and the  fully capped $\sigma_{\negg,2}^{(1,0)}(1,0,1)$--problem has canonical orientation given by 
     \begin{equation*}
   (-1)^{\sigma(n) + \mu(0) + 1} v_1 \partial_{x_{n+2}} \wedge v_2 \partial_{x_{n+2}}\otimes \partial_{x_1} \wedge \dotsm \wedge \partial_{x_{n}}.
  \end{equation*}
  
In both cases we get that the capping sequence is given by
    \begin{align*}
 0 \to 
 \begin{bmatrix}
   \langle v_1\partial_{x_{n+2}}\rangle \\
    \langle v_2\partial_{x_{n+2}}\rangle
 \end{bmatrix}
    \xrightarrow{\alpha} 
 \begin{bmatrix}
  \langle i(z-1)\partial_{x_{n+2}} \rangle \\
  0 \\
  \langle \partial_{x_{n+2}} \rangle \\\\
0
\end{bmatrix}
\xrightarrow{\beta}  
\begin{bmatrix}
   \begin{bmatrix}
  \langle \partial_{x_1}\rangle \\
  \vdots \\
   \langle \partial_{x_{n}}\rangle
 \end{bmatrix}\\
 0\\
 0\\
 0
\end{bmatrix} 
\xrightarrow{\gamma}  
\begin{bmatrix}
 \langle \partial_{x_{1}}\rangle \\
 \vdots \\
 \langle \partial_{x_{n}}\rangle
 \end{bmatrix} 
 \to 0,
\end{align*}
which induces the orientation
  \begin{equation*}
   (-1)^{1+\sigma^{(1,0)}_{1,+} + \sigma^{(0,0)}_{0,-} + \sigma^{t,(1,0)}_{-}(n) + \sigma_{\negg,2}^{(1,0)}(1,0,1)} v_1 \partial_{x_{n+2}} \wedge v_2 \partial_{x_{n+2}}\otimes \partial_{x_1} \wedge \dotsm \wedge \partial_{x_{n}}
  \end{equation*}
  on the fully capped  $\sigma_{\negg,2}^{(1,0)}(1,0,1)$--problem, and the orientation
    \begin{equation*}
   (-1)^{1+\sigma^{(0,1)}_{1,+} + \sigma^{(1,1)}_{0,-} + \sigma^{t,(0,1)}_{-}(n) + \sigma_{\negg,2}^{(0,1)}(1,0,1)} v_1 \partial_{x_{n+2}} \wedge v_2 \partial_{x_{n+2}}\otimes \partial_{x_1} \wedge \dotsm \wedge \partial_{x_{n}}
  \end{equation*}
   on the fully capped  $\sigma_{\negg,2}^{(0,1)}(1,0,1)$--problem.
  
 The result now follows by using Lemma \ref{lma:tp2}, Lemma \ref{lma:tp1}, Lemma \ref{lma:mu0} and Lemma \ref{lma:sigmarelations}  in the $(1,0)$--case, and also \eqref{eq:good} in the $(0,1)$--case.

 The $\sigma_{\negg,2}^{(1,0)}(1,0,2)$--case is similar to $\sigma_{\negg,2}^{(1,0)}(1,0,1)$, except that the sign $\sigma^{(0,0)}_{0,-}$ from the capping sequence should be replaced by $\sigma^{(1,1)}_{0,-}$. Similar arguments apply to the $\sigma_{\negg,2}^{(0,1)}(1,0,2)$--case by comparing with $\sigma_{\negg,2}^{(0,1)}(1,0,1)$.

 For the statements when $n>1$ we use Corollary \ref{lma:signallspeccap}.
 \end{proof}

\subsection{Values of all sign functions}\label{sec:allresults}
  Now we give a summary of the values of all sign functions.
  
 \subsubsection{Values and relations for \texorpdfstring{$\sigma_{Y_0}$}{Y}, modulo 2} 
  \begin{align*}
&\sigma_{Y_0}^{(1, 1)}(0,0)  \equiv \sigma_{Y_0}^{(0, 0)}(0,0)   \equiv n+1, \\
& \sigma_{Y_0}^{(0, 1)}(0,1)  \equiv \sigma_{Y_0}^{(1, 0)}(0,1)  \equiv n+1, \\ 
& \sigma_{Y_0}^{(0, 1)}(1,0)  \equiv \sigma_{Y_0}^{(1, 0)}(1,0)    \equiv n+1, \\
& \sigma_{Y_0}^{(0, 0)}(1,1)  \equiv \sigma_{Y_0}^{(1, 1)}(1,1)  \equiv n. 
\end{align*}
 
\subsubsection{Values and relations for \texorpdfstring{$\sigma_{Y_1}$}{Y}, modulo 2}
$ $ \\
$n=1$:
\begin{align*}
&\sigma_{Y_1}^{(0,0)}(1,0) \equiv  \sigma_{Y_1}^{(1,1)}(1,0) \equiv  \sigma_{Y_1}^{(1,0)}(1,1) \equiv  \sigma_{Y_1}^{(0,1)}(1,1)   \equiv 1,  \\ 
&\sigma_{Y_1}^{(0,0)}(0,1) \equiv  \sigma_{Y_1}^{(1,1)}(0,1)  \equiv 0,\\
 &\sigma_{Y_1}^{(0,1)}(0,0)  \equiv  \sigma_{Y_1}^{(1,0)}(0,0)  \equiv  0.
 \end{align*}
$n>1$:
\begin{align*}
&\sigma_{Y_1}^{(0,0)}(1,0) \equiv  \sigma_{Y_1}^{(1,1)}(1,0) \equiv  \sigma_{Y_1}^{(1,0)}(1,1) \equiv  \sigma_{Y_1}^{(0,1)}(1,1)   \equiv n + 1,  \\ 
&\sigma_{Y_1}^{(0,0)}(0,1) \equiv  \sigma_{Y_1}^{(1,1)}(0,1)  \equiv n, \\
 &\sigma_{Y_1}^{(0,1)}(0,0)  \equiv  \sigma_{Y_1}^{(1,0)}(0,0)  \equiv  n+1.
\end{align*}

\subsubsection{Values and relations for \texorpdfstring{$\sigma_{{\swi}}$}{sw}, modulo 2}
\begin{align*}
 &\sigma_{\swi}^{(0, 0)}(l) \equiv \sigma_{\swi}^{(1, 1)}(l) \equiv 0, \\
 &\sigma_{\swi}^{(0, 0)}(u) \equiv  \sigma_{\swi}^{(1, 1)}(u) \equiv 1, \\
 &\sigma_{\swi}^{(1, 0)}(u) \equiv  \sigma_{\swi}^{(0, 1)}(u) \equiv \sigma_{\swi}^{(1, 0)}(l) \equiv  \sigma_{\swi}^{(0, 1)}(l)  \equiv 0.
\end{align*}

\subsubsection{Values and relations for \texorpdfstring{$\sigma_{{\negg,1}}$}{P1-}, modulo 2} 
$ $\\

$n=1$:
\begin{align*}
 \sigma_{\negg,1}^{(0,0)}(1) &=  0, \\
  \sigma_{\negg,1}^{(1,1)}(1) &= 1,\\
 \sigma_{\negg,1}^{(0,1)}(1) &= \sigma_{1,-}^{(0,1)}, \\
 \sigma_{\negg,1}^{(1,0)}(1) &=  1  \\
 \sigma_{\negg,1}^{(l,l)}(0) &=  1 + l + \sigma_{0,+}^{(l,l)} ,\qquad l \in \{0,1\},\\
   \sigma_{\negg,1}^{(1,0)}(0) &= \sigma_{0,-}^{(0,0)} + 1, \\
\sigma_{\negg,1}^{(0,1)}(0) &= \sigma_{0,-}^{(1,1)}+\sigma_{1,-}^{(0,1)}.
\end{align*} 

$n>1$,$k>0$:
\begin{align*}
 \sigma_{\negg,1}^{(l,l)}(k) &= n\cdot (k + 1), \qquad l \in \{0,1\},\\
 \sigma_{\negg,1}^{(0,1)}(k) &= n\cdot k +k + 1, \\
 \sigma_{\negg,1}^{(1,0)}(k) &=  n\cdot k +k + 1, 
\end{align*} 
and 
\begin{align*}
 \sigma_{\negg,1}^{(l,l)}(0) &=  1,\qquad l \in \{0,1\},\\
   \sigma_{\negg,1}^{(0,1)}(0) &= n, \\
\sigma_{\negg,1}^{(1,0)}(0) &= n.
\end{align*} 

\subsubsection{Values and relations for \texorpdfstring{$\sigma_{{\pos,2}}$}{P2+}, modulo 2} 
\begin{equation*}
 \sigma_{\pos,2}^{(0,0)}(0,0) = \sigma_{\pos,2}^{(1,1)}(0,0)=\sigma_{\pos,2}^{(0,1)}(1,0)=\sigma_{\pos,2}^{(1,0)}(1,0)=0                                                                                                                                                                                                                                                                                                                                   \end{equation*}
 for all $n$. 
 
 For $n=1$:
  \begin{align*}
   &\sigma_{\pos,2}^{(0,1)}(0,1) \equiv \sigma_{\pos,2}^{(1,0)}(0,1) \equiv \sigma_{0,-}^{(0,0)} + \sigma_{0,-}^{(1,1)}. \\
  \end{align*}

   For $n>1$: 
  \begin{align*}
   &\sigma_{\pos,2}^{(0,1)}(0,1) \equiv \sigma_{\pos,2}^{(1,0)}(0,1) \equiv 0, \\
   &\sigma_{\pos,2}^{(0,0)}(1,1)  \equiv \sigma_{\pos,2}^{(1,1)}(1,1)  \equiv 1. 
 \end{align*}

\subsubsection{Values and relations for \texorpdfstring{$\sigma_{{\negg,2}}$}{P2-}, modulo 2} 
$ $ \\
 For $n>1$:
\begin{align*}
 \sigma^{(0,0)}_{\negg,2}(0, 0, 1) &\equiv  \sigma^{(0,0)}_{\negg,2}(0, 0, 2) \equiv   \sigma^{(0,1)}_{\negg,2}(0, 1, 1) \equiv n+1\\
 &\equiv   \sigma^{(1,0)}_{\negg,2}(0, 1, 2) + 1, \\
  \sigma^{(1,1)}_{\negg,2}(0, 0, 1) &\equiv   \sigma^{(1,1)}_{\negg,2}(0, 0, 2) \equiv   \sigma^{(1,0)}_{\negg,2}(0, 1, 1)  \equiv n+1\\
  &\equiv   \sigma^{(0,1)}_{\negg,2}(0, 1, 2)+ 1, \\
  \sigma^{(0,1)}_{\negg,2}(1, 0, 1) &\equiv  \sigma^{(0,1)}_{\negg,2}(1, 0, 2) \equiv
  \sigma^{(0,0)}_{\negg,2}(1, 1, 1) \equiv   \sigma^{(1,1)}_{\negg,2}(1, 1, 2) \equiv n, \\
    \sigma^{(1,0)}_{\negg,2}(1, 0, 1) &\equiv  \sigma^{(1,0)}_{\negg,2}(1, 0, 2) \equiv
 \sigma^{(1,1)}_{\negg,2}(1, 1, 1) \equiv  \sigma^{(0,0)}_{\negg,2}(1, 1, 2) \equiv n.\\
  \end{align*}
 

%
 
  
  For $n=1$:
  \begin{align*}
 \sigma^{(0,0)}_{\negg,2}(0, 0, 1) &\equiv  \sigma^{(0,1)}_{\negg,2}(0, 1, 1) \equiv \sigma_{0,+}^{(0,0)}, \\
  \sigma^{(1,0)}_{\negg,2}(0, 1, 2) &\equiv   \sigma^{(0,0)}_{\negg,2}(0, 0, 2)+ 1 \equiv \sigma_{0,+}^{(0,0)}+1, \\
  \sigma^{(1,1)}_{\negg,2}(0, 0, 2) &\equiv  \sigma^{(0,1)}_{\negg,2}(0, 1, 2) +1 \equiv \sigma_{0,+}^{(1,1)} + 1, \\
    \sigma^{(1,0)}_{\negg,2}(0, 1, 1) &\equiv  \sigma^{(1,1)}_{\negg,2}(0, 0, 1) \equiv \sigma_{0,+}^{(1,1)} + 1, 
  \end{align*}
  
    \begin{align*}
      \sigma_{\negg,2}^{(1,0)}(1,0,1) &\equiv \sigma_{0,-}^{(0,0)} + 1, \\
      \sigma_{\negg,2}^{(1,0)}(1,0,2) &\equiv \sigma_{0,-}^{(1,1)} + 1 ,\\
     \sigma_{\negg,2}^{(0,1)}(1,0,1) &\equiv  \sigma_{0,-}^{(1,1)} + \sigma^{(0,1)}_{1,-} ,\\
     \sigma_{\negg,2}^{(0,1)}(1,0,2) &\equiv \sigma_{0,-}^{(0,0)} + \sigma^{(0,1)}_{1,-},\\
 \end{align*}

  \begin{align*}
\sigma^{(0,0)}_{\negg,2}(1, 1, 1) +  \sigma^{(1,1)}_{\negg,2}(1, 1, 2) & \equiv \sigma_{0,-}^{(1,1)} +  \sigma_{0,-}^{(0,0)},\\
\sigma^{(0,0)}_{\negg,2}(1, 1, 1) +  \sigma^{(0,0)}_{\negg,2}(1, 1, 2) & \equiv
\sigma_{0,-}^{(0,1)} +  1,  \\
 \sigma^{(1,1)}_{\negg,2}(1, 1, 1) +  \sigma^{(1,1)}_{\negg,2}(1, 1, 2)& \equiv \sigma_{0,-}^{(0,1)} +  1,  \\
\sigma^{(1,1)}_{\negg,2}(1, 1, 1) +  \sigma^{(0,0)}_{\negg,2}(1, 1, 2) & \equiv \sigma_{0,-}^{(1,1)} +  \sigma_{0,-}^{(0,0)}.
  \end{align*}

 \appendix
\section{Families of Fredholm operators related to trees}\label{app:mini}

Here we discuss some results about families of Fredholm operators associated to Morse flow trees.

Recall that the pair $(A,\w_\nu)$ of a trivialized Lagrangian boundary condition $A$ and a weight $\w_\nu$ defined on a standard domain $\Delta$ is called admissible if the associated $\dbar$-operator $\dbar_A:\Hi_{2,\nu}[A] \to \Hi_{1,\nu}[0]$ is Fredholm. 

\begin{defi}
 Let $\Delta = \Delta_{m+1}$ be a standard domain and let $\w_\lambda$
 be a continuous family of weights defined on $\Delta$. Then a family
 $A_\lambda: \partial \Delta \to U(n)$, $\lambda \in [0,1]$, of
 trivialized boundary conditions is \emph{$C^2$-admissible} if 
 \begin{enumerate}
  \item $(A_\lambda ,\w_\lambda)$ is admissible for all $\lambda \in [0,1]$,
  \item $A_\lambda: \partial\Delta  \to U(n)$ is smooth and depends
   $C^2$ on $\lambda\in[0,1]$,  with  $\|A_\lambda\|_{C^2}$,  $\|A^{-1}_\lambda\|_{C^2}$ bounded for each $\lambda$,
  \item $\dbar A_\lambda|_{\partial \Delta} = 0$.
  \end{enumerate}
\end{defi}

Here the $C^k$-norm of a matrix $A: X \to GL(n,\C)$ is defined by
\begin{equation*}
 \|A\|_{C^k} = \sum_{|\alpha_j|\leq k} \sup_{x \in X}\|D^{\alpha_j}A(x)\|_{\max}  
\end{equation*}
where the derivative is taken component-wise and
 $\|A\|_{\max} = \max |a_{ij}|$.

\begin{lma}\label{lma:goodchoice}
Let $\Gamma$ be a rigid flow tree, an elementary tree or a sub flow
tree. Let $\w_\lambda$ be the weight associated to $\Gamma$ as in
Section \ref{sec:analys}. Then there is a $\lambda_0 > 0$ so that if
$\lambda\leq \lambda_0$, then the family of trivialized boundary
conditions associated to the pre-glued disk $w_\lambda$ of $\Gamma$ as described in Section \ref{sec:cutstab}, is $C^2$-admissible.
\end{lma}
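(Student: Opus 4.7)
The plan is to verify the three defining properties of $C^2$-admissibility one at a time, reducing everything to the explicit description of $w_\lambda$ and of the trivialized boundary conditions $\tilde A(\Gamma_\lambda)$ produced by the construction in \cite{trees} and in Section \ref{sec:triv}. After reparametrizing $(0,\lambda_0]$ onto $[0,1]$, I would think of $\lambda$ as the parameter of the family and treat the limit $\lambda \to 0$ separately, where the analytical content is concentrated.

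First I would check admissibility for each fixed $\lambda$. By Section \ref{sec:local} and the choice of weight vectors in Section \ref{sec:spintriv} (plus the choices in Section \ref{sec:capspec} and at end-vertices in Section \ref{sec:analys}), the weight components at every puncture lie in $(-\pi/2,\pi/2)^n$ and avoid the spectrum of the asymptotic operator determined by the local form of $A_\lambda$; by \cite{legsub} the resulting $\dbar$-operator on an elementary piece is Fredholm. Since the boundary conditions of $w_\lambda$ are built by gluing these elementary pieces across edge point regions, Lemma \ref{lma:indexglu} and the gluing sequences of Lemma \ref{lma:gluweights} propagate the Fredholm property to the $\dbar_{\Gamma_\lambda}$-operator on the whole standard domain, giving condition~(1).

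Next I would establish the smoothness and continuous $\lambda$-dependence required in~(2). The pre-glued disk $w_\lambda$, and hence the Lagrangian boundary condition it induces, is constructed in \cite{trees} by an explicit interpolation procedure that depends smoothly on $\lambda$; the trivialization added in the auxiliary directions in Section \ref{sec:triv} is piecewise explicit and independent of $\lambda$ outside neighborhoods of cusps, ends, switches, and $Y_1$-vertices. To get the uniform $C^2$-bound, I would argue separately on three types of regions: (a) in the long flow segments (including the gluing regions of length $O(\lambda^{-1})$) the boundary conditions are close to a constant $\R^n$-condition, so $\|A_\lambda\|_{C^2}$ is bounded by $O(\lambda)$ after rescaling to an interval of fixed length; (b) in neighborhoods of $2$- and $3$-valent interior vertices the boundary conditions are uniformly close to constant; (c) in the neighborhoods where the uniform $\pm\pi$-rotation at cusps is performed, the rotation is done on an interval whose length is bounded independently of $\lambda$ (this is precisely the point of working with the flat metric produced in Section \ref{sec:complex} and Section \ref{sec:analys}), so the $C^2$-norm of a uniform rotation is bounded by a constant that is uniform in $\lambda$.

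Finally, the condition $\dbar A_\lambda|_{\partial\Delta} = 0$ in~(3) is a purely local requirement at the boundary: given any $C^2$-family $A_\lambda$ satisfying (1) and (2), one can perturb $A_\lambda$ in a thin collar neighborhood of $\partial\Delta$ through a $\lambda$-dependent homotopy of $U(n)$-valued functions, with the perturbation controlled in $C^2$, without changing the homotopy class of the loop and hence without affecting admissibility or the index. The main obstacle in the whole argument is step~(c) above: one needs to know that the intervals over which the $\pm\pi$ rotations are performed at cusps do not shrink as $\lambda \to 0$, which would blow up the $C^2$-norm. This is exactly what the flat metric construction of \cite[Section 4.3]{trees} is designed for, and once that input is in place the remaining estimates are routine.
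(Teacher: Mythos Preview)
Your proposal is correct and substantially more detailed than the paper's own argument, which is a single sentence: the result ``follows by the constructions in \cite{trees}, for all $\lambda$ so small so that $\theta_\lambda < \delta$.'' In other words, the paper defers all three conditions to the explicit construction of the pre-glued disks in \cite{trees} and records only the one quantitative inequality that fixes $\lambda_0$.

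The difference in emphasis is worth noting. You locate the ``main obstacle'' in your step~(c), the uniform control on the $\pm\pi$-rotation intervals at cusp-type vertices, which guarantees the $C^2$-bound in condition~(2). The paper instead singles out the admissibility condition~(1): near a puncture the boundary angle is $\theta_\lambda$ (Section~\ref{sec:local}), and the weight exponent is $\pm\delta$ (Section~\ref{sec:spintriv}); the pair is admissible precisely when the weight avoids the spectrum of the asymptotic operator, i.e.\ when $\theta_\lambda < \delta$. Since $\theta_\lambda \to 0$, this is what actually determines $\lambda_0$. Your argument for~(1) contains this implicitly (``avoid the spectrum of the asymptotic operator''), but it would be sharper to name the inequality $\theta_\lambda < \delta$ explicitly, as that is the only place where smallness of $\lambda$ is genuinely used. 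Conditions~(2) and~(3), by contrast, hold for all $\lambda$ once the construction of \cite{trees} is in place; your perturbation argument for~(3) is unnecessary since the explicit trivializations of Section~\ref{sec:triv} already extend holomorphically off the boundary in the obvious way.
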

\begin{proof}
This follows by the constructions in \cite{trees} together with
results found in [\cite{legsub}, Section 6]. In particular, from Lemma
\ref{lma:wfred} it follows that this holds for all $\lambda$ so small
so that $\theta_\lambda < \delta$.   
\end{proof}

The following proposition shows that under certain circumstances, the kernels of the operators $\dbar_{A_\lambda}$ constitute a vector bundle, with topology induced by $[0,1] \times \Hi_{2,\nu}(\Delta,\C^n)$.

\begin{prp}\label{prp:keriso}
Let  $A_\lambda $, $\lambda \in[0,1]$ be  a $C^2$-admissible family of
boundary conditions on $\Delta_{m+1}$ for a given a weight $\w = \w_\nu$. Assume that $\dbar_{A_0}$ is surjective, and let $(v_1,\dotsc,v_k)$ be a basis for $\kd_{A_0}$. Then there is a $\lambda_0 >0$ such that for every $\lambda \in [0, \lambda_0]$ there is a basis $(v_1^\lambda,\dotsc,v_k^\lambda)$ for $\kd_{A_\lambda}$ satisfying
\begin{equation*}
 \|v_i^\lambda - v_i\|_{2,\nu} \leq C \| A_\lambda - A_0\|_{C^2} 
\end{equation*}
where $C = C(\{A_\lambda\}_{\lambda\in[0,1]})$.
\end{prp}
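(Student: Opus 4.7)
The plan is to reduce the family $\dbar_{A_\lambda}$ to a single perturbed operator on a fixed Sobolev space, then apply a quantitative contraction-mapping argument. For each $\lambda$, I would extend $U_\lambda := A_\lambda A_0^{-1}\colon\partial\Delta_m\to U(n)$ to a smooth map $\tilde U_\lambda\colon\Delta_m\to U(n)$ which equals $\id$ outside a fixed compact collar of $\partial\Delta_m$ disjoint from all punctures, and which satisfies
\[
\|\tilde U_\lambda - \id\|_{C^2}\le C_1\|A_\lambda - A_0\|_{C^2}
\]
uniformly in $\lambda$. The condition $\dbar A_\lambda|_{\partial\Delta_m}=0$ built into the definition of $C^2$-admissibility lets one arrange the extension so that $\dbar\tilde U_\lambda$ vanishes along $\partial\Delta_m$ as well. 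Multiplication by $\tilde U_\lambda^{-1}$ defines a bounded isomorphism $\Psi_\lambda\colon\Hi_{2,\mu}[A_\lambda]\to\Hi_{2,\mu}[A_0]$ close to the identity, and conjugation yields
\[
\tilde\dbar_\lambda := \Psi_\lambda\circ\dbar_{A_\lambda}\circ\Psi_\lambda^{-1} = \dbar_{A_0}+B_\lambda,
\]
where $B_\lambda$ is multiplication by the $C^1$-function $\tilde U_\lambda^{-1}(\dbar\tilde U_\lambda)$, so that $\|B_\lambda\|_{L(\Hi_{2,\mu}[A_0],\,\Hi_{1,\mu}[0])}\le C_2\|A_\lambda-A_0\|_{C^2}$.

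Since $\dbar_{A_0}$ is surjective, it admits a bounded right inverse $Q_0\colon\Hi_{1,\mu}[0]\to(\kd_{A_0})^{\perp}$. Choose $\lambda_0$ small enough that $\|Q_0\|\cdot\|B_\lambda\|<1/2$ for $\lambda\in[0,\lambda_0]$; then $\tilde\dbar_\lambda$ remains surjective, and by constancy of the Fredholm index $\dim\Ker\tilde\dbar_\lambda = k$. For each $v_i\in\kd_{A_0}$ I look for $w_i^\lambda\in\im Q_0$ with $\tilde\dbar_\lambda(v_i+w_i^\lambda)=0$; this reduces to the fixed-point equation
\[
w_i^\lambda = -Q_0\,B_\lambda(v_i+w_i^\lambda),
\]
which the Banach contraction principle solves uniquely with $\|w_i^\lambda\|_{2,\mu}\le 2\|Q_0\|\cdot\|B_\lambda\|\cdot\|v_i\|_{2,\mu}$. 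Setting $v_i^\lambda := \Psi_\lambda^{-1}(v_i+w_i^\lambda)\in\kd_{A_\lambda}$, the triangle inequality gives
\[
\|v_i^\lambda - v_i\|_{2,\mu} \le \|(\Psi_\lambda^{-1}-\id)v_i\|_{2,\mu} + \|\Psi_\lambda^{-1}w_i^\lambda\|_{2,\mu} \le C\|A_\lambda - A_0\|_{C^2}.
\]
Linear independence of $(v_1^\lambda,\dots,v_k^\lambda)$ for small $\lambda_0$, together with $\dim\kd_{A_\lambda}=k$, ensures that these vectors form a basis.

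The main technical point will be the construction of the gauge extension $\tilde U_\lambda$ and the verification of the bound on $B_\lambda$ in the weighted norms. The weight $\w_\mu$ grows exponentially at the ends, so it is essential to keep $\tilde U_\lambda-\id$ supported in a compact subset of $\Delta_m$ that is bounded away from every puncture; the $C^2$-admissibility hypothesis forces $A_\lambda$ to share the same asymptotic data with $A_0$ at each puncture up to $O(\|A_\lambda - A_0\|_{C^2})$, so such a compactly supported extension exists with the required $C^2$-estimate. Because $B_\lambda$ is then supported in a fixed compact region, its weighted operator norm is comparable to its unweighted norm on the support, and the estimate reduces to a standard Sobolev multiplier bound by the product $\|\tilde U_\lambda^{-1}\dbar\tilde U_\lambda\|_{C^1}$.
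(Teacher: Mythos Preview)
Your approach is essentially the same as the paper's: conjugate by (an extension of) $A_\lambda A_0^{-1}$ to transport the problem to the fixed space $\Hi_{2,\mu}[A_0]$, obtain a zeroth-order perturbation of $\dbar_{A_0}$ whose operator norm is controlled by $\|A_\lambda - A_0\|_{C^2}$, and then run a Neumann-series/contraction argument using a bounded right inverse of $\dbar_{A_0}$. The paper phrases the last step as trivializing the kernel bundle via $(\id + T(\check A_0 - \check A_\lambda))^{-1}$, which is exactly your fixed-point iteration.

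There is one unnecessary complication in your write-up that creates a small gap. You insist that $\tilde U_\lambda - \id$ be supported in a fixed compact set away from the punctures, and you justify this by asserting that $C^2$-admissibility forces $A_\lambda$ to agree asymptotically with $A_0$ at each end. The definition of $C^2$-admissibility given in the paper does not say this, so the existence of a compactly supported extension is not guaranteed by the hypotheses. Fortunately you do not need it: the paper proves just before this proposition that for any $A:X\to GL(n,\C)$ with $\|A\|_{C^1}$ bounded, multiplication by $A$ satisfies $\|Av\|_{1,\mu}\le 2n\|A\|_{C^1}\|v\|_{1,\mu}$ on the weighted space, regardless of the behaviour of $A$ at infinity. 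The point is that the weight commutes through the multiplication, so no exponential blow-up occurs. Hence any $C^2$-bounded extension of $A_\lambda A_0^{-1}$ works, and your multiplier bound $\|B_\lambda\|\le C_2\|A_\lambda-A_0\|_{C^2}$ follows directly without any compact-support gymnastics. Once you drop that requirement, your argument and the paper's coincide.
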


\begin{rmk}
 We have a similar statement for the cokernel elements in the case when $\dbar_{A_0}$ is injective, obtained by considering the adjoint operator $\partial$. 
\end{rmk}

First we recall the following standard result.
\begin{lma}
 Let $X$ be a manifold, and let $\w_\nu$ be a weight function defined on $X$. If $A: X \to GL(n,\C)$ and $v:X \to \C^n$ then
 \begin{equation*}
  \|Av\|_{i,\nu} \leq C_i n \|A\|_{C^i} \|v\|_{i,\nu}, \qquad  i=1,2,
 \end{equation*}
where $\| \cdot\|_{i,\nu}$ denotes the norm on $\Hi_{i,\nu}(X,\C^n)$
associated to the weight $\w = \w_\nu$, and $C_i>0$ is a constant that
only depends on the Sobolev regularity $i$.
\end{lma}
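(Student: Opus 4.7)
The plan is to unpack the weighted Sobolev norm by its definition, estimate each piece pointwise via elementary matrix inequalities, and combine using the Leibniz rule.

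First I would expand, by definition,
\begin{equation*}
\|Av\|_{1,\mu}^2 = \|\w_\mu Av\|_1^2 = \|\w_\mu Av\|_{L^2}^2 + \|D(\w_\mu Av)\|_{L^2}^2.
\end{equation*}
For the $L^2$-term, the key pointwise inequality is $|\w_\mu Av| \leq n \|A\|_{C^0} |\w_\mu v|$. To see this, one applies Cauchy--Schwarz to matrix multiplication, $|Mu|^2 \leq n^2 \|M\|_{\max}^2 |u|^2$ for any $M \in GL(n,\C)$ and $u \in \C^n$, combined with the fact that the diagonal weight $\w_\mu$ may be pulled through the matrix $A$ row by row. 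Integrating gives
\begin{equation*}
\|\w_\mu Av\|_{L^2} \leq n \|A\|_{C^0} \|\w_\mu v\|_{L^2} \leq n \|A\|_{C^0} \|v\|_{1,\mu}.
\end{equation*}

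For the derivative term, I would apply the Leibniz rule in the form $D(\w_\mu Av) = (DA)(\w_\mu v) + A \cdot D(\w_\mu v)$ (treating $\w_\mu$ as commuting with $A$ up to the same row-by-row identification used above), then apply the same matrix estimate to each summand to obtain
\begin{equation*}
|D(\w_\mu Av)| \leq n \|DA\|_{C^0} |\w_\mu v| + n \|A\|_{C^0} |D(\w_\mu v)|.
\end{equation*}
Taking $L^2$-norms, using the triangle inequality, and noting $\max(\|A\|_{C^0},\|DA\|_{C^0}) \leq \|A\|_{C^1}$, one gets
\begin{equation*}
\|D(\w_\mu Av)\|_{L^2} \leq n \|A\|_{C^1}\bigl(\|\w_\mu v\|_{L^2} + \|D(\w_\mu v)\|_{L^2}\bigr) \leq n \|A\|_{C^1} \|v\|_{1,\mu}.
\end{equation*}
Summing the two contributions via $\|\cdot\|_1 \leq \|\cdot\|_{L^2} + \|D\cdot\|_{L^2}$ yields exactly $\|Av\|_{1,\mu} \leq 2n \|A\|_{C^1} \|v\|_{1,\mu}$.

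There is no essential obstacle here; the only minor bookkeeping point is that $\w_\mu$ is diagonal rather than a scalar, but since it is diagonal with strictly positive entries, the Cauchy--Schwarz estimate for $|Mu|$ can be applied to each row of $A$ separately, so the weight passes through the matrix $A$ harmlessly. The constant $2n$ then comes out cleanly: a factor of $n$ from the matrix bound and a factor of $2$ from combining the $L^2$ and derivative parts.
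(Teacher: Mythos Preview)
Your approach is essentially identical to the paper's: the pointwise matrix bound $|Mu|\le n\|M\|_{\max}|u|$, the Leibniz splitting of $D(\w Av)$, and the final use of $\|f\|_1 \le \|f\|_{L^2}+\|Df\|_{L^2}$.

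There is one concrete slip. In the derivative estimate you claim
\[
\|\w_\mu v\|_{L^2} + \|D(\w_\mu v)\|_{L^2} \le \|v\|_{1,\mu},
\]
but for nonnegative $a,b$ one has $a+b\ge\sqrt{a^2+b^2}$, not $\le$; the usable bound is $a+b\le\sqrt 2\,\sqrt{a^2+b^2}$. Carrying the extra $\sqrt 2$ through, your argument yields $(1+\sqrt 2)\,n$ rather than $2n$. This is harmless for the application (only some constant is needed), and the paper itself is content to end with the loose estimate $\|Av\|_{1,\mu}^2\le 10\,n^2\|A\|_{C^1}^2\|v\|_{1,\mu}^2$ without optimizing.

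A second remark concerns your ``row-by-row'' justification for pulling the diagonal weight through $A$. As stated this does not work: for a diagonal $\w$ with unequal entries the pointwise inequality $|\w Av|\le n\|A\|_{\max}|\w v|$ can fail (take $n=2$, $\w=\operatorname{diag}(100,1)$, $A=\bigl(\begin{smallmatrix}0&1\\0&0\end{smallmatrix}\bigr)$, $v=(0,1)^T$). The paper does not address this either --- it simply writes the weight on the right as $v\w$, effectively treating it as scalar --- so your argument is no less rigorous than the paper's on this point, but the specific justification you offer is not correct.
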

\begin{proof}
 For $B \in GL(n,\C)$ let $\|B\|_{2}$ denote the operator norm of $B$, viewed as a linear operator, i.e.
 \begin{equation*}
  \|B\|_{2} = \sup_{\|z\|_{\C^n} =1} \|Bz\|_{\C^n}.
 \end{equation*}
 
 Then we have the following inequality
 \begin{equation*}
  \|B\|_2 \leq n \|B\|_{\max}
 \end{equation*}
and  hence 
\begin{equation*}
\langle Av, Av\rangle (x) \leq \|A(x)\|_2^2\langle v,v\rangle (x) \leq n^2\|A(x)\|_{\max}^2\langle v,v\rangle (x) 
\end{equation*}
 point-wise.

This implies that 
 \begin{equation*}
  \|Av \w\|_{L^2}^2 = \int_X \langle Av\w, Av\w \rangle dm \leq n^2\|A\|_{C^0}^2 \int_X\langle v\w, v\w \rangle dm = n^2\|A\|^2_{C^0}  \|v\w\|^2_{L^2},
 \end{equation*}
 \begin{multline*}
  \|D(Av\w)\|_{L^2} =  \|D(A)v\w +AD(v\w)\|_{L^2} \leq \|D(A)v\w\|_{L^2} +  \|AD(v\w)\|_{L^2} \\
   \leq  n\|D A\|_{C^0}  \|v\w\|_{L^2} +  n\|A\|_{C^0}  \|D(v\w)\|_{L^2},
 \end{multline*}
so
\begin{align*}
 \MoveEqLeft[32]{ \|Av\|_{1,\nu}^2 = \|Av\w\|_{L^2}^2 +\|D(Av\w)\|_{L^2}^2}&\\
\leq n^2 \|A\|^2_{C^0}( \|v\w\|_{L^2}^2 +\|D(v\w)\|_{L^2}^2)+  n^2\|D A\|^2_{C^0}  \|v\w\|_{L^2}^2& \\
\quad {} + 2n^2\|D A\|_{C^0}\|A\|_{C^0} \|v\w\|_{L^2}\|D(v\w)\|_{L^2} &\\
 \leq 10 n^2 \|A\|^2_{C^1}\|v\|^2_{1,\nu}.&
\end{align*}

The proof for $i=2$ is completely similar, but here we also have to
make an estimate for  $\|D^2(Av\w)\|_{L^2}$.
\end{proof}

\begin{proof}[Proof of Proposition \ref{prp:keriso}]
 By assumptions the operator $\dbar_{A_\lambda} : \Hi_{2,\nu}[A_\lambda] \to \Hi_{1,\nu}[0]$ is Fredholm for all $\lambda \in [0,1]$. Now, for each $\lambda \in [0,1]$, consider the diagram  
  \begin{align*}
 \xymatrix{
\Hi_{2,\nu}[A_0] \ar[d]^{\dbar_{A_0}} \ar[r]^{A_\lambda A_0^{-1}} 
& \Hi_{2,\nu}[A_\lambda] \ar[d]^{\dbar_{A_\lambda}} \\
\Hi_{1,\nu}[0] \ar[r]^{A_\lambda A_0^{-1}} 
& \Hi_{1,\nu}[0],
}
\end{align*}
 where the multiplication by $A_\lambda A_0^{-1}$ induce isomorphisms. From this we get an operator 
 \begin{equation*}
  \check{A}_\lambda := \dbar + A_0A_\lambda^{-1}\dbar(A_\lambda A_0^{-1}): \Hi_{2,\nu}[A_0] \to \Hi_{1,\nu}[0]
 \end{equation*}
 which is conjugate to $\dbar_{A_\lambda}$. In particular,
 $\check{A}_\lambda$ gives a family of Fredholm operators. And since
 $\check{A}_{0}$ is assumed to be surjective it follows from standard
 arguments in Fredholm theory that  there is a $\tilde \lambda_0 > 0$
 so that $\check{A}_{\lambda}$ also is surjective for $\lambda
 \leq \tilde \lambda_0$, and that the kernels of these operators
 constitute a vector bundle over $[0, \tilde \lambda_0]$.

Moreover, we have the following estimate. 
 \begin{align*}
   &\|(\check{A}_{0}-\check{A}_{\lambda})v\|_{1,\nu} \\
   & \quad{} \leq n^4C_1^4\|A_0\|_{C^1} \|A^{-1}_\lambda\|_{C^1}(\|\dbar(A_\lambda
   -A_0) \|_{C^1}\|A_0^{-1}\|_{C_1} + \|A_\lambda
   -A_0 \|_{C^1}\|\dbar A_0^{-1}\|_{C^1})\|v\|_{2,\nu} \\
   & \quad{}\leq n^4C_1^4 E \|A_\lambda - A_0\|_{C^2}\|v\|_{2,\nu},
 \end{align*}
 where $E = E(\|A_0\|_{C^2},
 \{\|A_\lambda^{-1}\|_{C^2}\}_{\lambda \in [0,1]})$. This means that
 we have an estimate
 \begin{equation}\label{eq:est1}
   \|\check{A}_{0}-\check{A}_{\lambda}\|_{\op} \leq  n^4C_1^4 E \|A_\lambda - A_0\|_{C^2}
 \end{equation}
where the operator norm $\|\cdot \|_{\op}$ is with respect to
$(\check{A}_{0}-\check{A}_{\lambda})$ regarded as a map from
$\Hi_{2,\nu}[A_0]$ to $\Hi_{1,\nu}[0]$.

 Now let $T$ be a bounded right inverse for $\check A_0$ with norm
 $\|T\|_{\op}$ regarded as an operator $T: \Hi_{1,\nu}[0[ \to
 \Hi_{2,\nu}[A_0]$. Let $\lambda_0'$ be so small so that
 \begin{equation}\label{eq:est2}
   \|A_\lambda - A_0\|_{C^2}\leq \frac{1}{2 n^4C_1^4 E \|T\|_{\op}},
 \end{equation}
 for all $\lambda \in [0,\lambda_0]$, and let $\lambda_0 = \min
 \{\lambda_0', \tilde \lambda_0\}$.
 Let $\Ker \check A$ denote the vector bundle over $[0, \lambda_0]$ with
 fibers $\Ker \check A_\lambda$. Then $\Ker \check A \simeq [0,\lambda_0]
 \times \Ker \check A_0$ with trivialization given by
\begin{equation*}
 \id - T(\check A_0 - \check A_\lambda): \Ker \check A_\lambda  \to \Ker \check A_0.
\end{equation*}

It follows that if $(v_1,\dotsc,v_k)$ is a basis for $\Ker \check A_0$ then $(\tilde v_1^\lambda,\dotsc,\tilde v_k^\lambda)$ gives a basis for $\Ker \check A_\lambda$, where 
\begin{equation*}
 \tilde v^\lambda_i = (\id - T(\check A_0 - \check A_\lambda))^{-1} v_i, \qquad i=1,\dotsc,k.
\end{equation*}
Moreover, 
\begin{multline*}
 \| \tilde v^\lambda_i- v_i\|_{2,\nu} \leq \sum_{i=1}^\infty\|
 (T(\check A_0 - \check A_\lambda))^iv_i\|_{2,\nu}
 \leq \sum_{i=1}^\infty\|
 T\|_{\op}^i\|\check A_0 - \check A_\lambda\|_{\op}^i\|v_i\|_{2,\nu}\\
 \leq \tilde C \|A_0 -A_\lambda\|_{C^2}\|v_i\|_{2,\nu},
\end{multline*}
where $\tilde C = 2 \|T\|_{\op}n^4C_1^4E$.

Indeed, from \eqref{eq:est1} and \eqref{eq:est2} we get that
\begin{equation*}
  \|T\|_{\op}\|\check A_0 - \check A_\lambda\|_{\op} \leq
  \|T\|_{\op}n^4C_1^4E\|A_\lambda-A_0\|_{C^2} \leq \frac{1}{2},
\end{equation*}
so
\begin{equation*}
   \sum_{i=0}^\infty\|
 T\|_{\op}^i\|\check A_0 - \check A_\lambda\|_{\op}^i\leq 2
\end{equation*}
and hence
\begin{equation*}
   \sum_{i=1}^\infty\|
   T\|_{\op}^i\|\check A_0 - \check A_\lambda\|_{\op}^i
   \leq 2 \|T\|_{\op}\|\check A_0 - \check A_\lambda\|_{\op}
   \leq 2  \|T\|_{\op}n^4C_1^4E \|A_0 -A_\lambda\|_{C^2}.
\end{equation*}

Now if we multiply $\tilde v^\lambda_i$ by $A_\lambda A_0^{-1}$ for $i=1,\dotsc,k$, we get a basis for $\Ker \dbar_{A_\lambda}$ for $\lambda \leq \lambda_0$, and
\begin{align*}
 \|A_{\lambda} A_0^{-1}\tilde v^{\lambda}_i - v_i \|_{2,\nu}
 &
 \leq 
 \|(A_{\lambda} - A_{0})A_0^{-1}\tilde v^{\lambda}_i\|_{2,\nu} + \|\tilde  v^{\lambda}_i - v_i \|_{2,\nu} \\
 & \leq n^2C_2^2\|A_0^{-1}\|_{C^2} \|( A_\lambda -
   A_0)\|_{C^2}\|\tilde v^\lambda_i\|_{2,\nu} + \tilde C \|( A_0 -
   A_\lambda)\|_{C^2} \|v_i\|_{2,\nu}.
\end{align*}
The result follows.
\end{proof}

We summarize the implications of Proposition \ref{prp:keriso} by giving some results which are used in Section \ref{sec:glupf}.

\begin{lma}\label{lma:genkeriso}
 Suppose that $A_\lambda$, $\lambda \in [0,1]$, is a $C^2$-admissible family of trivialized boundary conditions on $D_0$ so that $A_0 =\id$. Then there is a $\lambda_0 >0$ so that for all $0 \leq \lambda \leq \lambda_0$ we have that $\Coker \dbar_{A_\lambda}$ is trivial, and there are functions $ v_{i,\lambda} \in \Hi_{2}[A_\lambda]$, $i=1,\dotsc,n$, satisfying
 \begin{equation*}
 \|v_{i,\lambda} -\partial_{x_i}\|_{2} \to 0 \text{ as } \lambda \to 0
\end{equation*}
and so that 
\begin{equation*}
 \kd_{A_\lambda} = \spn (v_{1,\lambda},\dotsc, v_{n,\lambda}).
\end{equation*}
\end{lma}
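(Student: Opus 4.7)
The plan is to deduce the lemma directly from Proposition~\ref{prp:keriso} by verifying its hypotheses at $\lambda = 0$. Specifically, I need to show that $\dbar_{A_0} = \dbar_{\id}$ is surjective with $n$-dimensional kernel spanned by the constant $\R^n$-valued functions $\partial_{x_1}, \ldots, \partial_{x_n}$. For the kernel, note that any $v \in \kd_{A_0}$ is a holomorphic map $v : D_0 \to \C^n$ with $v(\partial D_0) \subset \R^n$. Componentwise Schwarz reflection extends such a $v$ to a bounded holomorphic map on a neighborhood of $\overline{D_0}$, and the maximum principle then forces $v$ to be a constant $\R^n$-vector; conversely every such constant lies in the kernel. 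Hence $\kd_{A_0} = \spn(\partial_{x_1}, \ldots, \partial_{x_n})$, which is exactly $n$-dimensional.

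Next I would verify that $\cd_{A_0} = 0$. The Maslov index of the constant loop $A_0 \equiv \id$ in $U(n)$ is zero, so by the standard Riemann--Hilbert computation (see e.g.\ \cite{jholo}) one has $\ind \dbar_{A_0} = n$. Combined with the kernel computation above, this forces $\dim \cd_{A_0} = 0$, giving surjectivity.

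With surjectivity at $\lambda = 0$ established, Proposition~\ref{prp:keriso} applies with $v_i = \partial_{x_i}$: there exist $\lambda_0 > 0$ and, for every $\lambda \in [0, \lambda_0]$, a basis $(v_{1,\lambda}, \ldots, v_{n,\lambda})$ of $\kd_{A_\lambda}$ satisfying
\begin{equation*}
\|v_{i,\lambda} - \partial_{x_i}\|_2 \leq C\|A_\lambda - A_0\|_{C^2}.
\end{equation*}
The $C^2$-admissibility of $\{A_\lambda\}$ guarantees $\|A_\lambda - A_0\|_{C^2} \to 0$ as $\lambda \to 0$, which gives the required convergence. Triviality of $\Coker \dbar_{A_\lambda}$ for $\lambda \in [0, \lambda_0]$ then follows from homotopy invariance of the Fredholm index ($\ind \dbar_{A_\lambda} \equiv n$ along the admissible family) together with $\dim \kd_{A_\lambda} = n$ supplied by the proposition; alternatively one can use that surjectivity is an open condition in the space of Fredholm operators.

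I do not expect any serious obstacle: essentially all of the analytic work has been absorbed into Proposition~\ref{prp:keriso}, and the lemma amounts to its base case. The only point requiring care is interpreting the continuity in $\lambda$ in the $C^2$-norm, which is exactly what is encoded in the definition of $C^2$-admissibility and what makes the estimate in Proposition~\ref{prp:keriso} effective as $\lambda \to 0$.
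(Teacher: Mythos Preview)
Your proposal is correct and follows essentially the same route as the paper: verify surjectivity and the explicit kernel at $\lambda=0$, then invoke Proposition~\ref{prp:keriso}. The paper's proof simply asserts the $\lambda=0$ facts and appeals to openness of surjectivity, whereas you supply the standard Schwarz reflection/maximum principle and index arguments for them; this is added detail, not a different strategy.
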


\begin{proof}
In the limit $\lambda =0$ the problem is surjective with $n$-dimensional kernel spanned by $(\partial_{x_1},\dotsc,\partial_{x_n})$. Since surjectivity is an open condition and the problem is Fredholm for all $\lambda$ the result then follows from Proposition \ref{prp:keriso}. 
\end{proof}

\begin{cor}\label{cor:canonorient}
The canonical orientation of $\det \dbar_{A_\lambda}$ is given by the ordered tuple $(v_{1,\lambda},\dotsc, v_{n,\lambda})$.
\end{cor}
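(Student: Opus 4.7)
The plan is to combine Proposition \ref{fukayaorient} (which says that the canonical orientation of $\det\dbar_{A_\lambda}$ is computed via evaluation at a boundary point, using the trivialization $A_\lambda$ to identify the image with $\R^n$) with the continuity provided by Lemma \ref{lma:genkeriso}.

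First I would check the base case $\lambda=0$. Since $A_0=\id$, the boundary condition is the constant $\R^n$, the operator $\dbar_{A_0}$ is surjective, and its kernel consists precisely of the constant functions with values in $\R^n$; in particular, $\{\partial_{x_1},\dotsc,\partial_{x_n}\}$ is a basis and evaluation at any boundary point sends $\partial_{x_i}$ to the $i$-th standard basis vector $e_i\in\R^n$. By the explicit description in Proposition \ref{fukayaorient}, the canonical orientation of $\det\dbar_{A_0}$ is then represented by $\partial_{x_1}\wedge\dotsm\wedge\partial_{x_n}$. This proves the statement at $\lambda=0$.

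Next I would extend this to small $\lambda>0$. By Lemma \ref{lma:genkeriso}, for $0\leq \lambda\leq \lambda_0$ the cokernel vanishes, so $\det\dbar_{A_\lambda}=\bigwedge^{\max}\kd_{A_\lambda}$, and the vectors $v_{i,\lambda}$ satisfy $\|v_{i,\lambda}-\partial_{x_i}\|_{2}\to 0$ as $\lambda\to 0$. By the Sobolev trace inequality, $v_{i,\lambda}|_{\partial D_0}\to \partial_{x_i}|_{\partial D_0}$ in a sufficiently strong norm so that the evaluation of $A_\lambda^{-1}v_{i,\lambda}$ at a fixed boundary point $p\in\partial D_0$ converges to $e_i\in\R^n$. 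Thus, shrinking $\lambda_0$ if necessary, the matrix
\[
M(\lambda)=\bigl[\,A_\lambda(p)^{-1}v_{1,\lambda}(p)\ \big|\ \dotsm\ \big|\ A_\lambda(p)^{-1}v_{n,\lambda}(p)\,\bigr]
\]
satisfies $M(\lambda)\to \mathrm{Id}$ as $\lambda\to 0$, and in particular has positive determinant. By the evaluation description in Proposition \ref{fukayaorient}, this positive determinant means that the ordered basis $(v_{1,\lambda},\dotsc,v_{n,\lambda})$ represents the canonical orientation of $\det\dbar_{A_\lambda}$ for all $\lambda\in[0,\lambda_0]$.

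There is no serious obstacle here; the only care required is to verify that the Sobolev convergence $v_{i,\lambda}\to\partial_{x_i}$ in $\Hi_{2}$ survives restriction to the boundary and pointwise evaluation, which is standard since $\Hi_2$ of a $2$-dimensional domain embeds into the continuous functions on the closed disk. With this in hand the result follows, and it also implies that the choice of boundary point used for evaluation is irrelevant, since $M(\lambda)$ is close to $\mathrm{Id}$ at every boundary point for $\lambda$ small.
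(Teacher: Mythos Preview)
Your proposal is correct and follows the natural approach that the paper leaves implicit (the corollary is stated without proof). You spell out explicitly the continuity argument: at $\lambda=0$ evaluation of $(\partial_{x_1},\dotsc,\partial_{x_n})$ gives the identity matrix, and by Lemma~\ref{lma:genkeriso} together with the Sobolev embedding $\Hi_2\hookrightarrow C^0$ the evaluated matrix $M(\lambda)$ stays close to the identity, hence has positive determinant; Proposition~\ref{fukayaorient} then yields the claim. The paper evidently regards this as immediate from the orientability of the determinant bundle over $\Omega(U(n))$ and the convergence in Lemma~\ref{lma:genkeriso}, which is exactly what you unpack.
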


\begin{cor}\label{cor:canonbundle}
We have an oriented vector bundle $\Ker\ul{\dbar_A} \simeq [0, \lambda_0] \times \kd_{A_0}$ where the trivializations are given by $v_{i,\lambda} \mapsto \partial_{x_i}$, $i=1,\dotsc,n$. Moreover, the orientation of each fiber is given by the canonical orientation.
\end{cor}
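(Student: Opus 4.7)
The plan is to upgrade the fiberwise data supplied by Lemma \ref{lma:genkeriso} to a genuine vector bundle structure on $\Ker\ul{\dbar_A}$ by using Proposition \ref{prp:keriso} to produce local trivializations, and then to read off the orientation assertion from Corollary \ref{cor:canonorient}.

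First, I would shrink $\lambda_0$ if necessary so that Lemma \ref{lma:genkeriso} applies: then $\dim \kd_{A_\lambda}=n$ is constant on $[0,\lambda_0]$, $\Coker\dbar_{A_\lambda}=0$, and the distinguished basis $(v_{1,\lambda},\dotsc,v_{n,\lambda})$ exists and satisfies $v_{i,\lambda}\to\partial_{x_i}$ in $\Hi_{2}[A_\lambda]$ as $\lambda\to 0$. The constant-rank condition is the first ingredient needed for a vector bundle. Second, to get local triviality near an arbitrary $\lambda_*\in[0,\lambda_0]$, I would apply Proposition \ref{prp:keriso} with basepoint shifted to $\lambda_*$ (which is legitimate because $\dbar_{A_{\lambda_*}}$ is surjective and the family $A_\lambda$ remains $C^2$-admissible after reparametrization). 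This yields a basis $(u_{1}^{\lambda_*,\lambda},\dotsc,u_{n}^{\lambda_*,\lambda})$ of $\kd_{A_\lambda}$ for $\lambda$ in a neighborhood $U_{\lambda_*}$ of $\lambda_*$, depending continuously on $\lambda$ in the $\Hi_{2,\mu}$-norm. These bases define local trivializations of $\Ker\ul{\dbar_A}$ compatible with the subspace topology inherited from $[0,\lambda_0]\times\Hi_{2,\mu}$, which is precisely the topology fixed in Section \ref{sec:glulim}.

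Next, I would check that the global section $\lambda\mapsto(v_{1,\lambda},\dotsc,v_{n,\lambda})$ supplied by Lemma \ref{lma:genkeriso} is continuous at every $\lambda_*\in[0,\lambda_0]$, not merely at $\lambda_*=0$. On $U_{\lambda_*}$ the two bases $(v_{i,\lambda})$ and $(u_{i}^{\lambda_*,\lambda})$ span the same $n$-dimensional subspace, so they differ by a change of basis $T_\lambda\in GL(n,\R)$. Since both families are continuous into $\Hi_{2,\mu}$ and of constant rank, $T_\lambda$ depends continuously on $\lambda$, and hence so does $(v_{i,\lambda})$. Covering the compact interval $[0,\lambda_0]$ by finitely many such $U_{\lambda_*}$, the map $\xi:\Ker\ul{\dbar_A}\to[0,\lambda_0]\times\kd_{A_0}$ defined by $\xi_\lambda(v_{i,\lambda})=\partial_{x_i}$ is a continuous bundle isomorphism.

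For the orientation statement, Corollary \ref{cor:canonorient} asserts that the canonical orientation of $\det\dbar_{A_\lambda}$ is represented by $v_{1,\lambda}\wedge\dotsm\wedge v_{n,\lambda}$, which under $\xi_\lambda$ corresponds to $\partial_{x_1}\wedge\dotsm\wedge\partial_{x_n}$. Equipping $\Ker\ul{\dbar_A}$ with the orientation pulled back from this standard basis of $\kd_{A_0}$ therefore yields an oriented vector bundle whose fiberwise orientation agrees with the canonical one. The main obstacle is essentially bookkeeping rather than analytic: the only subtle point is observing that continuity of $(v_{i,\lambda})$ at $\lambda_*>0$ reduces, via the comparison with the local Proposition \ref{prp:keriso}-basis, to continuity of a change-of-basis matrix in $GL(n,\R)$, which is automatic for continuous families of frames in a constant-rank subbundle.
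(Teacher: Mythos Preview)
Your overall strategy is correct and aligned with the paper, which states this as an unproved corollary of Lemma~\ref{lma:genkeriso} and Corollary~\ref{cor:canonorient}. However, your argument for continuity of $(v_{i,\lambda})$ at interior points $\lambda_*>0$ is circular: you write ``since both families are continuous into $\Hi_{2,\mu}$ \dots\ $T_\lambda$ depends continuously on $\lambda$, and hence so does $(v_{i,\lambda})$'', but the continuity of $(v_{i,\lambda})$ is exactly what you are trying to establish, so it cannot be used as a hypothesis to deduce continuity of the change-of-basis matrix $T_\lambda$.

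The repair is simpler than your local-patch construction. The basis $(v_{i,\lambda})$ furnished by Lemma~\ref{lma:genkeriso} is built in the proof of Proposition~\ref{prp:keriso} via the explicit formula
\[
v_i^\lambda \;=\; A_\lambda A_0^{-1}\,\bigl(\id + T(\check A_0 - \check A_\lambda)\bigr)^{-1}\partial_{x_i},
\]
where $T$ is a fixed bounded right inverse of $\check A_0$. Every factor here depends continuously on $\lambda$ over $[0,\lambda_0]$: $A_\lambda$ does by $C^2$-admissibility, $\check A_\lambda$ is a continuous perturbation of $\check A_0$, and the Neumann inverse is continuous on the open set where it exists. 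Hence $(v_{i,\lambda})$ is already a \emph{global} continuous frame, giving the trivialization $v_{i,\lambda}\mapsto\partial_{x_i}$ directly, with no need for shifted basepoints or transition matrices. Your orientation step via Corollary~\ref{cor:canonorient} is then correct as written.
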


We have a similar result for $C^2$-admissible families defined on $\Delta_i$,  $i=1$ or $i=2$. Indeed, assume that 
 $A_\lambda$, $\lambda \in [0,1]$ is a family of trivialized boundary conditions on $\Delta_i$ with $A_0= \id$. Assume that  $\w=\w_\nu$ is a weight function so that $(A_\lambda, \w)$ is admissible for all $\lambda \in[0,1]$, and so that the weight vectors satisfy 
 \begin{align*}
 \text{If } i=1:& \qquad \nu_0 = (\underbrace{-\delta,\dotsc,-\delta}_k, \underbrace{\delta,\dotsc,\delta}_{n-k}) \\
 \text{If } i=2:&
  \qquad \nu_0 = (\underbrace{-\delta,\dotsc,-\delta}_k, \underbrace{\delta,\dotsc,\delta}_{n-k}), \quad \nu_1 = ({-\delta,\dotsc,-\delta}), \text { or } \\ 
   & \qquad \nu_0 =({-\delta,\dotsc,-\delta}), 
    \quad \nu_1 = (\underbrace{-\delta,\dotsc,-\delta}_k, \underbrace{\delta,\dotsc,\delta}_{n-k}). 
 \end{align*}
 Then we have the following.

\begin{lma}\label{lma:genkerisoweight}
There is a $\lambda_0 >0$ so that for $0 \leq \lambda \leq \lambda_0$ we have that $\Coker \dbar_{A_\lambda}$ is trivial, and there are functions $ v_{i,\lambda} \in \Hi_{2,\nu}[A_\lambda]$, $i=1,\dotsc,k,$ satisfying
 \begin{equation*}
 \|v_{i,\lambda} -\partial_{x_i}\|_{2,\nu} \to 0 \text{ as } \lambda \to 0
\end{equation*}
and so that 
\begin{equation*}
 \kd_{A_\lambda} = \spn (v_{1,\lambda},\dotsc, v_{k,\lambda}).
\end{equation*}
\end{lma}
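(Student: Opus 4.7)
The plan is to verify the hypotheses of Proposition~\ref{prp:keriso} at $\lambda=0$ and then invoke it directly. First I would compute $\kd_{A_0}$ and $\cd_{A_0}$ explicitly. Since $A_0\equiv\id$, any $v\in\Hi_{2,\mu}[A_0]$ satisfies totally real $\R^n$-boundary conditions on every boundary component of $\Delta_i$. A holomorphic function on $\Delta_i$ with constant $\R^n$-boundary values extends by Schwarz reflection to an entire holomorphic function on $\C$, and the requirement that $\w_\mu v \in L^2$ forces this entire function to be bounded, hence a constant vector $c=(c_1,\dots,c_n)\in\R^n$. The weighted integrability condition $\w_\mu c \in L^2$ then kills $c_j$ whenever the weight vector has a positive entry $+\delta$ at some puncture in the $j$-th direction; in the prescribed configurations (either the mixed pattern at the single puncture of $\Delta_1$, or the mixed pattern at one puncture combined with the all-negative pattern at the other puncture of $\Delta_2$) this is exactly the last $n-k$ coordinates. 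Hence $\kd_{A_0}=\spn(\partial_{x_1},\dots,\partial_{x_k})$.

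Second, I would establish that $\cd_{A_0}=0$ by an index calculation. Using the index formula for weighted $\dbar$-problems with split constant boundary conditions from [\cite{legsub}, Section~6], with the given weight vectors one obtains $\ind \dbar_{A_0}=k$. Combined with the explicit $k$-dimensional kernel computed above, this forces $\dim\cd_{A_0}=0$, i.e., $\dbar_{A_0}$ is surjective.

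Third, I would apply Proposition~\ref{prp:keriso}. The family $\{A_\lambda\}_{\lambda\in[0,1]}$ is $C^2$-admissible by hypothesis, and $\dbar_{A_0}$ is surjective with explicit kernel basis $(\partial_{x_1},\dots,\partial_{x_k})$, so the proposition produces $\lambda_0>0$ and a basis $(v_{1,\lambda},\dots,v_{k,\lambda})$ of $\kd_{A_\lambda}$ for $0\leq\lambda\leq\lambda_0$ satisfying
\begin{equation*}
\|v_{i,\lambda}-\partial_{x_i}\|_{2,\mu}\leq C\,\|A_\lambda-A_0\|_{C^2}.
\end{equation*}
The continuity assumption in the definition of $C^2$-admissibility then yields $\|v_{i,\lambda}-\partial_{x_i}\|_{2,\mu}\to 0$ as $\lambda\to 0$. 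Triviality of the cokernel persists on $[0,\lambda_0]$ by the openness of surjectivity in the space of Fredholm operators, which is precisely the mechanism used in the proof of Proposition~\ref{prp:keriso}.

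The main obstacle is the base-case analysis at $\lambda=0$: one must correctly identify which constant functions survive the weight and verify that the weighted Fredholm index really equals $k$, so that the cokernel vanishes. This requires carefully matching the weight-vector conventions of Sections~\ref{sec:spintriv} and~\ref{sec:capspec} against the conventions appearing in the index formulas of \cite{legsub}. Once the base case is settled, the statement follows directly from Proposition~\ref{prp:keriso} with no further work.
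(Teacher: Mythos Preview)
Your proposal is correct and follows essentially the same route as the paper: establish that $\dbar_{A_0}$ is surjective with kernel $\spn(\partial_{x_1},\dots,\partial_{x_k})$, then invoke Proposition~\ref{prp:keriso}. The paper simply cites \cite{legsub} for the base case rather than sketching it explicitly; your Schwarz-reflection step is slightly loose (for $\Delta_1$ reflection does not extend to all of $\C$, and weighted $L^2$ does not by itself yield boundedness), but the asymptotic analysis in \cite[Section~6]{legsub}, which you already invoke for the index, supplies exactly the rigorous kernel computation you need.
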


\begin{proof}
From \cite{legsub} it follows that in the limit $\lambda =0$ the problem is surjective with $k$-dimensional kernel spanned by $(\partial_{x_1},\dotsc,\partial_{x_k})$. Since surjectivity is an open condition and the problem is Fredholm for all $\lambda$ the result then follows from Proposition \ref{prp:keriso}. 
\end{proof}

\begin{cor}\label{cor:canonbundleweight}
We have an oriented vector bundle $\Ker\ul{\dbar_A} \simeq [0, \lambda_0] \times \kd_{A_0}$ where the trivializations are given by $v_{i,\lambda} \mapsto \partial_{x_i}$, $i=1,\dotsc,k$.
\end{cor}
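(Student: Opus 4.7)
The plan is to package the content of Lemma \ref{lma:genkerisoweight} together with the construction from Proposition \ref{prp:keriso} into a vector-bundle statement. First I would apply Lemma \ref{lma:genkerisoweight} to obtain, for every $\lambda \in [0,\lambda_0]$, the basis $(v_{1,\lambda},\dotsc,v_{k,\lambda})$ of $\kd_{A_\lambda}$ with the asymptotic property $\|v_{i,\lambda}-\partial_{x_i}\|_{2,\mu}\to 0$ as $\lambda \to 0$. Here $\lambda_0$ may have to be chosen smaller than the one provided by Lemma \ref{lma:genkerisoweight}, but only by a fixed amount dictated by the cokernel estimate in the proof of Proposition \ref{prp:keriso}.

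Next I would define the total space
\begin{equation*}
\Ker\ul{\dbar_A} = \{(\lambda,v)\in [0,\lambda_0]\times \Hi_{2,\mu}(\Delta_i,\C^n):v\in\kd_{A_\lambda}\},
\end{equation*}
topologized as a subspace of $[0,\lambda_0]\times \Hi_{2,\mu}(\Delta_i,\C^n)$, and the candidate trivialization
\begin{equation*}
\xi:\Ker\ul{\dbar_A} \to [0,\lambda_0]\times \kd_{A_0},\qquad v_{i,\lambda}\mapsto (\lambda,\partial_{x_i}).
\end{equation*}
The key step is to verify that $\xi$ is a homeomorphism with continuous inverse. For this I would use the same device as in the proof of Proposition \ref{prp:keriso}: conjugate $\dbar_{A_\lambda}$ back to the fixed space $\Hi_{2,\mu}[A_0]$ by multiplication with $A_\lambda A_0^{-1}$ to obtain the Fredholm family $\check A_\lambda$, and then use a bounded right inverse $T$ of $\check A_0$ to write the explicit family of isomorphisms $(\id+T(\check A_0-\check A_\lambda))^{-1}:\kd_{\check A_0}\to \kd_{\check A_\lambda}$. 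The norm estimate in the proof of Proposition \ref{prp:keriso}, combined with the bound $\|A_\lambda -A_0\|_{C^2}\to 0$ coming from $C^2$-admissibility, gives the continuity of the section $\lambda\mapsto v_{i,\lambda}$ as well as the continuity of the inverse trivialization.

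Once the local (in fact global) trivialization is established, orientability is automatic because the trivialization is defined over the contractible base $[0,\lambda_0]$; picking any orientation on the fiber $\kd_{A_0}\simeq \spn(\partial_{x_1},\dotsc,\partial_{x_k})$ and transporting it via $\xi$ orients $\Ker\ul{\dbar_A}$ coherently in $\lambda$. The main (and only) technical point is the continuity of the basis elements $v_{i,\lambda}$ in the $\Hi_{2,\mu}$-norm, but as just indicated this is already essentially contained in the proof of Proposition \ref{prp:keriso}, so the corollary really amounts to a repackaging of that result in families; no further estimates or new ideas should be needed.
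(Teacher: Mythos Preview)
Your proposal is correct and matches the paper's approach: the corollary is stated without proof as an immediate consequence of Lemma~\ref{lma:genkerisoweight} and the vector-bundle construction in Proposition~\ref{prp:keriso}, and you have simply spelled out those details. No further ideas are needed.
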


\begin{rmk}\label{rmk:megarmk}
All the results above also holds for the cokernel of injective problems, by considering the adjoint operator $\partial$.
\end{rmk}


\vskip.1in \noindent \textsc{Acknowledgments.} The first part of this paper is part of
the authors Ph.D.\ thesis at Uppsala University, supported by the Knut
and Alice Wallenberg Foundation.  This part  was finished while the
author was sponsored by the ERC project Geodycon, as a postdoc at Nantes University, and by the grant KAW
2015.0353 from the Knut and Alice Wallenberg Foundation, as a postdoc
at Stanford University. The second part was written during the authors period as a postdoc at the University of Oslo.

The author would like to
thank Tobias Ekholm and Michael Sullivan for sharing their insights in the
subject, and for pointing out mistakes in earlier versions of the paper.

\newpage
\bibliographystyle{hplain}
\bibliography{c_main_n} 
\end{document}